\documentclass[envcountsame]{livre1}
\usepackage{etex}
\usepackage{morefloats} %
\usepackage[doc,show]{optional} 
\usepackage{array}
\usepackage{srcltx}
\usepackage{xcolor}
\usepackage{soul}
\usepackage{pdfpages}

\usepackage[
  letterpaper,
  top=3.0cm,
  bottom=3.0cm,
  left=3.0cm,
  right=3.0cm
]{geometry}

\definecolor{labelkey}{rgb}{0,0.08,0.45}
\definecolor{refkey}{rgb}{0,0.6,0.0}
\definecolor{Brown}{rgb}{0.45,0.0,0.05}
\definecolor{nido}{rgb}{1.0,0.2,0.0}
\definecolor{dgreen}{rgb}{0.00,0.40,0.00}
\definecolor{dblue}{rgb}{0,0.08,0.45}
\PassOptionsToPackage{normalem}{ulem}
\usepackage{ulem}
\usepackage{epsfig}
\usepackage[usenames]{pstricks}
\usepackage{pstricks-add}
\usepackage{pst-grad}
\usepackage{pst-plot} 
\usepackage{pst-eucl} 
\usepackage{multicol}
\usepackage{graphicx}
\usepackage[tbtags]{amsmath}
\usepackage{amssymb}
\usepackage{pifont}
\usepackage{calc}
\usepackage{ntheorem}
\usepackage{rotating}
\usepackage{euscript}
\usepackage{epic,eepic}
\usepackage{makeidx}
\usepackage{index}   
\usepackage{exscale,relsize} 
\usepackage{enumitem}
\usepackage{fancyhdr}
\usepackage{cancel}
\RequirePackage[colorlinks,hyperindex]{hyperref}
\hypersetup{linktocpage=true,citecolor=dblue,linkcolor=dgreen} 

\usepackage[capitalize, nameinlink]{cleveref} 

\crefdefaultlabelformat{#2\textup{#1}#3}
\crefrangeformat{enumi}{#3\textup{#1}#4\textup{\textendash}#5\textup{#2}#6}
\crefname{equation}{}{}

\crefname{chapter}{Chapter}{chapters}
\crefname{figure}{Figure}{Figures}

\crefdefaultlabelformat{#2\textup{#1}#3}
\crefname{enumi}{}{}
\crefname{equation}{}{}
\crefname{page}{p.}{pp.}
\crefname{item}{}{}
\crefname{figure}{Figure}{Figures}
\crefname{theorem}{Theorem}{Theorems}
\crefname{condition}{Condition}{Conditions}
\crefname{lemma}{Lemma}{Lemmas}
\crefname{proposition}{Proposition}{Propositions}
\crefname{corollary}{Corollary}{Corollaries}
\crefname{definition}{Definition}{Definitions}
\crefname{fact}{Fact}{Facts}
\crefname{example}{Example}{Examples}
\crefname{remark}{Remark}{Remarks}
\crefname{problem}{Problem}{Problems}
\crefname{question}{Question}{Questions}
\crefname{exercise}{Exercise}{Exercises}

\makeatletter

\let\if@envcntshowhiercnt=\iftrue
\renewtheorem{lemma}[theorem]{Lemma}
\renewtheorem{fact}[theorem]{Fact}
\renewtheorem{corollary}[theorem]{Corollary}
\renewtheorem{proposition}[theorem]{Proposition}

\theoremstyle{plain}{\theorembodyfont{\rmfamily}%
\renewtheorem{conjecture}[theorem]{Conjecture}}
\theoremstyle{plain}{\theorembodyfont{\rmfamily}%
\renewtheorem{example}[theorem]{Example}}
\theoremstyle{plain}{\theorembodyfont{\rmfamily}%
\renewtheorem{remark}[theorem]{Remark}}
\theoremstyle{plain}{\theorembodyfont{\rmfamily}%
}
\theoremstyle{plain}{\theorembodyfont{\rmfamily}%
\renewtheorem{definition}[theorem]{Definition}}
\theoremstyle{plain}{\theorembodyfont{\rmfamily}%
\renewtheorem{problem}[theorem]{Problem}}
\theoremstyle{plain}{\theorembodyfont{\rmfamily}%
\renewtheorem{definition}[theorem]{Definition}}
\theoremstyle{plain}{\theorembodyfont{\rmfamily}%
\renewtheorem{exercise}{Exercise}[chapter]}

\g@addto@macro\normalsize{%
	\setlength\abovedisplayskip{6pt}
	\setlength\belowdisplayskip{6pt}
	\setlength\abovedisplayshortskip{6pt}
	\setlength\belowdisplayshortskip{6pt}
}

\let\orgdescriptionlabel\descriptionlabel
\renewcommand*{\descriptionlabel}[1]{%
	\let\orglabel\label
	\let\label\@gobble
	\phantomsection
	\edef\@currentlabel{#1}%
	\let\label\orglabel
	\orgdescriptionlabel{#1}%
}

\newcommand{\hhbcom}[1]{{\opt{hhb}{\marginpar{\color{blue}hhb}%
{\color{blue}{{\rm\bfseries [}{\rm #1}{\rm\bfseries ]}}}}}}

\newcommand{\ww}{\ensuremath{\sqrt{5}}}
\newcommand{\thalb}{\ensuremath{\tfrac{1}{2}}}

\newcommand{\menge}[2]{\big\{{#1}~\big |~{#2}\big\}}

\newcommand{\emp}{\ensuremath{\varnothing}}

\newcommand{\ii}{\ensuremath{\mathrm i}}

\newcommand{\QQ}{\ensuremath{\mathbb Q}}
\newcommand{\RR}{\ensuremath{\mathbb R}}
\newcommand{\Rel}{{\ensuremath{\mathsf R}}}
\newcommand{\notRel}{{\not\Rel\,}}
\newcommand{\FF}{\ensuremath{\mathbb F}}
\newcommand{\SR}{\ensuremath{\mathbb S}}
\newcommand{\CC}{\ensuremath{\mathbb C}}
\newcommand{\ZZ}{\ensuremath{\mathbb Z}}

\newcommand{\RP}{\ensuremath{{\mathbb R}_{+}}}

\newcommand{\RPP}{\ensuremath{{\mathbb R}_{++}}}

\newcommand{\NN}{\ensuremath{\mathbb N}}

\newcommand{\zeroun}{\ensuremath{\left]0,1\right[}}

\newcommand{\gr}{\ensuremath{\operatorname{gra}}}
\newcommand{\ran}{\ensuremath{\operatorname{ran}}}

\newcommand{\Id}{\ensuremath{{\operatorname{Id}}}}

\newcommand{\minf}{\ensuremath{- \infty}}
\newcommand{\pinf}{\ensuremath{+ \infty}}
\newcommand{\exi}{\ensuremath{\exists\,}}

\newcommand{\nnn}{\ensuremath{{n\in{\mathbb N}}}}
\newcommand{\kkk}{\ensuremath{{k\in{\mathbb N}}}}

{\begin{list}{}{%
\settowidth{\labelwidth}{\textrm{#1~}}%
\setlength{\leftmargin}{\labelwidth+\labelsep}}}
{\end{list}}

\newcommand*{\QEDW}{\hfill \ensuremath{\square}}%

\smartqed     

\renewenvironment{solution}
  {\noindent{\it Solution}. \ignorespaces}
  {\vspace{-6mm}\hfill\;\;\;
   \begin{flushright}\QEDW\end{flushright}\vskip 2mm}

\renewcommand\theenumii{(\alph{enumii})}

\allowdisplaybreaks 
\makeindex

\begin{document}
\thispagestyle{empty}
\mbox{~}\vskip 32mm
{\bfseries

{\huge
\noindent
%
%
}

\vskip 12mm

{\Huge 

\noindent
Mathematical Proof\\[-10mm]

%
}}

\vskip 24mm


\hskip 23mm\begin{turn}{26}%
\hfill{\huge\mbox{${(x+y)^n = \displaystyle \sum_{k=0}^{n} {n \choose k}
x^{n-k}y^k}$}}
\end{turn} 

\bigskip 

\bigskip 

These Course Notes provide an introduction to mathematical proofs 
for undergraduate students transitioning from computational calculus
to abstract mathematics. Topics include propositional logic,
proof techniques, mathematical induction, fields, sets and relations,
sequences and series, completeness of the real numbers, cardinality,
and related foundational material. Numerous examples and exercises 
(with complete solutions) are included. 
The notes are designed for a one-semester proof course.

\bigskip 

Version 1 (initial release, typeset in March 2026) 


\vfill 

{\textcopyright\ 
\larger \textbf{2026 Heinz H.\ Bauschke} 
}

\bigskip 

{
This work is licensed under the
Creative Commons Attribution–NonCommercial–ShareAlike 4.0 International License.

You are free to share and adapt the material for non-commercial purposes,
provided appropriate credit is given and derivative works are distributed
under the same license.

\url{https://creativecommons.org/licenses/by-nc-sa/4.0/}
}

\bigskip

\pagenumbering{roman}

\thispagestyle{empty}

%
%
%
%
%

\section*{Preface}

The calendar description of the \textbf{MATH 220 Mathematical Proof}
states
\begin{quotation}
\noindent
Sets and functions; induction; cardinality; properties of the real numbers; sequences, series, and limits. Logic, structure, style, and clarity of proofs emphasized throughout. 
\end{quotation}

This material is classical;
consequently, there is a large number of text books
on this subject already available; see, e.g., 
\cite{Beardon}
\cite{BG}, \cite{Bloch}, 
\cite{CPZ}, \cite{Cunning}, \cite{Forster}, \cite{Galovich}, 
\cite{Gerstein}, 
\cite{Howie}, 
\cite{Lay}, \cite{Lipschutz}, \cite{SES}, \cite{Solow}
for some useful related references. 

These notes were greatly 
inspired by the basic flow of Forster's \cite{Forster} --- a book that strongly 
influenced me as an undergraduate student.

This set of lecture notes helps with
the transition from Calculus (which is largely mechanical) to more
advanced abstract mathematics.
The amount of topics covered is not excessive;
indeed, the material in these lecture notes can be covered in one term.

Please note that the UBC library provides access to many books, some of
which (in particular those published by Springer) 
you can download while on a campus network, including: 
\begin{itemize}
\item Matthias Beck and Ross Geoghegan: \emph{The Art of Proof}
\item Ethan Bloch: \emph{Proofs and Fundamentals}
\item Daniel W.\ Cunningham: \emph{A Logical Introduction to Proof}
\item Larry J.\ Gerstein: \emph{Introduction to Mathematical Structures and
Proofs}
\item John M.\ Howie: \emph{Real Analysis}
\end{itemize} 

I hope that you will enjoy these lecture notes,
and I welcome feedback, suggestions, and typos. 

I thank deeply the sharp and kind students who already contributed by squashing
typos and suggesting improvements. 
A special thank you to Alexandra Gretchko who gave me a long list of useful
comments in Fall 2013!
Thanks to Nicholas Kayban for useful comments in Fall 2017!
I am grateful to Jakob Thoms who found several pertinent typos in Fall 2018!
Last but not least, I thank Liangjin Yao, Walaa Moursi, Minh
Bui, Hui Ouyang, and Shambhavi Singh who
have been wonderful Teaching Assistants for
this course, and who suggested several exercises and spotted various typos!

\vspace{\baselineskip}
\begin{flushright}\noindent

\hfill {\it Heinz Bauschke, University of British Columbia Okanagan}\\
Kelowna, 2026\hfill~\\
\end{flushright}

\tableofcontents
\newpage
\thispagestyle{empty}
\pagenumbering{arabic}
\setcounter{page}{0}

\chapter{Background}

\label{cha:background}

\section{Logical Connectives}

\begin{definition}
A \textbf{statement}\index{Statement} is a sentence that can be
classified as either true (T) or false (F). 
\end{definition}

\begin{example}\label{ex:s}
The following are statements:
\begin{enumerate}
\item 
\label{ex:si}
$1+0=1$.
\item 
\label{ex:sii}
Every integer is a prime number.
\item 
\label{ex:gold}
\textbf{(Goldbach)} Every even integer greater than $2$ is the sum
of two prime numbers.
\end{enumerate}
\end{example}

Here \ref{ex:si} is evidently true, while
\ref{ex:sii} is false (e.g., because $6=2\cdot 3$).
It is clear that \ref{ex:gold} is either true or false, and
hence \ref{ex:gold} is a statement; however, it is presently unknown
whether \ref{ex:gold} is true.\index{Goldbach conjecture} 

Statements can be negated, or combined with ``and'' and ``or''. 

\begin{definition}[not, and, or, if, if and only if] 
\label{d:notandor}
Let $p$ and $q$ be statements. Then:
\begin{enumerate}
\item \textbf{``not $p$'':} This is written as $\lnot p$. 
\item \textbf{``$p$ and $q$''}: This is written as $p\land q$.
\item \textbf{``$p$ or $q$''}: This is written as $p\lor q$. 
\item \textbf{``$p$ implies $q$''}: This is written as $p\Rightarrow
q$ and also expressed as ``If $p$, then $q$''. 
\item \textbf{``$p$ if and only if $q$''}: This is written as $p\Leftrightarrow
q$ and it means ``$p\Rightarrow q$ and $q\Rightarrow p$''. 
\end{enumerate}
\end{definition}

\begin{example}
Suppose $p$ represents ``$5$ is an odd integer'' and 
$q$ represents ``$6$ is a prime number''. 
Then $p$ is true while $q$ is false.
Furthermore:
\begin{enumerate}
\item
$\lnot p$ means ``$5$ is not an odd integer'', i.e., ``$5$ is even'', which
is false.
\item
$\lnot q$ means ``$6$ is not a prime number'', which is true.
\item
$p\land q$ means ``$5$ is an odd integer and $6$ is a prime number'',
which is false. 
\item
$p\lor q$ means ``$5$ is an odd integer or $6$ is a prime number'',
which is true. 
\item
$p\Rightarrow q$ means ``If $5$ is an odd integer, then $6$ is a prime
number'', which is false.
\item
$q\Rightarrow p$ means ``If $6$ is a prime number, then $5$ is an odd
integer'', which is true. 
\item 
$q\Leftrightarrow p$ means ``$5$ is an odd integer if and only if 
$6$ is a prime number'', which is false.
\end{enumerate}
\end{example}

\section{Truth Tables and More}
\index{Truth table}
It is illustrative to understand the logical connective through truth
tables: 

\begin{center}
\newcolumntype{C}{>{\centering\arraybackslash}m{1.0cm}<{}}
\begin{tabular}{|C|C|C|C|C|C|C|C|}
\hline
$p$ & $q$ & $\lnot p$ & $\lnot q$ & $p\land q$ & $p\lor q$ &
$p\Rightarrow q$ & $p\Leftrightarrow q$\\
\hline\hline
T & T & F & F & T & T & T & T \\
T & F & F & T & F & T & F & F \\
F & T & T & F & F & T & T & F \\
F & F & T & T & F & F & T & T \\
\hline
\end{tabular}
\end{center}

The statements ``$p\Rightarrow q$'' and
``$(\lnot p)\lor q$'' have the same truth tables, hence they are 
\textbf{logically equivalent}. 
If a statement is always true, then one has a \textbf{tautology};
if, at the other extreme, it is always false, then we have a
\textbf{contradiction}. 
\index{logically equivalent}
\index{Tautology}
\index{Contradiction}

\begin{definition}
Let $p$ and $q$ be statements. Then:
\begin{enumerate}
\item 
``$q\Rightarrow p$'' is the \textbf{converse} of ``$p\Rightarrow
q$''.\index{Converse (of an implication)}
\item 
``$(\lnot p)\Rightarrow (\lnot q)$'' is the \textbf{inverse} 
of ``$p\Rightarrow q$''.\index{Inverse (of an implication)}
\item 
``$(\lnot q)\Rightarrow (\lnot p)$'' is the \textbf{contrapositive} 
of ``$p\Rightarrow q$''.\index{Contrapositive (of an implication)}
\end{enumerate}
\end{definition}

Of course, one may combine more than two statements which gives rise
to larger truth tables to cover all the cases. 

Some statements concern phrases involving ``for each'',
``for every'', ``for all'', ``there exists'', etc.
For instance, the negation of ``Every integer is a prime number'' is
``There exists an integer that is not a prime number'' (e.g.,
$10=(2)(5)$, which serves as a \textbf{counterexample});
the answer ``Every integer is not a prime number'' is \emph{wrong}. 

Finally, we point out that \texttt{WolframAlpha} does very nicely
truth tables for you; try, e.g., \texttt{"truth table for p implies q"}. 

\section{Proof Techniques for Mathematical Results}

Mathematical results are often labelled
Theorem, Proposition, Lemma, Corollary, or Example.
A Theorem is usually an important mathematical result,
while a Proposition or a Lemma is not quite so important.
A Corollary is an easy consequence of another mathematical result.
An Example illustrates a more abstract theorem. 

In the following chapters we shall see many proofs.
Suppose we wish to prove a statement such as $p\Rightarrow q$.
Of course, if $p$ is false, then this statement is true (check the truth table). 
It thus remains to assume that $p$ is true and derive at the truth of
$q$. 

If one is lucky, one can reason from $p$ to $q$ directly. 
This route leads to the most insightful proofs as one clearly sees 
the path to the conclusion $q$. 

If this direct reasoning fails, one common approach is to break up the
hypothesis $p$ into several cases and then establish the conclusion
$q$ for each case. \index{Proof by cases} We think of this as a 
\emph{divide-and-conquer approach}. Many harder proofs follow this pattern. 

Sometimes, it helps to consider proving the contrapositive. 
\index{Proof by contrapositive}

Finally, if everything fails, then it is advised to argue by contradiction:
Assume that $p$ is true and $q$ is false, and derive from this a false
statement (an absurdity). While this does not shed much insight into
why $p\Rightarrow q$ is true, it is a very powerful tool. 
\index{Proof by contradiction}

Statements involving positive integers are often proved using the
principle of mathematical induction (see Chapter~\ref{cha:Induction}). 

We conclude with some simple proofs illustrating some of the
techniques just mentioned. 

\begin{proposition}[a direct proof]
\label{p:120604a}
If $n$ is an even integer, then $n^2$ is also even.
\end{proposition}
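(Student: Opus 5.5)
The plan is a direct proof, as the label of Proposition~\ref{p:120604a} suggests. We reason straight from the hypothesis ``$n$ is even'' to the conclusion ``$n^2$ is even''. No cases, contrapositive, or contradiction are needed.

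\textbf{Step 1 (unpack the hypothesis).} By definition, an integer $n$ is even when there exists an integer $k$ such that $n=2k$. So assume $n$ is an even integer and fix such a $k\in\ZZ$ with $n=2k$.

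\textbf{Step 2 (compute the square).} Using only the arithmetic of the integers (commutativity and associativity of multiplication), we get
\begin{equation*}
n^2=(2k)^2=(2k)(2k)=4k^2=2\,(2k^2).
\end{equation*}

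\textbf{Step 3 (repackage to match the definition).} Set $m=2k^2$. Since the integers are closed under multiplication, $m$ is an integer. Then $n^2=2m$ with $m\in\ZZ$, which is exactly the definition of $n^2$ being even. This finishes the argument.

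There is no real obstacle here. The only point that needs care is Step 3: we must explicitly note that $2k^2$ is an integer. It is not enough to say ``$n^2$ is a multiple of $2$''; we have to exhibit an integer witness $m$ so that the definition of evenness applies verbatim.
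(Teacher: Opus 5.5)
Your proof is correct and is the same argument the paper gives: write $n=2k$ with $k\in\ZZ$, compute $n^2=(2k)^2=4k^2=2(2k^2)$, and observe that $2k^2$ is an integer. Your remark that you must exhibit an integer witness is exactly the paper's final clause ``because $2k^2$ is evidently an integer.''
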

\begin{proof}
Write $n=2k$, where $k$ is an integer --- this is what
it means for $n$ to be even!
Then $n^2 = (2k)^2 = 4k^2 = 2(2k^2)$ is also even because
$2k^2$ is evidently an integer. 
\end{proof}

\begin{proposition}[a proof by cases] 
If $n$ is an integer that is not divisible by $3$, then
the remainder of $n^2$ after division by $3$ is $+1$.
\end{proposition}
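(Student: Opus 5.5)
Since the proposition is labelled as a proof by cases, I will split the hypothesis ``$n$ is not divisible by $3$'' according to the remainder of $n$ upon division by $3$. Because $n$ is not divisible by $3$, that remainder is either $1$ or $2$. So there is an integer $k$ with either $n=3k+1$ or $n=3k+2$, which gives exactly two cases to treat.

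\emph{Case 1: $n=3k+1$.} Expanding gives $n^2 = 9k^2+6k+1 = 3(3k^2+2k)+1$. Since $3k^2+2k$ is an integer, $n^2$ leaves remainder $1$ after division by $3$.

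\emph{Case 2: $n=3k+2$.} Expanding gives $n^2 = 9k^2+12k+4 = 3(3k^2+4k+1)+1$. Again $3k^2+4k+1$ is an integer, so the remainder is $1$. The only small trick is to split $4$ as $3+1$, so that the leftover term is exactly $1$.

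The only step needing justification is the first one: that every integer $n$ can be written as $3k$, $3k+1$ or $3k+2$ for some integer $k$. This is the division algorithm, which I will take as known background, just as the proof of Proposition~\ref{p:120604a} takes the definition of evenness for granted. The possibility $n=3k$ is exactly the case excluded by the hypothesis. With the two cases done, the proposition follows.

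As an alternative that avoids a second computation, I could write $n=3k\pm1$. Then $n^2=9k^2\pm6k+1=3(3k^2\pm2k)+1$ handles both cases at once. However, the two-case presentation fits the intended illustration better.
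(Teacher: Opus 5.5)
Your proof is correct and essentially identical to the paper's: it uses the same two cases $n=3k+1$ and $n=3k+2$ and the same factorizations $n^2=3(3k^2+2k)+1$ and $n^2=3(3k^2+4k+1)+1$. Your one-line $n=3k\pm1$ variant is also valid, and the paper itself uses that form later, in Exercise~\ref{exo:130928c}.
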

\begin{proof}
After division by $3$, the integer $n$ has remainder either $1$ or
$2$.

\emph{Case 1:} $n = 3k+1$, where $k$ is another integer.\\
Then $n^2 = (3k+1)^2 = (3k)^2 + 2(3k)(1) + 1 = 9k^2 + 6k + 1
= 3(3k^2 +2k) +1$, as claimed. 

\emph{Case 2:} $n = 3k+2$, where $k$ is another integer.\\
Then $n^2 = (3k+2)^2 = (3k)^2 + 2(3k)(2) + 4 = 9k^2 + 12k +4 
= 3(3k^2 +4k+1) +1$, as claimed. 
\end{proof}

\begin{proposition}[a proof by contradiction]
If $n$ is an integer such that $n^2$ is odd, then $n$ is odd. 
\end{proposition}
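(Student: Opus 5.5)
The label says ``a proof by contradiction'', so I would follow that pattern. Let $n$ be an integer such that $n^2$ is odd, and suppose, to the contrary, that $n$ is not odd. The goal is to derive a false statement from these two facts together.

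First, I would use the fact that every integer is either even or odd. Since $n$ is not odd, $n$ must be even. This is the only step that deserves a word of care. It relies on the dichotomy that every integer has remainder $0$ or $1$ after division by $2$, and that ``$n$ is odd'' means $n=2k+1$ for some integer $k$. This is the same division-with-remainder reasoning used in the proof by cases above, where we noted that $n$ has remainder $1$ or $2$ after division by $3$. I would simply invoke it, just as that proof did.

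Next, I would apply \cref{p:120604a} to the even integer $n$. It tells us that $n^2$ is even. But $n^2$ is odd by hypothesis, and no integer is both even and odd: the equation $2a=2b+1$ would give $1=2(a-b)$, which is impossible for integers $a,b$. This contradiction shows that the assumption was false, so $n$ must be odd.

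I do not expect a genuine obstacle here. The only thing to watch is to state explicitly the even/odd dichotomy and the fact that it is exclusive, since together these make ``not odd'' mean ``even'' and produce the contradiction. In passing, I would remark that this argument is really the contrapositive of \cref{p:120604a} dressed up as a contradiction.
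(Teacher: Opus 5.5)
Your proof is correct and follows the paper's argument exactly: assume $n$ is even, apply Proposition~\ref{p:120604a} to get that $n^2$ is even, and contradict the hypothesis that $n^2$ is odd. Your remarks that every integer is even or odd, and not both, spell out what the paper leaves implicit.
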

\begin{proof}
Assume to the contrary that $n$ is even.
By Proposition~\ref{p:120604a}, $n^2$ is even, which is absurd
because $n^2$ is assumed to be odd.
\end{proof}

To show that a mathematical statement is \emph{false},
one provides a \emph{counterexample}. 
For instance, a counterexample to the statement
``Every prime number is odd'', is the number $2$ 
because $2$ is a prime number that is \emph{not odd}.

\section*{Exercises}\markright{Exercises}
\addcontentsline{toc}{section}{Exercises}
\setcounter{theorem}{0}

\begin{exercise}
Let $p$ and $q$ be statements.
Compute the truth table for $(\lnot p)\lor q$ and compare to the truth
table of $p\Rightarrow q$; deduce therefore that these last two
statements are logically equivalent. 
\end{exercise}
\opt{show}{
\begin{solution}
Indeed, 
\begin{center}
\newcolumntype{C}{>{\centering\arraybackslash}m{2.0cm}<{}}
\begin{tabular}{|C|C|C|C|C|}
\hline
$p$ & $q$ & $\lnot p$ & $(\lnot p)\lor q$ & $p\Rightarrow q$\\
\hline\hline
T & T & F & T & T \\
T & F & F & F & F \\
F & T & T & T & T \\
F & F & T & T & T \\
\hline
\end{tabular}
\end{center}
and the logical equivalence follows because the last two columns coincide. 
\end{solution}
}

\begin{exercise}
Write down (i) a tautology statement,
and (ii) a contradiction statement. 
\end{exercise}
\opt{show}{
\begin{solution}
Let $p$ be any statement. 
(i): 
Then $p \lor (\lnot p)$ is certainly
always T, indeed (arguing by cases): 
If $p$ is T, then 
$p \lor (\lnot p) = T \lor (\lnot T) = T \lor F = T$.
And if $p$ is F, then
$p \lor (\lnot p) = F \lor (\lnot F) = F \lor T = T$.
A concrete instance is ``every integer is even or odd''. 
(ii): Similarly to (i), consider
$p \land (\lnot p)$, which is always F;
concretely, ``every integer is even and odd''. 
\end{solution}
}

\begin{exercise}[contrapositive]
\label{exo:contrapositivetruth}
Let $p$ and $q$ be statements.
Compute the truth table for $(\lnot q)\Rightarrow (\lnot p)$ and compare to the truth
table of $p\Rightarrow q$; deduce therefore that these latter two
statements are logically equivalent. 
\end{exercise}
\begin{solution}
Indeed, 
\begin{center}
\newcolumntype{C}{>{\centering\arraybackslash}m{1.5cm}<{}}
\begin{tabular}{|C|C|C|C|C|C|}
\hline
$p$ & $q$ & $\lnot p$ & $\lnot q$ & $\lnot q \Rightarrow \lnot p$ & $p
\Rightarrow q$\\
\hline\hline
T & T & F & F & T & T\\
T & F & F & T & F & F\\
F & T & T & F & T & T\\
F & F & T & T & T & T\\
\hline
\end{tabular}
\end{center}
and the logical equivalence follows because the last two column coincide.
\end{solution}

\begin{exercise}[proof by contradiction]
Let $p$ and $q$ be statements.
Compute the truth table for $(p \land (\lnot q))\Rightarrow F$ 
and compare to the truth
table of $p\Rightarrow q$; deduce therefore that these latter two
statements are logically equivalent. 
\end{exercise}
\begin{solution}
Indeed, 
\begin{center}
\newcolumntype{C}{>{\centering\arraybackslash}m{1.6cm}<{}}
\begin{tabular}{|C|C|C|C|C|C|}
\hline
$p$ & $q$ & $\lnot q$ & $p \land (\lnot q)$ & \shortstack[c]{$(p \land (\lnot q))$ \\ $\Rightarrow$ F} & $p \Rightarrow q$\\
\hline\hline
T & T & F & F & T & T\\
T & F & T & T & F & F\\
F & T & F & F & T & T\\
F & F & T & F & T & T\\
\hline
\end{tabular}
\end{center}
and the logical equivalence follows because the last two column coincide.
\end{solution}

\begin{exercise}[proof by cases]
Let $p$, $q$, and $r$ be statements.
Compute the truth tables for 
$(p \lor q) \Rightarrow r$
and $(p \Rightarrow r)\land (q\Rightarrow r)$;
deduce therefore that these latter two
statements are logically equivalent. 
\end{exercise}
\begin{solution}
We show the tables using \texttt{WolframAlpha}.
\begin{center}
\includegraphics[width=1.0\textwidth]{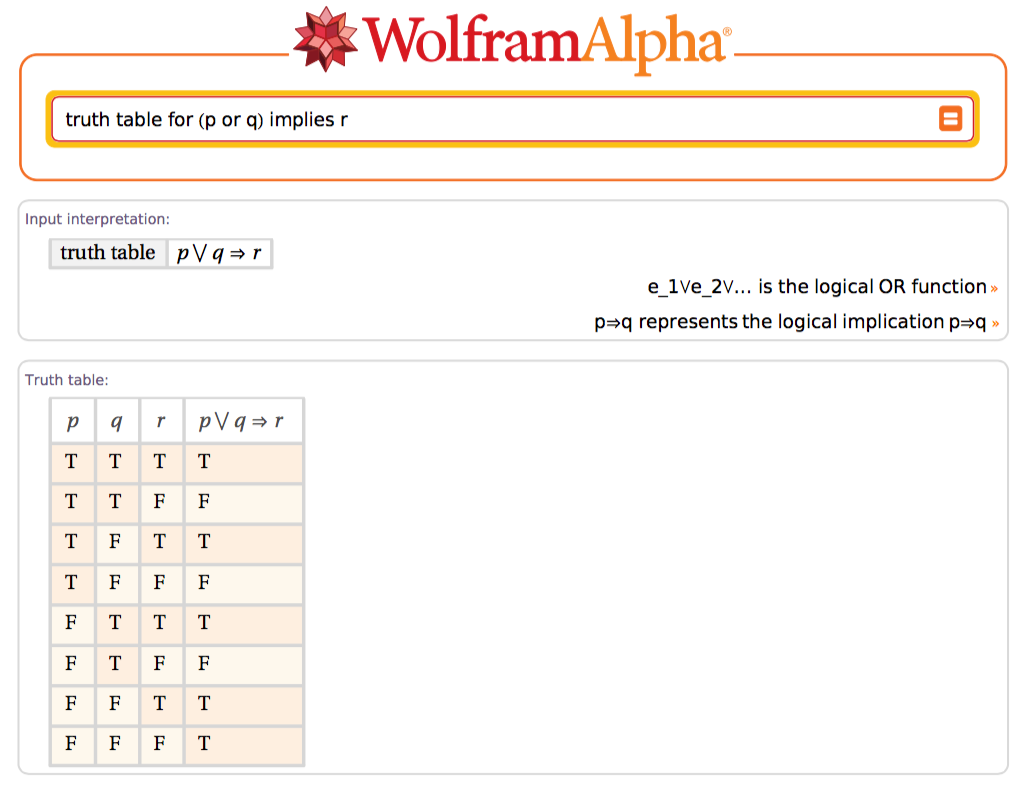}
\end{center}
\begin{center}
\includegraphics[width=1.0\textwidth]{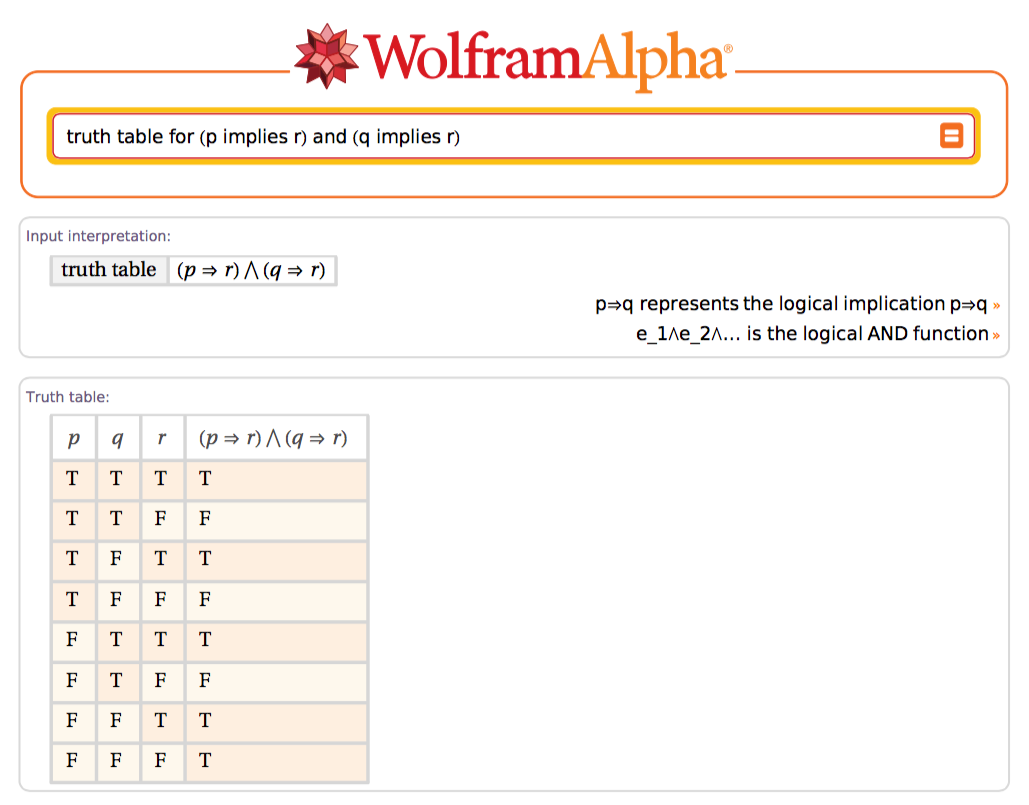}
\end{center}
\end{solution}


\begin{exercise}
Show that if $n$ is an integer, then
the remainder of $n^2$ after division by $4$ is
either $0$ or $1$. 
\emph{Hint:} Argue by cases.
\end{exercise}
\begin{solution}





\emph{Case~1:} $n$ is even, say $n=2k$, where $k$ is another
integer.\\
Then
$n^2 = (2k)^2 = 4k^2$ has remainder $0$ when dividing by $4$.

\emph{Case~2:} $n$ is odd, say $n=2k+1$, where $k$ is another
integer.\\
Then
$n^2 = (2k+1)^2 = (2k)^2+2(2k)+1^2 = 4k^2+4k+1 = 4(k^2+k)+1$ 
has remainder $1$ when dividing by $4$.
\end{solution}

\begin{exercise}
Below we have written down a proof of a statement.
Determine what the statement is. 
\begin{quotation}
\begin{proof}
Write $m=2k$ and $n=2l$, where $k$ and $l$ are integers.
Then $m+n = 2k+2l=2(k+l)$ is also even.
\end{proof}
\end{quotation}
\end{exercise}
\begin{solution}
The statement was: ``The sum of two even integers is even.''
\end{solution}

\begin{exercise}
Show that the sum of an odd and an even integer is odd.
\end{exercise}
\begin{solution}
Write $m=2k$ and $n=2l+1$, where $k$ and $l$ are integers
so that $m$ represents the even and $n$ the odd integer.
Then $m+n = 2k+2l+1=2(k+l)+1$ is indeed odd.
\end{solution}

\begin{exercise}
\label{exo:140818a}
Provide two examples where
$p\Rightarrow q$ is true but the
inverse $\lnot p \Rightarrow \lnot q$ is false.
\end{exercise}
\begin{solution}
First example:
Let $p$ be the statement
``The function of $f$ is differentiable.''
and  let $q$ be the statement 
``The function of $f$ is continuous.''
From Calculus~I, we know that $p\Rightarrow q$ is T. 
The inverse, however, is false:
E.g., when $f = |\cdot|$ is the absolute value,
then $f$ is continuous (so $q$ is T, so $\lnot q$ is F)
and $f$ is not differentiable (so $p$ is F and $\lnot p$ is T).
Then $(\lnot p)\Rightarrow (\lnot q) = (\lnot(\lnot p))\lor (\lnot q)
= (\lnot T)\lor F = F \lor F = F$.
Or put differently, the inverse of $q$ is the contrapositive of
$q\Rightarrow p$, and we know that continuity does not imply
differentiability. 

Second example:
Let $p$ be the statement
``$x\geq 0$''
and  let $q$ be the statement 
``$x^2\geq 0$''.  
Since $x^2\geq 0$ does not imply $x\geq 0$ (consider $x=-1$),
we get another example.

More generally, you can take any true implication $p\Rightarrow q$,
where $q\Rightarrow p$ is false. 
\end{solution}

Suppose we are interested in proving the implication
$p\Rightarrow q$. 
One may start with $q$, manipulate, and end up with $p$.
In this case, we have shown that $q\Rightarrow p$.
If we are lucky, all steps are \emph{reversible} in which
case one has proven $p\Leftrightarrow q$ and in particular 
$p\Rightarrow q$. This common technique is also called
\textbf{working backwards}. 

\begin{exercise}
In a round robin tournament with $n$ players $P_1,\ldots,P_n$,
where $n\in\{2,3,\ldots\}$, each player plays exactly one game
with each of the other players. Assume that each game ends in a
win or loss (draws cannot occur). 
Denote by $W_i$ and $L_i$ be the number of games won and lost, 
respectively, by player $P_i$. Show that
\begin{equation*}
W_1^2 + W_2^2 + \cdots + W_n^2 = L_1^2 + L_2^2 + \cdots + L_n^2.
\end{equation*}
\emph{Hint:} Work backwards to derive a statement that is true.
If all your steps are reversible, you have found a proof.
Write down a clean proof that does not work backwards. 
\end{exercise}
\opt{show}{
\begin{solution}
We work backwards. 
We will use $\Sigma$ notation for the sums, and simply write
$\Sigma$ to indicate a summation from $i=1$ to $i=n$. 
Suppose that 
\begin{equation*}
\sum W_i^2 = \sum L_i^2.
\end{equation*}
Then, since $W_i+L_i = n-1$, 
\begin{align*}
&\qquad \sum (W_i^2 - L_i^2) = 0\\
&\Rightarrow \sum (W_i+L_i)(W_i-L_i) = 0 \\
&\Rightarrow \sum (n-1)(W_i-L_i) = 0 \\
&\Rightarrow \sum (W_i-L_i) = 0 \\
&\Rightarrow \sum W_i = \sum L_i.
\end{align*}
Now the last equation is true since the total number of won games is
the same as the total number of lost games.
Moreover, all implications are \emph{reversible}. 
Hence, we are led to a proof by working backwards. 

\texttt{Clean Proof:}
Observe that each player plays $n-1$ games so that
$W_i+L_i = n-1$. Also observe that 
the total number of won games is
the same as the total number of lost games: $\sum W_i = \sum
L_i$. Hence
$\sum (W_i-L_i)=0$.
Thus $0 = (n-1)\sum(W_i-L_i) = 
\sum(n-1)(W_i-L_i) = \sum(W_i+L_i)(W_i-L_i)
=\sum(W_i^2-L_i^2) = (\sum W_i^2) - (\sum L_i^2)$.
Therefore, $\sum W_i^2 = \sum L_i^2$.
\end{solution}
}

\begin{exercise}
Show that there is no function $f$ from the
set $\{0,1\}$ to the set $\{0,1\}$ such that\footnote{Functions
of this type play a role in \emph{Quantum Information Theory}.}
for every $x\in\{0,1\}$, we have
$(f\circ f)(x) = f(f(x)) = 1-x$.
\emph{Hint:} Argue by contradiction. 
\end{exercise}
\opt{show}{
\begin{solution}
We argue by contradiction.
Suppose to the contrary that a function
$f$ has the property that 
\begin{equation}
\label{e:160826a}
f\big(f(x)\big) = 1- x
\end{equation}
for every $x\in\{0,1\}$. 
There are only two conceivable alternatives:

\emph{Alternative~1:} $f(0)=0$.\\
On the one hand,
$f(f(0))=f(0)=0$.
On the other hand,
$f(f(0))=1-0 = 1$ by \eqref{e:160826a}.
Altogether, we get the absurdity that $0 = f(f(0))=1$. 

\emph{Alternative~2:} $f(0)=1$.\\
Then, by \eqref{e:160826a},
$1 = 1-0 = f(f(0)) = f(1)$ and so $f(1)=1$. 
Applying $f$, we get $f(f(1))=f(1)=1$.
On the other hand, using again \eqref{e:160826a},
$f(f(1))=1-1=0$.
Altogether, we get the absurdity that $f(f(1))$ is both $0$ and
$1$. 

To conclude, we obtained  contradictions for all alternatives.
Therefore, no such function $f$ exists. 
\end{solution}
}

\begin{exercise}
\label{exo:160826b}
Let $f\colon [0,1]\to[0,1]$ be such that
$f(f(x))=1-x$. 
Show that 
(i) $f(1/2)=1/2$;
(ii) $0<f(0)<1$; and
(iii) $0<f(1)<1$. 
\end{exercise}
\opt{show}{
\begin{solution}
(i):
We have $f(f(1/2))=1-1/2 = 1/2$.
Hence $f(1/2)=f(f(f(1/2))) = 1-f(1/2)$
and thus
$2 f(1/2)=1$, i.e., $f(1/2)=1/2$. 

(ii):
If $f(0)=0$, then $0=f(0)=f(f(0))=1-0=1$ which is absurd. 
Hence $0<f(0)$. 
Assume to the contrary that $f(0)=1$.
Then $1-0 = f(f(0))=f(1)$ and thus $f(1)=1$.
We have $f(0)=1=f(1)$.
Applying $f$ yields
$1-0=f(f(0)) = f(f(1))=1-1=0$ which is absurd.
Hence $f(0)<1$. 

(iii): This is similar to the proof of (ii):
If $f(1)=1$, then $1=f(1)=f(f(1))=1-1=0$ which is absurd.
Hence $f(1)<1$.
Assume to the contrary that $f(1)=0$.
Then $0=1-1 = f(f(1))=f(0)$ and so $f(0)=0$.
Hence $f(0)=0=f(1)$.
Applying $f$ yields
$1-0 = f(f(0))=f(f(1))=1-1$ which is absurd. 
Hence $f(1)>0$.
\end{solution}
}

\begin{exercise}[NAND]
\label{exo:nand}
Let $p$ and $q$ be statements.
Compute the truth table for $\lnot(p \land q)$,
which is known as \textbf{negative-AND} or \textbf{NAND}
in Electrical Engineering. 
Compare to the truth
table of $(\lnot p)\lor(\lnot q)$.
Are the truth tables the same?
\end{exercise}
\opt{show}{
\begin{solution}
Indeed, computing the truth tables 
\begin{center}
\newcolumntype{C}{>{\centering\arraybackslash}m{1.6cm}<{}}
\begin{tabular}{|C|C|C|C|C|C|}
\hline
$p$ & $q$ & $\lnot p$ & $\lnot q$ & 
$\lnot(p \land q)$ & $(\lnot p)\lor(\lnot q)$ \\
\hline\hline
T & T & F & F & F & F\\
T & F & F & T & T & T\\
F & T & T & F & T & T\\
F & F & T & T & T & T\\
\hline
\end{tabular}
\end{center}
we see they are the same because the last two columns coincide.
\end{solution}
}

\begin{exercise}[XOR]
\label{exo:xor}
Let $p$ and $q$ be statements.
Compute the truth table for $(p\land \lnot q)\lor(q\land \lnot p)$,
which is known as \textbf{exclusive-OR} or \textbf{XOR}
in Electrical Engineering and Computer Science. 
How would you express this operation in plain English?
\end{exercise}
\opt{show}{
\begin{solution}
Indeed, computing the truth tables 
\begin{center}
\newcolumntype{C}{>{\centering\arraybackslash}m{1.6cm}<{}}
\begin{tabular}{|C|C|C|C|C|C|C|}
\hline
$p$ & $q$ & $\lnot p$ & $\lnot q$ & 
$p \land \lnot q$ & $q \land \lnot p$ & $p$ XOR $q$ \\
\hline\hline
T & T & F & F & F & F & F \\
T & F & F & T & T & F & T \\
F & T & T & F & F & T & T \\
F & F & T & T & F & F & F \\
\hline
\end{tabular}
\end{center}
In plain English, we would say
``Either $p$ or $q$ but not both''. 
\end{solution}
}

\begin{exercise}[XOR swap]
\label{exo:xorswap}
Let $p$ and $q$ be variables that take on values T (TRUE) or F (FALSE).
Now imagine you do the following operations in this order:
1) $p := p\; \text{XOR}\; q$;
2) $q := p\; \text{XOR}\; q$;
3) $p := p\; \text{XOR}\; q$,
where ``XOR'' is the exclusive-OR operator discussed in Exercise~\ref{exo:xor}. 
(So $p$ gets overwritten twice, and $q$ gets overwritten once.)
Show that executing these 3 operations provides a way
of swapping the contents of the variables $p$ and $q$,
\emph{without the need of a temporary intermediate variable}. 
(This trick was shown
to me by the late Dr.~Alan Paeth, a former colleague in Computer Science.)
You can prove this either by considering cases 
or by considering a truth table.
\end{exercise}
\begin{solution}
We work through the following table column-wise, from left-to-right,
and we write ``X'' for ``XOR'' for brevity:
\begin{center}
\newcolumntype{C}{>{\centering\arraybackslash}m{2.2cm}<{}}
\begin{tabular}{|C|C|C|C|C|}
\hline
$p$ & $q$ & $p := p$ XOR $q$ & $q := p$ XOR $q$ & $p := p$ XOR $q$ \\
\hline\hline
T & T & F & T & T \\
T & F & T & T & F \\
F & T & T & F & T \\
F & F & F & F & F \\
\hline
\end{tabular}
\end{center}
So we see that applying 3 times XOR in this fashion
amounts to swapping the contents of $p$ and $q$. 
\end{solution}

\begin{exercise} 
Let us revisit Exercise~\ref{exo:200713wrongproof1} which 
concerns the statement
\begin{equation*}
\text{If $a$ and $b$ are integers such that $ab$ is even, 
then $a$ or $b$ is even.}
\end{equation*}
Prove this statement by proving the \emph{contrapositive}.
\end{exercise}
\begin{solution}
Let $p$ be the statement ``$a$ and $b$ are integers such that $ab$ is even''
and let $q$ be the statement 
``$a$ and $b$ are integers and at least one of them is even''.
Our job is to prove that $p\Rightarrow q$.

Note that 
$\lnot p$ is the statement ``$a$ and $b$ are integers such that $ab$ is odd'' 
and 
$\lnot q$ is the statement 
``$a$ and $b$ are integers which are both odd''.

Proving our result by contrapositive means we 
prove the logically equivalent 
$\lnot q \Rightarrow \lnot p$, i.e., 
\begin{equation*}
\text{If $a$ and $b$ are integers which are both odd,
then $ab$ is odd.}
\end{equation*}
Let's do this!
Assume that $a$ and $b$ are both odd,
say $a=2k+1$ and $b=2l+1$ for two integers $k$ and $l$.
Then 
\begin{align*}
ab &= (2k+1)(2l+1)
= 4kl+2k+2l+1
= 2(2kl+k+l)+1
\end{align*}
is indeed odd because $2kl+k+l$ is an integer. 
\end{solution}

\begin{exercise} 
Let us revisit Exercise~\ref{exo:200713wrongproof1} which 
concerns the statement
\begin{equation*}
\text{If $a$ and $b$ are integers such that $ab$ is even, 
then $a$ or $b$ is even.}
\end{equation*}
Prove this statement by a \emph{proof by contradiction}.
\end{exercise}
\begin{solution}
Let $p$ be the statement ``$a$ and $b$ are integers such that $ab$ is even''
and let $q$ be the statement 
``$a$ and $b$ are integers and at least one of them is even''.
Our job is to prove that $p\Rightarrow q$.

Note that 
$\lnot q$ is the statement 
``$a$ and $b$ are integers which are both odd''.

Proving our result by contradiction means we 
assume $p$ and $\lnot q$, and then we aim for a contradiction: 
\begin{align*}
&\text{Assume that $a$ and $b$ are integers such that 
$ab$ is even,}
\text{and $a$ and $b$ both odd.}\\
&\text{Find a contradiction.}
\end{align*}
Let's do this!
Assume that $a$ and $b$ are both odd,
say $a=2k+1$ and $b=2l+1$,
and that $ab = 2n$ is even, 
for three integers $k,l,n$. 
Then $ab = a\cdot b$ is just 
\begin{equation*}
2n = (2k+1)(2l+1).
\end{equation*}
Expanding the right-hand side yields 
\begin{equation*}
2n = 4kl+2k+2l+1.
\end{equation*}
Dividing by $2$ gives 
\begin{equation*}
n = 2kl+k+l+\tfrac{1}{2}
\end{equation*}
which is absurd because 
on the LHS we have an integer $n$,
while on the RHS we have an integer, 
$2kl+k+l$, to which we add $\tfrac{1}{2}$,
so the RHS is not an integer!
This contradiction proves the result.
\end{solution}
    
\begin{exercise} 
Let us revisit Exercise~\ref{exo:200713wrongproof1} which 
concerns the statement
\begin{equation*}
\text{If $a$ and $b$ are integers such that $ab$ is even, 
then $a$ or $b$ is even.}
\end{equation*}
Prove this statement by a \emph{proof by cases}.
Do two cases: (i) $a$ is even; (ii) $a$ is odd. 
\end{exercise}
\begin{solution}
We assume that $a$ and $b$ are integers such that 
$ab$ is even, say 
\begin{equation*}
    ab = 2n,
\end{equation*}
for some integer $n$.

\emph{Case~1:} $a$ is even.\\
Then we are done! (Almost feels like cheating but it isn't! 
The hard work is in \emph{Case~2}.)

\emph{Case~2} $a$ is odd, say $a = 2k+1$, for some integer $k$.\\
Then we write $ab=a\cdot b$:
\begin{equation*}
2n = (2k+1)b.
\end{equation*}
Hence $2n = 2kb+b$; equivalently,
$2n-2kb = b$ and also equivalently, 
\begin{equation*}
b = 2(n-kb).
\end{equation*}
Because $n-kb$ is an integer, the number $b$ is even and we are done!
\end{solution}

\begin{exercise} 
Prove the following statement
\begin{equation*}
\text{If $n$ is an integer, then $n^2+n+1$ is odd.}
\end{equation*}
by considering two cases (i) $n$ is even; (ii) $n$ is odd. 
\end{exercise}
\begin{solution}
\emph{Case~1:} $n$ is even, say $n=2k$ for some integer $k$.\\
Then 
\begin{align*}
n^2 + n +1 
&=(2k)^2 + (2k) + 1 
=4k^2 + 2k+1
= 2(2k^2+k)+1
\end{align*}
is odd because $2k^2+k$ is an integer. 

\emph{Case~2} $n$ is odd, say $n = 2k+1$, for some integer $k$.\\
Then 
\begin{align*}
n^2 + n +1 
&=(2k+1)^2 + (2k+1) + 1 
=(4k^2+4k+1) + (2k+1) + 1\\
&= \big(2(2k^2+2k)+1\big) + (2k + 1) + 1
=  2(2k^2+3k+1) + 1
\end{align*}
is odd because $2k^2+3k+1$ is an integer. 
\end{solution}

\begin{exercise} 
Each of the following statements is \emph{false}.
To show this, provide a counterexample for each statement: 
\begin{enumerate}
    \item ``If $n$ is an even integer, then $n^2$ is odd.''
    \item ``If $n$ is an integer that is odd, then $n$ is divisible by $3$.''
    \item ``If $p$ is an odd prime number, then $p+2$ is also a prime number.''
    \item ``If $p$ is an odd prime number, then $p+2$ is not a prime number.''
\end{enumerate}
\end{exercise}
\begin{solution}
(i): $n=2$ is even but $n^2=2^2=4$ is still even.
(In fact, any even $n$ is a counterexample.)
(ii): $n=5$. 
(iii): $p=7$. 
(iv): $p=3$. 
\end{solution}

\begin{exercise}[YOU be the marker!]
\label{exo:200713wrongproof1}
Consider the following statement:
\begin{equation}
\text{If $a$ and $b$ are integers such that $ab$ is even, 
then $a$ or $b$ is even.}
\end{equation}
Explain why the following ``Proof'' does not work: 
\begin{quotation}
\emph{Step~1}:
Suppose that $a$ is even, say $a=2k$, where $k$ is some integer.\\
(The case when $b$ is even is analogous.)\\
\emph{Step~2}:
Then $ab = (2k)b = 2(kb)$ is even because $kb$ is an integer
and we are done. 
\end{quotation}
Which result does the above wrong proof actually prove?
\end{exercise}
\begin{solution}
Let $p$ be the statement ``$a$ and $b$ are integers such that $ab$ is even''
and let $q$ be the statement 
``$a$ and $b$ are integers and at least one of them is even''.
Our job is to prove that $p\Rightarrow q$.

The above ``proof'' does not prove this as it starts with the desired 
conclusion, namely $q$, and deduces the desired hypothesis, namely $p$.
In other words, the wrong ``proof'' is actually a correct proof
for the statement:
\begin{equation*}
\text{If $a$ and $b$ are integers such that $a$ or $b$ is even, 
then $ab$ is even.}
\end{equation*}
\end{solution}

\begin{exercise}[TRUE or FALSE?]
Each of the following statements is either T or F.
Mark each statement either T or F, and explain briefly why.
\begin{enumerate}
\item
$p\Rightarrow q$ is logically equivalent to $\lnot p \Rightarrow \lnot q$.
\item
$p\Rightarrow q$ is logically equivalent to $\lnot q \Rightarrow \lnot p$.
\item
$p\Rightarrow q$ is logically equivalent to $q \Rightarrow p$.
\end{enumerate}
\end{exercise}
\begin{solution}
(i): F. See Exercise~\ref{exo:140818a} for counterexamples. 
(ii): T. See Exercise~\ref{exo:contrapositivetruth}.  
(iii): F. See Exercise~\ref{exo:140818a} for counterexamples. 
\end{solution}

\begin{exercise}[TRUE or FALSE?]
Each of the following statements is either true or false. 
If it is true, then briefly explain why. 
If it is false, then provide a counterexample (do not ``correct'' 
the wrong statement).
\begin{enumerate}
    \item ``If $a$ is an even integer, then $a^2$ is even.''
    \item ``If $p$ is prime, then $p^2+1$ is a prime.''
    \item ``If $a$ and $b$ are odd integers, then $ab$ is also odd.''
    \item ``If $p$ is a prime number, then the set 
    $\{p+1,p+2,p+3,p+4,p+5\}$ also contains a prime number.''
\end{enumerate}
\end{exercise}
\begin{solution}
(i): TRUE: because $a=2k$ $\Rightarrow$ $a^2 = (2k)^2 = 4k^2 = 2(2k^2)$.

(ii): FALSE: E.g., $p=3$ is prime but $p^2+1=10$ is not. 

(iii): TRUE: [$a=2k+1$ and $b=2l+1$] $\Rightarrow$ 
$ab = (2k+1)(2l+1) = 4kl+2k+2l+1 = 2(2kl+k+l)+1$. 

(iv): FALSE: $p=23$ is prime, but the set 
$\{24,25,26,27,28\}$ contains no primes. 
\end{solution} 
\chapter{The Principle of Mathematical Induction}
\label{cha:Induction}

\index{Principle of Mathematical Induction}
\index{Induction}

\section{Statement and Examples}

The {Principle of Mathematical Induction} is one of the most important
proof techniques in Mathematics. It is used to verify the truth of
statements $S(n)$ for \emph{all} integers $n$ greater than or equal to some
smallest integer of interest, say $n_0$. 
This is remarkable because it is conceptionally hard to check infinitely
many statements.

\begin{fact}[Principle of Mathematical Induction]
\label{f:induction}
Let $n_0$ be an integer and let $S(n)$ be a statement, 
formulated  for all integers $n\geq n_0$. 
Suppose we are able to verify the following.
\begin{enumerate}
\item \textbf{Base Case}: $S(n_0)$ is true.
\item \textbf{Inductive Step}: If $n$ is an integer $\geq n_0$ and
$S(n)$ is true, then $S(n+1)$ is also true.
\end{enumerate}
Then the statement $S(n)$ is true for \emph{all} integers $n\geq n_0$.
\end{fact}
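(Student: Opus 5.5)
The Principle of Mathematical Induction (Fact~\ref{f:induction}) is stated as a \emph{Fact}, so it is really an axiom about the integers. To justify it, I would derive it from the \textbf{Well-Ordering Principle}: every nonempty set of integers that is bounded below (in particular, every nonempty subset of $\{n_0,n_0+1,\ldots\}$) has a smallest element. The argument is by contradiction, following the pattern described in the section on proof techniques.

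\emph{Step~1 (set up the failure set).} Assume the Base Case and the Inductive Step hold, and suppose, to the contrary, that $S(n)$ fails for some integer $n\geq n_0$. Consider the set
\begin{equation*}
F = \menge{n\in\ZZ}{n\geq n_0 \text{ and } S(n) \text{ is false}},
\end{equation*}
which is nonempty by assumption and bounded below by $n_0$.

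\emph{Step~2 (pick the least counterexample).} By the Well-Ordering Principle, $F$ has a smallest element $m$. Since $S(n_0)$ is true by the Base Case, $m\neq n_0$, and hence $m\geq n_0+1$. Therefore $m-1$ is an integer with $m-1\geq n_0$. Also $m-1\notin F$, because $m-1<m$ and $m$ is the smallest element of $F$. Hence $S(m-1)$ is true.

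\emph{Step~3 (contradiction).} Apply the Inductive Step with $n=m-1\geq n_0$: since $S(m-1)$ is true, $S(m)$ is true. This contradicts $m\in F$. So $F$ is empty, and $S(n)$ holds for all $n\geq n_0$.

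The only real obstacle is foundational: something must be taken as given. Either induction itself is an axiom (as in the Peano axioms, where it is built into the definition of $\NN$), or well-ordering is. I would state clearly that well-ordering is assumed. If asked, I would also remark that the two principles are equivalent, so the proof shows consistency between them rather than deriving induction from nothing. The remaining steps are routine; the only care needed is checking that $m-1\geq n_0$, which is exactly where the Base Case is used.
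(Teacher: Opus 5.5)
Your argument is correct: the failure set $F$ is nonempty and bounded below. Its least element $m$ cannot be $n_0$ by the Base Case, so $m-1\geq n_0$ is a legitimate index with $S(m-1)$ true, and the Inductive Step then contradicts $m\in F$.

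It is, however, a genuinely different route from the notes, which give no proof at all. Fact~\ref{f:induction} is simply adopted as a basic property of the integers, in the spirit of the Peano axioms, where induction is essentially built into $\NN$. Your version trades that primitive for another one, the Well-Ordering Principle. As you point out, the two are equivalent, so you do not gain a more fundamental justification. You gain an explanation of how the two principles are linked, together with the ``least counterexample'' technique, which is a useful proof pattern in its own right.

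Your approach also fits the notes well, since they quietly use well-ordering elsewhere. Examples are ``the smallest integer $k\geq 0$'' in the proof of Theorem~\ref{t:numberseries}, and the $\min$ defining $h$ in the proof of Proposition~\ref{p:nicecount}. The notes' choice, in turn, buys economy: with induction as the single primitive, strong induction (Exercise~\ref{exo:strongind}) and, if desired, well-ordering itself can be derived rather than assumed.
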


\hhbcom{Here in class one can talk about dominos,
ladders, etc.}

In practical applications of Fact~\ref{f:induction}, the smallest integer
of interest, $n_0$, is often either $0$ or $1$.
Let us denote the set of all integers $\{0,1,-1,2,-2,3,-3,\ldots\}$ 
\label{def:ZZ}
by $\ZZ$, 
and the set of nonnegative
integers $\{0,1,2,\ldots\}$ \label{def:NN} by $\NN$:
\begin{equation}
\ZZ = \{0,1,-1,2,-2,3,-1,\ldots\}
\;\;\text{and}\;\;
\NN = \{0,1,2,3,\ldots\}.
\end{equation}

We start with a classical example.

\begin{theorem}
\label{t:Gauss}
For every integer $n\geq 1$, one has
\begin{equation}
\label{e:100321:a}
1+2+\cdots + n = \frac{n(n+1)}{2}.
\end{equation}
\end{theorem}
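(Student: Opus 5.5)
I will prove Theorem~\ref{t:Gauss} by the Principle of Mathematical Induction (Fact~\ref{f:induction}) with $n_0=1$. For each integer $n\geq 1$, let $S(n)$ be the statement \eqref{e:100321:a}. I then verify the two hypotheses of Fact~\ref{f:induction} in turn.

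\emph{Base Case.} For $n=1$ the left-hand side of \eqref{e:100321:a} consists of the single term $1$. The right-hand side is $1\cdot 2/2 = 1$. So $S(1)$ is true.

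\emph{Inductive Step.} Let $n\geq 1$ be an integer and assume $S(n)$ holds, i.e., $1+2+\cdots+n = n(n+1)/2$. The goal is $S(n+1)$, namely
\begin{equation*}
1+2+\cdots+n+(n+1) = \frac{(n+1)(n+2)}{2}.
\end{equation*}
I will split the left-hand side as $(1+2+\cdots+n)+(n+1)$ and replace the bracket using the inductive hypothesis. This gives
\begin{equation*}
\frac{n(n+1)}{2}+(n+1).
\end{equation*}
Factoring out $(n+1)$ turns this into $(n+1)\big(\tfrac{n}{2}+1\big)$, which equals $(n+1)(n+2)/2$. Hence $S(n+1)$ holds, and Fact~\ref{f:induction} yields $S(n)$ for every integer $n\geq 1$.

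The only step with any content is the algebra in the inductive step. The key is to isolate the last summand $n+1$ so that the hypothesis $S(n)$ can be applied to the rest, and then to factor rather than expand. I expect no genuine obstacle beyond writing this cleanly, and I do not need Gauss's pairing trick of adding the sum to itself in reverse order.
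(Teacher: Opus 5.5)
Your proof is correct and is essentially identical to the paper's: induction with $n_0=1$, the trivial base case, and an inductive step that splits off the summand $n+1$, applies the hypothesis, and factors as $(n+1)\big(\tfrac{n}{2}+1\big)=\tfrac{(n+1)(n+2)}{2}$.
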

\begin{proof}
We prove this result by using the principle of mathematical induction,
with $n_0 = 1$. The statement $S(n)$ is \eqref{e:100321:a}. 

\textbf{Base Case}: When $n=1$,
the claimed identity \eqref{e:100321:a} reads
\begin{equation}
1 = \frac{1(1+1)}{2},
\end{equation}
which is obviously true.

\textbf{Inductive Step}:
Now suppose that $n\in \ZZ$, that $n\geq 1$, and that 
$S(n)$ is true, i.e., that 
\eqref{e:100321:a} is true for this $n$.
Our job is to show that $S(n+1)$ is true, i.e.,
that \eqref{e:100321:a} is true when $n$ is replaced by
$n+1$. That is, we must prove that
\begin{equation}
\label{e:100321:b}
1+2+\cdots + n + (n+1) \stackrel{?}{=} \frac{(n+1)(n+2)}{2}.
\end{equation}
This seems even harder than verifying $S(n)$ but remember that
we assume that $S(n)$ is true, so we can put it to good use!
Indeed, using the assumption that $S(n)$ is true --- sometimes this is
referred to as \textbf{using the induction hypothesis} --- we see that
\begin{subequations}
\begin{align}
1+2+\cdots + n + (n+1) &= \big( 1+2+\cdots + n\big) + (n+1)\\
&= \frac{n(n+1)}{2} + (n+1)\\
&= (n+1)\left( \frac{n}{2} + 1\right)\\
&= \frac{(n+1)(n+2)}{2}.
\end{align}
\end{subequations}
Hence \eqref{e:100321:b} is verified, i.e., $S(n+1)$ is true.
Therefore, by the Principle of Mathematical Induction
(Fact~\ref{f:induction}), the formula \eqref{e:100321:a} is true for
every integer $n\geq 1$.
\end{proof}

\begin{remark}
\label{r:Gauss}
Theorem~\ref{t:Gauss} is related to a well known 
story involving the young
Carl Friedrich Gauss, \index{Gauss, C.F.} 
who amazed his Elementary School teacher when he
almost instantaneously added the numbers from $1$ to $100$ in his head and
announced the result to be $5050$. 
While it is not likely that Gauss used
Theorem~\ref{t:Gauss}, he could have 
combined the first and last, the second and second-to-last etc.\ terms, 
so that 
\begin{subequations}
\begin{align}
1+ 2+ \cdots + 100 &= (1+100) + (2+99) + \cdots + (49+52) + (50+51)\\
&= 50\cdot 101 = 5050.
\end{align}
\end{subequations}
\end{remark}

The well-known \textbf{sigma notation}, 
\label{def:sigma-notation}
encountered already in Calculus~II,
allows us to write statements such as the one arising in
Theorem~\ref{t:Gauss} more concisely:
Let $m$ and $n$ be in $\ZZ$ such that $m \leq n$.
Let $\alpha_k$ be a real number, 
for every integer $k$ such that $m\leq k\leq n$.
One then defines\footnote{``$:=$'' means that the left side is
defined by the right side --- as in \texttt{Maple}!}
\begin{equation}
\sum_{k=m}^{n} \alpha_k := \alpha_m + \alpha_{m+1} + \cdots + \alpha_n,
\end{equation}
which we can also write as 
\begin{equation}
\sum_{k=m}^{n} \alpha_k = \sum_{k=m+1}^{n+1} \alpha_{k-1} 
= \sum_{k=m-1}^{n-1} \alpha_{k+1}.
\end{equation}
It is convenient to set
\begin{equation}
\sum_{k=m}^{n} \alpha_k := 0, \quad\text{when $m>n$;}
\end{equation}
this is the so-called \textbf{empty sum convention}. 
\index{Empty sum convention} 
Note that we can now rewrite (and slightly 
extend\footnote{We now also cover the case
when $n=0$, in which case the identity turns into $0=0$.})
Theorem~\ref{t:Gauss} as 
\begin{equation}
\sum_{k=1}^{n} k = \frac{n(n+1)}{2}, \quad\text{for every $n\in\NN$.}
\end{equation}

Let us now consider the related problem of adding the first few \emph{odd}
integers:
\begin{subequations}
\begin{align}
1 &= 1,\\
1+3 &= 4,\\
1+3+5 &=9,\\
1+3+5+7 & = 16.
\end{align}
\end{subequations}
We recognize that the sums as \emph{squares}:
$1=1^2$, $4=2^2$, $9=3^2$, $16=4^2$, and are motivated
to prove the following result.

\begin{theorem}
\label{t:odd}
For every integer $n\geq 1$, one has
\begin{equation}
\label{e:100322:a}
\sum_{k=1}^{n} (2k-1) = n^2.
\end{equation}
\end{theorem}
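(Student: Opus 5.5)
I will prove \eqref{e:100322:a} by the Principle of Mathematical Induction (Fact~\ref{f:induction}) with $n_0=1$, taking $S(n)$ to be the identity \eqref{e:100322:a}. The structure will mirror the proof of Theorem~\ref{t:Gauss}: verify a base case, then show that $S(n)$ implies $S(n+1)$.

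\textbf{Base Case:} For $n=1$ the left side of \eqref{e:100322:a} is the single term $2\cdot 1-1=1$. The right side is $1^2=1$, so $S(1)$ holds.

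\textbf{Inductive Step:} Let $n\geq 1$ be an integer and assume $S(n)$, that is, $\sum_{k=1}^{n}(2k-1)=n^2$. The goal is $\sum_{k=1}^{n+1}(2k-1)\stackrel{?}{=}(n+1)^2$. I split off the last term, $k=n+1$, and apply the induction hypothesis:
\begin{equation*}
\sum_{k=1}^{n+1}(2k-1) = \Big(\sum_{k=1}^{n}(2k-1)\Big) + \big(2(n+1)-1\big) = n^2 + 2n+1 = (n+1)^2 .
\end{equation*}
This is exactly $S(n+1)$, and Fact~\ref{f:induction} then gives the claim for every $n\geq 1$.

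There is no serious obstacle. The only point needing care is the inductive step: the new term is $2(n+1)-1=2n+1$, not $2n-1$, and then one must recognize $n^2+2n+1$ as $(n+1)^2$. As an alternative check, I could avoid induction entirely by writing $\sum_{k=1}^{n}(2k-1)=2\sum_{k=1}^{n}k - n$ and using Theorem~\ref{t:Gauss}, which gives $n(n+1)-n=n^2$. However, the induction proof is the one I would present here.
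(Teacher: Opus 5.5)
Your proof is correct and follows the paper's argument essentially verbatim. It uses induction with $n_0=1$, the base case $2\cdot 1-1=1=1^2$, and an inductive step that splits off the new term $2(n+1)-1=2n+1$ and recognizes $n^2+2n+1=(n+1)^2$; your side remark via Theorem~\ref{t:Gauss}, $2\cdot\frac{n(n+1)}{2}-n=n^2$, is also valid.
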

\begin{proof}
We prove this result by mathematical induction, with $n_0=1$ and
where the statement $S(n)$ is \eqref{e:100322:a}. 

\textbf{Base Case}: When $n=1$, \eqref{e:100322:a} turns into
\begin{equation}
\sum_{k=1}^{1} (2k-1) = (2\cdot 1-1) = 1 = 1^2,
\end{equation}
which is clearly true.

\textbf{Inductive Step}: Now suppose that $n\in\NN$ is such that
$n\geq 1$ and $S(n)$ is true, i.e., \eqref{e:100322:a} holds for this $n$.
Our goal is to show that $S(n+1)$ is true, i.e., that 
\eqref{e:100322:a} holds when $n$ is replaced by $n+1$. That is,
we must prove that
\begin{equation}
\label{e:100322:b} 
\sum_{k=1}^{n+1} (2k-1) \stackrel{?}{=} (n+1)^2.
\end{equation}
Indeed, using the \textbf{induction hypothesis} that $S(n)$ is true in 
\eqref{e:100322:c}, we see that
\begin{subequations}
\begin{align}
\sum_{k=1}^{n+1} (2k-1) &= \left(\sum_{k=1}^{n} (2k-1)\right) + 
\big(2(n+1)-1\big)\\
&= n^2 + (2n+1)\label{e:100322:c}\\
&= (n+1)^2.
\end{align}
\end{subequations}
Hence \eqref{e:100322:b} is verified, i.e., $S(n+1)$ is true.
By the Principle of Mathematical Induction, the formula \eqref{e:100322:a}
is true for every integer $n\geq 1$. 
\end{proof}

It is important to note that Mathematical Induction is a great tool
for verifying results that we believe to be true --- it does not help us to
discover these results: in both Theorem~\ref{t:Gauss} and
Theorem~\ref{t:odd}, we had to have a good grip in what the answer is
before we were able to start the verification. 

Mathematical Induction is not only useful for proving identities --- it is
also useful for verifying inequalities or other mathematical statements
such as the next result which is rooted in Number Theory.

Let $a$ and $b$ be in $\ZZ$. Recall that \textbf{$a$ divides $b$}, written
as $a|b$, if $a\neq 0$ and $b/a\in\ZZ$ so that $b = k\cdot a$, where
$k\in\ZZ$. Thus, $4|12$ but $4$ does not divide
$13$ since $13/4 = 3\tfrac{1}{4}\notin\ZZ$.

\begin{theorem}
\label{t:divide}
For every integer $n\geq 0$, one has
\begin{equation}
\label{e:100322:d}
3 | (7^n-4^n).
\end{equation}
\end{theorem}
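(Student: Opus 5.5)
We prove Theorem~\ref{t:divide} by the Principle of Mathematical Induction (Fact~\ref{f:induction}) with $n_0=0$, where $S(n)$ is the statement \eqref{e:100322:d}, i.e., that $7^n-4^n = 3m$ for some $m\in\ZZ$.

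\textbf{Base Case}: For $n=0$ we have $7^0-4^0 = 1-1 = 0 = 0\cdot 3$. Since $0/3=0\in\ZZ$ and $3\neq 0$, we get $3|0$, so $S(0)$ holds.

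\textbf{Inductive Step}: Suppose $n\in\NN$ and $S(n)$ is true, say $7^n-4^n = 3m$ with $m\in\ZZ$. We must show $3|(7^{n+1}-4^{n+1})$. The key manipulation is to split $7 = 3+4$, so that the expression $7^n-4^n$ from the induction hypothesis appears:
\begin{equation*}
7^{n+1}-4^{n+1} = 7\cdot 7^n - 4\cdot 4^n = 3\cdot 7^n + 4\cdot 7^n - 4\cdot 4^n = 3\cdot 7^n + 4\,(7^n-4^n).
\end{equation*}
The main obstacle is spotting this regrouping; the alternative of adding and subtracting $7\cdot 4^n$ works equally well and gives $7(7^n-4^n)+3\cdot 4^n$. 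Substituting the induction hypothesis, we obtain
\begin{equation*}
7^{n+1}-4^{n+1} = 3\cdot 7^n + 4\cdot 3m = 3\,(7^n+4m).
\end{equation*}
Since $7^n+4m\in\ZZ$, it follows that $3|(7^{n+1}-4^{n+1})$, i.e., $S(n+1)$ is true.

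By the Principle of Mathematical Induction, \eqref{e:100322:d} holds for every integer $n\geq 0$. Alternatively, a direct proof would factor $7^n-4^n=(7-4)\sum_{k=0}^{n-1}7^{n-1-k}4^k$, but the induction argument fits the chapter better.
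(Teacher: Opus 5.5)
Your proof is correct and takes essentially the same approach as the paper: induction with $n_0=0$, and the same regrouping in the inductive step, $7^{n+1}-4^{n+1} = 3\cdot 7^n + 4(7^n-4^n) = 3(7^n+4m)$. The paper writes this as $7^{n+1}-4\cdot 7^n + 4(7^n-4^n)$ before simplifying.
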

\begin{proof}
Again, we prove this result by mathematical induction, with $n_0=0$ and
where the statement $S(n)$ is \eqref{e:100322:d}. 

\textbf{Base Case}:
When $n=0$, \eqref{e:100322:d} turns into $3|(7^0-4^0)$, i.e., into
$3|0$ which is obviously true.

\textbf{Inductive Step}:
Now suppose that $n\in\ZZ$ is such that $n\geq 0$ and $S(n)$ is true, i.e.,
\eqref{e:100322:d} holds. 
Our job is to show that $S(n+1)$ is true, i.e.,
\begin{equation}
\label{e:100322:e}
3 \stackrel{?}{|} (7^{n+1}-4^{n+1}).
\end{equation}
Now since $S(n)$ is true, we know there exists $k\in\ZZ$ such that
$7^n-4^n = 3\cdot k$. 
Thus, using this \textbf{induction hypothesis} in 
\eqref{e:100322:f}, we see that
\begin{subequations}
\begin{align}
7^{n+1}-4^{n+1} &= 7^{n+1} - 4\cdot 7^n + 4(7^n - 4^n)\\
&= 7^n(7-4) + 4(3\cdot k)\label{e:100322:f}\\
&= 3\big(7^n+4\cdot k\big).
\end{align}
\end{subequations}
Since $7^n+4k\in\ZZ$, it is clear that \eqref{e:100322:e} is true.
Thus, by the principle of mathematical induction, we see that
\eqref{e:100322:d} is true for every integer $n\geq 0$. 
\end{proof}

\begin{theorem}[Geometric Sum]
\index{Geometric sum} 
\label{t:geosum}
Let $x\neq 1$ and let $\nnn$.
Then
\begin{equation}
\label{e:geosum}
\sum_{k=0}^{n}x^k = \frac{1-x^{n+1}}{1-x}.
\end{equation}
\end{theorem}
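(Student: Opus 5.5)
We will prove \eqref{e:geosum} by the Principle of Mathematical Induction (Fact~\ref{f:induction}), with $n_0=0$ and with $S(n)$ being the identity \eqref{e:geosum} for the fixed real number $x\neq 1$. Throughout, $x$ stays fixed and only $n$ varies. The hypothesis $x\neq 1$ is needed only to guarantee $1-x\neq 0$, so that all quotients are defined.

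\textbf{Base Case}: For $n=0$ the left side is $\sum_{k=0}^{0}x^k = x^0 = 1$. The right side is $(1-x)/(1-x)=1$. So $S(0)$ holds.

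\textbf{Inductive Step}: Assume $n\in\NN$ and that $S(n)$ holds. We must show
\begin{equation*}
\sum_{k=0}^{n+1}x^k \stackrel{?}{=} \frac{1-x^{n+2}}{1-x}.
\end{equation*}
First split off the last term, writing the sum as $\big(\sum_{k=0}^{n}x^k\big)+x^{n+1}$. Next apply the induction hypothesis to replace the bracket by $(1-x^{n+1})/(1-x)$. Then put $x^{n+1}$ over the common denominator $1-x$, so the numerator becomes $1-x^{n+1}+x^{n+1}(1-x)$. The terms $\pm x^{n+1}$ cancel, leaving $1-x^{n+2}$, which is exactly $S(n+1)$. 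The Principle of Mathematical Induction then gives \eqref{e:geosum} for every $\nnn$.

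There is no real obstacle here. The one step that needs care is the common-denominator algebra, together with noting that the division is legitimate because $x\neq 1$. As a cross-check, one could also multiply $\sum_{k=0}^{n}x^k$ by $1-x$ and observe that the resulting sum telescopes to $1-x^{n+1}$.
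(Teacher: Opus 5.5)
Your proposal is correct and follows essentially the same route as the paper: induction on $n$ with base case $n=0$, splitting off $x^{n+1}$, applying the induction hypothesis, and combining over the common denominator $1-x$ so that the $\pm x^{n+1}$ terms cancel to leave $1-x^{n+2}$.
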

\begin{proof}
By Mathematical Induction.

\textbf{Base Case}: $n=0$.
Then \eqref{e:geosum} clearly holds since each side equals $1$.

\textbf{Inductive Step}: Assume that $\nnn$ is such that
\eqref{e:geosum} holds. We must show that \eqref{e:geosum} also holds with
$n$ replaced by $n+1$. 
Indeed, using the \textbf{induction hypothesis} in \eqref{e:100406:e}, 
we obtain 
\begin{subequations}
\begin{align}
\sum_{k=0}^{n+1} x^k &= \left(\sum_{k=0}^n x^k\right) + x^{n+1}\\
&= \frac{1-x^{n+1}}{1-x} + x^{n+1} \label{e:100406:e}\\
&= \frac{1-x^{n+1}+x^{n+1}-x x^{n+1}}{1-x} =
\frac{1-x^{(n+1)+1}}{1-x},
\end{align}
\end{subequations}
as required.
\end{proof}

\section{Factorials and the Binomial Theorem}

\begin{definition}[Products and Factorial Function]
Let $m$ and $n$ be in $\ZZ$ such that $m\leq n$, and let $\alpha_k$ be a
real number, for every integer $k$ such that $m\leq k\leq n$. 
Analogously to the sigma notation for sums, we define
\label{def:Pi-notation} 
\begin{equation}
\prod_{k=m}^{n} \alpha_k = \alpha_m\cdot\alpha_{m+1}\cdots
\alpha_n.
\end{equation}
When $m>n$, we adopt the \textbf{empty product convention}
\index{Empty product convention} 
$\prod_{k=m}^{n} \alpha_k := 1$,
which is motivated by the fact that multiplication by $1$ does not change
anything, just like addition of $0$. 
The product of the first $n$ strictly positive integers is called
\textbf{``$n$ factorial''} and denoted by $n!$:
\label{def:factorial}
\index{Factorial} 
\begin{equation}
n! := \prod_{k=1}^{n} k = 1\cdot 2 \cdots n.
\end{equation}
\end{definition}

We thus have $0!=1$, $1!=1$, $2!=2$, $3!=6$, $4!=24$, \ldots,
$(n+1)! = (n+1)\cdot n!$. 
The factorial function occurs frequently when dealing with combinatorial
questions such as the following: In how many different ways can we arrange
the 3 distinct letters $A,B,C$? Answer: $6 = 3!$, namely:
$ABC, ACB, BAC, BCA, CAB, CBA$. 
These 3-letter words which utilize each letter
exactly once are called \textbf{permutations} of $\{A,B,C\}$. 
More generally, we are now able to prove the following result. 

\begin{theorem}[Permutations Theorem]
\index{Permutations Theorem}
\label{t:factorial}
Let $n$ be a strictly positive integer and let 
$A=\{a_1,\ldots,a_n\}$ be a set containing $n$ distinct elements. 
Then the number of all permutations of these $n$ distinct elements is $n!$.
\end{theorem}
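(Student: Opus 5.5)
We prove the Permutations Theorem by mathematical induction (Fact~\ref{f:induction}) with $n_0=1$. The statement $S(n)$ is: every set of $n$ distinct elements has exactly $n!$ permutations, where a permutation is a word of length $n$ using each element exactly once. Formulating $S(n)$ for \emph{every} such set, not just one fixed set $A$, is essential. In the inductive step we will apply the hypothesis to various $n$-element subsets of an $(n+1)$-element set.

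\textbf{Base Case}: $n=1$. A set $\{a_1\}$ has exactly one permutation, namely the one-letter word $a_1$. Since $1!=1$, $S(1)$ holds.

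\textbf{Inductive Step}: Assume $n\geq 1$ and that $S(n)$ is true. Let $A=\{a_1,\ldots,a_{n+1}\}$ consist of $n+1$ distinct elements. We sort the permutations of $A$ into $n+1$ classes according to their first letter: for each $i\in\{1,\ldots,n+1\}$, let $P_i$ be the set of permutations of $A$ beginning with $a_i$. These classes do not overlap, since a word has only one first letter. Together they account for every permutation of $A$. Hence the total number of permutations of $A$ is $|P_1|+\cdots+|P_{n+1}|$.

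The key step is to count each $P_i$. Deleting the leading $a_i$ from a word in $P_i$ gives a word of length $n$ that uses each element of $A\setminus\{a_i\}$ exactly once, i.e., a permutation of that $n$-element set. Conversely, placing $a_i$ in front of any permutation of $A\setminus\{a_i\}$ produces a member of $P_i$. These two operations undo each other. So $P_i$ is in one-to-one correspondence with the permutations of $A\setminus\{a_i\}$, and by the induction hypothesis $|P_i|=n!$. Therefore the number of permutations of $A$ is $(n+1)\cdot n! = (n+1)!$, which is $S(n+1)$. By the Principle of Mathematical Induction, $S(n)$ holds for all $n\geq 1$.

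The main obstacle is making the counting rigorous at this level. The counting principle (the size of a disjoint union is the sum of the sizes, and a one-to-one correspondence preserves counts) has not yet been formalized in these notes, since bijections and cardinality come later. I would therefore present it informally, explicitly describing the ``remove the first letter'' and ``prepend $a_i$'' operations as mutually inverse. The other point to state clearly is that the induction hypothesis is applied to the set $A\setminus\{a_i\}$, which differs from $\{a_1,\ldots,a_n\}$. This is exactly why $S(n)$ must quantify over all $n$-element sets.
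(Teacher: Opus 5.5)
Your proof is correct and follows essentially the same route as the paper. Both argue by induction on $n$, split the permutations of an $(n+1)$-element set into $n+1$ classes according to which element sits in one fixed position (you use the first position, the paper uses the last), and count each class by the induction hypothesis to get $(n+1)\cdot n! = (n+1)!$. Your explicit remark that $S(n)$ must cover every $n$-element set, because the hypothesis is applied to $A\smallsetminus\{a_i\}$, makes precise a point the paper leaves implicit.
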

\begin{proof}
By Mathematical Induction.

\textbf{Base Case}: $n=1$ indeed allows only one permutation, as claimed. 

\textbf{Inductive Step}: 
For $k\in\{1,\ldots,n+1\}$, 
denote the set of permutations of $\{a_1,\ldots,a_n,a_{n+1}\}$ 
with $a_k$ at the last position by $B_k$. 
Invoking the induction hypothesis, we see that 
each $B_k$ contains exactly $n!$ permutations. 
Thus the total number of permutations is
$(n+1)\cdot n! = (n+1)!$. 
\end{proof}

\begin{definition}
[Binomial Coefficients]
\label{d:choose}
\index{Binomial coefficients} 
Given $n\in\NN$ and $k\in\ZZ$, 
the \emph{binomial coefficient} 
\textbf{``$n$ choose $k$''} is defined by 
\begin{equation}
{n \choose k} := 
\begin{cases} \displaystyle 
\prod_{i=1}^k \frac{n-i+1}{i} = \frac{n(n-1)\cdots
(n-k+1)}{1\cdot 2\cdots k}, &\text{if $k\geq 0$;}\\[+4mm]
0, &\text{if $k<0$.}
\end{cases}
\end{equation}
\end{definition}

\begin{lemma}
\label{l:choose}
Let $n\in\NN$ and let $k\in\ZZ$. Then the following hold.
\begin{enumerate}
\item 
\label{l:choose:i}
$\displaystyle {n \choose 0}= {n\choose n}= 1$ and
$\displaystyle {n \choose 1} = n$.\\[+2mm]
\item
\label{l:choose:ii}
$\displaystyle {n \choose k} = 
\frac{n!}{k!(n-k)!} = {n \choose n-k}$~~~ if $0\leq k\leq n$.\\[+2mm]
\item
\label{l:choose:iii}
$\displaystyle {n \choose k} = 
0 = {n \choose n-k}$~~~ if $k<0$ or $k>n$.\\[+2mm]
\item
\label{l:choose:iv}
$\displaystyle {n \choose k} = 
{n-1 \choose k-1} + {n-1 \choose k}$~~~ if $n\geq 1$. 
\end{enumerate}
\end{lemma}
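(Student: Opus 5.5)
The plan is to work directly from Definition~\ref{d:choose}, treating the four items in order and letting the later items reuse the earlier ones.

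For \ref{l:choose:i}, the case $k=0$ gives an empty product, which equals $1$ by the empty product convention. The case $k=1$ gives the single factor $\frac{n-1+1}{1}=n$. For $k=n$ the product is $\frac{n(n-1)\cdots 1}{1\cdot 2\cdots n}=\frac{n!}{n!}=1$; this also covers $n=0$, where the product is empty.

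For \ref{l:choose:ii}, let $0\le k\le n$. Multiply numerator and denominator by $(n-k)!=\prod_{i=1}^{n-k}i$. The numerator $n(n-1)\cdots(n-k+1)$ times $(n-k)!$ is exactly $n!$, so $\binom{n}{k}=\frac{n!}{k!(n-k)!}$. The right-hand side is symmetric under $k\mapsto n-k$, and $0\le n-k\le n$. Applying the same formula to $n-k$ therefore yields $\binom{n}{n-k}$.

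For \ref{l:choose:iii}, there are two cases.
\begin{itemize}
\item If $k<0$, then $\binom{n}{k}=0$ by definition. Also $n-k>n\ge 0$, which puts us in the next case for $n-k$.
\item If $k>n\ge 0$, then $k\ge n+1$, so the factor with $i=n+1$ is present in the product. That factor has numerator $n-(n+1)+1=0$, so the product is $0$. Moreover $n-k<0$, so $\binom{n}{n-k}=0$ by definition.
\end{itemize}
Since each case sends $k$ to the other case under $k\mapsto n-k$, both equalities follow in either case.

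For \ref{l:choose:iv}, which I expect to be the main obstacle, let $n\ge 1$ and split into cases on $k$.
\begin{itemize}
\item If $k<0$, all three terms vanish by definition.
\item If $k=0$, the identity reads $1=0+1$, using \ref{l:choose:i} and the definition.
\item If $k>n$, all three terms vanish by \ref{l:choose:iii}, because $k>n-1$ and $k-1>n-1$.
\item If $k=n$, it reads $1=1+0$, by \ref{l:choose:i} and \ref{l:choose:iii}.
\item The main case is $1\le k\le n-1$. Here all indices lie in range, so \ref{l:choose:ii} applies to every term. Compute
\begin{equation*}
\frac{(n-1)!}{(k-1)!(n-k)!}+\frac{(n-1)!}{k!(n-1-k)!}
=\frac{(n-1)!\,\big(k+(n-k)\big)}{k!(n-k)!}=\frac{n!}{k!(n-k)!},
\end{equation*}
using $k!=k\cdot(k-1)!$ and $(n-k)!=(n-k)(n-k-1)!$ to reach a common denominator.
\end{itemize}
The only real care needed is the bookkeeping of the boundary cases, so that \ref{l:choose:ii} is never applied outside $0\le k\le n$.
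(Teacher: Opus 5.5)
Your proof is correct and follows essentially the same route as the paper. Items (i)--(iii) come directly from the definition, and (iv) disposes of the boundary cases $k\le 0$, $k\ge n$ directly and handles $1\le k\le n-1$ via the factorial formula of (ii) and the split $n=k+(n-k)$; you just write out the details the paper calls ``readily'' verified and run the main computation in the opposite direction.
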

\begin{proof}
Identities \ref{l:choose:i}--\ref{l:choose:iii} 
follow readily from the definitions. 

\ref{l:choose:iv}: 
If $k\leq 0$ or $k\geq n$, then the identity is easily verified directly. 
Thus assume that $1\leq k\leq n-1$. Using \ref{l:choose:ii}, we obtain
\begin{subequations}
\begin{align}
{n \choose k} &= \frac{n!}{k!(n-k)!} 
= \frac{\big(k+(n-k)\big)(n-1)!}{k!(n-k)!} \\
&= \frac{k(n-1)!}{k!(n-k)!} + \frac{(n-k)(n-1)!}{k!(n-k)!}
= \frac{(n-1)!}{(k-1)!(n-k)!} + \frac{(n-1)!}{k!(n-k-1)!}\\
&= {n-1 \choose k-1} + {n-1 \choose k},
\end{align}
\end{subequations}
as required.
\end{proof}

\begin{remark}[Pascal's Triangle]
\label{r:pascal}
Lemma~\ref{l:choose}\ref{l:choose:iv} has the beautiful
visualization as \emph{\textbf{Pascal's Triangle}}
$$\begin{tabular}{ccccccccc}
&&&&1&&&&\\
&&&1&&1&&&\\
&&1&&2&&1&&\\
&1&&3&&3&&1&\\
1&&4&&6&&4&&1\end{tabular}$$
\index{Pascal's Triangle} 
where each number on the boundary is $1$, and each number in the interior
can be found by adding the two nearest numbers in the previous row.
\end{remark}

Expanding and simplifying yields the identities
\begin{subequations}
\begin{align}
(x+y)^0 &=1,\\
(x+y)^1 &= x+y = 1\cdot x + 1\cdot y,\\
(x+y)^2 &= x^2 + 2xy + y^2 = 1\cdot x^2 + 2\cdot xy+ 1\cdot y^2,\\
(x+y)^3 &= 1\cdot x^3 + 3\cdot x^2y + 3\cdot xy^2 + 1\cdot y^3,
\end{align}
\end{subequations}
whose coefficients bear a striking resemblance to 
Pascal's Triangle (Remark~\ref{r:pascal}). 
This is not a coincidence --- this result is 
known as the \emph{Binomial Theorem} which dates back to Euclid
to the 4th century BC. 

\begin{theorem}[Binomial Theorem]
\label{t:binomial}
Let $x$ and $y$ be real numbers, and let $n\in\NN$.
Then \index{Binomial Theorem} 
\begin{equation}
\label{e:100406:a}
(x+y)^n = \sum_{k=0}^{n} {n \choose k} x^{n-k}y^k.
\end{equation}
\end{theorem}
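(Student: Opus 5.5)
We prove \eqref{e:100406:a} by Mathematical Induction (Fact~\ref{f:induction}) on $n$, with $n_0=0$ and $S(n)$ being the identity \eqref{e:100406:a} for this $n$ and all real $x,y$.

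\textbf{Base Case}: For $n=0$ the left side is $(x+y)^0=1$. The right side is ${0\choose 0}x^0y^0=1$ by Lemma~\ref{l:choose}\ref{l:choose:i}.

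\textbf{Inductive Step}: Assume $S(n)$ holds for some $n\in\NN$. We write $(x+y)^{n+1}=(x+y)(x+y)^n$ and substitute the induction hypothesis. This gives two sums,
\begin{equation*}
\sum_{k=0}^{n}{n\choose k}x^{n+1-k}y^k+\sum_{k=0}^{n}{n\choose k}x^{n-k}y^{k+1}.
\end{equation*}
In the second sum we shift the index via $k\mapsto k-1$, using the reindexing identity stated with the sigma notation. It becomes $\sum_{k=1}^{n+1}{n\choose k-1}x^{n+1-k}y^k$.

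Next we extend both sums to run over $k=0,\ldots,n+1$. This is harmless: the added terms are ${n\choose n+1}$ and ${n\choose -1}$, and both vanish by Lemma~\ref{l:choose}\ref{l:choose:iii}. Once both sums have the same range, we combine them termwise into
\begin{equation*}
\sum_{k=0}^{n+1}\left[{n\choose k}+{n\choose k-1}\right]x^{(n+1)-k}y^k.
\end{equation*}
Pascal's rule, Lemma~\ref{l:choose}\ref{l:choose:iv} applied with $n+1\geq1$ in place of $n$, turns each bracket into ${n+1\choose k}$. This is exactly $S(n+1)$, and the Principle of Mathematical Induction finishes the proof.

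The only delicate point is the bookkeeping of the index shift and of the boundary terms $k=0$ and $k=n+1$. The convention ${n\choose k}=0$ for $k<0$ or $k>n$, from Definition~\ref{d:choose} and Lemma~\ref{l:choose}\ref{l:choose:iii}, is precisely what lets us avoid splitting off these boundary terms by hand. If one preferred not to rely on it, one would instead peel off the $k=0$ term of the first sum and the $k=n+1$ term of the second sum, combine the middle ranges $1\le k\le n$ with Pascal's rule, and use ${n\choose 0}={n+1\choose 0}=1$ and ${n\choose n}={n+1\choose n+1}=1$.
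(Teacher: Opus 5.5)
Your proof is correct and follows the same route as the paper. Both argue by induction from the base case $n=0$, expand $(x+y)^{n+1}$ into two sums using the induction hypothesis, and shift the index in the second sum. Both then extend the two ranges to $k=0,\ldots,n+1$ using ${n\choose n+1}={n\choose -1}=0$ and combine termwise with Lemma~\ref{l:choose}\ref{l:choose:iv}; the paper merely writes $(x+y)^n(x+y)$ and does the shift and the extension in a single displayed line.
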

\begin{proof}
By Mathematical Induction.

\textbf{Base Case}:
When $n=0$, \eqref{e:100406:a} turns into
\begin{equation}
(x+y)^0 = 1 = {0\choose 0} x^0y^0.
\end{equation}

\textbf{Inductive Step}:
Now assume that $n\in\NN$ is such that \eqref{e:100406:a} holds.
We need to check that \eqref{e:100406:a} is true when $n$ is replaced by
$n+1$. 
Using the \textbf{induction hypothesis} in \eqref{e:100406:b}, 
the fact that
${n \choose n+1} = {n \choose -1} = 0$ in \eqref{e:100406:c}, 
and Lemma~\ref{l:choose}\ref{l:choose:iv} in \eqref{e:100406:d}, 
we obtain 
\begin{subequations}
\begin{align}
(x+y)^{n+1} &= (x+y)^{n}(x+y) = (x+y)^n x + (x+y)^n y\\
&= \sum_{k=0}^{n}{n\choose k}x^{n+1-k}y^{k} + 
\sum_{k=0}^{n}{n\choose k}x^{n-k}y^{k+1} \label{e:100406:b}\\
&= \sum_{k=0}^{n+1}{n\choose k}x^{n+1-k}y^{k} + 
\sum_{k=0}^{n+1}{n\choose k-1}x^{n-k+1}y^{k} \label{e:100406:c}\\
&= \sum_{k=0}^{n+1}{n+1\choose k}x^{n+1-k}y^{k},
\label{e:100406:d}
\end{align}
\end{subequations}
as required. 
\end{proof}

Setting $x=1$ and $y=\pm 1$ in Theorem~\ref{t:binomial}
yields the following result.

\begin{corollary} 
\label{c:binomial}
Let $n\in\{1,2,3,\ldots\}$. Then
\begin{equation}
\label{e:2^n}
\sum_{k=0}^{n} {n\choose k} = 2^n
\end{equation}
and
\begin{equation}
\label{e:bino-1}
\sum_{k=0}^{n} (-1)^k{n\choose k} = 0.
\end{equation}
\end{corollary}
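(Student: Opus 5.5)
The plan is to specialize the Binomial Theorem (Theorem~\ref{t:binomial}) directly, as the sentence preceding the corollary suggests; no induction is needed beyond what that theorem already provides.

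For \eqref{e:2^n}, set $x=y=1$ in \eqref{e:100406:a}. The left-hand side becomes $(1+1)^n=2^n$. Each summand on the right becomes ${n\choose k}1^{n-k}1^k={n\choose k}$, since every integer power of $1$ equals $1$. This gives \eqref{e:2^n}, and the identity holds for every $n\in\NN$, in particular for $n\geq 1$.

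For \eqref{e:bino-1}, set $x=1$ and $y=-1$. The right-hand side becomes $\sum_{k=0}^{n}{n\choose k}1^{n-k}(-1)^k=\sum_{k=0}^{n}(-1)^k{n\choose k}$. The left-hand side becomes $(1+(-1))^n=0^n$.

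The only step needing care is to justify that $0^n=0$. This holds because $n\geq 1$: we have $0^n=0\cdot 0^{n-1}=0$. This is exactly why the corollary excludes $n=0$, since $0^0=1$ by the empty product convention, and indeed $\sum_{k=0}^{0}(-1)^k{0\choose k}=1\neq 0$. With that observation \eqref{e:bino-1} follows, and I expect no real obstacle beyond noting this role of the hypothesis $n\geq 1$.
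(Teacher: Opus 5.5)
Your proposal is correct and matches the paper, which obtains both identities by setting $x=1$ and $y=\pm 1$ in the Binomial Theorem. Your remark that $n\geq 1$ is needed exactly to get $0^n=0$ in the second identity is a correct detail the paper leaves implicit.
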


\begin{theorem}[Combinations Theorem]
\label{t:combinations}
\index{Combinations Theorem}
Let $n$ be a strictly positive integer and
let $A_n = \{a_1,\ldots,a_n\}$ be a set containing $n$ distinct elements.
Let $k\in\{0,1,\ldots,n\}$. Then the number of all subsets
(``\textbf{combinations}'') of $A_n$ of size
$k$ is ${n \choose k}$. 
\end{theorem}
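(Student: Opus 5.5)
We argue by mathematical induction on $n$, with $n_0=1$, and use Pascal's rule (Lemma~\ref{l:choose}\ref{l:choose:iv}) in the inductive step. It is convenient to let $S(n)$ be the statement ``for every $k\in\ZZ$, the number of subsets of $A_n$ of size $k$ equals ${n\choose k}$.'' For $k<0$ or $k>n$ there are no such subsets, and ${n\choose k}=0$ by Lemma~\ref{l:choose}\ref{l:choose:iii}. So $S(n)$ agrees with the theorem's claim on $\{0,\ldots,n\}$, and allowing all $k\in\ZZ$ removes boundary cases later.

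\textbf{Base Case}: $n=1$. The set $A_1=\{a_1\}$ has exactly one subset of size $0$ (namely $\varnothing$) and exactly one subset of size $1$ (namely $A_1$). This matches ${1\choose 0}={1\choose 1}=1$ from Lemma~\ref{l:choose}\ref{l:choose:i}. Other values of $k$ are handled by the remark above.

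\textbf{Inductive Step}: Assume $S(n)$ and fix $k\in\ZZ$. Split the subsets $B\subseteq A_{n+1}$ with $|B|=k$ into two disjoint classes.
\begin{itemize}
\item Those with $a_{n+1}\notin B$. These are exactly the $k$-element subsets of $A_n$, so by the induction hypothesis there are ${n\choose k}$ of them.
\item Those with $a_{n+1}\in B$. The map $B\mapsto B\setminus\{a_{n+1}\}$ is a bijection onto the $(k-1)$-element subsets of $A_n$; its inverse is $C\mapsto C\cup\{a_{n+1}\}$. So by the induction hypothesis there are ${n\choose k-1}$ of them.
\end{itemize}
Adding the two counts and applying Lemma~\ref{l:choose}\ref{l:choose:iv} with $n+1$ in place of $n$ gives ${n\choose k-1}+{n\choose k}={n+1\choose k}$. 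This proves $S(n+1)$, and the Principle of Mathematical Induction (Fact~\ref{f:induction}) finishes the proof.

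The only delicate point is the bijection in the second class. One must check that removing $a_{n+1}$ and adding it back are mutually inverse and preserve the sizes $k$ and $k-1$. This works because $a_{n+1}\notin A_n$, since the elements of $A_{n+1}$ are distinct. The edge cases $k=0$ and $k=n+1$ need no separate treatment, because we allowed all $k\in\ZZ$: one of the two classes is simply empty, consistent with the value $0$ of the corresponding binomial coefficient.
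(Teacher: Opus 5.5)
Your proof is correct and follows the paper's argument. Both proceed by induction on $n$, split the $k$-element subsets of $A_{n+1}$ according to whether they contain $a_{n+1}$, and combine the two counts ${n\choose k-1}+{n\choose k}$ via Lemma~\ref{l:choose}\ref{l:choose:iv}. The differences are cosmetic. You extend the statement to all $k\in\ZZ$ using Lemma~\ref{l:choose}\ref{l:choose:iii}, so the cases $k=0$ and $k=n+1$ (which the paper dismisses as clearly true) are absorbed automatically. You also spell out the bijection $B\mapsto B\smallsetminus\{a_{n+1}\}$ that the paper uses implicitly.
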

\begin{proof}
By Mathematical Induction.

\textbf{Base Case}: $n=1$.
There is exactly $1={1\choose 0}$ subset of size $0$, namely $\varnothing$,
and there is exactly $1={1\choose 1}$ subset of size $1$, namely $\{a_1\}$
itself.

\textbf{Inductive Step}: We assume that the result is true for 
$A_n = \{a_1,\ldots,a_n\}$, where $n\in\NN$ and $n\geq 1$.
The set under investigation 
$A_{n+1} = \{a_1,\ldots,a_n,a_{n+1}\}$ contains $n+1$ distinct elements. 
Let $k\in\{0,1,\ldots,n,n+1\}$.
If $k=0$ or $k=n+1$, then the result is clearly true.
Thus, assume that $1\leq k \leq n$. 
Let $B$ be a subset of $A_{n+1}$ containing $k$ elements.
Then either $a_{n+1}\in B$ or $a_{n+1}\notin B$.
In the former case, $B=C\cup \{a_{n+1}\}$, where $C$ contains
$k-1$ elements from $A_n$, which shows that there are ${n\choose k-1}$ 
choices for $B$ in this case;
in the latter case, $B$ is a subset of $A_n$ for which there are
${n\choose k}$ choices. (Here we used the induction hypothesis twice.)
In view of Lemma~\ref{l:choose}\ref{l:choose:iv}, there are altogether
${n\choose k-1} + {n\choose k} = {n+1 \choose k}$ choices for $B$,
as claimed. 
\end{proof}

\opt{sec}{\newpage}

\section*{Exercises}\markright{Exercises}
\addcontentsline{toc}{section}{Exercises}
\setcounter{theorem}{0}

\begin{exercise}
\label{exo:100906:a}
Prove that, for every integer $n\geq 1$, one has
\begin{equation}
\label{e:100906:a}
1^2+2^2+\cdots + n^2 = \frac{n(n+1)(2n+1)}{6}.
\end{equation}
\end{exercise}
\begin{solution}
We prove this result by using the Principle of Mathematical Induction,
with $n_0 = 1$. The statement $S(n)$ is \eqref{e:100906:a}. 

\textbf{Base Case}: When $n=1$,
the claimed identity \eqref{e:100906:a} reads
\begin{equation*}
1^2 = \frac{1(1+1)(2\cdot 1+1)}{6},
\end{equation*}
which simplifies to $1=1$, 
which is obviously true.

\textbf{Inductive Step}:
Now suppose that $n\in \NN$, that $n\geq 1$, and that 
$S(n)$ is true, i.e., that 
\eqref{e:100906:a} is true for this $n$.
Our job is to show that $S(n+1)$ is true, i.e.,
that \eqref{e:100906:a} is true when $n$ is replaced by
$n+1$. That is, we must prove that
\begin{equation}
\label{e:100906:b}
1^2+2^2+\cdots + n^2 + (n+1)^2 \stackrel{?}{=} \frac{(n+1)(n+2)(2n+3)}{6}.
\end{equation}
Using the \textbf{induction hypothesis}, we see that
\begin{align*}
1^2+2^2+\cdots + n^2 + (n+1)^2 &= \big( 1^2+2^2+\cdots + n^2\big) +
(n+1)^2\\
&= \frac{n(n+1)(2n+1)}{6} + (n+1)^2\\
&= (n+1)\left( \frac{n(2n+1)}{6} + (n+1)\right)\\
&= (n+1)\left( \frac{2n^2+n+6n+6}{6} \right)\\
&= \frac{n+1}{6}\big( (n+2)(2n+3) \big).
\end{align*}
Hence \eqref{e:100906:b} is verified, i.e., $S(n+1)$ is true.
Therefore, by the Principle of Mathematical Induction
(Fact~\ref{f:induction}), the formula \eqref{e:100906:a} is true for
every integer $n\geq 1$.
\end{solution}

\begin{exercise}
\label{exo:150914a}
Prove that, for every integer $n\geq 1$, one has
\begin{equation}
\label{e:150914:a}
\frac{1}{2} + \frac{2}{2^2} + \frac{3}{2^3} + \cdots +
\frac{n}{2^n} = 2 - \frac{n+2}{2^n}. 
\end{equation}
\end{exercise}
\begin{solution}
We prove this result by using the Principle of Mathematical Induction,
with $n_0 = 1$. The statement $S(n)$ is \eqref{e:150914:a}. 

\textbf{Base Case}: When $n=1$,
the claimed identity \eqref{e:150914:a} reads
\begin{equation*}
\frac{1}{2} = 2 - \frac{1+2}{2^1}
\end{equation*}
which simplifies to $1/2=1/2$, 
which is obviously true.

\textbf{Inductive Step}:
Now suppose that $n\in \NN$, that $n\geq 1$, and that 
$S(n)$ is true, i.e., that 
\eqref{e:150914:a} is true for this $n$.
Our job is to show that $S(n+1)$ is true, i.e.,
that \eqref{e:150914:a} is true when $n$ is replaced by
$n+1$. That is, we must prove that
\begin{equation}
\label{e:150914:b}
\frac{1}{2} + \frac{2}{2^2} + \frac{3}{2^3} + \cdots +
\frac{n}{2^n} + \frac{n+1}{2^{n+1}} \stackrel{?}{=}  2 -
\frac{(n+1)+2}{2^{n+1}}.
\end{equation}
Using the \textbf{induction hypothesis}, we see that
\begin{align*}
\frac{1}{2} + \frac{2}{2^2} + \frac{3}{2^3} + \cdots +
\frac{n}{2^n} + \frac{n+1}{2^{n+1}} &= 
\Big(\frac{1}{2} + \frac{2}{2^2} + \frac{3}{2^3} + \cdots +
\frac{n}{2^n}\Big) + \frac{n+1}{2^{n+1}}\\
&=  2 - \frac{n+2}{2^n} + \frac{n+1}{2^{n+1}}\\
&=  2 - \frac{2(n+2)-(n+1)}{2^{n+1}}\\
&=  2 - \frac{n+3}{2^{n+1}}\\
&=  2 - \frac{(n+1)+2}{2^{n+1}}.
\end{align*}
Hence \eqref{e:150914:b} is verified, i.e., $S(n+1)$ is true.
Therefore, by the Principle of Mathematical Induction
(Fact~\ref{f:induction}), the formula \eqref{e:150914:a} is true for
every integer $n\geq 1$.
\end{solution}

\begin{exercise} 
Find a formula for the sum of the first $n$ strictly positive 
\emph{even} integers
(\`a la Theorem~\ref{t:odd}) and prove your formula using 
Mathematical Induction. 
\end{exercise}
\begin{solution}
For every integer $n\geq 1$, one has
\begin{equation}
\label{e:100906:c}
\sum_{k=1}^{n} (2k) = n(n+1).
\end{equation}
We prove this result by mathematical induction, with $n_0=1$ and
where the statement $S(n)$ is \eqref{e:100906:c}. 

\textbf{Base Case}: When $n=1$, \eqref{e:100906:c} turns into
\begin{equation*}
\sum_{k=1}^{1} (2k) = (2\cdot 1) = 2 = 1(2) = 1(1+1),
\end{equation*}
which is obviously true.

\textbf{Inductive Step}: Now suppose that $n\in\NN$ is such that
$n\geq 1$ and $S(n)$ is true, i.e., \eqref{e:100906:c} holds for this $n$.
Our goal is to show that $S(n+1)$ is true, i.e., that 
\eqref{e:100906:c} holds when $n$ is replaced by $n+1$. 
That is, we must prove that
\begin{equation}
\label{e:100906:d} 
\sum_{k=1}^{n+1} (2k) \stackrel{?}{=} (n+1)(n+2).
\end{equation}
Indeed, using the \textbf{induction hypothesis} that $S(n)$ is true in 
\eqref{e:100906:e}, we see that
\begin{subequations}
\begin{align}
\sum_{k=1}^{n+1} (2k) &= \left(\sum_{k=1}^{n} (2k)\right) + 
\big(2(n+1)\big)\\
&= n(n+1) + (2n+2)\label{e:100906:e}\\
&= (n+1)(n+2).
\end{align}
\end{subequations}
Hence \eqref{e:100906:d} is verified, i.e., $S(n+1)$ is true.
By the Principle of Mathematical Induction, the formula
\eqref{e:100906:c}
is true for every integer $n\geq 1$. 
\end{solution}

\begin{exercise}
You are standing in line at the Grand 10 Movie Theatre, to get tickets
for the movie \emph{Avengers: Infinity Matters}. 
The line consists of at least two people, 
the first of which is an odd, and the
last of which is an even. 
(In this question, we call people odd if their birthyears are odd; 
otherwise, they're even.)
Using mathematical induction, show that 
somewhere along that line there is an even 
directly standing behind an odd.
\end{exercise}
\begin{solution}
We use the principle of mathematical induction,
with $n_0=2$. The statement is, for an integer $n\geq 2$, 
\begin{quotation}
\noindent
$S(n)$: In a line of $n$ people (first an odd , last an even),
there is somewhere an even standing directly behind an odd.
\end{quotation}

\textbf{Base Case:} When $n=2$, $S(n)$ is certainly true since the
queue consists of two people only, an odd followed by an even.

\textbf{Inductive Step:} Now suppose that $n\in\NN$ such that $n\geq
2$ and that $S(n)$ is true. Our job is to show that $S(n+1)$ is true.
So consider a line of $n+1$ people, say $p_1,\ldots,p_n,p_{n+1}$, 
where $p_1$ is an odd and
$p_{n+1}$ is an even. Since $n\geq 2$, it follows that $n+1\geq
3$. Now consider the first $n$ people, i.e.,
$p_1,\ldots,p_n$.
If $p_n$ is an odd, then $S(n+1)$ is true since
$p_{n+1}$, who is an even, stands behind $p_{n}$.
Otherwise, $p_n$ is an even. But then, using
the \textbf{induction hypothesis}, somewhere in the line
$p_1,\ldots,p_n$ must an even stand right behind an odd.
\end{solution}

\begin{exercise}
Prove that, for every integer $n\geq 0$, 
\begin{equation}
\label{e:100913:a}
5 | (9^n-4^n).
\end{equation}
\end{exercise}
\begin{solution}
This result is very similar to Theorem~\ref{t:divide}.
We argue by induction, with $n_0=0$ and where
the statement $S(n)$ is \eqref{e:100913:a}.

\textbf{Base Case}:
When $n=0$, \eqref{e:100913:a} turns into $5|(9^0-4^0)$, i.e., into
$5|0$ which is obviously true.

\textbf{Inductive Step}:
Now suppose that $n\in\NN$ is such that $S(n)$ is true, i.e.,
\eqref{e:100913:a} holds. 
Our job is to show that $S(n+1)$ is true, i.e.,
\begin{equation}
\label{e:100913:b}
5 \stackrel{?}{|} (9^{n+1}-4^{n+1}).
\end{equation}
Now since $S(n)$ is true, we know there exists $k\in\ZZ$ such that
$9^n-4^n = 5\cdot k$. 
Thus, using this \textbf{induction hypothesis} in 
\eqref{e:100913:c}, we see that
\begin{subequations}
\begin{align}
9^{n+1}-4^{n+1} &= 9^{n+1} - 4\cdot 9^n + 4(9^n - 4^n)\\
&= 9^n(9-4) + 4(5\cdot k)\label{e:100913:c}\\
&= 5\big(9^n+4\cdot k\big).
\end{align}
\end{subequations}
Since $9^n+4k\in\ZZ$, it is clear that \eqref{e:100913:b} is true.
Thus, by the principle of mathematical induction, we see that
\eqref{e:100913:a} is true for every integer $n\geq 0$. 
\end{solution}

\opt{sec}{
\begin{exercise} 
\label{exo:bern}
For $n\in\NN$ and $k\in\{0,1,2\ldots,n\}$ we define
\begin{equation}
P_k^n(x) := {n \choose k} x^k (1-x)^{n-k}. 
\end{equation}
Now let $f\colon[0,1]\to\RR$ be a function.
The \emph{Bernstein polynomials} $B_nf$, where $n\in\NN$, 
associated with $f$ are \index{Bernstein polynomials} 
\begin{equation}
(B_nf)(x) := \sum_{k=0}^{n} f\big(\tfrac{k}{n}\big) P_k^n(x) =
\sum_{k=0}^{n} f\big(\tfrac{k}{n}\big)
{n \choose k}
x^k(1-x)^{n-k}.
\end{equation}
Let $f\colon [0,1]\to\RR$, $g\colon [0,1]\to\RR$,
$\alpha\in\RR$, $\beta\in\RR$, and $n\in\NN$. 
Show the following.
\begin{enumerate}
\item 
\label{exo:berni}
$B_n(\alpha f+\beta g) = \alpha B_nf+\beta B_n g$.
\item 
\label{exo:bernii}
If $f(x)\leq g(x)$ for all $x\in[0,1]$,
then $(B_nf)(x)\leq (B_ng)(x)$ for all $x\in[0,1]$.
\item 
\label{exo:berniii}
If $f(x)=1$ for every $x\in[0,1]$, then
$(B_nf)(x)=1$ for every $x\in[0,1]$. 
\end{enumerate}
\end{exercise}
\opt{hide}{
\begin{proof}
\hhbcom{to do...}
\hhbcom{Could also do some more fun things from Davidson-Donzig}
\end{proof}
}
}

\begin{exercise}
Explain what is wrong with the following ``proof'' by induction 
of the statement (for $n\geq 1$) $S(n)$:
``Any collection of $n$ rubber ducks consists of rubber ducks of the
same colour.''
\begin{quotation}
\noindent
``\emph{Proof.}
Clearly $S(1)$ is true.
Now suppose that $S(n)$ is true for some $n\geq 1$.
Consider a collection $M$ of $n+1$ rubber ducks.
Take one rubber duck, call it $x$.
The induction hypothesis applied to $M\smallsetminus\{x\}$ shows
that the remaining $n$ rubber ducks have all the same colour, say $c$.
Now return $x$ to the set and remove a different rubber duck $y$.
Again, the remaining rubber ducks $M\smallsetminus\{y\}$, 
one of which is $x$, must all have
the same colour $c$. Therefore, all $n+1$ rubber ducks have the same
colour $c$ and the result is proved by the principle of
mathematical induction.''
\end{quotation}
\end{exercise}
\enlargethispage{2\baselineskip}
\begin{solution}
The proof looks perfect but is flawed: the
inductive step breaks down when going from $n=1$ to $n+1=2$
rubber ducks: Consider a collection of two rubber ducks, one of which
is yellow and the other red. There is no reason that when $y$ is taken
out, the remaining collection should have the same colour as before.
\end{solution}

\begin{exercise}
Consider the statement $S(n)$,
\begin{equation}
\label{e:190916a}
\sum_{k=1}^n k = \pi + \frac{n(n+1)}{2}. 
\end{equation}
Clearly $S(n)$ is false for every $n\geq 1$ since
the sum on the left side of \eqref{e:190916a} 
is actually an integer but the right side is
not.
However, show that the actual inductive step is true.
(This shows the importance of the base case in proofs by
induction!)
\end{exercise}
\begin{solution}
Suppose $S(n)$ is true for some $n\geq 1$.
Then
\begin{align*}
\sum_{k=1}^{n+1}k &= \left(\sum_{k=1}^n k\right) + n+1\\
&= \pi + \frac{n(n+1)}{2} + n+1 \quad \text{(since $S(n)$ is
true)}\\
&= \pi + \frac{n(n+1)+2(n+1)}{2}\\
&=\pi + \frac{(n+1)(n+2)}{2}\\
&=\pi + \frac{(n+1)\big((n+1)+1\big)}{2}\\
\end{align*}
which shows that $S(n+1)$ holds as well.
\end{solution}

\begin{exercise}
Determine a simple formula for the sum
$3+3^2 + \cdots +3^n$, where $n\geq 1$ is an integer.
Provide a proof of your formula. 
\end{exercise}
\begin{solution}
Using \eqref{e:geosum}, we obtain
\begin{align*}
3+3^2+\cdots + 3^n &= -1 + \sum_{k=0}^{n} 3^k
= -1 + \frac{1-3^{n+1}}{1-3}
= \frac{-2}{2} + \frac{3^{n+1}-1}{2}
= \frac{3^{n+1}-3}{2}.
\end{align*}
\end{solution}

\begin{exercise}
Explain why all binomial coefficients --- which are defined as
quotients --- are actually integers.
\end{exercise}
\begin{solution}
The statement is $S(n)$, where $n\in\NN$: 
``${n \choose k}$ is an integer for every $k\in\ZZ$.''

\textbf{Base Case:} When $n=0$, $S(n)$ is true since,
by definition (or by Lemma~\ref{l:choose}\ref{l:choose:i}) 
${0 \choose k}$ is either $0$ or $1$ and thus an integer.

\textbf{Inductive Step:} 
Now suppose $S(n)$ is true for some integer $n\geq 0$.
We need to show that $S(n+1)$ is true, i.e.,
${n+1\choose k}$ is an integer for all $k\in\ZZ$.
But this is clear using the induction hypothesis and
Lemma~\ref{l:choose}\ref{l:choose:iv}.

Therefore, by the Principle of Mathematical Induction, all binomial
coefficients are integers.

\emph{Alternatively}, one may refer to the Combinations~Theorem 
(Theorem~\ref{t:combinations}) the proof of which is similar to
the above. 
\end{solution}

\begin{exercise}[Lotto 6/49] 
Determine the number of all combinations of winning numbers
for Lotto 6/49. \index{Lotto 6/49} 
\end{exercise}
\begin{solution}
By Theorem~\ref{t:combinations}, 
there are
\begin{equation*}
{49 \choose 6} = \frac{49\cdot 48\cdot 47\cdot 46\cdot 45\cdot 44}{1\cdot
2\cdot 3\cdot 4\cdot 5\cdot 6} = 13,983,816
\end{equation*}
combinations of size $6$ drawn from the set $\{1,2,\ldots,49\}$.
Thus, the probability of guessing the winning numbers is approximately
$1$ to 14 million. 
\end{solution}

\begin{exercise}
Let $n\in\{1,2,\ldots\}$. 
Prove that
\begin{equation*}
\sum_{k \text{~even}} {n\choose k} = \sum_{k \text{~odd}} {n \choose
k}. 
\end{equation*}
\end{exercise}
\begin{solution}
From \eqref{e:100406:a} with $x=1$ and $y=-1$, we obtain
\begin{align*}
0 &=0^n = (1-1)^n = \sum_{k=0}^{n} {n \choose k} 1^{n-k}(-1)^k 
= \sum_{k=0}^{n} {n \choose k} (-1)^k\\
&= \sum_{k\in\{0,1,\ldots,n\} \text{~and even}}  {n \choose k} (-1)^k
+ \sum_{k\in\{0,1,\ldots,n\} \text{~and odd }}  {n \choose k} (-1)^k\\
&= \sum_{k\in\{0,1,\ldots,n\} \text{~and even}}  {n \choose k} 
+ \sum_{k\in\{0,1,\ldots,n\} \text{~and odd }}  {n \choose k} (-1)
\end{align*}
Hence
\begin{equation}
\label{e:120918a}
\sum_{k\in\{0,1,\ldots,n\} \text{~and even}}  {n \choose k} 
= \sum_{k\in\{0,1,\ldots,n\} \text{~and odd}}  {n \choose k}. 
\end{equation}
On the other hand, 
by Lemma~\ref{l:choose}\ref{l:choose:iii},
\begin{equation}
\label{e:120918b}
{n\choose k} = 0,\quad\text{if $k<0$ or $k>n$.}
\end{equation}
Therefore, using \eqref{e:120918b}, we see that 
\eqref{e:120918a} can be stated as
\begin{equation*}
\sum_{\text{$k$ even}}  {n \choose k} 
= \sum_{\text{$k$ odd}}  {n \choose k}. 
\end{equation*}
This completes the proof.
\end{solution}

\begin{exercise}
Prove that, for every $n\in\NN$,
\begin{equation}
3^n = \sum_{k=0}^{n} {n \choose k} 2^k.
\end{equation}
\end{exercise}
\begin{solution}
This follows from the Binomial Theorem
(Theorem~\ref{t:binomial}) by setting $x=1$ and $y=2$.
\end{solution}

\begin{exercise}
The polynomial $x^3+y^3$ in two variables $x$ and $y$
factors as follows: $x^3+y^3 = (x+y)(x^2-xy+y^2)$.
In fact, prove that for every $n\in\NN$, 
$x^{2n+1}+y^{2n+1}$ is divisible by $x+y$.
\end{exercise}
\begin{solution}
Consider the statement $S(n)$ for $n\in\NN$ stating that
``The polynomial $x^{2n+1}+y^{2n+1}$ is divisible by $x+y$.''. 
We argue by mathematical induction.

\textbf{Base Case}: $n=0$. Then $S(0)$ turns into 
``$x^{2\cdot 0+1} + y^{2\cdot 0 +1} =x+y$ is divisible by $x+y$'',
which is certainly true.

\textbf{Inductive Step}: 
Assume the statement is true for some $n\geq 0$. 
Then there exists a polynomial $q$ in two variables $x,y$ such that
\begin{equation}
x^{2n+1} + y^{2n+1} = q\cdot (x+y).
\end{equation}
Using the \textbf{induction hypothesis}, we see that
\begin{align*}
x^{2(n+1)+1} + y^{2(n+1)+1} &= x^2x^{2n+1} + y^2y^{2n+1} \\
&= x^2\big(x^{2n+1}+y^{2n+1}\big) + (y^2-x^2)y^{2n+1}\\
&= x^2\cdot q\cdot (x+y) + (y+x)(y-x)y^{2n+1}\\
&= (x+y)\big(x^2\cdot q + (y-x)y^{2n+1}\big).
\end{align*}
Hence $S(n+1)$ is true as well. 
By the Principle of Mathematical Induction, the result is true for every
$n\in\NN$. 
\end{solution}

\begin{exercise}
Prove that, for every $\nnn$,
\begin{equation*}
10^n = \sum_{k=0}^n {n \choose k} 9^{n-k}.
\end{equation*}
\end{exercise}
\begin{solution}
This follows from the Binomial Theorem
(Theorem~\ref{t:binomial}) by setting $x=9$ and $y=1$.
\end{solution}

\begin{exercise}[a general calculus product rule]
Recall the calculus rule
\begin{equation*}
(f\cdot g)' = f'\cdot g + f\cdot g',
\end{equation*}
which we assume to be true.
Show that if $n\geq 2$ and $f_1,\ldots,f_n$ are differentiable functions,
then
\begin{equation*}
\big(f_1f_2\cdots f_n\big)' = f_1'f_2f_3\cdots f_n + f_1f_2'f_3\cdots f_n +
\cdots + f_1\cdots f_{n-1}f_n'.
\end{equation*}
\end{exercise}
\begin{solution}
We prove this by induction on $n$. The base case $n=2$ is clear by the well known
product rule.
Now assume the result is true for some integer $n\geq 2$.
Consider $n+1$ functions $f_1,\cdots, f_{n+1}$ and their product, which
we write as $(f_1\cdots f_n)\cdot f_{n+1}$. Then, by using the induction
hypothesis and the case $n=2$, we get
\begin{align*}
(f_1\cdots f_{n+1})' &= \big( (f_1\cdots f_n)f_{n+1}\big)' 
=
(f_1\cdots f_n)'f_{n+1} + (f_1\cdots f_n)f'_{n+1}
\\
&=  (f_1'f_2\cdots f_n + \cdots + f_1\cdots f_{n-1}f_n')f_{n+1} + 
(f_1\cdots f_n)f'_{n+1}\\
&=f_1'f_2\cdots f_{n+1} + \cdots + f_1\cdots f_nf_{n+1}',
\end{align*}
as required. 
By the Principle of Mathematical Induction, the result is true for every
integer $n\geq 2$. 
\opt{sol}{Could also do Leibniz rule for $n$th derivative of $fg$.}
\end{solution}

\begin{exercise}[more fun with calculus]
Consider the function
\begin{equation*}
f(x) = \frac{1}{x}.
\end{equation*}
Compute a few derivatives of $f$ and guess a formula for $f^{(n)}$,
the $n$th derivative of $f$. Then prove your formula by induction. 
\end{exercise}
\begin{solution}
We have $f(x) = f^{(0)}(x) = 1/x = x^{-1}$,
$f'(x) = f^{(1)}(x) = (x^{-1})' = (-1)x^{-2} = -x^{-2}$, 
$f''(x) = f^{(2)}(x) = (-x^{-2})' = (-2)(-1)x^{-3} = 2x^{-3}$, etc.
This suggests the formula
\begin{equation*}
f^{(n)}(x) = \frac{(-1)^n n!}{x^{n+1}}, \quad\text{for $n\geq 0$.}
\end{equation*}
The base case, $n=0$, is clear since $f(x) = 1/x = (-1)^{0} 0! /x^{0+1}$.
Now assume the result is true for some $n\geq 0$.
Then
\begin{align*}
f^{(n+1)}(x) &= \frac{d}{dx} f^{(n)}(x)
= \frac{d}{dx}\frac{(-1)^n n!}{x^{n+1}}
= (-1)^n n! \frac{d}{dx} x^{-n-1}\\
&= (-1)^n n! (-n-1)x^{-n-2}
= (-1)^{n+1} n!(n+1)x^{-((n+1)+1)}\\
&= \frac{(-1)^{n+1} (n+1)!}{x^{(n+1)+1}},
\end{align*}
as required.
By the Principle of Mathematical Induction, the result is true for every
$n\in\NN$. 
\end{solution}

\begin{exercise}[even more fun with calculus]
Consider the function
\begin{equation*}
f(x) = \frac{1}{1-x^2},
\end{equation*}
and denote its $n$th derivative by 
$f^{(n)}$. Show that 
\begin{equation*}
f^{(n)}(x) = \frac{n!}{2}\left(\frac{1}{(1-x)^{n+1}} +
\frac{(-1)^n}{(1+x)^{n+1}}\right), \quad \text{for $n\geq 0$.}
\end{equation*}
\end{exercise}
\begin{solution}
The base case, $n=0$, is clear since 
\begin{align*}
f^{(0)}(x) &= f(x) = \frac{1}{1-x^2} = 
\frac{1}{2}\left( \frac{1}{1-x}+\frac{1}{1+x}\right)\\
&= \frac{0!}{2}\left( \frac{1}{(1-x)^{0+1}} +
\frac{(-1)^0}{(1+x)^{0+1}}\right). 
\end{align*}
Now assume the result is true for some $n\geq 0$.
Then
\begin{align*}
f^{(n+1)}(x) &= \frac{d}{dx} f^{(n)}(x)
= \frac{d}{dx}\frac{n!}{2}\left(\frac{1}{(1-x)^{n+1}} +
\frac{(-1)^n}{(1+x)^{n+1}}\right) \\
&= \frac{n!}{2} \left( \frac{d}{dx}\left( (1-x)^{-n-1}\right) +
(-1)^n\frac{d}{dx}\left((1+x)^{-n-1}\right)\right)\\
&= \frac{n!}{2}\left( (-n-1)(1-x)^{-n-2}(-1) + (-1)^n(-n-1)(1+x)^{-n-2}\right)\\
&= \frac{n!}{2}(n+1)\left( (1-x)^{-((n+1)+1)} +
(-1)^{n+1}(1+x)^{-((n+1)+1)} \right) \\
&= \frac{(n+1)!}{2} \left( \frac{1}{(1-x)^{(n+1)+1}} +
\frac{(-1)^{n+1}}{(1+x)^{(n+1)+1}}\right),
\end{align*}
as required.
By the Principle of Mathematical Induction, the result is true for every
$n\in\NN$. 
\end{solution}

\begin{exercise}[The Twelve Days of Christmas] \
\begin{enumerate}
\item Locate the lyrics of the carol ``The Twelve Days of
Christmas''.
\item How many gifts in total did your true love send to you?
\emph{Hint:} The answer is \underline{not} $1+2+\cdots + 12= 78$ because except
for the last gift each 
gift is received more than once. 
\item Suppose the carol was called 
``The $n$ days of Christmas'', where $n\in\NN$.
How many gifts did your true love send to you in total?
\emph{Hint:} You may use \eqref{e:100906:a}. 
\end{enumerate}
\index{Twelve Days of Christmas} 
\end{exercise}
\begin{solution}
See, e.g.,
\begin{center}
{\url{https://www.poetryfoundation.org/poems/42913/the-twelve-days-of-christmas}}
\end{center}
Indeed, using Theorem~\ref{t:Gauss} 
and Exercise~\ref{exo:100906:a}, we obtain
\begin{align*}
\sum_{k=1}^{n} (1+\cdots + k) 
&= \sum_{k=1}^{n}\frac{k(k+1)}{2} 
= \thalb\sum_{k=1}^{n} k + \thalb\sum_{k=1}^n k^2\\
&= \frac{n(n+1)}{4} + \frac{n(n+1)(2n+1)}{12}\\
&= \frac{n(n+1)}{12}\big(3+2n+1\big)
= \frac{n(n+1)}{12}(2n+4)\\ 
&= \frac{n(n+1)(n+2)}{6}.
\end{align*}
When $n=12$, we get $(12)(13)(14)/6 = 364$. 
\end{solution}

\begin{exercise}
Prove that for every integer $n\geq 8$, 
there exist $\alpha,\beta$ in $\NN$ such that 
$n=\alpha\cdot 3 + \beta \cdot 5$. 
\emph{Hint:}
These are actually three proofs by induction, with a common inductive step!
\end{exercise}
\begin{solution}
Denote the statement by $S(n)$.
Note that:
\begin{itemize}
\item $S(8)$ is true since $8 = 1\cdot 3 + 1\cdot 5$.
\item $S(9)$ is true since $9 = 3\cdot 3 + 0\cdot 5$.
\item $S(10)$ is true since $10 = 0\cdot 3 + 2\cdot 5$.
\end{itemize}
These are the three \textbf{base cases}. 
We now prove that $S(8+3n)$, $S(9+3n)$, and $S(10+3n)$ are true;
luckily, the \textbf{inductive step} is identical for all three!

So suppose that $k\geq 8$ and $S(k)$ is true.
It suffices to show that $S(k+3)$ is true.
Write $k = \alpha\cdot 3+\beta\cdot 5$,
where $\alpha$ and $\beta$ are in $\NN$. 
Then $k+3=(\alpha+1)\cdot 3 +\beta\cdot 5$ and $S(k+3)$ is true.
Therefore, 
the result holds by the Principle of Mathematical Induction.
\opt{hhb}{See induction book page 602 for more.}
\end{solution}

\begin{exercise}
Prove that for every integer $n\geq 4$, 
there exist $\alpha,\beta$ in $\NN$ such that 
$n=\alpha\cdot 2 + \beta \cdot 5$. 
\emph{Hint:}
These are actually two proofs by induction, with a common inductive step!
\end{exercise}
\begin{solution}
Denote the statement by $S(n)$.
Note that:
\begin{itemize}
\item $S(4)$ is true since $4 = 2\cdot 2 + 0\cdot 5$.
\item $S(5)$ is true since $5 = 0\cdot 2 + 1\cdot 5$.
\end{itemize}
These are the two \textbf{base cases}. 
We now prove that $S(4+2n)$ and $S(5+2n)$ are true;
luckily, the \textbf{inductive step} is identical for both!

So suppose that $k\geq 4$ and $S(k)$ is true.
It suffices to show that $S(k+2)$ is true.
Write $k = \alpha\cdot 2+\beta\cdot 5$,
where $\alpha$ and $\beta$ are in $\NN$. 
Then $k+2=(\alpha+1)\cdot 2 +\beta\cdot 5$ and $S(k+2)$ is true.
Therefore, 
the result holds by the Principle of Mathematical Induction.
\opt{hhb}{See induction book page 602 for more.}
\end{solution}

\begin{exercise}
Consider the number 5. \emph{Taking order into account},
we can write 5 in six different ways as a sum of 3 
strictly positive integers:
\begin{equation*}
5 = 1 + 1 +3 = 1 + 3 + 1 = 3+1+1 = 1 + 2 + 2 = 2 + 1 + 2 = 2 + 2 +1.
\end{equation*}
In how many ways can we write $n$, where $n\geq 3$,
as a sum of 3 strictly positive integers, taking order into
account?
\end{exercise}
\begin{solution}
Write 
\begin{equation*}
n = 1 + 1 + \cdots + 1,
\end{equation*}
and observe that we have $n-1$ plus signs. Any choice of two
gives rise to an ordered sum. So the answer is ${n-1 \choose 2}$ by
the Combinations Theorem~\ref{t:combinations}.
\end{solution}

\begin{exercise}
Show that 
\begin{equation*}
s(n) := \frac{n^5}{5} + \frac{n^4}{2} + \frac{n^3}{3} - \frac{n}{30}
\end{equation*}
is an integer for every $n\in\NN$.
\end{exercise}
\begin{solution}
We prove this by induction. The result is clear for the base
case, $n=0$, since $s(0)$ is obviously equal to $0$.

Now assume that $s(n)$ is an integer, for some $\nnn$.
Then, using Pascal's Triangle, 
\begin{align*}
s(n+1) &=
\frac{(n+1)^5}{5} + \frac{(n+1)^4}{2} + \frac{(n+1)^3}{3} -
\frac{n+1}{30}\\
&= \frac{n^5 + 5n^4 + 10n^3 + 10n^2 + 5n+1}{5}\\
&\qquad + \frac{n^4 + 4n^3 + 6n^2 + 4n+ 1}{2}\\
&\qquad + \frac{n^3 + 3n^2 + 3n+1}{3} - \frac{n+1}{30}\\
&= \left(\frac{n^5}{5} + \frac{n^4}{2} + \frac{n^3}{3} -
\frac{n}{30} \right)\\
&\qquad + \left(n^4 + 2n^3 + 2n^3 + 2n^2 + 3n^2 + n^2 + n + 2n +n
\right)\\
&\qquad + \left(\tfrac{1}{5} + \tfrac{1}{2} + \tfrac{1}{3}
-\tfrac{1}{30}\right)\\
&= s(n) + n^4 + 4n^3 + 6n^2 + 4n + 1,
\end{align*}
which is clearly an integer. 
Thus, by the Principle of Mathematical Induction, the result is
proven. 
\end{solution}

\begin{exercise}[The Principle of Strong Induction]
\index{strong induction}
\label{exo:strongind}
The \emph{Principle of Strong Induction} states the following:
\begin{quotation}
Let $n_0$ be an integer and let $T(n)$ be a statement, 
formulated  for all integers $n\geq n_0$. 
Suppose we are able to verify the following.
\begin{enumerate}
\item \textbf{Base Case}: $T(n_0)$ is true.
\item \textbf{Inductive Step}: If $n$ is an integer $\geq n_0$ and
$T(n_0)$, $T(n_0+1)$,\ldots, $T(n)$ are true, then $T(n+1)$ is also true.
\end{enumerate}
Then the statement $T(n)$ is true for \emph{all} integers $n\geq n_0$.
\end{quotation}
Prove the Principle of Strong Induction using the Principle of (ordinary)
Induction.
\end{exercise}
\begin{solution}
For every integer $n\geq n_0$, define the statement 
\begin{center}$S(n)$ by ``$T(n_0)$ and
$T(n_0+1)$ and \ldots\ and $T(n)$''. 
\end{center}
We shall prove that $S(n)$ is true for all integers $n\geq n_0$ by regular induction.

The base case is clear by (i).

Now assume that $S(n)$ is true for some $n\geq n_0$. 
Then, by (ii), $T(n+1)$ is true. Hence $S(n+1)$ is true as well.

By the Principle of Mathematical Induction, $S(n)$ is true for all $n\geq
n_0$.

Therefore, $T(n)$ is also true for all $n\geq n_0$. 
\end{solution}

\begin{exercise}[factorization into prime numbers]
\label{exo:primefactors}
Recall a number $p\in\{2,3,4,\ldots\}$ is \emph{prime}
if $p$ is only divisible (without remainder) by $\pm 1$ and $\pm
p$. Use the principle of strong induction to show that
every integer $n\geq 2$ can be written as a product of prime
numbers.
\emph{Aside comment:} You will see in Number Theory that the
factorization is ``unique''.
\index{Prime number}
\end{exercise}
\begin{solution}
Clearly $n=2$ can written as a product of primes since $2$ is
prime.
Now assume the statement holds for all integers
$2,3,\ldots,n$. 
Consider the number $n+1$.
If $n+1$ is prime, then we are done.
So assume $n+1$ is not prime.
Then $n+1 = ab$, where $2\leq a \leq n$ and
$2\leq b\leq n$. By the hypothesis, we can factor
both $a$ and $b$ into primes, say
$a=p_1p_2\cdots p_k$ and $b= q_1q_2\cdots q_l$.
Then $n+1=ab = p_1p_2\cdots p_kq_1q_2\cdots q_l$
is indeed a product of primes.
Therefore, by the Principle of Strong Induction, we're done. 
\end{solution}

\begin{exercise}[infinitely many primes]
Show that there are infinitely many primes.
\emph{Hint:} Argue by contradiction and assume there are only
finitely many primes $p_1,\ldots,p_k$.
Now consider $n:=p_1p_2\cdots p_k+1$.
\end{exercise}
\begin{solution}
Since $n$ divided by any of the $p_i$ leaves a remainder of $1$,
it is clear that $n$ is not divisible by any of the numbers
$p_1,\ldots,p_k$. 
On the other hand, by the previous exercise,
there exists a prime number $q$ dividing $n$.
Clearly, $q\notin\{p_1,\ldots,p_k\}$ and we have found
a new prime number not in our list of all prime numbers,
which is absurd.
\end{solution}

\begin{exercise}
\label{exo:walaa2}
Let $\nnn$. 
Using mathematical induction and 
Theorem~\ref{t:Gauss}, show that
\begin{equation*}
1^3 + 2^3 + \cdots + n^3 = (1+2+\cdots +n)^2.
\end{equation*}
\end{exercise}
\begin{solution}
The base case, $n=0$, is obviously true since we obtain $0=0$ by
the empty sum convention.

Now assume the identity holds for some $\nnn$.
Then, using the inductive hypothesis and Theorem~\ref{t:Gauss} (twice), 
we obtain
\begin{align*}
1^3 + 2^3 + \cdots + (n+1)^3
&= \big(1^3 + 2^3+\cdots + n^3) + (n+1)^3\\
&= (1+2 + \cdots + n)^2 + (n+1)^3\\
& = \left(\frac{n(n+1)}{2}\right)^2 + (n+1)^3 \\
& = \frac{(n+1)^2}{4}\big( n^2 + 4(n+1)\big)\\
&=  \frac{(n+1)^2}{4}(n+2)^2\\
&= \left( \frac{(n+1)(n+2)}{2}\right)^2\\
&= \big( 1+ 2+ \cdots + (n+1)\big)^2,
\end{align*}
as required.
Therefore, the entire result is proven by the 
Principle of Mathematical Induction. 
\end{solution}

\begin{exercise}[fun with trig calculus]
\label{exo:walaa4}
Show that
\begin{equation*}
\frac{d^n}{dx^n}\big(\sin x\big) = \sin\big(x+\tfrac{n}{2}\pi\big)
\end{equation*}
for every $\nnn$.
\end{exercise}
\begin{solution}
The result is obvious for $n=0$.

Now assume the formula holds for some $n\in\NN$.
Then, using the inductive hypothesis,
we obtain
\begin{align*}
\frac{d^{n+1}}{dx^{n+1}}\big(\sin x\big) 
&= \frac{d}{dx} \left(\frac{d^{n}}{dx^{n}}\big(\sin x\big)\right)\\
&= \frac{d}{dx} \sin\big(x+\tfrac{n}{2}\pi\big)\\
&= \cos\big(x+\tfrac{n}{2}\pi\big).
\end{align*}
On the other hand, 
since $\sin(y+\pi/2) = \cos(y)$, we see that
\begin{equation*}
\cos\big(x+\tfrac{n}{2}\pi\big) = 
\sin\big(x+\tfrac{n+1}{2}\pi\big).
\end{equation*}
Altogether, we have verified the formula for $n+1$.
Therefore, by the Principle of Mathematical Induction, the identity is
proven.
\end{solution}

\begin{exercise}
\label{exo:wasser2}
Let $n$ be an integer such that $n\geq 1$. Show that
\begin{equation}
\label{e:wasser2:a}
\frac{1}{2!} + \frac{2}{3!}+\cdots + \frac{n}{(n+1)!} = 1 -
\frac{1}{(n+1)!}. 
\end{equation}
\end{exercise}
\begin{solution}
We prove this result by using the Principle of Mathematical Induction,
with $n_0 = 1$. The statement $S(n)$ is \eqref{e:wasser2:a}. 

\textbf{Base Case}: When $n=1$,
the claimed identity \eqref{e:wasser2:a} reads
\begin{equation}
\frac{1}{2!} = 1 - \frac{1}{(1+1)!}
\end{equation}
which simplifies to $1/2=1/2$, 
which is obviously true.

\textbf{Inductive Step}:
Now suppose that $n\in \NN$, that $n\geq 1$, and that 
$S(n)$ is true, i.e., that 
\eqref{e:wasser2:a} is true for this $n$.
Our job is to show that $S(n+1)$ is true, i.e.,
that \eqref{e:wasser2:a} is true when $n$ is replaced by
$n+1$. That is, we must prove that
\begin{equation}
\label{e:wasser2:b}
\frac{1}{2!} + \frac{2}{3!}+\cdots + \frac{n}{(n+1)!} 
+ \frac{n+1}{((n+1)+1)!}
\stackrel{?}{=}
1 - \frac{1}{((n+1)+1)!}. 
\end{equation}
Using the \textbf{induction hypothesis}, we see that
\begin{align*}
\frac{1}{2!} + \frac{2}{3!}+\cdots + \frac{n}{(n+1)!} 
+ \frac{n+1}{(n+1)!} 
&= 1- \frac{1}{(n+1)!}+ \frac{n+1}{((n+1)+1)!} \\
&= 1 - \frac{(n+2)-(n+1)}{(n+2)!}\\
&= 1 - \frac{1}{(n+2)!}\\
&= 1 - \frac{1}{((n+1)+1)!}
\end{align*}
Hence \eqref{e:wasser2:b} is verified, i.e., $S(n+1)$ is true.
Therefore, by the Principle of Mathematical Induction
(Fact~\ref{f:induction}), the formula \eqref{e:wasser2:a} is true for
every integer $n\geq 1$.
\end{solution}

\begin{exercise}[triominos]
An ``L-shaped'' union of 3 unit squares looking like 
\begin{center}
\includegraphics[height=1cm]{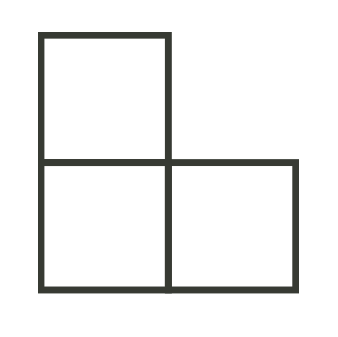}
\end{center}
is called a \emph{triomino}.
Let $n$ be an integer such that $n\geq 1$.
Show that a $2^n \times 2^n$ grid of unit squares, with one
square removed, can always be covered (without overlapping) 
by triominos.
\end{exercise}
\begin{solution}
This is a charming proof by induction on $n$.

The base case, $n=1$, is obvious since removing
1 square from a $2\times 2$ grid leaves us precisely with a
triomino. 

Now assume the result is true for $n\geq 1$ and consider
a $2^{n+1}\times 2^{n+1}$ grid with one black square removed.
Divide the grid into four $2^n\times 2^n$ grids and place
a triomino as shown in the following picture:
\begin{center}
\includegraphics[height=4cm]{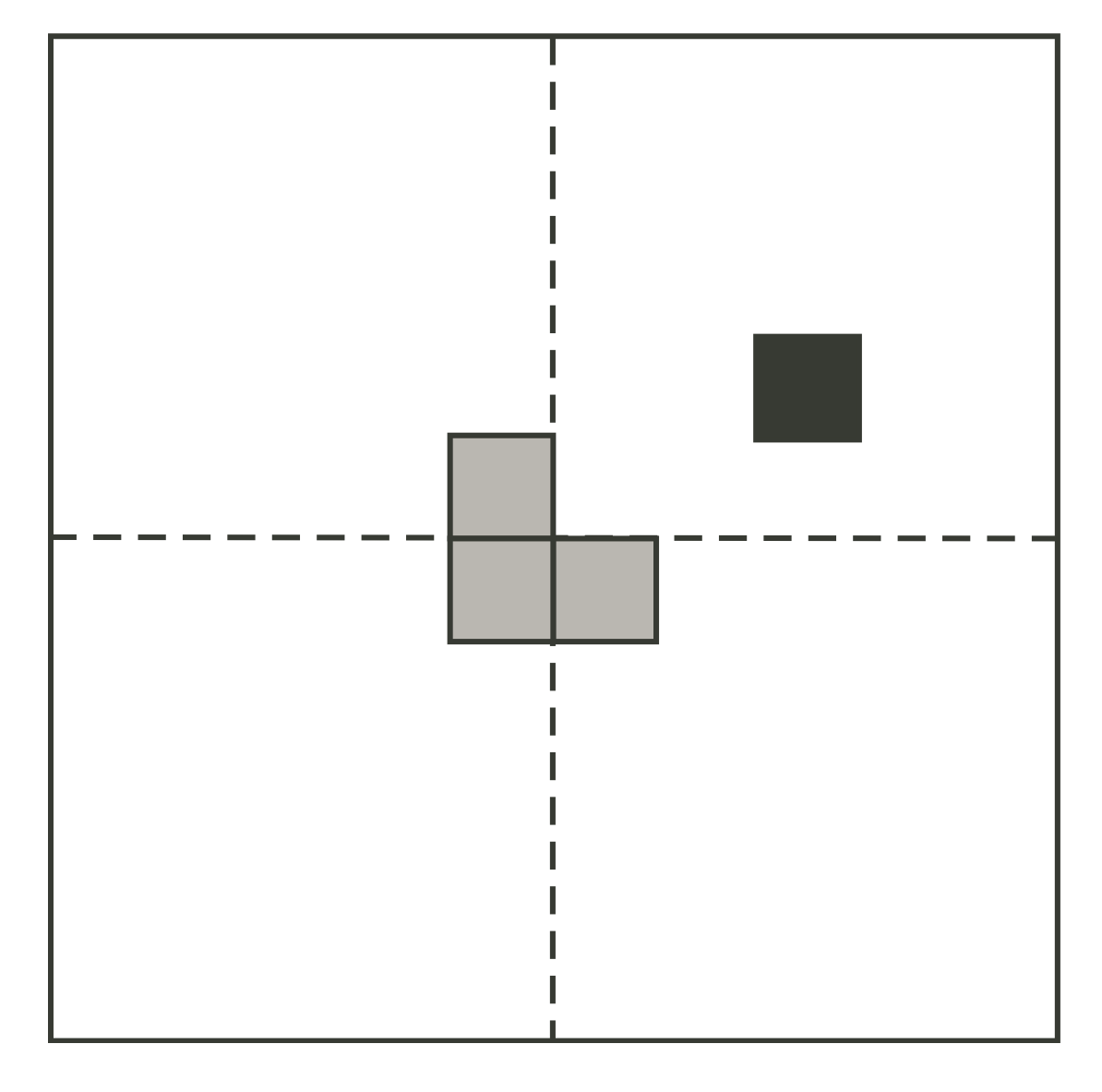}
\end{center}
By induction hypothesis, the four smaller grids can be covered
with triominos which completes the proof. 
\end{solution}

\begin{exercise}[Gauss revisited]
In Theorem~\ref{t:Gauss}, we proved by induction that 
for every integer $n\geq 1$, one has
\begin{equation}
\label{e:200714a}
1+2+\cdots + n = \frac{n(n+1)}{2}.
\end{equation}
In Remark~\ref{r:Gauss},
we pointed out an alternative approach to this formula.
Use Gauss's approach to prove \eqref{e:200714a} by 
(i) considering first case when $n$ is even;
and then (ii) considering the case when $n$ is odd and using (i).
\end{exercise}
\begin{proof}
(i): Suppose $n$ is even, say $n=2k$ for the integer $k=n/2$.
Then there are $k$ pairs which we group as follows:
\begin{align*}
1+2+\cdots+n 
&= 
(1+n)+\big(2+(n-1)\big)+\cdots + \big(k+(k-1)\big)\\
&=k(n+1)
=\frac{n}{2}(n+1)
\end{align*}
as claimed. 

(ii): Now suppose that $n$ is odd. 
Then $n-1$ is even, so by (i), we have 
\begin{equation*}
1+2+\cdots + (n-1) = \frac{(n-1)\big((n-1)+1\big)}{2}
= \frac{(n-1)n}{2}.
\end{equation*}
It follows that 
\begin{align*}
1+2+\cdots + n 
&= \big(1+2+\cdots + (n-1))+ n \\
&= \frac{(n-1)n}{2} + n 
= \frac{(n-1)n + 2n}{2}\\
&= \frac{n^2+n}{2}
= \frac{n(n+1)}{2}
\end{align*}
as required. 
\end{proof}

\begin{exercise}
Prove that, for every integer $n\geq 0$, 
\begin{equation}
\label{e:200720:a}
7 | (9^n-2^n).
\end{equation}
\end{exercise}
\begin{solution}
This result is very similar to Theorem~\ref{t:divide}.
We argue by induction, with $n_0=0$ and where
the statement $S(n)$ is \eqref{e:200720:a}.

\textbf{Base Case}:
When $n=0$, \eqref{e:200720:a} turns into $7|(9^0-2^0)$, i.e., into
$7|0$ which is obviously true.

\textbf{Inductive Step}:
Now suppose that $n\in\NN$ is such that $n\geq 0$ and $S(n)$ is true, i.e.,
\eqref{e:200720:a} holds. Our job is to show that $S(n+1)$ is true, i.e.,
\begin{equation}
\label{e:200720:b}
7 \stackrel{?}{|} (9^{n+1}-2^{n+1}).
\end{equation}
Now since $S(n)$ is true, we know there exists $k\in\ZZ$ such that
$9^n-2^n = 7\cdot k$. 
Thus, using this \textbf{induction hypothesis} in 
\eqref{e:100913:c}, we see that
\begin{subequations}
\begin{align}
9^{n+1}-2^{n+1} &= 9^{n+1} - 2\cdot 9^n + 2(9^n - 2^n)\\
&= 9^n(9-2) + 2(7\cdot k)\label{e:200720:c}\\
&= 7\big(9^n+2\cdot k\big).
\end{align}
\end{subequations}
Since $9^n+2k\in\ZZ$, it is clear that \eqref{e:200720:b} is true.
Thus, by the Principle of Mathematical Induction, we see that
\eqref{e:200720:a} is true for every integer $n\geq 0$. 
\end{solution}

\begin{exercise}
Find a simple formula for the sum
\begin{equation}
\label{e:200720:d}
\tfrac{1}{4^n}+\tfrac{1}{4^{n-1}}+\cdots + \tfrac{1}{4}+1+4+4^2 + \cdots +4^n,
\end{equation}
where $n\geq 1$ is an integer.
Provide a proof of your formula. 
\emph{Hint:} \eqref{e:geosum}.
\end{exercise}
\begin{solution}
Using \eqref{e:geosum}, we obtain
\begin{align*}
\tfrac{1}{4^n}+\tfrac{1}{4^{n-1}}+\cdots + \tfrac{1}{4}+1+4+4^2 + \cdots +4^n 
&= \frac{1}{4^n}\big(1+4 + 4^2 + \cdots + 4^{2n} \big)\\
&= \frac{1}{4^n}\cdot \frac{1-4^{2n+1}}{1-4}
= \frac{4^{2n+1}-1}{3\cdot 4^n}.
\end{align*}
\end{solution}

\begin{exercise}
\label{exo:200721a}
Use \emph{strong} induction to prove that 
$13^n$ can be written as a sum of two squares of integers
for every integer $n\geq 1$.\\
\emph{Hint:} Do this directly for $n=1$ and $n=2$.
Then assume the result is true for some $n\geq 2$ and
use the inductive hypothesis for $n-1$ to find the decomposition for $n+1$.
\end{exercise}
\begin{solution}
Let $S(n)$ be the statement in question. 

\textbf{Base Cases}:
When $n=1$, we have 
\begin{equation*}
    13^1 = 13 = 4+9 = 2^2 + 3^2.
\end{equation*}
And when $n=2$, we have
\begin{equation*}
    13^2 = 169 = 25+144 = 5^2+12^2.
\end{equation*}
(This can be found by brute force: 
Check $\sqrt{169-1^2}$, $\sqrt{169-2^2}$ etc.)
So the result holds for $n=1$ and $n=2$.

\textbf{Inductive Step}:
Now suppose that $n\in\NN$ is such that $n\geq 2$ and $S(1),S(2),\ldots,S(n)$ are 
true. In particular, $S(n-1)$ is true, say 
\begin{equation*}
13^{n-1} = a^2+b^2,
\end{equation*}
for some integers $a$ and $b$.
Note that we use here $n-1$, not the usual $n$,
to do the inductive step to $n+1$. Indeed,
\begin{align*}
13^{n+1}
&= 13^2\cdot 13^{n-1}
= 13^2(a^2+b^2)
=(13a)^2+(13b)^2
\end{align*}
and the result holds for $n+1$, i.e., $S(n+1)$ is true.
Thus, by the principle of strong mathematical induction, we see that
the statement is true for every positive integer $n$.
\end{solution}

\begin{exercise}[YOU be the marker!] 
Consider the following statement 
\begin{equation*}
\text{``Every positive integer is greater than $42$.''}
\end{equation*}
This is clearly not true (consider $1$ for instance).
Here is a ``proof'' that you are presented with:
\begin{quotation}
Suppose $n$ is a positive integer.\\
We assume the statement is true for this $n$, so $n>42$.\\
Then $n+1 > 42+1 = 43>42$.\\
Hence the statement is true for $n+1$.\\
Therefore, by the principle of mathematical induction, \\
the statement is true.
\end{quotation}
Why is this proof wrong?
\end{exercise}
\begin{solution}
This proof has no base case --- and the base case $n=1$ is not true!
The inductive step is done fine. 
(It is a proof that all integers bigger than $42$ are bigger than $42$.)
\end{solution}

\begin{exercise}[YOU be the marker!]
Consider the following statement 
\begin{equation*}
\text{``Every integer $n\geq 0$ satisfies $3|(7^n-4^n)$.''}
\end{equation*}
Here is a ``proof'' that you are presented with:
\begin{quotation}
Denote the statement by $S(n)$.\\
We prove this by induction on $n$. \\
The base case $n=0$ is true because $3|(7^0-4^0)$.\\
Now suppose $n\geq 0$ and $S(n)$ is true.\\
Because $n$ is arbitrary, the result is true for all $n$ and we are done.
\end{quotation}
Why is this proof wrong?
\end{exercise}
\begin{solution}
The base case is correct.
But the inductive step requires the result to be true for
some $n\geq 0$ and requires us to prove that the result 
is then true for $n+1$. This is the whole point of induction!
\end{solution}

\begin{exercise}[TRUE or FALSE?]
Mark each of the following statements as either true or false. 
Briefly justify your answer.
\begin{enumerate}
\item ``When doing a proof by induction, we must prove the base case 
and the inductive step.''
\item ``A proof by induction is a good technique to try to prove 
statements about real numbers.''
\item ``A proof by induction is a great approach to discover new theorems.''
\item ``There are statements that can be proved with 
strong induction but not with regular induction.''
\end{enumerate}
\end{exercise}
\begin{solution}
(i): TRUE: This is what is required in a proof by induction. 

(ii): FALSE: Unfortunately not. The problem is that there is no smallest 
real number $n_0$ and given $n$, there is not ``next larger'' real number $n+1$.

(iii): FALSE: You can use induction to try to prove results that are true 
--- you need to know what you want to prove \emph{in advance}!

(iv): TRUE: See, e.g., Exercise~\ref{exo:primefactors}.
(If you know how to factor $n=24$, say, then this does not help
you one bit to factor $n+1=25$.)
\end{solution} 
\chapter{Fields}

\label{cha:fields}

In this chapter, we study properties of fields.
Fields are sets sharing essential properties with
real numbers which we take for granted and which we denote by $\RR$.
We assume there are two operations defined for the real numbers, namely
\textbf{addition}
\begin{equation} 
+\colon \RR\times\RR\to \RR \colon (x,y)\mapsto x+y
\end{equation}
and \textbf{multiplication}
\begin{equation} 
\cdot\colon \RR\times\RR\to \RR \colon (x,y)\mapsto x\cdot y.
\end{equation}
One also writes $xy$ instead of $x\cdot y$. 
These two operations are assumed to satisfy certain properties
called the axioms of the field. 
\index{Real numbers}

\section{Axioms of Addition}

\index{Axioms of Addition}

We start with properties defined for addition.

\begin{definition}[Axioms of Addition] \ 
\begin{enumerate}
\item[\textbf{A1}]
\textbf{(associativity)}
For all $x,y,z$ in $\RR$, we have: $(x+y)+z = x+(y+z)$.
\item[\textbf{A2}]
\textbf{(commutativity)}
For all $x,y$ in $\RR$, we have: $x+y=y+x$.
\item[\textbf{A3}]
\textbf{(existence of zero)}
There exists a number $0\in\RR$ such that for every $x\in\RR$, we have
 $x+0=x$. 
\item[\textbf{A4}]
\textbf{(existence of the negative)}
For every $x\in\RR$, there exists a number $-x\in\RR$ such that
$x+(-x)=0$. 
\end{enumerate}
\end{definition}

\begin{proposition} The following hold.
\label{p:may26:1}
\begin{enumerate}
\item 
\label{p:may26:1i}
The number $0$ is unique.
\item 
\label{p:may26:1ii}
For every $x\in\RR$, $-x$ (i.e., the negative of $x$) is unique.
\item 
\label{p:may26:1iii}
$-0 = 0$.
\item 
\label{p:may26:1iv}
For every $x\in\RR$, we have: $-(-x)=x$.
\item 
\label{p:may26:1v}
For all $x,y$ in $\RR$, we have: $-(x+y)=-x+(-y)$.
\end{enumerate}
\end{proposition}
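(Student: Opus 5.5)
We derive each item directly from the Axioms of Addition A1--A4, proving them in the order stated since later items rest on the earlier uniqueness claims. The guiding principle is uniqueness: to show two quantities are equal, we show both satisfy the defining property of a zero or of a negative, and then invoke the uniqueness already established.

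For \ref{p:may26:1i}, suppose $0$ and $0'$ both satisfy A3. Applying A3 for $0'$ with $x=0$ gives $0+0'=0$. Applying A3 for $0$ with $x=0'$ gives $0'+0=0'$. By A2, $0+0'=0'+0$, so $0=0'$.

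For \ref{p:may26:1ii}, let $y$ and $z$ both satisfy $x+y=0$ and $x+z=0$. Then A1--A3 give
\begin{equation*}
y=y+0=y+(x+z)=(y+x)+z=(x+y)+z=0+z=z+0=z.
\end{equation*}
Justifying $0+z=z$ requires an A2 swap, because A3 only gives $z+0=z$. This bookkeeping of which axiom licenses each move is the main obstacle, since commutativity must be invoked explicitly at every such step.

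The remaining items use \ref{p:may26:1ii} in the form: if $x+y=0$, then $y=-x$.

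For \ref{p:may26:1iii}, A3 gives $0+0=0$. Hence $0$ is a negative of $0$, and uniqueness yields $-0=0$.

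For \ref{p:may26:1iv}, A2 and A4 give $(-x)+x=x+(-x)=0$. So $x$ is a negative of $-x$, and uniqueness yields $-(-x)=x$.

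For \ref{p:may26:1v}, we verify that $(-x)+(-y)$ is a negative of $x+y$. We regroup with A1 and A2 to get $(x+y)+((-x)+(-y))=(x+(-x))+(y+(-y))$. This equals $0+0=0$ by A4 and A3. Uniqueness then gives $-(x+y)=-x+(-y)$.

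The regrouping in \ref{p:may26:1v} needs a short chain of A1 and A2 steps, namely $x+(y+((-x)+(-y)))$, then $x+((-x)+(y+(-y)))$, and so on. Writing this chain out carefully is the only fiddly point in the whole proof.
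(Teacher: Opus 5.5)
Your proof is correct and follows essentially the same route as the paper: uniqueness of $0$ via A2 and A3, uniqueness of the negative by a short A1--A3 chain, and then items (iii)--(v) by exhibiting a candidate negative and invoking that uniqueness. The only cosmetic difference is in (ii), where you compare two arbitrary negatives $y,z$ while the paper compares an arbitrary $y$ directly with $-x$; this is the same argument.
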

\begin{proof}
\ref{p:may26:1i}:
Suppose there is another (possibly different) 
real number $0'\in\RR$ that acts like a zero, i.e., 
for every $x\in\RR$, $x+0'=x$; in particular (set $x=0$),
$0+0'=0$.
On the other hand, using \textbf{A3} with $x=0'$, we have
$0'+0=0'$.
Altogether, using \textbf{A2}, we deduce that
$0=0+0' = 0'+0=0'$, i.e., that $0=0'$. 

\ref{p:may26:1ii}:
Let $x\in\RR$ and suppose that $y$ is another (possibly different
from $-x$)
real number such that 
\begin{equation}
\label{e:may26:a}
x+y=0.
\end{equation}
Therefore, 
\begin{align*}
y &= y+0 \quad\text{[by \textbf{A3}]}\\
&= 0+y \quad\text{[by \textbf{A2}]}\\
&= \big(x+(-x)\big)+y \quad\text{[by \textbf{A4}]}\\
&= \big((-x)+x\big) + y \quad\text{[by \textbf{A2}]}\\
&= (-x)+(x + y) \quad\text{[by \textbf{A1}]}\\
&= (-x)+0 \quad\text{[by \eqref{e:may26:a}]}\\
&= -x \quad\text{[by \textbf{A3}],}
\end{align*}
as claimed. 

\ref{p:may26:1iii}:
On the one hand, by \textbf{A4} (with $x=0$), 
we have $0 + (-0) = 0$.
On the other hand, using \textbf{A3} (with $x=0$), we see that
$0+0 = 0$. 
Altogether, since $-0$ is unique by \ref{p:may26:1ii},
we deduce that $-0=0$. 

\ref{p:may26:1iv}:
Let $x\in\RR$. 
On the one hand,
$(-x) + (-(-x)) = 0$ by \textbf{A4} (applied to $-x$).
On the other hand, using \textbf{A2} and \textbf{A4}, 
we see that $(-x) + x = x+(-x) = 0$. 
Altogether, since $-(-x)$ is unique by
\ref{p:may26:1ii}, it follows that $x = -(-x)$. 

\ref{p:may26:1v}: 
Let $x$ and $y$ be in $\RR$. Then
\begin{align*}
(x+y) + \big( -x + (-y)\big) &= x + \Big( y + \big( -x+(-y)\big)\Big)
\quad\text{[by \textbf{A1}]}\\
&= x + \Big( y + \big( (-y)+(-x)\big)\Big)
\quad\text{[by \textbf{A2}]}\\
&= x + \Big( \big(y + (-y)\big)+(-x))\Big)
\quad\text{[by \textbf{A1}]}\\
&= x + \big( 0 +(-x)\big)
\quad\text{[by \textbf{A4}]}\\
&= x + \big( (-x) + 0\big)
\quad\text{[by \textbf{A2}]}\\
&= x + (-x) 
\quad\text{[by \textbf{A3}]}\\
&= 0
\quad \text{[by \textbf{A4}]}.
\end{align*}
In view of the uniqueness of the negative (apply \ref{p:may26:1ii} 
to $x+y$), we deduce that $-(x+y)=-x+(-y)$. 
\end{proof}

Given $x$ and $y$ in $\RR$, we define \textbf{subtraction} by 
\begin{equation}
x - y := x + (-y).
\end{equation}
We then have, e.g., that 
$x- x = x+(-x) = (-(-x))+ (-x) = 0$. 
Subtraction is useful for solving equations involving addition.

\begin{proposition}
\label{p:may26:2}
Let $a$ and $b$ be in $\RR$. Then the equation 
\begin{equation}
\label{e:may26:z}
a+x=b 
\end{equation}
has a unique solution, namely $x=b-a$. 
\end{proposition}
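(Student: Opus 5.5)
The statement has two parts: existence (the candidate $x=b-a$ actually solves \eqref{e:may26:z}) and uniqueness (any solution must equal $b-a$). I will prove them separately, justifying every manipulation by one of the axioms \textbf{A1}--\textbf{A4}, in the same line-by-line style as the proof of Proposition~\ref{p:may26:1}.

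\emph{Existence.} I substitute $x=b-a=b+(-a)$ into the left side of \eqref{e:may26:z} and compute
\begin{equation*}
a+\big(b+(-a)\big) = a+\big((-a)+b\big) = \big(a+(-a)\big)+b = 0+b = b+0 = b.
\end{equation*}
The justifications, in order, are \textbf{A2}, \textbf{A1}, \textbf{A4}, \textbf{A2}, and \textbf{A3}. This shows that $b-a$ is a solution.

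\emph{Uniqueness.} Suppose $y\in\RR$ is any number with $a+y=b$. Following the pattern of the proof of Proposition~\ref{p:may26:1}\ref{p:may26:1ii}, I write
\begin{equation*}
y = y+0 = 0+y = \big((-a)+a\big)+y = (-a)+(a+y) = (-a)+b = b+(-a) = b-a.
\end{equation*}
Here I use \textbf{A3}, then \textbf{A2}, then \textbf{A4} together with \textbf{A2} to rewrite $0$ as $(-a)+a$, then \textbf{A1}, then the hypothesis $a+y=b$, then \textbf{A2}, and finally the definition of subtraction. Hence every solution equals $b-a$.

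There is no real obstacle. The only care needed is to avoid silently using commutativity or associativity, and to invoke \textbf{A2} explicitly whenever the order of terms changes, since the axioms are stated one-sidedly (for example, $x+0=x$ and not $0+x=x$).
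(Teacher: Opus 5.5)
Your proof is correct and takes essentially the same approach as the paper's (Exercise~\ref{exo:may26:1}): the existence computation matches it step for step (\textbf{A2}, \textbf{A1}, \textbf{A4}, \textbf{A2}, \textbf{A3}). Your uniqueness chain is the paper's ``add $-a$ to both sides from the left'' argument, written as a single forward chain starting from $y$.
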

\begin{proof}
Exercise~\ref{exo:may26:1}.
\end{proof}

\section{Axioms of Multiplicaton}

\index{Axioms of Multiplication}
\index{Axiom of Distributivity}

\begin{definition}[Axioms of Multiplication and Distributivity] \ 
\begin{enumerate}
\item[\textbf{M1}]
\textbf{(associativity)}
For all $x,y,z$ in $\RR$, we have: $(xy)z = x(yz)$.
\item[\textbf{M2}]
\textbf{(commutativity)}
For all $x,y$ in $\RR$, we have: $xy=yx$.
\item[\textbf{M3}]
\textbf{(existence of one)}
There exists a number $1\in\RR\smallsetminus\{0\}$ 
such that for every $x\in\RR$, we have
 $x\cdot 1=x$. 
\item[\textbf{M4}]
\textbf{(existence of the inverse)}
For every $x\in\RR\smallsetminus\{0\}$, 
there exists a number $x^{-1}\in\RR$ such that
$x x^{-1}=1$. 
\item[\textbf{D}]
\textbf{(distributivity)}
For all $x,y,z$ in $\RR$, we have: $x(y+z) = xy+xz$. 
\end{enumerate}
\end{definition}

Completely analogous to Proposition~\ref{p:may26:1} and 
Proposition~\ref{p:may26:2}, we obtain the following 
two results.

\begin{proposition}
\label{p:100908:2}
The following hold.
\begin{enumerate}
\item The number $1$ is unique.
\item For every $x\in\RR\smallsetminus\{0\}$, $x^{-1}$ (i.e., the inverse
of $x$) is unique.
\item $1^{-1} = 1$.
\item 
\label{p:100908:2iv}
For every $x\in\RR\smallsetminus\{0\}$, we have:
$(x^{-1})^{-1} =x $.
\item 
\label{p:100908:2v}
For all $x,y$ in $\RR\smallsetminus\{0\}$, we have:
$(xy)^{-1} = x^{-1}\cdot y^{-1}$. 
\end{enumerate}
\end{proposition}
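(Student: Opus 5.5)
The plan is to mirror the proof of Proposition~\ref{p:may26:1} line by line, replacing \textbf{A1}--\textbf{A4} by \textbf{M1}--\textbf{M4}. The one new feature is that \textbf{M4} only applies to nonzero elements, so at each step I must confirm that every element whose inverse I use is nonzero.

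For (i), suppose $1'$ also satisfies $x\cdot 1'=x$ for all $x$. Taking $x=1$ gives $1\cdot 1'=1$, and \textbf{M3} with $x=1'$ gives $1'\cdot 1=1'$. Then \textbf{M2} yields $1=1\cdot1'=1'\cdot1=1'$.

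For (ii), let $x\neq 0$ and suppose $xy=1$. The chain
\[
y=y\cdot1=1\cdot y=(xx^{-1})y=(x^{-1}x)y=x^{-1}(xy)=x^{-1}\cdot 1=x^{-1}
\]
uses \textbf{M3}, \textbf{M2}, \textbf{M4}, \textbf{M2}, \textbf{M1}, the hypothesis, and \textbf{M3}, exactly as in Proposition~\ref{p:may26:1}\ref{p:may26:1ii}.

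For (iii), $1\neq0$ by \textbf{M3}, so $1^{-1}$ exists. Both $1\cdot1^{-1}=1$ and $1\cdot 1=1$ hold, so uniqueness from (ii) gives $1^{-1}=1$.

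Before (iv) and (v) I need an auxiliary fact: if $x\neq 0$ then $x^{-1}\neq 0$, and if $x,y\neq0$ then $xy\neq 0$. Both rest on the identity $z\cdot 0=0$ for every $z$, which is not stated earlier. I expect this to be the main obstacle, since the addition results never needed anything like it. I would prove it from \textbf{D}: $z\cdot0=z(0+0)=z\cdot0+z\cdot0$, and Proposition~\ref{p:may26:2} then gives $z\cdot 0=z\cdot0-z\cdot0=0$.

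With that in hand, if $x^{-1}=0$ then $1=xx^{-1}=x\cdot0=0$, which contradicts \textbf{M3}. If $xy=0$, then
\[
1=1\cdot 1=(xx^{-1})(yy^{-1})=(xy)(x^{-1}y^{-1})=0\cdot(x^{-1}y^{-1})=0,
\]
where the middle step regroups using \textbf{M1} and \textbf{M2}; again this contradicts \textbf{M3}.

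For (iv), $x^{-1}\neq0$, so $(x^{-1})^{-1}$ exists. We also have $x^{-1}x=xx^{-1}=1$ by \textbf{M2}, so uniqueness from (ii) applied to $x^{-1}$ gives $(x^{-1})^{-1}=x$.

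For (v), $xy\neq 0$, so $(xy)^{-1}$ exists. Regrouping with \textbf{M1}, \textbf{M2}, \textbf{M4} and \textbf{M3}, in the same order as in Proposition~\ref{p:may26:1}\ref{p:may26:1v}, shows $(xy)(x^{-1}y^{-1})=1$. Uniqueness from (ii) applied to $xy$ then gives $(xy)^{-1}=x^{-1}y^{-1}$.
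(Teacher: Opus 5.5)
Your proof is correct and follows the paper's argument item by item: the same uniqueness chains for (i) and (ii), and then uniqueness of inverses applied to $1$, $x^{-1}$ and $xy$ for (iii)--(v). The one addition is your explicit check that $x^{-1}\neq 0$ and $xy\neq 0$ via $z\cdot 0=0$ from \textbf{D}; the paper simply asserts the first in (iv) and leaves the second implicit in (v), and your check is not circular, since the paper's own proof of $x\cdot 0=0$ in Proposition~\ref{p:may26:4}\ref{p:may26:4ii} does not rely on the present result.
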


Given $x\in\RR$ and $y\in\RR\smallsetminus\{0\}$, we define 
\textbf{division} by
\begin{equation}
\frac{x}{y} := x/y := xy^{-1}.
\end{equation}
Note that for every $y\in\RR\smallsetminus\{0\}$, one has
$y/y = yy^{-1} = 1$. Division is important for solving equations involving
multiplication.

\begin{proposition}
\label{p:may26:3}
Let $a\in\RR\smallsetminus\{0\}$ and $b\in\RR$. 
Then the equation
\begin{equation}
\label{e:may26:y}
a\cdot x = b
\end{equation}
has a unique solution, namely $x=b/a$.
\end{proposition}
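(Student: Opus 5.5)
The plan mirrors the proof one would give for Proposition~\ref{p:may26:2}, with the multiplicative axioms \textbf{M1}--\textbf{M4} replacing \textbf{A1}--\textbf{A4}. The statement has two parts: \emph{existence}, namely that $x=b/a$ really solves \eqref{e:may26:y}, and \emph{uniqueness}, namely that any solution must equal $b/a$. I will prove them separately. Throughout, $a\neq 0$, so \textbf{M4} guarantees that $a^{-1}$ exists, and $b/a$ means $ba^{-1}$ by definition of division.

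\emph{Existence.} Substitute $x=ba^{-1}$ and compute $a\cdot(ba^{-1})$. I will rewrite it as $a(a^{-1}b)$ by \textbf{M2}, then as $(aa^{-1})b$ by \textbf{M1}. This equals $1\cdot b$ by \textbf{M4}, then $b\cdot 1$ by \textbf{M2}, and finally $b$ by \textbf{M3}. Each step should be annotated with the axiom used, in the style of the proof of Proposition~\ref{p:may26:1}\ref{p:may26:1ii}.

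\emph{Uniqueness.} Suppose $x\in\RR$ satisfies $ax=b$. I will start from $x=x\cdot 1$ (\textbf{M3}) and write this as $1\cdot x$ (\textbf{M2}). Then $1\cdot x=(aa^{-1})x$ (\textbf{M4}), which equals $(a^{-1}a)x$ (\textbf{M2}) and then $a^{-1}(ax)$ (\textbf{M1}). Using the hypothesis, this is $a^{-1}b$, which equals $ba^{-1}=b/a$ by \textbf{M2}. Hence every solution coincides with $b/a$.

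There is no real obstacle here; the only point needing care is to invoke \textbf{M4} only for the nonzero number $a$. I will also not use any property of $0$ such as $0\cdot x=0$, since that has not yet been established at this stage of the notes. The argument above avoids it entirely, so it is self-contained given the axioms.
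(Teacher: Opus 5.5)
Your proof is correct and follows essentially the same route as the paper's solution to the corresponding exercise. Existence uses the identical chain \textbf{M2}, \textbf{M1}, \textbf{M4}, \textbf{M2}, \textbf{M3}, and uniqueness left-multiplies by $a^{-1}$; you merely write the latter as one chain of equalities starting from $x=x\cdot 1$, rather than as ``multiply both sides of $ay=b$ by $a^{-1}$.''
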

\begin{proof}
Exercise~\ref{exo:may26:2}.
\end{proof}

\begin{proposition}
\label{p:may26:4}
The following hold.
\begin{enumerate}
\item
\label{p:may26:4i}
For all $x,y,z$ in $\RR$, we have:
$(x+y)z = xz+yz$.
\item 
\label{p:may26:4ii}
For every $x\in\RR$, we have $x\cdot 0 = 0$.
\item 
\label{p:may26:4iii}
For all $x,y$ in $\RR$ we have:
$xy = 0$ $\Leftrightarrow$
\emph{[}$x=0$ or $y=0$\emph{]}.
\item
\label{p:may26:4iv}
For every $x\in\RR$, we have:
$-x = (-1)x$.
\item
\label{p:may26:4v}
For all $x,y$ in $\RR$, we have:
$(-x)(-y)=xy$.
\end{enumerate}
\end{proposition}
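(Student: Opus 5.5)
We prove the five parts in order, each later part using the earlier ones, with the axioms \textbf{A1}--\textbf{A4}, \textbf{M1}--\textbf{M4}, \textbf{D} and the uniqueness results of Proposition~\ref{p:may26:1}, Proposition~\ref{p:may26:2}, Proposition~\ref{p:100908:2} and Proposition~\ref{p:may26:3}.

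For \ref{p:may26:4i}, we use \textbf{M2} three times together with \textbf{D}:
\[
(x+y)z = z(x+y) = zx+zy = xz+yz.
\]
For \ref{p:may26:4ii}, we observe that $x\cdot 0 = x(0+0) = x\cdot 0 + x\cdot 0$, by \textbf{A3} and \textbf{D}. Hence $x\cdot 0$ solves the equation $a+t=a$ with $a=x\cdot 0$. By \textbf{A3}, $t=0$ is also a solution, so the uniqueness in Proposition~\ref{p:may26:2} yields $x\cdot 0=0$. This is the one conceptually delicate step: we must avoid ``cancelling'' informally and instead invoke the uniqueness statement.

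For \ref{p:may26:4iii}:
\begin{itemize}
\item ``$\Leftarrow$'': this follows from \ref{p:may26:4ii}, together with \textbf{M2} when $x=0$.
\item ``$\Rightarrow$'': suppose $xy=0$ and $x\neq 0$. Then $y$ solves $x\cdot t=0$. By \ref{p:may26:4ii} and \textbf{M2}, $t=0$ is also a solution, so Proposition~\ref{p:may26:3} gives $y=0$. Alternatively, multiply by $x^{-1}$ and use \textbf{M1}, \textbf{M4}, \textbf{M3}.
\end{itemize}

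For \ref{p:may26:4iv}, we compute
\[
x+(-1)x = 1\cdot x+(-1)x = (1+(-1))x = 0\cdot x = 0,
\]
using \textbf{M3} with \textbf{M2}, part \ref{p:may26:4i}, \textbf{A4}, and \ref{p:may26:4ii} with \textbf{M2}. Uniqueness of the negative (Proposition~\ref{p:may26:1}\ref{p:may26:1ii}) then gives $(-1)x=-x$.

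For \ref{p:may26:4v}, by \ref{p:may26:4iv}, \textbf{M1} and \textbf{M2} we have $(-x)(-y) = (-1)(-1)xy$. It therefore suffices to show $(-1)(-1)=1$. Applying \ref{p:may26:4iv} to $x=-1$ gives $(-1)(-1) = -(-1)$, and Proposition~\ref{p:may26:1}\ref{p:may26:1iv} gives $-(-1)=1$.

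The main obstacle is bookkeeping rather than ideas: we must keep every use of commutativity explicit, since \textbf{D} and \textbf{M3} are only stated with the factor on one side.
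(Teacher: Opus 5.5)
Your proof is correct and follows the paper's argument essentially step for step: parts (i) and (iv) are identical, and in (ii) and (iii) you only formalize the paper's ``subtract $x\cdot 0$ from both sides'' and ``multiply by $x^{-1}$'' steps by invoking the uniqueness statements of Propositions~\ref{p:may26:2} and~\ref{p:may26:3}. The one small deviation is in (v), where you reduce to $(-1)(-1)=-(-1)=1$ instead of the paper's chain $(-x)(-y)=(-x)\big((-1)y\big)=\big((-1)(-x)\big)y=\big(-(-x)\big)y=xy$, but both rest on the same two facts, part (iv) and Proposition~\ref{p:may26:1}\ref{p:may26:1iv}.
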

\begin{proof}
Let $x,y,z$ be in $\RR$.

\ref{p:may26:4i}:
Using \textbf{M2} (repeatedly) and \textbf{D}, we see that
$(x+y)z = z(x+y) = zx + zy = xz + yz$. 

\ref{p:may26:4ii}:
Since $0+0=0$, it follows from \textbf{D} that
$x\cdot 0 + x\cdot 0 = x\cdot(0+0) = x\cdot 0$.
Subtracting $x\cdot 0$ from both sides (i.e.,
adding $-(x\cdot 0)$) yields $x\cdot 0 = 0$. 

\ref{p:may26:4iii}:
``$\Leftarrow$'': If $x=0$, then $xy = yx = y\cdot 0 = 0$ by
\ref{p:may26:4ii}, and similarly when $y=0$.
``$\Rightarrow$'': Now assume that $xy=0$.
If $x=0$, then we are done; thus, we assume that $x\neq 0$.
Multiply from the left by $x^{-1}$, 
and use \ref{p:may26:4ii}, \textbf{M1}, \textbf{M2}, \textbf{M4},
\textbf{M2}, and \textbf{M3}, to conclude that
$0 = x^{-1}0 = x^{-1}(xy) = (x^{-1}x)y = (xx^{-1})y = 1\cdot y = y\cdot 1 =
y$. Hence $y=0$. 

\ref{p:may26:4iv}:
By \textbf{M3}, \textbf{M2}, \ref{p:may26:4i}, \textbf{M2},
\ref{p:may26:4ii},  we obtain
$x+(-1)x = x\cdot 1 + (-1)x = 
1\cdot x + (-1)\cdot x = (1-1)\cdot x = 0\cdot x = x\cdot 0 = 0$.
Hence, because $-x$ is unique 
(Proposition~\ref{p:may26:1}\ref{p:may26:1ii}), 
we deduce that $-x = (-1)x$.  

\ref{p:may26:4v}:
Exercise~\ref{exo:100908:1}. 
\end{proof}

\section{Laws of Associativity, Commutativity, and Distributivity}

While addition is technically defined only for two terms, we extend it to
more than two terms by the following reduction:
\begin{equation}
x_1 + x_2 + \cdots + x_n := \Big( \cdots \big( (x_1+x_2)+x_3\big)+\cdots\Big)
+ x_n.
\end{equation}
Note that the expression on the right is actually independent of the way we
place the parentheses by \textbf{A1}. 
Analogous comments apply to multiplication and \textbf{M1}:
\begin{equation}
x_1 \cdot x_2 \cdot \cdots \cdot x_n := \Big( \cdots \big( (x_1\cdot x_2)\cdot
x_3\big)\cdot \,\cdots\Big)
\cdot x_n.
\end{equation}

Of course, in view of \textbf{A2} and \textbf{M2}, the \emph{order}
in which we add or multiply is not important. More formally,
let $(i_1,\ldots,i_n)$ be a permutation of $(1,\ldots,n)$. 
(E.g., $(4,2,1,3)$ is a permutation of $(1,2,3,4)$.)
Then
\begin{equation}
x_1+x_2 + \cdots + x_n = x_{i_1} + x_{i_2} + \cdots + x_{i_n}
\end{equation}
and
\begin{equation}
x_1\cdot x_2 \cdot \,\cdots\, \cdot x_n = x_{i_1} \cdot x_{i_2}
\cdot\,\cdots \,\cdot  x_{i_n}.
\end{equation}
(Which reflects that $x_1+x_2+x_3+x_4 = x_4 + x_2 + x_1+x_3$, etc.)

We can now state and prove a useful rule for manipulating double sums
(sometimes jokingly called the \emph{Fundamental Theorem of Accounting}
meaning that if you sum up all numbers in a table, it should not matter if
you sum along rows or along columns):
\begin{equation}
\label{e:100908:a}
\sum_{i=1}^{m}\sum_{j=1}^n x_{i,j} = \sum_{j=1}^n \sum_{i=1}^m x_{i,j}.
\end{equation}
Indeed, the left side of \eqref{e:100908:a} is
\begin{align*}
\sum_{i=1}^{m}\sum_{j=1}^n x_{i,j} &= 
\left(\sum_{j=1}^n x_{1,j}\right) + \cdots + \left(\sum_{j=1}^n
x_{m,j}\right)\\
&= \big(x_{1,1} + \cdots + x_{1,n}\big) \\
&\quad + \cdots\\
&\quad + \big(x_{m,1} + \cdots + x_{m,n}\big).
\end{align*}
On the other hand, the right side of \eqref{e:100908:a} is
\begin{align*}
\sum_{j=1}^n \sum_{i=1}^m x_{i,j} &= 
\left(  \sum_{i=1}^m x_{i,1}\right)
+ \cdots +
\left(  \sum_{i=1}^m x_{i,n}\right)\\
&= \big( x_{1,1} + \cdots + x_{m,1}\big)\\
&\quad + \cdots\\
&\quad + \big(x_{1,n} + \cdots + x_{m,n}\big). 
\end{align*}
Altogether, we see that both double sums have exactly the same terms, just
in a different order.
This verifies \eqref{e:100908:a}. 

Another useful identity is
\begin{equation}
\left( \sum_{i=1}^{m} x_i\right)\cdot
\left(\sum_{j=1}^{n}y_j\right)
= \sum_{i=1}^{m}\sum_{j=1}^{n} x_iy_j,
\end{equation}
which follows repeated application of
\textbf{D} and Proposition~\ref{p:may26:4}\ref{p:may26:4i}.

\section{Powers and Exponents}
\begin{definition}
Let $x\in\RR$ and let $n\in\{1,2,\ldots\}$. We define
\begin{equation}
x^0 := 1,
\end{equation}
\begin{equation}
x^n := \underbrace{x\cdot x \cdot\,\cdots\cdot x}_{\text{$n$ terms}},
\end{equation}
and 
\begin{equation}
x^{-n} := (x^{-1})^n,\quad\text{if $x\neq 0$.}
\end{equation}
\end{definition}

One may prove the following rules.

\begin{proposition}
\label{p:100908:1}
Let $x$ and $y$ be in $\RR$, and let $m$ and $n$ be in $\ZZ$.
Then the following hold.
\begin{enumerate}
\item 
$x^{n}x^{m} = x^{n+m}$.
\item
$(x^n)^m = x^{n\cdot m}$.
\item 
\label{p:100908:1iii}
$(xy)^n = x^n\cdot y^n$.
\end{enumerate}
(It is assumed here that $x$ or $y$ is nonzero as needed when 
negative exponents are encountered.)
\end{proposition}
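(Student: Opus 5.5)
The plan is to first prove all three rules for nonnegative exponents by induction, and then extend them to arbitrary integers by reducing negative exponents to positive ones via $x^{-n}=(x^{-1})^n$.

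\emph{Step 1 (nonnegative exponents).} Fix $n\in\NN$ and prove (i) $x^nx^m=x^{n+m}$ by induction on $m\in\NN$. The base case $m=0$ is $x^n\cdot 1=x^n$ by \textbf{M3}. For the inductive step, use the recursive description $x^{m+1}=x^m\cdot x$, which comes directly from the left-nested definition of iterated products. Then \textbf{M1} and the induction hypothesis give
\begin{equation*}
x^nx^{m+1}=(x^nx^m)x=x^{n+m}x=x^{n+m+1}.
\end{equation*}
Next prove (iii) $(xy)^m=x^my^m$ by induction on $m$, using \textbf{M1} and \textbf{M2} to regroup $x^my^m\,xy=(x^mx)(y^my)$. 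Finally prove (ii) $(x^n)^m=x^{nm}$ by induction on $m$, using (i):
\begin{equation*}
(x^n)^{m+1}=(x^n)^mx^n=x^{nm}x^n=x^{nm+n}.
\end{equation*}

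\emph{Step 2 (auxiliary facts for $x\neq 0$).} Show $x^n\neq 0$ for $n\in\NN$, by induction with Proposition~\ref{p:may26:4}\ref{p:may26:4iii}. Then show $(x^n)^{-1}=(x^{-1})^n=x^{-n}$ for $n\in\NN$. For this, apply (iii) with $y=x^{-1}$ to get $x^n(x^{-1})^n=(xx^{-1})^n=1^n=1$, and invoke uniqueness of inverses (Proposition~\ref{p:100908:2}). Consequently the identity $x^{-k}=(x^k)^{-1}$ holds for \emph{every} $k\in\ZZ$; for $k<0$ this uses Proposition~\ref{p:100908:2}\ref{p:100908:2iv}.

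\emph{Step 3 (all integers).} This is the main obstacle, mostly bookkeeping over sign cases. For (i), split according to the signs of $n$ and $m$. If both are negative, write $n=-a$, $m=-b$ and apply Step 1 to $x^{-1}$. If $n\geq 0>m$, say $m=-b$, split further according to whether $n\geq b$ or $n<b$. When $n\geq b$, write $x^n=x^{n-b}x^b$ by Step 1; then $x^b(x^{-1})^b=1$ by Step 2, leaving $x^{n-b}$. The case $n<b$ is symmetric, and the remaining sign case follows by \textbf{M2}.

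For (iii) with negative exponent $-n$, use Proposition~\ref{p:100908:2}\ref{p:100908:2v}, $(xy)^{-1}=x^{-1}y^{-1}$, and then Step 1. For (ii), use the identity $x^{-k}=(x^k)^{-1}$ from Step 2 together with the nonnegative case:
\begin{equation*}
(x^n)^{-b}=\big((x^n)^{b}\big)^{-1}=(x^{nb})^{-1}=x^{-nb}.
\end{equation*}
This covers $m<0$ with $n$ arbitrary, once (ii) is known for $m\geq 0$ and any $n\in\ZZ$. That in turn follows by induction on $m$ using the general (i).

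Care is needed to keep the nonzero hypotheses explicit wherever inverses appear.
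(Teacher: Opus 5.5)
Your proposal is correct. For item (iii) it is the same argument the paper gives in its exercise solution: induction for $n\geq 0$, then for negative exponents the reduction $(xy)^{-k}=((xy)^{-1})^k=(x^{-1}y^{-1})^k=(x^{-1})^k(y^{-1})^k$, using $(xy)^{-1}=x^{-1}y^{-1}$. The paper states (i) and (ii) without proof, so your sign-case analysis and the auxiliary identity $x^{-k}=(x^k)^{-1}$ for all $k\in\ZZ$ fill in what the paper leaves out.

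One small point: in the negative-exponent case of (ii) you need $x^n\neq 0$ also for $n<0$. This follows by applying your Step~2 nonvanishing fact to $x^{-1}$ instead of $x$.
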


\section{General Fields}

\label{sec:fields}

\index{Field}

\begin{definition}[field]
Let $\FF$ be a set along with two operations
\begin{equation} 
+\colon \FF\times\FF\to \FF \colon (x,y)\mapsto x+y
\end{equation}
and 
\begin{equation} 
\cdot\colon \FF\times\FF\to \FF \colon (x,y)\mapsto x\cdot y.
\end{equation}
such that 
\textbf{A1}--\textbf{A4},
\textbf{M1}--\textbf{M4},
and \textbf{D} hold
(with $\RR$ replaced by $\FF$, of course). 
Then $\FF$ is called a \textbf{field}.
\end{definition}

Note that all the rules derived in this chapter will hold because
we only used the axioms to derive these rules.

Important examples of fields are:
the real number $\RR$,
the rational numbers $\QQ$, 
and the so-called {complex numbers}:

\begin{definition}[complex numbers]
The set of \emph{complex numbers} is 
\begin{equation}
\mathbb{C} := \menge{x+\ii\cdot y}{x\in\RR,y\in\RR},
\quad \text{where} \quad
\ii := \sqrt{-1} 
\quad \text{so that} \quad
\ii^2 = -1.
\end{equation}
Here is how addition and multiplication are defined for two complex numbers
$z_1 = x_1+\ii y_1 $ and $z_2 = x_2 + \ii y_2$:
\begin{align}
\label{e:CCaddmult}
z_1+z_2 &:= (x_1+x_2)+\ii(y_1+y_2),\\
z_1\cdot z_2 &:= (x_1x_2-y_1y_2)+\ii(x_1y_2+x_2y_1).
\end{align}
The zero is $0+\ii 0$ and the one is $1+\ii 0$. 
Given $x+\ii y\in\CC$, the negative and inverse are
\begin{equation}
\label{e:CCinverses}
-(x+\ii y) := (-x) + \ii(-y)
\quad\text{and}\quad
(x+\ii y)^{-1} := \frac{x+\ii(-y)}{x^2 + y^2}.
\end{equation}
\end{definition}

The set of integers $\ZZ$ is \emph{not} a field:
While \textbf{A1}--\textbf{A4}
and \textbf{D} do hold, \textbf{M4} fails.
For instance, the integer $3$ does not have a multiplicative
inverse in $\ZZ$ because $1/3\notin\ZZ$. 
Similarly, the nonnegative integers $\NN$ do not form a field
(in fact, \textbf{A4} fails as well). 

A curious field is $\FF_2 = \{0,1\}$, where the operations are \emph{defined} by 
\hhbcom{A nice table would be great, time permitting}
\begin{subequations}
\label{e:F2add}
\begin{align}
0 + 0 := 0\\
1 + 0 := 1\\
0 + 1 := 1\\
1 + 1 := 0
\end{align}
\end{subequations}
and 
\begin{subequations}
\label{e:F2mult}
\begin{align}
0 \cdot 0 := 0\\
1 \cdot 0 := 0\\
0 \cdot 1 := 0\\
1 \cdot 1 := 1
\end{align}
\end{subequations}
By considering all possible case, we can directly verify that $\FF_2$ is
indeed a field. 
$\FF_2$ and other finite fields play an important role in Number Theory and
Algebra.
Note that in $\FF_2$, we have the rather curious looking identity $1+1=0$. 

In Algebra, it can be shown (with quite some work) that
each finite field is essentially unique and contains $p^n$
elements, where $p$ is a prime number and $n\in\{1,2,\ldots\}$. 

The next result is convenient for checking that subsets of $\RR$ are
fields.

\begin{proposition}
\label{p:subfield}
Let $\SR$ be a subset of $\RR$ such that the following hold:
\begin{enumerate}
\item 
$0\in\SR$.
\item
$1\in\SR$.
\item 
If $x$ and $y$ belong to $\SR$, then so does $x+y$.
\item 
If $x$ and $y$ belong to $\SR$, then so does $x\cdot y$.
\item 
If $x\in\SR$, then $-x\in\SR$.
\item 
If $x\in\SR\smallsetminus\{0\}$, then $x^{-1}\in\SR$.
\end{enumerate}
Then $\SR$ is also a field (with addition and multiplication
inherited from $\RR$).
\end{proposition}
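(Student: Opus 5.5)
The plan is to check, one axiom at a time, that \textbf{A1}--\textbf{A4}, \textbf{M1}--\textbf{M4}, and \textbf{D} hold for $\SR$ under the operations inherited from $\RR$. Before any axiom can be checked, the operations must make sense on $\SR$. So the first step is to observe that hypotheses (iii) and (iv) say exactly that restricting addition and multiplication of $\RR$ to $\SR\times\SR$ gives maps
\begin{equation*}
+\colon \SR\times\SR\to\SR
\quad\text{and}\quad
\cdot\colon \SR\times\SR\to\SR.
\end{equation*}
This closure is what makes $\SR$ a candidate for a field in the sense of Section~\ref{sec:fields}.

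Next I will split the axioms into two groups. The \emph{universal} axioms \textbf{A1}, \textbf{A2}, \textbf{M1}, \textbf{M2}, and \textbf{D} are statements of the form ``for all $x,y,z$''. Since every element of $\SR$ is also an element of $\RR$, and the operations on $\SR$ are those of $\RR$, each such identity holds in $\SR$ because it already holds in $\RR$. For instance, given $x,y,z\in\SR$, we have $(x+y)+z=x+(y+z)$ since this is \textbf{A1} in $\RR$. The others follow in exactly the same way, with no further work.

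The \emph{existence} axioms \textbf{A3}, \textbf{A4}, \textbf{M3}, and \textbf{M4} require the relevant special element to lie in $\SR$ itself, and this is where (i), (ii), (v), and (vi) come in.
\begin{itemize}
\item For \textbf{A3}, the number $0$ lies in $\SR$ by (i), and $x+0=x$ holds for all $x\in\SR\subseteq\RR$.
\item For \textbf{M3}, the number $1$ lies in $\SR$ by (ii), $1\neq 0$ (as in $\RR$), and $x\cdot 1=x$ holds for all $x\in\SR$.
\item For \textbf{A4}, given $x\in\SR$, its negative $-x$ (taken in $\RR$) lies in $\SR$ by (v) and satisfies $x+(-x)=0$.
\item For \textbf{M4}, given $x\in\SR\smallsetminus\{0\}$, its inverse $x^{-1}$ (taken in $\RR$) lies in $\SR$ by (vi) and satisfies $xx^{-1}=1$.
\end{itemize}

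I expect the only real subtlety to be conceptual rather than computational. One must make clear that the zero, one, negatives, and inverses used for $\SR$ are the same objects as those of $\RR$, so that no new uniqueness argument is needed; Propositions~\ref{p:may26:1} and \ref{p:100908:2} then apply automatically to $\SR$. Once closure and membership of these special elements are established, all the axioms are verified and $\SR$ is a field.
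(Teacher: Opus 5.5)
Your proof is correct and follows the same route as the paper: the universal axioms (\textbf{A1}, \textbf{A2}, \textbf{M1}, \textbf{M2}, \textbf{D}) are inherited from $\RR$, while closure under $+$ and $\cdot$ and the membership hypotheses for $0$, $1$, negatives and inverses guarantee that the operations stay inside $\SR$ and that the required special elements lie in $\SR$. Your version simply spells out the paper's one-line argument axiom by axiom.
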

\begin{proof}
Indeed, all axioms hold since they do hold for $\RR$.
The required assumptions are needed to guarantee that
all required objects exists in $\SR$ and that addition and multiplication
do not lead us outside $\SR$.
\end{proof} 

\section*{Exercises}\markright{Exercises}
\addcontentsline{toc}{section}{Exercises}
\setcounter{theorem}{0}

\begin{exercise}
\label{exo:may26:1} 
Verify Proposition~\ref{p:may26:2}.
(You need to check that (i) $x = b-a$ is a solution of 
\eqref{e:may26:z} and that (ii) there is no other solution.)
\end{exercise}
\begin{solution}
We need to show that (i) the equation has a solution and (ii) the solution
is unique.

(i): Using (in this order) the definition of subtraction,
\textbf{A2}, \textbf{A1}, \textbf{A4},
\textbf{A2}, and \textbf{A3},
we see that $a+(b-a) = a+\big(b+ (-a)\big) = a+\big( (-a)+b\big)
= \big(a + (-a)\big)+b = 0+b = b+0 = b$, so $b-a$ is a solution.

(ii): Now assume that $y$ is another (possibly different from $b-a$) 
solution, i.e.,
$a+y=b$. 
Adding $-a$ to both sides from the left, 
and also using \textbf{A1}, \textbf{A2}, \textbf{A4} and \textbf{A3}, 
we obtain 
$(-a)+(a+y) = (-a) + b$, i.e.,
$\big((-a)+a\big)+y = b+ (-a)$, i.e.,
$\big(a+(-a)\big)+y = b-a$, i.e.,
$0+y=y+0 = y =b-a$. 
\end{solution}

\begin{exercise}
Verify Proposition~\ref{p:100908:2}.
\end{exercise}
\begin{solution}
(i): Let $1'$ be a number that acts like a one.
Then $1\cdot 1' = 1$ and $1'\cdot 1 = 1'$ by \textbf{M3}.
In view of \textbf{M2}, we obtain
$1 = 1\cdot 1' = 1'\cdot 1 = 1'$. 

(ii):
Let $y$ be a number that acts like an inverse of $x$, where
$x\in \RR\smallsetminus\{0\}$.
Then 
\begin{equation}
\label{e:171011}
x\cdot y=1.
\end{equation}
Therefore, 
\begin{align*}
y &= y\cdot 1 \quad\text{[by \textbf{M3}]}\\
&= 1\cdot y \quad\text{[by \textbf{M2}]}\\
&= \big(x\cdot x^{-1}\big)\cdot y \quad\text{[by \textbf{M4}]}\\
&= \big(x^{-1}\cdot x\big) \cdot y \quad\text{[by \textbf{M2}]}\\
&= x^{-1}\cdot (x \cdot y) \quad\text{[by \textbf{M1}]}\\
&= x^{-1}\cdot 1 \quad\text{[by \eqref{e:171011}]}\\
&= x^{-1} \quad\text{[by \textbf{M3}],}
\end{align*}
as claimed. 

(iii): 
Since $1\cdot 1 =1$ by \textbf{M3}, 
it follows from (ii) that $1^{-1}=1$. 

(iv): 
Let $x\in\RR\smallsetminus\{0\}$.
By \textbf{M3} and \textbf{M2}, we have
$1=x\cdot x^{-1} = x^{-1}\cdot x$. 
On the other hand, $x^{-1}\neq 0$ and
$x^{-1}\cdot x = 1$; thus, by (ii),
$(x^{-1})^{-1}=x$. 

(v):
Since the inverse of $x\cdot y$ is unique,
it suffices to check that the candidate for the inverse does the
job. Indeed, 
\begin{align*}
(x\cdot y) \cdot \big( x^{-1}\cdot y^{-1}\big) &= x \cdot \Big(
y \cdot \big( x^{-1} \cdot y^{-1}\big)\Big)
\quad\text{[by \textbf{M1}]}\\
&= x \cdot \Big( y \cdot \big( y^{-1}\cdot x^{-1}\big)\Big)
\quad\text{[by \textbf{M2}]}\\
&= x \cdot \Big( \big(y \cdot y^{-1}\big)\cdot x^{-1}\Big)
\quad\text{[by \textbf{M1}]}\\
&= x \cdot \big( 1 \cdot x^{-}\big)
\quad\text{[by \textbf{M4}]}\\
&= x \cdot \big( x^{-1} \cdot 1\big)
\quad\text{[by \textbf{M2}]}\\
&= x \cdot x^{-1}
\quad\text{[by \textbf{M3}]}\\
&= 1
\quad \text{[by \textbf{M4}]}.
\end{align*}
\end{solution}

\begin{exercise}
\label{exo:may26:2} 
Verify Proposition~\ref{p:may26:3}. 
(You need to check that (i) $x = b/a$ is a solution 
of \eqref{e:may26:y} and that (ii) there is no other solution.)
What happens when $a=0$?
\end{exercise}
\begin{solution}
(i): Using (in this order) 
\textbf{M2}, \textbf{M1}, \textbf{M4},
\textbf{M2}, and \textbf{M3},
we see that $a\cdot b/a = a\cdot (b a^{-1}) = a\big(a^{-1}\cdot b\big)
= \big(a \cdot a^{-1}\big)\cdot b = 1\cdot b = b\cdot 1 = b$, so $b/a$ is a solution.

(ii): Now assume that $y$ is another (possibly different from $b/a$) solution, i.e.,
$ay=b$. Multiplying $a^{-1}$ to both sides from the left, 
and also using \textbf{M1}, \textbf{M2}, \textbf{M4} and \textbf{M3}, 
yields
$a^{-1}\cdot(ay) = a^{-1} \cdot b$, i.e.,
$\big(a^{-1}a\big)y = ba^{-1}$, i.e.,
$\big(aa^{-1}\big)y = b/a$, i.e.,
$1\cdot y=y\cdot 1 = y =b/a$. 

Now suppose that $a=0$. 
Then, 
using \textbf{M2} and Proposition~\ref{p:may26:4}\ref{p:may26:4ii}, 
$a\cdot x = 0\cdot x =x \cdot 0 =0$.

Thus if $b \neq 0$, 
then $a\cdot x= 0 \neq b$, so there is no solution.

And if $b=0$, then $a\cdot x = 0 = b$,
so every $x\in\RR$ is a solution. 
\end{solution}

\begin{exercise}
\label{exo:100908:1}
Verify Proposition~\ref{p:may26:4}\ref{p:may26:4v}.
\emph{Hint}: Use Proposition~\ref{p:may26:4}\ref{p:may26:4iv}. 
\end{exercise}
\begin{solution}
Indeed,
\begin{align*}
(-x)(-y) &= (-x)\big((-1)y\big) \quad\text{[by Proposition~\ref{p:may26:4}\ref{p:may26:4iv}]}\\
&= \big((-x)(-1)\big)y \quad\text{[by \textbf{M1}]}\\
&= \big((-1)(-x)\big)y \quad\text{[by \textbf{M2}]}\\
&= \big(-(-x)\big)y \quad\text{[by Proposition~\ref{p:may26:4}\ref{p:may26:4iv}]}\\
&= xy \quad\text{[by Proposition~\ref{p:may26:1}\ref{p:may26:1iv}].}
\end{align*}
The proof is complete. 
\end{solution}

\begin{exercise}
Prove the following:
For every $n\in\{2,3,4,\ldots\}$ and
real numbers $x_1,\ldots,x_n$ such that the product $x_1x_2\cdots x_n$
is equal to $0$, then at least one of the factors must be equal to $0$.
\end{exercise}
\begin{solution}
Denote the statement in question by $S(n)$.

\textbf{Base Case}: The statement is true for $n=2$ by 
the implication ``$\Rightarrow$'' of
Proposition~\ref{p:may26:4}\ref{p:may26:4iii}.

\textbf{Inductive Step}: Now let $n\in\{2,3,\ldots\}$ be such that
$S(n)$ is true. We must show that $S(n+1)$ is true. Thus assume
$x_1,\ldots,x_n,x_{n+1}$ are real numbers such that $x_1\cdots x_nx_{n+1} =
0$. Write this as $(x_1\cdots x_n)x_{n+1}=0$; here, we have a product with
two factors. Since the Base Case is true, we must have that
$x_1\cdots x_n=0$ or that $x_{n+1}=0$. 
In the latter case, we are done. In the former case, 
the inductive hypothesis yields that one of the numbers $x_1,\ldots,x_n$
must equal $0$, and we are done as well.
In either case, $S(n+1)$ is verified.

Therefore, the entire statement is proven using the Principle of
Mathematical Induction. 
\end{solution}

\begin{exercise}
Suppose that $x$ and $y$ are two nonzero real numbers. Show
that 
$\displaystyle \left(\frac{x}{y}\right)^{-1} = \frac{y}{x}$. 
\end{exercise}
\begin{solution}
By definition, $\tfrac{x}{y} = xy^{-1}$ and $\tfrac{y}{x} = yx^{-1}$,
which are both well defined because neither $x$ nor $y$ is equal to $0$.
The two quotients/products are nonzero by
Proposition~\ref{p:may26:4}\ref{p:may26:4iii}. 
Using Proposition~\ref{p:100908:2}\ref{p:100908:2v}, 
Proposition~\ref{p:100908:2}\ref{p:100908:2iv}, 
and \textbf{M2}, 
we have
\begin{equation*}
\left(\frac{x}{y}\right)^{-1} 
= \big(xy^{-1}\big)^{-1} 
= x^{-1}(y^{-1})^{-1} 
= x^{-1}y 
= yx^{-1}
= \frac{y}{x}, 
\end{equation*}
as required. 
\emph{Note}: Similarly you can prove all rules you have learnt
on manipulating fractions such as 
$\tfrac{a}{b}\big/\tfrac{x}{y} = \tfrac{ay}{bx}$, etc. 
\end{solution}

\begin{exercise}
Prove Proposition~\ref{p:100908:1}\ref{p:100908:1iii}.
\emph{Hint:} Consider two cases, $n\geq 0$ and $n<0$. 
In the former case, use induction. In the latter case, reduce to the former
case.
\end{exercise}
\begin{solution}
Assume first $n\geq 0$.
If $n=0$, then $(xy)^0 = 1$, $x^0=1$, and $y^0=1$ by definition and the
result is clear.
Now assume the result is true for some nonnegative integer $n\geq 0$.
Then
\begin{equation*}
(xy)^{n+1} = (xy)(xy)^n = (xy)\big(x^ny^n\big) = x\cdot y \cdot x^n\cdot
y^n = \big(x\cdot x^n\big)\cdot \big( y\cdot y^n\big)
= x^{n+1}y^{n+1}.
\end{equation*}
By the Principle of Mathematical Induction, the result holds for all
integers $n\geq 0$.

Now assume that $n<0$. Then $m := -n>0$ and 
by definition and using the already verified case
as well as Proposition~\ref{p:100908:2}\ref{p:100908:2v},
we obtain
\begin{align*}
x^n y^n &= x^{-m}y^{-m} = (x^{-1})^m (y^{-1})^m \\
&= \big( x^{-1}y^{-1}\big)^m = \big( (xy)^{-1}\big)^m = 
(xy)^{-m} = (xy)^n.
\end{align*}
(In this case, we must have that $x\neq 0$ and that $y\neq 0$.)
\end{solution}

\begin{exercise}
Check that $\FF_2$ is a field.
\end{exercise}
\begin{solution} 
OK, not much choice here --- we need to work through the nine axioms!

\textbf{A1}:
There are 8 cases to consider $x\in\{0,1\},y\in\{0,1\},z\in\{0,1\}$.
We shall use \eqref{e:F2add} repeatedly. 

\emph{Case~1:} $x=0$, $y=0$, $z=0$:\\
Then 
$(x+y)+z = (0+0)+0 = 0+0 = 0 = 0 + 0 = 0+(0+0) = x+(y+z)$.

\emph{Case~2:} $x=0$, $y=0$, $z=1$:\\
Then 
$(x+y)+z = (0+0)+1 = 0+1 = 1 = 0 + 1 = 0+(0+1) = x+(y+z)$. 

\emph{Case~3:} $x=0$, $y=1$, $z=0$:\\
Then 
$(x+y)+z = (0+1)+0 = 1+0 = 1 = 0 + 1 = 0+(1+0) = x+(y+z)$. 

\emph{Case~4:} $x=0$, $y=1$, $z=1$:\\
Then 
$(x+y)+z = (0+1)+1 = 1+1 = 0 = 0 + 0 = 0+(1+1) = x+(y+z)$. 

\emph{Case~5:} $x=1$, $y=0$, $z=0$:\\
Then 
$(x+y)+z = (1+0)+0 = 1+0 = 1 = 1 + 0 = 1+(0+0) = x+(y+z)$. 

\emph{Case~6:} $x=1$, $y=0$, $z=1$:\\
Then 
$(x+y)+z = (1+0)+1 = 1+1 = 0 = 1 + 1 = 1+(0+1) = x+(y+z)$. 

\emph{Case~7:} $x=1$, $y=1$, $z=0$:\\
Then 
$(x+y)+z = (1+1)+0 = 0+0 = 0 = 1 + 1 = 1+(1+0) = x+(y+z)$. 

\emph{Case~8:} $x=1$, $y=1$, $z=1$:\\
Then 
$(x+y)+z = (1+1)+1 = 0+1 = 1 = 1 + 0 = 1+(1+1) = x+(y+z)$. 

\textbf{A2}:
It is clear from \eqref{e:F2add} that this is true. 
(Or consider 4 cases: $x\in\{0,1\},y\in\{0,1\}$.)

\textbf{A3}:
It is clear from \eqref{e:F2add} that this is true. 
(Or consider 2 cases: $x\in\{0,1\}$.)

\textbf{A4}:
We define $-0 := 0$ and $-1 := 1$.
Then, by \eqref{e:F2add},
$0+(-0) = 0 + 0 = 0$ and 
$1+(-1) = 1 + 1 = 0$ as required.

\textbf{M1}:
There are 8 cases to consider $x\in\{0,1\},y\in\{0,1\},z\in\{0,1\}$.
We shall use \eqref{e:F2mult} repeatedly. 

\emph{Case~1:} $x=0$, $y=0$, $z=0$:\\
Then 
$(x\cdot y)\cdot z = (0\cdot 0)\cdot 0 = 0\cdot 0 = 0 = 0 \cdot 0 = 
0\cdot (0\cdot 0) = x\cdot (y\cdot z)$.

\emph{Case~2:} $x=0$, $y=0$, $z=1$:\\
Then 
$(x\cdot y)\cdot z = (0\cdot 0)\cdot 1 = 0\cdot 1 = 0 = 0 \cdot 0 
= 0\cdot (0\cdot 1) = x\cdot (y\cdot z)$. 

\emph{Case~3:} $x=0$, $y=1$, $z=0$:\\
Then 
$(x\cdot y)\cdot z = (0\cdot 1)\cdot 0 = 0\cdot 0 = 0 = 0 \cdot 0 = 
0\cdot (1\cdot 0) = x\cdot (y\cdot z)$. 

\emph{Case~4:} $x=0$, $y=1$, $z=1$:\\
Then 
$(x\cdot y)\cdot z = (0\cdot 1)\cdot 1 = 0\cdot 1 = 0 
= 0 \cdot  1 = 0\cdot (1\cdot 1) = x\cdot (y\cdot z)$. 

\emph{Case~5:} $x=1$, $y=0$, $z=0$:\\
Then 
$(x\cdot y)\cdot z = (1\cdot 0)\cdot 0 = 0\cdot 0 = 0 
= 1 \cdot 0 = 1\cdot (0\cdot 0) = x\cdot (y\cdot z)$. 

\emph{Case~6:} $x=1$, $y=0$, $z=1$:\\
Then 
$(x\cdot y)\cdot z = (1\cdot 0)\cdot 1 = 0\cdot 1 = 0 = 1 \cdot 0 = 
1\cdot (0\cdot 1) = x\cdot (y\cdot z)$. 

\emph{Case~7:} $x=1$, $y=1$, $z=0$:\\
Then 
$(x\cdot y)\cdot z = (1\cdot 1)\cdot 0 = 1\cdot 0 
= 0 = 1 \cdot 0 = 1\cdot (1\cdot 0) = x\cdot (y\cdot z)$. 

\emph{Case~8:} $x=1$, $y=1$, $z=1$:\\
Then 
$(x\cdot y)\cdot z = (1\cdot 1)\cdot 1 = 1 \cdot 1  
= 1 = 1 \cdot 1 = 1\cdot (1\cdot 1) = x\cdot (y\cdot z)$. 

\textbf{M2}:
It is clear from \eqref{e:F2mult} that this is true. 
(Or consider 4 cases: $x\in\{0,1\},y\in\{0,1\}$.)

\textbf{M3}:
It is clear from \eqref{e:F2mult} that this is true. 
(Or consider 2 cases: $x\in\{0,1\}$.)

\textbf{M4}:
We define $1^{-1} := 1$.
Then, by \eqref{e:F2mult},
$1\cdot 1^{-1} = 1 \cdot 1 = 1$ 
as required. 

\textbf{D}:
There are 8 cases to consider $x\in\{0,1\},y\in\{0,1\},z\in\{0,1\}$.
We shall use \eqref{e:F2add} and \eqref{e:F2mult} repeatedly. 

\emph{Case~1:} $x=0$, $y=0$, $z=0$:\\
Then 
$x(y+z) = 0\cdot(0+0) = 0\cdot 0 = 0 = 0 + 0 = 0\cdot 0 + 0 \cdot 0 = xy+xz$. 

\emph{Case~2:} $x=0$, $y=0$, $z=1$:\\
Then 
$x(y+z) = 0\cdot(0+1) = 0\cdot 1 = 0 = 0 + 0 = 0\cdot 0 + 0 \cdot 1 = xy+xz$. 

\emph{Case~3:} $x=0$, $y=1$, $z=0$:\\
Then 
$x(y+z) = 0\cdot(1+0) = 0\cdot 1 = 0 = 0 + 0 = 0\cdot 1 + 0 \cdot 0 = xy+xz$. 

\emph{Case~4:} $x=0$, $y=1$, $z=1$:\\
Then 
$x(y+z) = 0\cdot(1+1) = 0\cdot 0 = 0 = 0 + 0 = 0\cdot 1 + 0 \cdot 1 = xy+xz$. 

\emph{Case~5:} $x=1$, $y=0$, $z=0$:\\
Then 
$x(y+z) = 1\cdot(0+0) = 1\cdot 0 = 0 = 0 + 0 = 1\cdot 0 + 1 \cdot 0 = xy+xz$. 

\emph{Case~6:} $x=1$, $y=0$, $z=1$:\\
Then 
$x(y+z) = 1\cdot(0+1) = 1\cdot 1 = 1 = 0 + 1 = 1\cdot 0 + 1 \cdot 1 = xy+xz$. 

\emph{Case~7:} $x=1$, $y=1$, $z=0$:\\
Then 
$x(y+z) = 1\cdot(1+0) = 1\cdot 1 = 1 = 1 + 0 = 1\cdot 1 + 1 \cdot 0 = xy+xz$. 

\emph{Case~8:} $x=1$, $y=1$, $z=1$:\\
Then 
$x(y+z) = 1\cdot(1+1) = 1\cdot 0 = 0 = 1 + 1 = 1\cdot 1 + 1 \cdot 1 = xy+xz$. 
\end{solution}

\begin{exercise}
\label{exo:QQ}
Check that $\QQ$ is a field.
\emph{Hint:} Proposition~\ref{p:subfield}. 
\end{exercise}
\begin{solution}
Observe that $0 = 0/1$ and $1=1/1$ are rational numbers.
Let $x = a/b$ and $y=c/d$ be in $\QQ$,
where $a,b,c,d$ are integers, and $b,d$ are nonzero. 
Then $x+y = (ad + bc)/(bd) \in \QQ$
and $xy = (ac)/(bd)\in\QQ$. 
Moreover, $-x = (-a)/b \in \QQ$
and if $x\neq 0$, i.e., $a\neq 0$, then
$x^{-1} = b/a \in \QQ$ as well. 
The result thus follows from Proposition~\ref{p:subfield}. 
\end{solution}

\begin{exercise}
The complex numbers $\CC$ form a field\footnote{The complex numbers are crucial
in modern Physics and Engineering.}.
\begin{enumerate}
    \item Verify \textbf{A1}, \textbf{A2}, \textbf{A3}, and \textbf{A4}.
    \item Verify \textbf{M1}, \textbf{M2}, \textbf{M3}, and \textbf{M4}.
    \item Verify \textbf{D}. 
\end{enumerate}
\end{exercise}
\begin{solution} 
Let $z,z_1,z_2,z_3$ be in $\CC$, say 
$z=x +\ii y$, 
$z_1 = x_1+\ii y_1$, 
$z_2 = x_2+\ii y_2$, 
$z_3 = x_3+\ii y_3$, 
where 
$x,x_1,x_2,x_3,y,y_1,y_2,y_3$ are in $\RR$.

We now verify the nine axioms.
We will make use of \eqref{e:CCaddmult} 
and \eqref{e:CCinverses} repeatedly 
as well as the field axioms for the real numbers $\RR$: 

(i):
\textbf{A1}:
Indeed, using also \textbf{A1} for $\RR$, we get 
\begin{align*}
(z_1+z_2)+z_3
&= \big((x_1+\ii y_1) + (x_2+\ii y_2)\big) + (x_3+\ii y_3)
= \big((x_1+x_2) + \ii(y_1+y_2) \big) + (x_3+\ii y_3)\\
&= \big((x_1+x_2) + x_3 \big) +\ii\big((y_1+y_2)+y_3\big)
= \big(x_1+(x_2+x_3) \big) +\ii(\big(y_1+(y_2+y_3)\big)\\
&= (x_1+\ii y_1) + \big((x_2+x_3)+\ii(y_2+y_3)\big)
= (x_1+\ii y_1) + \big((x_2+\ii y_2)+(x_3+\ii y_3)\big)\\
&= z_1 + (z_2+z_3).
\end{align*}

\textbf{A2}:
Indeed, using also \textbf{A2} for $\RR$, we get 
\begin{align*}
z_1+z_2
&= (x_1+\ii y_1) + (x_2+\ii y_2)
= (x_1+x_2) + \ii(y_1+y_2)
= (x_2+x_1) + \ii(y_2+y_1)\\
&= (x_2+\ii y_2) + (x_1+\ii y_1)
= z_2+z_1.
\end{align*}

\textbf{A3}:
Indeed, using also \textbf{A3} for $\RR$, we get 
\begin{align*}
z+0 
&= (x+\ii y) + (0+\ii 0)
= (x+0) + \ii(y+0)
= x + \ii y
= z.
\end{align*}

\textbf{A4}:
Indeed, using also \textbf{A4} for $\RR$, we get 
\begin{align*}
z+ (-z)
&= (x+\ii y) + \big((-x)+\ii (-y)\big)
= \big(x+(-x)\big) + \ii\big(y+(-y)\big)
= 0 + \ii 0
= 0.
\end{align*}

(ii): \textbf{M1}:
Indeed, 
\begin{align*}
(z_1\cdot z_2)\cdot z_3 
&= \big((x_1+\ii y_1)\cdot (x_2+\ii y_2)\big)\cdot (x_3+\ii y_3)
= \big((x_1x_2-y_1y_2)+\ii(x_1y_2+x_2y_1) \big) \cdot (x_3+\ii y_3)\\
&= \big((x_1x_2-y_1y_2)x_3 - (x_1y_2+x_2y_1)y_3 \big) 
+ \ii\big((x_1x_2-y_1y_2)y_3+x_3(x_1y_2+x_2y_1)\big)\\
&= \big(x_1x_2x_3-y_1y_2x_3-x_1y_2y_3-y_1x_2y_3\big)
+\ii\big(x_1x_2y_3-y_1y_2y_3+x_1y_2x_3+y_1x_2x_3\big)\\
&= \big(x_1(x_2x_3-y_2y_3)-y_1(x_2y_3+x_3y_2) \big)
+ \ii\big(x_1(x_2y_3+x_3y_2)+ (x_2x_3-y_2y_3)y_1\big)\\
&= (x_1+\ii y_1)\big((x_2x_3-y_2y_3)+\ii(x_2y_3+x_3y_2)\big)
= (x_1+\ii y_1)\big((x_2+\ii y_2)\cdot (x_3+\ii y_3)\big)\\
&= z_1 \cdot (z_2\cdot z_3).
\end{align*}

\textbf{M2}:
Indeed, 
\begin{align*}
z_1\cdot z_2 
&= (x_1+\ii y_1)\cdot (x_2+\ii y_2)
= (x_1x_2-y_1y_2)+\ii(x_1y_2+x_2y_1)
= (x_2x_1-y_2y_1)+\ii(x_2y_1+x_1y_2)\\
&= (x_2+\ii y_2)\cdot (x_1+\ii y_1)
= z_2 \cdot z_1.
\end{align*}

\textbf{M3}:
Indeed, 
\begin{align*}
z \cdot 1
&= (x+\ii y)\cdot (1+\ii 0)
= (x\cdot 1 - y\cdot 0)+\ii(x\cdot 0 + 1\cdot y)
= (x-0)+\ii(0+y)
= x+\ii y
= z. 
\end{align*}

\textbf{M4}:
Indeed, assuming that $z\neq 0$, we have 
\begin{align*}
z \cdot z^{-1}
&= (x+\ii y)\cdot \Big(\frac{x}{x^2+y^2}+\ii\frac{-y}{x^2+y^2}\Big)\\
&= \Big(x\cdot \frac{x}{x^2+y^2}-y\frac{-y}{x^2+y^2}\Big) + 
\ii\Big(x\frac{-y}{x^2+y^2}+\frac{x}{x^2+y^2}y\Big)\\
&= \frac{x^2+(-y)^2}{x^2+y^2}
+ \ii\frac{x(-y)+xy}{x^2+y^2}
= 1+\ii 0
= 1.
\end{align*}

(iii): \textbf{D}:
Indeed, we have 
\begin{align*}
z_1(z_2+z_3)
&= (x_1+\ii y_1)\big((x_2+\ii y_2) + (x_3+\ii y_3)\big)
= (x_1+\ii y_1)\big((x_2+x_3)+\ii (y_2 +  y_3)\big)\\ 
&= \big(x_1(x_2+x_3)-y_1(y_2+y_3) \big)
+\ii\big(x_1(y_2+y_3)+(x_2+x_3)y_1 \big)\\ 
&= \big(x_1x_2+x_1x_3-y_1y_2-y_1y_3 \big)
+ \ii\big(x_1y_2+x_1y_3+x_2y_1+x_3y_1\big)\\
&= \big(x_1x_2-y_1y_2+x_1x_3-y_1y_3\big)
+\ii\big(x_1y_2+x_2y_1 + x_1y_3 + x_3y_1\big)\\
&= \big((x_1x_2-y_1y_2)+\ii(x_1y_2+x_2y_1) \big)
+ \big((x_1x_3-y_1y_3)+\ii(x_1y_3+x_3y_1) \big) \\
&= (x_1+\ii y_1)(x_2+\ii y_2)+ (x_1+\ii y_1)(x_3+\ii y_3)
= z_1z_2+z_1z_3.
\end{align*}
\end{solution}

\begin{exercise}
In this exercise, we assume that $\sqrt{2}\in\RR\smallsetminus\QQ$,
a fact we shall derive later (see Theorem~\ref{t:squareroot2}). 
We define
\begin{equation*}
\QQ(\sqrt{2}) := \menge{x+\sqrt{2}y}{x\in\QQ,\,y\in\QQ}.
\end{equation*}
Show that $\QQ(\sqrt{2})$ is a field\footnote{Fields of this type
arise in Number Theory and Algebra (Galois Theory).}.
\emph{Hint:} Proposition~\ref{p:subfield} and Exercise~\ref{exo:QQ}. 
\end{exercise}
\begin{solution}
First, $0 = 0+\sqrt{2}0$, $1 = 1 + \sqrt{2}0$,
and $\{0,1\}\subseteq \QQ$, hence $0$ and $1$ belong to 
$\QQ(\sqrt{2})$.

Now let $x+\sqrt{2}y$ and $u+\sqrt{2}v$ be in 
$\QQ(\sqrt{2})$, where $\{x,y,u,v\}\subseteq\QQ$. 
Then
\begin{equation*}
\big(x+\sqrt{2}y\big) + \big(u+\sqrt{2}v\big)
= (x+u)+\sqrt{2}(y+v)\in \QQ(\sqrt{2})
\end{equation*}
because $x+u \in \QQ$ and $y+v\in\QQ$. 
Similarly, since $\sqrt{2}^2 = 2$, 
\begin{equation*}
\big(x+\sqrt{2}y\big) \cdot \big(u+\sqrt{2}v\big)
= (xu + 2yv) + \sqrt{2}(xv+yu) \in  \QQ(\sqrt{2}).
\end{equation*}
Furthermore, $-(x+\sqrt{2}y) = (-x) + \sqrt{2}(-y)\in
 \QQ(\sqrt{2})$ since $\{-x,-y\}\subseteq\QQ$.
Finally, if $x+\sqrt{2}y\neq 0$, then
\begin{equation*}
\frac{1}{x+\sqrt{2}y} = 
\frac{x+\sqrt{2}(-y)}{x^2 - 2y^2} \in  \QQ(\sqrt{2})
\end{equation*}
because $\{x,-y,x^2-2y^2\}\subseteq\QQ$.
Note that we do \emph{not} divide by $0$:
$x^2\neq 2y^2$ because $\sqrt{2}\notin\QQ$. 
\end{solution}

\begin{exercise}
Let $f\colon \QQ\to\RR$ be a function such that 
for all $x$ and $y$ in $\QQ$, we have
$f(x+y) = f(x)+f(y)$.
Show that $f(x)=f(1)\cdot x$, for every $x\in\QQ$.
\end{exercise}
\begin{solution}
We do this by ``hill climbing'', a proof technique that
gradually builds up to the desired conclusion.

First, an easy induction shows that 
\begin{equation}
\label{e:blubbers}
f(a_1 + \cdots + a_n) = f(a_1) + \cdots + f(a_n).
\end{equation}

We now tackle the proof

\emph{Case 1: $x\in\{1,2,\ldots\}$}:\\
Clearly $f(1) = f(1)\cdot 1$. 
Also, $f(2) = f(1+1) = f(1)+f(1)= 2f(1)$.
Inductively, $f(n) = n f(1)$.

\emph{Case 2: $x\in\{0,-1,-2,\ldots\}$}:\\
We have $f(0) = f(0+0) = f(0) + f(0) = 2f(0)$
and thus $f(0) = 0 = 0f(1)$.
Next, if $n\in\NN$, then
$0 = f(0) = f(n+(-n)) = f(n) + f(-n)$ and
so $f(-n) = -f(n) = -n f(1)=(-n)f(1)$.

\emph{Case 3: $x\in\{1/1,1/2,1/3,\ldots\}$}:\\
Using \eqref{e:blubbers},
we obtain 
\begin{equation*}
f(1) = f\big(\tfrac{1}{n} + \cdots + \tfrac{1}{n}\big)
= f\big(\tfrac{1}{n}\big) + \cdots + 
f\big(\tfrac{1}{n}\big) = n f\big(\tfrac{1}{n}\big).
\end{equation*}
Hence $f(1/n) = (1/n) f(1)$. 

\emph{Case 4: $x\in\{-1/1,-1/2,-1/3,\ldots\}$}:\\
Similarly to the proof of Case~2.

\emph{Case 5: $x\in\QQ$}:\\
Write $x = m/n$, where $n$ is a nonzero integer.
Using \eqref{e:blubbers} and Case~4, we finally obtain
\begin{equation*}
f\big(\tfrac{m}{n}\big)
= m f\big(\tfrac{1}{n}\big) = \tfrac{m}{n}f(1).
\end{equation*}
\end{solution}

\begin{exercise}
Evaluate
$\displaystyle 
\sum_{j=0}^n \sum_{i=j}^n {n\choose i}{i\choose j}
$.
\end{exercise}
\begin{solution}
Observe that we sum over indices $(i,j)$ such that
$0\leq j\leq i\leq n$.
In the sum in question, we fix $j$ and sum over $i$ first.
Alternatively, and with the same overall result,
we can fix $i$ and sum over $j$, thus obtaining with
the help of the Binomial Theorem, 
\begin{align*}
\sum_{j=0}^n \sum_{i=j}^n {n\choose i}{i\choose j}
&= \sum_{i=0}^n \sum_{j=0}^{i} {n\choose i}{i\choose j}
= \sum_{i=0}^n {n\choose i} \sum_{j=0}^{i} {i\choose j}
= \sum_{i=0}^n {n\choose i} 2^i
= (1+2)^n 
= 3^n. 
\end{align*}
\end{solution}

\begin{exercise}
\label{exo:walaa3}
Let $x>0$ be a real number such that $x+\tfrac{1}{x}\in \NN$.
Show that for every $\nnn$, $x^n + \tfrac{1}{x^n}\in\NN$ as well. 
\emph{Hint:} Induction on $n$. 
\end{exercise}
\begin{solution}
We prove this by strong induction for. Clearly,
the result holds when $n=0$ since $x^0+1/x^0 = 1+1/1 = 2\in\NN$.
It also clearly holds when $n=1$, by hypothesis on $x$. 

Now suppose that $x^k + \tfrac{1}{x^k}\in\NN$ for every
$k\in\{0,1,\ldots,n\}$ and for some $n\geq 1$. 
Observe that
\begin{equation*}
x^{n+1} + \frac{1}{x^{n+1}}  = 
\underbrace{\underbrace{\left(x+\tfrac{1}{x}\right)}_{\in\NN} 
\underbrace{\left(x^n + \tfrac{1}{x^n}\right)}_{\in\NN}}_{\in\NN}
- \underbrace{\left( x^{n-1} + \tfrac{1}{x^{n-1}}\right)}_{\in\NN}. 
\end{equation*}
Hence $x^{n+1} + \frac{1}{x^{n+1}}\in\ZZ$. But since this quantity is
obviously positive (since $x>0$), it follows that
$x^{n+1} + \frac{1}{x^{n+1}}\in\{1,2,\ldots\}$, as required.

Therefore, by the Principle of Mathematical Induction, the result is
proven.
\end{solution}

\begin{exercise}
\label{exo:rerafu}
A \emph{real rational function} $f(x)$ is a quotient of polynomials
$$ 
f(x) = \frac{ a_m x^m+a_{m-1} x^{m-1} + \cdots + a_1x + a_0}{b_n x^n +
b_{n-1}x^{n-1} + \cdots + b_1x+b_0},
$$ where all coefficients are real, $m\in\NN$, $n\in\NN$, and $b_n\neq 0$.
\end{exercise}
The set of real rational functions form a field, commonly written
as $\RR(x)$, 
with 
$$ (f+g)(x) := f(x)+g(x) \quad\text{and}\quad (fg)(x) := f(x)\cdot g(x),$$
with $0$ and $1$ viewed as constant polynomials, 
with 
$$ 
(-f)(x) := \frac{-a_m x^m-a_{m-1} x^{m-1} - \cdots - a_1x - a_0}{b_n x^n +
b_{n-1}x^{n-1} + \cdots + b_1x+b_0}
$$ 
and with 
$$ (1/f)(x) := \frac{b_n x^n + b_{n-1}x^{n-1} + \cdots + b_1x+b_0}{ a_m
x^m+a_{m-1} x^{m-1} + \cdots + a_1x + a_0}$$
when $a_m\neq 0$.
\begin{enumerate}
\item Verify \textbf{A4}.
\item Verify \textbf{M4}.
\end{enumerate}
\begin{solution}
Write 
\begin{equation*}
f(x) = \frac{ a_m x^m+a_{m-1} x^{m-1} + \cdots + a_1x + a_0}{b_n x^n +
b_{n-1}x^{n-1} + \cdots + b_1x+b_0},
\end{equation*}
where $b_n\neq 0$. 

(i): \textbf{A4}:
Using the definition and \textbf{D}, we obtain 
\begin{align*}
(-f)(x) &= \frac{-a_m x^m-a_{m-1} x^{m-1} - \cdots - a_1x - a_0}{b_n x^n +
b_{n-1}x^{n-1} + \cdots + b_1x+b_0}\\
&= - \frac{ a_m x^m+a_{m-1} x^{m-1} + \cdots + a_1x + a_0}{b_n x^n +
b_{n-1}x^{n-1} + \cdots + b_1x+b_0}\\
&= -f(x).
\end{align*}
Hence $f(x) + (-f)(x) = f(x) - f(x) = 0$. 

(ii): \textbf{M4}: 
Indeed, 
using the definition, we have 
\begin{align*}
f(x)\cdot (1/f)(x)
&= 
\frac{ a_m x^m+ \cdots + a_0}{b_n x^n +
\cdots +b_0} 
\cdot 
\frac{b_n x^n + \cdots +b_0}{ a_m
x^m+ \cdots + a_0} 
= 1.
\end{align*}
\end{solution}

\begin{exercise}
\label{exo:walaa150923}
Suppose that $\FF =\{0,1,a,b\}$
is the finite field containing four (distinct) elements: $0,1,a,b$. 
Show the following:
\begin{enumerate}
\item
$ab=1$.
\item
$a^2=b$.
\end{enumerate}
\emph{Hint:} You could argue by cases and reach contradictions for other
conceivable outcomes.
\end{exercise}
\begin{solution}
Note that both $a$ and $b$ are different from $0$ and $1$. 

(i): 
\emph{Case~1:} $ab=0$.\\
By Proposition~\ref{p:may26:4}\ref{p:may26:4iii}, 
$a=0$ or $b=0$ which is absurd since both are nonzero.

\emph{Case~2:} $ab=a$.\\
Since $a\neq 0$, we can multiply by $a^{-1}$ to deduce that $b=1$
which is absurd. 

\emph{Case~3:} $ab=b$.\\
Since $b\neq 0$, we can multiply by $b^{-1}$ to deduce that $a=1$
which is absurd. 

\emph{Case~4:} $ab=1$.\\
Since the other conceivable possibilities were just brought to a
contradiction, this is the only remaining alternative and
therefore must hold since fields are closed under multiplication. 

(ii):
\emph{Case~1:} $a^2=0$.\\
By Proposition~\ref{p:may26:4}\ref{p:may26:4iii}, 
$a=0$ which is absurd.

\emph{Case~2:} $a^2=a$.\\
Since $a\neq 0$, we can multiply by $a^{-1}$ to deduce that $a=1$
which is absurd. 

\emph{Case~3:} $a^2=1$.\\
Then, using (i), $a^2=1=ab$ and hence (multiply by $a^{-1}$)
$a=b$ which is absurd. 

\emph{Case~4:} $a^2=b$.\\
Since the other conceivable possibilities were just brought to a
contradiction, this is the only remaining alternative and
therefore must hold since fields are closed under multiplication. 
\end{solution}

\begin{exercise}
\label{exo:heinz150923}
Suppose we impose the existence of 
an inverse for $0$ in \textbf{M4} not
just for every nonzero number. 
Show that this assumption leads to contradiction with
at least one of the results obtained in this chapter.
\end{exercise}
\begin{solution}
We suppose that $0^{-1}$ exists; thus,
$0\cdot 0^{-1} =1$.
On the other hand, 
Proposition~\ref{p:may26:4}\ref{p:may26:4ii} 
(with $x=0^{-1}$) would then yield
$0^{-1}\cdot 0 = 0$.
Altogether, $1=0$ which is absurd. 
\end{solution}

\begin{exercise}
\label{exo:heinz150923b}
Let $\FF$ be a finite field. 
For every $n\in\{1,2,3,\ldots\}$, 
we \emph{define} 
$n\odot 1 := \sum_{i=1}^n 1$.
(Note that $n$ may or may not lie in $\FF$,
but $n\odot 1$ must lie in $\FF$.)
Show the following:
\begin{enumerate}
\item[(i)]
There exists a number $c\in \{1,2,\ldots\}$ such that 
$c\odot 1 = 0$.
\item[(ii)]
Show that the smallest integer $c\geq 1$ such that $c\odot 1 = 0$
is a prime number. 
(The number $c$ is called the \emph{characteristic} of the field
$\FF$. 
\end{enumerate}
\end{exercise}
\begin{solution}
(i): 
Since $\FF$ contains only finitely many elements,
there must exist integers $1\leq m < n$ such that
$m\odot 1 = n\odot 1$. 
Subtracting $m\odot 1$, we learn that 
$(n-m)\odot 1 = 0$. 

(ii): 
Suppose to the contrary that $c$ is not a prime number,
say $c=p\cdot q$, where $p\geq 2$ and $q\geq 2$ are integers.
Then $0 = c\odot 1 = (pq)\odot 1 = (p\odot 1)(q\odot 1)$.
By Proposition~\ref{p:may26:4}\ref{p:may26:4iii},
$p\odot 1 = 0$ or $q\odot 1=0$.
Since $p<c$ and $q<c$, this contradicts the assumption
that $c$ is the smallest integer $\geq 1$ such that $c\odot 1 =
0$. 
\end{solution}

\begin{exercise}
\label{exo:151007a}
Let $\FF$ be a field,
and let $x,y,u,v$ be in $\FF$. Show that 
$$ (xu-yv)^2+(xv+yu)^2 = (x^2+y^2)(u^2+v^2).$$
\end{exercise}
\begin{solution}
This follows from a simple expansion: indeed,
since $(\alpha+\beta)^2 = (\alpha+\beta)(\alpha+\beta) =
(\alpha+\beta)\alpha+(\alpha+\beta)\beta =
\alpha\alpha+\beta\alpha+\alpha\beta+\beta\beta =
\alpha^2+\alpha\beta+\alpha\beta+\beta^2 =
\alpha^2+2\alpha\beta+\beta^2$, we obtain 
\begin{align*}
(xu-yv)^2+(xv+yu)^2 
&=
(xu)^2 + (-yv)^2 + 2(xu)(-yv) + (xv)^2 + (yu)^2+2(xv)(yu)\\
&= x^2u^2+y^2v^2 + x^2v^2+y^2u^2
= (x^2+y^2)(u^2+v^2),
\end{align*}
as claimed. 
\end{solution}

\begin{exercise}
In Exercise~\ref{exo:200721a}, we proved that 
$13^n$ can be written as a sum of two squares of integers
for every integer $n\geq 1$
by using \emph{strong} induction.
In this exercise, our job is to prove this result 
using \emph{regular} induction.
\emph{Hint:} Put Exercise~\ref{exo:151007a} to good use!
\end{exercise}
\begin{solution}
Denote the statement in question by $S(n)$.

\textbf{Base Case:} 
When $n=1$, we have 
\begin{equation*}
    13^1 = 13 = 4 + 9 = 2^2 + 3^2
\end{equation*}
and so the base case is verified. 

\textbf{Inductive Step:} 
Now suppose that $S(n)$ holds for some integer $n\geq 1$, say 
\begin{equation*}
13^n = a^n + b^n,
\end{equation*}
for some integers $a$ and $b$. 
Then 
\begin{equation*}
13^{n+1} = 13\cdot 13^n = \big(2^2+3^2\big)\big(a^2+b^2\big).
\end{equation*}
Set $x=2,y=3,u=a,v=b$.
Then, by Exercise~\ref{exo:151007a}, 
\begin{align*}
(2^2+3^2)(a^2+b^2)
&= (x^2+y^2)(u^2+v^2)
= (xu-yv)^2 + (xv+yu)^2\\
&=(2a-3b)^2 + (2b+3a)^2
\end{align*}
is the sum of two squares of integers 
because $2a-3b$ and $2b+3a$ are clearly also integers. 
Altogether,
\begin{equation*}
13^{n+1} = (\underbrace{2a-3b}_{\in\ZZ})^2 + (\underbrace{2b+3a}_{\in\ZZ})^2
\end{equation*}
is the sum of two squares of integers.
Hence $S(n+1)$ is true.

Therefore, by the Principle of Mathematical Induction, 
the result is true for every integer $n\geq 1$. 
\end{solution}

\begin{exercise}[YOU be the marker!] 
Consider the following statement 
\begin{equation*}
\text{``$0=1$''}
\end{equation*}
and the following ``proof'':
\begin{quotation}
On the one hand,
$0\cdot 0^{-1} = 0$ by Proposition~\ref{p:may26:4}\ref{p:may26:4iii}.\\
On the other hand,
$0\cdot 0^{-1} = 1$ by \textbf{M4}.\\
Altogether, 
$0=0\cdot 0^{-1} = 1$.
\end{quotation}
Why is this proof wrong?
\end{exercise}
\begin{solution}
$0^{-1}$ does not exist: 
\textbf{M4} only works for nonzero numbers. 
\end{solution}

\begin{exercise}[TRUE or FALSE?]
Mark each of the following statements as either true or false.  
Briefly justify your answer.
\begin{enumerate}
\item ``The set of integers, $\ZZ$, forms a field with the usual 
definitions of addition and multiplication.''
\item ``The set of rational numbers, $\QQ$, forms a field with 
the usual definitions of addition and multiplication.''
\item ``If $n\in\{1,2,\ldots\}$, then there exists a field 
containing precisely $n$ numbers.''
\item ``If $a\neq 0$ in a field, then the equation $ax=1$
always has a solution.''
\end{enumerate}
\end{exercise}
\begin{solution}
(i): FALSE: For instance, the number $2$ has no inverse. 

(ii): TRUE: This follows from Proposition~\ref{p:subfield}.

(iii): FALSE: For instance, there is no field containing 
exactly one number, because every field contains at least 
two numbers: $0$ and $1$ (and $1$ is assumed to be different from $0$).

(iv): TRUE: Proposition~\ref{p:may26:3}.
\end{solution} 
\chapter{Sets and Relations}
\label{cha:sets}

\section{Basic Concepts}

A \textbf{set}\index{Set}\index{Element} 
is a ``well defined'' collection of objects called
\textbf{elements}. It is customary to use curly brackets to denote sets.
For instance, the set comprising of the first 4 positive integers is
denoted by
\begin{equation}
\{1,2,3,4\}.
\end{equation}
If $A$ is a set, we write $x\in A$ if $x$ belongs to $A$; otherwise, we
write $x\notin A$.
Thus,
\begin{equation}
3\in \{1,2,3,4\} \quad\text{but}\quad
-7\notin \{1,2,3,4\}.
\end{equation}
Sets need not be restricted to numbers, we can also speak of the set of all
students in this class, or all cells in an organism, etc.

Often, sets are constructed from a given ``universe'' of potential
elements. For instance, we write
\begin{equation}
\menge{n\in\NN}{\text{$n$ is a prime number}} 
= \{2,3,5,7,11,\ldots\}
\end{equation}
to describe the set of all prime numbers;
another frequently used notation for this set is 
$\{n\in\NN\;:\;\text{$n$ is a prime number}\}$. 
These constructions allow for sets to be formed that contain
\emph{nothing}.
For instance, the set
\begin{equation}
S := \menge{n\in\NN}{\text{$n$ is a prime number between $14$ and $16$}} 
\end{equation}
contains no element because the numbers 14, 15, 16 are all composite:
$14=2\cdot 7$, $15 = 3\cdot 5$, and $16 = 2\cdot 8$.
Thus, $S=\{\,\}$, which is also and more commonly written as 
\begin{equation}
S = \varnothing;
\end{equation}
\label{l:emptyset}
in words: $S$ is the \textbf{empty set}.\index{Empty set}

Let $A$ and $B$ be two sets.
It may be that these sets are comparable in the sense that one is contained
in the other. For instance, the set of prime numbers is certainly contained
in the set of all nonnegative integers. Or the set of all men is
contained in the set of all human beings. Or the set of all vowels is
contained in the set of all letters of the alphabet. 

\begin{definition}[subset]\label{d:subset}
Let $A$ and $B$ be sets.\index{Subset}
If every element in $A$ belongs to $B$, then we say that
$A$ is a \textbf{subset} of $B$ and we write $A\subseteq B$.
If $A\subseteq B$ and $B\subseteq A$, then the sets $A$ and $B$ coincide, i.e.,
$A=B$.
If $A$ is not a subset of $B$, i.e., if there is an element $x\in A$ such
that $x\notin B$, then we write $A\not\subseteq B$. 
\end{definition}

We can now write $\NN\subseteq\ZZ\subseteq\QQ\subseteq\RR$ and
$\ZZ\not\subseteq\NN$. 
To verify that two sets $A$ and $B$ are the same, we check that $A\subseteq B$
and that $B\subseteq A$, i.e.,
\begin{equation}
A = B
\quad\Leftrightarrow\quad
[A\subseteq B \;\;\text{and}\;\; B\subseteq A].
\end{equation}
Note that $\varnothing\subseteq A$, which implies that the empty set is
unique.

\begin{definition}[union, intersection, complement]
\label{d:uic} 
Let $A$ and $B$ be sets.
Then\footnote{Great notation: 
(i) $x\in A\cup B$ $\Leftrightarrow$ [$x\in A$ $\lor$ $x\in B$];
(ii) $x\in A\cap B$ $\Leftrightarrow$ [$x\in A$ $\land$ $x\in B$].}
\begin{enumerate}
\item
$A\cup B := \menge{x}{\text{$x\in A$ or $x\in B$}}$ is the 
\textbf{union} of $A$ and $B$; \index{Union}
\item
$A\cap B := \menge{x}{\text{$x\in A$ and $x\in B$}}$ is the 
\textbf{intersection} of $A$ and $B$; \index{Intersection}
\item
$A\smallsetminus B := \menge{x\in A}{x\notin B}$ is the 
\textbf{complement} of $B$ in $A$. \index{Complement}
\end{enumerate}
If $A\cap B=\varnothing$, then $A$ and $B$ are said to be \textbf{disjoint}.
\end{definition}

Often, one works with subsets of a ``universal'' set $X$ (e.g., with subsets
of real numbers). If the universal set $X$ is understood and $A\subseteq
X$, \index{Universal set} 
then one also writes $A^c$ for $X\smallsetminus A$; the superscript ``$c$''
stands for ``complement''. 

These notions all generalize to more than 2 
--- even infinitely many ---
sets. For instance, let $\{A_i\}_{i\in I}$ be an
indexed family of sets.
Then their union is
\begin{equation}
\bigcup_{i\in I} A_i = \menge{x}{\text{$x\in A_i$ for some $i\in I$}},
\end{equation}
and their intersection is
\begin{equation}
\bigcap_{i\in I} A_i = \menge{x}{\text{$x\in A_i$ for every $i\in I$}}.
\end{equation}

\section{Laws for Sets and the Power Set}

The operations introduced in the previous section satisfy 
several nice rules which we collect now.

\begin{proposition}
\label{p:setlaws}
Let $X$ be a set, and let $A,B,C$ be subsets of $X$.
Then the following hold.
\begin{enumerate}
\item
\label{p:setlawsi}
(idempotent laws) $A \cup A=A$ and $A \cap A = A$.

\item
\label{p:setlawsii}
(associative laws)
$(A\cup B)\cup C = A \cup(B\cup C)$
and 
$(A\cap B)\cap C = A \cap(B\cap C)$.

\item
\label{p:setlawsiii}
(commutative laws)
$A\cup B = B \cup A$ and $A\cap B = B \cap A$.
\item
\label{p:setlawsiv}
(distributive laws)
$A\cup (B\cap C) = (A \cup B)\cap (A \cup C)$ 
and 
$A\cap (B\cup C) = (A \cap B)\cup (A \cap C)$. 
\item 
\label{p:setlawsv}
(identity laws)
$A \cup \varnothing = A$,
$A \cap X= A$,
$A\cup X = X$,
$A\cap \varnothing = \varnothing$.
\item 
\label{p:setlawsvi}
(involution law)
$(A^c)^c = A$.
\item 
\label{p:setlawsvii}
(complement laws)
$A\cup A^c = X$,
$A \cap A^c = \varnothing$,
$X^c = \varnothing$,
$\varnothing^c = X$.
\item
\label{p:setlawsviii}
(DeMorgan's laws)\index{DeMorgan's laws} 
$(A\cup B)^c = A^c \cap B^c$
and $(A \cap B)^c = A^c \cup B^c$.
\end{enumerate}
\end{proposition}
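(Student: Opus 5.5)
The plan is to reduce every identity in Proposition~\ref{p:setlaws} to a statement about membership, and then to a tautology of propositional logic from Chapter~\ref{cha:background}. For each claimed equality $S=T$ of subsets of $X$, I will fix an arbitrary $x\in X$ and show that $x\in S \Leftrightarrow x\in T$. This gives both $S\subseteq T$ and $T\subseteq S$, hence $S=T$ by Definition~\ref{d:subset}. By Definition~\ref{d:uic} and the footnote there, $x\in A\cup B$ is the statement $(x\in A)\lor(x\in B)$, and $x\in A\cap B$ is $(x\in A)\land(x\in B)$. Since all sets live in $X$, $x\in A^c$ is $\lnot(x\in A)$. Writing $p,q,r$ for the statements $x\in A$, $x\in B$, $x\in C$, each law turns into a logical equivalence that I can check with a truth table.

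I will go through the items in order. \ref{p:setlawsi}: these are $p\lor p\equiv p$ and $p\land p\equiv p$. \ref{p:setlawsii}, \ref{p:setlawsiii}: these are associativity and commutativity of $\lor$ and $\land$, which can be read off the truth tables. \ref{p:setlawsv}: $x\in\varnothing$ is always F and $x\in X$ is always T, so the identities become $p\lor F\equiv p$, $p\land T\equiv p$, $p\lor T\equiv T$, and $p\land F\equiv F$. \ref{p:setlawsvi}: this is $\lnot\lnot p\equiv p$. \ref{p:setlawsvii}: $p\lor\lnot p$ is a tautology and $p\land\lnot p$ is a contradiction (compare the earlier exercise on tautologies). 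The remaining two, $X^c=\varnothing$ and $\varnothing^c=X$, follow directly from the definition $A^c=X\smallsetminus A$.

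The items with real content are the distributive laws \ref{p:setlawsiv} and DeMorgan's laws \ref{p:setlawsviii}. For DeMorgan, the second law is exactly the NAND computation of Exercise~\ref{exo:nand}: $\lnot(p\land q)\equiv(\lnot p)\lor(\lnot q)$. For the first law I will compute the analogous four-row table for $\lnot(p\lor q)$ and $(\lnot p)\land(\lnot q)$. Alternatively, I can derive it from the second law applied to $A^c,B^c$, using \ref{p:setlawsvi}. For distributivity, I expect the main, if modest, obstacle to be the bookkeeping of an eight-row truth table in $p,q,r$. Instead I will argue by cases on $p$. If $p$ is T, both sides of $p\lor(q\land r)\equiv(p\lor q)\land(p\lor r)$ are T. If $p$ is F, both sides reduce to $q\land r$. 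The second distributive law is handled dually: if $p$ is F both sides are F, and if $p$ is T both reduce to $q\lor r$. This completes the plan.
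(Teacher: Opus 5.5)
Your proposal is correct and uses the same membership argument as the paper. The paper fixes $x\in X$ and writes the chain $x\in(A\cup B)^c \Leftrightarrow x\notin A\cup B \Leftrightarrow [x\notin A \text{ and } x\notin B] \Leftrightarrow x\in A^c\cap B^c$ for the first DeMorgan law, calls the second one similar, and leaves the other items unproved; your plan applies this same translation into propositional logic to all eight items (rightly noting that every set involved is a subset of $X$, so checking $x\in X$ suffices), and your case split on $p$ for the distributive laws is a clean substitute for an eight-row truth table.
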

\begin{proof}
\ref{p:setlawsviii}:
Let $x\in X$.
Then 
\begin{subequations}
\begin{align}
x\in (A\cup B)^c 
&\Leftrightarrow
x\notin A\cup B\\
&\Leftrightarrow
[x\notin A \text{~and~} x\notin B]\\
&\Leftrightarrow
[x\in A^c \text{~and~} x\in B^c]\\
&\Leftrightarrow
x \in A^c \cap B^c.
\end{align}
\end{subequations}
The proof of $(A \cap B)^c = A^c \cup B^c$ is similar.
\end{proof}

Looking back at the beginning of the chapter,
we didn't fully define the meaning of a ``set''.
In fact, this is a rather subtle notion as was discovered
in 1901 by Bertrand Russell.\index{Russell, B.}

\begin{remark}[Russell's Paradox]
\label{r:Russell}\index{Russell's Paradox} 
Consider the set of all sets $X$ that do not contain themselves:
\begin{equation}
R := \menge{X}{X\notin X}. 
\end{equation}
Is $R$ an element of $R$ or not?
\hhbcom{Arguing by cases!}

\emph{Case~1}: Assume that $R\in R$.\\
Then, by definition of $R$, $R\notin R$, which is absurd.

\emph{Case~2}: Assume that $R\notin R$.\\
Then, by definition of $R$, $R\in R$, which again is absurd.

The resolution to this paradox is that $R$ is not a set. 
Thus, set theory requires some care to not
encounter such paradoxes. However, working scientists and
mathematicians essentially never encounter paradoxes of this form ---
these paradoxes are looked at by logicians.
\end{remark}

\begin{definition}
\label{d:powerset} \index{Power Set} 
Let $X$ be a set.
The collection of all subsets of $X$ is denoted by $\mathcal{P}(X)$
and called the \textbf{power set} of $X$.
\end{definition}

For instance, if $X=\{A,B,C\}$,
then the power set
\begin{equation}
\mathcal{P}(X) = 
\big\{\varnothing,
\{A\},\{B\},\{C\},
\{A,B\}, \{A,C\}, \{B,C\},
\{A,B,C\}\big\}
\end{equation}
contains precisely 8 elements.
This is not a coincidence as the next result shows.

\begin{theorem}
\label{t:powerset}
Let $X$ be a set that contains $n$ distinct elements, where
$n\in\NN$. 
Then the power set $\mathcal{P}(X)$ contains $2^n$ elements\footnote{
This motivates an alternative notation for the power set sometimes seen in
the literature, namely $2^X$.}. 
\end{theorem}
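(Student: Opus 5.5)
The quickest route uses the Combinations Theorem (Theorem~\ref{t:combinations}) together with Corollary~\ref{c:binomial}. If $n=0$, then $X=\varnothing$ and its only subset is $\varnothing$ itself, so $\mathcal{P}(X)=\{\varnothing\}$ has exactly $1=2^0$ element. So assume $n\geq 1$ and write $X=\{a_1,\ldots,a_n\}$ with distinct $a_i$.

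Every subset $B\subseteq X$ has a size $k\in\{0,1,\ldots,n\}$, and each subset has exactly one size. We therefore sort $\mathcal{P}(X)$ into the $n+1$ pairwise disjoint classes
\[
\mathcal{P}_k:=\menge{B\subseteq X}{\text{$B$ has exactly $k$ elements}},\qquad k=0,1,\ldots,n,
\]
whose union is $\mathcal{P}(X)$. By Theorem~\ref{t:combinations}, $\mathcal{P}_k$ contains exactly ${n\choose k}$ elements. Since the classes are disjoint, the number of elements of $\mathcal{P}(X)$ is
\[
\sum_{k=0}^{n}{n\choose k}=2^n,
\]
where the equality is \eqref{e:2^n} of Corollary~\ref{c:binomial}.

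The only step needing care is the justification that the count of $\mathcal{P}(X)$ is the sum of the counts of the classes. This holds because the $\mathcal{P}_k$ form a partition: each subset lies in exactly one of them. I would state this explicitly rather than leave it implicit.

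An alternative, self-contained route is induction on $n$, mirroring the proof of Theorem~\ref{t:combinations}. The base case $n=0$ is as above. For the inductive step, suppose $X=\{a_1,\ldots,a_{n+1}\}$ and split the subsets $B$ according to whether $a_{n+1}\notin B$ or $a_{n+1}\in B$. Subsets of the first kind are exactly the subsets of $\{a_1,\ldots,a_n\}$. Subsets of the second kind are exactly the sets $C\cup\{a_{n+1}\}$ with $C\subseteq\{a_1,\ldots,a_n\}$, and distinct $C$ give distinct $B$. By the induction hypothesis each kind has $2^n$ members, so $\mathcal{P}(X)$ has $2^n+2^n=2^{n+1}$ elements. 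I would present the first argument and mention this one as a remark.
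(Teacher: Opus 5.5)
Your proposal is correct and matches the paper's proof: you partition $\mathcal{P}(X)$ by subset size, count each class with Theorem~\ref{t:combinations}, and sum using \eqref{e:2^n}. Handling $n=0$ separately is a worthwhile addition, since Theorem~\ref{t:combinations} and Corollary~\ref{c:binomial} are stated only for $n\geq 1$. Your inductive alternative formalizes the paper's closing remark that each element either belongs to a given subset or does not.
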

\begin{proof}
Using Theorem~\ref{t:combinations} and \eqref{e:2^n}, 
we see that 
\begin{subequations}
\begin{align}
\text{number of elements in $\mathcal{P}(X)$} &= 
\sum_{k=0}^{n} \text{number of subsets of $X$ of size $k$}\\
&= \sum_{k=0}^{n} {n \choose k}\\
&= 2^n,
\end{align}
\end{subequations}
as claimed.
Alternatively, one observes that every element of $X$ 
either belongs to a given subset or it does not, giving rise to $2^n$ possibilities.
\end{proof}

\section{Product Sets and Relations}

\label{sec:psar}

\begin{definition}[product set]\label{d:productset}
Let $A$ and $B$ be sets.\index{Product Set}\index{Cartesian Product
Set}
Then the \textbf{(Cartesian) product set} of $A$ and $B$ is
\begin{equation}
A \times B := \menge{(a,b)}{\text{$a\in A$ and $b\in B$}}.
\end{equation}
\end{definition}

Thus if $A = \{a,b,c\}$ and $B=\{1,2\}$, then
\begin{equation}
A\times B =
\big\{ (a,1),(a,2),(b,1),(b,2),(c,1),(c,2)\big\}.
\end{equation}
An important example is the \textbf{Euclidean plane}\index{Euclidean
plane}
\begin{equation}
\RR^2 = \RR \times \RR = \menge{(x,y)}{\text{$x\in\RR$ and
$y\in\RR$}};
\end{equation}
here, product set consists of all ordered pairs of real numbers. 
Note that the order is important: the point $(1,2)$ is not the same
as the point $(2,1)$. (Although for \emph{sets}, we do have $\{1,2\} =
\{2,1\}$.)
In the Euclidean plane, the product set $[0,1]\times[1,3]$ consists
precisely of all the points lying in the rectangle with corner
points $(0,1)$, $(0,3)$, $(1,1)$, and $(1,3)$.

\begin{definition}[relation]
\label{d:relation}
\index{Relation}
Let $A$ and $B$ be sets.
A \textbf{relation} from $A$ to $B$ is simply a subset of
$A\times B$.
Now let $\Rel$ be a relation from $A$ to $B$,
and let $a\in A$ and $b\in B$.
If $A=B$, we say that $\Rel$ is a relation on $A$. 
If $(a,b)\in \Rel$, we say $a$ is related (or, $\Rel$-related) to $b$ and we
write $a\Rel b$.
\end{definition}

Consider the case when $A=B=\RR$. If $\Rel = \menge{(x,y)\in\RR^2}{x=y}$,
then $\Rel$ is the diagonal in the Euclidean plane, and the meaning of
$x\Rel y$ is exactly $x = y$.
And if $f\colon \RR\to\RR$ is a function (say, the exponential
function seen in Calculus~I), then $\Rel =
\menge{(x,y)\in\RR^2}{y=f(x)} = \menge{(x,f(x))}{x\in\RR}$ is a
relation. Relations thus generalize functions; in fact, relations
play important roles in modern Analysis where one considers
``functions'' whose outputs are sets (e.g., ``derivatives'' of
functions such as the absolute value which is not differentiable at
$0$ in the classical sense). 

The following type of relation is important in many areas of
Mathematics; in particular in Number Theory and Algebra.

\begin{definition}[equivalence relation]
Let $\Rel$ be a relation on a set $A$.\index{Equivalence relation}
Then $\Rel$ is an \textbf{equivalence relation} if it satisfies
the following properties for all $a,b,c$ in $A$. \label{d:eqrel}
\begin{enumerate}
\item
(reflexivity)\index{Reflexivity (for equivalence relation)}
$a\Rel a$.
\item 
(symmetry)\index{Symmetry (for equivalence relation)}
If $a\Rel b$, then $b\Rel a$.
\item
(transitivity)\index{Transitivity (for equivalence relation)} 
If $a\Rel b$ and $b\Rel c$, then $a\Rel c$.
\end{enumerate}
\end{definition}

One sees also the notations $a\equiv b$ or $a\sim b$ instead of
$a\Rel b$, especially in the context of equivalence relations.
Note that $a\Rel b :\Leftrightarrow a=b$ defines an equivalence
relation on $A$, which motivates these notations.

\begin{example}[mod 5]
\label{ex:mod5}
For $a$ and $b$ in $\ZZ$, we say 
$a$ and $b$ are \emph{congruent modulo 5} and write 
\index{Congruent modulo}\index{mod} 
\begin{equation}
a\equiv b \pmod{5},
\end{equation}
if $5$ divides $b-a$, i.e., $(b-a)/5 \in\ZZ$. 
(Thus $7\equiv -13 \pmod{5}$ but $5\not\equiv 12\pmod{5}$.)
This defines an equivalence relation on $\ZZ$.
\end{example}
\begin{proof}
Let $a,b,c$ be in $\ZZ$.
For convenience, we abbreviate $a\equiv b\pmod{5}$ by $a\equiv b$.

``reflexivity'':
Since $a-a=0$ and $5|0$, we have $a\equiv a$.

``symmetry'':
Suppose that $a\equiv b$.
Then there exists $k\in\ZZ$ such that $b-a = 5\cdot k$.
Hence $a-b=5\cdot(-k)$. Since $-k\in\ZZ$, we have
$b\equiv a$.

``transitivity'':
Suppose that $a\equiv b$ and that $b\equiv c$.
Then there exist $k$ and $l$ in $\ZZ$ such that\footnote{A common
mistake is to not use a (possibly) different integer $l$ --- but there
is no reason that the quotients $(b-a)/5$ and $(c-b)/5$ should be
identical!}
\begin{equation}
b-a = 5\cdot k
\end{equation}
and 
\begin{equation}
c-b = 5\cdot l.
\end{equation}
Adding the last two equations yields
\begin{equation}
c-a = (b-a) + (c-b) = 5\cdot k + 5\cdot l = 5\cdot(k+l).
\end{equation}
Since $k+l\in\ZZ$, it follows that $a\equiv c$.
\end{proof}

A second inspection of the proof of Example~\ref{ex:mod5} shows that
there was nothing special about the number 5; in fact, the proof works
analogously if $5$ is replaced by any nonzero (typically positive)
integer. Let us record this important observation.

\begin{proposition}[mod $m$]
\label{p:modm}
Let $m$ be a positive integer and define a relation on $\ZZ$ by
\begin{equation}
a\equiv b\pmod{m} 
\quad :\Leftrightarrow
\quad
\frac{b-a}{m}\in\ZZ.
\end{equation}
Then one says that $a$ and $b$ are \textbf{congruent modulo $m$};
moreover, this relation is an equivalence relation.
\index{Congruent modulo}\index{mod}
\end{proposition}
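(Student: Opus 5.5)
The plan is to mimic the proof of Example~\ref{ex:mod5} verbatim, replacing the number $5$ by the positive integer $m$. The statement has two parts: a definition, which needs no proof, and the claim that congruence modulo $m$ is an equivalence relation on $\ZZ$. So I will verify the three defining properties of Definition~\ref{d:eqrel} for arbitrary $a,b,c$ in $\ZZ$, abbreviating $a\equiv b\pmod{m}$ by $a\equiv b$. Throughout, $m\neq 0$ is used only so that the quotient $(b-a)/m$ makes sense.

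\emph{Reflexivity.} Since $a-a=0$ and $0/m=0\in\ZZ$, we get $a\equiv a$.

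\emph{Symmetry.} Assume $a\equiv b$, so $b-a=m\cdot k$ for some $k\in\ZZ$. Then $a-b=m\cdot(-k)$. Because $-k\in\ZZ$, this gives $b\equiv a$.

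\emph{Transitivity.} Assume $a\equiv b$ and $b\equiv c$. Write $b-a=m\cdot k$ and $c-b=m\cdot l$ with $k,l\in\ZZ$; the footnote warning in Example~\ref{ex:mod5} applies, so $k$ and $l$ must be allowed to differ. Adding the two equations yields
\begin{equation*}
c-a=(b-a)+(c-b)=m\cdot(k+l).
\end{equation*}
Since $k+l\in\ZZ$, we conclude $a\equiv c$.

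There is no real obstacle here. The only points needing care are, first, translating ``$(b-a)/m\in\ZZ$'' into ``$b-a=m\cdot k$ for some integer $k$'', which is Proposition~\ref{p:may26:3} applied with $a=m\neq 0$; and second, using two independent integers in the transitivity step. The closure of $\ZZ$ under negation and addition is taken for granted, as in Example~\ref{ex:mod5}.
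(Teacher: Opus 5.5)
Your proposal is correct and follows exactly the paper's route: the paper justifies Proposition~\ref{p:modm} by observing that the proof of Example~\ref{ex:mod5} goes through verbatim with $5$ replaced by any nonzero integer $m$, and that is the reflexivity--symmetry--transitivity check you carry out. Your remark that ``$(b-a)/m\in\ZZ$'' is equivalent to ``$b-a=m\cdot k$ for some $k\in\ZZ$'' via Proposition~\ref{p:may26:3} makes explicit a step the paper leaves implicit.
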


One useful property of an equivalence relation defined on a set $A$ is
that the relation induces a ``partition'' of the set $A$ into subsets
of elements that are all related to each other. 

\begin{definition}[equivalence classes and quotient set]
\label{d:eqclass}
Let $A$ be a set and let $\Rel$ be an equivalence relation on $A$.
For each $a\in A$, denote by $[a]$ the set of elements related to $a$,
which is also called the \textbf{equivalence class} of
$a$,\index{Equivalence class} 
i.e.,\index{Equivalence class}\index{Quotient set}
\begin{equation}
[a] := \menge{b\in A}{a\Rel b}.
\end{equation}
The \textbf{quotient set} is the collection of all equivalence classes,
written as\index{Quotient set}
\begin{equation}
A/\Rel := \menge{[a]}{a\in A}. 
\end{equation}
\end{definition}

\begin{example}[mod 5 cont'd]
Let us revisit Example~\ref{ex:mod5}. 
Fix $a\in \ZZ$, and let $b\in \ZZ$.
Then $a\equiv b\pmod{5}$ 
$\Leftrightarrow$
$(b-a)/5\in \ZZ$
$\Leftrightarrow$
$b-a \in 5\cdot \ZZ = \{0,\pm 5,\pm 10,\pm 15,\ldots\}$
$\Leftrightarrow$
$b\in \{a,a\pm 5,a\pm 10,a\pm 15,\ldots\}$.
Hence
\begin{subequations}
\begin{align}
[0] &= \{0,\pm 5,\pm 10,\ldots\}=\{\ldots,-5,0,5,\ldots\} \\
[1] &= \{1,6,-4,11,-9,\ldots\} = \{\ldots,-4,1,6,\ldots\}\\
[2] &= \{\ldots,-3,2,7,\ldots\}\\
[3] &= \{\ldots,-2,3,8,\ldots\}\\
[4] &= \{\ldots,-1,4,9,\ldots\}
\end{align}
\end{subequations}
and thus
\begin{subequations}
\begin{align}
\cdots = [-5] = [0] &= [5] = \cdots\\
\cdots = [-4] = [1] &= [6] = \cdots\\
\cdots = [-3] = [2] &= [7] = \cdots\\
\cdots = [-2] = [3] &= [8] = \cdots\\
\cdots = [-1] = [4] &= [9] = \cdots.
\end{align}
\end{subequations}
Therefore, the quotient set
\begin{equation}
\ZZ/(\text{mod}\,5) = \big\{[0],[1],[2],[3],[4]\big\},
\end{equation}
which is also written as $\ZZ/(5\ZZ)$ or $\ZZ_5$, 
consists of 5 elements. 
Note how the original set $\ZZ$ is ``partitioned'' into 5 ``pieces''
$[0],[1],[2],[3],[4]$; 
these sets have no overlap, and they collectively cover $\ZZ$. 
\end{example}

Perhaps not too surprisingly, Proposition~\ref{p:modm} can be refined
just like the previous example extended Example~\ref{ex:mod5}:

\begin{example}[mod $m$ cont'd]
Let us revisit Proposition~\ref{p:modm}. Then the quotient set is 
\begin{equation}
\ZZ/(\text{mod}\,m) = \big\{[0],[1],[2],\ldots,[m-1]\big\},
\end{equation}
and it contains $m$ distinct elements. 
\end{example}

Furthermore, 
the partition property holds in fact true for any equivalence
relation.

\begin{theorem}
\label{t:partition}
Let $A$ be a set and let $\Rel$ be an equivalence relation on $A$.
Then the following hold.
\begin{enumerate}
\item 
\label{t:partitioni}
For every $a\in A$, we have $a\in [a]$.
\item
\label{t:partitionii}
For all $a$ and $b$ in $A$, we have
$[a]=[b]$ $\Leftrightarrow$ $a\Rel b$.
\item 
\label{t:partitioniii}
For all $a$ and $b$ in $A$, we have
$[a]\neq [b]$ $\Rightarrow$ $[a]\cap [b]=\varnothing$.
\end{enumerate}
\end{theorem}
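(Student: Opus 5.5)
The three items will be proved in order, each straight from the definition $[a]=\menge{b\in A}{a\Rel b}$ and the three axioms of an equivalence relation, with each part feeding into the next.

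For \ref{t:partitioni}, let $a\in A$. By reflexivity $a\Rel a$, so $a$ satisfies the defining condition of $[a]$, i.e., $a\in[a]$.

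For \ref{t:partitionii}, I prove both implications. ``$\Rightarrow$'': assume $[a]=[b]$. By \ref{t:partitioni}, $b\in[b]=[a]$, so by definition $a\Rel b$. ``$\Leftarrow$'': assume $a\Rel b$; by symmetry also $b\Rel a$. To get $[b]\subseteq[a]$, take $x\in[b]$, so $b\Rel x$; then $a\Rel b$ and $b\Rel x$ give $a\Rel x$ by transitivity, i.e., $x\in[a]$. For $[a]\subseteq[b]$, take $x\in[a]$, so $a\Rel x$; combined with $b\Rel a$, transitivity gives $b\Rel x$, i.e., $x\in[b]$. The two inclusions together give $[a]=[b]$ by Definition~\ref{d:subset}. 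The only real care needed anywhere in the theorem is here: symmetry must be invoked to chain the relations in the right order for transitivity, since the definition of $[a]$ puts $a$ on the left.

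For \ref{t:partitioniii}, I prove the contrapositive: if $[a]\cap[b]\neq\varnothing$, then $[a]=[b]$. Pick $c\in[a]\cap[b]$, so $a\Rel c$ and $b\Rel c$. Symmetry gives $c\Rel b$, and transitivity with $a\Rel c$ gives $a\Rel b$. By \ref{t:partitionii}, $[a]=[b]$, which proves the statement (Exercise~\ref{exo:contrapositivetruth}).
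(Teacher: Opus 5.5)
Your proof is correct and matches the paper's argument essentially line for line: reflexivity for (i), the same two implications for (ii), and the same contrapositive through a common element $c$ for (iii). The only difference is that in (ii) you write out both inclusions explicitly, where the paper proves one and calls the other ``an analogous argument''.
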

\begin{proof}
Let $a,b,c$ be in $A$.

\ref{t:partitioni}:
Since $\Rel$ is reflexive, we have $a\Rel a$ and thus $a\in[a]$.

\ref{t:partitionii}:
``$\Rightarrow$'': 
By \ref{t:partitioni}, $b\in [b]=[a]$; hence $b\in [a]$, i.e., $a\Rel b$.

~~~~~``$\Leftarrow$'': 
Take $c\in[a]$. Then $a\Rel c$ and also $a\Rel b$.
By symmetry, $b\Rel a$ and hence, by transitivity, $b\Rel c$.
Hence $c\in[b]$. Thus, $[a]\subseteq [b]$ and an analogous
argument shows that $[b]\subseteq [a]$. Altogether, $[a]=[b]$.

\ref{t:partitioniii}:
We show the contrapositive.
So assume that $[a]\cap[b]\neq\varnothing$, say $c\in[a]\cap[b]$.
Then $a\Rel c$ and $b\Rel c$.
By symmetry, $c\Rel b$ and, by transitivity, $a\Rel b$.
Hence, by \ref{t:partitionii}, $[a]=[b]$.
\end{proof}

\section*{Exercises}\markright{Exercises}
\addcontentsline{toc}{section}{Exercises}
\setcounter{theorem}{0}

\begin{exercise}
Show that the empty set is unique. 
\end{exercise}
\begin{solution}
Recall that the empty set is a subset of every set.
So let's let $\varnothing$ and $\varnothing^\prime$ be
empty sets.
Since $\varnothing$ is an empty set and $\varnothing^\prime$ is a
set, we have $\varnothing\subseteq\varnothing^\prime$.
Analogously, 
since $\varnothing^\prime$ is an empty set and $\varnothing$ is a
set, we have $\varnothing^\prime\subseteq\varnothing$.
Altogether, $\varnothing=\varnothing^\prime$. 
\end{solution}

\begin{exercise}
\label{exo:260304a}
Let $A$ and $B$ be sets.
Prove that $A\cap B\subseteq A \subseteq A\cup B$.
\end{exercise}
\begin{solution}
``$A\cap B\subseteq A$'':
We have $x\in A\cap B$
$\Leftrightarrow$ 
[$x\in A$ and $x\in B$]
$\Rightarrow$
$x\in A$.

``$A\subseteq A\cup B$'':
We have  $x\in A$
$\Rightarrow$
[$x\in A$ or $x\in B$]
$\Leftrightarrow$
$x\in A\cup B$.
\end{solution}

\begin{exercise}
Let $A$ and $B$ be sets. Show that the following are equivalent.
\begin{enumerate}
\item $A\subseteq B$.
\item $A\cap B = A$.
\item $A\cup B = B$.
\end{enumerate}
\end{exercise}
\begin{solution}
The most efficient way to prove
an equivalence of several items is to prove implications
on a circular route, whenever this is possible. 

``(i)$\Rightarrow$(ii)'':
Suppose that $A\subseteq B$.

We clearly have $A\cap B\subseteq A$ 
(see Exercise~\ref{exo:260304a}).
Now take $x\in A$. Since $A\subseteq B$, we also
have $x\in B$. Combining, we deduce that $x\in A\cap B$.
Altogether, $A\cap B = A$.

``(ii)$\Rightarrow$(iii)'':
Suppose that $A\cap B = A$.
It is clear that $B\subseteq A \cup B$.
Now take $x\in A\cup B$. 
If $x\in A$, then, since $A=A\cap B$,
$x\in A\cap B$ and so $x\in B$.
If $x\in B$, then clearly $x\in B$.
In either case, $x\in B$.
Therefore, $A\cup B \subseteq B$.
Altogether, $A \cup B = B$.

``(iii)$\Rightarrow$(i)'':
Now assume that $A\cup B = B$.
Take $x\in A$. Since $A\cup B\subseteq B$,
we have $x\in B$. Thus, $A\subseteq B$.
\end{solution}

\begin{exercise}
Let $X$ be a (universal) set, and let $A$ and $B$ be subsets of $X$.
Prove that $(A \cap B)^c = A^c \cup B^c$.
\end{exercise}
\begin{solution}
Indeed, 
\begin{align*}
x\in (A\cap B)^c 
&\Leftrightarrow
x\notin A\cap B
\Leftrightarrow
[x\notin A \text{~or~} x\notin B]
\Leftrightarrow
[x\in A^c \text{~or~} x\in B^c]
\Leftrightarrow
x \in A^c \cup B^c.
\end{align*}
\end{solution}

\begin{exercise}
Let $X$ be a (universal) set, and let $A$ and $B$ be subsets of $X$.
Prove that $A\subseteq B$ if and only if $B^c \subseteq A^c$.
\end{exercise}
\begin{solution}
``$\Rightarrow$'':
Let $y\in B^c$. Then $y\in X\smallsetminus B$.
We claim that $y\in A^c$.
Suppose to the contrary that $y\notin A^c$.
Then $y\in (A^c)^c=A$.
The hypothesis implies that $y\in B$, which contradicts
the choice of $y$ (in $B^c$).

``$\Leftarrow$'':
Let $x\in A$. 
We claim that $x\in B$.
Suppose to the contrary that $x\notin B$,
i.e., $x\in B^c$.
The hypothesis implies that $x\in A^c$,
which is absurd because $x\in A$. 
\end{solution}

\begin{exercise}
\label{exo:A=B}
Suppose that $A,B$ are sets,
and that $X_1,\ldots,X_n$ are sets
such that $A\cup B \subseteq X_1\cup X_2\cup\cdots\cup X_n$.
Furthermore, suppose that
whenever $x$ belongs to some $X_i$, then
either ($x\in A$ and $x\in B$) or ($x\notin A$ and $x\notin B$).
Show that $A=B$.
\end{exercise}
\begin{solution}
Suppose to the contrary that $A\neq B$.
Then there exists $x\in (A\smallsetminus B)\cup (B\smallsetminus
A)$. Assume first that $x\in A\smallsetminus B$.
Since $A\smallsetminus B\subseteq A\cup B$, there exists $i$ such
that $x\in X_i$. By assumption either $x$ is in both $A$ and $B$
(impossible) or $x$ is neither in $A$ nor in $B$ (also
impossible), which is absurd. 
The case when $x\in B\smallsetminus A$ is dealt with similarly. 
Altogether, we have reached a contradiction; hence, we have proved the
result. 
\end{solution}

\begin{exercise}[symmetric difference]
\label{exo:symmdiff}
\index{symmetric difference}
The \emph{symmetric difference} of two sets
$A$ and $B$ is defined by 
\begin{equation*}
A\oplus B := (A\smallsetminus B)\cup(B\smallsetminus A).
\end{equation*}
Show the following for three arbitrary sets $A,B,C$:
\begin{enumerate}
\item
\label{exo:symmdiff0}
$A\oplus \varnothing = A$ and $A\oplus A = \varnothing$. 
\item
\label{exo:symmdiff1}
$A\oplus B = (A\cup B)\smallsetminus(A\cap B)$.
\item
\label{exo:symmdiff2}
$A\oplus(B\oplus C) = (A\oplus B)\oplus C$.
\item
\label{exo:symmdiff3}
$A\oplus B = B\oplus A$.
\item
\label{exo:symmdiff4}
$A\cap(B\oplus C) = (A\cap B)\oplus(A\cap C)$.
\item
\label{exo:symmdiff4+}
$(A\oplus B)\oplus (B\oplus C) = A\oplus C$.
\item
\label{exo:symmdiff5}
If $A\oplus B = A\oplus C$, then $B=C$.
\end{enumerate}
\end{exercise}
\begin{solution}
\ref{exo:symmdiff0}:
$A \oplus\varnothing =
(A\smallsetminus\varnothing)\cup(\varnothing\smallsetminus A)
= A\cup\varnothing = A$ and
$A \oplus A =
(A\smallsetminus A)\cup (A\smallsetminus A) =
\varnothing\cup\varnothing = \varnothing$. 

\ref{exo:symmdiff1}:
First let $x\in A\oplus B$.
Then $x\in A\smallsetminus B$ or
$x\in B\smallsetminus A$.
\emph{Case~1:} $x\in A\smallsetminus B$.
Then $x\in A\subseteq A\cup B$.
Moreover, $x\notin B$ and hence $x\notin A\cap B$
because $A\cap B\subseteq B$.
It follows that $x\in (A\cup B)\smallsetminus (A\cap B)$.
\emph{Case~2:} $x\in B\smallsetminus A$.
Then 
$x\in (A\cup B)\smallsetminus (A\cap B)$ similarly to the proof
of Case~1.
We have shown that
\begin{equation*}
A\oplus B \subseteq (A\cup B)\smallsetminus (A\cap B).
\end{equation*}
Now suppose that $x\in (A\cup B)\smallsetminus (A\cap B)$.
Then $x\in A\cup B$ and $x\notin A\cap B$.
\emph{Case~1:} $x\in A$. Since $x\notin A\cap B$, it must
be that $x\notin B$. Hence $x\in A\smallsetminus B$ and thus
$x\in A\oplus B$.
\emph{Case~2:} $x\in B$. We see similarly that $x\in A\oplus B$.
Altogether,
\begin{equation*}
(A\cup B)\smallsetminus (A\cap B) \subseteq A\oplus B,
\end{equation*}
and the proof of \ref{exo:symmdiff1} is complete. 

\ref{exo:symmdiff2}: We use Exercise~\ref{exo:A=B} to deal with
this and consider 8 possible cases.

\emph{Case~1:} $x\in A\cap B\cap C$.
Then $x\in A \smallsetminus(B\oplus C)$ and thus $x\in
A\oplus(B\oplus C)$.
Similarly, $x\in(A\oplus B)\oplus C$.

\emph{Case~2:} $x\in A^c\cap B\cap C$.
Since $x\in B\cap C$, $x\notin B\oplus C$.
Furthermore, $x\notin A$. Hence $x\notin A\oplus(B\oplus C)$.
Since $x\in A^c\cap B$, we have $x\in A\oplus B$.
Futhermore, $x\in C$. Hence $x\notin (A\oplus B)\oplus C$.

\emph{Case~3:} $x\in A\cap B^c\cap C$.
Since $x\in B^c \cap C$, $x\in B\oplus C$.
Now $x\in A$, hence $x\notin A\oplus(B\oplus C)$.
Since $x\in A\cap B^c$, $x\in A\oplus B$.
Now $x\in C$, hence $x\notin (A\oplus B)\oplus C$. 

\emph{Case~4:} $x\in A\cap B\cap C^c$.
Since $x\in B\cap C^c$, $x\in B\oplus C$.
Now $x\in A$, hence $x\notin A\oplus(B\oplus C)$.
Since $x\in A\cap B$, $x\notin A\oplus B$.
Since $x\notin C$, $x\notin (A\oplus B)\oplus C$. 

\emph{Case~5:} $x\in A\cap B^c\cap C^c$.
Since $x\in B^c\cap C^c$, $x\notin B\oplus C$.
Now $x\in A$, hence $x\in A\oplus(B\oplus C)$. 
Since $x\in A\cap B^c$, $x\in A\oplus B$.
Now $x\notin C$, hence $x\in(A\oplus B)\oplus C$.

\emph{Case~6:} $x\in A^c\cap B\cap C^c$.
Since $x\in B\cap C^c$, $x\in B\oplus C$.
Now $x\notin A$, hence $x\in A\oplus (B\oplus C)$.
Since $x\in A^c\cap B$, $x\in A\oplus B$.
Now $x\notin C$, hence $x\in (A\oplus B)\oplus C$. 

\emph{Case~7:} $x\in A^c\cap B^c\cap C$.
Since $x\in B^c\cap C$, $x\in B\oplus C$.
Now $x\notin A$, hence $x\in A\oplus(B\oplus C)$.
Since $x\in A^c\cap B^c$, $x\notin A\oplus B$.
Now $x\in C$, hence $x\in(A\oplus B)\oplus C$.

\emph{Case~8:} $x\in A^c\cap B^c\cap C^c$.
Since $x\in B^c\cap C^c$, $x\notin B\oplus C$.
Now $x\notin A$, hence $x\notin A\oplus(B\oplus C)$.
Since $x\in A^c\cap B^c$, $x\notin A\oplus B$.
Now $x\notin C$, hence $x\notin (A\oplus B)\oplus C$.

\ref{exo:symmdiff3}: This is obvious from the definition. 

\ref{exo:symmdiff4}:
We use Exercise~\ref{exo:A=B} to deal with
this and consider 8 possible cases.

\emph{Case~1:} $x\in A\cap B\cap C$.
Since $x\in B\cap C$, $x\notin B\oplus C$.
Hence $x\notin A\cap(B\oplus C)$. 
Since $x\in A\cap B$ and $x\in A\cap C$,
$x\notin(A\cap B)\oplus(A\cap C)$.

\emph{Case~2:} $x\in A^c\cap B\cap C$.
Since $x\notin A$, $x\notin A\cap(B\oplus C)$. 
Furthermore, again since $x\notin A$,
$x\notin A\cap B$ and $x\notin A\cap C$,
hence $x\notin (A\cap B)\oplus(A\cap C)$.

\emph{Case~3:} $x\in A\cap B^c\cap C$.
Since $x\in B^c\cap C$, $x\in B\oplus C$.
Now $x\in A$, hence $x\in A\cap(B\oplus C)$.
Since $x\in A\cap B^c$ and $x\in A\cap C$,
we have $x\notin A\cap B$ and $x\in A\cap C$,
hence $x\in (A\cap B)\oplus (A\cap C)$. 

\emph{Case~4:} $x\in A\cap B\cap C^c$.
Since $x\in A$ and $x\in B\cap C^c$,
we have $x\in A\cap(B\oplus C)$.
Since $x\in A\cap B$ and $x\notin A\cap C$,
we have $x\in (A\cap B)\oplus(A\cap C)$.

\emph{Case~5:} $x\in A\cap B^c\cap C^c$.
We have $x\in A$ and $x\notin B\oplus C$,
hence $x\notin A \cap(B\oplus C)$.
Now $x\notin A\cap B$ and $x\notin A\cap C$,
hence $x\notin(A\cap B)\oplus(A\cap C)$.

\emph{Case~6:} $x\in A^c\cap B\cap C^c$.
Since $x\notin A$, $x\notin A\cap(B\oplus C)$.
For the same reason, $x\notin A\cap B$ and $x\notin A\cap C$,
hence $x\notin (A\cap B)\oplus(A\cap C)$. 

\emph{Case~7:} $x\in A^c\cap B^c \cap C$.
Since $x\notin A$, $x\notin A\cap(B\oplus C)$.
For the same reason, $x\notin A\cap B$ and $x\notin A\cap C$,
hence $x\notin (A\cap B)\oplus(A\cap C)$. 

\emph{Case~8:} $x\in A^c\cap B^c \cap C^c$.
Since $x\notin A$, $x\notin A\cap(B\oplus C)$.
For the same reason, $x\notin A\cap B$ and $x\notin A\cap C$,
hence $x\notin (A\cap B)\oplus(A\cap C)$.

\ref{exo:symmdiff4+}:
Denote the LHS by $L$ and the RHS by $R$. 
We use Exercise~\ref{exo:A=B} to deal with
this and consider 8 possible cases.

\emph{Case~1:} $x\in A\cap B\cap C$.
Then $x\notin A\oplus B$ and $x\notin B\oplus C$,
hence $x\notin L$.
Also, $x\notin R$. 

\emph{Case~2:} $x\in A^c\cap B\cap C$.
Then $x\in A\oplus B$ and $x\notin B\cap C$,
hence $x\in L$. Also, $x\in R$. 

\emph{Case~3:} $x\in A\cap B^c\cap C$.
Then $x\in A\oplus B$ and $x\in B\oplus C$,
hence $x\notin L$. Also, $x\notin R$. 

\emph{Case~4:} $x\in A\cap B\cap C^c$.
Then $x\notin A\oplus B$ and $x\in B\oplus C$,
hence $x\in L$. Also, $x\in R$.

\emph{Case~5:} $x\in A\cap B^c\cap C^c$.
Then $x\in A\oplus B$ and $x\notin B\oplus C$,
hence $x\in L$. Also, $x\in R$.

\emph{Case~6:} $x\in A^c\cap B\cap C^c$.
Then $x\in A\oplus B$ and $x\in B\oplus C$,
hence $x\notin L$. Also, $x\notin R$.

\emph{Case~7:} $x\in A^c\cap B^c\cap C$.
Then $x\notin A\oplus B$ and $x\in B\oplus C$,
hence $x\in L$. Also, $x\in R$.

\emph{Case~8:} $x\in A^c\cap B^c\cap C^c$.
Then $x\notin A\oplus B$ and $x\notin B\oplus C$,
hence $x\notin L$. Also, $x\notin R$.

\ref{exo:symmdiff5}:
We show the contrapositive, i.e., $B\neq C\Rightarrow A\oplus
B\neq A\oplus C$.
Suppose that $B\neq C$.
Then $B\oplus C\neq\varnothing$,
say $x\in B\smallsetminus C$ (the case $C\smallsetminus B$ is
similar). 
\emph{Case~1:} $x\in A$.
Then $x\in A\cap B$, hence $x\notin A\oplus B$.
And $x\in A\cap C^c$, hence $x\in A\oplus C$.
\emph{Case~2:} $x\notin A$.
Then $x\in A^c\cap B$, hence $x\in A\oplus B$.
And $x\in A^c\cap C^c$, hence $x\notin A\oplus C$.

In either case, we conclude that $A\oplus B\neq A\oplus C$.
\end{solution}




\begin{exercise}
Set $A = \ZZ\times(\ZZ\smallsetminus\{0\})
= \menge{(x,y)\in\ZZ\times\ZZ}{y\neq 0}$
and define a relation $\Rel$ on $A$ by
\begin{equation}
(x,y)\Rel (u,v) \quad :\Leftrightarrow\quad
xv = yu.
\end{equation}
Show that $\Rel$ is an equivalence relation.
What is the ``meaning'' of this relation and
what can you identify the quotient set with?
\end{exercise}
\begin{solution} 
Let $(x,y),(u,v),(a,b)$ be in $A$. 

Reflexivity: Because $xy=yx$, it is clear that $(x,y)\Rel(x,y)$. 

Symmetry: Assume $(x,y)\Rel(u,v)$, i.e., $xv=yu$.
Then $uy=vx$ and hence $(u,v)\Rel(x,y)$.

Transitivity: Assume $(x,y)\Rel(u,v)$, i.e., $xv=yu$.
Also assume that $(u,v)\Rel(a,b)$, i.e., $ub=va$. 
Note that $y$, $v$, and $b$ are all nonzero.
Hence
$x/y = u/v$ and $u/v = a/b$.
Combining, we learn that
$x/y=a/b$. Thus $xb=ya$, i.e., 
$(x,y)\Rel(a,b)$.

We thus have shown that $\Rel$ is an equivalence relation!

We have $(x,y)\Rel(u,v)$ if and only if $x/y= u/v$.

Therefore,
the quotient set is essentially the set of rational numbers $\QQ$ since
So $[(x,y)]$ can be viewed as $x/y$ in $\QQ$; the equivalence class contains
all fractions giving the same rational number!
\end{solution}

\begin{exercise}
Construct relations $\Rel$ such that:
\begin{enumerate}
\item $\Rel$ is reflexive and symmetric, but not transitive;
\item $\Rel$ is reflexive and transitive, but not symmetric;
\item $\Rel$ is symmetric and transitive, but not reflexive.
\end{enumerate}
\end{exercise}
\begin{solution}

(i): $A := \{1,2,3\}$ and 
$\Rel := \{ (1,1),(2,2),(3,3),(1,2),(2,1),(2,3),(3,2)\}$.
Reflexivity and symmetry are clear by inspection.
Transitivity fails: $(1,2)\in\Rel$ and $(2,3)\in\Rel$ but 
$(1,3)\notin\Rel$. 

(ii): $A := \{1,2\}$ and
$\Rel := \{(1,1),(2,2),(1,2)\}$.
By inspection, $\Rel$ is reflexive and transitive, but not symmetric:
$(1,2)\in\Rel$ but $(2,1)\notin\Rel$.

(iii): $A := \{1,2\}$ and $\Rel := \{(1,1)\}$. 
Then $\Rel$ is symmetric and transitive, yet not reflexive: 
$(2,2)\notin\Rel$. 
\end{solution}

\begin{exercise}
Construct relations $\Rel$ such that:
\begin{enumerate}
\item $\Rel$ is reflexive, but neither symmetric nor transitive;
\item $\Rel$ is symmetric, but neither reflexive nor transitive;
\item $\Rel$ is transitive, but neither reflexive nor symmetric. 
\end{enumerate}
\end{exercise}
\begin{solution}
(i): $A := \{1,2,3\}$ and
$\Rel := \{(1,1),(2,2),(3,3),(1,2),(2,3)\}$.
Clearly, $\Rel$ is reflexive.
Since $(1,2)\in\Rel$ but $(2,1)\notin\Rel$, $\Rel$ is not symmetric. 
Since $(1,2)\in\Rel$ and $(2,3)\in\Rel$, yet $(1,3)\notin\Rel$, 
we see that $\Rel$ is not transitive. 

(ii): $A := \{1,2\}$ and
$\Rel := \{(1,2),(2,1)\}$.
Clearly, $\Rel$ is symmetric.
However, $(1,1)\notin\Rel$, so $\Rel$ is not reflexive.
Furthermore $(1,2)\in\Rel$ and $(2,1)\in\Rel$, yet $(1,1)\notin\Rel$, so
transitivity also fails.

(iii): $A := \{1,2\}$ and 
$\Rel := \{(1,2)\}$.
Transitivity is trivially true (there is nothing to check since $2\neq 1$). 
Reflexivity fails since $(1,1)\notin\Rel$.
Finally, $\Rel$ is clearly not symmetric. 
\end{solution}

\begin{exercise}
Let $A$ be the set of all straight lines drawn in the Euclidean
plane. Define a relation $\Rel$ on $A$ by 
\begin{equation*}
\ell_1 \Rel \ell_2 \quad :\Leftrightarrow\quad
\text{the lines $\ell_1$ and $\ell_2$ are perpendicular.}
\end{equation*}
Decide whether $\Rel$ is
(i) reflexive; 
(ii) symmetric;
(iii) transitive;
(iv) an equivalence relation. 
\end{exercise}
\begin{solution} 
Let $\ell_1,\ell_2,\ell_3$ be lines in $A$. 

(i): $\Rel$ is \emph{not reflexive} because no line is perpendicular
to itself.

(ii): $\Rel$ is \emph{symmetric} because if $\ell_1$ is perpendicular
to $\ell_2$, then $\ell_2$ is perpendicular to $\ell_1$.

(iii): $\Rel$ is not \emph{transitive}: The $x$-axis is perpendicular to
the $y$-axis, and the $y$-axis is perpendicular to the $x$-axis;
however, the $x$-axis is not perpendicular to itself.

(iv): In view of (i) or (iii), it is clear that $\Rel$ is
\emph{not} an equivalence relation.
\end{solution}

\begin{exercise}
Let $A$ be the set of all straight lines drawn in the Euclidean
plane. Define a relation $\Rel$ on $A$ by 
\begin{equation*}
\ell_1 \Rel \ell_2 \quad :\Leftrightarrow\quad
\text{the lines $\ell_1$ and $\ell_2$ are parallel.}
\end{equation*}
Decide whether $\Rel$ is
(i) reflexive; 
(ii) symmetric;
(iii) transitive;
(iv) an equivalence relation. 
\end{exercise}
\begin{solution}
Let $\ell_1,\ell_2,\ell_3$ be lines in $A$. 

(i): $\Rel$ is \emph{reflexive} because every line is parallel to
itself.

(ii): $\Rel$ is \emph{symmetric} because if $\ell_1$ is parallel 
to $\ell_2$, then $\ell_2$ is also parallel to $\ell_1$.

(iii): $\Rel$ is \emph{transitive} because if $\ell_1$ is
parallel to $\ell_2$ and $\ell_2$ is parallel to $\ell_3$, then
$\ell_1$ is parallel to $\ell_3$. 

(iv): In view of (i)--(iii), it is clear that $\Rel$ is
an equivalence relation.
\end{solution}

\begin{exercise}
Let 
$n\in\{1,2,3,\ldots\}$, and let 
$A$ be a set containing $n$ distinct elements.
Consider the following two relations:
\begin{enumerate}
\item $\Rel_1 = \menge{(a,a)}{a\in A}$.
\item $\Rel_2 = A\times A$.
\end{enumerate}
Determine whether or not each of these relations is an 
equivalence relation. 
If it is, determine the number of elements in the quotient set.
\end{exercise}
\begin{solution}
(i): 
It is easy to check that $\Rel_1$ is an equivalence relation.
The quotient set $A/\Rel_1 = \menge{[a]}{a\in A}$ 
can be identified with $A$ itself because $[a]=\{a\}$ for
every $a\in A$; thus, it has $n$ elements. 

(ii): 
It is easy to check that $\Rel_2$ is an equivalence relation.
The quotient set $A/\Rel_2 = \menge{[a]}{a\in A} = \{A\}$ has exactly $1$
element, because for every $a\in A$ and every $b\in A$, $b\in[a]$. 
\end{solution}

\begin{exercise}
Let $A$ be a set, and let $\Rel$ be a relation on $A$. 
Then $\Rel$ is called \emph{antisymmetric} if it satisfies the
following property for all $a$ and $b$ in $A$:
If $a\Rel b$ and $b\Rel a$, then $a=b$.
Construct the following:
\begin{enumerate}
\item An example of an equivalence relation that is
antisymmetric.
\item An example of an equivalence relation that is not
antisymmetric.
\end{enumerate}
\end{exercise}
\begin{solution}
(i): Let $A$ be a nonempty set and define $a\Rel b$
$:\Leftrightarrow$ $a=b$. Then $\Rel$ is an equivalence relation
and it is also antisymmetric.

(ii): Consider the equivalence relation of Example~\ref{ex:mod5}. 
It is not antisymmetric because $5\equiv 10$ and $10\equiv 5$ yet
$5\neq 10$. 
\end{solution}

\begin{exercise}
\label{exo:walaa5}
Let $A$ be the set of functions from $\RR$ to $\RR$.
Define a relation $\Rel$ on $A$ by
\begin{equation*}
f\Rel g \quad :\Leftrightarrow \quad
\big( f(0)=g(0) \;\;\text{or}\;\; f(1)=g(1)\big).
\end{equation*}
Decide whether $\Rel$ is
(i) reflexive;
(ii) symmetric; 
(iii) transitive;
(iv) an equivalence relation.
\end{exercise}
\begin{solution}
Let $f,g,h$ be in $A$, i.e., $f,g,h$ are functions from $\RR$ to $\RR$.

(i): Since $f(0)=f(0)$ and $f(1)=f(1)$, it is clear that $f \Rel f$ and
hence $\Rel$ is \emph{reflexive}.

(ii): Since $f(0)=g(0) \Rightarrow g(0)=f(0)$ and also
$f(1)=g(1) \Rightarrow g(1)=f(1)$, it follows that $\Rel$ is symmetric.

(iii): Suppose that $f(x)=x-1$, $g(x)=2x^2-1$, and $h(x)=(x-2)^2$.
Then $f(0)=-1 = g(0)$ and so $f \Rel g$.
Also, $g(1) = 1 = h(1)$ and so $g \Rel h$.
However, $f(0)=-1\neq 4 = h(0)$ and $f(1)=0\neq 1 = h(1)$ and so 
$f\negthinspace\negthinspace\negthinspace\not\Rel h$. It follows that $\Rel$ is \emph{not transitive.}

(iv): Because transitivity fails, $\Rel$ is \emph{not an equivalence
relation}. 
\end{solution}

\begin{exercise}
\label{exo:walaa6}
Let $A$ be the set of functions from $\RR$ to $\RR$.
Define a relation $\Rel$ on $A$ by
\begin{equation*}
f\Rel g \quad :\Leftrightarrow \quad
\big( f(x)-g(x)=1 \;\;\text{for every $x\in\RR$.}\big).
\end{equation*}
Decide whether $\Rel$ is
(i) reflexive;
(ii) symmetric; 
(iii) transitive;
(iv) an equivalence relation.
\end{exercise}
\begin{solution}
Let $f,g,h$ be in $A$, i.e., $f,g,h$ are functions from $\RR$ to $\RR$.

(i): Since $f(x)-f(x)=0\neq 1$ for every $x\in\RR$,
it is clear that 
$f\notRel f$ 
and thus
$\Rel$ is \emph{not reflexive}.

(ii): If $f\Rel g$, i.e., $f(x)-g(x)=1$ for every $x\in\RR$,
then $g(x)-f(x)=-1\neq 1$ for every $x\in\RR$.
Hence $g\notRel f$ and thus
$\Rel$ is \emph{not symmetric}.

(iii):
If $f\Rel g$ and $g\Rel h$, i.e., $f(x)-g(x)=1$ and $g(x)-h(x)=1$  
for every $x\in\RR$, then
(by adding) $f(x)-h(x) = 2$ for every $x\in\RR$ and hence 
$f\notRel h$.
Thus $\Rel$ is \emph{not transitive}.

(iv): Since each property fails, $\Rel$ is \emph{not an equivalence
relation}.
\end{solution}

\begin{exercise}
\label{exo:walaa7}
Let $A$ be the set of functions from $\RR$ to $\RR$.
Define a relation $\Rel$ on $A$ by
$f\Rel g$ $:\Leftrightarrow$
there exists a constant $c\in\RR$ (possibly depending on $f$ and $g$) 
such that $f(x)-g(x)=c$ for every $x\in\RR$.

Decide whether $\Rel$ is
(i) reflexive;
(ii) symmetric; 
(iii) transitive;
(iv) an equivalence relation.
\end{exercise}
\begin{solution}
Let $f,g,h$ be in $A$, i.e., $f,g,h$ are functions from $\RR$ to $\RR$.

(i): Since $f(x)-f(x)=0$ for every $x\in\RR$,
it is clear that $f\Rel f$ and thus $\Rel$ is \emph{reflexive}.

(ii): If $f\Rel g$, say $f(x)-g(x)=c$ for some $c\in\RR$ and every $x\in\RR$,
then $g(x)-f(x)=-c$ for every $x\in\RR$.
Hence $g\Rel f$ and thus
$\Rel$ is \emph{symmetric}.

(iii):
If $f\Rel g$ and $g\Rel h$, say $f(x)-g(x)=c$ and $g(x)-h(x)=d$  for every
$x\in\RR$ and some constants $c$ and $d$ in $\RR$, then
(by adding) $f(x)-h(x) = c+d$ for every $x\in\RR$ and hence 
$f\Rel h$.
Thus $\Rel$ is \emph{transitive}.

(iv): Since each property holds, $\Rel$ is \emph{an equivalence
relation}.
\end{solution}

\begin{exercise}
Let $A = \RR^2$, and 
define a relation $\Rel$ on $A$ by
$(x,y)\Rel (u,v)$ $:\Leftrightarrow$
$|x|+|y|=|u|+|v|$.

Show that $\Rel$ is an equivalence relation.
Describe the equivalence class $[(1,0)]$ 
geometrically.
Which equivalence class has exactly one element?
\end{exercise}
\begin{solution}
Let $(x,y)$, $(u,v)$, and $(a,b)$ in $\RR^2$.

(i): Clearly, $(x,y)\Rel (x,y)$ because
$|x|+|y|=|x|+|y|$. 
Thus $\Rel$ is \emph{reflexive}.

(ii): If $(x,y)\Rel (u,v)$, 
then $|x|+|y| = |u|+|v|$ and
hence $|u|+|v|=|x|+|y|$ which shows that 
$(u,v)\Rel (x,y)$.
Hence $\Rel$ is \emph{symmetric}.

(iii):
If $(x,y)\Rel (u,v)$ and $(u,v)\Rel (a,b)$, then
$|x|+|y|=|u|+|v|$ and $|u|+|v|=|a|+|b|$
and so $|x|+|y|=|a|+|b|$ which shows that 
$(x,y) \Rel (a,b)$. 
Thus $\Rel$ is \emph{transitive}.

The equivalence class $[(1,0)] =
\menge{(x,y)\in\RR^2}{|x|+|y|=1}$
is the ``diamond'' with vertices
$(\pm 1,0)$ and $(0,\pm 1)$. 

All equivalence classes are scaled version of this diamond
--- and thus contain infinitely many elements --- 
except for $[(0,0)] = \{(0,0)\}$. 
\end{solution}

\begin{exercise}
\label{exo:walaa8}
Let $A$ be the set of functions from $\RR$ to $\RR$.
Define a relation $\Rel$ on $A$ by
\begin{equation*}
f\Rel g \quad :\Leftrightarrow \quad
\big( f(0)=g(1)\;\;\text{and}\;\; f(1)=g(0)\big)
\end{equation*}
Decide whether $\Rel$ is
(i) reflexive;
(ii) symmetric; 
(iii) transitive;
(iv) an equivalence relation.
\end{exercise}
\begin{solution}
Let $f,g,h$ be in $A$, i.e., $f,g,h$ are functions from $\RR$ to $\RR$.

(i): Suppose that $f(x)=x$. Then $f(0)=0\neq 1 = f(1)$, so 
$f\negthinspace\negthinspace\negthinspace\not\Rel f$ 
and thus
$\Rel$ is \emph{not reflexive}.

(ii): 
If $f\Rel g$, then $f(0)=g(1)$ and $f(1)=g(0)$;
hence, $g(0)=f(1)$ and $g(1)=f(0)$, i.e.,
$g\Rel f$.
Thus $\Rel$ is \emph{symmetric}.

(iii):
Suppose that $f(x)=h(x)=|x-1|$ and
$g(x)=x$.
Then $f(0)=1=g(1)$ and $f(1)=0=g(0)$.
Hence $f\Rel g$ and thus (by (ii) or directly)
$g\Rel f$ and so $g\Rel h$.
However, $f(0)=1\neq 0 = f(1)=h(1)$ and thus
$f\negthinspace\negthinspace\negthinspace\not\Rel h$.
Thus $\Rel$ is \emph{not transitive}.

(iv): Since two properties fail, $\Rel$ is \emph{not an equivalence
relation}.
\end{solution}

\begin{exercise}
Suppose that $S = \{1,2,3\}$ and
denote its power set by $A$.
Define a relation on $A$ by
$X\Rel Y$ $:\Leftrightarrow$
$X$ and $Y$ have the same number of elements.
Show that $\Rel$ is an equivalence relation,
and determine the quotient set.
How many elements does the quotient set have?
\end{exercise}
\begin{solution}
Let $X,Y,Z$ be elements in $A$, i.e., subsets of $S$.

(i): Since $X$ and $X$ have the same number of elements, we have 
$X\Rel X$ and so $\Rel$ is reflexive.

(ii): If $X$ and $Y$ have the same number of elements, so do
$Y$ and $X$; consequently, 
$\Rel$ is symmetric.

(iii): 
If $X$ and $Y$ have the same number of elements, say $m$,
and $Y$ and $Z$ have the same number of elements, say $n$,
then $m=n$ and so $\Rel$ is transitive.

Thus, $\Rel$ is an equivalence relation.

We have
\begin{align*} 
A/\Rel &= 
\big\{ [\varnothing],[\{1\}],[\{1,2\}],[\{1,2,3\}]\big\}\\
&= 
\Big\{
\{\varnothing\},\big\{\{1\},\{2\},\{3\}\big\},\big\{\{1,2\},\{1,3\},\{2,3\}\big\},\big\{\{1,2,3\}\big\}\Big\}
\end{align*}
and the quotient set has 4 elements. 
\end{solution}

\begin{exercise}
Suppose that $S = \{1,2,3,4,5\}$ and
denote its power set by $A$.
Define a relation on $A$ by
$X\Rel Y$ $:\Leftrightarrow$
$X\cap\{1,3,5\}= Y\cap\{1,3,5\}$.
Show that $\Rel$ is an equivalence relation and
determine the equivalence class $[\{1,2,3\}]$. 
\end{exercise}
\begin{solution}
Let $X,Y,Z$ be elements in $A$, i.e., subsets of $S$.

(i): Since $X\cap\{1,3,5\}=X\cap\{1,3,5\}$, we have
$X\Rel X$ and $\Rel$ is reflexive.

(ii): If $X\cap\{1,3,5\} = Y\cap\{1,3,5\}$, then
$Y\cap\{1,3,5\} = X\cap\{1,3,5\}$ and hence $\Rel$ is symmetric.

(iii): If $X\cap \{1,3,5\} = Y\cap\{1,3,5\}$ and 
$Y\cap\{1,3,5\}=Z\cap\{1,3,5\}$, then
$X\cap\{1,3,5\}=Z\cap\{1,3,5\}$ and hence $\Rel$ is transitive.

Thus, $\Rel$ is an equivalence relation.

Now $Z\in [\{1,2,3\}]$
$\Leftrightarrow$
$\{1,2,3\}\Rel Z$
$\Leftrightarrow$
$\{1,2,3\}\cap\{1,3,5\} = Z\cap\{1,3,5\}$
$\Leftrightarrow$
$\{1,3\} = Z\cap\{1,3,5\}$
$\Leftrightarrow$
($\{1,3\}\subseteq Z$ and $5\notin Z$)
$\Leftrightarrow$
$Z\in\{ \{1,3\},\{1,2,3\},\{1,3,4\},\{1,2,3,4\}\}$. 
Hence
\begin{equation*}
 [\{1,2,3\}]
=\{ \{1,3\},\{1,2,3\},\{1,3,4\},\{1,2,3,4\}\}.
\end{equation*}
\end{solution}

\begin{exercise}[Pigeonhole Principle]
If you have $n$ buckets (or pigeonholes) and 
more than $n$ objects (or pigeons) that you distribute,
then one of the buckets contains at least $2$ objects.
This is known as the \emph{Pigeonhole Principle}.
\index{Pigeonhole Principle}
Now assume that you are in a class with 15+ students. 
Deduce that there are two students in your class 
whose birthday falls in the same calendar month. 
\end{exercise}
\begin{solution}
We have 12 buckets, namely the months of a year.
Your class has 15+ students, so there are more than 12 students in it.
Hence there exist two students whose birthday falls in the same
month.
\end{solution}

\begin{exercise}[hair!]
Assuming that the maximum number of hairs on a human head
is 150,000, show that there are (at least) five people in Vancouver having
exactly the same number of hairs on their heads. 
\emph{Hint:} Argue by contradiction and find out the population of
Vancouver!
\end{exercise}
\begin{solution}
This is a variant of the Pigeonhole Principle.
We argue by contradiction and suppose that the result is false. 
Then for every number $n\in\{0,1,2,\ldots,150000\}$
there are no more than 4 people with that number $n$ of hair.
Hence the population of Vancouver can be no larger than
$4\cdot150001 = 600004$.
On the other hand, the population of Vancouver is 
larger than 660000 --- contradiction!
(Sources for the numbers: WolframAlpha.)
\end{solution}

\begin{exercise}
Let $A=\QQ$ be the set of rational numbers and define 
a relation on $A$ by 
$$ \frac{a}{b} \Rel \frac{c}{d}
\quad :\Leftrightarrow\quad 
\frac{a}{b}-\frac{c}{d}\in \ZZ.$$
Show that $\Rel$ is an equivalence relation.
Furthermore, 
determine the equivalence class $[0]$. 
\end{exercise}
\begin{solution}
Consider elements $a/b$, $c/d$, $e/f$ in $A=\QQ$
(so $b,d,f$ are all nonzero of course).

(i): We have $(a/b)-(a/b) = 0\in\ZZ$ so $(a/b)\Rel (a/b)$ 
and $\Rel$ is reflexive.

(ii): 
Suppose $(a/b)\Rel (c/d)$, say $(a/b)-(c/d) = k\in\ZZ$.
Then $(c/d)-(a/b) = -k \in \ZZ$ and so $(c/d)\Rel (a/b)$.
Hence $\Rel$ is symmetric.

(iii): 
Suppose $(a/b)\Rel (c/d)$, say $(a/b)-(c/d) = m\in\ZZ$, 
and $(c/d)\Rel (e/f)$, say $(c/d)-(e/f) = n\in\ZZ$. 
Then 
$$
(a/b)-(e/f)
= \big( (a/b)-(c/d)\big) + \big((c/d)-(e/f)\big)
= m+n \in \ZZ,
$$ 
and therefore $(a/b)\Rel (e/f)$.
Hence 
$\Rel$ is transitive.

Thus, $\Rel$ is an equivalence relation.

Finally, 
$(a/b)\in [0]$
$\Leftrightarrow$ 
$0\Rel (a/b)$
$\Leftrightarrow$ 
$(a,b)\Rel 0$
$\Leftrightarrow$ 
$(a/b)-0 \in \ZZ$
$\Leftrightarrow$
$(a/b)\in\ZZ$.
Therefore, $[0]=\ZZ$.
\end{solution}

\begin{exercise}
Set $A = \ZZ\times \ZZ$
and define a relation $\Rel$ on $A$ by
\begin{equation}
(x,y)\Rel (u,v) \quad :\Leftrightarrow\quad
x+v = y+u.
\end{equation}
Show that $\Rel$ is an equivalence relation.
Moreover, determine the equivalence class $[(0,0)]$.
\end{exercise}
\begin{solution} 
Let $(x,y),(u,v),(a,b)$ be in $A$. 

Reflexivity: 
Because $x+y=y+x$, it is clear that $(x,y)\Rel(x,y)$. 

Symmetry: Assume that $(x,y)\Rel(u,v)$, i.e., $x+v=y+u$.
Then $u+y=v+x$ and hence $(u,v)\Rel(x,y)$.

Transitivity: Assume $(x,y)\Rel(u,v)$, i.e., $x+v=y+u$.
Also assume that $(u,v)\Rel(a,b)$, i.e., $u+b=v+a$. 
Hence $x-y=u-v$ and $u-v=a-b$.
Thus $x-y=a-b$, i.e., $x+b=y+a$ and so $(x,y)\Rel (a,b)$.

Finally, let $(u,v)\in A$.
Then 
$(u,v)\in[(0,0)]$
$\Leftrightarrow$
$(0,0)\Rel (u,v)$
$\Leftrightarrow$
$0+v=0+u$
$\Leftrightarrow$
$v=u$.
Hence 
$[(0,0)]=\menge{(n,n)}{n\in\ZZ}$.
\end{solution}

\begin{exercise}[YOU be the marker!] 
Consider the following statement 
\begin{equation}
\text{``The usual $\leq$ is an equivalence relation on $\RR$.''}
\end{equation}
and the following ``proof'':
\begin{quotation}
Reflexivity: Clear, since $a\leq a$.\\
Symmetry: If $a\leq b$ and $b\leq a$, then $a=b$ and so $a\leq b$.\\
Transitivity:
If $a\leq b$ and $b\leq c$, then obviously $a\leq c$. 
\end{quotation}
Why is this proof wrong?
\end{exercise}
\begin{solution}
The proofs of reflexivity and transitivity are correct.
But the proof of symmetry is wrong.
Symmetry means that if $a\leq b$, then 
we must show that $b\leq a$ (we cannot assume this).
And this is wrong: e.g., $1\leq 2$ is true 
but $2\leq 1$ is clearly false.
\end{solution}

\begin{exercise}[YOU be the marker!]  
Consider the following statement 
\begin{equation*}
\text{``If $\Rel$ is a symmetric and transitive relation on $A$,
 then $\Rel$ is also reflexive.''}
\end{equation*}
and the following ``proof'':
\begin{quotation}
Let $x\in A$.\\
Now get $y\in A$ such that $x\Rel y$.\\
By symmetry, $y\Rel x$.\\
Now $x\Rel y$ and $y\Rel x$, 
so transitivity yields $x\Rel x$ and we are done.
\end{quotation}
Why is this proof wrong?
\end{exercise}
\begin{solution}
All is well \textbf{except} that we cannot guarantee that 
for all $x\in A$, there is some $y\in A$ such that 
$x\Rel y$ and that is where the proof breaks down.

A counterexample is $A = \{1,2\}$
and $\Rel = \{(2,2)\}$ which is symmetric and transitive but 
not reflexive because $(1,1)\notin\Rel$. 
\end{solution}

\begin{exercise}[TRUE or FALSE?]
Mark each of the following statements as either true or false. 
Briefly justify your answer.
\begin{enumerate}
\item ``If $x\in A$ and $A\subseteq B$, then $x\in B$.''
\item ``If $x\notin A$ and $A\subseteq B$, then $x\notin B$.''
\item ``Every relation is an equivalence relation.''
\item ``If $A$ is a set of $10$ elements and 
$\Rel$ is an equivalence relation on $A$, then the quotient set 
$A/\Rel$ has at least 2 elements.
\end{enumerate}
\end{exercise}
\begin{solution}

(i): TRUE: By definition. 

(ii): FALSE: $-1\notin \NN\subseteq \ZZ$ but $-1\in\ZZ$. 

(iii): FALSE: There are many counterexamples, e.g., ``$<$''. 

(iv): FALSE: If $\Rel = A\times A$, i.e., every $a\in A$ is 
related to every $b\in A$,   
then the quotient set has exactly one element, $A$ itself.
\end{solution} 
\chapter{Ordered Fields}

\label{ch:of}

Inequalities are a crucial part of mathematics. 
We now revisit the real numbers in this light.

\section{Axioms of Order and Consequences}

\index{Axioms of Order}

\begin{definition}[Axioms of Order]
Certain elements in $\RR$ are termed \textbf{positive}; 
if $x\in\RR$ is positive, we write $x>0$.
The following hold.
\begin{enumerate}
\item[\textbf{O1}]
For every $x\in\RR$, exactly one of the following hold:
$x>0$, $x=0$, or $-x>0$.
\item[\textbf{O2}]
For all $x,y$ in $\RR$ we have:
[$x>0$ and $y>0$] $\Rightarrow$ $x+y>0$.
\item[\textbf{O3}]
For all $x,y$ in $\RR$ we have:
[$x>0$ and $y>0$] $\Rightarrow$ $xy>0$.
\end{enumerate}
\end{definition}

If $-x$ is positive, we say that $x$ is \textbf{negative}.
Loosely speaking, \textbf{O1} says that every real number
is $0$, positive, or negative; and \textbf{O2} and \textbf{O3} say
that the sum and product of positive numbers is again positive.

Given $x,y$ in $\RR$, we write:
$x>y$ if $x-y>0$;
$x\geq y$ if [$x>y$ or $x=y$];
$y<x$ if $x>y$;
and 
$y\leq x$ if $x\geq y$.
If $x\geq 0$, we say that $x$ is \textbf{nonnegative}\footnote{Warning:
Some authors include $0$ when speaking about positive numbers.};
if $x\leq 0$, then $x$ is \textbf{nonpositive}.

\begin{proposition}
\label{p:ofield}
Let $u,v,x,y,z$ be in $\RR$.
Then the following hold.
\begin{enumerate}
\item 
\label{p:ofieldi}
$x<0$ 
$\Leftrightarrow$
$-x>0$.
\item 
\label{p:ofieldii}
(``$<$'' is transitive)~~
\emph{[}$x<y$ and $y<z$\emph{]} $\Rightarrow$ $x<z$.
\item 
\label{p:ofieldiii}
$x<y$ $\Leftrightarrow$ $x+z<y+z$.
\item 
\label{p:ofieldiv}
\emph{[}$x<y$ and $u<v$\emph{]} $\Rightarrow$ $x+u<y+v$.
\item 
\label{p:ofieldiv+}
\emph{[}$x\leq y$ and $u \leq v$\emph{]} $\Rightarrow$ $x+u\leq y+v$.
\item 
\label{p:ofieldv}
\emph{[}$x<y$ and $u>0$\emph{]} $\Rightarrow$ $ux<uy$.
\item 
\label{p:ofieldvi}
\emph{[}$x<y$ and $u<0$\emph{]} $\Rightarrow$ $ux>uy$.
\item 
\label{p:ofieldvii}
\emph{[}$x\leq y$ and $u>0$\emph{]} $\Rightarrow$ $ux\leq uy$.
\item 
\label{p:ofieldviii}
\emph{[}$x\leq y$ and $u<0$\emph{]} $\Rightarrow$ $ux\geq uy$.
\item 
\label{p:ofieldix}
\emph{[}$0\leq x< y$ and $0\leq u<v$\emph{]} $\Rightarrow$ $ux< vy$.
\item 
\label{p:ofieldx}
$x\neq 0$ $\Leftrightarrow$ $x^2>0$. 
\item 
\label{p:ofieldxi}
$1>0$.
\item 
\label{p:ofieldxii}
$x>0$ $\Leftrightarrow$ $x^{-1}>0$.
\item 
\label{p:ofieldxiii}
$x<0$ $\Leftrightarrow$ $x^{-1}<0$.
\item 
\label{p:ofieldxiv}
$0<x<y$ $\Rightarrow$ $x^{-1}>y^{-1}$. 
\end{enumerate}
\end{proposition}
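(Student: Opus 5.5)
I will derive all fifteen items directly from \textbf{O1}--\textbf{O3}, the field axioms, and the algebraic rules of Propositions~\ref{p:may26:1}, \ref{p:100908:2} and \ref{p:may26:4}. The strategy is to translate each inequality into the statement that some difference is positive, and then apply \textbf{O2} or \textbf{O3}. I will proceed in the listed order, since later items reuse earlier ones.

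\ref{p:ofieldi}: $x<0$ means $0>x$, i.e., $0-x=-x>0$.

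\ref{p:ofieldii}: $y-x>0$ and $z-y>0$, so by \textbf{O2} $(y-x)+(z-y)=z-x>0$, after simplifying with \textbf{A1}--\textbf{A4}.

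\ref{p:ofieldiii}: $(y+z)-(x+z)=y-x$, using Proposition~\ref{p:may26:1}\ref{p:may26:1v}, so both sides of the equivalence say the same thing.

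\ref{p:ofieldiv}: Apply \textbf{O2} to $y-x$ and $v-u$.

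\ref{p:ofieldiv+}: Split into cases according to whether each inequality is strict or an equality, combining \ref{p:ofieldiii} and \ref{p:ofieldiv}.

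\ref{p:ofieldv}: By \textbf{O3} and \textbf{D}, $uy-ux=u(y-x)>0$.

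\ref{p:ofieldvi}: Here $-u>0$, so $(-u)(y-x)=ux-uy>0$ by Proposition~\ref{p:may26:4}\ref{p:may26:4iv}.

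\ref{p:ofieldvii}, \ref{p:ofieldviii}: These follow from \ref{p:ofieldv} and \ref{p:ofieldvi} together with the trivial equality case.

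\ref{p:ofieldix}: By \ref{p:ofieldvii} and \ref{p:ofieldv} we have $ux\le vx$ (since $u<v$ and $x\ge 0$). We also have $vx<vy$, because $v>u\ge0$ gives $v>0$. Transitivity, together with its obvious mixed $\le$/$<$ variant, then gives the claim.

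\ref{p:ofieldx}: For ``$\Rightarrow$'', by \textbf{O1} either $x>0$, so $x^2>0$ by \textbf{O3}; or $-x>0$, so $x^2=(-x)(-x)>0$ by Proposition~\ref{p:may26:4}\ref{p:may26:4v}. For ``$\Leftarrow$'', if $x=0$ then $x^2=0$, which is not positive by \textbf{O1}.

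\ref{p:ofieldxi}: We have $1=1^2$ and $1\neq0$ by \textbf{M3}, so $1>0$ by \ref{p:ofieldx}.

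\ref{p:ofieldxii}: This is the step needing the most care, because \textbf{O1} must be used as a trichotomy to rule out cases. Suppose $x>0$. Then $x^{-1}\neq 0$, since $xx^{-1}=1\neq0$. If $x^{-1}$ were negative, then $x(-x^{-1})>0$ by \textbf{O3}; but $x(-x^{-1})=-1$, and $-1>0$ contradicts \ref{p:ofieldxi} via \textbf{O1}. Hence $x^{-1}>0$. The converse follows by applying this to $x^{-1}$ and using $(x^{-1})^{-1}=x$.

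\ref{p:ofieldxiii}: Apply \ref{p:ofieldxii} to $-x$, noting that $(-x)^{-1}=-x^{-1}$. This holds by uniqueness of inverses, since $(-x)(-x^{-1})=xx^{-1}=1$.

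\ref{p:ofieldxiv}: Both $x^{-1}$ and $y^{-1}$ are positive by \ref{p:ofieldxii}, so $x^{-1}y^{-1}>0$. Multiplying $x<y$ by this quantity via \ref{p:ofieldv} gives $y^{-1}<x^{-1}$.

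The main obstacle is not depth but bookkeeping: the trichotomy arguments in \ref{p:ofieldx} and \ref{p:ofieldxii}, and the mixed $\le$/$<$ transitivity needed in \ref{p:ofieldix}. I will state that mixed variant briefly, with a one-line proof by cases, before using it.
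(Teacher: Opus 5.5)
Your proposal is correct, and its overall strategy is exactly the paper's: rewrite each inequality as positivity of a difference, then invoke \textbf{O1}--\textbf{O3}. Items \ref{p:ofieldi}--\ref{p:ofieldiv}, \ref{p:ofieldv}--\ref{p:ofieldviii}, \ref{p:ofieldx}, \ref{p:ofieldxi} and \ref{p:ofieldxiv} follow the paper essentially line for line. Your case split for \ref{p:ofieldiv+} supplies an item that the paper's proof skips.

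Two items take a different local route.

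For \ref{p:ofieldxii}, the paper argues directly. Item \ref{p:ofieldx} gives $(x^{-1})^2>0$, and \textbf{O3} then yields $x^{-1}=x\,(x^{-1})^2>0$. You instead exclude $x^{-1}<0$ by trichotomy, since it would force $-1=x(-x^{-1})>0$ against \ref{p:ofieldxi}. The paper's identity is shorter and avoids a contradiction argument. Yours makes explicit the fact $x^{-1}\neq 0$, which the paper uses tacitly when it invokes \ref{p:ofieldx}.

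For \ref{p:ofieldix}, the paper's exercise solution splits into the cases ``$x=0$ or $u=0$'' (where $ux=0<vy$) and ``$x,u>0$'' (where $ux<uy<vy$). You instead pass through $vx$, getting $ux\le vx<vy$, which removes the case split on $u$. However, the step $ux\le vx$ when $x=0$ is not covered by \ref{p:ofieldv} or \ref{p:ofieldvii} as you cite them. It needs $0\cdot u=0=0\cdot v$ from Proposition~\ref{p:may26:4}\ref{p:may26:4ii}.

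Likewise, in \ref{p:ofieldxiv} you should record $y>0$ via \ref{p:ofieldii} before applying \ref{p:ofieldxii} to $y$. Your reduction of \ref{p:ofieldxiii} to \ref{p:ofieldxii} through $(-x)^{-1}=-x^{-1}$ is a clean way to supply what the paper only calls ``similar''.
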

\begin{proof}
\ref{p:ofieldi}:
$x<0$
$\Leftrightarrow$
$0>x$
$\Leftrightarrow$
$0-x>0$
$\Leftrightarrow$
$-x>0$.

\ref{p:ofieldii}:
Since $x<y$ and $y<z$, we have
$y-x>0$ and $z-y>0$. Using \textbf{O2},
it follows that 
$z-x = (y-x)+(z-y) > 0$, i.e., $z>x$.

\ref{p:ofieldiii}:
$x<y$ 
$\Leftrightarrow$
$y-x>0$
$\Leftrightarrow$
$(y+z)-(x+z)>0$
$\Leftrightarrow$
$x+z<y+z$.

\ref{p:ofieldiv}: 
Indeed, using \textbf{O2},
we have
[$y-x>0$ and $v-u>0$]
$\Rightarrow$ 
$(y+v)-(x+u) = (y-x)+(v-u)>0$
$\Leftrightarrow$
$x+u<y+v$.

\ref{p:ofieldv}: 
Indeed, using \textbf{O3}, we have
[$x<y$ and $u>0$] 
$\Leftrightarrow$
[$y-x>0$ and $u>0$]
$\Rightarrow$
$uy-ux = yu-xu= (y-x)u>0$
$\Leftrightarrow$
$uy>ux$ 
$\Leftrightarrow$ $ux<uy$.

\ref{p:ofieldvi}: 
Indeed, using \textbf{O3}, we have
[$x<y$ and $u<0$] 
$\Leftrightarrow$
[$y-x>0$ and $-u>0$]
$\Rightarrow$
$-uy+ux = y(-u)-x(-u)= (y-x)(-u)>0$
$\Leftrightarrow$
$uy<ux$.

\ref{p:ofieldvii}: 
Exercise~\ref{exo:201011a}.

\ref{p:ofieldviii}: 
Exercise~\ref{exo:201011b}.

\ref{p:ofieldix}: 
Exercise~\ref{exo:201011c}.

\ref{p:ofieldx}: 
``$\Rightarrow$'': 
Suppose that $x\neq 0$.
If $x>0$, then $x^2 = x \cdot x > 0$ by \textbf{O3}.
If $x<0$, then $-x>0$ and hence
$x^2 = x\cdot x = (-x)(-x) = (-x)^2 > 0$
by Proposition~\ref{p:may26:4}\ref{p:may26:4v} and \textbf{O3}. 

``$\Leftarrow$'': Suppose that $x=0$. Then $x^2 = x\cdot x = 0$
by Proposition~\ref{p:may26:4}\ref{p:may26:4ii}. Hence
if  $x^2>0$, then $x\neq 0$. 

\ref{p:ofieldxi}: 
We have $1\neq 0$ by \textbf{M3}.
Hence, by \textbf{M3} and \ref{p:ofieldx}, $1 = 1^2 > 0$.

\ref{p:ofieldxii}: 
``$\Rightarrow$'':
If $x>0$, then 
$x\neq 0$ and (by \ref{p:ofieldx}) $(x^{-1})^2 > 0$;
hence, by \textbf{O3}, 
$x^{-1} = x(x^{-1})^2 > 0$.

``$\Leftarrow$'':
Apply ``$\Rightarrow$'' to $x^{-1}$ and use the fact that
$(x^{-1})^{-1} = x$ by Proposition~\ref{p:100908:2}\ref{p:100908:2v}. 

\ref{p:ofieldxiii}: 
This is similar to 
\ref{p:ofieldxii}. 

\ref{p:ofieldxiv}: 
We have $0<x$ and, by \ref{p:ofieldii}, $0<y$.
Hence, by \textbf{O3}, $xy>0$.
Thus, using Proposition~\ref{p:100908:2}\ref{p:100908:2v} 
and \ref{p:ofieldxii},
$u:= x^{-1}y^{-1}=(xy)^{-1} >0$.
By \ref{p:ofieldv}, 
$xu<yu$, hence
$y^{-1}= xx^{-1}y^{-1}=xu<yu=yx^{-1}y^{-1}=x^{-1}$. 
\end{proof}

The following two results are actually used quite often in mathematical proofs.

\begin{corollary}
\label{c:130923a}
Let $n\in\{2,3,\ldots\}$, and let $x_1,\ldots,x_n$ be nonnegative numbers.
Then\footnote{This result is used in Linear Programming and Convex
Optimization courses in the proof of duality theorems.}
\begin{equation}
x_1+x_2+\cdots + x_n = 0 
\quad\Leftrightarrow\quad
x_1=x_2=\cdots = x_n = 0.
\end{equation}
\end{corollary}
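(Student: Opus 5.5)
The direction ``$\Leftarrow$'' is immediate: if every $x_i=0$, then repeated use of \textbf{A3} gives $x_1+\cdots+x_n = 0+\cdots+0 = 0$. (This can be formalized by a trivial induction on $n$, but nothing beyond \textbf{A3} is needed.) The real content is ``$\Rightarrow$''.

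For ``$\Rightarrow$'' I will first show the auxiliary fact that a finite sum of nonnegative numbers is nonnegative. Moreover, such a sum is strictly positive as soon as one summand is positive. For two summands, let $a\geq 0$ and $b\geq 0$. If both are $0$, the sum is $0$ by \textbf{A3}. If exactly one is positive, say $a>0$ and $b=0$, then $a+b=a>0$. If both are positive, then $a+b>0$ by \textbf{O2}. Alternatively, Proposition~\ref{p:ofield}\ref{p:ofieldiv+} already gives $a+b\geq 0+0=0$. The general case then follows by induction on $n$, grouping $x_1+\cdots+x_{n+1}=(x_1+\cdots+x_n)+x_{n+1}$.

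With this in hand, I argue by contraposition. Suppose some $x_j\neq 0$. Since $x_j\geq 0$, we get $x_j>0$. Write the sum as $x_j + s$, where $s$ is the sum of the remaining terms; reordering is allowed by \textbf{A1}--\textbf{A2}. Since $s$ is a sum of nonnegative numbers, $s\geq 0$ by the auxiliary fact. Then $x_j+s>0$: if $s=0$ this is clear, and if $s>0$ it follows from \textbf{O2}. By \textbf{O1} the sum is therefore not $0$, which is the contrapositive of ``$\Rightarrow$''.

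The only slightly delicate point is the bookkeeping in the induction, namely the step ``nonnegative plus positive is positive'', which requires splitting $\geq$ into its two cases. Alternatively, one could run a direct induction on $n\geq 2$. The base case $n=2$ is: if $x_1+x_2=0$ with $x_1,x_2\geq 0$ and, say, $x_1>0$, then $x_1+x_2>0$ by the argument above, contradicting \textbf{O1}. For the inductive step, group the last two terms, apply the base case to $(x_1+\cdots+x_n)+x_{n+1}$, and then apply the induction hypothesis to the first group. This is the same pattern the paper used in its exercise on vanishing products.
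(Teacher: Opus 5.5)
Your proof is correct. Your closing ``alternative'' is exactly the paper's route. The paper settles $n=2$ by contradiction: if $x_1>0$, then $0\leq x_2=x_2+0<x_2+x_1=0$ by Proposition~\ref{p:ofield}\ref{p:ofieldiii}. It then inducts on the statement itself. It applies the case $n=2$ to $y_1+y_2$, where $y_1=x_1+\cdots+x_n$ and $y_2=x_{n+1}$, and applies the induction hypothesis to $y_1$.

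Your main route differs in where the induction lives. You put it into a strengthened auxiliary lemma about signs: a finite sum of nonnegative numbers is nonnegative, and strictly positive once some summand is positive. The corollary then follows in one step from \textbf{O1}, with no induction on the corollary itself.

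This buys you something concrete. The paper's inductive step simply asserts that $y_1=x_1+\cdots+x_n$ is nonnegative, without proof. That is precisely the first half of your lemma, obtainable (as you note) from Proposition~\ref{p:ofield}\ref{p:ofieldiv+} by induction. So your version makes that hidden step explicit, and it also yields the stronger positivity statement. The paper's version is shorter because it reuses the two-term case as a black box, at the cost of leaving that nonnegativity fact implicit.
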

\begin{proof}
Consider the case when $n=2$. 

``$\Leftarrow$'': This is clear because $0+0=0$ by \textbf{A3}.

``$\Rightarrow$'': We argue by contradiction and assume that one of the
numbers, say $x_1$, is positive: $x_1>0$. 
Then, by Proposition~\ref{p:ofield}\ref{p:ofieldiii},
$0 \leq x_2 = x_2+0 < x_2 + x_1 = 0$, which is absurd. 

The general case follows by mathematical induction and is left as 
Exercise~\ref{exo:130923b}. 
\end{proof}

\begin{corollary}
\label{c:130923c}
Let $n\in\{2,3,\ldots\}$, and let $x_1,\ldots,x_n$ be in $\RR$.
Then\footnote{This result is useful in Linear Algebra and Matrix Analysis,
where it is used to prove that the only vector of length $0$ is the zero
vector: $\|x\|=0$ $\Leftrightarrow$ $x=(0,\ldots,0)\in\RR^n$.}
\begin{equation}
x_1^2+x_2^2+\cdots + x_n^2 = 0 
\quad\Leftrightarrow\quad
x_1=x_2=\cdots = x_n = 0.
\end{equation}
\end{corollary}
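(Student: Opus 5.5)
The plan is to reduce Corollary~\ref{c:130923c} to Corollary~\ref{c:130923a}, applied to the numbers $y_i := x_i^2$. The only thing that needs checking is that these squares are nonnegative.

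For each $i\in\{1,\ldots,n\}$, Proposition~\ref{p:ofield}\ref{p:ofieldx} gives $x_i^2>0$ if $x_i\neq 0$. If $x_i=0$, then $x_i^2 = 0\cdot 0 = 0$ by Proposition~\ref{p:may26:4}\ref{p:may26:4ii}. In either case $x_i^2\geq 0$, so $y_1,\ldots,y_n$ are nonnegative numbers.

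``$\Leftarrow$'': If $x_1=\cdots=x_n=0$, then each $x_i^2=0$. Repeated use of \textbf{A3} then shows that the sum is $0$.

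``$\Rightarrow$'': Suppose $x_1^2+\cdots+x_n^2=0$. Corollary~\ref{c:130923a}, applied to the nonnegative numbers $y_i=x_i^2$, yields $x_i^2=0$ for every $i$. We then conclude $x_i=0$ for every $i$. One way is the contrapositive of the ``$\Rightarrow$'' direction of Proposition~\ref{p:ofield}\ref{p:ofieldx}: a nonzero $x_i$ would give $x_i^2>0$, contradicting $x_i^2=0$ by the trichotomy \textbf{O1}. Equivalently, one can use Proposition~\ref{p:may26:4}\ref{p:may26:4iii} with $x_i\cdot x_i=0$.

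There is no real obstacle here. The only care needed is to justify $x_i^2\geq 0$ in the case $x_i=0$ separately, since Proposition~\ref{p:ofield}\ref{p:ofieldx} covers only the nonzero case. The induction on $n$ is not needed here, because it is already contained in Corollary~\ref{c:130923a}, which we may assume, including the part left as an exercise.
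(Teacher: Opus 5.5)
Your proposal is correct and matches the paper's proof: it shows $x_i^2\geq 0$ by the cases $x_i=0$ (Proposition~\ref{p:may26:4}\ref{p:may26:4ii}) and $x_i\neq 0$ (Proposition~\ref{p:ofield}\ref{p:ofieldx}), applies Corollary~\ref{c:130923a} to the squares, and finishes with Proposition~\ref{p:may26:4}\ref{p:may26:4iii}. Your alternative finish via \textbf{O1} is equally valid.
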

\begin{proof}
Exercise~\ref{exo:130923d}. 
\end{proof}

\begin{remark} 
\label{r:191010}
Some comments on Proposition~\ref{p:ofield} are in order.
\begin{enumerate}
\item 
\label{r:191010i}
An \textbf{ordered field} is a field in which the axioms of order
\textbf{O1},
\textbf{O2},
\textbf{O3} hold.
Thus, $\RR$ is an ordered field, as is $\QQ$, the field of rational
numbers. 
Since the proofs of Proposition~\ref{p:ofield} utilized only the axioms of
order, all the properties of that Proposition hold true in any ordered
field, e.g., $\QQ$. 
\item Consider the {complex numbers} $\mathbb{C}$, in which 
$\mathrm{i}^2=-1$, where $\mathrm{i} = \sqrt{-1}$. 
Since $\mathrm{i}\neq 0$, item~\ref{p:ofieldx} would imply that
$-1=\mathrm{i}^2>0$, hence $0>1$, 
which contradicts item~\ref{p:ofieldxi}.
This shows that it is intrinsically  
\emph{impossible} to make the complex numbers an
ordered field, no matter how we try to define order there.
\item The natural numbers are embedded in the real numbers via $0\mapsto 0$
and
\begin{equation}
n \mapsto \underbrace{1+1+\cdots + 1}_{\text{$n$ terms}}.
\end{equation}
Using \textbf{O2} and Proposition~\ref{p:ofield}\ref{p:ofieldxi}, we can
show by mathematical induction that $n>0$ for $n\in\{1,2,3,\ldots\}$.
\item 
The finite field $\mathbb{F}_2$ encounted in Section~\ref{sec:fields}
cannot be ordered. For if it could be ordered,
then Proposition~\ref{p:ofield}\ref{p:ofieldxi} would yield $1>0$ and
hence,
by Proposition~\ref{p:ofield}\ref{p:ofieldiii}, 
$0=1+1>0+1=1$, which is absurd. 
\end{enumerate}
\end{remark}

\section{Properties of the Absolute Value}

\begin{definition}[absolute value]
\label{d:absval}
For $x\in\RR$, the \emph{absolute value} is\index{Absolute value}
\begin{equation}
|x| := \begin{cases}
x, &\text{if $x\geq 0$;}\\
-x, &\text{if $x<0$.}
\end{cases}
\end{equation}
\end{definition}

\begin{proposition}
\label{p:absval}
Let $x$ and $y$ be in $\RR$.
Then the following hold.
\begin{enumerate}
\item 
\label{p:absvali}
$|x|\geq 0$.
\item 
\label{p:absvalii}
$|x|=0$ $\Leftrightarrow$ $x=0$.
\item 
\label{p:absvalii+}
$x\leq |x|$.
\item 
\label{p:absvaliii}
$|x\cdot y|=|x|\cdot|y|$.
\item 
\label{p:absvaliii+}
$|-x|=|x|$.
\item 
\label{p:absvalvi}
$-x\leq |x|$.
\item 
\label{p:absvaliv}
$y\neq 0$ $\Rightarrow$ 
$\displaystyle \left|\frac{x}{y}\right|=\frac{|x|}{|y|}$.
\end{enumerate}
\end{proposition}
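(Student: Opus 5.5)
All seven items follow from Definition~\ref{d:absval} by splitting into the cases $x\geq 0$ and $x<0$ (and, for the items involving $y$, also $y\geq 0$ and $y<0$). At each step we invoke \textbf{O1} and Proposition~\ref{p:ofield}. I would prove the items in the order listed, because later items reuse earlier ones.

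For \ref{p:absvali}: if $x\geq 0$ then $|x|=x\geq 0$. If $x<0$ then $|x|=-x>0$ by Proposition~\ref{p:ofield}\ref{p:ofieldi}.

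For \ref{p:absvalii}: if $x=0$ then $|x|=x=0$. Conversely, if $x\neq 0$, the case $x>0$ gives $|x|=x>0$, and the case $x<0$ gives $|x|=-x>0$; either way $|x|\neq 0$.

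For \ref{p:absvalii+}: if $x\geq 0$ this is equality. If $x<0$, then $x<0\leq |x|$ by \ref{p:absvali}, and Proposition~\ref{p:ofield}\ref{p:ofieldii} (together with the obvious handling of $\leq$) finishes it.

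For \ref{p:absvaliii+}: use \textbf{O1} on $x$.
\begin{itemize}
\item If $x>0$, then $-x<0$, so $|-x|=-(-x)=x=|x|$ by Proposition~\ref{p:may26:1}\ref{p:may26:1iv}.
\item If $x=0$, use $-0=0$.
\item If $x<0$, then $-x>0$, so $|-x|=-x=|x|$.
\end{itemize}
Then \ref{p:absvalvi} follows at once: apply \ref{p:absvalii+} to $-x$ and use \ref{p:absvaliii+}, giving $-x\leq |-x|=|x|$.

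Item \ref{p:absvaliii} is the main bookkeeping step. If $x=0$ or $y=0$, both sides are $0$ by Proposition~\ref{p:may26:4}\ref{p:may26:4ii} and \ref{p:absvalii}. Otherwise there are four sign cases:
\begin{itemize}
\item For $x,y>0$: $xy>0$ by \textbf{O3}, so $|xy|=xy=|x||y|$.
\item For $x>0>y$: $x(-y)>0$, so $xy<0$ and $|xy|=-(xy)=x(-y)=|x||y|$. Here we use Proposition~\ref{p:may26:4}\ref{p:may26:4iv} to write $-(xy)=(-1)xy=x((-1)y)$.
\item For $x<0<y$: this is symmetric to the previous case by \textbf{M2}.
\item For $x,y<0$: $xy=(-x)(-y)>0$ by Proposition~\ref{p:may26:4}\ref{p:may26:4v}, so $|xy|=(-x)(-y)=|x||y|$.
\end{itemize}
The only care needed is the manipulation of signs via those field rules.

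Finally, \ref{p:absvaliv} avoids further cases. Since $y\neq 0$, we have $|y|\neq 0$ by \ref{p:absvalii}. By \ref{p:absvaliii}, $|x|=|(x/y)\,y|=|x/y|\,|y|$. The uniqueness of solutions from Proposition~\ref{p:may26:3} then gives $|x/y|=|x|/|y|$.
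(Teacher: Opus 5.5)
Your proposal is correct and follows the paper's route: a sign-case analysis from Definition~\ref{d:absval} for \ref{p:absvali}--\ref{p:absvaliii}, then \ref{p:absvalvi} from \ref{p:absvalii+} applied to $-x$, and \ref{p:absvaliv} from $x=(x/y)\,y$ together with \ref{p:absvaliii}. There are two small deviations: you prove $|-x|=|x|$ directly by cases rather than by setting $y=-1$ in \ref{p:absvaliii} as the paper does, and you carry out all four sign cases of \ref{p:absvaliii}, whereas the paper writes out only the cases with $x\geq 0$.
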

\begin{proof}
\ref{p:absvali}: 
Case~1: $x\geq 0$. Then $|x|=x\geq 0$ by definition.
Case~2: $x<0$. Then, by definition and 
Proposition~\ref{p:ofield}\ref{p:ofieldi},
$|x|=-x>0$.

\ref{p:absvalii}: 
``$\Leftarrow$'': If $x=0$,
then $|x|=x=0$.
``$\Rightarrow$'': We prove this by contradiction.
Assume that $|x|=0$ and $x\neq 0$. 
Then either $x>0$ or $x<0$.
If $x>0$, then $|x|=x>0$, which is absurd;
otherwise, $x<0$, and then $|x|=-x>0$, which is also absurd.

\ref{p:absvalii+}: 
If $x\geq 0$, then $|x|=x$ and so $x\leq |x|$.
And if $x<0$, then $|x|=-x>0$ and so $x=-|x|<0<|x|$.

\ref{p:absvaliii}: 
We consider 4 cases.

\emph{Case~1}: $x\geq 0$ and $y\geq 0$.\\
Then $|x|=x$ and $|y|=y$; hence,
\begin{equation}
\label{e:100920:a}
|x|\cdot|y| = xy.
\end{equation}
Now if $y>0$, then $xy\geq 0$ by
Proposition~\ref{p:ofield}\ref{p:ofieldvii}.
And if $y=0$, then $xy=x\cdot 0 = 0$ by
Proposition~\ref{p:may26:4}\ref{p:may26:4ii}. 
In either case $xy\geq 0$ and thus 
\begin{equation}
\label{e:100920:b}
|xy| = xy\geq 0. 
\end{equation}
Combining \eqref{e:100920:a} and \eqref{e:100920:b},
we see that $|x|\cdot|y| = |xy|$. 

\emph{Case~2}: $x\geq 0$ and $y< 0$.\\
Suppose first that $x=0$.
Then $x\cdot y = 0\cdot y = 0$, 
$|x y| = |0| = 0$, and $|x|\cdot |y| = |0|\cdot (-y) 
= 0\cdot (-y) = 0$. Thus, $|xy|=0=|x|\cdot|y|$.
Now assume that $x>0$. Then
$|x|\cdot|y| = x\cdot (-y) = x\cdot(-1)y = (-1)(xy) =-(xy)$ and
$xy<0$ (see Proposition~\ref{p:may26:4}\ref{p:may26:4iv} and 
Proposition~\ref{p:ofield}\ref{p:ofieldv}) 
so that $|xy| = -(xy)$. 

\emph{Case~3}: $x<0$ and $y\geq 0$.\\
\hhbcom{This is as Case~2}

\emph{Case~4}: $x<0$ and $y<0$.\\

\ref{p:absvaliii+}: 
Set $y=-1$ in \ref{p:absvaliii}.
Now $1>0$ (by Proposition~\ref{p:ofield}\ref{p:ofieldxi}), hence
$-1<0$ and thus $|-1| = -(-1) = 1$.
Thus $|xy| = |x|\cdot|y|$ turns into
$|-x| = |x(-1)| = |x|\cdot|-1| = |x|\cdot 1 = |x|$.

\ref{p:absvalvi}:
By \ref{p:absvalii+} (applied to $-x$) and \ref{p:absvaliii+}, 
we have
$-x \leq |-x| = |x|$.

\ref{p:absvaliv}: 
We have $x = (x/y)\cdot y$.
Hence, by \ref{p:absvaliii}, 
$|x| = |x/y|\cdot|y|$ and thus $|x|/|y| = |x/y|$. 
\end{proof}

The following result is useful when checking inequalities.

\begin{lemma}
\label{l:absval}
Let $x$ and $y$ be in $\RR$.
Then 
\begin{equation}
|x|\leq y
\quad
\Leftrightarrow
\quad
x\leq y \text{ and } -x\leq y.
\end{equation}
\end{lemma}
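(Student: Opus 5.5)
I will prove the two implications separately, relying only on Proposition~\ref{p:absval} and the transitivity properties of the order from Proposition~\ref{p:ofield}.

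For ``$\Rightarrow$'', assume $|x|\leq y$. By Proposition~\ref{p:absval}\ref{p:absvalii+} we have $x\leq |x|$, and by Proposition~\ref{p:absval}\ref{p:absvalvi} we have $-x\leq |x|$. Chaining each of these with $|x|\leq y$ gives $x\leq y$ and $-x\leq y$.

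The chaining step uses transitivity of ``$\leq$''. Proposition~\ref{p:ofield}\ref{p:ofieldii} states this only for ``$<$'', so I will first record the non-strict version. Suppose $a\leq b$ and $b\leq c$.
\begin{itemize}
\item If $a=b$ or $b=c$, then we may substitute directly, and $a\leq c$ follows.
\item Otherwise $a<b$ and $b<c$, and Proposition~\ref{p:ofield}\ref{p:ofieldii} gives $a<c$.
\end{itemize}
I expect this small case check to be the only mildly fiddly point, since every other step is a direct citation.

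For ``$\Leftarrow$'', assume $x\leq y$ and $-x\leq y$, and split according to Definition~\ref{d:absval}.
\begin{itemize}
\item If $x\geq 0$, then $|x|=x\leq y$.
\item If $x<0$, then $|x|=-x\leq y$.
\end{itemize}
In either case $|x|\leq y$, which completes the proof.
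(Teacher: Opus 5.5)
Your proof is correct and follows essentially the same route as the paper: for ``$\Rightarrow$'' you chain $\pm x\leq |x|$ (Proposition~\ref{p:absval}\ref{p:absvalii+}\&\ref{p:absvalvi}) with $|x|\leq y$, and for ``$\Leftarrow$'' you use that $|x|\in\{x,-x\}$ by Definition~\ref{d:absval}. Your explicit check that ``$\leq$'' is transitive, derived from Proposition~\ref{p:ofield}\ref{p:ofieldii}, fills in a step the paper leaves implicit.
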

\begin{proof}
``$\Rightarrow$'':
Suppose that $|x|\leq y$.
Then, by
Proposition~\ref{p:absval}\ref{p:absvalii+}\&\ref{p:absvalvi},
$x\leq|x|$ and $-x\leq|x|$. Hence,
$\pm x \leq|x|\leq y$.

``$\Leftarrow$'':
Suppose that $\pm x\leq y$.
By definition, $|x|=x$ or $|x|=-x$.
Hence, $|x|\leq y$.
\end{proof}

\begin{theorem}[triangle inequality]
\label{t:triangle}
Let $x$ and $y$ be in $\RR$. Then\index{Triangle inequality}
\begin{equation}
|x+y|\leq |x|+|y|.
\end{equation}
\end{theorem}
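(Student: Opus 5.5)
The plan is to reduce the triangle inequality to Lemma~\ref{l:absval}, applied with $x+y$ in place of $x$ and $|x|+|y|$ in place of $y$. By that lemma it suffices to verify the two inequalities
\begin{equation*}
x+y\leq |x|+|y|
\quad\text{and}\quad
-(x+y)\leq |x|+|y|.
\end{equation*}
This avoids a four-way case split on the signs of $x$ and $y$.

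For the first inequality, I would invoke Proposition~\ref{p:absval}\ref{p:absvalii+} twice to get $x\leq|x|$ and $y\leq|y|$. Adding these with Proposition~\ref{p:ofield}\ref{p:ofieldiv+} gives $x+y\leq|x|+|y|$.

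For the second inequality, I would first rewrite $-(x+y)=(-x)+(-y)$ using Proposition~\ref{p:may26:1}\ref{p:may26:1v}. Proposition~\ref{p:absval}\ref{p:absvalvi} gives $-x\leq|x|$ and $-y\leq|y|$. Adding these again with Proposition~\ref{p:ofield}\ref{p:ofieldiv+} yields $-(x+y)\leq|x|+|y|$.

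With both inequalities in hand, Lemma~\ref{l:absval} (direction ``$\Leftarrow$'') delivers $|x+y|\leq|x|+|y|$. There is no real obstacle here. The only point needing care is that the ``$\leq$'' version of adding inequalities, Proposition~\ref{p:ofield}\ref{p:ofieldiv+}, is the right tool, rather than the strict version~\ref{p:ofieldiv}, since equalities may occur. The identity $-(x+y)=-x+(-y)$ also has to be cited explicitly from the field axioms.
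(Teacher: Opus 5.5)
Your proof is correct and follows essentially the same route as the paper. Like the paper, you bound both $x+y$ and $-(x+y)=-x+(-y)$ by $|x|+|y|$ using Proposition~\ref{p:absval}\ref{p:absvalii+}\&\ref{p:absvalvi} together with Proposition~\ref{p:ofield}\ref{p:ofieldiv+}, and then conclude with Lemma~\ref{l:absval}; your explicit citation of Proposition~\ref{p:may26:1}\ref{p:may26:1v} for the identity $-(x+y)=-x+(-y)$ is a detail the paper leaves implicit.
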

\begin{proof}
By Proposition~\ref{p:absval}\ref{p:absvalii+},
$x\leq |x|$ and $y\leq |y|$.
Hence, by Proposition~\ref{p:ofield}\ref{p:ofieldiv+}, 
\begin{equation} 
x+y \leq |x|+|y|. 
\end{equation}
Similarly,
by Proposition~\ref{p:absval}\ref{p:absvalvi},
$-x\leq |x|$ and $-y\leq |y|$; thus,
\begin{equation} 
-(x+y)= -x+(-y) \leq |x|+|y|. 
\end{equation}
Altogether, 
using Lemma~\ref{l:absval}, we see that 
$|x+y| \leq |x|+|y|$. 
\end{proof}

\begin{corollary}
\label{c:triangle}
Let $x$ and $y$ be in $\RR$.
Then
\begin{equation}
\big| |x|-|y|\big| \leq |x-y|.
\end{equation}
\end{corollary}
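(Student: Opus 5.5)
The plan is to reduce the claim to the triangle inequality (Theorem~\ref{t:triangle}) and then assemble the two one-sided bounds with Lemma~\ref{l:absval}. By Lemma~\ref{l:absval}, applied with the number $|x|-|y|$ in place of $x$ and with $|x-y|$ in place of $y$, it suffices to establish the two inequalities
\begin{equation*}
|x|-|y|\leq |x-y|
\quad\text{and}\quad
-\big(|x|-|y|\big)\leq |x-y|.
\end{equation*}

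For the first inequality, we write $x=(x-y)+y$ and apply Theorem~\ref{t:triangle}, which gives $|x|\leq |x-y|+|y|$. We then add $-|y|$ to both sides. This step is legitimate for ``$\leq$'' by Proposition~\ref{p:ofield}\ref{p:ofieldiii} in the strict case, and it is trivial in the case of equality. The result is $|x|-|y|\leq |x-y|$.

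For the second inequality, we interchange the roles of $x$ and $y$. Writing $y=(y-x)+x$, Theorem~\ref{t:triangle} yields $|y|\leq |y-x|+|x|$, and hence $|y|-|x|\leq |y-x|$. To finish, we note that $y-x=-(x-y)$ by Proposition~\ref{p:may26:1}\ref{p:may26:1v} together with \ref{p:may26:1iv}. Proposition~\ref{p:absval}\ref{p:absvaliii+} then gives $|y-x|=|x-y|$. Since also $|y|-|x|=-(|x|-|y|)$, which again follows from Proposition~\ref{p:may26:1}, the second inequality holds.

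Combining the two inequalities via Lemma~\ref{l:absval} proves the claim. The main obstacle is bookkeeping rather than ideas. One must make sure that Lemma~\ref{l:absval} is invoked with the correct roles. One must also handle the field manipulations rigorously in this axiomatic setting: rewriting $x$ as $(x-y)+y$ uses \textbf{A1}, \textbf{A2}, \textbf{A3}, and \textbf{A4}, and the identity $|y-x|=|x-y|$ must be justified explicitly.
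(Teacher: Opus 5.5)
Your proof is correct and matches the paper's argument. Both apply the triangle inequality to $x=(x-y)+y$ and to $y=(y-x)+x$, use $|y-x|=|x-y|$, and combine the two one-sided bounds with Lemma~\ref{l:absval}. Your extra bookkeeping (Proposition~\ref{p:ofield}\ref{p:ofieldiii} for the subtraction and Proposition~\ref{p:absval}\ref{p:absvaliii+} for the symmetry) makes explicit steps the paper leaves implicit.
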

\begin{proof}
On the one hand, using Theorem~\ref{t:triangle}, 
$|x| = |(x-y)+y| \leq |x-y| + |y|$.
On the other hand,
$|y| \leq |y-x| + |x| = |x-y|+|x|$.
Altogether,
\begin{equation}
|x|-|y| \leq |x-y|
\quad\text{and}\quad
-\big(|x|-|y|\big) = |y|-|x| \leq |x-y|,
\end{equation}
which implies that $\big||x|-|y|\big| \leq |x-y|$
by Lemma~\ref{l:absval}. 
\end{proof}

\begin{lemma}[general triangle inequality]
\label{l:generaltriangle}
Let $n\geq 2$ be an integer, and
let $x_1,x_2,\ldots,x_n$ be real numbers.
Then\index{Triangle inequality}
\begin{equation}
\label{e:gentriangle}
\left|\sum_{i=1}^{n} x_i\right|
\leq \sum_{i=1}^{n} |x_i|.
\end{equation}
\end{lemma}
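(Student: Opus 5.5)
I will prove \eqref{e:gentriangle} by mathematical induction on $n$ (Fact~\ref{f:induction}) with $n_0=2$. The statement $S(n)$ is: for all real numbers $x_1,\ldots,x_n$, inequality \eqref{e:gentriangle} holds.

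\textbf{Base Case}: For $n=2$, \eqref{e:gentriangle} reads $|x_1+x_2|\leq |x_1|+|x_2|$. This is exactly the triangle inequality, Theorem~\ref{t:triangle}.

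\textbf{Inductive Step}: Assume $S(n)$ holds for some integer $n\geq 2$, and let $x_1,\ldots,x_{n+1}$ be real numbers. The plan is to split off the last term, so that we face a sum of only two numbers:
\begin{equation*}
\left|\sum_{i=1}^{n+1} x_i\right| = \left|\Big(\sum_{i=1}^{n} x_i\Big) + x_{n+1}\right| \leq \left|\sum_{i=1}^{n} x_i\right| + |x_{n+1}|.
\end{equation*}
The inequality is Theorem~\ref{t:triangle} applied to the two numbers $\sum_{i=1}^n x_i$ and $x_{n+1}$.

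Next I bound the first term by the induction hypothesis, $\left|\sum_{i=1}^{n} x_i\right|\leq\sum_{i=1}^n|x_i|$. Adding $|x_{n+1}|$ to both sides preserves the inequality by Proposition~\ref{p:ofield}\ref{p:ofieldiv+} (with $u=v=|x_{n+1}|$). This gives a bound of $\sum_{i=1}^{n+1}|x_i|$.

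Chaining the two inequalities needs transitivity of ``$\leq$''. Proposition~\ref{p:ofield}\ref{p:ofieldii} states transitivity only for strict ``$<$'', so I will check the cases where one or both inequalities are equalities; each case is immediate. With that, $S(n+1)$ holds, and the Principle of Mathematical Induction completes the proof.

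There is no real obstacle here. The only points needing care are writing $\sum_{i=1}^{n+1}$ as $\big(\sum_{i=1}^n\big)+x_{n+1}$, which is the definition of a multi-term sum given in the chapter on fields, and justifying transitivity of ``$\leq$'' as just described.
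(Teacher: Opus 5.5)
Your proof is correct and follows the paper's own solution (Exercise~\ref{exo:triangle}) essentially verbatim: induction with base case $n=2$ given by Theorem~\ref{t:triangle}, then splitting off $x_{n+1}$, applying the two-term triangle inequality, and finishing with the induction hypothesis. Your extra case check for transitivity of ``$\leq$'' is correct; the paper simply leaves that step implicit.
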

\begin{proof}
Exercise~\ref{exo:triangle}.
\end{proof}

\begin{lemma}
\label{l:maxmin}
Let $x$ and $y$ be in $\RR$.
Then\index{Maximum}\index{Minimum}
\begin{equation}
\max\{x,y\} = \frac{x+y+|x-y|}{2}
\quad\text{and}\quad
\min\{x,y\} = \frac{x+y-|x-y|}{2}.
\end{equation}
\end{lemma}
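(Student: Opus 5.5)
The plan is a proof by cases on the order of $x$ and $y$, using \textbf{O1} together with the definition of the absolute value (Definition~\ref{d:absval}). Recall that $\max\{x,y\}$ is the larger of the two numbers, namely $y$ if $x\leq y$ and $x$ otherwise; $\min\{x,y\}$ is the other one. By \textbf{O1} applied to $x-y$, exactly one of $x-y>0$, $x-y=0$, $x-y<0$ holds. It is convenient to group these into two cases: $x\geq y$ and $x<y$.

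\emph{Case~1:} $x\geq y$. Then $x-y\geq 0$, so $|x-y|=x-y$ by definition. Hence
\begin{equation*}
\frac{x+y+|x-y|}{2}=\frac{x+y+x-y}{2}=\frac{2x}{2}=x=\max\{x,y\}
\end{equation*}
and
\begin{equation*}
\frac{x+y-|x-y|}{2}=\frac{x+y-x+y}{2}=\frac{2y}{2}=y=\min\{x,y\}.
\end{equation*}
These simplifications use only the field rules of Chapter~\ref{cha:fields}: distributivity \textbf{D}, \textbf{A1}--\textbf{A4}, and $2\cdot 2^{-1}=1$. The last identity is legitimate because $2=1+1>0$ by Proposition~\ref{p:ofield}\ref{p:ofieldxi} and \textbf{O2}, so $2\neq 0$.

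\emph{Case~2:} $x<y$. Then $x-y<0$, so $|x-y|=-(x-y)=y-x$ by Proposition~\ref{p:may26:1}\ref{p:may26:1v} and \ref{p:may26:1iv}. Hence the first expression becomes $(x+y+y-x)/2=y=\max\{x,y\}$, and the second becomes $(x+y-y+x)/2=x=\min\{x,y\}$.

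There is no genuine obstacle here. The only points needing care are to justify $|x-y|$ correctly in each case, and to note that dividing by $2$ is allowed because $2\neq 0$ in an ordered field. In the overlapping situation $x=y$, both formulas give $x$, so the value of $\max$ and $\min$ is unambiguous there.
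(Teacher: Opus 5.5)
Your proof is correct and follows the paper's own argument: the same split into the cases $x\geq y$ and $x<y$, evaluating $|x-y|$ as $x-y$ or $y-x$ respectively and simplifying both formulas directly. The only difference is that you also justify $2\neq 0$ and $-(x-y)=y-x$ from the axioms, which the paper leaves implicit.
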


\section{Archimedean Fields and Consequences}

\begin{definition}[archimedean property]
For all $x,y$ in $\RR$ such that $x>0$ and $y>0$,
there exists $n\in\NN$ such that $ny>x$.
\index{Archimedean property}\index{Archimedean field}
\end{definition}

\begin{lemma}
\label{l:entier}
For every $x>0$, there exists $n\in\NN$ such that $n>x$.
\end{lemma}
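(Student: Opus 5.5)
The plan is to apply the archimedean property directly with the choice $y=1$. Let $x>0$ be given. By Proposition~\ref{p:ofield}\ref{p:ofieldxi} we know $1>0$, so both $x$ and $y:=1$ are positive and the hypotheses of the archimedean property are satisfied. It then yields some $n\in\NN$ with $n\cdot 1>x$.

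The only remaining step is to identify $n\cdot 1$ with $n$. Under the embedding of $\NN$ into $\RR$ described in Remark~\ref{r:191010}, the integer $n$ is the real number $1+\cdots+1$ ($n$ terms), and $0$ corresponds to $0$. By \textbf{M3} (together with \textbf{M2}) we have $n\cdot 1=n$ for every real number. Hence $n>x$, as required.

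I do not expect any real obstacle. The one point to watch is that the archimedean property requires $y>0$, which is exactly why I invoke $1>0$ from Proposition~\ref{p:ofield}\ref{p:ofieldxi} rather than taking it for granted. If one worries about the case $n=0$, it cannot arise: $0\cdot 1=0<x$ by Proposition~\ref{p:may26:4}\ref{p:may26:4ii}, so the $n$ produced must in fact satisfy $n\geq 1$. This observation is not needed for the statement itself.
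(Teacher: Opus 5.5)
Your proposal is correct and is the same argument as the paper, which simply says the lemma is clear from the archimedean property. You make explicit what the paper leaves implicit: the choice $y=1$ (justified by $1>0$) and the identification $n\cdot 1=n$, which follows from \textbf{M3} alone, so \textbf{M2} is not needed.
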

\begin{proof}
Clear from the archimedean property.
\end{proof}

For every $x\in\RR$, there exists a unique integer $k\in\ZZ$ such that 
$k\leq x < k+1$. One writes $\lfloor x \rfloor = k$ and says
``$k$ is equal to the \emph{floor} of $x$''.\index{Floor}
We thus have $\lfloor 4/3\rfloor = 1$, $\lfloor 0\rfloor = 0$,
and $\lfloor -4/3\rfloor = -2$. 
In Lemma~\ref{l:entier}, we may choose $n=\lfloor x+1\rfloor$. 

\begin{corollary}
\label{c:entier}
For every $\varepsilon>0$, there exists
$n\in\NN$ such that $n>0$ and $\tfrac{1}{n}<\varepsilon$.
\end{corollary}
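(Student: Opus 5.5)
The plan is to derive Corollary~\ref{c:entier} directly from Lemma~\ref{l:entier} (equivalently, from the archimedean property) by applying it to the reciprocal $1/\varepsilon$.

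First, let $\varepsilon>0$ be given. By Proposition~\ref{p:ofield}\ref{p:ofieldxii}, $\varepsilon^{-1}>0$. Applying Lemma~\ref{l:entier} with $x:=\varepsilon^{-1}$ produces some $n\in\NN$ with $n>\varepsilon^{-1}$. Since $\varepsilon^{-1}>0$, transitivity (Proposition~\ref{p:ofield}\ref{p:ofieldii}) gives $n>0$, which is the first requirement.

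Next, we invert the inequality $0<\varepsilon^{-1}<n$. Proposition~\ref{p:ofield}\ref{p:ofieldxiv} gives $(\varepsilon^{-1})^{-1}>n^{-1}$. By Proposition~\ref{p:100908:2}\ref{p:100908:2iv}, $(\varepsilon^{-1})^{-1}=\varepsilon$, so $\tfrac1n=n^{-1}<\varepsilon$, as required.

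There is no serious obstacle here. The only points needing care are applying the order rules in the right direction, in particular using \ref{p:ofieldxiv} instead of naively "flipping" the inequality, and confirming that $n\neq 0$ so that $1/n$ is defined. The latter follows from $n>0$ via \textbf{O1}. Alternatively, one could apply the archimedean property directly with $y=\varepsilon$ and $x=1$ to get $n\varepsilon>1$, then multiply by $n^{-1}>0$ using \ref{p:ofieldv}. This yields the same result and also gives $n>0$, since $n=0$ would give $0>1$, contradicting \ref{p:ofieldxi}.
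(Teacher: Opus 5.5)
Your proof is correct and is essentially the paper's own argument. It sets $x=1/\varepsilon>0$, applies Lemma~\ref{l:entier} to get $n>x>0$, and inverts via Proposition~\ref{p:ofield}\ref{p:ofieldxiv}. Your explicit appeal to Proposition~\ref{p:100908:2}\ref{p:100908:2iv} for $(\varepsilon^{-1})^{-1}=\varepsilon$ just spells out a step the paper leaves implicit.
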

\begin{proof}
Set $x=1/\varepsilon$. Then $x>0$ by
Proposition~\ref{p:ofield}\ref{p:ofieldxii}.
Hence, by Lemma~\ref{l:entier}, there exists $n\in\NN$
such that $n>x>0$.
In turn, by Proposition~\ref{p:ofield}\ref{p:ofieldxiv}, 
$\tfrac{1}{n} < \tfrac{1}{x} = \varepsilon$. 
\end{proof}

\begin{theorem}[Bernoulli's inequality]
Let $x\geq -1$ and let $n\in\NN$.
Then\index{Bernoulli's inequality} 
\begin{equation}
\label{e:bernoulli}
(1+x)^n \geq 1+nx.
\end{equation}
\end{theorem}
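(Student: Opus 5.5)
I will prove \eqref{e:bernoulli} by the Principle of Mathematical Induction (Fact~\ref{f:induction}) on $n$, with $n_0=0$, keeping $x\geq -1$ fixed throughout. The statement $S(n)$ is $(1+x)^n\geq 1+nx$.

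\textbf{Base Case}: for $n=0$ both sides equal $1$, since $(1+x)^0=1$ by definition and $1+0\cdot x=1$ by Proposition~\ref{p:may26:4}\ref{p:may26:4ii}. So $S(0)$ holds with equality.

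\textbf{Inductive Step}: assume $S(n)$ for some $n\in\NN$. The plan is to write
\begin{equation*}
(1+x)^{n+1}=(1+x)^n(1+x)
\end{equation*}
and multiply the induction hypothesis $(1+x)^n\geq 1+nx$ by the factor $1+x$. This is the one step needing care. The hypothesis $x\geq -1$ gives $1+x\geq 0$, by Proposition~\ref{p:ofield}\ref{p:ofieldiii}. If $1+x>0$, then Proposition~\ref{p:ofield}\ref{p:ofieldvii} lets us multiply the inequality and keep its direction. If $1+x=0$, both sides of the multiplied inequality equal $0$, so it holds trivially. Either way we get
\begin{equation*}
(1+x)^{n+1}\geq (1+nx)(1+x).
\end{equation*}
Expanding with \textbf{D}, the right side is $1+(n+1)x+nx^2$. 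Since $n\geq 0$ and $x^2\geq 0$ (Proposition~\ref{p:ofield}\ref{p:ofieldx}, or $x^2=0$ when $x=0$), we have $nx^2\geq 0$. Then Proposition~\ref{p:ofield}\ref{p:ofieldiv+} gives $1+(n+1)x+nx^2\geq 1+(n+1)x$.

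Finally, I chain the two inequalities using transitivity of $\leq$, which follows from Proposition~\ref{p:ofield}\ref{p:ofieldii} together with the equality cases. This gives $S(n+1)$, and the Principle of Mathematical Induction completes the proof.

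The main obstacle is the multiplication by $1+x$. This is exactly where the assumption $x\geq -1$ enters, and the boundary case $1+x=0$ must be handled separately because Proposition~\ref{p:ofield}\ref{p:ofieldvii} requires $u>0$.
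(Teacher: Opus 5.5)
Your proof is correct and takes the same approach as the paper: induction from $n=0$, multiplying the induction hypothesis by $1+x\geq 0$, expanding to $1+(n+1)x+nx^2$, and dropping $nx^2\geq 0$. Handling $1+x=0$ as a separate case is a small improvement, since the paper just cites $1+x\geq 0$ while its rule for multiplying an inequality is stated only for a positive factor.
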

\begin{proof}
By Mathematical Induction on $n\geq 0$.

\textbf{Base Case}: When $n=0$, then clearly
$(1+x)^0 = 1 = 1+0 = 1+nx$. 

\textbf{Inductive Step}: Now assume $n\geq 0$ is such that 
\eqref{e:bernoulli} holds. We need to show that
\eqref{e:bernoulli} is true when $n$ is replaced by $n+1$.
Using the assumption that $1+x\geq 0$ and the induction hypothesis
in \eqref{e:bernoulli:b}, 
we see that
\begin{subequations}
\begin{align}
(1+x)^{n+1} &= (1+x)(1+x)^n\\
&\geq (1+x)(1+nx)\label{e:bernoulli:b}\\
&= 1+ (n+1)x + nx^2\\
&\geq 1+(n+1)x,
\end{align}
\end{subequations}
which shows that \eqref{e:bernoulli} holds for $n+1$.
The proof is complete by the principle of mathematical induction. 
\end{proof}

\begin{theorem}
\label{t:100920}
Let $y>1$.
Then for every $M>0$, there exists $n\in\NN$ such that $y^n>M$.
\end{theorem}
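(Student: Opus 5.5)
The plan is to combine Bernoulli's inequality with the archimedean property (in the form of Lemma~\ref{l:entier}). Given $y>1$ and $M>0$, I will write $y=1+x$ with $x:=y-1$. Since $y>1$, we have $x>0$, so in particular $x\geq -1$. This means Bernoulli's inequality \eqref{e:bernoulli} applies and gives
\begin{equation*}
y^n=(1+x)^n\geq 1+nx \quad\text{for every } n\in\NN.
\end{equation*}
It therefore suffices to find $n\in\NN$ with $1+nx>M$. Then $y^n\geq 1+nx>M$, and the strict inequality follows by chaining $\geq$ with $>$ via Proposition~\ref{p:ofield}\ref{p:ofieldii} and \ref{p:ofieldiii}.

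To get such an $n$, I will apply the archimedean property directly, with the positive numbers $M$ and $x$. This yields $n\in\NN$ with $nx>M$. Then $1+nx>nx>M$, using $1>0$ (Proposition~\ref{p:ofield}\ref{p:ofieldxi}) together with Proposition~\ref{p:ofield}\ref{p:ofieldiii} and \ref{p:ofieldii}. Alternatively, I could use Lemma~\ref{l:entier} with $M/x>0$ (positive by Proposition~\ref{p:ofield}\ref{p:ofieldxii} and \textbf{O3}) and multiply through by $x>0$ using Proposition~\ref{p:ofield}\ref{p:ofieldv}.

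There is no real obstacle here. The only point needing care is the bookkeeping of strict versus non-strict inequalities when combining $y^n\geq 1+nx$ with $1+nx>M$. Concretely, if $y^n=1+nx$ we substitute directly, and if $y^n>1+nx$ we use transitivity. I will also note that the hypothesis $x\geq -1$ for Bernoulli is satisfied, since $x>0$.
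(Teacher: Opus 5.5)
Your proof is correct and matches the paper's: set $x:=y-1>0$, use the archimedean property to get $n\in\NN$ with $nx>M$, and conclude from Bernoulli's inequality that $y^n\geq 1+nx>M$. The only cosmetic difference is that the paper writes the final chain as $1+nx>1+M>M$, whereas you write $1+nx>nx>M$.
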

\begin{proof}
Set $x:= y-1 > 0$. 
Using the archimedean property, we obtain $n\in\NN$ such that
$nx > M$.
Because $x>0>-1$, we deduce from 
Bernoulli's inequality that 
\begin{equation}
y^n = (1+x)^n \geq 1+nx > 1+M > M
\end{equation}
as announced. 
\end{proof}

\begin{corollary}
\label{c:100920}
Let $z\in\RR$ such that $0<z<1$.
Then for every $\varepsilon>0$, there exists $n\in\NN$ such that
$z^n<\varepsilon$.
\end{corollary}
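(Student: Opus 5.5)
The plan is to reduce the statement to Theorem~\ref{t:100920} by passing to reciprocals. Since $0<z<1$, we have $z\neq 0$, and Proposition~\ref{p:ofield}\ref{p:ofieldxii} gives $y:=z^{-1}>0$. Applying Proposition~\ref{p:ofield}\ref{p:ofieldxiv} to $0<z<1$ gives $y=z^{-1}>1^{-1}=1$, using $1^{-1}=1$ from Proposition~\ref{p:100908:2}. So $y>1$, and Theorem~\ref{t:100920} is available for this $y$.

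Next, given $\varepsilon>0$, set $M:=\varepsilon^{-1}$. Then $M>0$, again by Proposition~\ref{p:ofield}\ref{p:ofieldxii}. Theorem~\ref{t:100920} provides $n\in\NN$ with $y^n>M$, that is, $(z^{-1})^n>\varepsilon^{-1}$.

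It remains to translate this back into a statement about $z^n$. By the definition of negative powers, $(z^{-1})^n=z^{-n}$. By Proposition~\ref{p:100908:1} (or a short induction using Proposition~\ref{p:100908:2}\ref{p:100908:2v}), this equals $(z^n)^{-1}$. Moreover $z^n>0$, which follows by induction from \textbf{O3}, with base case $z^0=1>0$ by Proposition~\ref{p:ofield}\ref{p:ofieldxi}. We therefore have
\begin{equation*}
0<\varepsilon^{-1}<(z^n)^{-1}.
\end{equation*}
Applying Proposition~\ref{p:ofield}\ref{p:ofieldxiv} to this inequality, and using $(a^{-1})^{-1}=a$ from Proposition~\ref{p:100908:2}\ref{p:100908:2iv}, yields $z^n<\varepsilon$, as required.

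The main obstacle is only bookkeeping: justifying $(z^{-1})^n=(z^n)^{-1}$ and $z^n>0$, since neither is stated verbatim earlier in the paper. I would handle both with one-line inductions on $n$ if citing Proposition~\ref{p:100908:1} feels insufficient. An alternative route avoids reciprocals of powers altogether: write $z^{-1}=1+x$ with $x>0$ and apply Bernoulli's inequality directly. That argument, however, still needs the same reciprocal step at the end, so the reduction to Theorem~\ref{t:100920} is the cleaner choice.
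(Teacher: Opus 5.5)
Your proof is correct and takes the same route as the paper. It passes to $y=1/z>1$ via Proposition~\ref{p:ofield}\ref{p:ofieldxiv}, applies Theorem~\ref{t:100920} with $M=1/\varepsilon$, and then inverts back with Proposition~\ref{p:ofield}\ref{p:ofieldxiv} again. The only difference is that you explicitly justify $(z^{-1})^n=(z^n)^{-1}$ and $z^n>0$, which the paper uses without comment.
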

\begin{proof}
By Proposition~\ref{p:ofield}\ref{p:ofieldxiv}, 
$y := \tfrac{1}{z}>1$. 
Let $\varepsilon>0$. 
Now Theorem~\ref{t:100920} implies the existence of 
$n\in\NN$ such that $\tfrac{1}{z^n} = \big(\tfrac{1}{z}\big)^n = y^n >
\tfrac{1}{\varepsilon}$. 
Using Proposition~\ref{p:ofield}\ref{p:ofieldxiv} again,
we see that 
$z^n<\varepsilon$. 
\end{proof}

\begin{remark}
An ordered field with the archimedean property is called
an \textbf{archimedean field}. So, $\RR$ and $\QQ$ are archimedean. 
However, there do exist ordered fields that are not archimedean; e.g.,
the real rational functions $\RR(x)$ (see Exercises
\ref{exo:rerafu} and \ref{exo:rerafu2}). 
\end{remark}

\section*{Exercises}\markright{Exercises}
\addcontentsline{toc}{section}{Exercises}
\setcounter{theorem}{0}

\begin{exercise}
\label{exo:201011a}
Prove Proposition~\ref{p:ofield}\ref{p:ofieldvii}.
\end{exercise}
\begin{solution}
Suppose that $x\leq y$ and that $u>0$. 
We must show that $ux\leq uy$.
First, $x\leq y$ means $y\geq x$, which in turn
is [$y>x$ or $y=x$]. 
\emph{Case~1:} $y=x$. 
Then $uy=ux$. 
It follows that $uy\geq ux$, as claimed.
\emph{Case~2:} $y>x$. Then $y-x>0$.
Since $u>0$, \textbf{O3} yields 
$(y-x)u>0$. On the other hand,
\begin{align*}
(y-x)u &= u(y-x) \tag{by \textbf{M2}}\\
&=u(y+(-x)) \tag{def of subtraction}\\
&=uy+u(-x) \tag{by \textbf{D}}\\
&=uy+u((-1)x) \tag{by Proposition~\ref{p:may26:4}\ref{p:may26:4iv}}\\
&=uy+(u(-1))x \tag{by \textbf{M1}}\\
&=uy+((-1)u)x \tag{by \textbf{M2}}\\
&=uy+(-1)(ux) \tag{by \textbf{M1}}\\
&=uy+(-(ux)) \tag{by Proposition~\ref{p:may26:4}\ref{p:may26:4iv}}\\
&=uy-ux \tag{def of subtraction}
\end{align*}
Altogether, $uy-ux>0$. Hence $uy>ux$, thus
$uy \geq ux$ and so $ux\leq uy$.
\end{solution}

\begin{exercise}
\label{exo:201011b}
Prove Proposition~\ref{p:ofield}\ref{p:ofieldviii}.
\end{exercise}
\begin{solution} 
Suppose that $x\leq y$ and that $u<0$. 
We must show that $ux\geq uy$.

First, $x\leq y$ means $y\geq x$, which in turn
is [$y>x$ or $y=x$]. 

\emph{Case~1:} $y=x$. 
Then $uy=ux$. 
It follows that $ux\geq uy$, as claimed.

\emph{Case~2:} $y>x$. Then $y-x>0$.
Since $u<0$, we have $-u>0$ and \textbf{O3} yields 
$(y-x)(-u)>0$. On the other hand,
\begin{align*}
(y-x)(-u) &= (-u)(y-x) \tag{by \textbf{M2}}\\
&=(-u)(y+(-x)) \tag{def of subtraction}\\
&=(-u)y+(-u)(-x) \tag{by \textbf{D}}\\
&=((-1)u)y+ux \tag{by Proposition~\ref{p:may26:4}\ref{p:may26:4iv}\&\ref{p:may26:4v}}\\
&=(-1)(uy)+ux \tag{by \textbf{M1}}\\
&=-(uy)+ux \tag{by Proposition~\ref{p:may26:4}\ref{p:may26:4iv}}\\
&=ux+(-(uy)) \tag{by \textbf{A2}}\\
&=ux-uy \tag{def of subtraction}
\end{align*}
Altogether, $ux-uy>0$. Hence $ux>uy$, thus
$ux \geq uy$.
\end{solution}

\begin{exercise}
\label{exo:201011c}
Prove Proposition~\ref{p:ofield}\ref{p:ofieldix}.
\end{exercise}
\begin{solution} 
Suppose $0\leq x < y$ and $0\leq u < v$.
We must show that $ux<vy$.
Note that $y>0$ and $v>0$, 
so $vy > 0$ by \textbf{O3}.

\emph{Case~1:} $x=0$ or $u=0$.
Then $ux=0$ by Proposition~\ref{p:may26:4}\ref{p:may26:4ii} and we are done
in this case.

\emph{Case~2:} $x>0$ and $u>0$.
By Proposition~\ref{p:ofield}\ref{p:ofieldv},
$ux<uy$. 
Now $u<v$ and $y>0$, so again by 
Proposition~\ref{p:ofield}\ref{p:ofieldv},
$yu<yv$.

Altogether, 
$ux<uy = yu <yv=vy$ and so $ux<vy$. 
\end{solution}

\begin{exercise}
\label{exo:130923b}
Complete the proof of Corollary~\ref{c:130923a}
by showing the missing induction step. 
(You do not need to verify the base case.)
\end{exercise}
\begin{solution}
The base case was already dealt with in the proof of
Corollary~\ref{c:130923a}. 
Denote the statement in question by $S(n)$. 

\textbf{Inductive Step:}
Assume that $n\geq 2$ is an integer such that the statement is true.
Now let $x_1,\ldots,x_n,x_{n+1}$ be nonnegative real numbers.

If all of them are equal to $0$, then 
$(x_1+\cdots + x_n)+x_{n+1} = 0 + 0 = 0$,
where we used the inductive hypothesis for the first equality.
Let us consider the converse, i.e., we assume that 
$x_1+\cdots + x_{n+1} = 0$, which we rewrite as 
\begin{equation*}
0 = (x_1+x_2+\cdots + x_n) + x_{n+1}.
\end{equation*}
Set $y_1 := x_1+\cdots + x_n$ and $y_2 := x_{n+1}$.
Both $y_1$ and $y_2$ are nonnegative, and they add up to $0$.
Since we showed that $S(2)$ is true, it follows that
$y_1 = x_1+\cdots +x_n=0$ and $y_2 = x_{n+1} = 0$.
Using the inductive hypothesis again (on $y_1$!),
it follows that $x_1=x_2=\cdots = x_n=x_{n+1}=0$.
We thus showed that $S(n+1)$ is true as well.

Therefore, by the Principle of Mathematical Induction, the entire result
follows. 
\end{solution}

\begin{exercise}
\label{exo:130923d}
Prove Corollary~\ref{c:130923c}.
\emph{Hint}: You may use Corollary~\ref{c:130923a}.
\end{exercise}
\begin{solution}
The implication ``$\Leftarrow$'' is clear (or very easily proved by
induction, using \textbf{A3}). 

It thus remains to prove the implication ``$\Rightarrow$''.
So suppose that $x_1^2+x_2^2+\cdots + x_n^2=0$.
Note that each $x_i^2$ is nonnegative:
indeed, if $x_i = 0$, then $x_i^2 = 0^2 = 0$ by 
Proposition~\ref{p:may26:4}\ref{p:may26:4ii};
otherwise, $x_i\neq 0$ in which case $x_i^2 > 0$ by 
Proposition~\ref{p:ofield}\ref{p:ofieldx}. 
It therefore follows from Corollary~\ref{c:130923a} (applied
to the nonnegative numbers $x_1^2,\ldots,x_n^2$) that 
$x_1^2 = \cdots = x_n^2 = 0$. Furthermore, 
Proposition~\ref{p:may26:4}\ref{p:may26:4iii} now yields that
each $x_i = 0$, as announced. 
\end{solution}

\begin{exercise}
\label{exo:triangle}
Prove Lemma~\ref{l:generaltriangle}. 
\end{exercise}
\begin{solution}
By Mathematical Induction on $n$. 

\textbf{Base Case}: $n=2$.
Then we need to prove that $|x_1+x_2| \leq |x_1|+|x_2|$, which indeed is
true by the classical triangle inequality (Theorem~\ref{t:triangle}). 

\textbf{Inductive Step}: Assume that $\nnn$ is such that $n\geq 2$ 
and \eqref{e:gentriangle} holds. We must show that \eqref{e:gentriangle} also holds with
$n$ replaced by $n+1$, i.e.,
\begin{equation}
\label{e:hesucks}
\left|\sum_{k=1}^{n+1} x_k\right|
\stackrel{?}{\leq} \sum_{k=1}^{n+1} |x_k|.
\end{equation}
Indeed, using the triangle inequality (Theorem~\ref{t:triangle}) 
in \eqref{e:hesucksb}
and the \textbf{induction hypothesis} in \eqref{e:hesucksc}, 
\begin{subequations}
\begin{align}
\left|\sum_{k=1}^{n+1} x_k\right|
&= \left|\left(\sum_{k=1}^{n} x_k\right) + x_{n+1}\right| \\
&\leq \left|\sum_{k=1}^{n} x_k\right| + |x_{n+1}| \label{e:hesucksb}\\
&\leq \left(\sum_{k=1}^{n}|x_k|\right) + |x_{n+1}|\label{e:hesucksc}\\
&= \sum_{k=1}^{n+1}|x_k|,
\end{align}
\end{subequations}
we verified \eqref{e:hesucks}, as required.
Therefore, by the Principle of Mathematical Induction, 
Lemma~\ref{l:generaltriangle} is verified. 
\end{solution}

\begin{exercise}
\label{exo:maxmin}
Prove Lemma~\ref{l:maxmin}.
\end{exercise}
\begin{solution}
\emph{Case~1}: $x\geq y$.\\
On the one hand,
$$\max\{x,y\} = x \quad\text{and}\quad\min\{x,y\} = y.$$
On the other hand, 
$x-y\geq 0$ and hence $|x-y|=x-y$.
It follows that 
$$\frac{x+y+|x-y|}{2} = \frac{x+y+(x-y)}{2} = \frac{2x}{2} = x$$
and
$$\frac{x+y-|x-y|}{2} = \frac{x+y-(x-y)}{2} = \frac{2y}{2} = y.$$
Altogether, the result follows in this case. 

\emph{Case~2}: $x< y$.\\
On the one hand,
$$\max\{x,y\} = y \quad\text{and}\quad\min\{x,y\} = x.$$
On the other hand, 
$x-y<0$ and hence $|x-y|=-(x-y)=y-x$.
It follows that 
$$\frac{x+y+|x-y|}{2} = \frac{x+y+(y-x)}{2} = \frac{2y}{2} = y$$
and
$$\frac{x+y-|x-y|}{2} = \frac{x+y-(y-x)}{2} = \frac{2x}{2} = x.$$
Altogether, the result follows in this case. 
\end{solution}

\begin{exercise}
\label{exo:uncleshawn}
Let $n\in\NN\smallsetminus\{3\}$. 
Show that
\begin{equation*}
n^2 \leq 2^n
\end{equation*}
\emph{Hint}: Use induction for $n\geq 4$.
\end{exercise}
\begin{solution}
The statement is clearly true for 
$n\in\{0,1,2,4\}$ since
$0^2 = 0 \leq 1 = 2^0$,
$1^2 = 1 \leq 2 = 2^1$,
$2^2 = 4 \leq 4 = 2^2$,
and 
$4^2 = 16 \leq 16 = 2^4$. 

Now assume that the inequality is true for some integer $n\geq 4$.
Then $4n \leq n^2$ and hence
\begin{align*}
(n+1)^2 &= n^2 + 2n+1  
\leq n^2 + \frac{n^2}{2} + 1  \quad\text{[since $4n\leq n^2$]}\\
&= n^2\big(1+\tfrac{1}{2}\big) + 1
\leq \frac{3}{2}2^n + 1 \quad\text{[induction hypothesis]}\\
&< \frac{3}{2}2^n + \frac{1}{2}2^n
= 2\cdot 2^n 
= 2^{n+1}.
\end{align*}
Therefore, the result is proven using the Principle
of Mathematical Induction.

\emph{Alternatively},
we could argue
that 
$(n+1)^2 = n^2+2n+1
= n^2(1+2\cdot\tfrac{1}{n}+\tfrac{1}{n^2})
\leq 2^n(1+2\cdot\tfrac{1}{n}+\tfrac{1}{n^2})
$.
We would be done if the last term is 
less than or equal to $2^{n+1} = 2^n\cdot 2$,
so it suffices to show that 
$1+2\cdot\tfrac{1}{n}+\tfrac{1}{n^2} \leq 2$, 
i.e., that 
$2\cdot\tfrac{1}{n}+\tfrac{1}{n^2} \leq 1$.
But this is OK because $n\geq 4$ implies 
$2\cdot\tfrac{1}{n} \leq 2\cdot\tfrac{1}{4} = \tfrac{1}{2}$
and 
$\tfrac{1}{n^2} \leq \tfrac{1}{4^2} = \tfrac{1}{16} \leq \tfrac{1}{2}$. 
\end{solution}

\begin{exercise}
\label{exo:101011:2}
Show that
\begin{equation*}
2^n < n!
\end{equation*}
for every integer $n\geq 4$. 
\end{exercise}
\begin{solution}
We use induction.

When $n=4$, we have
$2^4 = 16 < 24 = 4!$.

Now assume that for some integer $n\geq 4$,
we have $2^n < n!$. 
Then
\begin{equation*}
2^{n+1} = 2\cdot 2^n
< 2\cdot n!
<(n+1)\cdot n! 
=(n+1)!
\end{equation*}
and the result thus follows from the Principle
of Mathematical Induction.
\end{solution}

\begin{exercise}
Let $x>0$. 
Show that
\begin{equation*}
x + \frac{1}{x} \geq 2,
\end{equation*}
and that equality holds if and only if 
$x=1$.
\end{exercise}
\begin{solution}
Using Proposition~\ref{p:ofield}\ref{p:ofieldx},
Proposition~\ref{p:may26:4}\ref{p:may26:4ii}, 
and Proposition~\ref{p:ofield}\ref{p:ofieldvii},
we have
\begin{subequations}
\begin{align*}
(x-1)^2 \geq 0 &\Leftrightarrow
x^2 - 2x + 1 \geq 0
\Leftrightarrow
x^2 +1 \geq 2x
\Leftrightarrow
x + \frac{1}{x} \geq 2.
\end{align*}
\end{subequations}
The last inequality is thus an equality if and only if
$(x-1)^2=0$
$\Leftrightarrow$
$x-1=0$ (by Proposition~\ref{p:may26:4}\ref{p:may26:1iii})
$\Leftrightarrow$
$x=1$.
\end{solution}

\begin{exercise}
\label{exo:101011:1}
Let $n\in\NN\smallsetminus\{0\}$ and let $k\in\NN$.
Show that 
\begin{equation*}
{n \choose k}\frac{1}{n^k} \leq \frac{1}{k!}.
\end{equation*}
\end{exercise}
\begin{solution}
If $k\geq n+1$, then the inequality is true because
${n \choose k} = 0<\tfrac{1}{k!}$. 
Thus, we assume that $k\in\{0,1,\ldots,n\}$. 
Then
$\frac{n!}{(n-k)!} = (n)(n-1)\cdots (n-k+1)$
and so
\begin{equation*}
\frac{1}{n^k}\frac{n!}{(n-k)!} = 
\frac{(n)(n-1)\cdots (n-k+1)}{\underbrace{(n)(n)\cdots(n)}_{k
\text{ factors}}}
= \frac{n}{n}\frac{n-1}{n}\cdots \frac{n-k+1}{n}
\leq 1.
\end{equation*}
Hence, by Lemma~\ref{l:choose}\ref{l:choose:ii},
\begin{equation*}
{n \choose k}\frac{1}{n^k}
= \frac{1}{n^k} \frac{n!}{(n-k)!}\frac{1}{k!}
\leq \frac{1}{k!},
\end{equation*}
as claimed.
\end{solution}

\begin{exercise}
\label{exo:101011:3}
Let $n\geq 1$ be an integer. Show that 
\begin{equation*}
\Big( 1+ \frac{1}{n}\Big)^n \leq \sum_{k=0}^{n} \frac{1}{k!}.
\end{equation*}
\emph{Hint:} Exercise~\ref{exo:101011:1}. 
\end{exercise}
\begin{solution}
Indeed, using the Binomial Theorem (Theorem~\ref{t:binomial})
and Exercise~\ref{exo:101011:1}, we obtain
\begin{equation*}
\Big( 1+ \frac{1}{n}\Big)^n 
= \sum_{k=0}^n {n \choose k}\frac{1}{n^k}
\leq 
\sum_{k=0}^n \frac{1}{k!}
\end{equation*}
as required.
\end{solution}

\begin{exercise}
\label{exo:101011:4}
Let $n\geq 4$ be an integer. 
Use Exercise~\ref{exo:101011:2} and Theorem~\ref{t:geosum}
to show that
\begin{equation*}
\sum_{k=0}^n \frac{1}{k!} < 3.
\end{equation*}
\end{exercise}
\begin{solution}
By Exercise~\ref{exo:101011:2}, we have 
\begin{equation*}
\frac{1}{k!} < \left(\frac{1}{2}\right)^k = 2^{-k}
\qquad
\text{for all integers $k\geq 4$.}
\end{equation*}
Therefore, using also the formula for the geometric sum
(Theorem~\ref{t:geosum}), we obtain
\begin{align*}
\sum_{k=0}^n \frac{1}{k!}
&= \sum_{k=0}^3 \frac{1}{k!} +  \sum_{k=4}^n \frac{1}{k!}
= \big( 1 + 1 + \tfrac{1}{2}+\tfrac{1}{6}\big) 
+  \sum_{k=4}^n \frac{1}{k!}
= \big(2+\tfrac{2}{3}\big) + \sum_{k=4}^n \frac{1}{k!}\\
&<  \big(2+\tfrac{2}{3}\big) + \sum_{k=4}^n 2^{-k}
=  \big(2+\tfrac{2}{3}\big) + 2^{-4}\sum_{k=4}^n 2^{-k+4}
=  \big(2+\tfrac{2}{3}\big) + 2^{-4}\sum_{k=0}^{n-4} 2^{-(k+4)+4}\\
&=  \big(2+\tfrac{2}{3}\big) + 2^{-4}\sum_{k=0}^{n-4} (1/2)^k
=  \big(2+\tfrac{2}{3}\big) + 2^{-4}\frac{1-(1/2)^{n-3}}{1-(1/2)}\\
&<  \big(2+\tfrac{2}{3}\big) + 2^{-4}\frac{1-0}{1/2}
=  2+\tfrac{2}{3} + \tfrac{1}{8}
=  2+\tfrac{19}{24} < 3. 
\end{align*}
\end{solution}

\begin{exercise}
Let $n \geq 4$ be an integer. 
Use Exercises \ref{exo:101011:3} and 
\ref{exo:101011:4} to show that
\begin{equation*}
\left( \frac{n}{3}\right)^n \leq \frac{n!}{3}\,.
\end{equation*}
\end{exercise}
\begin{solution}
By induction on $n$.

When $n=1$, we indeed have
$(1/3)^1 = 1/3 \leq 1/3 = 1!/3$.

Now assume the inequality is true for some integer $n\geq 1$.
Then, using the two exercises and the induction hypothesis, 
we have
\begin{align*}
\left( \frac{n+1}{3}\right)^{n+1}
&= \frac{n+1}{3}  \left(\frac{n+1}{3}\right)^n 
= \frac{n+1}{3}  \left(\frac{n\big(1+1/n\big)}{3}\right)^n 
= \frac{n+1}{3}  \left(\frac{n}{3}\right)^n
\left(1 + \frac{1}{n}\right)^n\\
&\leq \frac{n+1}{3}  \left(\frac{n}{3}\right)^n
\sum_{k=0}^{n} \frac{1}{k!}
< \frac{n+1}{3}  \left(\frac{n}{3}\right)^n \cdot 3
= (n+1) \left(\frac{n}{3}\right)^n 
\leq (n+1) \frac{n!}{3}
= \frac{(n+1)!}{3}.
\end{align*}
The result thus follows from the Principle of Mathematical Induction.
\end{solution}

\begin{exercise}
Let $n \geq 3$ be an integer. 
Use the Binomial Theorem (and mathematical induction) to show that 
\begin{equation*}
n^n > (n+1)!\,.
\end{equation*}
\end{exercise}
\begin{solution}
By induction on $n$.

When $n=3$, we indeed have
$3^3 = 27 > 24 = 4!$. 

Now assume the inequality is true for some integer $n\geq 3$.
Then, using the Binomial Theorem and the induction hypothesis, 
we have
\begin{align*}
(n+1)^{n+1}
&= \sum_{k=0}^{n+1} {n+1 \choose k} n^{n+1-k}
= n^{n+1} + (n+1)n^{n} + \sum_{k=2}^{n+1} {n+1 \choose k} n^{n+1-k}\\
&= n^{n} \bigg(n + (n+1)+ \sum_{k=2}^{n+1} {n+1 \choose k}
n^{1-k}\bigg)\\
&> n^n(2n+1)
> n^n(n+2)
> (n+1)!(n+2)
= (n+2)!. 
\end{align*}
The result thus follows from the Principle of Mathematical Induction.
\end{solution}

\begin{exercise}[a non-archimedean ordered field]
\label{exo:rerafu2}
Consider the field of real rational functions $\RR(x)$
(see Exercise~\ref{exo:rerafu}) and \emph{define} 
$$ f(x) = \frac{ a_m x^m+a_{m-1} x^{m-1} + \cdots + a_1x + a_0}{b_n x^n +
b_{n-1}x^{n-1} + \cdots + b_1x+b_0}$$
to be positive when the $a_mb_n>0$. 
(This does not mean that the function $f$ is everywhere positive!)
Show that with this definition, $\RR(x)$ becomes an ordered field,
i.e., check \textbf{O1}--\textbf{O3}.
Furthermore, show that for every $\nnn$, $x>n$ and hence $\RR(x)$ is
\emph{not} archimedean. 
\end{exercise}
\begin{solution} 
Suppose $f$ has the representation as given, where we 
also abbreviate the numerator and denominator polynomials: 
$$ f(x) = \frac{ a_m x^m+a_{m-1} x^{m-1} + \cdots + a_1x + a_0}{b_n x^n +
b_{n-1}x^{n-1} + \cdots + b_1x+b_0} =: \frac{a(x)}{b(x)}.$$
Then $b_n\neq 0$. 
Hence $a_mb_n$ is defined and we have a positive element 
if $a_mb_n>0$, a negative element if $a_mb_n<0$,
and $0$ is $a_m=0$, i.e., $f$ is the zero rational function.
This proves \textbf{O1}.

Now assume that $f$ is positive, i.e., $a_mb_n>0$  and that 
$$ g(x) = \frac{ c_p x^p+c_{p-1} x^{p-1} + \cdots + c_1x + c_0}{d_q x^q +
d_{q-1}x^{q-1} + \cdots + d_1x+d_0} =: \frac{c(x)}{d(x)}$$
is positive as well, i.e., $c_pd_q>0$. 
Then 
\begin{align*}
(f+g)(x)
&= \frac{a(x)}{b(x)} + \frac{c(x)}{d(x)}
= \frac{a(x)d(x)}{b(x)d(x)} + \frac{c(x)b(x)}{d(x)b(x)}
= \frac{a(x)d(x)+b(x)c(x)}{b(x)d(x)}\\
&= \frac{(a_mx^m+ \cdots)(d_qx^q+\cdots) + (b_nx^n+\cdots)(c_px^p+\cdots)}%
{(b_nx^n+\cdots)(d_qx^q+\cdots)}\\
&= \frac{(a_md_qx^{m+q}+\cdots) + (b_nc_px^{n+p}+\cdots)}{b_nd_qx^{n+q}+\cdots}.
\end{align*}

\emph{Case~1:} $b_n>0$ and $d_q>0$.
Then $a_m>0$ and $c_p>0$. Hence 
$a_md_q>0$, $b_nc_p>0$, and $b_nd_q>0$, which shows $f+g$ is positive.

\emph{Case~2:} $b_n<0$ and $d_q<0$.
Then $a_m<0$ and $c_p<0$. Hence 
$a_md_q>0$, $b_nc_p>0$, and $b_nd_q>0$, which shows $f+g$ is positive.

\emph{Case~3:} $b_n>0$ and $d_q<0$.
Then $a_m>0$ and $c_p<0$. Hence 
$a_md_q<0$, $b_nc_p<0$, and $b_nd_q<0$, which shows $f+g$ is positive.

\emph{Case~4:} $b_n<0$ and $d_q>0$.
This is similar to \emph{Case~3}.

Altogether, \textbf{O2} holds. 

Now consider 
\begin{align*}
(f\cdot g)(x)
&= \frac{a(x)}{b(x)} \cdot \frac{c(x)}{d(x)}
= \frac{a(x)c(x)}{b(x)d(x)}
= \frac{(a_mx^m+ \cdots)(c_px^p+\cdots)}%
{(b_nx^n+\cdots)(d_qx^q+\cdots)}
= \frac{a_mc_px^{m+p}+\cdots}{b_nd_qx^{n+q}+\cdots}.
\end{align*}

\emph{Case~1:} $b_n>0$ and $d_q>0$.
Then $a_m>0$ and $c_p>0$. Hence 
$a_mc_p>0$, and $b_nd_q>0$, which shows $f\cdot g$ is positive.

\emph{Case~2:} $b_n<0$ and $d_q<0$.
Then $a_m<0$ and $c_p<0$. Hence 
$a_mc_p>0$, and $b_nd_q>0$, which shows $f\cdot g$ is positive.

\emph{Case~3:} $b_n>0$ and $d_q<0$.
Then $a_m>0$ and $c_p<0$. Hence 
$a_mc_p<0$, and $b_nd_q<0$, which shows $f\cdot g$ is positive.

\emph{Case~4:} $b_n<0$ and $d_q>0$.
This is similar to \emph{Case~3}.

Altogether, we verified \textbf{O3}. 

Finally, consider 
$$
f(x) = x = \frac{1\cdot x^1 + 0}{1}
\quad\text{and}\quad 
g(x) = n = \frac{n}{1}.
$$
Then 
$$
(f-g)(x) = \frac{1\cdot x^1 + 0}{1} - \frac{n}{1}
= \frac{1\cdot x^1 -n}{1}
$$
and since $1\cdot 1= 1>0$, it follows that $f-g>0$, 
i.e., $f>g$. 

In view of Lemma~\ref{l:entier},
we see that $\RR(x)$ cannot have the archimedean property.
\end{solution}

\begin{exercise}[Cauchy--Schwarz inequality]
\label{exo:CS}
\index{Cauchy--Schwarz inequality}
Let $x_1,x_2,y_1,y_2$ be numbers in an ordered field $\FF$.
Show that 
$$(x_1y_1+x_2y_2)^2 \leq (x_1^2+x_2^2)(y_1^2+y_2^2).$$
\emph{Hint:} Exercise~\ref{exo:151007a}.
\end{exercise}
\begin{solution}
Using Exercise~\ref{exo:151007a}, we have for 
all $x,y,u,v$: 
$$ (xu-yv)^2+(xv+yu)^2 = (x^2+y^2)(u^2+v^2).$$
Setting $x=x_1,v=y_1,y=x_2,u=y_2$ we thus learn that 
$$ (x_1y_2-x_2y_1)^2+(x_1y_1+x_2y_2)^2 = (x_1^2+x_2^2)(y_2^2+y_1^2).$$
On the other hand, squares in an ordered field are nonnegative 
because of Proposition~\ref{p:ofield}\ref{p:ofieldx}, 
$0^2=0\geq 0$ and Remark~\ref{r:191010}\ref{r:191010i}. 
Altogether, we deduce that 
$$(x_1y_1+x_2y_2)^2 \leq  (x_1y_2-x_2y_1)^2+(x_1y_1+x_2y_2)^2
= (x_1^2+x_2^2)(y_1^2+y_2^2),$$
as claimed. 
\end{solution}

\begin{exercise}
Let $x$ and $y$ be numbers in $\RR$ (or, in fact, in any ordered
field) such that $x<y$.
Show that there exists a number $m$ such that
$x<m<y$. 
\end{exercise}
\begin{solution}
Recall that $1>0$ 
(Proposition~\ref{p:ofield}\ref{p:ofieldxi}).
Hence $2=1+1>0$ by \textbf{O2}.
It follows that $1/2>0$ 
(Proposition~\ref{p:ofield}\ref{p:ofieldxii}).
Now set
$$ m := (x+y)/2.$$
We have the following using Proposition~\ref{p:ofield}:
$x<y$
$\Rightarrow$
$2x=x+x < x+y$
$\Rightarrow$
$x = (2x)/2 < (x+y)/2 = m$.
The proof that $m<y$ is similar
(start by adding $y$ instead of $x$ to the inequality $x<y$). 
\end{solution}

\begin{exercise}
Prove or disprove: 
\begin{quotation}
``If $x$ and $y$ are in $\RR$, then there exists $M>0$ such that 
$|x|+|y|\leq M\cdot |x+y|$.''
\end{quotation}
\end{exercise}
\begin{solution}
The statement is false.
Suppose to the contrary that the statement is true. 
Let $x=1$ and $y=-1$. 
Because we assume the statement is true, 
there exists $M>0$ as in the statement. 
Then 
$$2 = 1+ 1 = |x|+y| \leq M\cdot|x+y| = M\cdot |1+(-1)| = M\cdot 0 = 0,$$
which is a contradiction because $2>0$. 
\end{solution}

\begin{exercise}
Prove or disprove: 
\begin{quotation}
``If $x$ and $y$ are in $\RR$, then 
$|x|^2+|y|^2\geq |x+y|^2$.''
\end{quotation}
\end{exercise}
\begin{solution}
The statement is false:
Consider $x=y=1$.
Then LHS is equal to $1^2+1^2=2$
while RHS is equal to $(1+1)^2 = 4$. 
\end{solution}

\begin{exercise}
Prove or disprove: 
\begin{quotation}
``There exists $M>0$ such that if $x$ and $y$ are in $\RR$, then 
$|x|^2+|y|^2\geq M\cdot |x+y|^2$.''
\end{quotation}
\end{exercise}
\begin{solution}
The statement is true, e.g., with $M=1/2$ (or smaller). 
Let $x$ and $y$ be in $\RR$.
We must show that 
$$|x|^2 + |y|^2 \stackrel{?}{\geq} \thalb |x+y|^2.$$
Note that $|z|\in\{z,-z\}$ and so $|z|^2 \in \{z^2,(-z)^2\}$
and so $|z|^2 = z^2$.
Hence, we must show that 
$x^2 + y^2 \stackrel{?}{\geq} \thalb (x+y)^2$
or equivalently, 
$$2x^2+2y^2 \stackrel{?}{\geq} (x+y)^2.$$
Expanding the RHS gives 
$$2x^2+2y^2 \stackrel{?}{\geq} x^2 +y^2+2xy$$
or $x^2+y^2-2xy \stackrel{?}{\geq} 0$.
But this is true because $x^2+y^2-2xy = (x-y)^2$ is a square,
hence nonnegative!
\end{solution}

\begin{exercise}
\label{exo:200724a}
Show that for every real number $x>-1$, we have 
\begin{equation*}
1-x \leq \frac{1}{1+x}.
\end{equation*}
\end{exercise}
\begin{solution}
We know that squares are nonnegative 
(use Proposition~\ref{p:ofield}\ref{p:ofieldx}),
so $x^2 \geq 0$ and hence 
$1-x^2\leq 1$.
On the other hand, $1-x^2=(1+x)(1-x)$.
So altogether
\begin{equation}
\label{exo:200724ae1}
(1+x)(1-x)\leq 1. 
\end{equation}
Because $x>-1$, we have $1+x>0$. 
Dividing \eqref{exo:200724ae1} by $1+x$
and recalling Proposition~\ref{p:ofield}\ref{p:ofieldvii}, we obtain the result. 
\end{solution}

\begin{exercise}
\label{exo:200724b}
Show that for all real numbers $x$ and $y$ such that $-1<x<y$ we have 
\begin{equation*}
\frac{x}{1+x} < \frac{y}{1+y};
\end{equation*}
in other words, the function $z\mapsto z/(1+z)$ 
is strictly increasing on $\left]-1,+\infty\right[$. 
\end{exercise}
\begin{solution}
Assume that $-1<x<y$.
Then $0<1+x<1+y$.
Taking the inverse (Proposition~\ref{p:ofield}\ref{p:ofieldxiv}), we obtain 
$(1+x)^{-1}>(1+y)^{-1}$. 
Hence 
$1-(1+x)^{-1}<1-(1+y)^{-1}$ and so 
\begin{align*}
\frac{x}{1+x} &=  \frac{(1+x)-1}{1+x} =1-(1+x)^{-1}
<1-(1+y)^{-1}
= \frac{(1+y)-1}{1+y}
= \frac{y}{1+y}
\end{align*}
as claimed. 

Alternatively, we can (but don't have to, see above!) use Calculus:
\begin{equation*}
    \Big( \frac{z}{1+z}\Big)' = \frac{1}{(1+z)^2}>0
\end{equation*}
for $z>-1$ and thus the function is strictly increasing. 
\end{solution}

\begin{exercise}
\label{exo:200724c}
Suppose that $0\leq x \leq 1$,
that $0<\alpha \leq 1$,
that $0\leq x_0\leq 1$,
that $n\in\NN$, and that 
\begin{equation}
\label{e:exo:200724c1}
x \leq \frac{x_0}{nx_0+\alpha^{-n}}.
\end{equation}
Let 
\begin{equation}
\label{e:201020a}
    0 \leq y \leq \alpha(1-x)x.
\end{equation}
Show that 
\begin{equation}
y \leq \frac{x_0}{(n+1)x_0+\alpha^{-(n+1)}}.
\end{equation}
\emph{Hint:}
Exercises~\ref{exo:200724a} and \ref{exo:200724b}.
\end{exercise}
\begin{solution}
We start by pointing out that $y$ can actually be chosen
because $\alpha> 0$, $x\geq 0$, $1-x\geq 0$
imply that $\alpha(1-x)x\geq 0$.
Therefore, 
\begin{align}
y
&\leq 
\alpha(1-x)x = (\alpha x)(1-x)\tag{by \eqref{e:201020a}}\\
&\leq \alpha \frac{x}{1+x} \tag{by Exercise~\ref{exo:200724a}}\\
&\leq \alpha 
\frac{\frac{x_0}{nx_0+\alpha^{-n}}}{1+\frac{x_0}{nx_0+\alpha^{-n}}} \tag{by \eqref{e:exo:200724c1} and Exercise~\ref{exo:200724b}}\\
&= \frac{1}{\frac{1}{\alpha}} \frac{x_0}{nx_0+\alpha^{-n}+x_0}\notag\\
&= \frac{x_0}{\tfrac{1}{\alpha}(n+1)x_0+\alpha^{-(n+1)}}\notag\\
&\leq \frac{x_0}{(n+1)x_0+\alpha^{-(n+1)}} \tag{because $1/\alpha\geq 1$}
\end{align}
as claimed. 
\end{solution}

\begin{exercise}
\label{exo:200724d}
Show that if $x\in\RR$ satisfies $|x|\geq 4$,
then $-x^2+x \leq -3|x|$.
\end{exercise}
\begin{solution}
We have 
$|x|\geq 4$
$\Leftrightarrow$
$-|x|\leq -4$
$\Leftrightarrow$
$-|x|\cdot|x|\leq -4|x|$
$\Leftrightarrow$
$-x^2\leq -4|x|$
$\Rightarrow$
$-x^2 \leq -x-3|x|$
$\Leftrightarrow$
$-x^2+x\leq -3|x|$.
\end{solution}

\begin{exercise}
Show that if $x$ and $y$ are two nonzero real numbers, then 
\begin{equation*}
    \frac{1}{2}\Big|\frac{x}{|x|}+\frac{y}{|y|} \Big| 
    \leq \frac{|x+y|}{|x|+|y|}.
\end{equation*}
\end{exercise}
\begin{solution}
\emph{Case~1:} $x>0$ and $y<0$.\\ 
Then $x/|x|=1$ while $y/|y|=-1$.
Hence the LHS $=0$ and we are done.

\emph{Case~2:} $x<0$ and $y>0$. \\
This is similar to \emph{Case~1}.

\emph{Case~3:} $x>0$ and $y>0$.\\
Then $x/|x|=y/|y|=1$ and the LHS 
$=\thalb|1+1|=1$. Luckily,
the RHS $=(x+y)/(x+y) = 1$
\end{solution}

\begin{exercise}[YOU be the marker!] 
Consider the following statement 
\begin{equation*}
\text{``If $x$ and $y$ are in $\RR$, then $|x+y|=|x|+|y|$.''}
\end{equation*}
and the following ``proof'':
\begin{quotation}
Let $x$ and $y$ be in $\RR$.\\
\emph{Case~1}: $x\geq 0$ and $y\geq 0$.\\
Then $x+y\geq 0$ and hence $|x+y|=x+y=|x|+|y|$.\\
\emph{Case~2}: $x< 0$ and $y< 0$.\\
Then $-x>0$ and $-y>0$, hence $-(x+y)=(-x)+(-y) > 0$.\\
Thus $|x+y| = -(x+y) = (-x)+(-y) = |x| + |y|$.\\
Altogether, we verified the statement. 
\end{quotation}
Why is this proof wrong?
\end{exercise}
\begin{proof}
Everything written down is mathematically sound except 
for the last sentence: We are not done because
there are missing cases that were not considered, 
e.g., $x>0$ and $y<0$. 
(And the result is wrong: consider $x=1$ and $y=-1$.)
\end{proof}

\begin{exercise}[TRUE or FALSE?]
Mark each of the following statements as either true or false. 
Briefly justify your answer.
\begin{enumerate}
\item ``There exists a field that cannot be ordered.''
\item ``In an ordered field, the sum of two negative numbers is negative.''
\item ``In an ordered field, the product of two negative numbers is negative.''
\item ``If $x\in\RR$, then $(1+x)^3 \geq 1+3x$.''
\end{enumerate}
\end{exercise}
\begin{solution}
(i): TRUE: The field with two elements, $\FF_2$, cannot be ordered 
because $1+1=0$. 

(ii): TRUE: [$x<0$ and $y<0$]
$\Leftrightarrow$ [$-x>0$ and $-y>0$]
$\Rightarrow$ $-(x+y)= (-x)+(-y)>0$
$\Rightarrow$ $x+y<0$.

(iii): FALSE: 
$-1<0$ but $(-1)(-1) = (1)(1) = 1>0$.

(iv): FALSE: Consider $x=-11$.
Then $(1+x)^3 = (-10)^3 = -1000$
but $1+3(-11)=-32$. 
\end{solution}

\begin{exercise}[TRUE or FALSE?]
Determine whether or not each of the following statements
is either TRUE or FALSE.
If it is TRUE, explain why. 
If it is FALSE, provide a counterexample.
\begin{enumerate}
\item
$|x+y| = |x| + |y|$.
\item 
$|x-y| \leq |x|-|y|$.
\item
$|x-y| \leq |x|+|y|$.
\end{enumerate}
\end{exercise}
\begin{solution}
(i): This is FALSE.
E.g., let $x=1$ and $y=-1$.
Then $|x+y| = |1+(-1)| = |0| = 0$
while $|x|+|y| = |1| + |-1| = 1 + 1 =2$.

(ii): This is FALSE. For instance, consider $x=0$ and $y=1$.
Then $|x-y|=|0-1| = 1$ but $|x|-|y| = |0|-|1| = 0 -1 = -1 $. 

(iii): This is TRUE. The triangle inequality (applied to $x$ and $-y$)
and Proposition~\ref{p:absval}\ref{p:absvaliii+} yield 
$|x-y| = |x+(-y)| \leq |x| + |-y| = |x| + |y|$. 
\end{solution} 
\chapter{Sequences and Limits}

\section{Definitions and Examples}

\begin{definition}[sequence]
A \textbf{sequence} of real numbers is formally a function from $\NN$ to $\RR$,
written as
$(a_n)_\nnn$ or $(a_0,a_1,a_2,\ldots,a_n,\ldots)$. 
If $n_0\in\ZZ$, it is convenient to refer to $(a_n)_{n\geq n_0} =
(a_{n_0},a_{n_0+1},\ldots)$ also as a sequence.\index{Sequence}
\end{definition}

Note that a sequence is different from a set (which is unordered).
Thus, e.g.,
\begin{equation}
(0,1,0,1,0,1,\ldots) \neq (1,0,1,0,1,0,\ldots)
\end{equation}
whereas
\begin{equation}
\{0,1,0,1,0,1,\ldots\} = \{0,1\} = \{1,0\} = \{1,0,1,0,1,0,\ldots\}.
\end{equation}

\begin{example}
\label{ex:seq}
Here are some sequences.
\begin{enumerate}
\item 
\label{ex:seq:i}
If $a_n\equiv a$, for some constant $a\in\RR$, then we obtain 
the constant sequence $(a,a,a,\ldots)$.\\[-2mm]
\item If $a_n=\tfrac{1}{n}$ for every $n\geq 1$, then we obtain the
sequence
$(1,\tfrac{1}{2},\tfrac{1}{3},\ldots,\tfrac{1}{n},\ldots)$.\\[-2mm]
\item If $a_n=(-1)^n$ for every $n\in\NN$, one obtains
$(1,-1,1,-1,\ldots)$. \\[-2mm]
\item If $a_n = 1/3^n$ for every $n\in\NN$, we get
$(1,\tfrac{1}{3},\tfrac{1}{9},\tfrac{1}{27},\ldots)$.\\[-2mm]
\item More generally, let $\alpha\in\RR$ and set $a_n = \alpha^n$ for every $n\in\NN$.
One obtains $(a_n)_\nnn = (1,\alpha,\alpha^2,\ldots,\alpha^n,\ldots)$.
\\[-2mm]
\item If $a_n=\tfrac{n}{n+1}$ for every $\nnn$, we obtain
$(0,\tfrac{1}{2},\tfrac{2}{3},\tfrac{3}{4},\ldots)$.\\[-2mm]
\item 
\label{ex:seq:fibo}
Set $f_0=0$, $f_1=1$, and $f_n=f_{n-1}+f_{n-2}$ for $n\geq 2$.
Then one obtains the \emph{recursively defined} sequence of 
\textbf{Fibonacci numbers}:
$(0,1,1,2,3,5,8,13,21,\ldots)$.\index{Fibonacci numbers} \\[-2mm]
\item 
\label{ex:seq:logseq}
Let $r\in\RR$ and $x_0\in\RR$, 
and $x_{n+1}=r(1-x_n)x_n$ for $n\geq 0$.
This is the (again recursively defined) \textbf{logistic sequence}\index{logistic sequence}.
\end{enumerate}
\end{example}

\begin{definition}[convergence, limit, divergence]
\label{d:limit}
Let $(a_n)_\nnn$ be a sequence of real numbers, and let $\ell\in\RR$.
Then $(a_n)_\nnn$ \textbf{converges} to $\ell$,
written as $\lim_\nnn a_n=\ell$, 
$\lim_{n\to\pinf}a_n=\ell$, $\lim a_n=\ell$, or $a_n\to\ell$,
if\index{Convergent sequence}\index{Convergence}\index{Limit}
\begin{quotation}
\noindent
for every $\varepsilon>0$, there exists $N(\varepsilon)\in\NN$ such that\\
$n\geq N(\varepsilon)$ $\Rightarrow$ $|a_n-\ell|<\varepsilon$ 
(i.e., $|a_n-\ell|<\varepsilon$ for \emph{every} $n\geq N(\varepsilon)$). 
\end{quotation}
If this happens, $\ell$ is called the \textbf{limit} of the sequence
$(a_n)_\nnn$. If $(a_n)_\nnn$ has no limit, then it \textbf{diverges}.
\index{Divergent sequence}\index{Divergence}
\end{definition}

Equivalently, we can say that $(a_n)_\nnn$ converges to $\ell$
if eventually all terms of the sequence lie in
the open interval\footnote{We follow here the European (especially
French) notation of open
intervals $\left]a,b\right[$, which is convenient to distinguish
from a pair $(a,b)$.}
\begin{equation}
\left]\ell-\varepsilon,\ell+\varepsilon\right[
:= \menge{x\in\RR}{|x-\ell|<\varepsilon}
= \menge{x\in\RR}{\ell-\varepsilon < x < \ell+\varepsilon},
\end{equation}
for every $\varepsilon>0$.

Let us now revisit Example~\ref{ex:seq}.

\begin{example}
\label{ex:cseq}
If $a_n\equiv a$ for some constant $a\in\RR$,
then $\lim a_n=a$. Indeed, let $\varepsilon > 0$
and set (e.g.) $N(\varepsilon) = 0$. Then
for every $n\geq N(\varepsilon) = 0$, we have
\begin{equation}
|a_n-a| = |a-a| = |0| = 0 < \varepsilon,
\end{equation}
as required.
\end{example}

In Definition~\ref{d:limit}, we talk about \emph{the} limit of sequence.
The next result shows that this language is justified.

\begin{theorem}[the limit is unique]
A sequence has at most one limit.
\end{theorem}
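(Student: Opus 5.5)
I argue by contradiction: suppose $(a_n)_\nnn$ converges both to $\ell$ and to $m$, with $\ell\neq m$. The plan is to pick a single $\varepsilon$ small enough that the two open intervals $\left]\ell-\varepsilon,\ell+\varepsilon\right[$ and $\left]m-\varepsilon,m+\varepsilon\right[$ cannot share a point. Eventually every term of the sequence must lie in both intervals, which is impossible.

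Concretely, set $\varepsilon := |\ell-m|/2$. Since $\ell\neq m$, we have $\ell-m\neq 0$, so $|\ell-m|>0$ by Proposition~\ref{p:absval}\ref{p:absvalii}. Because $1/2>0$, Proposition~\ref{p:ofield}\ref{p:ofieldv} gives $\varepsilon>0$.

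Apply Definition~\ref{d:limit} twice with this $\varepsilon$:
\begin{itemize}
\item there is $N_1\in\NN$ with $|a_n-\ell|<\varepsilon$ for all $n\geq N_1$;
\item there is $N_2\in\NN$ with $|a_n-m|<\varepsilon$ for all $n\geq N_2$.
\end{itemize}
Take $n:=\max\{N_1,N_2\}$; both estimates hold for this single index. The two thresholds are in general different, so it matters that one index is chosen that exceeds both.

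Now write $\ell-m=(\ell-a_n)+(a_n-m)$ and use the triangle inequality (Theorem~\ref{t:triangle}) together with $|\ell-a_n|=|a_n-\ell|$ (Proposition~\ref{p:absval}\ref{p:absvaliii+}). Proposition~\ref{p:ofield}\ref{p:ofieldiv} then yields
\begin{equation*}
|\ell-m|\leq|a_n-\ell|+|a_n-m|<\varepsilon+\varepsilon=|\ell-m|.
\end{equation*}
This says $|\ell-m|<|\ell-m|$, contradicting \textbf{O1}. Hence $\ell=m$, and the sequence has at most one limit.

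There is no genuine obstacle here. The only points needing care are choosing $\varepsilon$ as half the distance, so that the two $\varepsilon$ bounds add up to exactly $|\ell-m|$, and justifying $\varepsilon>0$ from the order axioms as above.
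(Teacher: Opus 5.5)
Your proof is correct and takes essentially the same approach as the paper: assume two distinct limits, pick the tolerance tied to $|\ell-m|$, pass to $n\geq\max\{N_1,N_2\}$, and use the triangle inequality to get $|\ell-m|<|\ell-m|$. The only difference is cosmetic. The paper takes any $0<\varepsilon\leq|\ell_1-\ell_2|$ with bounds $\tfrac{1}{2}\varepsilon$, while you fix $\varepsilon=|\ell-m|/2$ directly.
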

\begin{proof}
Let $(a_n)_\nnn$ be a sequence.
There is nothing to prove if the sequence does not converge.
Thus, assume that $(a_n)_\nnn$ converges.
We need to show that the limit is unique and
assume to the contrary 
that there are \emph{two} limits $\ell_1$ and $\ell_2$, where 
$\ell_1\neq\ell_2$.

Now take any $0<\varepsilon \leq |\ell_1-\ell_2|$. 
Since $a_n\to\ell_1$, there exists $N_1\in \NN$ such that 
\begin{equation}
n\geq N_1 
\quad\Rightarrow\quad
|a_n-\ell_1|<\thalb \varepsilon.
\end{equation}
And since $a_n\to\ell_2$, there exists $N_2\in \NN$ such that 
\begin{equation}
n\geq N_2 
\quad\Rightarrow\quad
|a_n-\ell_2|<\thalb \varepsilon.
\end{equation}
Thus, for every $n\geq \max\{N_1,N_2\}$, we have
\begin{subequations}
\begin{align}
|\ell_1-\ell_2|
&=|(\ell_1-a_n)-(\ell_2-a_n)|
\leq |\ell_1-a_n| + |\ell_2-a_n|\\
&<\thalb\varepsilon + \thalb\varepsilon
=\varepsilon\\
&\leq |\ell_1-\ell_2|,
\end{align}
\end{subequations}
which is absurd.
\end{proof}

\begin{example}
Let $a_n = \frac{1}{n}$, for every $n\geq 1$,
and let $\varepsilon > 0$.
Now pick\footnote{This is possible because of Corollary~\ref{c:entier}
and here we use the archimedean property of the real numbers.} 
$N(\varepsilon)\in\NN$ such that 
$N(\varepsilon) > \frac{1}{\varepsilon}$.
Then
\begin{equation}
|a_n-0|= \left| \tfrac{1}{n} - 0\right| = \tfrac{1}{n} < \varepsilon,
\quad
\text{for all $n\geq N(\varepsilon)$.}
\end{equation}
Therefore,
\begin{equation}
\label{e:1/n}
\tfrac{1}{n}\to 0.
\end{equation}
\end{example}

\begin{example}
\label{ex:altseq}
The sequence defined by
$a_n = (-1)^n$ diverges.
\end{example}
\begin{proof}
We argue by contradiction.
Suppose that there exists $\ell\in\RR$ such that $a_n\to\ell$.
Then, for $\varepsilon=1$, there exists $N=N(1)\in\NN$ such that
\begin{equation}
|a_n-\ell| < 1,
\quad
\text{for all $n\geq N$.}
\end{equation}
Hence, for all $n\geq N$, the triangle inequality 
(Theorem~\ref{t:triangle}) thus yields
\begin{subequations}
\begin{align}
2 &= |a_{n+1}-a_n| = |(a_{n+1}-\ell) + (\ell-a_n)|\\
&\leq |a_{n+1}-\ell| + |\ell-a_n| 
=|a_{n+1}-\ell| + |a_n-\ell| < 1+ 1= 2,
\end{align}
\end{subequations}
which is absurd.
This is the desired contradiction and therefore
$(a_n)_\nnn$ diverges. 
\end{proof}

\begin{example}
\label{ex:n/(n+1)}
$\displaystyle \frac{n}{n+1} \to 1$.
\end{example}
\begin{proof}
Let $\varepsilon>0$.
Then
\begin{equation}
\left|\frac{n}{n+1}-1\right|
=\frac{1}{n+1} < \frac{1}{n}\leq \varepsilon,
\quad\text{for every } n\geq \frac{1}{\varepsilon}.
\end{equation}
(Hence any integer $N\geq \frac{1}{\varepsilon}$ does the job.)
\end{proof}

\begin{definition}[boundedness]
\label{d:boundedseq}
Let $(a_n)_\nnn$ be a sequence.
Then $(a_n)_\nnn$ is
\textbf{bounded above} if 
there exists $\beta\in\RR$ such that 
$a_n\leq\beta$, for every $\nnn$.
Analogously, $(a_n)_\nnn$ is
\textbf{bounded below} if 
there exists $\alpha\in\RR$ such that 
$\alpha\leq a_n$, for every $\nnn$.
Finally,
$(a_n)_\nnn$ is \textbf{bounded}
if $(a_n)_\nnn$ is both bounded above and below.\index{Bounded
sequence}\index{bounded above (sequence)}\index{bounded below
(sequence)} 
\end{definition}

It follows easily (Exercise~\ref{exo:bdseq}) that
$(a_n)_\nnn$ is bounded if and only if
\begin{equation}
\label{e:bdseq}
\text{there exists $\gamma\geq 0$ such that
$|a_n|\leq\gamma$,
for every $\nnn$.}
\end{equation}

The next result is fundamental, yielding
a necessary condition for convergence.

\begin{theorem}
\label{t:convbound}
Every convergent sequence is bounded.
\end{theorem}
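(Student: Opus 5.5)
The plan is to use the definition of convergence (Definition~\ref{d:limit}) with the fixed choice $\varepsilon=1$. This controls all but finitely many terms; the finitely many remaining terms are handled by taking a maximum. By \eqref{e:bdseq} (Exercise~\ref{exo:bdseq}), it suffices to produce a single $\gamma\geq 0$ with $|a_n|\leq\gamma$ for every $\nnn$.

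Concretely, let $(a_n)_\nnn$ converge to $\ell\in\RR$. Applying Definition~\ref{d:limit} with $\varepsilon=1$ gives $N:=N(1)\in\NN$ such that $|a_n-\ell|<1$ for every $n\geq N$. For such $n$, the triangle inequality (Theorem~\ref{t:triangle}) gives
\begin{equation*}
|a_n| = |(a_n-\ell)+\ell| \leq |a_n-\ell| + |\ell| < 1+|\ell|.
\end{equation*}
Alternatively, one could use Corollary~\ref{c:triangle}. This handles the tail of the sequence.

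For the finitely many indices $n\in\{0,1,\ldots,N-1\}$ (none, if $N=0$), I would set
\begin{equation*}
\gamma := \max\big\{|a_0|,|a_1|,\ldots,|a_{N-1}|,\,1+|\ell|\big\}.
\end{equation*}
This is a maximum of finitely many real numbers, so it exists; it is also nonnegative, since $1+|\ell|>0$ by Proposition~\ref{p:absval}\ref{p:absvali} and Proposition~\ref{p:ofield}\ref{p:ofieldxi}. Then $|a_n|\leq\gamma$ for every $\nnn$: for $n<N$, the term $|a_n|$ is itself among the listed numbers, and for $n\geq N$ we use the previous display. By \eqref{e:bdseq}, the sequence is bounded.

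There is no real obstacle here. The only point needing care is that the definition of convergence says nothing about the first $N$ terms, so they must be absorbed by the finite maximum. One should also note that a finite set of reals has a maximum, which follows by a trivial induction on the number of elements using Lemma~\ref{l:maxmin}.
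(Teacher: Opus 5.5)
Your proof is correct and follows the paper's argument exactly. Like the paper, you take $\varepsilon=1$, bound the tail by $1+|\ell|$ via the triangle inequality, and absorb the first $N$ terms into $\gamma=\max\{|a_0|,\ldots,|a_{N-1}|,1+|\ell|\}$.
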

\begin{proof}
Let $(a_n)_\nnn$ be convergent, with limit $\ell$.
Then (for $\varepsilon=1$) there exists $N\in\NN$
such that 
\begin{equation}
|a_n-\ell| < 1,
\quad
\text{for all $n\geq N$.}
\end{equation}
Hence, using Theorem~\ref{t:triangle}, 
\begin{equation}
|a_n| = |(a_n-\ell)+\ell|
\leq |a_n-\ell| + |\ell|
<1 + |\ell|,
\quad
\text{for all $n\geq N$.}
\end{equation}
Therefore,
\begin{equation}
|a_n| \leq \gamma := \max\big\{
|a_0|,|a_1|,\ldots,|a_{N-1}|,1+|\ell|\big\},
\quad
\text{for all $n\in\NN$,}
\end{equation}
i.e., the sequence $(a_n)_\nnn$ is bounded. 
\end{proof}

\begin{remark}
The sequence defined by $a_n = (-1)^n$ is clearly
bounded (since $|a_n| = |(-1)^n| = 1$ for all $n\in\NN$)
but $(a_n)_\nnn$ is \emph{not} convergent (Example~\ref{ex:altseq}).
Therefore, the converse of Theorem~\ref{t:convbound} fails.
\end{remark}

\begin{example}[geometric sequence]
\label{ex:geoseq}\index{Geometric sequence} 
Let $\alpha\in\RR$. 
Then exactly one of the following holds.
\begin{enumerate}
\item 
\label{ex:geoseq:i}
$|\alpha|<1$ and $\alpha^n\to 0$.
\item 
\label{ex:geoseq:ii}
$\alpha=1$ and $\alpha^n= 1\to 1$.
\item 
\label{ex:geoseq:iii}
$\alpha=-1$ and $(\alpha^n)_\nnn=((-1)^n)_\nnn$ diverges.
\item 
\label{ex:geoseq:iv}
$|\alpha|>1$ and $(\alpha^n)_\nnn$ diverges.
\end{enumerate}
\end{example}
\begin{proof}
\ref{ex:geoseq:i}:
If $\alpha=0$, then $\alpha^n = 0^n = 0$ for every $n\geq 1$
and thus $\alpha^n\to 0$ by Example~\ref{ex:cseq}. 
Now assume that $\alpha\neq 0$, and
let $\varepsilon>0$. 
Apply Corollary~\ref{c:100920} (with $z = |\alpha|\in \zeroun$)
to obtain $N\in\NN$ such that $|\alpha|^N<\varepsilon$.
Hence,
\begin{equation}
|\alpha^n - 0| = |\alpha^n| = |\alpha|^n \leq |\alpha|^N <
\varepsilon,
\quad
\text{for every $n\geq N$.}
\end{equation}
Thus, $\alpha^n\to 0$. 

\ref{ex:geoseq:ii}:
Clear from Example~\ref{ex:cseq}.

\ref{ex:geoseq:iii}:
Clear from Example~\ref{ex:altseq}.

\ref{ex:geoseq:iv}:
Suppose to the contrary that $(\alpha^n)_\nnn$ converges.
By Theorem~\ref{t:convbound}, 
$(\alpha^n)_\nnn$ is bounded,
say $|\alpha^n|\leq\gamma$ for all $n\in\NN$ and for
some $\gamma> 0$. 
On the other hand, Theorem~\ref{t:100920} implies that there is some
integer $n\in\NN$ such that $|\alpha^n| = |\alpha|^n>\gamma$,
which is absurd. 
Therefore, $(\alpha^n)_\nnn$ diverges.
\end{proof}

\begin{proposition}[a bridge to calculus]
\label{p:calc1}
Let $a$ and $L$ be two real numbers, and 
let $f\colon \RR\smallsetminus\{a\}\to\RR$. 
Recall that, by definition, 
$$\lim_{x\to a} f(x)= L$$
if and only if 
\begin{quotation}
\noindent
for every $\varepsilon>0$, there exists $\delta>0$ such that\\
$0<|x-a|<\delta$ $\Rightarrow$ $|f(x)-L|<\varepsilon$. 
\end{quotation}
In fact, we have $\lim_{x\to a} f(x)=L$ if and only if 
whenever $(a_n)_\nnn$ is a sequence in $\RR\smallsetminus\{a\}$ with
$a_n\to a$, then $f(a_n)\to L$. 
\end{proposition}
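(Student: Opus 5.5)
I will prove the two implications separately: the forward direction directly from the definitions, and the backward direction by contrapositive, constructing a bad sequence.

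\emph{``$\Rightarrow$'':} Suppose $\lim_{x\to a}f(x)=L$, and let $(a_n)_\nnn$ be a sequence in $\RR\smallsetminus\{a\}$ with $a_n\to a$. Fix $\varepsilon>0$ and take $\delta>0$ from the $\varepsilon$--$\delta$ definition. Apply Definition~\ref{d:limit} to $a_n\to a$ with the tolerance $\delta$; this gives $N\in\NN$ such that $|a_n-a|<\delta$ for all $n\geq N$. Since $a_n\neq a$, we also have $0<|a_n-a|$, so $0<|a_n-a|<\delta$. Hence $|f(a_n)-L|<\varepsilon$ for all $n\geq N$, that is, $f(a_n)\to L$. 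This step is routine.

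\emph{``$\Leftarrow$'':} I prove the contrapositive. Assume $\lim_{x\to a}f(x)=L$ fails. Negating the definition (as in Chapter~\ref{cha:background}), there exists $\varepsilon_0>0$ such that for every $\delta>0$ some $x$ satisfies $0<|x-a|<\delta$ but $|f(x)-L|\geq\varepsilon_0$.

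For each $n\in\NN$ I apply this with $\delta=\tfrac{1}{n+1}>0$ and choose such a point $a_n$. Then $a_n\in\RR\smallsetminus\{a\}$ and $|a_n-a|<\tfrac{1}{n+1}$. The sequence converges to $a$: given $\varepsilon>0$, Corollary~\ref{c:entier} gives $N\geq1$ with $\tfrac1N<\varepsilon$, and then $|a_n-a|<\tfrac{1}{n+1}<\tfrac1N<\varepsilon$ for all $n\geq N$.

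On the other hand, $|f(a_n)-L|\geq\varepsilon_0$ for every $n$, so with $\varepsilon=\varepsilon_0$ no $N(\varepsilon_0)$ works in Definition~\ref{d:limit}. Thus $f(a_n)\not\to L$, contradicting the hypothesis.

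The main obstacle is this backward direction. The two delicate points are stating the negation of the $\varepsilon$--$\delta$ condition correctly (with the quantifiers in the right order) and choosing one point for each $n$ to build the sequence. The choice uses countably many selections, which I will treat as unproblematic, as is customary at this level.
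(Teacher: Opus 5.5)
Your proof is correct and follows the paper's argument essentially step for step. The forward direction takes the $\delta$ from the $\varepsilon$--$\delta$ definition and uses it as the tolerance in $a_n\to a$; the backward direction negates the definition and picks points with $0<|x_n-a|<\tfrac{1}{n+1}$ but $|f(x_n)-L|\geq\varepsilon_0$. The paper phrases the backward direction as a proof by contradiction rather than by contrapositive, which changes nothing.
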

\begin{proof}
Exercise~\ref{exo:calc1}.
\end{proof}

\begin{remark}[quantifier statements and their negation]
Sometimes, it is convenient to write complicated statements
using quantifiers. 
For instance, to express that a sequence $(a_n)_\nnn$ converges
to $\ell$, we can rewrite the definition as
\begin{equation}
\label{e:forall}
(\forall\varepsilon>0)
(\exi N\in\NN)
(\forall n\in\{N,N+1,\ldots\})
\quad
|a_n-\ell|<\varepsilon.
\end{equation}
Note that this is read left to right.
``$\forall$'' is the \emph{for all quantifier}
and ``$\exi$'' is the \emph{there exists quantifier}.
While this looks technical, the advantage is that it is easy
to logically negate such expressions.
E.g., the logical negation of \eqref{e:forall} is
\begin{equation}
(\exi\varepsilon>0)
(\forall N\in\NN)
(\exi n\in\{N,N+1,\ldots\})
\quad
|a_n-\ell|\geq \varepsilon.
\end{equation}
Note that the negation is obtained by swapping 
$\forall$ with $\exi$ throughout, and by negating the very
right-most statement. 
More generally, let $Q_1,\ldots,Q_n$ be quantifier statements
and let $p$ be a statement.
Then the logical negation of
\begin{equation}
(Q_n)(Q_{n-1})\cdots (Q_1)\quad p
\end{equation}
is 
\begin{equation}
(Q_n^*)(Q_{n-1}^*)\cdots (Q_1^*)\quad (\lnot p),
\end{equation}
where $Q_k^*$ is $Q_k$ but with $\forall$ and $\exi$
interchanged.
The fact that this useful rule works can be proved
by our beloved principle of mathematical induction on the
number of quantifiers!
Let us conclude by negating $\lim_{x\to a} f(x)=L$ from
Proposition~\ref{p:calc1}.
First, $\lim_{x\to a} f(x)=L$ means
\begin{equation}
\label{e:forall2}
(\forall \varepsilon>0)
(\exi \delta>0)
(\forall x\in\RR\smallsetminus\{a\})
\quad
|x-a| < \delta
\;\;\Rightarrow\;\;
|f(x)-L|<\varepsilon. 
\end{equation}
Recall that $p\Rightarrow q$ means
$(\lnot p)\lor q$.
Hence $\lnot(p\Rightarrow q) = (\lnot(\lnot p))\land (\lnot q)
= p \land (\lnot q)$. 
We deduce that the negation of \eqref{e:forall2} is
\begin{equation}
(\exi \varepsilon>0)
(\forall \delta>0)
(\exi x\in\RR\smallsetminus\{a\})
\quad
|x-a| < \delta
\;\;\land\;\;
|f(x)-L|\geq \varepsilon.
\end{equation}
(This is very useful in the proof of Proposition~\ref{p:calc1}.)
\end{remark}

\section{Limit Laws}

\begin{theorem}[Sum Law]
\label{t:sumlaw}
Let $(a_n)_\nnn$ and $(b_n)_\nnn$ be convergent sequences
with limits $\alpha$ and $\beta$, respectively.
Define the sequence $(c_n)_\nnn$ by
$c_n := a_n+b_n$, for every $n\in\NN$.
Then $(c_n)_\nnn$ converges to $\alpha+\beta$;
in short:\index{Sum Law (for limits)} 
\begin{equation}
\lim_{\nnn}\;(a_n+b_n) = \lim_\nnn a_n + \lim_\nnn b_n.
\end{equation}
\end{theorem}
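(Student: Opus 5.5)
The plan is a standard $\varepsilon/2$ argument built directly on Definition~\ref{d:limit} and the triangle inequality (Theorem~\ref{t:triangle}). First fix an arbitrary $\varepsilon>0$. Since $\tfrac12\varepsilon>0$ (because $2>0$, hence $\tfrac12>0$ by Proposition~\ref{p:ofield}), I will apply the definition of $a_n\to\alpha$ with tolerance $\tfrac12\varepsilon$. This gives $N_1\in\NN$ with $|a_n-\alpha|<\tfrac12\varepsilon$ for all $n\geq N_1$. In the same way, $b_n\to\beta$ gives $N_2\in\NN$ with $|b_n-\beta|<\tfrac12\varepsilon$ for all $n\geq N_2$. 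This mirrors the splitting already used in the proof that the limit is unique.

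Next set $N(\varepsilon):=\max\{N_1,N_2\}$, so that both estimates hold at once for every $n\geq N(\varepsilon)$. For such $n$ I rearrange using the field axioms (associativity and commutativity of addition, and $-(x+y)=-x+(-y)$ from Proposition~\ref{p:may26:1}):
\begin{equation*}
|c_n-(\alpha+\beta)| = |(a_n-\alpha)+(b_n-\beta)| \leq |a_n-\alpha|+|b_n-\beta| < \tfrac12\varepsilon+\tfrac12\varepsilon=\varepsilon .
\end{equation*}
The first inequality is Theorem~\ref{t:triangle}. The strict inequality comes from Proposition~\ref{p:ofield}\ref{p:ofieldiv}, which says strict inequalities can be added. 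This verifies the definition of $c_n\to\alpha+\beta$.

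There is no genuine obstacle. The only point needing care is to choose the tolerance $\tfrac12\varepsilon$ \emph{before} invoking convergence, rather than $\varepsilon$ itself. One must also take a single index $N$ that works for both sequences simultaneously, which is why the maximum is used.
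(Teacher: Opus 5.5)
Your proof is correct and is essentially the paper's own argument. You choose $N_1,N_2$ for the tolerance $\tfrac12\varepsilon$ and set $N=\max\{N_1,N_2\}$. Then the triangle inequality applied to $(a_n-\alpha)+(b_n-\beta)$ gives $|c_n-(\alpha+\beta)|<\varepsilon$ for all $n\geq N$.
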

\begin{proof}
Let $\varepsilon>0$.
Since $\thalb\varepsilon > 0$ and
$a_n\to\alpha$ and $b_n\to\beta$,
there exist $N_1\in\NN$ and $N_2\in\NN$ such that
\begin{equation}
n\geq N_1
\quad\Rightarrow\quad
|a_n-\alpha|<\thalb\varepsilon
\end{equation}
and 
\begin{equation}
n\geq N_2
\quad\Rightarrow\quad
|b_n-\beta|<\thalb\varepsilon. 
\end{equation}
Set $N:=\max\{N_1,N_2\}$.
Then for every $n\geq N$, we have
\begin{subequations}
\begin{align}
|c_n-(\alpha+\beta)| &= |(a_n+b_n)-(\alpha+\beta)| =
|(a_n-\alpha)+(b_n-\beta)|\\
&\leq |a_n-\alpha| + |b_n-\beta| < \thalb\varepsilon + \thalb\varepsilon =
\varepsilon.
\end{align}
\end{subequations}
Therefore, $c_n\to\alpha+\beta$. 
\end{proof}

\begin{example}
$\displaystyle \frac{n}{n+1} + \frac{1}{3^n}\to 1$.
\end{example}
\begin{proof}
On the one hand, $\frac{n}{n+1}\to 1$ by Example~\ref{ex:n/(n+1)}. 
On the other hand, $\frac{1}{3^n} = (1/3)^n \to 0$ 
by Example~\ref{ex:geoseq}\ref{ex:geoseq:i}.
Altogether, using Theorem~\ref{t:sumlaw},
$ \frac{n}{n+1} + \tfrac{1}{3^n}\to 1+0 = 1$.
\end{proof}

\begin{theorem}[Product Law]
\label{t:prodlaw}
Let $(a_n)_\nnn$ and $(b_n)_\nnn$ be convergent sequences
with limits $\alpha$ and $\beta$, respectively.
Define the sequence $(c_n)_\nnn$ by
$c_n := a_nb_n$, for every $n\in\NN$.
Then $(c_n)_\nnn$ converges to $\alpha\beta$;
in short:\index{Product Law (for limits)} 
\begin{equation}
\lim_{\nnn}\;(a_nb_n) = \Big(\lim_\nnn a_n\Big)\Big(\lim_\nnn b_n\Big).
\end{equation}
\end{theorem}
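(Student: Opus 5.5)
The plan is to reduce everything to the boundedness of convergent sequences (Theorem~\ref{t:convbound}) together with an add-and-subtract trick, in the same spirit as the proof of the Sum Law (Theorem~\ref{t:sumlaw}). The key algebraic identity I would use is
\begin{equation*}
a_nb_n-\alpha\beta = a_n(b_n-\beta) + \beta(a_n-\alpha),
\end{equation*}
so that, by the triangle inequality (Theorem~\ref{t:triangle}) and Proposition~\ref{p:absval}\ref{p:absvaliii},
\begin{equation*}
|c_n-\alpha\beta| \leq |a_n|\,|b_n-\beta| + |\beta|\,|a_n-\alpha|.
\end{equation*}

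First, since $(a_n)_\nnn$ converges, Theorem~\ref{t:convbound} together with \eqref{e:bdseq} provides $\gamma\geq 0$ with $|a_n|\leq\gamma$ for all $\nnn$. I would replace $\gamma$ by $\gamma+1>0$ and likewise use $|\beta|+1>0$, to avoid dividing by zero in the next step; this is the only delicate point, since $\beta$ or the bound might be $0$.

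Next, given $\varepsilon>0$, I would choose $N_1\in\NN$ such that $|b_n-\beta|<\varepsilon/(2(\gamma+1))$ for all $n\geq N_1$, and $N_2\in\NN$ such that $|a_n-\alpha|<\varepsilon/(2(|\beta|+1))$ for all $n\geq N_2$. Both tolerances are positive by Proposition~\ref{p:ofield}\ref{p:ofieldxii}.

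Finally, I would set $N:=\max\{N_1,N_2\}$. Then for all $n\geq N$ the displayed estimate gives
\begin{equation*}
|c_n-\alpha\beta| < (\gamma+1)\frac{\varepsilon}{2(\gamma+1)} + (|\beta|+1)\frac{\varepsilon}{2(|\beta|+1)} = \varepsilon,
\end{equation*}
using $|a_n|\leq\gamma+1$, $|\beta|\leq|\beta|+1$, and the monotonicity rules in Proposition~\ref{p:ofield}. Hence $c_n\to\alpha\beta$.

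The main obstacle is handling the factor $|a_n|$, which varies with $n$. Boundedness via Theorem~\ref{t:convbound} resolves this, so the remaining steps are routine.
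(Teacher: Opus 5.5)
Your proposal is correct and follows essentially the same route as the paper: the same splitting $a_nb_n-\alpha\beta=a_n(b_n-\beta)+(a_n-\alpha)\beta$, with boundedness of $(a_n)$ obtained from Theorem~\ref{t:convbound}. The only difference is cosmetic. The paper chooses a single constant $\gamma>\max\{\gamma_1,|\beta|\}$ and uses the tolerance $\varepsilon/(2\gamma)$ for both sequences, whereas you avoid division by zero with the two separate constants $\gamma+1$ and $|\beta|+1$.
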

\begin{proof}
The sequence $(a_n)_\nnn$ is convergent, hence
bounded by Theorem~\ref{t:convbound}. Thus, 
\begin{equation}
\text{there exists $\gamma_1\geq 0$ such that
for every $\nnn$, we have $|a_n|\leq\gamma_1$.}
\end{equation}
Now take any 
\begin{equation}
\gamma > \max\big\{\gamma_1,|\beta|\big\}\geq 0,
\end{equation}
and let $\varepsilon>0$. 
Since $a_n\to\alpha$ and $b_n\to\beta$,
for $\frac{\varepsilon}{2\gamma} > 0$,
there exist $N_1$ and $N_2$ in $\NN$ such that
\begin{equation}
|a_n-\alpha|<\frac{\varepsilon}{2\gamma}
\;\;\text{for every $n\geq N_1$}
\quad\text{and}\quad
|b_n-\beta|<\frac{\varepsilon}{2\gamma}
\;\;\text{for every $n\geq N_2$.}
\end{equation}
Thus, for every $n\geq N := \max\{N_1,N_2\}$, we have
\begin{subequations}
\begin{align}
|a_nb_n-\alpha\beta| &=|a_n(b_n-\beta) + (a_n-\alpha)\beta|\\
&\leq |a_n(b_n-\beta)| + |(a_n-\alpha)\beta| 
= |a_n|\cdot|b_n-\beta| + |a_n-\alpha|\cdot|\beta| \\
&<\gamma\frac{\varepsilon}{2\gamma} + \gamma\frac{\varepsilon}{2\gamma} =
\varepsilon.
\end{align}
\end{subequations}
Therefore, $a_nb_n\to\alpha\beta$.
\end{proof}

\begin{corollary}[Constant-Multiple Law]
\label{c:c-mlaw}
Let $(a_n)_\nnn$ be convergent to $\alpha$ and let $c\in\RR$.
Then $(ca_n)_\nnn$ converges to $c\alpha$; in short:
\index{Constant-Multiple Law (for limits)}
\begin{equation}
\lim_{\nnn}\;(ca_n) = c\;\lim_\nnn a_n. 
\end{equation}
\end{corollary}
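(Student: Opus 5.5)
The plan is to derive the Constant-Multiple Law as a direct instance of the Product Law (Theorem~\ref{t:prodlaw}). The single idea is to regard the constant $c$ as a sequence in its own right.

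First, define the constant sequence $(b_n)_\nnn$ by $b_n := c$ for every $\nnn$. By Example~\ref{ex:cseq}, this sequence converges to $c$. By hypothesis, $(a_n)_\nnn$ converges to $\alpha$.

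Second, apply Theorem~\ref{t:prodlaw} to $(b_n)_\nnn$ and $(a_n)_\nnn$. It gives $b_na_n \to c\alpha$. Since $b_na_n = ca_n$ for every $n$, the sequence $(ca_n)_\nnn$ is exactly the product sequence, and therefore converges to $c\alpha$.

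There is no genuine obstacle here. The only point to check is that the product sequence is literally the sequence $(ca_n)_\nnn$; this holds term by term by the definition of $b_n$, using \textbf{M2} if one wants the factors in the order $a_nb_n$.

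For a self-contained $\varepsilon$-argument instead, one would split into cases. If $c=0$, then $ca_n = 0$ for all $n$ and the claim follows from Example~\ref{ex:cseq}. If $c\neq 0$, one chooses $N$ with $|a_n-\alpha|<\varepsilon/|c|$ for all $n\geq N$. Then Proposition~\ref{p:absval}\ref{p:absvaliii} gives $|ca_n-c\alpha| = |c|\,|a_n-\alpha|<\varepsilon$ for all $n\geq N$. The route via the Product Law is shorter, so that is the one I would present.
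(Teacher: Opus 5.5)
Your proof is correct and is the same as the paper's: treat $c$ as the constant sequence converging to $c$ (Example~\ref{ex:cseq}) and apply the Product Law (Theorem~\ref{t:prodlaw}). Your alternative $\varepsilon$-argument, with the cases $c=0$ and $c\neq 0$, is also correct; it is exactly the direct proof the paper asks for in a later exercise.
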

\begin{proof}
The constant sequence $(c)_\nnn$ converges to $c$ by Example~\ref{ex:cseq}.
The result thus follows from Theorem~\ref{t:prodlaw}.
\end{proof}

\begin{corollary}[Difference Law]
\label{c:difflaw}
Let $(a_n)_\nnn$ and $(b_n)_\nnn$ be convergent sequences
with limits $\alpha$ and $\beta$, respectively.
Define the sequence $(c_n)_\nnn$ by
$c_n := a_n-b_n$, for every $n\in\NN$.
Then $(c_n)_\nnn$ converges to $\alpha-\beta$;
in short:\index{Difference Law (for limits)}
\begin{equation}
\lim_{\nnn}\;(a_n-b_n) = \lim_\nnn a_n - \lim_\nnn b_n.
\end{equation}
\end{corollary}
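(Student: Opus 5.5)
The Difference Law is a short corollary of the two limit laws just proved, so I would not run a fresh $\varepsilon$--$N$ argument. Instead, I would write
\begin{equation*}
a_n - b_n = a_n + (-1)b_n
\end{equation*}
and reduce to the Sum Law (Theorem~\ref{t:sumlaw}) and the Constant-Multiple Law (Corollary~\ref{c:c-mlaw}).

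First, apply Corollary~\ref{c:c-mlaw} with $c=-1$ to the convergent sequence $(b_n)_\nnn$. This shows that $(-1)b_n \to (-1)\beta$. By Proposition~\ref{p:may26:4}\ref{p:may26:4iv}, we have $(-1)b_n = -b_n$ and $(-1)\beta = -\beta$, so $-b_n\to-\beta$.

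Second, apply Theorem~\ref{t:sumlaw} to the convergent sequences $(a_n)_\nnn$ and $(-b_n)_\nnn$. This gives $a_n+(-b_n)\to\alpha+(-\beta)$. By the definition of subtraction, the left side is $a_n-b_n=c_n$ and the right side is $\alpha-\beta$, which is the claim.

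There is no real obstacle here; the only care needed is the bookkeeping identity $-x=(-1)x$, which lets the constant-multiple result be read as a statement about negatives. As an alternative, one could copy the proof of Theorem~\ref{t:sumlaw} verbatim, using $|(a_n-\alpha)-(b_n-\beta)|\leq|a_n-\alpha|+|b_n-\beta|$, which holds by Theorem~\ref{t:triangle} and Proposition~\ref{p:absval}\ref{p:absvaliii+}. The reduction above is cleaner, though.
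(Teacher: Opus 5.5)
Your proof is correct and is the paper's own argument: the paper also writes $c_n = a_n + (-1)b_n$ and combines the Sum Law (Theorem~\ref{t:sumlaw}) with the Constant-Multiple Law (Corollary~\ref{c:c-mlaw}). Your remark that $(-1)x=-x$ by Proposition~\ref{p:may26:4}\ref{p:may26:4iv} only makes explicit a step the paper leaves implicit, and your alternative direct $\varepsilon$--$N$ argument is exactly what one of the paper's exercises asks for.
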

\begin{proof}
This follows from 
Theorem~\ref{t:sumlaw} and 
Corollary~\ref{c:c-mlaw} because
$c_n = a_n + (-1)b_n$.
\end{proof}

\begin{proposition}[Reciprocal Law]
\label{p:replaw}
Let $(b_n)_\nnn$ be convergent to $\beta\neq 0$.
Then there exists $n_0\in\NN$ such that 
$b_n\neq 0$ for every $n\geq n_0$.\index{Reciprocal Law (for limits)}
Moreover, the sequence $(1/b_n)_{n\geq n_0}$ converges to $1/\beta$.
\end{proposition}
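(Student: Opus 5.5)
The plan has two stages: first keep $b_n$ away from $0$, then estimate $|1/b_n - 1/\beta|$ directly from the definition of convergence.

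\emph{Step 1 (eventually nonzero, uniformly bounded away from $0$).} Apply Definition~\ref{d:limit} with $\varepsilon := \thalb|\beta|$. This is positive because $\beta\neq 0$ and by Proposition~\ref{p:absval}\ref{p:absvalii}. We obtain $n_0\in\NN$ such that $|b_n-\beta|<\thalb|\beta|$ for every $n\geq n_0$. Corollary~\ref{c:triangle} then gives
\begin{equation*}
|\beta|-|b_n|\leq \big||b_n|-|\beta|\big|\leq |b_n-\beta|<\thalb|\beta|,
\end{equation*}
so $|b_n|>\thalb|\beta|>0$ for all $n\geq n_0$. In particular $b_n\neq 0$ there, which is the first claim. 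Moreover, by Proposition~\ref{p:ofield}\ref{p:ofieldxiv}, $1/|b_n|<2/|\beta|$ for all $n\geq n_0$.

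\emph{Step 2 (the estimate).} For $n\geq n_0$ we have
\begin{equation*}
\left|\frac{1}{b_n}-\frac{1}{\beta}\right| = \frac{|\beta-b_n|}{|b_n|\,|\beta|} < \frac{2}{|\beta|^2}\,|b_n-\beta|,
\end{equation*}
using Proposition~\ref{p:absval}\ref{p:absvaliii}\&\ref{p:absvaliv}. Now let $\varepsilon>0$. Since $\thalb|\beta|^2\varepsilon>0$ and $b_n\to\beta$, there is $N_1$ with $|b_n-\beta|<\thalb|\beta|^2\varepsilon$ for all $n\geq N_1$. Setting $N:=\max\{n_0,N_1\}$, for every $n\geq N$ we get $|1/b_n-1/\beta|<\varepsilon$. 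Hence $(1/b_n)_{n\geq n_0}$ converges to $1/\beta$.

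The main obstacle is Step 1, namely obtaining the \emph{uniform} lower bound $|b_n|>\thalb|\beta|$ for $n\geq n_0$. Without it the denominator $|b_n|$ in Step 2 cannot be controlled, and Step 2 is routine once it is in hand. The choice $\varepsilon=\thalb|\beta|$ combined with the reverse triangle inequality (Corollary~\ref{c:triangle}) resolves it.
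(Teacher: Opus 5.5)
Your proof is correct and is essentially the paper's argument: you choose $\varepsilon=\thalb|\beta|$ and use Corollary~\ref{c:triangle} to get $|b_n|>\thalb|\beta|$, then pick $N_1$ for the tolerance $\thalb|\beta|^2\varepsilon$ and take $N=\max\{n_0,N_1\}$, which is exactly how the paper proceeds. One cosmetic point: in your first displayed estimate of Step~2, the ``$<$'' should be ``$\leq$'' (both sides vanish when $b_n=\beta$); this does not affect the conclusion, since the final strict inequality comes from $|b_n-\beta|<\thalb|\beta|^2\varepsilon$.
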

\begin{proof}
Since $\beta\neq 0$, we have $|\beta|>0$ by
Proposition~\ref{p:absval}\ref{p:absvali}\&\ref{p:absvalii}.
Thus, since $\tfrac{1}{2}|\beta|>0$, there exists $n_0\in\NN$ such that
\begin{equation}
n\geq n_0
\quad\Rightarrow\quad
|b_n-\beta|<\frac{|\beta|}{2}.
\end{equation}
On the other hand, $|b_n-\beta| \geq \big| |b_n|-|\beta|\big| \geq
|\beta|-|b_n|$ by Corollary~\ref{c:triangle}. 
Altogether,
$|\beta|-|b_n| \leq |b_n-\beta| < \thalb|\beta|$ and thus
\begin{equation}
n\geq n_0
\quad\Rightarrow\quad
|b_n| > \frac{|\beta|}{2} > 0;
\end{equation}
in particular, $b_n\neq 0$, and also
\begin{equation}
n\geq n_0
\quad\Rightarrow\quad
\frac{1}{|b_n|} < \frac{2}{|\beta|}.
\end{equation}
Now let $\varepsilon >0$.
Then $|\beta|^2\varepsilon/2>0$ and hence 
there exists $n_1\in\NN$ such that 
\begin{equation}
n\geq n_1
\quad\Rightarrow\quad
|b_n-\beta| < \frac{\varepsilon|\beta|^2}{2}.
\end{equation}
Altogether, for $n\geq N := \max\{n_0,n_1\}$, we have
\begin{equation}
\left|\frac{1}{b_n}-\frac{1}{\beta}\right|
= \left| \frac{\beta-b_n}{b_n\beta}\right|
= \frac{1}{|\beta|}\frac{1}{|b_n|}|b_n-\beta|
<\frac{1}{|\beta|}\frac{2}{|\beta|} \frac{\varepsilon|\beta|^2}{2} =
\varepsilon.
\end{equation}
Therefore, $1/b_n\to 1/\beta$.
\end{proof}

\begin{corollary}[Quotient Law]
\label{c:quotlaw}
Let $(a_n)_\nnn$ and $(b_n)_\nnn$ be convergent sequences
with limits $\alpha$ and $\beta$, respectively.
Assume that $\beta\neq 0$.
Then there exists $n_0\in\NN$ such that
$b_n\neq 0$ for every $n\geq n_0$ and 
the sequence $(c_n)_{n\geq n_0}$,defined by
$c_n := a_n/b_n$ for every $n\geq n_0$, 
converges to $\alpha/\beta$;
in short:\index{Quotient Law (for limits)} 
\begin{equation}
\lim_{\nnn}\;\left(\frac{a_n}{b_n}\right) = \frac{\displaystyle \lim_\nnn
a_n}{\displaystyle \lim_\nnn b_n}. 
\end{equation}
\end{corollary}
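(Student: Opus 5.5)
The plan is to combine the Reciprocal Law (Proposition~\ref{p:replaw}) with the Product Law (Theorem~\ref{t:prodlaw}), since $a_n/b_n = a_n\cdot(1/b_n)$ by the definition of division. No new $\varepsilon$-estimate should be needed.

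First, I apply Proposition~\ref{p:replaw} to $(b_n)_\nnn$, using $\beta\neq 0$. This gives $n_0\in\NN$ with $b_n\neq 0$ for every $n\geq n_0$, and it gives $(1/b_n)_{n\geq n_0}\to 1/\beta$. In particular the quotients $c_n=a_n/b_n$ are well defined for $n\geq n_0$, which settles the first assertion.

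Second, I apply the Product Law to the sequences $(a_n)_{n\geq n_0}$ and $(1/b_n)_{n\geq n_0}$. The only technical point is that Theorem~\ref{t:prodlaw} is stated for sequences indexed by $\NN$, while ours start at $n_0$. I handle this by reindexing: set $\tilde a_k := a_{k+n_0}$ and $\tilde b_k := 1/b_{k+n_0}$ for $k\in\NN$. Convergence is unaffected by a shift of index. Indeed, if $|a_n-\alpha|<\varepsilon$ for all $n\geq N$, then $|\tilde a_k-\alpha|<\varepsilon$ for all $k\geq \max\{N-n_0,0\}$, and the same argument works for $\tilde b_k$. Theorem~\ref{t:prodlaw} then gives $\tilde a_k\tilde b_k\to \alpha\cdot(1/\beta)$. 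Shifting back, $c_n = a_n b_n^{-1}\to \alpha\beta^{-1}=\alpha/\beta$.

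The main obstacle is only this bookkeeping about the starting index. One could alternatively observe that the proof of Theorem~\ref{t:prodlaw} goes through verbatim for sequences indexed from $n_0$, and I would mention that as an equivalent justification.
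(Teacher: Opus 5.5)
Your proposal is correct and is essentially the paper's own proof. The paper simply writes $c_n = a_n\cdot\frac{1}{b_n}$ and combines Theorem~\ref{t:prodlaw} with Proposition~\ref{p:replaw}; your reindexing argument only spells out the shift of starting index, which the paper leaves implicit.
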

\begin{proof}
Since $c_n = a_n/b_n = a_n\cdot \frac{1}{b_n}$,
the result follows by combining Theorem~\ref{t:prodlaw} and
Proposition~\ref{p:replaw}.
\end{proof}

\begin{example}
\label{ex:limitlaws}
$\displaystyle \frac{2n^2 + 4n +5}{n^2+1} \to 2$.
\end{example}
\begin{proof}
We write, for every $n\geq 1$, 
\begin{equation}
\frac{2n^2 + 4n +5}{n^2+1}
= \frac{2+\frac{4}{n} + \frac{5}{n^2}}{1+\frac{1}{n^2}}. 
\end{equation}
Now 
\begin{itemize}
\item $2\to 2$ (by Example~\ref{ex:cseq});\\[-2mm]
\item $\frac{4}{n}\to 0$ 
(by \eqref{e:1/n} and  Corollary~\ref{c:c-mlaw});\\[-2mm]
\item $\frac{5}{n^2} = \frac{1}{n}\frac{5}{n}\to 0$
(by \eqref{e:1/n}, Corollary~\ref{c:c-mlaw}, and
Theorem~\ref{t:prodlaw});\\[-2mm]
\item $1\to 1$ (by Example~\ref{ex:cseq});\\[-2mm]
\item $\frac{1}{n^2} = \frac{1}{n}\frac{1}{n}\to 0$
(by \eqref{e:1/n} and Theorem~\ref{t:prodlaw}). 
\end{itemize}
Hence, using Theorem~\ref{t:sumlaw} repeatedly,
we have $2+\frac{4}{n} + \frac{5}{n^2} \to 2+0+0 = 2$
and $1+\frac{1}{n^2}\to 1+0=1\neq 0$.
Finally, using Corollary~\ref{c:quotlaw}, we have
\begin{equation}
\frac{2n^2 + 4n +5}{n^2+1}
= \frac{2+\frac{4}{n} + \frac{5}{n^2}}{1+\frac{1}{n^2}}
\to \frac{2}{1} = 2.
\end{equation}
\end{proof}

\begin{theorem}
\label{t:limleq}
Let $(a_n)_\nnn$ and $(b_n)_\nnn$ be two convergent sequences
such that $a_n\leq b_n$ for every $\nnn$.
Then
\begin{equation}
\lim_\nnn a_n \leq \lim_\nnn b_n.
\end{equation}
\end{theorem}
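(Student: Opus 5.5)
I would argue by contradiction, in the same spirit as the proof that the limit is unique. Write $\alpha := \lim_\nnn a_n$ and $\beta := \lim_\nnn b_n$, and assume to the contrary that $\alpha>\beta$. The idea is that once both sequences are trapped close enough to their limits, the terms $a_n$ must sit above the terms $b_n$, which contradicts the hypothesis $a_n\leq b_n$.

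Concretely, I would set $\varepsilon := \thalb(\alpha-\beta)>0$. This is positive because $\alpha-\beta>0$ and $\thalb>0$, exactly as in the exercise producing a midpoint between two numbers. By Definition~\ref{d:limit} there exists $N_1\in\NN$ such that $|a_n-\alpha|<\varepsilon$ for all $n\geq N_1$. Likewise there exists $N_2\in\NN$ such that $|b_n-\beta|<\varepsilon$ for all $n\geq N_2$. Put $N:=\max\{N_1,N_2\}$ and fix any $n\geq N$.

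Lemma~\ref{l:absval} converts the absolute value bounds into one-sided ones. It gives $\alpha-a_n<\varepsilon$, i.e., $a_n>\alpha-\varepsilon$, and $b_n-\beta<\varepsilon$, i.e., $b_n<\beta+\varepsilon$. (Strictly, the lemma is stated with $\leq$, so one uses it together with the strict bound $|x|<\varepsilon$.) By the choice of $\varepsilon$ we have $\alpha-\varepsilon = \beta+\varepsilon = \thalb(\alpha+\beta)$. Using transitivity, Proposition~\ref{p:ofield}\ref{p:ofieldii}, this yields $b_n<\thalb(\alpha+\beta)<a_n$. That contradicts $a_n\leq b_n$, so $\alpha\leq\beta$.

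There is no real obstacle here; the only care needed is the strict-versus-nonstrict bookkeeping just mentioned. To avoid it entirely, I would use the alternative of applying the Difference Law (Corollary~\ref{c:difflaw}) to $c_n:=b_n-a_n\geq 0$, which reduces the claim to showing that a convergent sequence of nonnegative terms has a nonnegative limit. That reduced claim follows by the same $\varepsilon$-argument with $\varepsilon := -\lim c_n$.
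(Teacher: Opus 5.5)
Your proof is correct and is essentially the paper's own argument: argue by contradiction with $\varepsilon := (\alpha-\beta)/2$, take $N=\max\{N_1,N_2\}$, and derive the chain $b_n<\beta+\varepsilon=\tfrac{\alpha+\beta}{2}=\alpha-\varepsilon<a_n$. The paper avoids your strict-versus-nonstrict bookkeeping by writing $\alpha-a_n\leq|a_n-\alpha|<\varepsilon$ and $b_n-\beta\leq|b_n-\beta|<\varepsilon$, i.e., by using $\pm x\leq|x|$ directly rather than Lemma~\ref{l:absval}.
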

\begin{proof}
Denote the limits of $(a_n)_\nnn$ and $(b_n)_\nnn$ by
$\alpha$ and $\beta$, respectively.
We argue by contradiction and assume that $\alpha > \beta$. 
Then $\varepsilon := (\alpha-\beta)/2 > 0$ so there exist
$N_1$ and $N_2$ in $\NN$ such that
\begin{equation}
n \geq N_1
\;\Rightarrow\;
|a_n-\alpha| < \varepsilon
\qquad\text{and}\qquad
n \geq N_2
\;\Rightarrow\;
|b_n-\beta| < \varepsilon. 
\end{equation}
Set $N := \max\{ N_1,N_2\}$.
Then, 
for every $n\geq N$, 
\begin{equation}
\alpha-a_n\leq |a_n-\alpha| < \varepsilon
\qquad\text{and}\qquad
b_n-\beta \leq |b_n-\beta| < \varepsilon.
\end{equation}
Therefore,
for every $n\geq N$, 
\begin{equation}
b_n < \varepsilon + \beta = \frac{\alpha-\beta}{2} + \beta
= \frac{\alpha+\beta}{2} = \alpha - \frac{\alpha-\beta}{2} 
= \alpha-\varepsilon < a_n,
\end{equation}
which contradicts the hypothesis $a_n\leq b_n$. 
\end{proof}

\begin{corollary}
\label{c:limleq}
Let $(b_n)_\nnn$ be a convergent sequence such that
$\alpha \leq b_n\leq \gamma$, for every $\nnn$.
Then
\begin{equation}
\alpha \leq \lim_\nnn b_n \leq \gamma.
\end{equation}
\end{corollary}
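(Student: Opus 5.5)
The plan is to obtain Corollary~\ref{c:limleq} as two applications of Theorem~\ref{t:limleq}, each comparing $(b_n)_\nnn$ with a constant sequence. For the left inequality, consider the constant sequence $a_n :\equiv \alpha$. By Example~\ref{ex:cseq}, it converges to $\alpha$. By hypothesis, $a_n=\alpha\leq b_n$ for every $\nnn$. Both sequences converge, so Theorem~\ref{t:limleq} applies and gives
\begin{equation*}
\alpha = \lim_\nnn a_n \leq \lim_\nnn b_n.
\end{equation*}

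For the right inequality, do the same with the roles reversed. Let $c_n :\equiv \gamma$, which converges to $\gamma$ by Example~\ref{ex:cseq}. Since $b_n\leq\gamma = c_n$ for every $\nnn$, Theorem~\ref{t:limleq} (applied to $(b_n)_\nnn$ and $(c_n)_\nnn$) gives $\lim_\nnn b_n \leq \gamma$. Combining the two inequalities yields the claim.

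There is no real obstacle here. The only points to check are that Theorem~\ref{t:limleq} requires both sequences to converge, which holds by Example~\ref{ex:cseq}, and that the limit of $(b_n)_\nnn$ is well defined, which holds because the limit is unique. If one wanted to avoid Theorem~\ref{t:limleq}, the fallback would be to rerun its contradiction argument directly: assuming $\lim b_n<\alpha$ and taking $\varepsilon := (\alpha-\lim b_n)/2$ forces some $b_n<\alpha$, which contradicts the hypothesis.
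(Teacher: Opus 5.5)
Your proof is correct and is exactly the paper's argument. The paper's proof is the one line ``Combine Example~\ref{ex:cseq} with Theorem~\ref{t:limleq}'', and you carried this out explicitly by comparing $(b_n)_\nnn$ with the constant sequences $\alpha$ and $\gamma$.
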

\begin{proof}
Combine Example~\ref{ex:cseq} with Theorem~\ref{t:limleq}.
\end{proof}

\begin{theorem}[Squeeze Theorem]
\label{t:squeeze}
Let $(a_n)_\nnn$, $(b_n)_\nnn$, and $(c_n)_\nnn$ be sequences
such that for every $\nnn$, $a_n\leq b_n\leq c_n$.
Suppose that $(a_n)_\nnn$ and $(c_n)_\nnn$ converge to a common
limit $\ell$. Then $(b_n)_\nnn$ converges to $\ell$ as well.
\index{Squeeze Theorem}
\end{theorem}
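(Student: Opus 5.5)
The plan is to argue directly from the definition of convergence (Definition~\ref{d:limit}), using the two convergent ``outer'' sequences to trap $b_n$ in the interval $\left]\ell-\varepsilon,\ell+\varepsilon\right[$. Fix $\varepsilon>0$. Since $a_n\to\ell$, there is $N_1\in\NN$ with $|a_n-\ell|<\varepsilon$ for all $n\geq N_1$. Since $c_n\to\ell$, there is $N_2\in\NN$ with $|c_n-\ell|<\varepsilon$ for all $n\geq N_2$. Set $N:=\max\{N_1,N_2\}$, as in the proof of the Sum Law (Theorem~\ref{t:sumlaw}).

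For $n\geq N$, unpack the absolute values via Lemma~\ref{l:absval}. From $|a_n-\ell|<\varepsilon$ we get in particular $\ell-a_n<\varepsilon$, i.e., $\ell-\varepsilon<a_n$. From $|c_n-\ell|<\varepsilon$ we get $c_n-\ell<\varepsilon$, i.e., $c_n<\ell+\varepsilon$. Chaining with the hypothesis $a_n\leq b_n\leq c_n$ and using transitivity (Proposition~\ref{p:ofield}\ref{p:ofieldii}, together with the obvious mixed versions for $\leq$ and $<$) gives
\begin{equation*}
\ell-\varepsilon<a_n\leq b_n\leq c_n<\ell+\varepsilon .
\end{equation*}
Subtracting $\ell$ (Proposition~\ref{p:ofield}\ref{p:ofieldiii}) yields $b_n-\ell<\varepsilon$ and $-(b_n-\ell)<\varepsilon$. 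Lemma~\ref{l:absval} then gives $|b_n-\ell|<\varepsilon$; strictness is harmless here, since the lemma's argument works verbatim with $<$ in place of $\leq$, because $|x|\in\{x,-x\}$. As $\varepsilon>0$ was arbitrary, $b_n\to\ell$.

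There is no real obstacle. The only point needing care is that we use just \emph{one} side of each absolute-value inequality: the lower bound from $(a_n)$ and the upper bound from $(c_n)$. We also note that Theorem~\ref{t:limleq} cannot be used instead, because it presupposes that $(b_n)$ converges, which is exactly what we must prove.
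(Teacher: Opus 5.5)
Your proof is correct and follows the paper's own argument essentially verbatim. You choose $N=\max\{N_1,N_2\}$, trap $b_n$ via $\ell-\varepsilon<a_n\leq b_n\leq c_n<\ell+\varepsilon$, and conclude $|b_n-\ell|<\varepsilon$; your justification of the strict-inequality version of Lemma~\ref{l:absval} is a detail the paper leaves implicit.
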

\begin{proof}
Let $\varepsilon>0$.
Since $a_n\to\ell$, there exists $N_1\in\NN$ such that
\begin{equation}
n\geq N_1
\quad\Rightarrow\quad
|a_n-\ell|< \varepsilon
\;\Leftrightarrow\;
\ell-\varepsilon < a_n < \ell+\varepsilon.
\end{equation}
Since $c_n\to\ell$, there exists $N_2\in\NN$ such that
\begin{equation}
n\geq N_2
\quad\Rightarrow\quad
|c_n-\ell|< \varepsilon
\;\Leftrightarrow\;
\ell-\varepsilon < c_n < \ell+\varepsilon.
\end{equation}
Now set $N := \max\{N_1,N_2\}$. 
Combining the above, we see that for every $n\geq N$,
\begin{equation}
\ell-\varepsilon < a_n \leq b_n \leq c_n < \ell+\varepsilon;
\end{equation}
consequently, $|b_n-\ell|<\varepsilon$.
\end{proof}

\begin{proposition}
\label{p:absantoabsa}
Let $(a_n)_\nnn$ be convergent to $\alpha$.
Then $(|a_n|)_\nnn$ converges to $|\alpha|$. 
\end{proposition}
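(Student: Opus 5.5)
The plan is a direct $\varepsilon$--$N$ argument that reuses the $N(\varepsilon)$ supplied by the convergence $a_n\to\alpha$. The only tool needed is the reverse triangle inequality, Corollary~\ref{c:triangle}, which says that for all real $x,y$ we have $\big||x|-|y|\big|\leq|x-y|$.

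First, fix an arbitrary $\varepsilon>0$. Since $a_n\to\alpha$, Definition~\ref{d:limit} provides $N=N(\varepsilon)\in\NN$ such that $|a_n-\alpha|<\varepsilon$ for every $n\geq N$.

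Second, apply Corollary~\ref{c:triangle} with $x=a_n$ and $y=\alpha$. For every $n\geq N$ this gives
\begin{equation*}
\big||a_n|-|\alpha|\big|\leq |a_n-\alpha|<\varepsilon .
\end{equation*}
This is exactly the defining condition for $|a_n|\to|\alpha|$, and the same $N$ works. So the proof is complete once the corollary is invoked.

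I do not expect a genuine obstacle. The one point to check is that Corollary~\ref{c:triangle} holds for arbitrary real numbers, so it can be applied termwise with no case split on the signs of $a_n$ or $\alpha$. An alternative route would be the Squeeze Theorem~\ref{t:squeeze} applied to $0\leq\big||a_n|-|\alpha|\big|\leq|a_n-\alpha|$. That route would first require showing $|a_n-\alpha|\to0$, which is immediate from the definition, so the direct argument above is shorter.
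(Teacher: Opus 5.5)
Your proof is correct and is essentially the paper's own argument. Fix $\varepsilon>0$, take the $N$ supplied by $a_n\to\alpha$, and apply Corollary~\ref{c:triangle} termwise to get $\big||a_n|-|\alpha|\big|\leq|a_n-\alpha|<\varepsilon$ for all $n\geq N$.
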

\begin{proof}
Exercise~\ref{exo:absantoabsa}.
\end{proof}

\section{Convergence to $\pm\infty$}

\index{Convergence to $\pm\infty$}
\index{Divergence to $\pm\infty$} 
The symbols $+\infty$ and $-\infty$ are convenient
to capture the behaviour of sequences that grow arbitrarily
large in absolute value. These symbols are \emph{not}
real numbers; e.g., $\infty + 1 = +\infty = +\infty + 0$ but $1\neq 0$ etc.

\begin{definition}[divergence to infinity]
Let $(a_n)_\nnn$ be a sequence of real numbers.
Then $(a_n)_\nnn$ \textbf{diverges to $\pinf$}, written 
$a_n\to\pinf$ or $\displaystyle\lim_\nnn a_n=\pinf$ if 
\begin{quotation}
\noindent
for every $\gamma\in\RR$, there exists $N\in\NN$ such that\\
$n\geq N$ $\Rightarrow$ $a_n>\gamma$.
\end{quotation}
Similarly, $(a_n)_\nnn$ \textbf{diverges to $\minf$}, written 
$a_n\to\minf$ or $\displaystyle\lim_\nnn a_n=\minf$ if 
\begin{quotation}
\noindent
for every $\gamma\in\RR$, there exists $N\in\NN$ such that\\
$n\geq N$ $\Rightarrow$ $a_n<\gamma$.
\end{quotation}
\end{definition}

Note that if $a_n\to\pinf$, then $(a_n)$ is not bounded above;
the converse is false, however. 

\begin{example}\
\begin{enumerate}
\item 
$(3n)_\nnn$ diverges to $\pinf$.
\item
$(-3^n)_\nnn$ diverges to $\minf$.
\item
$((-3)^n)_\nnn$ diverges, but it does not
diverge to $\pinf$ or to $\minf$.
\end{enumerate}
\end{example}

The study of such sequences can be reduced to
the study of null sequences (i.e., sequences that converge to $0$).
We leave the proofs as Exercise~\ref{exo:dinfty1} 
and Exercise~\ref{exo:dinfty2}.

\begin{theorem}
\label{t:dinfty1}
Let $(a_n)_\nnn$ be a sequence that diverges to
either $\pinf$ or $\minf$. 
Then there exists $n_0\in\NN$ such that
$a_n\neq 0$ for every $n\geq n_0$.
Furthermore, the sequence $(1/a_n)_{n\geq n_0}$
converges and
\begin{equation}
\lim_\nnn\; \frac{1}{a_n} = 0.
\end{equation}
\end{theorem}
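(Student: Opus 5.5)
We argue directly from the definitions, treating the case $a_n\to\pinf$ in detail; the case $a_n\to\minf$ then follows by symmetry.

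\emph{Step 1: eventually $a_n\neq 0$.} Suppose first that $a_n\to\pinf$. Apply the definition of divergence to $\pinf$ with $\gamma=0$. This yields $n_0\in\NN$ such that $a_n>0$ for every $n\geq n_0$, and in particular $a_n\neq 0$. If instead $a_n\to\minf$, we use $\gamma=0$ in the same way and get $a_n<0$ for every $n\geq n_0$. In both cases $1/a_n$ is well defined for $n\geq n_0$, so the sequence $(1/a_n)_{n\geq n_0}$ makes sense.

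\emph{Step 2: $1/a_n\to 0$.} Let $\varepsilon>0$. Then $1/\varepsilon>0$ by Proposition~\ref{p:ofield}\ref{p:ofieldxii}. In the case $a_n\to\pinf$, apply the definition with $\gamma=1/\varepsilon$ to obtain $N_1$ such that $a_n>1/\varepsilon$ for every $n\geq N_1$. Set $N:=\max\{n_0,N_1\}$ and take $n\geq N$. Then $0<1/\varepsilon<a_n$, so Proposition~\ref{p:ofield}\ref{p:ofieldxiv} gives $1/a_n<\varepsilon$. Also $1/a_n>0$ by Proposition~\ref{p:ofield}\ref{p:ofieldxii}. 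Hence $|1/a_n-0|=1/a_n<\varepsilon$ for all $n\geq N$, which is exactly convergence to $0$.

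In the case $a_n\to\minf$, apply the definition with $\gamma=-1/\varepsilon$ to get $a_n<-1/\varepsilon$ for large $n$. Then $-a_n>1/\varepsilon>0$, and the same argument gives $|1/a_n|=1/|a_n|=1/(-a_n)<\varepsilon$. Alternatively, one can note that $(-a_n)\to\pinf$, apply the first case, and finish with Corollary~\ref{c:c-mlaw} with $c=-1$.

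There is no real obstacle here. The only points needing care are to pick a single $n_0$ at which $1/a_n$ is defined and to take the maximum of the two thresholds, so that the inequality $|1/a_n|<\varepsilon$ is justified by the order properties and not merely by intuition.
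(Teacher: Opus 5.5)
Your proposal is correct and follows the paper's own argument: take $\gamma=0$ in the definition to get $a_n>0$ (or $a_n<0$) for $n\geq n_0$, then take $\gamma=1/\varepsilon$ (or $-1/\varepsilon$) to obtain $|1/a_n|<\varepsilon$. The paper only treats $a_n\to\pinf$ and calls the other case similar, whereas you write out the $\minf$ case explicitly; the $\max\{n_0,N_1\}$ is harmless but not needed, since $a_n>1/\varepsilon>0$ already gives $a_n\neq 0$.
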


\begin{theorem}
\label{t:dinfty2}
Let $(a_n)_\nnn$ be a sequence of positive (resp.~negative) real numbers
such that $a_n\to 0$. Then $(1/a_n)_\nnn$ diverges to $\pinf$
(resp.~$\minf$). 
\end{theorem}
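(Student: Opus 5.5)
The plan is to work directly from the definitions. I will first treat the case of positive terms, then reduce the negative case to it via $b_n := -a_n$.

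\emph{Positive case.} Assume $a_n>0$ for every $\nnn$ and $a_n\to 0$. By Proposition~\ref{p:ofield}\ref{p:ofieldxii}, each $1/a_n$ is a well-defined positive real number, so $(1/a_n)_\nnn$ is a genuine sequence. Let $\gamma\in\RR$ be arbitrary.

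Since the definition of divergence to $\pinf$ quantifies over all real $\gamma$, including nonpositive ones, I first replace $\gamma$ by $\gamma' := \max\{\gamma,1\}>0$. It suffices to show $1/a_n>\gamma'$ eventually, because then $1/a_n>\gamma'\geq\gamma$.

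Set $\varepsilon := 1/\gamma'>0$. Since $a_n\to 0$, there exists $N\in\NN$ such that $n\geq N$ implies $|a_n-0|<\varepsilon$. Because $a_n>0$, we have $|a_n|=a_n$, so $0<a_n<1/\gamma'$ for all $n\geq N$. Applying Proposition~\ref{p:ofield}\ref{p:ofieldxiv} to $0<a_n<1/\gamma'$ gives $1/a_n>\gamma'$, since $(1/\gamma')^{-1}=\gamma'$ by Proposition~\ref{p:100908:2}\ref{p:100908:2iv}. This yields $1/a_n>\gamma$ for all $n\geq N$, so $1/a_n\to\pinf$.

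\emph{Negative case.} Assume $a_n<0$ for every $\nnn$ and $a_n\to 0$. Set $b_n := -a_n>0$. By Corollary~\ref{c:c-mlaw} with $c=-1$, $b_n\to 0$, so the positive case gives $1/b_n\to\pinf$.

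Next, $1/b_n = -1/a_n$, using $(-1)^{-1}=-1$ and Proposition~\ref{p:100908:2}\ref{p:100908:2v}. Now let $\gamma\in\RR$ be given. Apply the divergence of $1/b_n$ to $-\gamma$: this gives $N$ with $-1/a_n>-\gamma$ for all $n\geq N$. Multiplying by $-1$ (Proposition~\ref{p:ofield}\ref{p:ofieldvi}) yields $1/a_n<\gamma$ for all $n\geq N$, so $1/a_n\to\minf$.

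The only real subtlety is handling arbitrary, possibly nonpositive, $\gamma$; choosing $\gamma'=\max\{\gamma,1\}$ so that $\varepsilon=1/\gamma'$ is defined and positive resolves it. Everything else is routine bookkeeping with the order axioms.
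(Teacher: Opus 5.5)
Your proof is correct and follows the paper's argument: set $\varepsilon := 1/\gamma$ and use $0<a_n<\varepsilon$ for large $n$ to obtain $1/a_n>\gamma$. The paper only writes out the positive case with $\gamma>0$. Your use of $\max\{\gamma,1\}$ to cover arbitrary $\gamma$, and your reduction of the negative case to $b_n:=-a_n$, fill in details the paper leaves implicit.
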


\section{Series}
\label{sec:series}
\index{Series}
\index{Infinite Series}
Let $(a_n)_\nnn$ be a sequence of real numbers.
The sequence of \textbf{partial sums} $(s_n)_\nnn$, defined by
\begin{equation}
s_n := \sum_{k=0}^{n} a_k,
\qquad\text{for every $n\in\NN$},
\end{equation}
is called an \textbf{infinite series} and written as 
\begin{equation}
\text{
$\displaystyle \sum_{k=0}^\infty a_k$ ~~~or~~~
$\displaystyle \sum_{k\in\NN} a_k$.
}
\end{equation}
If the sequence $(s_n)_\nnn$ converges, then we 
say that the series $ \sum_{k=0}^\infty a_k$ converges, and we denote
its limit also by 
$\textstyle \sum_{k=0}^\infty a_k$.

Thus, the notation 
\begin{equation}
\displaystyle \sum_{k=0}^\infty a_k
\end{equation}
has \emph{two meanings}: 
it denotes the sequence
\begin{equation}
\bigg(\sum_{k=0}^{n}a_k\bigg)_\nnn
\end{equation}
of partial sums; if this sequence of partial sums converges,
then it denotes also its limit.
As for sequences, it is sometimes convenient to start the index
of a series at an integer different from $0$.

\begin{remark}
It appears that series are much more special than sequences.
In fact, the notions are equivalent:
given an arbitrary sequence $(c_n)_\nnn$, we set
\begin{equation}
a_0 := c_0
\qquad\text{and}\qquad
a_n := c_n-c_{n-1}, \;\text{for $n\geq 1$.}
\end{equation}
Then, for every $\nnn$, 
\begin{equation}
\sum_{k=0}^n a_k = a_0 + \sum_{k=1}^{n} a_k
= c_0 + (c_1-c_0) + \cdots + (c_n-c_{n-1}) = c_n.
\end{equation}
Therefore, $(c_n)_\nnn = \sum_{k=0}^\infty a_k$. 
\end{remark}

\begin{example}
\label{ex:seite24}
Consider the series
\begin{equation}
\sum_{k=1}^\infty \frac{1}{k(k+1)},
\end{equation}
with partial sums
\begin{equation}
s_n := \sum_{k=1}^{n} \frac{1}{k(k+1)}, 
\quad\text{for all integers $n\geq 1$.}
\end{equation}
It can be shown by induction that (see Exercise~\ref{exo:seite24})
\begin{equation}
\label{e:seite24}
s_n = \sum_{k=1}^{n} \frac{1}{k(k+1)} = \frac{n}{n+1},
\quad\text{for all integers $n\geq 1$.}
\end{equation}
In view of Example~\ref{ex:n/(n+1)},
we deduce that
\begin{equation}
\sum_{k=1}^\infty \frac{1}{k(k+1)} = 1.
\end{equation}
\end{example}

\begin{theorem}[Geometric Series]
\label{t:geoseries}
Let $x\in\RR$ be such that $|x|<1$.
Then\index{Geometric series}
\begin{equation}
\sum_{k=0}^\infty x^k = \frac{1}{1-x}.
\end{equation}
\end{theorem}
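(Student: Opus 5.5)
The plan is to compute the partial sums explicitly and then pass to the limit using the limit laws already established. Since $|x|<1$, in particular $x\neq 1$, so the Geometric Sum (Theorem~\ref{t:geosum}) applies and gives, for every $\nnn$,
\begin{equation*}
s_n := \sum_{k=0}^{n} x^k = \frac{1-x^{n+1}}{1-x} = \frac{1}{1-x} - \frac{1}{1-x}\,x^{n+1}.
\end{equation*}
Thus the question of convergence of the series reduces entirely to the behaviour of the geometric sequence $(x^{n+1})_\nnn$.

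Next I will invoke Example~\ref{ex:geoseq}\ref{ex:geoseq:i}: because $|x|<1$, we have $x^n\to 0$. I need the shifted statement $x^{n+1}\to 0$. This follows either directly from the definition of convergence (if $|x^n|<\varepsilon$ for all $n\geq N$, then also $|x^{n+1}|<\varepsilon$ for all $n\geq N$), or by writing $x^{n+1}=x\cdot x^n$ and applying the Constant-Multiple Law (Corollary~\ref{c:c-mlaw}). I prefer the second route since it uses only stated results.

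Finally, the constant sequence $\big(\tfrac{1}{1-x}\big)_\nnn$ converges to $\tfrac{1}{1-x}$ by Example~\ref{ex:cseq}. The Constant-Multiple Law gives $\tfrac{1}{1-x}x^{n+1}\to 0$. The Difference Law (Corollary~\ref{c:difflaw}) then yields
\begin{equation*}
s_n\to \frac{1}{1-x}-0=\frac{1}{1-x}.
\end{equation*}
By the definition of the value of a convergent series, this proves the claim.

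There is no real obstacle here. The only points needing care are checking that $x\neq1$, so that Theorem~\ref{t:geosum} applies and $1-x\neq 0$, and handling the index shift from $x^n$ to $x^{n+1}$, which I deal with as described above.
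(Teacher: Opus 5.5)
Your proposal is correct and follows the paper's proof essentially step by step: the partial-sum formula from Theorem~\ref{t:geosum}, then $x^{n+1}\to 0$ via Example~\ref{ex:geoseq}\ref{ex:geoseq:i}, then the Constant-Multiple and Difference Laws. Your handling of the index shift through $x^{n+1}=x\cdot x^n$ simply makes explicit a step the paper does not spell out.
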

\begin{proof}
Indeed, for the partial sums we have, using Theorem~\ref{t:geosum}, 
\begin{equation}
s_n := \sum_{k=0}^{n} x^k = \frac{1-x^{n+1}}{1-x},
\quad\text{for every $\nnn$.}
\end{equation}
On the other hand, $x^{n+1} \to 0$ by
Example~\ref{ex:geoseq}\ref{ex:geoseq:i}. 
Altogether,
\begin{equation}
s_n = \frac{1-x^{n+1}}{1-x} = \frac{1}{1-x} - \frac{1}{1-x}\cdot x^{n+1}
\to \frac{1}{1-x} - \frac{1}{1-x}\cdot 0 = \frac{1}{1-x},
\end{equation}
by Corollary~\ref{c:c-mlaw} and Corollary~\ref{c:difflaw}.
\end{proof}

Since a series is nothing but a sequence of partial sums, 
we obtain from Theorem~\ref{t:sumlaw}, 
Corollary~\ref{c:c-mlaw}, and
Corollary~\ref{c:difflaw} the following laws\footnote{Products are not so
easy and they will be studied later.} for series.

\begin{theorem}
\label{t:serieslaws}
Let $\sum_{k=0}^\infty a_k$ and
$\sum_{k=0}^\infty b_k$ be convergent series,
and let $c\in\RR$.
Then the series
$\sum_{k=0}^\infty (a_k+b_k)$,
$\sum_{k=0}^\infty (a_k-b_k)$,
and 
$\sum_{k=0}^\infty ca_k$
are convergent as well; furthermore, we have
\index{Limit Laws (for series)} 
\begin{equation}
\sum_{k=0}^\infty (a_k\pm b_k) = 
\sum_{k=0}^\infty a_k \pm
\sum_{k=0}^\infty b_k 
\qquad\text{and}\qquad
\sum_{k=0}^\infty ca_k = 
c \sum_{k=0}^\infty a_k.
\end{equation}
\end{theorem}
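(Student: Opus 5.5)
The plan is to reduce everything to the limit laws for sequences, since a series is by definition its sequence of partial sums. Let $s_n := \sum_{k=0}^n a_k$ and $t_n := \sum_{k=0}^n b_k$. By hypothesis there are real numbers $\alpha$ and $\beta$ with $s_n\to\alpha$ and $t_n\to\beta$, where $\alpha=\sum_{k=0}^\infty a_k$ and $\beta=\sum_{k=0}^\infty b_k$.

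The first step is a purely finite, algebraic identity relating the partial sums of the new series to $s_n$ and $t_n$. For every $\nnn$ I will show that
\begin{equation*}
\sum_{k=0}^n (a_k\pm b_k) = s_n \pm t_n
\qquad\text{and}\qquad
\sum_{k=0}^n c a_k = c\, s_n.
\end{equation*}
These follow from the field axioms: \textbf{A1} and \textbf{A2} allow the finite sum to be regrouped, and \textbf{D} handles the constant multiple. A short induction on $n$ makes this rigorous, in the same style as the induction exercises in Chapter~\ref{cha:Induction}. In the inductive step one adds $a_{n+1}\pm b_{n+1}$ (respectively $ca_{n+1}$) to both sides and regroups.

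The second step applies the sequence limit laws to these identities. The Sum Law (Theorem~\ref{t:sumlaw}) gives $s_n+t_n\to\alpha+\beta$. The Difference Law (Corollary~\ref{c:difflaw}) gives $s_n-t_n\to\alpha-\beta$. The Constant-Multiple Law (Corollary~\ref{c:c-mlaw}) gives $cs_n\to c\alpha$. So each new partial-sum sequence converges, which means the corresponding series converges, and its limit is the stated value. This yields both the convergence claims and the displayed identities.

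There is no real obstacle here. The only point needing care is the notational double meaning of $\sum_{k=0}^\infty$, which denotes both the sequence of partial sums and its limit. The proof has to make clear that convergence of a series means convergence of its partial sums, so that the sequence laws apply verbatim.
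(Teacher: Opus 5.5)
Your proposal is correct and follows the same route as the paper, which simply notes that a series is its sequence of partial sums and then invokes the Sum Law, the Difference Law, and the Constant-Multiple Law for sequences. Your inductive check that the partial sums of $\sum_k (a_k\pm b_k)$ and $\sum_k ca_k$ are $s_n\pm t_n$ and $cs_n$ fills in a finite algebraic step that the paper leaves implicit; for the difference case you also need $-(x+y)=-x+(-y)$, which is Proposition~\ref{p:may26:1}.
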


The last two results are very powerful.
Let us provide an illustration.

\begin{example}
Suppose that 
\begin{equation}
x= 0.13424242\ldots = 0.13\overline{42}.
\end{equation}
This means\index{Decimal expansion}
\begin{equation}
x = \frac{13}{100} + \frac{42}{10^4} + \frac{42}{10^6} + \cdots
= \frac{13}{100} + \sum_{k=0}^\infty \frac{42}{10^{4+2k}}.
\end{equation}
The infinite series is, using Theorem~\ref{t:geoseries} and
Theorem~\ref{t:serieslaws}, equal to 
\begin{align}
\sum_{k=0}^\infty \frac{42}{10^{4+2k}} &= 
\frac{42}{10^4}\sum_{k=0}^\infty \left(\frac{1}{10^2}\right)^k 
= \frac{42}{10^4}\cdot \frac{1}{1-10^{-2}}
= \frac{42}{10^4-10^2} = \frac{42}{9900} = \frac{7}{1650}.
\end{align}
Therefore,
\begin{equation}
x = \frac{13}{100} + \frac{7}{1650} = \frac{443}{3300}. 
\end{equation}
\end{example}

\section*{Exercises}\markright{Exercises}
\addcontentsline{toc}{section}{Exercises}
\setcounter{theorem}{0}

\begin{exercise}
Suppose $(a_n)_\nnn$ is a sequence and let $\ell\in\RR$.
Let us say that $(a_n)_\nnn$ is ``super convergent'' to
$\ell$ if there exists $N\in \NN$ such that 
for every
$\varepsilon>0$
we have $n\geq N$ $\Rightarrow$ $|a_n-\ell|<\varepsilon$.
Show that if $(a_n)_\nnn$ super converges to $\ell$,
then $(a_n)_\nnn$ converges to $\ell$ (in the usual sense).
What about the converse?
\end{exercise}
\begin{solution}
Clearly, every super convergent sequence converges,
since there is a ``super'' $N$ that does not depend on
$\varepsilon$. 

Now assume that $(a_n)_\nnn$ super converges to $\ell$.
So pick up the $N\in\NN$ such that 
for every
$\varepsilon>0$
we have $n\geq N$ $\Rightarrow$ $|a_n-\ell|<\varepsilon$.
Let $n\geq N$. Then $|a_n-\ell|<\varepsilon$ for 
\emph{every} $\varepsilon > 0$. 
Then $|a_n-\ell|=0$ (because otherwise we could
choose $\varepsilon = |a_n-\ell|/2$ and obtain a contradiction).
Hence $a_N=a_{N+1}=\cdots = \ell$.
This means that the super convergent sequences are precisely
those that are \emph{eventually constant}.
This exercise also illustrates that the \emph{order of the
quantifiers} ``for every'' and ``there exist'' is crucial.
Moreover, $(1/n)_{n\geq 1}$ is convergent $0$, but not super
convergent to $0$. 
\end{solution}

\begin{exercise}
\label{exo:bdseq}
Let $(a_n)_\nnn$ be a sequence.
Show that $(a_n)_\nnn$ is bounded if and only if
\eqref{e:bdseq} holds.
\end{exercise}
\begin{solution}
``$\Rightarrow$'': Suppose first that $(a_n)_\nnn$ is bounded, say bounded
above by $\beta$ and bounded below by $\alpha$:
\begin{equation*}
\alpha \leq a_n \leq \beta, \quad\text{for every $\nnn$.}
\end{equation*}
Note that $-|\alpha| \leq \alpha$ and $\beta\leq |\beta|$. 
Set 
\begin{equation*}
\gamma := \max\{|\alpha|,|\beta|\}.
\end{equation*}
Then
\begin{equation*}
-\gamma\leq -|\alpha|\leq \alpha\leq a_n \leq \beta\leq |\beta|\leq\gamma, \quad\text{for every $\nnn$.}
\end{equation*}
In particular,
\begin{equation*}
a_n \leq \gamma \;\text{and}\; -a_n\leq \gamma, \quad\text{for every $\nnn$.}
\end{equation*}
By Lemma~\ref{l:absval}, $|a_n|\leq \gamma$ for every $\nnn$. 

``$\Leftarrow$'': Now assume that for some $\gamma\geq 0$, $|a_n|\leq \gamma$, for 
every $\nnn$. 
Then, 
for every $\nnn$, 
by Lemma~\ref{l:absval},
$\pm a_n\leq \gamma$ and so $-\gamma\leq a_n\leq \gamma$.
Thus, the sequence $(a_n)_\nnn$ is bounded above by $\gamma$ and bounded
below by $-\gamma$. Therefore, $(a_n)_\nnn$ is bounded. 
\end{solution}

\begin{exercise}
Prove that the sequence of Fibonacci numbers
$(f_n)_\nnn$ considered in
Example~\ref{ex:seq}\ref{ex:seq:fibo}
satisfies $f_n \geq n-1$ for all $\nnn$.
\emph{Hint:} Use strong induction (see Exercise~\ref{exo:strongind}) 
with a suitable $n_0$.
Then conclude that $(f_n)_\nnn$ is not bounded,
and hence not convergent.
\end{exercise}
\begin{solution}
We shall use strong induction with $n_0=3$.
First, we deal with $n\in\{0,1,2\}$ directly:\\
(i) If $n=0$, then $f_0 = 0 \geq -1 = 0-1$, as required.\\
(ii) If $n=1$, then $f_1 = 1 \geq 0 = 1-1$, as required.\\
(iii) If $n=2$, then $f_2 = 1 \geq 1 = 2-1$, as required.\\

Now we turn to the actual induction. 

\textbf{Base Case:}
If $n=3$, then $f_3 = 2 \geq 2 = 3-1$, as required.

\textbf{Inductive Step:}
Suppose that for some $n\geq 3$, we have $f_k\geq k-1$
for every $k\in\{3,\ldots,n\}$. Note that by
our preliminary work, we know that this actually holds for 
every $k\in\{0,1,\ldots,n\}$!

We thus conclude that 
\begin{subequations}
\begin{align}
f_{n+1} &= f_n + f_{n-1} 
\geq (n-1) + \big((n-1)-1\big)\\
&= 2n-3 \\
&\geq n\label{e:130928a}\\
&=(n+1)-1,
\end{align}
\end{subequations}
where \eqref{e:130928a} is true exactly because $n\geq 3$. 
This completes the proof of the inductive step.

Therefore, by the Principle of Strong Mathematical Induction, the
statement is proven. 
\end{solution}

\begin{exercise}
Prove that 
$\displaystyle \frac{n}{2^n}\to 0$.
\emph{Hint}: Exercise~\ref{exo:uncleshawn}. 
\end{exercise}
\begin{solution}
By Exercise~\ref{exo:uncleshawn},
for every $n\geq 4$,
\begin{equation*}
\frac{n^2}{2^n} \leq 1
\quad\Rightarrow\quad
\frac{n}{2^n} \leq \frac{1}{n}.
\end{equation*}
Now let $\varepsilon>0$,
and take $N(\varepsilon) >  \max\big\{4,\tfrac{1}{\varepsilon}\big\}$
in $\NN$.
Then
\begin{equation*}
\left|\frac{n}{2^n}-0\right| = \frac{n}{2^n} \leq 
\frac{1}{n} \leq \frac{1}{N(\varepsilon)} < \varepsilon,
\quad\text{for every $n\geq N(\varepsilon)$.}
\end{equation*}
Or, alternatively, use the Squeeze Theorem!
\end{solution}

\begin{exercise}
\label{exo:calc1}
Prove Proposition~\ref{p:calc1}.
\end{exercise}
\begin{solution}
``$\Rightarrow$'': Assume that $\lim_{x\to a} f(x)=L$ and let
$(a_n)_\nnn$ be a sequence such that $a_n\to a$ and $a_n\neq a$ for every
$\nnn$. Now let $\varepsilon >0$. 
By definition of the limit, there exists $\delta>0$ such that
$0<|x-a|<\delta$ implies $|f(x)-L|<\varepsilon$. 
Now for this $\delta$, we pick up $N$ such that 
$n\geq N$ implies $|a_n-a|<\delta$.
If $n\geq N$, then $0<|a_n-a|<\delta$ and hence
$|f(a_n)-L|<\varepsilon$. We have shown that $f(a_n)\to L$. 

``$\Leftarrow$'': 
We argue by contradiction. Hence we assume there exists 
some $\varepsilon>0$ such that for every $\delta>0$,
there exists $x$ such that $0<|x-a|<\delta$ and
$|f(x)-L|\geq \varepsilon$. 
In particular, for $\delta = 1/(n+1)$, where $\nnn$, 
we find $x_n$ such that $0<|x_n-a|<1/(n+1)$ yet $|f(x_n)-L| \geq \varepsilon$. 
Note that for the sequence $(x_n)_\nnn$ so generated, we have
$x_n\to a$ and $x_n\neq a$ for every $\nnn$.
By hypothesis we should have $|f(x_n)-L|<\varepsilon$ for $n$ sufficiently
large; however, we have $|f(x_n)-L| \geq \varepsilon$ for every $\nnn$
--- this is absurd!
\end{solution}

\begin{exercise}
Prove Corollary~\ref{c:c-mlaw} directly, using the definition of
convergence.
\end{exercise}
\begin{solution}
If $c=0$, then $(ca_n)_\nnn = (0)_\nnn$ is a constant sequence converging
to $0=c\alpha$. 

We thus assume that $c\neq 0$.
Let $\varepsilon>0$. 
Since $\varepsilon/|c|>0$ and $a_n\to\alpha$, 
there exists $N\in\NN$ such that
\begin{equation*}
n\geq N \;\Rightarrow\;
|a_n-\alpha| < \frac{\varepsilon}{|c|}. 
\end{equation*}
It follows that for every $n\geq N$, we have
\begin{equation*}
|c a_n - c\alpha| = |c(a_n-\alpha)| = |c|\cdot |a_n-\alpha|
< |c|\frac{\varepsilon}{|c|} = \varepsilon,
\end{equation*}
as required. 
\end{solution}

\begin{exercise}
Prove Corollary~\ref{c:difflaw} directly, using the definition of
convergence.
\end{exercise}
\begin{solution}
Let $\varepsilon>0$.
Since $\thalb\varepsilon > 0$ and
$a_n\to\alpha$ and $b_n\to\beta$,
there exist $N_1\in\NN$ and $N_2\in\NN$ such that
\begin{equation*}
n\geq N_1
\quad\Rightarrow\quad
|a_n-\alpha|<\thalb\varepsilon
\end{equation*}
and 
\begin{equation*}
n\geq N_2
\quad\Rightarrow\quad
|b_n-\beta|<\thalb\varepsilon. 
\end{equation*}
Set $N:=\max\{N_1,N_2\}$.
Then for every $n\geq N$, we have
\begin{align*}
|c_n-(\alpha-\beta)| &= |(a_n-b_n)-(\alpha-\beta)| =
|(a_n-\alpha)+(\beta-b_n)|\\
&\leq |a_n-\alpha| + |b_n-\beta| < \thalb\varepsilon + \thalb\varepsilon =
\varepsilon.
\end{align*}
Therefore, $c_n\to\alpha-\beta$. 
\end{solution}

\begin{exercise}
Using the limit laws, determine the limit of the sequence $(a_n)_\nnn$,
where 
\begin{equation*}
a_n := \frac{7n^3 + 3n - 7}{2n^3 + 2n+3},
\quad\text{where $n\in\NN$.}
\end{equation*}
\end{exercise}
\begin{solution}
Dividing by $n^3$ and 
arguing as in Example~\ref{ex:limitlaws},
we deduce that for every $n\geq 1$, 
\begin{equation*}
a_n = \frac{7 + 3\frac{1}{n}\frac{1}{n} -
7\frac{1}{n}\frac{1}{n}\frac{1}{n}}{2+
2\frac{1}{n}\frac{1}{n}+3\frac{1}{n}\frac{1}{n}\frac{1}{n}} \to 
\frac{7+3\cdot 0 \cdot 0 - 7\cdot 0\cdot 0 \cdot 0}{2+2\cdot 0 \cdot 0
+3\cdot 0\cdot 0 \cdot 0} = \frac{7}{2}.
\end{equation*}
Hence $\lim_\nnn a_n = \frac{7}{2}$. 
\end{solution}

\begin{exercise}[sum law for limits from calculus]
\label{exo:calc2}
Let $f$ and $g$ be functions defined on $\RR\smallsetminus\{a\}$
with limits $L$ and $M$ as $x\to a$, respectively.
Prove that the sum law for limits in calculus holds, i.e.., 
$$\lim_{x\to a} \big(f(x)+g(x)\big) = \lim_{x\to a}f(x) + \lim_{x\to
a}g(x).$$
\emph{Hint:} Proposition~\ref{p:calc1} and Theorem~\ref{t:sumlaw}. 
\end{exercise}
\begin{solution}
Let $(x_n)_\nnn$ be a sequence in $\RR\smallsetminus\{a\}$ that converges
to $a$. By Proposition~\ref{p:calc1}, it suffices to show
that $(f+g)(x_n)=f(x_n)+g(x_n)\to L+M$.
Now, again by Proposition~\ref{p:calc1}, $f(x_n)\to L$ and $g(x_n)\to M$.
Now by Theorem~\ref{t:sumlaw}, $f(x_n)+g(x_n)\to L+M$ and we are done.
\end{solution}

\begin{exercise}[product law for limits from calculus]
\label{exo:calc3}
Let $f$ and $g$ be functions defined on $\RR\smallsetminus\{a\}$
with limits $L$ and $M$ as $x\to a$, respectively.
Prove that the product law for limits in calculus holds, i.e., 
$$\lim_{x\to a} \big(f(x)\cdot g(x)\big) = \lim_{x\to a}f(x) \cdot \lim_{x\to
a}g(x).$$
\emph{Hint:} Proposition~\ref{p:calc1} and Theorem~\ref{t:prodlaw}. 
\end{exercise}
\begin{solution}
Let $(x_n)_\nnn$ be a sequence in $\RR\smallsetminus\{a\}$ that converges
to $a$. By Proposition~\ref{p:calc1}, it suffices to show
that $(f\cdot g)(x_n)=f(x_n)\cdot g(x_n)\to L\cdot M$.
Now, again by Proposition~\ref{p:calc1}, $f(x_n)\to L$ and $g(x_n)\to M$.
Now by Theorem~\ref{t:prodlaw}, $f(x_n)\cdot g(x_n)\to L\cdot M$ and we are done.
\end{solution}

\begin{exercise}[squeeze theorem from calculus]
\label{exo:calc4}
Let $f,g,h$ be functions defined on $\RR\smallsetminus\{a\}$
such that $f(x)\leq g(x)\leq h(x)$ for every $x\in\RR\smallsetminus\{a\}$, and 
suppose that $L = \lim_{x\to a} f(x)=\lim_{x\to a} h(x)$. 
Prove that $\lim_{x\to a} g(x)=L$. 
\emph{Hint:} Proposition~\ref{p:calc1} and Theorem~\ref{t:squeeze}. 
\end{exercise}
\begin{solution}
Let $(x_n)_\nnn$ be a sequence in $\RR\smallsetminus\{a\}$ that converges
to $a$. By Proposition~\ref{p:calc1}, it suffices to show
that $b_n := g(x_n)\to L$. 
Now, by assumption, $a_n := f(x_n) \leq b_n = g(x_n) \leq c_n := h(x_n)$,
and, again by Proposition~\ref{p:calc1}, $a_n\to L$ and $c_n\to L$. 
Now by Theorem~\ref{t:squeeze}, $b_n\to L$, as required.
\end{solution}

\begin{exercise}
Provide examples of sequences $(a_n)$ and $(b_n)$
such that $a_n\to \pinf$ and $b_n\to 0$ for each of the following.
\begin{enumerate}
\item $(a_nb_n)$ diverges to $\pinf$.
\item $(a_nb_n)$ diverges to $\minf$.
\item $(a_nb_n)$ is unbounded but does not diverge to $\pinf$ or to
$\minf$.
\item $(a_nb_n)$ converges to some real number $\gamma$ that can be chosen arbitrarily in advance. 
\item $(a_nb_n)$ is bounded but not convergent.
\end{enumerate}
This illustrates why it's hard to define the product $0\cdot\infty$.
\end{exercise}
\begin{solution}
Many examples are possible. Here is a set of examples that does the
job.
\begin{enumerate}
\item $a_n= n^2$ and $b_n=\tfrac{1}{n}$ for $n\geq 1$. 
Then $a_nb_n=n\to\pinf$. 
\item $a_n = n^2$ and $b_n=-\tfrac{1}{n}$ for $n\geq 1$. 
Then $a_nb_n=-n\to\minf$. 
\item $a_n = n^2$ and $b_n = (-1)^n\tfrac{1}{n}$ for $n\geq 1$.
Then $a_nb_n = (-1)^nn$. 
\item $a_n = n$ and $b_n = \gamma\tfrac{1}{n}$ for $n\geq 1$.
Then $a_nb_n = \gamma$.
\item $a_n = n$ and $b_n = (-1)^n\tfrac{1}{n}$ for $n\geq 1$.
Then $a_nb_n = (-1)^n$. 
\end{enumerate}
\end{solution}

\begin{exercise}
\label{exo:dinfty1}
Prove Theorem~\ref{t:dinfty1}.
\end{exercise}
\begin{solution}
Suppose that $a_n\to\pinf$. (The proof when $a_n\to\minf$ is similar.)
For $\gamma=0$, there exists $n_0\in\NN$ such that
whenever $n\geq n_0$, we have $a_n>0$.
It follows that the sequence $(1/a_n)_{n\geq n_0}$ is a 
well-defined sequence of positive real numbers.
Now let $\varepsilon>0$. Since $a_n\to\pinf$,
there exists $N\in\NN$ such that $a_n>\frac{1}{\varepsilon}$,
for every $n\geq N$.
It follows that for every $n\geq N$, we have
\begin{equation*}
\left|\frac{1}{a_n}-0\right|
=\frac{|1|}{|a_n|} = \frac{1}{a_n} < \frac{1}{\frac{1}{\varepsilon}} =
\varepsilon.
\end{equation*}
\end{solution}

\begin{exercise}
\label{exo:dinfty2}
Prove Theorem~\ref{t:dinfty2} when the sequence has 
exclusively positive terms. 
\end{exercise}
\begin{solution} 
Suppose that $(a_n)_\nnn$ is a sequence of positive numbers. 
Also assume that $a_n\to 0$. 
We must show that $(1/a_n)_\nnn$ diverges to $\pinf$. 
To this end, let $\gamma>0$. 
Then $\varepsilon := 1/\gamma > 0$.
Because $a_n\to 0$, there exists $N\in\NN$ such that 
if $n\geq N$, then $a_n = |a_n| = |a_n-0|<\varepsilon$;
hence $1/a_n > 1/\varepsilon = \gamma$
and we are done.
\end{solution}

\begin{exercise}
\label{exo:seite24}
Prove \eqref{e:seite24} by mathematical induction.
\end{exercise}
\begin{solution}
The statement to prove is
\begin{equation*}
\sum_{k=1}^n \frac{1}{k(k+1)} = \frac{n}{n+1},
\quad\text{for every integer $n\geq 1$.}
\end{equation*}
Denote the corresponding statement by $S(n)$.

\textbf{Base Case}:
When $n=1$, $S(n)$ reads
$\sum_{k=1}^{1} \frac{1}{k(k+1)} = \frac{1}{1(1+1)} = 
\frac{1}{2} = \frac{1}{1+1}. \checkmark$

\textbf{Inductive Step}:
Suppose that $S(n)$ is true for some integer $n\geq 1$.
Then, using the inductive hypothesis in \eqref{e:201021a}, 
we obtain 
\begin{subequations}
\begin{align}
\sum_{k=1}^{n+1} \frac{1}{k(k+1)} 
&= \left(\sum_{k=1}^n \frac{1}{k(k+1)}\right) + \frac{1}{(n+1)(n+2)}\\
&= \frac{n}{n+1} +  \frac{1}{(n+1)(n+2)}\label{e:201021a}\\
&= \frac{n(n+2)}{(n+1)(n+2)} +  \frac{1}{(n+1)(n+2)}\\
&= \frac{n^2+2n+1}{(n+1)(n+2)}\\
&= \frac{(n+1)^2}{(n+1)(n+2)}\\
&= \frac{n+1}{n+2},
\end{align}
\end{subequations}
which verifies the statement $S(n+1)$.
The proof is thus complete by the principle of mathematical induction.
\end{solution}

\begin{exercise}
Determine the limit of each of the following infinite series.
\begin{enumerate}
\item $1+\frac{1}{2} + \frac{1}{4}+\frac{1}{8} + \frac{1}{16}+\cdots$.\\[-2mm]
\item $1-\frac{1}{2} + \frac{1}{4}-\frac{1}{8} + \frac{1}{16}\mp \cdots$. 
\end{enumerate}
\end{exercise}
\begin{solution}
Both series are geometric series $\sum_{k=0}^\infty x^k$, 
with $x=\pm\tfrac{1}{2}$. It follows from 
Theorem~\ref{t:geoseries} that the limits
are $1/(1\mp\tfrac{1}{2})$.
Thus, 
$1+\frac{1}{2} + \frac{1}{4}+\frac{1}{8} +\cdots = 2$ and
$1-\frac{1}{2} + \frac{1}{4}-\frac{1}{8} \pm \cdots = \frac{2}{3}$. 
\end{solution}

\begin{exercise}
Convert $1.23455555\ldots = 1.234\overline{5}$ into a fraction.
\end{exercise}
\begin{solution}
Set $x :=  1.234\overline{5}$.
Then, using the geometric series, 
\begin{align*}
x&= 1 + \frac{2}{10} + \frac{3}{100} + \frac{4}{1000}
+ \frac{5}{10000}\Big(1 + \frac{1}{10} + \frac{1}{100} + \cdots\Big)\\
&=  1 + \frac{2}{10} + \frac{3}{100} + \frac{4}{1000}
+ \frac{5}{10000}\frac{1}{1-\frac{1}{10}}
=  1 + \frac{2}{10} + \frac{3}{100} + \frac{4}{1000}
+ \frac{5}{1000}\frac{1}{10- 1}\\
&=  1 + \frac{2}{10} + \frac{3}{100} + \frac{4}{1000}
+ \frac{5}{1000}\frac{1}{9}
=  \frac{9000}{9000} + \frac{2\cdot 900}{9000} + 
\frac{3\cdot 90}{9000} + \frac{4\cdot 9}{9000}
+ \frac{5}{9000}\\
&=\frac{9000+1800+270+36+5}{9000}
= \frac{11111}{9000}.
\end{align*}
\end{solution}

\begin{exercise}
Let $\alpha$ and $\beta$ be real numbers.
Define a sequence $(a_n)_\nnn$ by
\begin{equation*}
a_0 := \alpha,\quad
a_1 := \beta,\quad
a_{n} := \frac{a_{n-2}+a_{n-1}}{2}, 
\;\;\text{for every $n\geq 2$.}
\end{equation*}
Show that $(a_n)_\nnn$ converges and determine its limit.\\
\emph{Hint:} 
For every integer $k\geq 0$, we have
$a_{k+1}-a_k = (-1/2)^k(\beta-\alpha)$.
\end{exercise}
\begin{solution}
The identity in the hint is easily proved by induction on $k\geq 0$.
Indeed, the identity is clear for $k=0$.

Now assume the identity holds for some integer $k\geq 0$.
Then
\begin{align*}
a_{(k+1)+1} - a_{k+1} & = a_{k+2} - a_{k+1} 
= \frac{a_{k}+a_{k+1}}{2} - a_{k+1}
= \frac{a_k-a_{k+1}}{2}\\
&= (-1/2)(a_{k+1}-a_k)
= (-1/2)(-1/2)^k(\beta-\alpha)
= (-1/2)^{k+1}(\beta-\alpha),
\end{align*}
and, by the Principle of Mathematical Induction, the identity thus
holds for all $k\in\NN$.

Hence, for every $n\geq 1$,
\begin{align*}
a_n &= a_0 + (a_1-a_0)+\cdots + (a_n-a_{n-1})
=\alpha + \sum_{k=0}^{n-1}(a_{k+1}-a_k)
=\alpha + \sum_{k=0}^{n-1}\left(\frac{-1}{2}\right)^k(\beta-\alpha)\\
&\to \alpha + \frac{2}{3}(\beta-\alpha)
= \alpha + \frac{1}{1-(-1/2)}(\beta-\alpha) 
= \alpha + \frac{2}{3}(\beta-\alpha) 
= \frac{\alpha+2\beta}{3}.
\end{align*}
\end{solution}

\begin{exercise}
\label{exo:strangeharm}
Set $h_n = 1 + \tfrac{1}{2} + \cdots + \tfrac{1}{n}$ for every $n\geq
1$, which yields the sequence of \emph{harmonic
numbers}\index{Harmonic numbers}
$(h_n)_\nnn$. 
Show that
\begin{equation*}
\sum_{k=2}^{n} \frac{1}{k(k-1)}h_k = 2 - \frac{1}{n+1} -
\frac{h_{n+1}}{n}, \quad\text{for every $n\geq 1$.}
\end{equation*}
\end{exercise}
\begin{solution}
The statement to prove is 
\begin{equation*}
\sum_{k=2}^{n} \frac{1}{k(k-1)}h_k = 2 - \frac{1}{n+1} -
\frac{h_{n+1}}{n}, \quad\text{for every $n\geq 1$.}
\end{equation*}
Denote this statement by $S(n)$.

\textbf{Base Case}:
When $n=1$, then $S(n)$ turns into 
$0 = 2 - \tfrac{1}{1+1} - \tfrac{3}{2}. \checkmark$

\textbf{Inductive Step}:
Suppose that $S(n)$ is true for some integer $n\geq 1$.
Then
\begin{align*}
\sum_{k=2}^{n+1} \frac{1}{(k-1)k}h_k &= 
\bigg(\sum_{k=2}^{n} \frac{1}{(k-1)k} {h_k}\bigg) +
\frac{1}{(n+1)n}h_{n+1}
= 2 - \frac{1}{n+1} - \frac{h_{n+1}}{n} + \frac{1}{(n+1)n}h_{n+1}\\
&= 2 - \frac{1}{n+1} - \bigg(\frac{1}{n}-\frac{1}{(n+1)n}\bigg){h_{n+1}}
= 2 - \frac{1}{n+1} - \frac{1}{n+1}{h_{n+1}}\\
&= 2 - \frac{1}{n+1} - \frac{1}{n+1}\bigg(h_{n+2}-\frac{1}{n+2}\bigg)
= 2 - \frac{1}{n+1}\bigg(1-\frac{1}{n+2}\bigg) - \frac{1}{n+1}h_{n+2}\\
&= 2 - \frac{1}{n+2} - \frac{1}{n+1}h_{n+2}
= 2 - \frac{1}{(n+1)+1} - \frac{h_{(n+1)+1}}{n+1},
\end{align*}
and, by the Principle of Mathematical Induction, the identity thus
holds for all $n\geq 1$.
\end{solution}

\begin{exercise}
\label{exo:Binet}
Use strong induction to prove that the sequence of Fibonacci numbers
$(f_n)_\nnn$ considered in
Example~\ref{ex:seq}\ref{ex:seq:fibo}
satisfies 
\begin{equation*}
\ww f_n = \left(\frac{1+\ww}{2}\right)^n -
\left(\frac{1-\ww}{2}\right)^n,
\end{equation*}
for every $\nnn$. 
\end{exercise}
\begin{solution}
\textbf{Base Case:}
When $n=0$, the identity turns into $0=0$, which is clearly true.
When $n=1$, the identity turns into
$\ww\cdot 1 = \ww = (1+\ww)/2 - (1-\ww)/2$, which is again true. 

\textbf{Inductive Step:}
Suppose the identity holds for some $n-1$ and $n$, where $n\geq
1$. 
Then, using that
\begin{equation*}
\left(\frac{1+\ww}{2}\right)^2 = 
\frac{1+5+2\ww}{4} = \frac{3+\ww}{2}
\end{equation*}
and that
\begin{equation*}
\left(\frac{1-\ww}{2}\right)^2 = 
\frac{1+5-2\ww}{4} = \frac{3-\ww}{2}, 
\end{equation*}
we obtain 
\begin{align*}
\ww f_{n+1} &= \ww (f_{n-1}+f_{n}) 
=\ww f_{n-1} + \ww f_n\\
&= 
\left(\frac{1+\ww}{2}\right)^{n-1} -
\left(\frac{1-\ww}{2}\right)^{n-1}
+ \left(\frac{1+\ww}{2}\right)^n -
\left(\frac{1-\ww}{2}\right)^n\\
&=\left(\frac{1+\ww}{2}\right)^{n-1}
\left(1 + \frac{1+\ww}{2}\right)-
\left(\frac{1-\ww}{2}\right)^{n-1}
\left(1 + \frac{1-\ww}{2}\right)\\
&=\left(\frac{1+\ww}{2}\right)^{n-1}
\left(\frac{3+\ww}{2}\right)-
\left(\frac{1-\ww}{2}\right)^{n-1}
\left(\frac{3-\ww}{2}\right)\\
&=\left(\frac{1+\ww}{2}\right)^{n-1}
\left(\frac{1+\ww}{2}\right)^2-
\left(\frac{1-\ww}{2}\right)^{n-1}
\left(\frac{1-\ww}{2}\right)^2\\
&=\left(\frac{1+\ww}{2}\right)^{n+1}
-\left(\frac{1-\ww}{2}\right)^{n+1},
\end{align*}
as required. This completes the proof of the inductive step.

Therefore, by the Principle of Strong Mathematical Induction,
the formula is proven. 
\end{solution}

\begin{exercise}
\label{exo:Binet2}
Explain why for all $n$ sufficiently large, 
we can compute the Fibonacci numbers 
$(f_n)_\nnn$ 
(see Example~\ref{ex:seq}\ref{ex:seq:fibo})
by 
\begin{equation*}
 f_n = \operatorname{round}\left(\frac{1}{\ww}\Big(\frac{1+\ww}{2}\Big)^n\right)
\end{equation*}
where $\operatorname{round}(x)$ returns the nearest integer to
$x$. 
\emph{Hint:} Exercise~\ref{exo:Binet}
\end{exercise}
\begin{solution}
By Exercise~\ref{exo:Binet}, we have, for every $\nnn$, 
\begin{equation*}
f_n = \frac{1}{\ww}\left(\frac{1+\ww}{2}\right)^n -
\frac{1}{\ww}\left(\frac{1-\ww}{2}\right)^n. 
\end{equation*}
Now note that $(1-\ww)^2 = 1 -2\ww+5 = 6 - 2\ww < 4$ because
$\sqrt{5}>1$. 
It follows that
\begin{equation*}
\left| \frac{1-\ww}{2}\right| < 1.
\end{equation*}
Hence it follows from 
Example~\ref{ex:geoseq} (geometric sequence) and
Corollary~\ref{c:c-mlaw} (constant-multiple law) that 
\begin{equation*}
-\frac{1}{\ww}\left(\frac{1-\ww}{2}\right)^n \to 0.
\end{equation*}
Thus,
\begin{equation*}
f_n = \frac{1}{\ww}\left(\frac{1+\ww}{2}\right)^n 
+ \text{(a sequence converging to $0$)}.
\end{equation*}
Eventually, the sequence converging to $0$ will have absolute
value $<0.5$; from then onwards, since it is clear that all
Fibonacci numbers are \emph{integers}, the result will hold. 

\emph{Note:} This approximation is actually working for 
\emph{every} $\nnn$ --- try it with Maple or your Calculator!
\end{solution}

\begin{exercise}
\label{exo:IsaoPat}
Let $(a_n)_\nnn$ be a sequence in $\left]0,1\right[$ and define
a sequence $(b_n)_\nnn$ by
$b_0 = a_0$ and 
\begin{equation}
\label{e:140811b}
b_{n+1} = \frac{a_{n+1}b_n}{a_{n+1}+b_n - a_{n+1}b_n}, 
\quad
\text{for every $n\geq 1$.}
\end{equation}
Show that
\begin{equation}
\label{e:140811a}
b_n = \frac{1}{\big(\sum_{i=0}^n 1/a_i\big)-n},
\quad\text{for every $n\in\NN$.}
\end{equation}
\end{exercise}
\begin{solution}
We show this by induction. 

\textbf{Base Case}:
When $n=0$, then \eqref{e:140811a} reads
$b_0 = 1/(1/a_0 - 0) = a_0.~ \checkmark$

\textbf{Inductive Step}:
Suppose that \eqref{e:140811a} is true for some integer $n\in\NN$. 
Then
\begin{equation*}
\sum_{i=0}^n \frac{1}{a_i} = n + \frac{1}{b_n}.
\end{equation*}
and hence
\begin{subequations}
\label{e:140811c}
\begin{align}
\sum_{i=0}^{n+1} \frac{1}{a_i}
&= \left(\sum_{i=0}^n \frac{1}{a_i}\right) + \frac{1}{a_{n+1}}\\
&=  n + \frac{1}{b_n} +  \frac{1}{a_{n+1}}.
\end{align}
\end{subequations}
On the other hand,
by \eqref{e:140811b}, 
\begin{equation}
\label{e:140811d}
\frac{1}{b_{n+1}} = \frac{a_{n+1}+b_n - a_{n+1}b_n}{a_{n+1}b_n} = 
\frac{1}{b_n}+\frac{1}{a_{n+1}} - 1.
\end{equation}
Combining 
\eqref{e:140811c} and \eqref{e:140811d}, we obtain 
\begin{equation*}
\sum_{i=0}^{n+1} \frac{1}{a_i}
= n + 1 + \frac{1}{b_{n+1}}.
\end{equation*}
Solving this for $b_{n+1}$ is precisely 
\eqref{e:140811a} with $n$ replaced by $n+1$.

Therefore, by the Principle of Mathematical Induction,
the identity holds for every $n\in\NN$.
\end{solution}

\begin{exercise}
\label{exo:IsaoPat2}
Consider Exercise~\ref{exo:IsaoPat} and
assume that $a_n = (1/2)^n$. 
Find (i) a simple formula for $b_n$
and (ii) $\lim_\nnn b_n$. 
\end{exercise}
\begin{solution}
Let $\alpha \in\left]0,1\right[$ 
and assume that $a_n = \alpha^n$ for every $\nnn$.
We have 
\begin{equation*}
\sum_{i=0}^n 1/a_i = \sum_{i=0}^n (1/\alpha)^i = 
\frac{1-(1/\alpha)^{n+1}}{1-(1/\alpha)}
\end{equation*}
by Theorem~\ref{t:geosum}. 
Hence
\begin{equation*}
b_n = \frac{1}{\frac{1-(1/\alpha)^{n+1}}{1-(1/\alpha)}
 - n}
\end{equation*}
In our present question, $\alpha = 1/2$ and so $1/\alpha=2$.
We thus obtain
\begin{equation*}
b_n = \frac{1}{\frac{1-2^{n+1}}{1-2} - n} = \frac{1}{2^{n+1}-(n+1)}, 
\end{equation*}
which gives (i). 
Next, the Binomial Theorem yields 
\begin{equation*}
2^n = (1+1)^n = \sum_{k=0}^n {n \choose k} \geq {n \choose 1} + {n \choose 2} 
= n + n(n-1)/2. 
\end{equation*}
Using this for $n+1$ instead of $n$, we obtain
$ 2^{n+1} - (n+1) \geq n(n+1)/2$ 
and hence that 
\begin{equation*}
0 \leq b_n = \frac{1}{2^{n+1}-(n+1)} \leq \frac{1}{ n(n+1)/2} =
\frac{2}{n(n+1)}
\to 0, 
\end{equation*}
which gives (ii). 
\end{solution}

\begin{exercise}
\label{exo:absantoabsa}
Prove Proposition~\ref{p:absantoabsa}.
\end{exercise}
\begin{solution}
Let $\varepsilon>0$. 
Since $a_n\to\alpha$, by definition, there exists $N\in\NN$ such that
$(\forall n\geq N)$ $|a_n-\alpha|<\varepsilon$. 
On the other hand, 
$(\forall n\geq N)$ $||a_n|-|\alpha||\leq |a_n-\alpha|$
by Corollary~\ref{c:triangle}. 
Altogether,
$(\forall n\geq N)$ $||a_n|-|\alpha|| <\varepsilon$
which shows that $|a_n|\to|\alpha|$. 
\end{solution}

\begin{exercise}
Let $(a_n)_\nnn$ be a sequence in $\RR$ such that $a_n\to\alpha$. 
Show that $\max\{a_n,0\}\to\max\{\alpha,0\}$. 
\emph{Hint:} Lemma~\ref{l:maxmin} and Proposition~\ref{p:absantoabsa}.
\end{exercise}
\begin{solution}
Lemma~\ref{l:maxmin} implies that
$$(\forall x\in\RR)\quad 
\max\{x,0\} = \frac{x+|x|}{2}.$$
Using the limit laws (Theorem~\ref{t:sumlaw} and
Corollary~\ref{c:c-mlaw}) along with
Proposition~\ref{p:absantoabsa},  we see that
\begin{equation*}
\max\{a_n,0\} = \frac{a_n+|a_n|}{2} \to \frac{\alpha+|\alpha|}{2}
= \max\{\alpha,0\},
\end{equation*}
as claimed. 
\end{solution}

\begin{exercise}
Let $(a_n)_\nnn$ and $(b_n)_\nnn$ be sequences in $\RR$ such that
$a_n\to\alpha$ and $b_n\to\beta$.
Show that $\max\{a_n,b_n\}\to\max\{\alpha,\beta\}$. 
\emph{Hint:} Lemma~\ref{l:maxmin} and Proposition~\ref{p:absantoabsa}.
\end{exercise}
\begin{solution}
Lemma~\ref{l:maxmin} implies that
$$(\forall x\in\RR)(\forall y\in\RR) \quad 
\max\{x,y\} = \frac{x+y+|x-y|}{2}.$$
Using the limit laws (Theorem~\ref{t:sumlaw} and
Corollary~\ref{c:c-mlaw}) along with
Proposition~\ref{p:absantoabsa},  we see that
\begin{equation*}
\max\{a_n,b_n\} = \frac{a_n+b_n+|a_n-b_n|}{2} \to \frac{\alpha+\beta+|\alpha-\beta|}{2}
= \max\{\alpha,\beta\},
\end{equation*}
as claimed. 
\end{solution}

\begin{exercise}[more on the logistic sequence]
\index{logistic sequence}
Let $r\in[0,1]$,
let $x_0\in [0,1]$, and define the logistic sequence via 
\begin{equation*}
x_{n+1} = r(1-x_n)x_n
\end{equation*}
for every $\nnn$. 
Show that $x_n\to 0$. 
\emph{Hint:}
Discuss cases. The hard case turns out to be easy if 
we use mathematical induction and 
Exercise~\ref{exo:200724c}.
\end{exercise}
\begin{solution}
An easy mathematical induction gives $x_n\in[0,1]$ for every $\nnn$.

If $r=0$, then clearly $(x_n)_\nnn = (x_0,0,0,0,\ldots)$ converges to $0$.

So assume that $0<r\leq 1$.
If $x_0=0$, then $(x_n)_\nnn = (0,0,0,\ldots)$ clearly converges to $0$.
So we also assume that $x_0>0$.
We show by mathematical induction that 
\begin{equation}
\label{e:logseq1}
x_n \leq \frac{x_0}{nx_0+r^{-n}}
\end{equation}
for every $\nnn$. 
The base case is clear since 
\begin{equation*}
x_0 = \frac{x_0}{0+1} = \frac{x_0}{0\cdot x_0 + r^{-0}}.
\end{equation*}
Now assume that for some integer $n\geq 0$, we have 
\begin{equation*}
x_n \leq \frac{x_0}{nx_0+r^{-n}}.
\end{equation*}
By Exercise~\ref{exo:200724c}, 
\begin{equation*}
x_{n+1} \leq \frac{x_0}{(n+1)x_0+r^{-(n+1)}}.
\end{equation*}
This completes the inductive step and therefore 
\eqref{e:logseq1} is proven. 
Note that \eqref{e:logseq1} implies 
\begin{equation*}
0\leq x_n \leq \frac{x_0}{nx_0+r^{-n}} \leq \frac{x_0}{nx_0} = \frac{1}{n}\to 0.
\end{equation*}
Hence $(x_n)_\nnn$ converges to $0$ by the Squeeze Theorem
(Theorem~\ref{t:squeeze}).
\end{solution}

\begin{exercise}[even more on the logistic sequence]
\index{logistic sequence}
Let $x_0\in \RR$, such that $|x_0|\geq 4$ and define the logistic sequence 
(with $r=1$) via 
\begin{equation*}
x_{n+1} = (1-x_n)x_n.
\end{equation*}
Show that $x_n\to -\infty$. 
\emph{Hint:}
Exercise~\ref{exo:200724d}.
\end{exercise}
\begin{solution}
First, let us prove by induction that 
\begin{equation}
\label{e:200724rain}
x_n \leq -4\cdot 3^n
\end{equation}
for every integer $n\geq 1$.

By Exercise~\ref{exo:200724d}, we have 
\begin{equation*}
x_1 = (1-x_0)x_0 = -x_0^2 + x_0 \leq -3|x_0|\leq -3\cdot 4=-4\cdot 3^1
\end{equation*}
and so the base case holds.

Now assume that $n\geq 1$ is an integer such that 
$x_n \leq -4\cdot 3^n$. 
Then $x_n<0$ and hence 
$|x_n| = -x_n \geq 4\cdot 3^n \geq 4$ and 
Exercise~\ref{exo:200724d} now yields
\begin{align*}
x_{n+1} &= (1-x_n)x_n = -x_n^2 + x_n \leq -3|x_n|
= -3(-x_n)\\
&= 3x_n \leq 3\big(-4\cdot 3^n) = -4\cdot 3^{n+1}
\end{align*}
which completes the proof of the inductive step. 

We have verified \eqref{e:200724rain} and the result now follows.
\end{solution}

\begin{exercise}[YOU be the marker!] 
Consider the following statement 
\begin{equation*}
\text{``If $(a_n)_\nnn$ is a sequence and $\alpha\in\RR$ such that $|a_n|\to|\alpha|$, 
then $a_n\to\alpha$.''}
\end{equation*}
and the following ``proof'':
\begin{quotation}
Let $\varepsilon>0$.\\
Because $|a_n|\to|\alpha|$,
there exists $N\in\NN$ such that 
$n\geq N$ 
$\Rightarrow$ 
$\big||a_n|-|\alpha|\big|<\varepsilon$.\\
It follows that 
$|a_n-\alpha|\leq \big||a_n|-|\alpha|\big|<\varepsilon$
and hence $a_n\to\alpha$. 
\end{quotation}
Why is this proof wrong?
\end{exercise}
\begin{solution}
The last line is wrong: We don't generally have 
$|a_n-\alpha|\leq \big||a_n|-|\alpha|\big|$,
the inequality goes the wrong way!

A counterexample to the statement is the sequence $(a_n)_\nnn= ((-1)^n)_\nnn$ 
and $\alpha =1$.
\end{solution}

\begin{exercise}[TRUE or FALSE?]
Mark each of the following statements as either true or false. 
Briefly justify your answer.
\begin{enumerate}
\item ``If $a_n\to \alpha$, $b_n\to\beta$, and $a_n<b_n$, then 
$\alpha<\beta$.''
\item ``If $(a_n)_\nnn$ is a convergent sequence, 
then $\sum_{\nnn}a_n$ is a convergent series.''
\item ``If $p(x)$ is a polynomial and $a_n\to\alpha$, 
then $p(a_n)\to p(\alpha)$.''
\item ``If $a_n\to\alpha\neq 0$, then $(a_n/|a_n|)_\nnn$ converges.''
\end{enumerate}
\end{exercise}
\begin{solution}
(i): FALSE: Consider $a_n=0\to 0=\alpha$ and $b_n=1/2^n\to\beta =0$. 

(ii): FALSE: $a_n\equiv 1\to 1$ but $\sum_\nnn a_n$ diverges to $+\infty$.

(iii): TRUE: This follows from the limit laws. 

(iv): TRUE: Suppose first that $\alpha>0$.
Then eventually $a_n$ is positive and so $a_n/|a_n|=1$.
Hence $a_n/|a_n|\to 1$ in this case. 
And if $\alpha<0$, then $a_n/|a_n|\to -1$. 
\end{solution} 
\chapter{The Completeness Axiom}

\section{The Trouble with $\sqrt{2}$}

Even the Greeks knew already that $\sqrt{2}\notin\mathbb{Q}$
--- the proof is a gorgeous example of the 
proof-by-contradiction
technique.

\begin{theorem}[$\sqrt{2}$ is irrational]
\label{t:squareroot2}\index{irrational}
There is no rational number $x$ such that $x^2 = 2$.
\end{theorem}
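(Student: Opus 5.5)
We will argue by contradiction, in the spirit of the proof-by-contradiction technique from Chapter~\ref{cha:background}. Suppose there is $x\in\QQ$ with $x^2=2$. Since $(-x)^2=x^2$ by Proposition~\ref{p:may26:4}\ref{p:may26:4v}, we may assume $x>0$ (clearly $x\neq 0$, as $0^2=0\neq 2$). Write $x=p/q$ with $p,q$ positive integers.

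The plan is to avoid relying on ``lowest terms'' as a black box. Instead we choose the representation with the \emph{smallest} possible positive denominator $q$. Such a $q$ exists by the well-ordering of the positive integers, which follows from strong induction (Exercise~\ref{exo:strongind}).

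From $p^2=2q^2$ we see that $p^2$ is even. By the contrapositive of Proposition~\ref{p:120604a}, in the form of the proposition that an integer with odd square is odd, $p$ must be even; write $p=2k$. Substituting gives $4k^2=2q^2$, so $q^2=2k^2$ is even, and by the same reasoning $q=2l$ for some positive integer $l$. Then $x=p/q=k/l$ with $0<l<q$. This contradicts the minimality of $q$, and hence no such rational $x$ exists.

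The main obstacle is justifying the choice of a minimal denominator rigorously, since the notes have not stated well-ordering explicitly. There are two ways to handle it. One is to phrase the argument as a strong induction on $q$: show that no fraction $p/q$ squares to $2$, and note that each counterexample with denominator $q$ produces one with the smaller denominator $q/2$, which the inductive hypothesis forbids. The other is a direct infinite-descent remark, that repeated halving would give an infinite strictly decreasing sequence of positive integers, which is impossible. I would use the strong-induction phrasing, since Exercise~\ref{exo:strongind} is available.
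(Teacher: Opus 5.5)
The step where you conclude that $p$ is even is justified incorrectly. You need the implication ``$p^2$ even $\Rightarrow$ $p$ even'', and that is the \emph{converse} of Proposition~\ref{p:120604a}, not its contrapositive.

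You do correctly identify the contrapositive of that proposition as ``an integer with odd square is odd''. But a contrapositive is logically equivalent to the original statement. It therefore only tells you about integers whose square is odd, and says nothing about $p$, whose square is even. Passing from ``$n^2$ odd $\Rightarrow$ $n$ odd'' to ``$n^2$ even $\Rightarrow$ $n$ even'' is the inverse fallacy of Exercise~\ref{exo:140818a}.

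The correct justification is the contrapositive of ``$p$ odd $\Rightarrow$ $p^2$ odd'': if $p=2j+1$, then $p^2=2(2j^2+2j)+1$ is odd. This is Exercise~\ref{exo:evensquare}, which is exactly what the paper invokes. You need it twice, once for $p$ and once for $q$.

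With that repair, your argument is the paper's argument: from $p^2=2q^2$, both $p$ and $q$ are even, which contradicts a normalization of the fraction. The one genuine difference is how you handle ``lowest terms''. The paper simply assumes $m/n$ is in lowest terms, which tacitly presupposes that every rational number has such a representation. You instead choose a minimal positive denominator, using well-ordering obtained from strong induction (Exercise~\ref{exo:strongind}), and reach the contradiction through $0<l<q$ rather than through a common factor $2$. This makes the hidden presupposition explicit and is a modest gain in rigor at essentially no cost. Your reduction to $x>0$ is harmless, though the paper's version does without it.
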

\begin{proof}\index{Proof by contradiction (Example)}
Suppose to the contrary that there exists $x\in\mathbb{Q}$ such that
$x^2=2$, say
\begin{equation}
x = \frac{m}{n}, 
\quad
\text{where $m\in\ZZ$ and $n\in\ZZ\smallsetminus\{0\}$}
\quad
\text{and}
\quad
x^2 =2,
\end{equation}
and where we assume that the fraction is in lowest terms 
(all common factors except for $\pm 1$ are canceled out). 
Then $2 = x^2 = m^2/n^2$ and so 
\begin{equation}
2n^2 = m^2.
\end{equation}
This means that $m^2$ is \emph{even}. 
Therefore, $m$ is even (see Exercise~\ref{exo:evensquare}), say
\begin{equation}
m= 2k, \quad \text{for some integer $k$.}
\end{equation}
But then the equality $2n^2 = m^2$ turns into
$2n^2 = (2k)^2 = 4k^2$.
Therefore, $n^2= 2k^2$ and so $n^2$ is even.
Again, we deduce that $n$ is even, say 
\begin{equation}
n= 2l, \quad \text{for some integer $l$.}
\end{equation}
But now $m$ and $n$ do have a common factor, namely $2$ --- contradiction!
\end{proof}

Hence, the existence of $\sqrt{2}$ does not follow
from any of the previous axioms for a field.
One last axiom is needed, the so-called completeness axiom.
Its precise formulation requires a new notion for sequences.

\section{Cauchy Sequences and the Completeness Axiom}

\begin{definition}[Cauchy sequence]
Let $(a_n)_\nnn$ be a sequence of real numbers.
Then $(a_n)_\nnn$ is a \textbf{Cauchy sequence} if
\index{Cauchy sequence}
\begin{quotation}
\noindent
for every $\varepsilon>0$, there exists $N\in\NN$ such that\\
for all $m,n$ in $\NN$\quad
[$m\geq N$ and $n\geq N$]\;\;
$\Rightarrow$ \;\; $|a_n-a_m| < \varepsilon$.
\end{quotation}
\end{definition}

\begin{theorem}
\label{t:convCauchy}
Every convergent sequence is also a Cauchy sequence.
\end{theorem}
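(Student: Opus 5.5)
We argue directly from the definitions, using an $\varepsilon/2$ argument with the triangle inequality (Theorem~\ref{t:triangle}), in the same spirit as the proof of Theorem~\ref{t:sumlaw}.

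Let $(a_n)_\nnn$ converge, say $a_n\to\ell$, and let $\varepsilon>0$. Since $\thalb\varepsilon>0$, Definition~\ref{d:limit} provides $N\in\NN$ such that $n\geq N$ implies $|a_n-\ell|<\thalb\varepsilon$. We claim that this same $N$ works in the definition of a Cauchy sequence.

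Indeed, let $m,n$ be in $\NN$ with $m\geq N$ and $n\geq N$. We insert $\ell$ and use Theorem~\ref{t:triangle} together with Proposition~\ref{p:absval}\ref{p:absvaliii+}:
\begin{equation*}
|a_n-a_m| = |(a_n-\ell)+(\ell-a_m)| \leq |a_n-\ell| + |a_m-\ell| < \thalb\varepsilon+\thalb\varepsilon = \varepsilon.
\end{equation*}
Since $\varepsilon>0$ was arbitrary, $(a_n)_\nnn$ is a Cauchy sequence.

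There is no real obstacle. The only points needing care are to apply the convergence hypothesis with $\thalb\varepsilon$ rather than $\varepsilon$, and to note that the single threshold $N$ controls both indices $m$ and $n$ at once, so no maximum of two thresholds is needed.
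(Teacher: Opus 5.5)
Your proof is correct and follows the paper's argument essentially line for line. Like the paper, you apply convergence with $\varepsilon/2$ to get a single threshold $N$, insert $\ell$, and combine the triangle inequality with $|\ell-a_m|=|a_m-\ell|$ to bound $|a_n-a_m|$ by $\varepsilon$.
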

\begin{proof}
Let $(a_n)_\nnn$ be a convergent sequence, with limit $\ell$.
Let $\varepsilon > 0$.
Then, since $\varepsilon/2>0$,
there exists $N\in\NN$ such that 
\begin{equation}
|a_n-\ell| < \frac{\varepsilon}{2},
\quad\text{for all $n\geq N$.}
\end{equation}
It follows that for all $n\geq N$ and $m\geq N$, we have
\begin{align*}
|a_n-a_m| &= |(a_n-\ell) + (\ell-a_m)|\\
&\leq |a_n-\ell| + |\ell-a_m| \\
&=|a_n-\ell|+|a_m-\ell|\\
&< \frac{\varepsilon}{2} + \frac{\varepsilon}{2},
\end{align*}
as required. 
\end{proof}

\begin{definition}[Completeness Axiom]
\label{d:compaxiom}\index{Completeness Axiom}
The real numbers are \textbf{complete}, i.e.,
every Cauchy sequence in $\RR$ is actually convergent.
\end{definition}

This is an incredibly important property.
It encodes convergence without mentioning the limit,
which is crucial in applications where one typically
does not know beforehand what the solution to a problem is.
We mention in passing that it can be shown
that every complete ordered field with the archimedean property is
essentially indistinguishable from the real numbers.
Furthermore, one can construct the real numbers as
equivalence classes of Cauchy sequences of rational numbers
but this is a fairly lengthy process.

\section{Series Expansions of Real Numbers}

As a first application of the completeness axiom, 
we see that the decimal expansions lead to real numbers.
In fact, this works for any base, not just the usual base $10$.

\begin{theorem}
\label{t:seriesnumber}
Let $b$ be an integer such that $b\geq 2$,
and let $k\in\NN$.
Suppose that for all $n\geq -k$, we have 
$a_n\in\{0,1,\ldots,b-1\}$, i.e., the $a_n$ are the digits with respect to
the base $b$.
Then \index{Decimal expansion of real numbers}
\begin{equation}
\pm \sum_{n=-k}^\infty a_n b^{-n}
\end{equation}
is a Cauchy sequence and hence converges to a real number.
\end{theorem}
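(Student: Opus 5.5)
The plan is to work with the sequence of partial sums
\begin{equation*}
s_N := \sum_{n=-k}^{N} a_n b^{-n}, \qquad N\geq -k,
\end{equation*}
and show directly that $(s_N)_{N\geq -k}$ is a Cauchy sequence. The sign $\pm$ is harmless. By Corollary~\ref{c:c-mlaw} with $c=-1$, together with the fact that $|(-s_N)-(-s_M)| = |s_N-s_M|$, the sequence $(-s_N)$ is Cauchy (and convergent) exactly when $(s_N)$ is. So it suffices to treat the $+$ sign. Once the Cauchy property is established, the Completeness Axiom (Definition~\ref{d:compaxiom}) immediately gives convergence to a real number.

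The key estimate concerns the tail. Let $M>N\geq -k$. Since every digit satisfies $0\leq a_n\leq b-1$,
\begin{equation*}
0\leq s_M-s_N=\sum_{n=N+1}^{M} a_n b^{-n} \leq (b-1)\sum_{n=N+1}^{M} b^{-n}.
\end{equation*}
I will factor out $b^{-(N+1)}$ and apply the geometric sum, Theorem~\ref{t:geosum}, with $x=1/b\neq 1$:
\begin{equation*}
(b-1)\,b^{-(N+1)}\sum_{j=0}^{M-N-1} b^{-j} = (b-1)\,b^{-(N+1)}\,\frac{1-b^{-(M-N)}}{1-b^{-1}} \leq (b-1)\,b^{-(N+1)}\,\frac{b}{b-1} = b^{-N}.
\end{equation*}
This yields the clean bound $|s_M-s_N|\leq b^{-N}$ for all $M\geq N$. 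The case $M=N$ is trivial, and the case $M<N$ follows by symmetry.

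To finish, let $\varepsilon>0$. Since $0<1/b<1$, Corollary~\ref{c:100920} supplies $N_0\in\NN$ with $(1/b)^{N_0}<\varepsilon$. For all $m,n\geq N_0$, I then get $|s_m-s_n|\leq b^{-\min\{m,n\}}\leq b^{-N_0}<\varepsilon$, using that $b^{-j}$ is decreasing in $j$ because $b>1$.

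The main obstacle is the bookkeeping in the tail estimate: the index shift inside the geometric sum, and the check that $1/(1-b^{-1}) = b/(b-1)$ exactly cancels the factor $(b-1)$. Beyond that, I only need to make sure every ordered-field manipulation used (for instance, $1-b^{-(M-N)}\leq 1$, and the positivity of $b-1$ and $b^{-N}$) rests on Proposition~\ref{p:ofield}.
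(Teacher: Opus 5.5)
Your proposal is correct and follows the paper's proof essentially step for step. The paper uses the same partial sums and the same bound of $|s_n-s_m|$ by $(b-1)\sum_{l=m+1}^{n} b^{-l}$. It evaluates that sum with the geometric sum formula to get the bound $b^{-m}$, then picks $N$ with $b^{-N}<\varepsilon$ and appeals to completeness. Your explicit handling of the $\pm$ sign and your use of Corollary~\ref{c:100920}, rather than Example~\ref{ex:geoseq}, to choose $N$ are only cosmetic differences.
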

\begin{proof}
Define the partial sums
\begin{equation}
s_n := \sum_{l=-k}^n a_l b^{-l},
\quad\text{for every integer $n\geq -k$.}
\end{equation}
We shall show that $(s_n)_{n\geq -k}$ is a Cauchy sequence.
Let $\varepsilon > 0$.
Since $0<1/b<1$, we have $1/b^n\to 0$ by
Example~\ref{ex:geoseq}\ref{ex:geoseq:i}. 
Hence, there exists $N\in \NN$ such that
\begin{equation}
\label{e:101028:a}
\text{
for all $n\geq N$, we have $b^{-n}<\varepsilon$.
}
\end{equation}
Now let $n\geq m\geq N\geq -k$. 
Then, using Theorem~\ref{t:geosum} and \eqref{e:101028:a},
\begin{subequations}
\begin{align}
|s_n-s_m| &= \left| \sum_{l=m+1}^{n}a_lb^{-l} \right|
=\sum_{l=m+1}^{n} a_lb^{-l}
\leq \sum_{l=m+1}^{n} (b-1)b^{-l}
= \sum_{l=0}^{n-m-1} (b-1)b^{-(l+m+1)}\\
&= (b-1)b^{-m-1}\sum_{l=0}^{n-m-1} (1/b)^l
= (b-1)b^{-m-1}\frac{1-(1/b)^{n-m}}{1-(1/b)}\\
&<(b-1)b^{-m-1}\frac{1}{1-(1/b)}
=(b-1)b^{-m}\frac{1}{b\big(1-(1/b)\big)}
= b^{-m}\\
&<\varepsilon,
\end{align}
\end{subequations}
as required.
\end{proof}

\begin{remark}
If $b=10$, we get the familiar decimal expansions.
Popular in Computer Science are binary expansions, i.e., $b=2$.
The Babylonians liked to work with $b=60$, because
60 is divisible by $2,3,4,5,6,10,12,15,20,30$. 
\end{remark}

The converse of the previous result is also true.

\begin{theorem}
\label{t:numberseries}
Let $b$ be an integer such that $b\geq 2$.
Then every real number has a series expansion with respect to base $b$.
\end{theorem}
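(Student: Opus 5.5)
We handle $x=0$ trivially (all digits $0$). For $x<0$ we apply the positive case to $-x$ and use the sign $\pm$ from Theorem~\ref{t:seriesnumber}. So assume $x>0$.

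The first step is to bring $x$ below $1$ after scaling by a power of $b$. By Theorem~\ref{t:100920} (applied with $y=b>1$) there is $k\in\NN$ with $b^k>x$. Setting $y:=x b^{-k}$ gives $0<y<1$. It therefore suffices to expand $y=\sum_{n=1}^\infty d_n b^{-n}$ with digits $d_n\in\{0,\ldots,b-1\}$. Multiplying by $b^k$ and reindexing then gives an expansion $\sum_{n=-k}^\infty a_n b^{-n}$ of $x$ (with leading digits $a_{-k}$ absorbing the shift), which has exactly the form of Theorem~\ref{t:seriesnumber}.

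The digits are constructed greedily using the floor function. Set $r_0:=y\in[0,1[$. Recursively, for $n\geq 1$, define
\[
d_n:=\lfloor b\,r_{n-1}\rfloor \quad\text{and}\quad r_n:=b\,r_{n-1}-d_n .
\]
Since $0\leq r_{n-1}<1$ we have $0\leq b r_{n-1}<b$, hence $d_n\in\{0,\ldots,b-1\}$ and $r_n\in[0,1[$. An induction on $n$ shows the identity
\[
y=\sum_{l=1}^{n} d_l b^{-l} + r_n b^{-n}.
\]
The inductive step is just $r_{n-1}b^{-(n-1)}=d_nb^{-n}+r_nb^{-n}$.

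Consequently $0\leq y-s_n=r_nb^{-n}<b^{-n}$, where $s_n$ denotes the $n$th partial sum. Since $b^{-n}\to0$ by Example~\ref{ex:geoseq}\ref{ex:geoseq:i}, the Squeeze Theorem (Theorem~\ref{t:squeeze}) gives $s_n\to y$, so the series converges to $y$. Note that the Completeness Axiom is not actually needed here, because we know the limit in advance.

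The main obstacle is bookkeeping rather than ideas. We must confirm the floor yields a valid digit, which is where $r_{n-1}<1$ is used. We must also handle the rescaling by $b^k$ carefully: the claim that multiplying a convergent series by $b^k$ shifts indices follows from Corollary~\ref{c:c-mlaw} applied to the partial sums. We do not claim the expansion is unique, since for example $0.0999\ldots=0.1$ in base $10$. The greedy choice simply produces one expansion, which is all the statement requires.
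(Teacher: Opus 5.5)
Your proof is correct and takes essentially the paper's route. The paper also builds the digits greedily while maintaining the invariant $x_n\leq x<x_n+b^{-n}$ for the partial sums, and then concludes from $b^{-n}\to 0$ without invoking completeness. Your rescaling to $y=xb^{-k}\in\left]0,1\right[$ and your floor/remainder recursion $d_n=\lfloor b\,r_{n-1}\rfloor$ simply repackage the paper's step of splitting $\left[x_n,x_n+b^{-n}\right[$ into $b$ equal subintervals and choosing the one that contains $x$.
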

\begin{proof}
The proof is algorithmic, we show how to do it.
Let $x\geq 0$ (otherwise, apply what we do next to $-x$ and then take the
negative). 
Let $k\geq 0$ be the smallest integer such that
\begin{equation}
\label{e:101018a}
0 \leq x < b^{k+1};
\end{equation}
note that by Theorem~\ref{t:100920}, $k$ does exist.
We now construct a sequence of digits $(a_l)_{l\geq -k}$ 
in $\{0,1,\ldots,b-1\}$ such that for every $n\geq -k$ and
\begin{equation}
\label{e:191024a}
x_n := \sum_{l=-k}^n a_l b^{-l},
\end{equation}
we have
\begin{equation}
\label{e:101018c}
x_n \leq x < x_n + b^{-n}.
\end{equation}

\textbf{Base Case}:
Consider the inequalities
\begin{equation}
0 = 0\cdot b^{k} < 1\cdot b^k < \cdots < (b-1)\cdot b^k < b\cdot b^k=
b^{k+1},
\end{equation}
which give rise to a partition of the interval $\left[0,b^{k+1}\right[$
into $b$ subintervals
$\left[0,b^k\right[$,
$\left[b^k,2b^k\right[$,
\ldots,
$\left[(b-1)b^k,b^{k+1}\right[$.
In view of \eqref{e:101018a},
there exists a unique integer $a_{-k}\in\{0,1,\ldots,b-1\}$ such that
\begin{equation}
x_{-k} = a_{-k}b^k \leq x < (a_{-k}+1)b^k = x_{-k} + b^k.
\end{equation}

\textbf{Inductive Step}:
Suppose that for some $n\geq -k$ the numbers 
$a_{-k},\ldots,a_n$ are already constructed such that 
\eqref{e:191024a} holds and 
\begin{equation}
\label{e:101018bb}
x_n \leq x < x_n + b^{-n}.
\end{equation}
We consider the inequalities
\begin{equation}
\label{e:101018b}
x_n < x_n + b^{-n-1} < x_n + 2b^{-n-1} < \cdots <
x_n+ b b^{-n-1} = x_n + b^{-n},
\end{equation}
which 
give rise to a partition of the interval 
$\left[x_n,x_n+b^{-n}\right[$
into $b$ subintervals
\begin{subequations}
\begin{align}
&\left[x_n,x_n+b^{-n-1}\right[,\\
&\left[x_n+b^{-n-1},x_n+2b^{-n-1}\right[,\\
&\qquad \vdots\\
&\left[x_n + (b-1)b^{-n-1},x_n+b^{-n}\right[.
\end{align}
\end{subequations}
As before, in view of \eqref{e:101018bb},
there exists a unique integer $a_{n+1}\in\{0,1,\ldots,b-1\}$
such that
\begin{equation}
x_n+a_{n+1}b^{-n-1}\leq x < x_n+ a_{n+1}b^{-n-1}+ b^{-n-1}.
\end{equation}
We now set $x_{n+1} = x_n + a_{n+1}b^{-n-1}$, which yields the
required. 

Therefore, by the principlde of mathematical induction, 
\eqref{e:101018c} is true for all $n\geq -k$, and
we clearly have
\begin{equation}
|x_n-x| = |x-x_n| = x-x_n < b^{-n}.
\end{equation}
Hence, $\lim_n x_n = x$ and thus
\begin{equation}
x = \sum_{l=-k}^\infty a_l b^{-l},
\end{equation}
which is the announced series expansion with respect to $b$.
\end{proof}

\begin{remark}
The last two results allow us to identify real numbers
with the usual (infinite) series expansions.
Note that a consequence of Theorem~\ref{t:numberseries}
is that every real number can be arbitrarily well approximated
by rational numbers (since the partial sums of the expansions
are all rational numbers).
\end{remark}

\section{Subsequences and the Bolzano-Weierstrass Theorem}

\begin{definition}[subsequence]
Let $(a_n)_\nnn$ be a sequence of real numbers, and let
\begin{equation}
n_0 < n_1 < n_2 < \cdots < n_k < n_{k+1} < \cdots
\end{equation}
be a strictly increasing sequence of nonnegative integers.
Then the sequence \index{Subsequence}
\begin{equation}
(a_{n_k})_{k\in\NN} = \big( a_{n_0}, a_{n_1}, a_{n_2},\ldots\big)
\end{equation}
is a \textbf{subsequence} of the sequence $(a_n)_\nnn$.
\end{definition}

\begin{proposition}
\label{p:allgo}
Let $(a_n)_\nnn$ be a sequence in $\RR$ such that $a_n\to\ell$,
and let $(a_{n_k})_{k\in\NN}$ be a subsequence of $(a_n)_\nnn$.
Then $(a_{n_k})_{k\in\NN}$ also converges to $\ell$.
\end{proposition}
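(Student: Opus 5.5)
The plan is a direct $\varepsilon$--$N$ argument from Definition~\ref{d:limit}, using one auxiliary fact about the indices: $n_k\geq k$ for every $k\in\NN$.

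\emph{Auxiliary fact.} I would prove $n_k\geq k$ by mathematical induction on $k$ (Fact~\ref{f:induction}). For the base case, $n_0\geq 0$ holds because the indices are nonnegative integers. For the inductive step, assume $n_k\geq k$. Since $n_{k+1}>n_k$ and both are integers, we get $n_{k+1}\geq n_k+1\geq k+1$.

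\emph{Main argument.} Let $\varepsilon>0$. Since $a_n\to\ell$, there exists $N\in\NN$ such that $|a_n-\ell|<\varepsilon$ for every $n\geq N$. I would use this same $N$ as the threshold for the subsequence. If $k\geq N$, then the auxiliary fact gives $n_k\geq k\geq N$. Hence $|a_{n_k}-\ell|<\varepsilon$ for every $k\geq N$, which is exactly the statement $(a_{n_k})_{k\in\NN}\to\ell$.

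The only step needing care is the auxiliary inequality $n_k\geq k$. It relies on the fact that a strict inequality between integers implies a gap of at least $1$. Everything after that is immediate from the definition of convergence.
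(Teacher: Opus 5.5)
Your proposal is correct and follows essentially the same route as the paper: you take the same $N$ for the subsequence and use $n_k\geq k\geq N$. The only difference is that you prove $n_k\geq k$ by induction, where the paper simply says it is clear.
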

\begin{proof}
Let $\varepsilon > 0$.
Since $a_n\to \ell$, there exists $N\in\NN$ such that
\begin{equation}
k\geq N \quad \Rightarrow 
\quad |a_k-\ell| < \varepsilon.
\end{equation}
It's clear that for every $k\in\NN$, $k\leq n_k$.
Hence if $k\geq N$, then we have $n_k \geq k\geq N$ and therefore 
$|a_{n_k}-\ell|<\varepsilon$.
\end{proof}

\begin{theorem}[Bolzano-Weierstrass]
\label{t:BW}\index{Bolzano-Weierstrass Theorem}
Let $(x_n)_\nnn$ be a bounded sequence of real numbers. 
Then $(x_n)_\nnn$ possesses a convergent subsequence.
\end{theorem}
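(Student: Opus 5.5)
I will use the bisection method, since the only tools for producing a limit are the Completeness Axiom (Definition~\ref{d:compaxiom}) and Cauchy sequences; there is no supremum principle yet. By boundedness and \eqref{e:bdseq}, choose $\gamma\geq 0$ with $|x_n|\leq\gamma$ for every $\nnn$. Set $I_0 := [\alpha_0,\beta_0] := [-\gamma,\gamma]$, so $I_0$ contains $x_n$ for infinitely many indices $n$ (in fact all of them).

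\textbf{Inductive construction.} Suppose a closed interval $I_k=[\alpha_k,\beta_k]$ of length $2\gamma/2^k$ has been built, containing $x_n$ for infinitely many $n$, together with indices $n_0<n_1<\cdots<n_k$ such that $x_{n_j}\in I_j$ for each $j\leq k$. Split $I_k$ at its midpoint $(\alpha_k+\beta_k)/2$. If both halves contained $x_n$ for only finitely many $n$, then $I_k$ would as well, which is impossible. So at least one half has infinitely many indices. Let $I_{k+1}$ be such a half (say the left one whenever both qualify). Since infinitely many indices land in $I_{k+1}$, we may pick $n_{k+1}>n_k$ with $x_{n_{k+1}}\in I_{k+1}$. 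The intervals are nested, $I_{k+1}\subseteq I_k$, and the length of $I_{k+1}$ is $2\gamma/2^{k+1}$. The principle of mathematical induction (Fact~\ref{f:induction}) then yields the whole strictly increasing sequence $(n_k)_{k\in\NN}$.

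\textbf{Cauchy property.} For $m\geq k$ the intervals are nested, so both $x_{n_m}$ and $x_{n_k}$ lie in $I_k$. Hence
\[
|x_{n_m}-x_{n_k}|\leq 2\gamma\,(1/2)^k .
\]
Given $\varepsilon>0$, Example~\ref{ex:geoseq}\ref{ex:geoseq:i} (or Corollary~\ref{c:100920}) gives $N$ with $2\gamma(1/2)^N<\varepsilon$. For $m,k\geq N$ we then have $|x_{n_m}-x_{n_k}|\leq 2\gamma(1/2)^{\min\{m,k\}}\leq 2\gamma(1/2)^N<\varepsilon$. The case $\gamma=0$ is trivial because the sequence is then constant. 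So $(x_{n_k})_{k\in\NN}$ is a Cauchy sequence, and by the Completeness Axiom it converges. This finishes the argument.

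\textbf{Main obstacle.} I expect the delicate point to be making the choice of half and the choice of the next index rigorous. The key facts are that a union of two sets each containing only finitely many indices contains only finitely many indices, and that ``infinitely many indices'' guarantees an index exceeding any given $n_k$. An alternative route I would keep in reserve is the peak-index argument, which produces a monotone subsequence and then shows that a bounded monotone sequence is Cauchy. I prefer bisection because the Cauchy estimate comes out explicitly.
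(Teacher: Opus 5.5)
Your proposal is correct and follows essentially the same bisection argument as the paper. You build nested halving intervals that each contain infinitely many terms, pick strictly increasing indices inside them, show the resulting subsequence is Cauchy, and invoke the Completeness Axiom. The only differences are cosmetic: you start from $[-\gamma,\gamma]$ rather than $[\alpha,\beta]$, you merge the interval and index constructions into one induction where the paper uses two, and you phrase ``infinitely many terms'' as ``infinitely many indices,'' which is slightly more precise.
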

\begin{proof}
Since $(x_n)_\nnn$ is bounded, 
there exist $\alpha$ and $\beta$ in $\RR$ such that
\begin{equation}
\alpha \leq x_n \leq \beta,
\quad\text{for all $n\in\NN$.}
\end{equation}
Now consider the interval
\begin{equation}
[\alpha_0,\beta_0] := 
[\alpha,\beta] := \menge{x\in\RR}{\alpha\leq x\leq \beta}.
\end{equation}
Using induction on $k$, we shall construct a sequence of (nested)
intervals $\big([\alpha_k,\beta_k]\big)_{k\in\NN}$ such that
the following hold, for every $k\in\NN$:
\begin{enumerate}
\item
\label{proof:bw:i}
$[\alpha_k,\beta_k]$ contains infinitely many terms of
the sequence $(x_n)_\nnn$.
\item
\label{proof:bw:ii}
$[\alpha_k,\beta_k] \subseteq [\alpha_{k-1},\beta_{k-1}]$
for $k\geq 1$.
\item
\label{proof:bw:iii}
$\beta_k-\alpha_k = 2^{-k}(\beta-\alpha)$.
\end{enumerate}

\textbf{Base Case:}
When $k=0$, we set $\alpha_0=\alpha$ and $\beta_0=\beta$.
Clearly, \ref{proof:bw:i} and \ref{proof:bw:iii} hold
(and we don't have to worry about \ref{proof:bw:ii}, since this item
is claimed only when $k\geq 1$).

\textbf{Inductive Step:}
Suppose that $[\alpha_k,\beta_k]$ with properties
\ref{proof:bw:i}--\ref{proof:bw:iii} is constructed, for some
$k\in\NN$.
Set $\mu := (\alpha_k+\beta_k)/2$, which is the midpoint of the
interval  $[\alpha_k,\beta_k]$.
Since  $[\alpha_k,\beta_k]$ contains infinitely many terms of
$(x_n)_\nnn$, at least one of the intervals
$[\alpha_k,\mu]$ and $[\mu,\beta_k]$ must contain infinitely
many terms of the sequence as well. We now set
\begin{equation}
[\alpha_{k+1},\beta_{k+1}] 
:=
\begin{cases}
[\alpha_k,\mu], &\text{if $[\alpha_k,\mu]$ contains infinitely many
terms of $(x_n)_\nnn$;}\\
[\mu,\beta_k], &\text{otherwise.}
\end{cases}
\end{equation}
It is clear that $[\alpha_{k+1},\beta_{k+1}]$
satisfies \ref{proof:bw:i}--\ref{proof:bw:iii}
with $k$ replaced by $k+1$. 

Having constructed the intervals, we now construct
--- again by induction ---
a subsequence $(x_{n_k})_{k\in\NN}$ of $(x_n)_\nnn$
such that $x_{n_k}\in[\alpha_k,\beta_k]$, for every $k\in\NN$.

\textbf{Base Case:}
Set $x_{n_0} = x_0$.
Since $\alpha\leq x_0\leq \beta$, we have $x_{n_0}\in
[\alpha_0,\beta_0]$.

\textbf{Inductive Step:}
Suppose $x_{n_0},\ldots,x_{n_k}$ are already constructed
with the desired properties for some $k\in\NN$. 
Since $[\alpha_{k+1},\beta_{k+1}]$ contains infinitely many terms
of the sequence $(x_n)_\nnn$, there exists
$n>n_k$ such that $x_n\in [\alpha_{k+1},\beta_{k+1}]$.
Set $n_{k+1} := n$. Then $x_{n_{k+1}}\in
[\alpha_{k+1},\beta_{k+1}]$ as required.

We shall now show that $(x_{n_k})_{k\in\NN}$ is convergent.
It suffices to show that  $(x_{n_k})_{k\in\NN}$ 
is a Cauchy sequence.
Let $\varepsilon > 0$. Since $1/2^n\to 0$, 
there exists $N\in\NN$ such that 
\begin{equation}
\frac{\beta-\alpha}{2^N} < \varepsilon.
\end{equation}
Now let $k$ and $l$ be integers $\geq N$.
Then
\begin{equation}
x_{n_k} \in [\alpha_k,\beta_k] \subseteq [\alpha_N,\beta_N]
\quad\text{and}\quad
x_{n_l} \in [\alpha_l,\beta_l] \subseteq [\alpha_N,\beta_N].
\end{equation}
Therefore,
$|x_{n_k}-x_{n_l}| \leq $ length of the interval $[\alpha_N,\beta_N]$
$= 2^{-N}(\beta-\alpha) < \varepsilon$. 
\end{proof}

%
%

\begin{definition}[cluster point]
Let $(a_n)_\nnn$ be a sequence, and let $c\in\RR$.
Then $c$ is a \textbf{cluster point}\footnote{Also known as
\textbf{accumulation point} or \textbf{limit point}of $(a_n)_\nnn$.} of $(a_n)_\nnn$ if some
subsequence of $(a_n)_\nnn$ converges to $c$.
\index{Cluster point}\index{Accumulation point}
\end{definition}

The Bolzano-Weierstrass theorem thus states that
every bounded sequence has a cluster point.

\begin{example}
\label{ex:101102:1}
The sequence $(a_n)_\nnn = ((-1)^n)_\nnn$ possesses two cluster points:
$-1$ and $+1$ since clearly
$a_{2k} = (-1)^{2k} = 1\to 1$
and 
$a_{2k+1} = (-1)^{2k+1} = -1\to -1$.
This sequence has no other cluster point.
\end{example}

\begin{example}
The sequence $(a_n)_\nnn = (n)_\nnn$ has no cluster point.
Indeed, $n\to\pinf$, and so does every subsequence of $(n)_\nnn$.
\end{example}

\begin{example}
The sequence $(a_n)_\nnn$ defined by
\begin{equation}
a_n := \begin{cases}
\frac{1}{n}, &\text{if $n$ is odd;}\\
n, &\text{if $n$ is even}
\end{cases}
\end{equation}
is unbounded (since $(a_{2k})_{k\in\NN} = (2k)_{k\in\NN}$ diverges
to $\pinf$); however $(a_n)_\nnn$ possesses exactly one cluster point,
namely $0$ (as $a_{2k+1} = 1/(2k+1)\to 0$). 
\end{example}

\section{Monotone Sequences}

\begin{definition}
Let $(a_n)_\nnn$ be a sequence. We say that:
\begin{enumerate}
\item $(a_n)_\nnn$ is \textbf{increasing},
if $a_n\leq a_{n+1}$ for every $\nnn$;\index{increasing sequence}
\item $(a_n)_\nnn$ is \textbf{strictly increasing},
if $a_n< a_{n+1}$ for every $\nnn$;\index{strictly increasing sequence} 
\item $(a_n)_\nnn$ is \textbf{decreasing},\index{decreasing sequence}
if $a_n\geq a_{n+1}$ for every $\nnn$;
\item $(a_n)_\nnn$ is \textbf{strictly decreasing},\index{strictly decreasing sequence} 
if $a_n> a_{n+1}$ for every $\nnn$.
\end{enumerate}
The sequence is \textbf{monotone}\index{Monotone sequence}
if it satisfies one of the above properties.
\end{definition}

The next result is fundamental since it provides a powerful
technique for obtaining convergent sequences.

\begin{theorem}[Bounded Monotone Convergence Theorem] 
\label{t:monconv}
Every bounded monotone sequence of real numbers converges.
\end{theorem}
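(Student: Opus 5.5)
The plan is to combine the Bolzano--Weierstrass Theorem (Theorem~\ref{t:BW}) with monotonicity. No supremum is available at this point in the notes; only the Completeness Axiom in its Cauchy form and its consequences are. By symmetry it suffices to treat an increasing sequence $(a_n)_\nnn$. If $(a_n)_\nnn$ is decreasing, then $(-a_n)_\nnn$ is increasing and bounded, and convergence of $(-a_n)_\nnn$ gives convergence of $(a_n)_\nnn$ by Corollary~\ref{c:c-mlaw} with $c=-1$. Strictly monotone sequences are in particular monotone, so nothing more is needed for them.

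So let $(a_n)_\nnn$ be increasing and bounded. By Theorem~\ref{t:BW} there is a subsequence $(a_{n_k})_{k\in\NN}$ converging to some $\ell\in\RR$.

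\emph{First claim:} $a_n\leq \ell$ for every $\nnn$. Fix $n$. Since $n_k\geq k$, we have $n_k\geq n$ for all $k\geq n$. By monotonicity (an easy induction on the gap $n_k-n$), this gives $a_n\leq a_{n_k}$ for all $k\geq n$. Passing to the limit in $k$ via Corollary~\ref{c:limleq}, applied to the tail sequence $(a_{n_k})_{k\geq n}$, yields $a_n\leq\ell$.

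\emph{Second claim:} $a_n\to\ell$. Let $\varepsilon>0$. Choose $K$ with $|a_{n_K}-\ell|<\varepsilon$, so that $\ell-\varepsilon<a_{n_K}$, and set $N:=n_K$. For $n\geq N$, monotonicity and the first claim give
\begin{equation*}
\ell-\varepsilon<a_{n_K}\leq a_n\leq \ell,
\end{equation*}
hence $|a_n-\ell|<\varepsilon$.

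The only delicate point is the first claim: justifying that a limit of terms which are all $\geq a_n$ is itself $\geq a_n$. Corollary~\ref{c:limleq} handles this, provided I note that it applies to sequences indexed from some $n_0$ onward, or that discarding finitely many terms does not change the limit. Everything else is a direct $\varepsilon$-argument.
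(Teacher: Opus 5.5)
Your proof is correct and follows essentially the same route as the paper: Bolzano--Weierstrass gives a subsequence $a_{n_k}\to\ell$, and monotonicity then traps every $a_n$ with $n\geq n_K$ near $\ell$. The only difference is the upper estimate: you first prove $a_n\leq\ell$ for all $n$ via Corollary~\ref{c:limleq}, whereas the paper picks $k\geq K$ with $n_k\leq n<n_{k+1}$ and uses $a_n\leq a_{n_{k+1}}<\ell+\varepsilon$, which avoids the limit-inequality step; your version has the small bonus of showing that $\ell$ is an upper bound of the sequence.
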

\begin{proof}
Let $(a_n)_\nnn$ be a bounded monotone sequence,
say (without loss of generality) increasing.
By the Bolzano-Weierstrass theorem (Theorem~\ref{t:BW}),
$(a_n)_\nnn$ possesses a convergent subsequence,
say $a_{n_k}\to \ell$. We now show that the entire
sequence converges to $\ell$.

To this end, let $\varepsilon>0$.
Then there exists $K\in\NN$ such that
\begin{equation}
k\geq K
\quad\Rightarrow\quad
|a_{n_k}-\ell| < \varepsilon.
\end{equation}
Set $N:= n_{K}$.
For every integer $n\geq N$,
there exists an integer $k\geq K$ such that
$n_k \leq n< n_{k+1}$.
Since $(a_n)_\nnn$ is increasing, we have
\begin{equation}
\ell-\varepsilon < a_{n_k} \leq a_n \leq a_{n_{k+1}} <
\ell+\varepsilon;
\end{equation}
consequently,
$\ell-\varepsilon < a_n <\ell+\varepsilon$
$\Leftrightarrow$
$|a_n-\ell| < \varepsilon$.
\end{proof}

\section*{Exercises}\markright{Exercises}
\addcontentsline{toc}{section}{Exercises}
\setcounter{theorem}{0}


Recall that an integer $a$ is \emph{even} if $a=2b$, where $b\in\ZZ$;
and that $a$ is \emph{odd} if $a=2b+1$, where $b\in\ZZ$.

\begin{exercise}
Prove that if $m$ is an even integer, then $m^2$ is even as well.
\end{exercise}
\begin{solution}
Suppose that $m$ is an even integer, say $m=2k$, where $k\in\ZZ$.
Then $m^2 = (2k)^2 = 4k^2 = 2(2k^2)$.
Since $2k^2\in\ZZ$, it follows that $m^2$ is even.
\end{solution}

\begin{exercise}
Prove that if $m$ is an odd integer, then $m^2$ is odd as well.
\end{exercise}
\begin{solution}
Suppose that $m$ is an odd integer, say $m=2k+1$, where $k\in\ZZ$.
Then $m^2 = (2k+1)^2 = 4k^2 +4k+1= 2(2k^2+2k)+1$.
Since $2k^2+2k\in\ZZ$, it follows that $m^2$ is odd.
\end{solution}

\begin{exercise}
\label{exo:evensquare}
Prove that if $m$ is an integer such that $m^2$ is even,
then $m$ is even as well.
\emph{Hint}:  Prove the contrapositive. 
\end{exercise}
\begin{solution}
Suppose that $m$ is not even. Then $m$ is odd, say 
$m=2k+1$, for some integer $k\in\ZZ$.
Hence 
$m^2 = (2k+1)^2 = 4k^2 + 4k + 1 = 2(2k^2+2k) + 1$
is odd because $2k^2+2k\in\ZZ$, and therefore $m^2$ is not even.
\end{solution}

\begin{exercise}
\label{exo:oddsquare}
Prove that if $m$ is an integer such that $m^2$ is odd,
then $m$ is odd as well.
\emph{Hint}:  Prove the contrapositive. 
\end{exercise}
\begin{solution}
Suppose that $m$ is not odd. Then $m$ is even, say 
$m=2k$, for some integer $k\in\ZZ$.
Thus 
$m^2 = (2k)^2 = 4k^2 = 2(2k^2)$
is even because $2k^2\in\ZZ$, and this 
shows that $m^2$ is not odd.
\end{solution}

\begin{exercise}
\label{exo:130928c}
Prove that if $m$ is an integer such that
$m^2$ is divisible by $3$, then $m$ must be divisible by $3$.
\emph{Hint:} Prove the contrapositive.
\end{exercise}
\begin{solution}
Suppose that $m$ is not divisible by $3$.
Then we write $m$ as $m=3k\pm 1$, where $k\in\ZZ$.
It follows that
\begin{equation*}
m^2 = (3k\pm 1)^2 = (3k)^2 \pm 2\cdot 3k + 1
= 3\big( 3k^2 \pm 2k\big) + 1
\end{equation*}
is not divisible by $3$ (it has remainder $1$).
\end{solution}

\begin{exercise}
Show that there exists no rational number $x$ such that $x^2=3$.
\emph{Hint:} You may use Exercise~\ref{exo:130928c}. 
\end{exercise}
\begin{solution}
Suppose to the contrary that there exists $x\in\mathbb{Q}$ such that
$x^2=3$, say
\begin{equation*}
x = \frac{m}{n}, 
\quad
\text{where $m\in\ZZ$ and $n\in\ZZ\smallsetminus\{0\}$}
\quad
\text{and}
\quad
x^2 =3,
\end{equation*}
and where we assume that the fraction is in lowest terms 
(all common factors except for $\pm 1$ are canceled out). 
Then $3 = x^2 = m^2/n^2$ and so 
\begin{equation*}
3n^2 = m^2.
\end{equation*}
It follows from Exercise~\ref{exo:130928c} that
$m$ is divisible by $3$, say 
\begin{equation*}
m= 3k, \quad \text{for some integer $k$.}
\end{equation*}
But then the equality $3n^2 = m^2$ turns into
$3n^2 = (3k)^2 = 9k^2$.
Therefore, $n^2= 3k^2$.
Again, by Exercise~\ref{exo:130928c}, it follows that 
\begin{equation*}
n= 3l, \quad \text{for some integer $l$.}
\end{equation*}
But now $m$ and $n$ do have a nontrivial common factor, namely $3$ --- contradiction!
\end{solution}

\begin{exercise}
\label{exo:200811a} 
Prove that if $m$ is an integer such that
$m^2$ is divisible by $5$, then $m$ must be divisible by $5$.
\emph{Hint:} Prove the contrapositive.
\end{exercise}
\begin{solution}
Suppose that $m$ is not divisible by $5$.

\emph{Case~1:}
$m=5k\pm 1$, where $k\in\ZZ$.\\
It follows that
\begin{equation*}
m^2 = (5k\pm 1)^2 = (5k)^2 \pm 2\cdot 5k + 1
= 5\big( 5k^2 \pm 2k\big) + 1
\end{equation*}
is not divisible by $5$ (it has remainder $1$).

\emph{Case~2:}
$m=5k\pm 2$, where $k\in\ZZ$.\\
It follows that
\begin{equation*}
m^2 = (5k\pm 2)^2 = (5k)^2 \pm 4\cdot 5k + 4
= 5\big( 5k^2 \pm 4k\big) + 4
\end{equation*}
is not divisible by $5$ (it has remainder $4$).
\end{solution}

\begin{exercise}
\label{exo:200811b} 
Show that there exists no rational number $x$ such that $x^2=5$.
\emph{Hint:} You may use Exercise~\ref{exo:200811a}. 
\end{exercise}
\begin{solution}
Suppose to the contrary that there exists $x\in\mathbb{Q}$ such that
$x^2=5$, say
\begin{equation*}
x = \frac{m}{n}, 
\quad
\text{where $m\in\ZZ$ and $n\in\ZZ\smallsetminus\{0\}$}
\quad
\text{and}
\quad
x^2 =5,
\end{equation*}
and where we assume that the fraction is in lowest terms 
(all common factors except for $\pm 1$ are canceled out). 
Then $5 = x^2 = m^2/n^2$ and so 
\begin{equation*}
5n^2 = m^2.
\end{equation*}
It follows from Exercise~\ref{exo:200811a} that
$m$ is divisible by $5$, say 
\begin{equation*}
m= 5k, \quad \text{for some integer $k$.}
\end{equation*}
But then the equality $5n^2 = m^2$ turns into
$5n^2 = (5k)^2 = 25k^2$.
Therefore, $n^2= 5k^2$.
Again, by Exercise~\ref{exo:200811a}, it follows that 
\begin{equation*}
n= 5l, \quad \text{for some integer $l$.}
\end{equation*}
But now $m$ and $n$ do have a nontrivial 
common factor, namely $5$ --- contradiction!
\end{solution}

\begin{exercise}
Prove that $0.49999\ldots = 0.4\overline{9}$ is equal to $0.5$.
\end{exercise}
\begin{solution}
Using the geometric series, we compute
\begin{align*}
0.4\overline{9}&= \frac{4}{10} 
+ \frac{9}{100}\Big(1 + \frac{1}{10} + \frac{1}{100} + \cdots\Big)
=  \frac{4}{10} + \frac{9}{100}\frac{1}{1-\frac{1}{10}}
=  \frac{4}{10} + \frac{9}{100- 10}\\
&=  \frac{4}{10} + \frac{9}{90}
=  \frac{4}{10} + \frac{1}{10}
=\frac{5}{10}
=0.5,
\end{align*}
as claimed.
\end{solution}

\begin{exercise}
Prove that $0.9999\ldots = 0.\overline{9}$ is equal to $1$.
\end{exercise}
\begin{solution}
Using the Geometric Series, we compute
\begin{align*}
0.\overline{9}&= 
\frac{9}{10}\Big(1 + \frac{1}{10} + \frac{1}{100} + \cdots\Big)
=  \frac{9}{10}\frac{1}{1-\frac{1}{10}}
=  \frac{9}{10}\frac{1}{\frac{9}{10}}
=  1,
\end{align*}
as claimed.
\end{solution}

\begin{exercise} 
Prove that $0.3333\ldots = 0.\overline{3}$ is equal to $1/3$.
\end{exercise}
\begin{solution}
Using the Geometric Series, we compute
\begin{align*}
0.\overline{3}&= 
\frac{3}{10}\Big(1 + \frac{1}{10} + \frac{1}{100} + \cdots\Big)
=  \frac{3}{10}\frac{1}{1-\frac{1}{10}}
=  \frac{3}{10}\frac{1}{\frac{9}{10}}
=  \frac{1}{3},
\end{align*}
as claimed.
\end{solution}

\begin{exercise}
Prove that $\tfrac{1}{3} = \tfrac{1}{4} + \tfrac{1}{16} +
\tfrac{1}{64} + \tfrac{1}{256}+\cdots $, 
i.e., the binary expansion of $1/3$ is $0.\overline{01}$.
\index{Binary expansion}
\end{exercise}
\begin{solution}
Indeed, using the Geometric Series, we get
\begin{align*}
\tfrac{1}{4} + \tfrac{1}{16} +
\tfrac{1}{64} + \tfrac{1}{256}+\cdots &= \tfrac{1}{4}\big(1+ \tfrac{1}{4} + \tfrac{1}{16} +
\tfrac{1}{64} + \tfrac{1}{256}+\cdots \big)
= \tfrac{1}{4}\frac{1}{1-\tfrac{1}{4}}=\frac{1}{4-1} = \frac{1}{3},
\end{align*}
as claimed.
\end{solution}

\begin{exercise} 
Prove that $\tfrac{1}{3} = \tfrac{2}{7} + \tfrac{2}{49} +
\tfrac{2}{343}+\cdots $, 
i.e., the expansion of $1/3$ in base $7$ is $0.\overline{2}$.
\index{Binary expansion}
\end{exercise}
\begin{solution}
Indeed, using the Geometric Series, we get
\begin{align*}
\tfrac{2}{7} + \tfrac{2}{49} +
\tfrac{2}{343} + \cdots &= \tfrac{2}{7}\big(1+ \tfrac{1}{7} + \tfrac{1}{7^2} +
\cdots \big)
= \tfrac{2}{7}\frac{1}{1-\tfrac{1}{7}}=
\tfrac{2}{7}\frac{1}{\tfrac{6}{7}} = \frac{2}{6} = \frac{1}{3},
\end{align*}
as claimed.
\end{solution}

\begin{exercise}
Prove that $\tfrac{1}{3} = \tfrac{2}{8} + \tfrac{5}{64} +
\tfrac{2}{512} + \tfrac{5}{4096}+\cdots $, 
i.e., the \emph{octal} (base $8$) expansion of $1/3$ is $0.\overline{25}$.
\index{Binary expansion}
\end{exercise}
\begin{solution}
Indeed, using the Geometric Series, we get
\begin{align*}
\tfrac{2}{8} + \tfrac{5}{64} +
\tfrac{2}{512} + \tfrac{5}{4096}+\cdots &= 
\tfrac{2}{8} + \tfrac{5}{8^2}
+\tfrac{2}{8^3} + \tfrac{5}{8^4} +\cdots \\
&=\big(\tfrac{2}{8} + \tfrac{5}{8^2} \big)\big(1 + \tfrac{1}{8^2}+\tfrac{1}{8^4}+\tfrac{1}{8^6}+\cdots\big)\\
&=\big(\tfrac{2\cdot 8}{8^2} + \tfrac{5}{8^2} \big)\Big(1 + \tfrac{1}{8^2}+\big(\tfrac{1}{8^2}\big)^2+\big(\tfrac{1}{8^2}\big)^3+\cdots\Big)\\
&= \tfrac{21}{64}\frac{1}{1-\tfrac{1}{64}}
= \tfrac{21}{64}\frac{1}{\tfrac{63}{64}}
=\frac{21}{63} = \frac{1}{3},
\end{align*}
as claimed. 
\end{solution}

\begin{exercise}
Prove that $\tfrac{2}{3} = \tfrac{1}{2} + \tfrac{1}{8} +
\tfrac{1}{32} + \tfrac{1}{128}+\cdots $, 
i.e., the binary expansion of $2/3$ is $0.\overline{10}$.
\index{Binary expansion}
\end{exercise}
\begin{solution}
Indeed, using the Geometric Series, we get
\begin{align*}
\tfrac{1}{2} + \tfrac{1}{8} +
\tfrac{1}{32} + \tfrac{1}{128}+\cdots &= \tfrac{1}{2}\big(1+ \tfrac{1}{4} + \tfrac{1}{16} +
\tfrac{1}{64} + \tfrac{1}{256}+\cdots \big)
= \tfrac{1}{2}\frac{1}{1-\tfrac{1}{4}}=
\tfrac{1}{2}\frac{4}{4-1}= \frac{2}{3},
\end{align*}
as claimed.
\end{solution}

\begin{exercise} 
Prove that $\tfrac{2}{3} = \tfrac{2}{4} + \tfrac{2}{16} +
\tfrac{2}{64} + \cdots $, 
i.e., the expansion of $2/3$ with respect to base 4 is $0.\overline{2}$.
\end{exercise}
\begin{solution}
Indeed, using the Geometric Series, we get
\begin{align*}
\tfrac{2}{4} + \tfrac{2}{16} +
\tfrac{2}{64} + \tfrac{2}{4^4}+\cdots 
&= \tfrac{2}{4}\big(1+ \tfrac{1}{4} + \tfrac{1}{4^2} +
\tfrac{1}{4^3} + \cdots \big)
= \tfrac{2}{4}\frac{1}{1-\tfrac{1}{4}}=
\tfrac{2}{4}\frac{1}{\frac{3}{4}}= \frac{2}{3},
\end{align*}
as claimed.
\end{solution}

\begin{exercise} 
\label{exo:base16a}
When working in base 16, one needs 15 digits,
which are typically written as 
$0,1,2,\ldots,9,A,B,C,D,E,F$,
 where $A,B,C,D,E,F$ stand for $10,11,12,13,14,15$
respectively. 
Prove that $\tfrac{2}{3} = \tfrac{10}{16} + \tfrac{10}{16^2} +
\tfrac{10}{16^3} + \cdots $, 
i.e., the expansion of $2/3$ with respect to base 16 is $0.\overline{A}$.
\end{exercise}
\begin{solution}
Indeed, using the Geometric Series, we get
\begin{align*}
\tfrac{10}{16} + \tfrac{10}{16^2} +
\tfrac{10}{16^3} + \tfrac{10}{16^4}+\cdots 
&= \tfrac{10}{16}\big(1+ \tfrac{1}{16} + \tfrac{1}{16^2} +
\tfrac{1}{16^3} + \cdots \big)\\
&= \tfrac{10}{16}\frac{1}{1-\tfrac{1}{16}}=
\tfrac{10}{16}\frac{1}{\frac{15}{16}}= \frac{2}{3},
\end{align*}
as claimed.
\end{solution}

\begin{exercise}
A rational number $x$ has the \emph{octal} (base 8) representation
$111.111111\cdots = 111.\overline{1}$.
Determine $x$, i.e., write $x$ as a quotient (in lowest terms) of integers. 
\end{exercise}
\begin{solution}
On the one hand, since we are working in base $8$, we have 
\begin{align*}
0.\overline{1} &= \tfrac{1}{8} + \tfrac{1}{64}+ \cdots
= \tfrac{1}{8} \sum_{k=0}^\infty \big(\tfrac{1}{8}\big)^k
= \tfrac{1}{8} \frac{1}{1-\tfrac{1}{8}} = \tfrac{1}{7}.
\end{align*}
On the other hand, the digits $111$ in base 8 mean
$1\cdot 8^2 + 1\cdot 8 + 1 = 64+8+1 = 73$. 
Altogether, we conclude that
$x= 73\tfrac{1}{7} = \tfrac{512}{7}$. 
\end{solution}

\begin{exercise} 
A rational number $x$ has the \emph{hexadecimal} (base 16, 
see also Exercise~\ref{exo:base16a}) representation
$0.ABBAABBA\cdots = 0.\overline{ABBA}$.
Determine $x$, i.e., write $x$ as a quotient (in lowest terms) of integers. 
\emph{Hint:} Let \texttt{Maple} or \texttt{WolframAlpha} help you.
\end{exercise}
\begin{solution}
Because we are working with base 16, we have 
\begin{align*}
x = 0.\overline{ABBA} 
&= \tfrac{10}{16} + \tfrac{11}{16^2}+\tfrac{11}{16^3}+\tfrac{10}{16^4}+ \cdots
= \Big(\tfrac{10}{16}+\tfrac{11}{16^2}+\tfrac{11}{16^3}+\tfrac{10}{16^4}\Big) 
\sum_{k=0}^\infty \big(\tfrac{1}{16^4}\big)^k\\
&= \tfrac{21981}{32768} \frac{1}{1-\tfrac{1}{16^4}} = \frac{862}{1285}.
\end{align*}
\end{solution}

\begin{exercise} 
Let $(a_n)_\nnn$ converge to $\ell$.
Show that $\ell$ is a cluster point of $(a_n)_\nnn$
and that $(a_n)_\nnn$ has no other cluster point.
\end{exercise}
\begin{solution}
It is clear that $\ell$ is a cluster point.

Let $\ell'$ be a possibly different cluster point.
Then there exists a subsequence $(a_{n_k})_\kkk$ such that 
$a_{n_k}\to \ell'$. 
By Proposition~\ref{p:allgo}, we have $\ell'=\ell$.
\end{solution}

\begin{exercise}
Show that the sequence $((-1)^n)_\nnn$ has no cluster points different
from $\pm 1$.
\end{exercise}
\begin{solution}
Write $(a_n)_\nnn = ((-1)^n)_\nnn$ and suppose to the contrary
that $c$ is a cluster point of $(a_n)_\nnn$ that is different from
$\pm 1$. Then there exists a subsequence $(a_{n_k})_\kkk$ converging to
$c$. Now any term of this subsequence is either $1$ or $-1$, so
$|a_{n_k}-c| \in \{ |1-c|,|-1-c|\}$. But since $\min \{ |1-c|,|-1-c|\}>0$,
it is clear that $|a_{n_k}-c|$ cannot become less than an arbitrary
$\varepsilon$ (e.g., it cannot become less than $(1/2)\min \{
|1-c|,|-1-c|\} > 0$) --- absurd!
\end{solution}

\begin{exercise}
Let $(a_n)_\nnn$ be a sequence of real numbers.
Show that $(a_n)_\nnn$ is convergent
if and only if
$(a_n)_\nnn$ is bounded and $(a_n)_\nnn$ possesses
exactly one cluster point.
\end{exercise}
\begin{solution}
``$\Rightarrow$'': 
Assume that $(a_n)_\nnn$ is convergent, say to $\ell$. 
Then $(a_n)_\nnn$ is bounded by Theorem~\ref{t:convbound}.
It is clear that $\ell$ is a cluster point of $(a_n)_\nnn$.
Hence $(a_n)_\nnn$ possesses at least one cluster point.
On the other hand, if $\ell'$ is another cluster point,
then there exists a subsequence $(a_{n_k})_{k\in\NN}$ 
converging to $\ell'$. By Proposition~\ref{p:allgo},
$\ell=\ell'$. Altogether, $(a_n)_\nnn$ has exactly one cluster point.

``$\Leftarrow$'': Let $\ell$ be the unique cluster point of 
$(a_n)_\nnn$. We argue by contradiction and assume that
$(a_n)_\nnn$ does not converge to $\ell$.
Then there exits $\varepsilon>0$ such that
for every $k\in\NN$ there exists $n=n_k\geq k$
such that $|a_{n_k}-\ell|\geq\varepsilon$. 
We use this to inductively construct a subsequence
$(a_{n_k})_{k\in\NN}$ of $(a_n)_\nnn$ such that
$|a_{n_k}-\ell|\geq\varepsilon$. 
By Bolzano-Weierstrass, this subsequence has a convergent
subsequence whose limit would be $\varepsilon$-away from $\ell$,
so there are at least two cluster points which is absurd.
\end{solution}

\begin{exercise}
Is the converse of Theorem~\ref{t:monconv} true?
That is, is it true that every convergent sequence is bounded and
monotone?
\end{exercise}
\begin{solution}
NO. Every convergent sequence is bounded by Theorem~\ref{t:convbound}.
However, not every convergent sequence is monotone. 
Consider, e.g., $(a_n)_\nnn = (\tfrac{1}{n+1}(-1)^n)_\nnn$.
Then, by the Squeeze Theorem, $a_n\to 0$. However, 
$(a_n)_\nnn$ is neither increasing nor decreasing; consequently,
it is not monotone. 
\end{solution}

\begin{exercise}
\label{exo:171107a}
Prove that if $m$ is an integer such that $m^3$ is even, then $m$
is even as well.
\emph{Hint:} Show the contrapositive. 
\emph{Remark:} This result is useful in showing that
$\sqrt[3]{2}\not\in\QQ$, see Exercise~\ref{exo:cuberoot2}. 
\end{exercise}
\begin{solution}
The contrapositive is
\begin{equation}
\label{e:171107a}
\tag{C}
\text{If $m$ is an odd integer, then $m^3$ is odd.}
\end{equation}
To prove \eqref{e:171107a}, let $m$ be an odd integer,
say $m=2n+1$, where $n\in\ZZ$. 
Then $m^3 = (2n+1)^3 = (2n)^3 + 3(2n)^2 + 3(2n) + 1
= 8n^3 + 12n^2 + 6n+1 = 2(4n^3+6n^2+3n) + 1$ is odd
because $4n^3+6n^2+3n\in\ZZ$. 
\end{solution}

\begin{exercise}
\label{exo:cuberoot2}
Prove that there is no rational number $x$ such that $x^3 = 2$. 
\emph{Hint:} Mimic the proof of Theorem~\ref{t:squareroot2}
using Exercise~\ref{exo:171107a}. 
\end{exercise}
\begin{solution}
Suppose to the contrary that there exists $x\in\mathbb{Q}$ such that
$x^3=2$, say
\begin{equation*}
x = \frac{m}{n}, 
\quad
\text{where $m\in\ZZ$ and $n\in\ZZ\smallsetminus\{0\}$}
\quad
\text{and}
\quad
x^3 =2,
\end{equation*}
and where we assume that the fraction is in lowest terms 
(all common factors except for $\pm 1$ are canceled out). 
Then $2 = x^3 = m^3/n^3$ and so 
\begin{equation*}
2n^3 = m^3.
\end{equation*}
This means that $m^3$ is \emph{even}. 
Therefore, $m$ is even (see Exercise~\ref{exo:171107a}), say
\begin{equation*}
m= 2k, \quad \text{for some integer $k$.}
\end{equation*}
But then the equality $2n^3 = m^3$ turns into
$2n^3 = (2k)^3 = 8k^3$.
Therefore, $n^3= 4k^3=2(2k^3)$ and so $n^3$ is even.
Again, we deduce that $n$ is even, say 
\begin{equation*}
n= 2l, \quad \text{for some integer $l$.}
\end{equation*}
But now $m$ and $n$ do have a nontrivial common factor, 
namely $2$ --- contradiction!
\end{solution}

\begin{exercise} 
\label{exo:171107a+}
Prove that if $m$ is an integer such that $m^3$ is divisible by $3$, then $m$
is divisible by $3$ as well.
\emph{Hint:} Show the contrapositive. 
\emph{Remark:} This result is useful in showing that
$\sqrt[3]{3}\not\in\QQ$, see Exercise~\ref{exo:cuberoot2+}. 
\end{exercise}
\begin{solution}
The contrapositive is
\begin{equation}
\label{e:171107a+}
\tag{C}
\text{If $m$ is an integer not divisible by $3$, then $m^3$ is not divisible by $3$.}
\end{equation}
To prove \eqref{e:171107a+}, write $m = 3k\pm 1$, 
where $k\in\ZZ$. 
Then $m^3 = (3k\pm 1)^3 = (3k)^3 + 3(3k)^2(\pm 1) + 3(3k)(\pm 1)^2 + (\pm 1)^3
= 27k^3 \pm 27k^2 + 9k \pm 1 
= 3(9k^3\pm 9n^2+3k) \pm 1$ is not divisible by $3$
because the remainder after dividing by $3$ is either $1$ or $2$. 
\end{solution}

\begin{exercise} 
\label{exo:cuberoot2+}
Prove that there is no rational number $x$ such that $x^3 = 3$. 
\emph{Hint:} Mimic the proof of Theorem~\ref{t:squareroot2}
using Exercise~\ref{exo:171107a+}. 
\end{exercise}
\begin{solution}
Suppose to the contrary that there exists $x\in\mathbb{Q}$ such that
$x^3=3$, say
\begin{equation*}
x = \frac{m}{n}, 
\quad
\text{where $m\in\ZZ$ and $n\in\ZZ\smallsetminus\{0\}$}
\quad
\text{and}
\quad
x^3 =3,
\end{equation*}
and where we assume that the fraction is in lowest terms 
(all common factors except for $\pm 1$ are canceled out). 
Then $3 = x^3 = m^3/n^3$ and so 
\begin{equation*}
3n^3 = m^3.
\end{equation*}
This means that $m^3$ is divisible by $3$. 
Therefore, $m$ is also divisible by $3$ (see Exercise~\ref{exo:171107a+}), say
\begin{equation*}
m= 3k, \quad \text{for some integer $k$.}
\end{equation*}
But then the equality $3n^3 = m^3$ turns into
$3n^3 = (3k)^3 = 27k^3$.
Therefore, $n^3= 9k^3=3(3k^3)$ and so $n^3$ is 
divisible by $3$. 
Again, we deduce that $n$ is divisible by $3$, say 
\begin{equation*}
n= 3l, \quad \text{for some integer $l$.}
\end{equation*}
But now $m$ and $n$ do have a nontrivial common factor, 
namely $3$ --- contradiction!
\end{solution}

\begin{exercise}[YOU be the marker!] 
Consider the following statement 
\begin{equation*}
\text{``The binary (i.e., base $2$) representation of $-1$ is 
$1.\overline{1}$.''}
\end{equation*}
and the following ``proof'':
\begin{quotation}
First, because we work with base $2$, we have 
$1.\overline{1} = 1 + 1\cdot 2 + 1\cdot 2^2 + \cdots 
= \sum_{n=0}^\infty 2^n$.\\
On the other hand, this is the geometric series 
$\sum_{n=0}^\infty 2^n = 1/(1-2) = -1$.\\
Altogether, the result follows.
\end{quotation}
Why is this proof wrong?
\end{exercise}
\begin{solution}
The first line is wrong because 
$1.\overline{1} = 1 + \sum_{n=1}^{\infty} \tfrac{1}{2^n}$. 

The second line is also wrong because the geometric series formula 
$\sum_{n=0}^\infty\alpha^n$ requires $|\alpha|<1$.
But here $|2| = 2>1$.
\end{solution}

\begin{exercise}[TRUE or FALSE?]
Mark each of the following statements as either true or false. 
Briefly justify your answer.
\begin{enumerate}
\item ``If $a$ is an integer and $a \geq 2$, then $\sqrt{a}$ is irrational.''
\item ``If $(a_n)_\nnn$ has a bounded subsequence, then $(a_n)_\nnn$ 
converges.''
\item ``If $(a_n)_\nnn$ converges, then $(a_n)_\nnn$ is monotone.''
\item ``If $(a_n)_\nnn$ is not a Cauchy sequence of real numbers, 
 then $(a_n)_\nnn$ cannot have a cluster point.''
\end{enumerate}
\end{exercise}
\begin{solution}
(i): FALSE: Consider $a=4 = 2^2$. 

(ii): FALSE: $a_n\equiv (-1)^n$. 

(iii): FALSE: $((-1)^n/(n+1))_\nnn$. 

(iv): FALSE: $((-1)^n)_\nnn$. 
\end{solution} 
\chapter{Square Roots}

We now show that square roots do exist.
This is an application of the completeness axiom. 
In fact, we provide an algorithm for computing square roots,
which not only goes back to the Babylonians, but which is also
used nowadays in numerical computations. 

\section{Existence of Square Roots}

Let $a\in\RR$ such that $a>0$, a number of which we wish
to compute the square root of. Now if 
$x\in\RR$ is a square root of $a$, i.e., $x^2=a$,
then $x\neq 0$ and $x=a/x$; otherwise $x\neq a/x$.
In the latter case, this motivates the replacement of
$x$ by
\begin{equation}
\frac{1}{2}\Big( x + \frac{a}{x}\Big),
\end{equation}
i.e., by the arithmetic mean of $x$ and $a/x$. 
It is remarkable that this idea always works,
i.e., it generates a sequence of real numbers that converges
to the square root of $a$.

\begin{theorem}
\label{t:squareroot}
Let $a>0$ and let $x_0>0$ be real numbers.
Define the sequence $(x_n)_\nnn$ by 
\begin{equation}
x_{n+1} := \frac{1}{2}\Big( x_n + \frac{a}{x_n}\Big),
\quad\text{for every $\nnn$.}
\end{equation}
Then $(x_n)_\nnn$ converges to the 
unique positive solution of the equation $x^2=a$.
\end{theorem}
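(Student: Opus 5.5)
The plan is the classical monotone-convergence argument, relying on Theorem~\ref{t:monconv} for existence of the limit and on the limit laws to identify it.

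First, by induction every $x_n>0$: $x_0>0$, and if $x_n>0$ then $a/x_n>0$ (Proposition~\ref{p:ofield}), so $x_{n+1}>0$. Next, for every $n\geq 0$ we have the key identity
\begin{equation*}
x_{n+1}^2-a=\tfrac14\Big(x_n+\tfrac{a}{x_n}\Big)^2-a=\tfrac14\Big(x_n-\tfrac{a}{x_n}\Big)^2\geq 0,
\end{equation*}
since squares are nonnegative. Hence $x_n^2\geq a$ for all $n\geq 1$. Consequently, for $n\geq1$,
\begin{equation*}
x_n-x_{n+1}=\frac{x_n^2-a}{2x_n}\geq 0,
\end{equation*}
so $(x_n)_{n\geq 1}$ is decreasing and bounded below by $0$ (and above by $x_1$). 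By the Bounded Monotone Convergence Theorem (Theorem~\ref{t:monconv}, applied to the shifted sequence $(x_{n+1})_\nnn$), $x_n\to\ell$ for some $\ell\in\RR$.

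The main obstacle is identifying $\ell$: to pass to the limit in the recursion via the Quotient Law (Corollary~\ref{c:quotlaw}) we need $\ell\neq 0$. For this I would use $x_n^2\geq a$ for $n\geq 1$: by the Product Law, $x_n^2\to\ell^2$, and Corollary~\ref{c:limleq} gives $\ell^2\geq a>0$, so $\ell\neq0$; moreover $\ell\geq0$ by Corollary~\ref{c:limleq} since $x_n>0$, hence $\ell>0$. Then $x_{n+1}\to\ell$ by Proposition~\ref{p:allgo}, while the right side of the recursion tends to $\tfrac12(\ell+a/\ell)$ by the Sum, Constant-Multiple and Quotient Laws. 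Uniqueness of limits gives $\ell=\tfrac12(\ell+a/\ell)$, i.e., $\ell^2=a$.

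Finally, uniqueness of the positive solution: if $y>0$ and $z>0$ satisfy $y^2=z^2=a$, then $0=y^2-z^2=(y-z)(y+z)$ with $y+z>0$, so $y=z$ by Proposition~\ref{p:may26:4}\ref{p:may26:4iii}. Thus $\ell$ is the unique positive solution of $x^2=a$, which also establishes the existence of $\sqrt{a}$.
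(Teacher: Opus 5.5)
Your proof is correct and follows essentially the same route as the paper: positivity, the identity $x_{n+1}^2-a=\tfrac14(x_n-a/x_n)^2$, monotonicity for $n\geq1$, the Bounded Monotone Convergence Theorem, the limit laws, and the factorization $(x-z)(x+z)=0$ for uniqueness. The only difference is how you secure $\ell>0$. You pass $x_n^2\geq a$ to the limit to get $\ell^2\geq a>0$, whereas the paper introduces $y_n:=a/x_n$, shows $y_1\leq y_n\leq x_n$, and concludes $\ell\geq y_1>0$; that is slightly longer, but it also yields the bracketing $a/x_n\leq\sqrt{a}\leq x_n$ that the paper uses later for its numerical examples.
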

\begin{proof}
The proof proceeds along several steps.

\textsc{Step 1:} 
$x_n>0$, for every $\nnn$.\\
Indeed, we prove this by induction on $n$.
By assumption, $x_0>0$.
Now assume that $x_n>0$ for some $\nnn$.
Then $1/x_n>0$ and hence $a/x_n>0$ since $a>0$.
It follows that $x_n + a/x_n>0$ and further that
$x_{n+1} = (x_n + a/x_n)/2>0$ since $2=1+1>0$.

\textsc{Step 2:} 
$x_n^2\geq a$, for every integer $n\geq 1$. \\
Indeed, if $n\geq 1$, then 
\begin{subequations}
\begin{align}
x_n^2 - a &= \frac{1}{4}\Big( x_{n-1}+\frac{a}{x_{n-1}}\Big)^2 -a 
= \frac{1}{4}\Big( x_{n-1}^2 + 2a + \frac{a^2}{x_{n-1}^2}\Big)  -
a\\
&= \frac{1}{4}\Big( x_{n-1}^2 - 2a + \frac{a^2}{x_{n-1}^2}\Big)
=  \frac{1}{4}\Big( x_{n-1}-\frac{a}{x_{n-1}}\Big)^2 \geq 0.
\end{align}
\end{subequations}

\textsc{Step 3:} 
$x_{n+1}\leq x_n$, for every integer $n\geq 1$. \\
Indeed, if $n\geq 1$, then using Steps 1 and 2, 
\begin{equation}
x_n-x_{n+1} = x_n -  \frac{1}{2}\Big( x_n + \frac{a}{x_n}\Big)
= \frac{1}{2x_n}\big(x_n^2 -a \big) \geq 0.
\end{equation}

Now define
\begin{equation}
y_n := \frac{a}{x_n},
\quad\text{for every $\nnn$.}
\end{equation}

\textsc{Step 4:} 
$y_n^2\leq a$, for every integer $n\geq 1$.\\
Indeed, let $n\geq 1$. Then 
$1/x_n^2\leq 1/a$ by Step~2.
Multiplying by $a^2>0$ gives
$y_n^2 = (a/x_n)^2 = a^2/x_n^2 \leq a^2/a = a$.

\textsc{Step 5:} 
$y_n\leq y_{n+1}$, for every integer $n\geq 1$.\\
This follows immediately from Step~3. 

\textsc{Step 6:} 
$y_n\leq x_{n}$, for every integer $n\geq 1$.\\
For otherwise, there would exist $n\geq 1$
such that $y_n>x_n>0$ and so $y_n^2>x_n^2$,
which contradicts $y_n^2\leq a \leq x_n^2$ (see Steps 2 and 4).

\textsc{Step 7:} 
$(x_n)_{\nnn}$ converges to some real number $x>0$.\\
Combining Step~3 and Step~6, we see that
$x_1\geq x_n\geq x_{n+1}\geq y_{n+1}\geq y_1 > 0$, for
every $n\geq 1$. Hence $(x_n)_\nnn$ is bounded and monotone
(decreasing). 
By Theorem~\ref{t:monconv}, $(x_n)_\nnn$ converges. 
Since $x_n \geq y_1>0$ for every $n\geq 1$, 
it follows from Corollary~\ref{c:limleq} that
$x\geq y_1>0$. 

\textsc{Step 8:} $x^2 =a $.\\
Indeed, using the Limit Laws,
we obtain
\begin{equation}
x = \lim x_{n+1} = \lim \frac{1}{2}\Big( x_n + \frac{a}{x_n}\Big)
= \frac{1}{2} \Big( \lim x_n + \frac{a}{\lim x_n}\Big) =
\frac{1}{2}\Big( x + \frac{a}{x}\Big). 
\end{equation}
Thus $2x = x+a/x$
$\Leftrightarrow$ 
$x=a/x$ 
$\Leftrightarrow$ 
$x^2 = a$.

\textsc{Step 9:} There is only one positive solution to $x^2=a$.\\
Suppose that $z>0$ satisfies $z^2 =a$.
Then $0 = a - a = x^2-z^2 = (x+z)(x-z)$.
Now $x+z>0$, so division by $x+z$ yields $0=x-z$.
\end{proof}

Theorem~\ref{t:squareroot} thus makes the following
notion well defined.

\begin{definition}[square root]
Let $a\in\RR$ such that $a\geq 0$.
Then the unique nonnegative real number $x$ such that
$x^2=a$ is called the \textbf{square root} of $a$,
and written $x=\sqrt{a}$.\index{Square root}
\end{definition}

\begin{remark}
\label{r:squareroot}
Let 
$a\in\RR$ and consider the equation
\begin{equation}
x^2=a.
\end{equation}
\begin{enumerate}
\item
When $a=0$, the equation becomes $x^2=0$.
By Proposition~\ref{p:may26:4}\ref{p:may26:4iii},
this equation has only one solution, namely $x=0$.
So $\sqrt{0} = 0$. 
\item Now assume that $a>0$.
Then $\sqrt{a}$ is the unique positive solution of the equation
$x^2=a$ by Theorem~\ref{t:squareroot}. 
Writing $x^2 = a$ as $x^2 = \sqrt{a}^2$, we see that
\begin{equation}
x^2=a
\;\Leftrightarrow\;
x^2 - \sqrt{a}^2 = 0
\;\Leftrightarrow\;
\big(x+\sqrt{a}\big)\big(x-\sqrt{a}\big)=0.
\end{equation}
In turn, Proposition~\ref{p:may26:4}\ref{p:may26:4iii}
yields $x+\sqrt{a}=0$ or $x-\sqrt{a}=0$,
which gives rise to the two distinct solutions
$\sqrt{a}$ and $-\sqrt{a}<0$.
\item When $a<0$, the equation $x^2=a$ has no solution in $\RR$ by 
Proposition~\ref{p:ofield}\ref{p:ofieldx}. 
\end{enumerate}
\end{remark}

Let $\alpha$ and $\beta$ be two nonnegative real numbers.
Then the uniqueness of square roots quickly leads to
\begin{equation}
\label{e:squareprod}
\sqrt{\alpha\beta} = \sqrt{\alpha}\sqrt{\beta}.
\end{equation}

\begin{lemma}[arithmetic mean -- geometric mean inequality]
\label{l:amgm}
\index{AM -- GM Inequality}
\index{Arithmetic mean}
\index{Geometric mean}
Let $\alpha\geq 0$ and let $\beta\geq 0$.
Then
\begin{equation}
\label{e:amgm}
\sqrt{\alpha\beta} \leq \frac{\alpha+\beta}{2},
\end{equation}
and equality holds if and only if $\alpha=\beta$.
\end{lemma}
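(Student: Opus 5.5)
The plan is to reduce the AM--GM inequality to the fact that a square is nonnegative, using the square roots whose existence is guaranteed by Theorem~\ref{t:squareroot} (and the case $\sqrt{0}=0$ from Remark~\ref{r:squareroot}). Set $u:=\sqrt{\alpha}\geq 0$ and $v:=\sqrt{\beta}\geq 0$, so that $u^2=\alpha$ and $v^2=\beta$. By \eqref{e:squareprod}, $\sqrt{\alpha\beta}=\sqrt{\alpha}\sqrt{\beta}=uv$. Consequently \eqref{e:amgm} is equivalent to $uv\leq (u^2+v^2)/2$.

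Multiplying by $2>0$ and using Proposition~\ref{p:ofield}, this in turn is equivalent to $0\leq u^2-2uv+v^2$. Expanding as in Exercise~\ref{exo:151007a}, the right-hand side equals $(u-v)^2$. Every square is nonnegative: it equals $0$ if $u-v=0$, and it is positive by Proposition~\ref{p:ofield}\ref{p:ofieldx} otherwise. This proves \eqref{e:amgm}.

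For the equality case, the same chain of equivalences shows that equality holds in \eqref{e:amgm} if and only if $(u-v)^2=0$. By Proposition~\ref{p:may26:4}\ref{p:may26:4iii}, this happens if and only if $u=v$. Finally, $u=v$ gives $\alpha=u^2=v^2=\beta$. Conversely, $\alpha=\beta$ gives $u=v$ by the uniqueness of nonnegative square roots, which is Theorem~\ref{t:squareroot} together with Remark~\ref{r:squareroot}.

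The only delicate point is making sure each step is a genuine equivalence rather than a one-way implication. The relevant steps are multiplication by the positive number $2$ and the passage between $\alpha,\beta$ and $u,v$ via the uniqueness of square roots. This reversibility is exactly what the ``if and only if'' in the equality case requires.
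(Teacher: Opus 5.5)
Your proof is correct and follows essentially the same route as the paper: rewrite $\sqrt{\alpha\beta}=\sqrt{\alpha}\sqrt{\beta}$ via \eqref{e:squareprod}, reduce \eqref{e:amgm} by reversible steps to $0\leq(\sqrt{\alpha}-\sqrt{\beta})^2$, and read off the equality case from $\sqrt{\alpha}=\sqrt{\beta}\Leftrightarrow\alpha=\beta$. Your explicit attention to the reversibility of each step, and to why $(u-v)^2=0$ forces $u=v$, only makes explicit what the paper's terser chain of equivalences leaves implicit.
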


\section{Speed of Convergence}

\begin{theorem}
Let $a>0$, let $x_0>0$, and let the sequence
$(x_n)_\nnn$ be defined as in Theorem~\ref{t:squareroot}, i.e.,
\begin{equation}
\label{e:halloween:a}
x_{n+1} := \frac{1}{2}\Big( x_n + \frac{a}{x_n}\Big),
\quad\text{for every $\nnn$.}
\end{equation}
Define the corresponding sequence of relative errors 
$(\varepsilon_n)_\nnn$ by
$\varepsilon_n = x_n/\sqrt{a} - 1$, i.e., by
\begin{equation}
\label{e:halloween:b}
x_n = \sqrt{a}\big(1+\varepsilon_n\big),
\quad\text{for every $n\in\NN$.}
\end{equation}
Then for every $n\geq 1$, we have
$\varepsilon_n\geq 0$ and also
\begin{equation}
\label{e:halloween:c}
\varepsilon_{n+1} = \frac{1}{2}
\frac{\varepsilon_n^2}{1+\varepsilon_n}
\leq \frac{1}{2}\min\big\{ \varepsilon_n,\varepsilon_n^2\big\}. 
\end{equation}
\end{theorem}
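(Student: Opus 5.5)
The plan is to substitute the ansatz \eqref{e:halloween:b} directly into the recursion \eqref{e:halloween:a} and simplify. First, $\varepsilon_n\geq 0$ for $n\geq 1$ follows from Step~2 in the proof of Theorem~\ref{t:squareroot}. That step gives $x_n^2\geq a$ for $n\geq 1$, and together with $x_n>0$ (Step~1) we get $x_n\geq\sqrt{a}$. Otherwise $0<x_n<\sqrt{a}$ would give $x_n^2<a$ by Proposition~\ref{p:ofield}\ref{p:ofieldix}. Hence $\varepsilon_n = x_n/\sqrt{a}-1\geq 0$. Also, since $x_n>0$ for all $n$, we have $1+\varepsilon_n = x_n/\sqrt{a}>0$ for every $n\in\NN$, so all divisions below are legitimate.

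Next comes the identity. Using $a=\sqrt{a}^2$, we compute
\begin{equation*}
\sqrt{a}(1+\varepsilon_{n+1}) = x_{n+1} = \frac{1}{2}\Big(\sqrt{a}(1+\varepsilon_n) + \frac{\sqrt{a}^2}{\sqrt{a}(1+\varepsilon_n)}\Big)
= \frac{\sqrt{a}}{2}\cdot\frac{(1+\varepsilon_n)^2+1}{1+\varepsilon_n}.
\end{equation*}
Dividing by $\sqrt{a}>0$ and subtracting $1$ yields
\begin{equation*}
\varepsilon_{n+1} = \frac{(1+\varepsilon_n)^2+1-2(1+\varepsilon_n)}{2(1+\varepsilon_n)} = \frac{1}{2}\frac{\varepsilon_n^2}{1+\varepsilon_n}.
\end{equation*}
This computation holds for every $n\in\NN$, and in particular for $n\geq 1$.

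Finally, the inequality $\frac{1}{2}\frac{\varepsilon_n^2}{1+\varepsilon_n}\leq\frac{1}{2}\min\{\varepsilon_n,\varepsilon_n^2\}$ is where $\varepsilon_n\geq 0$ is used. Because $1+\varepsilon_n\geq 1$, Proposition~\ref{p:ofield}\ref{p:ofieldxiv} (or equality when $\varepsilon_n=0$) gives $1/(1+\varepsilon_n)\leq 1$, hence $\varepsilon_n^2/(1+\varepsilon_n)\leq\varepsilon_n^2$. For the other bound, $\varepsilon_n\leq 1+\varepsilon_n$ gives $\varepsilon_n/(1+\varepsilon_n)\leq 1$. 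Multiplying by $\varepsilon_n\geq 0$ (Proposition~\ref{p:ofield}\ref{p:ofieldvii}, or trivially when $\varepsilon_n=0$) then gives $\varepsilon_n^2/(1+\varepsilon_n)\leq\varepsilon_n$. Taking both bounds together yields the minimum.

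The only delicate point is this sign bookkeeping. For $n=0$, $\varepsilon_0$ may be negative, which explains the restriction to $n\geq 1$ in the statement. The algebra itself is routine.
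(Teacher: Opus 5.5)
Your proof is correct and takes the same route as the paper. You get $\varepsilon_n\geq 0$ for $n\geq 1$ from Steps~1 and~2 of the proof of Theorem~\ref{t:squareroot}, substitute $x_n=\sqrt{a}(1+\varepsilon_n)$ into the recursion to obtain the identity, and derive the two bounds from $1+\varepsilon_n\geq 1$ and $\varepsilon_n\leq 1+\varepsilon_n$. Along the way you also spell out two details the paper leaves implicit: that $1+\varepsilon_n>0$, so the divisions are legitimate, and why $x_n^2\geq a$ together with $x_n>0$ gives $x_n\geq\sqrt{a}$.
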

\begin{proof}
Combining Steps 1 and 2 in the proof of Theorem~\ref{t:squareroot},
we see that 
$x_n \geq \sqrt{a}$ and hence $\varepsilon_n \geq 0$,
for $n\geq 1$. Next, substituting \eqref{e:halloween:b}
into \eqref{e:halloween:a}, we obtain
\begin{equation}
\sqrt{a}\big(1+\varepsilon_{n+1}\big) = 
\frac{1}{2}\bigg( \sqrt{a}\big(1+\varepsilon_n\big) +
\frac{a}{\sqrt{a}\big(1+\varepsilon_n\big)}\bigg),
\quad\text{for every $\nnn$.}
\end{equation}
After dividing by $\sqrt{a}$, this turns into
\begin{equation}
1+\varepsilon_{n+1} = 
\frac{1}{2}\bigg( 1+\varepsilon_n + 
\frac{1}{1+\varepsilon_n}\bigg),
\quad\text{for every $\nnn$.}
\end{equation}
Hence, 
\begin{equation}
\varepsilon_{n+1} = 
\frac{1}{2}\bigg( -1 +\varepsilon_n + 
\frac{1}{1+\varepsilon_n}\bigg)
= \frac{1}{2} 
\frac{\varepsilon_n^2}{1+\varepsilon_n}, 
\quad\text{for every $\nnn$.}
\end{equation}
Now suppose that $n\geq 1$. 
Then $\varepsilon_n\geq 0$.
On the one hand, since 
$1+\varepsilon_n\geq 1$, we get 
$\varepsilon_n^2/(1+\varepsilon_n)\leq \varepsilon_n^2$.
On the other hand, we also have 
$\varepsilon_n^2 \leq \varepsilon_n^2 + \varepsilon_n
= \varepsilon_n(\varepsilon_n+1)$; thus, 
$\varepsilon_n^2/(1+\varepsilon_n)\leq\varepsilon_n$.
Altogether, we obtain
\eqref{e:halloween:c}. 
\end{proof}

\begin{remark}
The relative error discussed in the last result becomes
small very quickly.
Suppose it is, at iteration $n$, less than or equal to $10^{-3}$.
Then at iteration $n+1$, the error is no worse
than $\tfrac{1}{2} 10^{-6}$. Roughly speaking, the number of correct
digits is expected to \emph{double} at every iteration, this is 
also termed \textbf{quadratic convergence}.

Furthermore, in Calculus~I we considered \textbf{Newton's method}
for solving the equation $f(x)=0$, via the 
iteration\index{Newton's method}
\begin{equation}
x_{n+1} = x_n  - \frac{f(x_n)}{f'(x_n)}.
\end{equation}
Now if $f(x) = x^2-a$, i.e., we aim to find a square root,
then $f'(x) = 2x$ and Newton's method takes the form
\begin{equation}
\label{e:Newton}
x_{n+1} = x_n - \frac{x_n^2-a}{2x_n} = \frac{x_n}{2} + \frac{a}{2x_n},
\end{equation}
which is \eqref{e:halloween:a}!
\end{remark}

We conclude with two numerical examples.
Note that $a/x_n\leq \sqrt{a}\leq x_n$
by Steps 2 and 4 in the proof of Theorem~\ref{t:squareroot}. 
We compute numerically the square roots of $4$ and of $2$. 
Observe the rapid convergence and doubling of accuracy. 
The code on the next page is done 
in \texttt{Julia}:

\newpage

\label{p:sqrt2}


\includepdf[
  pages=1-1,
  scale=0.9,
  offset= 0 -20mm, 
  pagecommand={\thispagestyle{headings}}
]{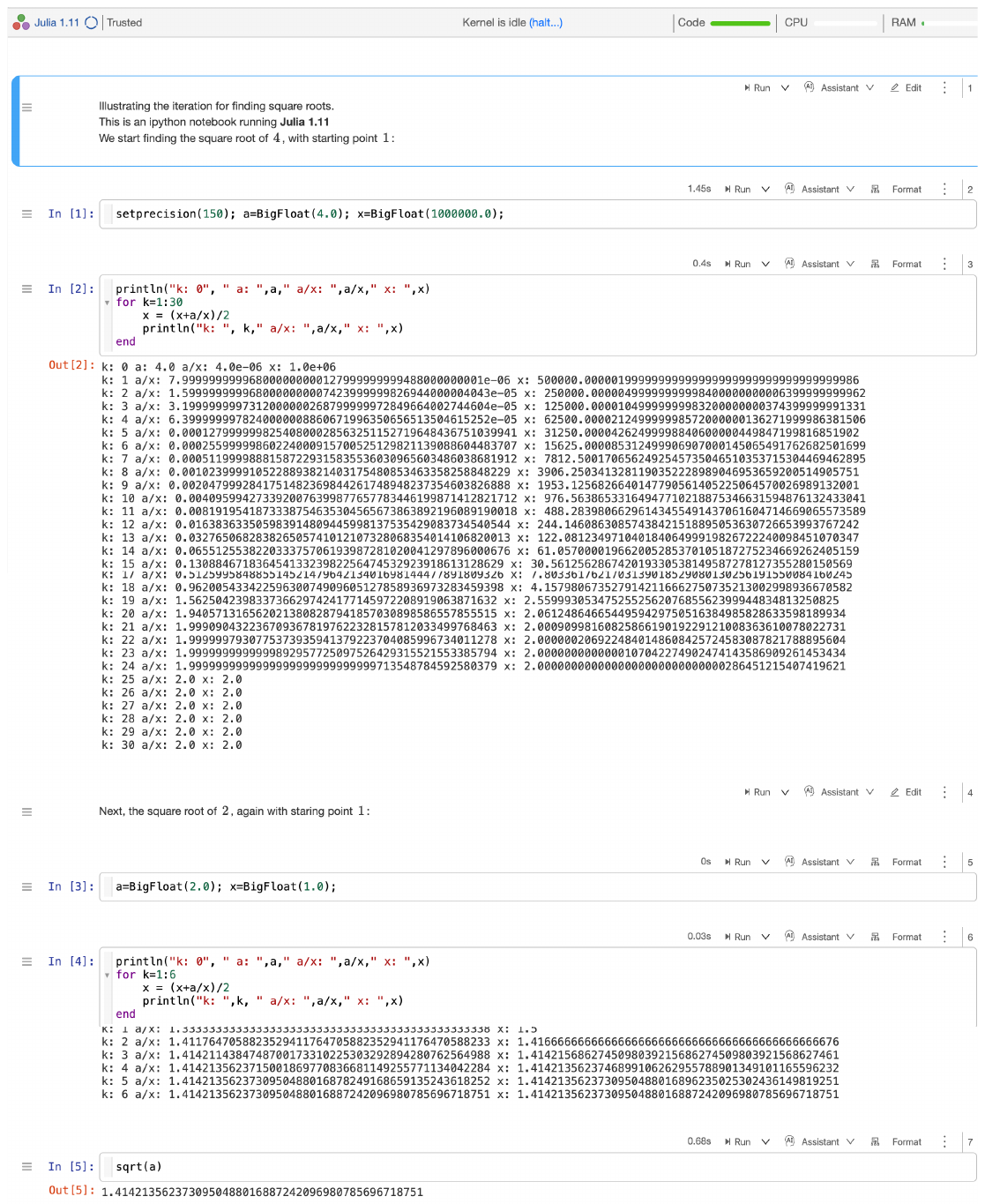}
















\section*{Exercises}\markright{Exercises}
\addcontentsline{toc}{section}{Exercises}
\setcounter{theorem}{0}

\begin{exercise} 
Let $k$ be an integer such that $k\geq 2$, let $a>0$, and let $x_0>0$.
Define a sequence $(x_n)_\nnn$ by
\begin{equation}
\label{e:kthroot}
x_{n+1} := \frac{1}{k}\Big( (k-1)x_n + \frac{a}{x_n^{k-1}}\Big),
\quad\text{for every $\nnn$.}
\end{equation}
Show that $(x_n)_\nnn$ converges to the unique positive solution of
$x^k=a$. 
\end{exercise}
\begin{solution}
First, an easy induction gives $x_n>0$ for every $\nnn$.
Next, 
\begin{align*}
x_n > 0
&\Rightarrow 
\frac{a}{x_n^k} + k-1 >0\\
&\Rightarrow 
\frac{a}{x_n^k} -1 >-k\\
&\Rightarrow 
\frac{1}{k}
\bigg(\frac{a}{x_n^k} -1 \bigg) > -1. 
\end{align*}
Note that 
\begin{equation}
\label{e:kroota}
x_{n+1} = \frac{1}{k}x_n\Big( (k-1) + \frac{a}{x_n^{k}}\Big)
= x_n\bigg(1+\frac{1}{k}\Big(\frac{a}{x_n^k}-1\Big)\bigg).
\end{equation}
Recall that Bernoulli yields 
$y\geq -1$ $\Rightarrow$ 
$(1+y)^k \geq 1+ky$. 
Applying this with 
\begin{equation*}
    y = \frac{1}{k}\Big(\frac{a}{x_n^k}-1\Big) > -1
\end{equation*}
gives us 
\begin{equation*}
    \bigg(1+\frac{1}{k}\Big(\frac{a}{x_n^k}-1\Big)\bigg)^k 
    \geq 1 + k\frac{1}{k}\Big(\frac{a}{x_n^k}-1\Big)
    = \frac{a}{x_n^k}. 
\end{equation*}
Hence 
\begin{equation*}
x_{n+1}^k = x_n^k\bigg(1+\frac{1}{k}\Big(\frac{a}{x_n^k}-1\Big)\bigg)^k  
\geq x_n^k \frac{a}{x_n^k} = a. 
\end{equation*}
Thus 
\begin{equation*}
(\forall n\geq 1)\quad 
\frac{a}{x_n^k} \leq 1; 
\;\;\text{hence}\;\;
\frac{1}{k}\bigg(\frac{a}{x_n^k}-1\bigg)\leq 0.
\end{equation*}
In view of \eqref{e:kroota}, 
$x_{n+1}\leq x_n$ for all $n\geq 1$.
Since the sequence $(x_n)_\nnn$ consists of positive terms,
it follows that there exists $x \in\RR$ such that 
\begin{equation*}
    x_n\to x \geq 0.
\end{equation*}
Now observe that the definition of $x_{n+1}$ yields 
\begin{equation*}
kx_{n+1}x_n^{k-1} = (k-1)x_n^k + a.
\end{equation*}
Taking the limit as $n\to\infty$ gives
\begin{equation*}
kxx^{k-1} = (k-1)x^k + a,
\end{equation*}
and therefore $x^k = a$. 
Finally, $0\leq z_1 < z_2$
$\Rightarrow$ 
$z_1^k < z_2^k$ which shows that there is only one 
nonnegative $k$th root!
\end{solution}

\begin{exercise}
Consider the function $f(x) = x^k -a$. 
Derive the update formula of the sequence generated by Newton's method
and compare to \eqref{e:kthroot}.
\end{exercise}
\begin{solution}
Recall that Newton's method updates via
\begin{equation*}
x_{n+1} = x_n  - \frac{f(x_n)}{f'(x_n)}.
\end{equation*}
Now $f'(x) = kx^{k-1}$ and hence 
\begin{equation*}
x_{n+1} = x_n  - \frac{x_n^k-a}{kx_n^{k-1}} = 
x_n\bigg(1-\frac{1}{k}\bigg) + \frac{a}{kx_n^{k-1}}
\end{equation*}
which yields the formula \eqref{e:kthroot}. 
\end{solution}

\begin{exercise}
\label{exo:130928f}
Let $\alpha$ and $\beta$ be real numbers such that
$0\leq \alpha < \beta$. 
Show that $\sqrt{\alpha}<\sqrt{\beta}$.
\end{exercise}
\begin{solution}
We argue by contradiction.
If the result was false, we would have 
$\sqrt{\alpha}\geq\sqrt{\beta}>0$ and
hence $\alpha = \sqrt{\alpha}^2\geq \sqrt{\beta}^2 = \beta$ by
Proposition~\ref{p:ofield}\ref{p:ofieldix}, which is absurd.
\end{solution}

\begin{exercise}
\label{exo:goldenmean}
Consider
\begin{equation}
\sqrt{1+\sqrt{1+\sqrt{1+\sqrt{1+\sqrt{1+\cdots}}}}},
\end{equation}
i.e., the sequence $(a_n)_\nnn$ defined by
$a_0:=1$ and $a_{n+1} := \sqrt{1+a_n}$, for $n\geq 0$. 
Show that this sequence is convergent, and determine its limit.
\emph{Hint:} Use induction twice to show that $(a_n)_{\nnn}$ is strictly
increasing, and that $a_n\leq 2$ for every $\nnn$.
\end{exercise}
\begin{solution}

\textsc{Step 0:} Let $0\leq \alpha < \beta$.
Then $\sqrt{\alpha}<\sqrt{\beta}$.
(See also Exercise~\ref{exo:130928f}.)
For otherwise, we would have
$\sqrt{\alpha}\geq\sqrt{\beta}>0$ and
hence $\alpha = \sqrt{\alpha}^2\geq \sqrt{\beta}^2 = \beta$ by
Proposition~\ref{p:ofield}\ref{p:ofieldix}, which is absurd.

\textsc{Step 1:} $(a_n)_\nnn$ is strictly increasing.\\
We prove by induction that 
\begin{equation*}
a_n < a_{n+1},
\quad
\text{for every $\nnn$.}
\end{equation*}

\textbf{Base Case}: When $n=0$, we do have
$a_0 = 1 < \sqrt{2} = \sqrt{1+1} = \sqrt{1+a_0} = a_1$.

\textbf{Inductive Step}: Assume that for some integer $n\geq 0$
we have $a_n<a_{n+1}$.
Then $1+a_n<1+a_{n+1}$ and hence
$a_{n+1} = \sqrt{1+a_n} < \sqrt{1+a_{n+1}}=a_{n+2}$, 
which completes the proof. 

\textsc{Step 2:} $a_n \leq 2$, for every $\nnn$.\\
We prove this also by induction.
Indeed, the \textbf{Base Case} $a_0 = 1<2$ is clear.
Suppose now that for some integer $n\geq 0$, we have $a_n\leq 2$
(this is the \textbf{inductive hypothesis}). 
Then $1+a_n\leq 1+2= 3<4$ and hence $a_{n+1}=\sqrt{1+a_n}\leq
\sqrt{3}<\sqrt{4}=2$ which completes the proof of the
\textbf{inductive step}.

\textsc{Step 3:} 
$a_n\to\alpha\in[1,2]$.\\
Note that $a_n\geq 1$ for every $\nnn$ by Step~1. 
Now combine Theorem~\ref{t:monconv} with Steps 1 and 2.

It remains to determine the limit $\alpha$.
Now $a_{n+1}^2 = 1+a_n$, for every $\nnn$;
thus, taking limits yields
$\alpha^2 = 1+\alpha$.
Hence $(\alpha-1/2)^2 = 5/4$. 
Since $\alpha\geq 1$, it follows that $\alpha-1/2=\sqrt{5}/2$
and therefore that $a_n \to (1+\sqrt{5})/2$. 
\end{solution}

\begin{exercise}
Consider the sequence $(a_n)_\nnn$ defined in
Exercise~\ref{exo:goldenmean}. \index{Inverse Symbolic Calculator}
Use a programming language of your choice to numerically compute the
first 40 terms of this sequence. 
Hand in (i) a printout of your computer code and
(ii) a printout of the output of your computer code. 
\end{exercise}
\begin{solution}
See next page!
\end{solution}

\includepdf[
  pages=1-1,
  scale=0.8,
  offset= 0 -20mm, 
  pagecommand={\thispagestyle{headings}}
]{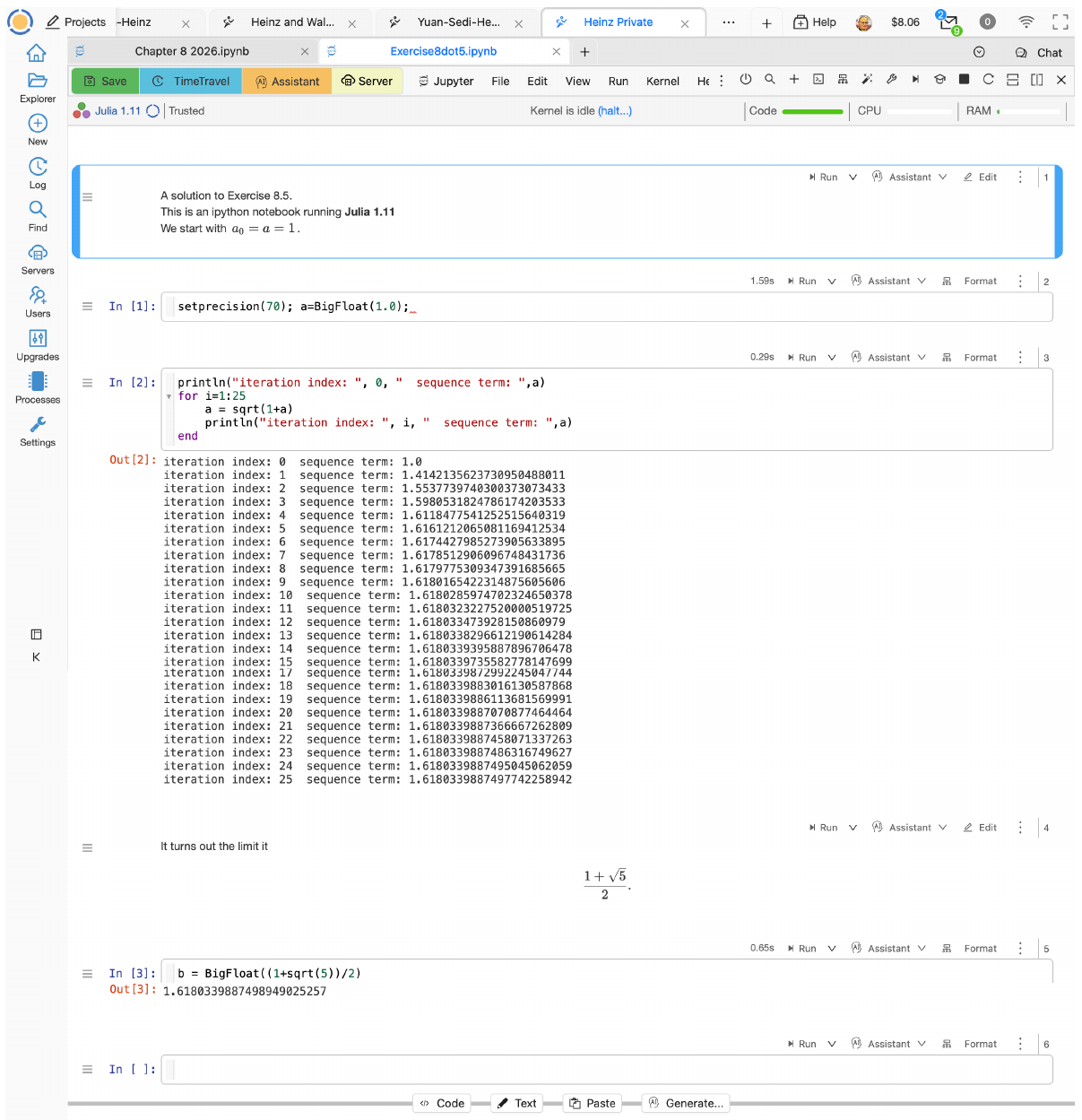}

\begin{exercise}
\label{exo:260304a}
Let $\alpha\geq 0$ and let $\beta\geq 0$.
Prove that $\sqrt{\alpha\beta}=\sqrt{\alpha}\sqrt{\beta}$.
\end{exercise}
\begin{solution}
In view of Theorem~\ref{t:squareroot}
and Remark~\ref{r:squareroot},
we note that given $\gamma\geq 0$,
$\sqrt{\gamma}$ is the unique nonnegative number such that
$\sqrt{\gamma}^2=\gamma$.

Thus, to verify that $\sqrt{\alpha\beta}=\sqrt{\alpha}\sqrt{\beta}$,
it suffices to show that
$\big(\sqrt{\alpha}\sqrt{\beta}\big)^2=\alpha\beta$,
since $\sqrt{\alpha}\sqrt{\beta}$ is clearly nonnegative.
Indeed,
$\big(\sqrt{\alpha}\sqrt{\beta}\big)^2
= \sqrt{\alpha}^2\sqrt{\beta}^2 = \alpha\beta$.
\end{solution}

\begin{exercise}
Prove Lemma~\ref{l:amgm}. 
\end{exercise}
\begin{solution}
By \eqref{e:squareprod},
\begin{equation*}
\sqrt{\alpha\beta} = \sqrt{\alpha}\sqrt{\beta}.
\end{equation*}
Hence,
\begin{subequations}
\begin{align}
\sqrt{\alpha\beta}\leq\frac{\alpha+\beta}{2}
&\Leftrightarrow
2\sqrt{\alpha}\sqrt{\beta}\leq \alpha+\beta\\
&\Leftrightarrow
0\leq \sqrt{\alpha}^2 - 2\sqrt{\alpha}\sqrt{\beta}+\sqrt{\beta}^2\\
&\Leftrightarrow
0\leq \big(\sqrt{\alpha}-\sqrt{\beta}\big)^2,\label{e:orban}
\end{align}
\end{subequations}
which is true by Proposition~\ref{p:ofield}\ref{p:ofieldx}. 
Moreover, we have equality in \eqref{e:orban} if and only if
$\sqrt{\alpha}=\sqrt{\beta}$, which is equivalent to $\alpha=\beta$.
\end{solution}

\begin{exercise}
Compute the square roots of $9$ and $3$ numerically, similarly to the
example provided in this section, using a programming language of your
choice (e.g., \texttt{Maple}, \texttt{Python}, \texttt{Octave},
\texttt{Java}, etc.). 
Hand in (i) a printout of your computer code and
(ii) a printout of the output of your computer code. 
\end{exercise}
\begin{solution}
    Modify the code provided on page~\pageref{p:sqrt2} to compute $\sqrt{3}$ and $\sqrt{9}$.
\end{solution}

\begin{exercise}
Consider the sequence $(f_n)_\nnn$ of Fibonacci numbers defined
in Example~\ref{ex:seq}\ref{ex:seq:fibo}. 
Using Exercise~\ref{exo:Binet} and the limit laws, show that
\begin{equation*}
\frac{f_{n+1}}{f_n}\to \frac{1+\ww}{2}.
\end{equation*}
\end{exercise}
\begin{solution}
Set $\alpha := (1+\ww)/2$  and observe that $\alpha>1$.
It follows that $1/\alpha^n\to 0$ by Example~\ref{ex:geoseq}. 
By Exercise~\ref{exo:Binet}, we have that
\begin{equation*}
f_n = \frac{1}{\ww}\left(\frac{1+\ww}{2}\right)^n
+ \varepsilon_n = \frac{1}{\ww}\alpha^n + \varepsilon_n,
\end{equation*}
where $\varepsilon_n\to 0$ (because $|1-\ww|<2$). 
Using the limit laws, we deduce that 
\begin{align*}
\frac{f_{n+1}}{f_n} &= 
\frac{ \frac{1}{\ww}\alpha^{n+1}
+ \varepsilon_{n+1}}{\frac{1}{\ww}\alpha^n
+ \varepsilon_n}
= \frac{\alpha +
\ww\varepsilon_{n+1}/\alpha^n}{1+\ww\varepsilon_n/\alpha^n}\\
&\to
\frac{\alpha+\ww\cdot 0\cdot 0}{1+\ww\cdot 0 \cdot 0} = \alpha =
\frac{1+\ww}{2}, 
\end{align*}
as announced. 
\end{solution}

\begin{exercise}
Find all solutions to the equation
$\sqrt{x+1}-\sqrt{x-1}=2$ where $x\in\RR$.
\end{exercise}
\begin{solution}
We claim there is no solution and argue by contradiction. 
Suppose to the contrary that $x$ is a solution of the equation.
Then $\sqrt{x+1} = \sqrt{x-1}+2$.
Squaring yields
$x+1 = x-1 + 4\sqrt{x-1} + 4$.
Hence $4\sqrt{x-1}=-2$ or 
$\sqrt{x-1}=-1/2$ which is absurd.
\end{solution}

\begin{exercise}[Harmonic Mean]
Let $x>0$ and $y>0$. The \emph{Harmonic Mean} of $x$ and $y$ is
defined by $h(x,y) := 2xy/(x+y)$. Show that 
$h(x,y) \leq \sqrt{xy}$.
\index{Harmonic mean}
\end{exercise}
\begin{solution}
We have $0 \leq (x-y)^2 = x^2-2xy+y^2$,
hence $2xy \leq x^2+y^2$. 
Thus $4xy \leq x^2 + y^2 + 2xy = (x+y)^2$.
It follows that $4(xy)^2 \leq (xy)(x+y)^2$
and thus (taking square roots)
$2xy \leq \sqrt{xy}(x+y)$.
Re-arranging yields
$h(x,y) = 2xy/(x+y) \leq \sqrt{xy}$. 
\end{solution}

\begin{exercise}[a trip to Vancouver]
Suppose you are driving to Vancouver.
\begin{enumerate}
\item
Suppose you go 50km/h in the first hour of your drive,
and 100km/h in the second hour of your drive.
Determine your average speed for the first couple of hours.
\item
Suppose your average speed driving to Vancouver was 50km/h (snow
storm!) while your average speed returning from Vancouver to
Kelowna was 100km/h. What was your average speed for the entire
round trip?
\end{enumerate}
\end{exercise}
\begin{solution}
(i): Denote your average speed for the first hour by $x$,
and for the second hour by $y$. 
You travel $x$ km in the first hour, and $y$ km in the second
hour. Altogether, you drove $x+y$ km in the first couple of
hours. Hence your average speed for that period is 
\begin{equation*}
\frac{x+y}{2},
\end{equation*}
which is the arithmetic mean of $x$ and $y$. 
For our concrete question, we obtain an average speed of 75 km/h.

(ii): Now let $x$ be the average speed of your trip from Kelowna
to Vancouver, and let $y$ be the average speed of your trip from
Vancouver back to Kelowna. 
Denote by $d$ the distance from Kelowna to Vancouver.
It took you $d/x$ hours from Kelowna to Vancouver,
and $d/y$ hours from Vancouver to Kelowna. 
Altogether, it took you $(d/x) + (d/y)$ hours for the entire
trip.
The average speed for your trip is therefore
\begin{equation*}
\frac{2d}{\frac{d}{x} + \frac{d}{y}} =
\frac{2}{\frac{1}{x}+\frac{1}{y}} = \frac{2xy}{x+y},
\end{equation*}
which is the harmonic mean of $x$ and $y$.
For our concrete question, we compute 
$(2\cdot50\cdot 100)/(50+100) = 200/3 = 66\tfrac{2}{3} \approx
66.67$ km/h. Kind of surprising, isn't it?
\end{solution}

\begin{exercise}
\label{exo:walaa1}
Let $n\geq 1$ be an integer. Show that
\begin{equation}
\label{e:140902e}
\sqrt{n} \leq \sum_{k=1}^n \frac{1}{\sqrt{k}} \leq 2\sqrt{n}-1.
\end{equation}
\emph{Hint:}
Show first that for $k\geq 1$, we have
\begin{equation}
\label{e:140902c}
\frac{1}{2\sqrt{k+1}} < \sqrt{k+1}-\sqrt{k} < \frac{1}{2\sqrt{k}}.
\end{equation}
\end{exercise}
\begin{solution}
Note that
$(\sqrt{k+1}-\sqrt{k})(\sqrt{k+1}+\sqrt{k}) = \sqrt{k+1}^2 - \sqrt{k}^2 =
k+1 - k = 1$ and hence 
\begin{equation}
\label{e:140902a}
\frac{1}{\sqrt{k+1}+\sqrt{k}} = \sqrt{k+1}-\sqrt{k}.
\end{equation}
Furthermore, since $\sqrt{k}<\sqrt{k+1}$, we estimate 
\begin{equation}
\label{e:140902b}
\frac{1}{2\sqrt{k+1}} = \frac{1}{\sqrt{k+1}+\sqrt{k+1}}
< \frac{1}{\sqrt{k}+\sqrt{k+1}}
< \frac{1}{\sqrt{k}+\sqrt{k}} =
\frac{1}{2\sqrt{k}}.
\end{equation}
Combining \eqref{e:140902a} and \eqref{e:140902b}
yields \eqref{e:140902c}.
Multiplying \eqref{e:140902c} by $2$ followed by summing 
over $k\in\{1,\ldots,n-1\}$ yields
\begin{equation}
\label{e:140902d}
\sum_{l=2}^n \frac{1}{\sqrt{l}}=\sum_{k=1}^{n-1}\frac{1}{\sqrt{k+1}} \leq 2\sqrt{n}-2 \leq 
\sum_{k=1}^{n-1} \frac{1}{\sqrt{k}}.
\end{equation}
This implies 
\begin{equation*} 
\sum_{k=1}^n \frac{1}{\sqrt{k}} 
= 1 + \sum_{l=2}^n \frac{1}{\sqrt{l}}
\leq 1 + 2\sqrt{n}- 2 = 2\sqrt{n}-1,
\end{equation*}
which establishes the right inequality in \eqref{e:140902e}. 
We also learn from \eqref{e:140902d} that
\begin{equation*} 
\sum_{k=1}^n \frac{1}{\sqrt{k}} 
= \frac{1}{\sqrt{n}} + \sum_{k=1}^{n-1} \frac{1}{\sqrt{k}}
\geq  \frac{1}{\sqrt{n}} + 2\sqrt{n}-2.
\end{equation*}
To complete the proof, it suffices to show that
\begin{equation*}
\frac{1}{\sqrt{n}} + 2\sqrt{n}-2 \geq \sqrt{n}.
\end{equation*}
But this inequality is equivalent to 
$0 \leq \sqrt{n}-2+1/\sqrt{n} = (\sqrt{\sqrt{n}} - 1/\sqrt{\sqrt{n}})^2$,
which is true. 
\end{solution}

\begin{exercise}[absolute value revisited]
\index{Absolute value}
Let $x\in\RR$.
Show that $$|x|= \sqrt{x^2}.$$
\end{exercise}
\begin{solution}
If $x\neq 0$, then $x^2>0$;
otherwise $x=0$ and $x^2\geq 0$.
In any case, $x^2\geq 0$ and so
$\sqrt{x^2}$ is well defined. 
Recall that $\sqrt{a}$ is the unique nonnegative solution $r$ 
to the equation $r^2=a$.

\emph{Case~1:} $x\geq 0$.
On the one hand, $|x|=x\geq 0$.
On the other hand, $x^2=x^2$, and so $x=\sqrt{x^2}$.
Altogether, $|x|=\sqrt{x^2}$.

\emph{Case~2:} $x< 0$.
On the one hand, $|x|=-x\geq 0$.
On the other hand, $(-x)^2=x^2$, and so $-x=\sqrt{x^2}$.
Altogether, $|x|=\sqrt{x^2}$.
\end{solution}

\begin{exercise}
Let $a_0=1$ and define recursively\footnote{This sequence plays a
role in the so-called FISTA algorithm, one of the most
successful optimization methods of the new millenium.
See Amir Beck and Marc Teboulle,
A Fast Iterative Shrinkage-Thresholding Algorithm for Linear
Inverse Problems,
\emph{SIAM Journal on Imaging Sciences}~2, pp.~183--202 (2009).}
$$a_{n+1} = \frac{1+\sqrt{1+4a_n^2}}{2}.$$
Show the following for every $\nnn$:
\begin{enumerate}
\item 
$a_n \geq (n+2)/2$
\item
$a_n \leq n+1$.
\end{enumerate}
\emph{Hint:} Use induction. 
\end{exercise}
\begin{solution}
(i): 
If $n=0$, then $a_0=1=(0+2)/2$ and the base case is fine.

Now assume the inequality holds for some $\nnn$.
Then
\begin{align*}
a_{n+1} &= \frac{1+\sqrt{1+4a_n^2}}{2} \geq \frac{1+\sqrt{4a_n^2}}{2}
= \frac{1+2a_n}{2}\\
&\geq \frac{1+2\frac{n+2}{2}}{2}
= \frac{n+3}{2} = \frac{(n+1)+2}{2},
\end{align*}
which completes the proof of the induction step.

(ii): When $n=0$, then $a_0 = 1 = 0+1$, so the base case is
clear.
Now assume the inequality holds for some $\nnn$.
We start by observing that
\begin{equation}
\label{e:171026}
2n+4 \geq 1 + \sqrt{1+4(n+1)^2}.
\end{equation}
Indeed,
\begin{align*}
2n+4 \geq 1 + \sqrt{1+4(n+1)^2}
&\Leftrightarrow
2n+3 \geq \sqrt{1+4(n+1)^2}\\
&\Leftrightarrow
(2n+3)^2 \geq {1+4(n+1)^2}\\
&\Leftrightarrow
4n^2+12n+9 \geq 1+4n^2+8n+4\\
&\Leftrightarrow
4n+4 \geq 0,
\end{align*}
which is obviously true!
We thus obtain
\begin{align*}
a_{n+1} &= \frac{1+\sqrt{1+4a_n^2}}{2} 
\tag*{[definition]}\\
&\leq \frac{1+\sqrt{1+4(n+1)^2}}{2} 
\tag*{[inductive hypothesis]}\\
&\leq \frac{2n+4}{2} 
\tag*{[use \eqref{e:171026}]}\\
&= n+2\\
&= (n+1)+1,
\end{align*}
as required. This completes the proof of the inductive step and
we are done by induction. 
\end{solution}

\begin{exercise}[continuity of taking the square root]
Let $(a_n)_\nnn$ be a sequence of nonnegative real numbers
converging to $\ell$.
Show that $\sqrt{a_n}\to\sqrt{\ell}$.
\emph{Remark:} This shows that taking the square root is a
continuous function.
\end{exercise}
\begin{solution}
Since $a_n\to\ell$ and $a_n\geq 0$ for all $\nnn$,
it follows that $\ell\geq 0$ by Corollary~\ref{c:limleq}. 
Let $\varepsilon>0$. 

\emph{Case~1:} $\ell=0$.\\
Since $\varepsilon^2>0$, there exists $N\in\NN$ such that
for every $n\geq N$, we have $a_n=|a_n-0|<\varepsilon^2$;
thus,  $|\sqrt{a_n}-0|=\sqrt{a_n}<\varepsilon$ as required.

\emph{Case~2:} $\ell>0$.\\
Since $\sqrt{\ell}\varepsilon>0$, there exists
$N\in\NN$ such that
for every $n\geq N$, we have
$|a_n-\ell|<\sqrt{\ell}\varepsilon$. 
It follows that
\begin{align*}\left|\sqrt{a_n}-\sqrt{\ell}\right|
&=
\frac{\left|\sqrt{a_n}-\sqrt{\ell}\right|\left|\sqrt{a_n}+\sqrt{\ell}\right|}{\left|\sqrt{a_n}+\sqrt{\ell}\right|}
=\frac{\left|\left(\sqrt{a_n}-\sqrt{\ell}\right)\left(\sqrt{a_n}+\sqrt{\ell}\right)
\right|}{\sqrt{a_n}+\sqrt{\ell}}\\
&=\frac{\left|a_n-\ell\right|}{\sqrt{a_n}+\sqrt{\ell}}
\leq \frac{\left|a_n-\ell\right|}{\sqrt{\ell}}
<\frac{\sqrt{\ell}\varepsilon}{\sqrt{\ell}} = \varepsilon,
\end{align*}
as required.
\end{solution}

\begin{exercise} 
\label{exo:1/root}
Show that 
\begin{equation*}
    \frac{1}{\sqrt{n}} \to 0.
\end{equation*}
\end{exercise}
\begin{solution}
Let $n\geq 1$.
Then $0 < \sqrt{n}<\sqrt{n+1}$ and thus
\begin{equation*}
0<    \frac{1}{\sqrt{n+1}} < \frac{1}{\sqrt{n}}.
\end{equation*}
It follows that $(1/\sqrt{n})_\nnn$ is
decreasing and bounded below (by $0$), hence 
there exists $\ell \in\RR$ such that 
\begin{equation*}
\frac{1}{\sqrt{n}} \to \ell. 
\end{equation*}
By the limit laws, 
\begin{equation*}
0 \leftarrow \frac{1}{n} = \frac{1}{\sqrt{n}} \cdot \frac{1}{\sqrt{n}}\to \ell\cdot \ell. 
\end{equation*}
Therefore, $\ell^2=0$ and thus $\ell=0$.
\end{solution}

\begin{exercise} Does the sequence 
$(\sqrt{n+9}-\sqrt{n})_\nnn$ converge?
If so, find the limit.
If not, explain why not.
\end{exercise}
\begin{solution}
Let $n\geq 1$.
Then 
\begin{align*}
0 < \sqrt{n+9}-\sqrt{n}
&= \frac{\big(\sqrt{n+9}+\sqrt{n}\big)\big(\sqrt{n+9}-\sqrt{n}\big)}%
{\sqrt{n+9}+\sqrt{n}}
= \frac{\sqrt{n+9}^2-\sqrt{n}^2}%
{\sqrt{n+9}+\sqrt{n}}
= \frac{9}{\sqrt{n+9}+\sqrt{n}}\\
&< \frac{9}{\sqrt{n}}
\to 0
\end{align*}
using Exercise~\ref{exo:1/root}.
By the Squeeze Theorem, we conclude that 
$\sqrt{n+9}-\sqrt{n} \to 0$.
\end{solution}

\begin{exercise}[YOU be the marker!] 
Consider the following statement 
\begin{equation*}
    \text{``$\sqrt{n+9}-\sqrt{n}\to 3$''}
\end{equation*}
and the following ``proof'':
\begin{quotation}
We have $\sqrt{n+9}-\sqrt{n}
= \sqrt{n} + \sqrt{9}-\sqrt{n} = \sqrt{9}$.\\
Now $\sqrt{9} = 3$ because the square root is 
nonnegative and $3^2 = 9$.\\
Altogether, the result follows.
\end{quotation}
Why is this proof wrong?
\end{exercise}
\begin{solution}
The error occurs right at the beginning:
It is \emph{not} true that 
$\sqrt{n+9} = \sqrt{n} + \sqrt{9}$.
For instance, when $n=16$,
we have $\sqrt{n+9} = \sqrt{16+9} = \sqrt{25} = 5$
while $\sqrt{n}+\sqrt{9} = \sqrt{16} + \sqrt{9} = 4 + 3 = 7$. 
\end{solution}

\begin{exercise}[YOU be the marker!] 
Consider the following statement 
\begin{equation*}
\text{``$2=1$''}
\end{equation*}
and the following ``proof'':
\begin{quotation}
First, $4-6 = 1-3$ $\Rightarrow$ 
$4-6+\tfrac{9}{4} = 1-3+\tfrac{9}{4}$
$\Rightarrow$ $2^2-2\cdot 2\cdot \tfrac{3}{2} 
+\big(\tfrac{3}{2}\big)^2= 1^2 - 2\cdot 1\cdot \tfrac{3}{2}
+\big(\tfrac{3}{2}\big)^2$.\\
Hence $\big(2-\tfrac{3}{2}\big)^2 = \big(1-\tfrac{3}{2}\big)^2$. \\
Taking square roots, we get
$2-\tfrac{3}{2} = 1 - \tfrac{3}{2}$.\\
Therefore, $2=1$.
\end{quotation}
Why is this proof wrong?
\end{exercise}
\begin{solution}
The error occurs when taking the square root.
Indeed, the square root is always nonnegative, 
so $\sqrt{\big(1-\tfrac{3}{2}\big)^2} = |1-\tfrac{3}{2}| = 
\tfrac{3}{2}-1$ which is different from $1-\tfrac{3}{2}$. 
\end{solution}

\begin{exercise}[TRUE or FALSE]
Each of the following statements is either true or false. 
Briefly justify your answer.
\begin{enumerate}
\item ``If $\alpha\geq 0$ and $\beta\geq 0$ are real numbers, then 
$\sqrt{\alpha\beta} = \sqrt{\alpha}\sqrt{\beta}$.''
\item ``If $\alpha\geq 0$ and $\beta\geq 0$ are real numbers, then 
$\sqrt{\alpha+\beta} = \sqrt{\alpha}+\sqrt{\beta}$.''
\item ``The square root of any nonnegative real number exists and 
is again nonnegative and real.''
\item ``The square root of any nonnegative rational number exists and 
is again nonnegative and rational.''
\end{enumerate}
\end{exercise}
\begin{solution}
(i): TRUE: See Exercise~\ref{exo:260304a}.

(ii): FALSE: Consider $\alpha=9$ and $\beta=16$. 

(iii): TRUE: We even have an algorithm that quickly finds square roots!

(iv): FALSE: $\sqrt{2}\in\RR\smallsetminus\QQ$. 
\end{solution} 
\chapter{Series}

\hhbcom{Stromberg's book is an excellent source, see page 58 or so}

\section{Basic Convergence Tests}

\index{Series}

\begin{proposition}[Cauchy Test]
\label{p:cauchycrit}\index{Cauchy test (for series)}
Let $(a_n)_\nnn$ be a sequence of real numbers.
Then the series $\sum_{k\in\NN} a_k$ converges if and only if
\begin{quotation}
\noindent
for every $\varepsilon>0$, there exists $N\in\NN$ such that
$\displaystyle\left|\sum_{k=m}^n a_k\right| < \varepsilon$ 
for all $n\geq m\geq N$.
\end{quotation}
\end{proposition}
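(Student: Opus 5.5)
The plan is to translate the statement into a statement about the sequence of partial sums $s_n := \sum_{k=0}^{n} a_k$, and then invoke the equivalence between convergence and the Cauchy property. By definition (Section~\ref{sec:series}), the series $\sum_{k\in\NN} a_k$ converges exactly when $(s_n)_\nnn$ converges. By Theorem~\ref{t:convCauchy}, every convergent sequence is Cauchy. By the Completeness Axiom (Definition~\ref{d:compaxiom}), every Cauchy sequence in $\RR$ converges. So the series converges if and only if $(s_n)_\nnn$ is a Cauchy sequence, and it remains to show that the Cauchy property of $(s_n)_\nnn$ is equivalent to the displayed condition.

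The key identity is that for $n\geq m\geq 1$ we have $s_n - s_{m-1} = \sum_{k=m}^{n} a_k$. This follows by splitting the finite sum $\sum_{k=0}^{n}$ into $\sum_{k=0}^{m-1}+\sum_{k=m}^{n}$, using associativity and commutativity of addition.

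For ``$\Rightarrow$'': let $\varepsilon>0$, and take $N'$ from the Cauchy property of $(s_n)_\nnn$. Set $N:=N'+1$. If $n\geq m\geq N$, then both $n$ and $m-1$ are at least $N'$, so
\begin{equation*}
\Big|\sum_{k=m}^n a_k\Big| = |s_n-s_{m-1}|<\varepsilon .
\end{equation*}

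For ``$\Leftarrow$'': let $\varepsilon>0$ and take $N$ from the hypothesis. Given $m,n\geq N$, we may assume by symmetry that $n\geq m$. If $n=m$, then $|s_n-s_m|=0<\varepsilon$. Otherwise $n\geq m+1\geq N$, and
\begin{equation*}
|s_n-s_m| = \Big|\sum_{k=m+1}^{n} a_k\Big| < \varepsilon .
\end{equation*}
Hence $(s_n)_\nnn$ is Cauchy.

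No step is genuinely hard. The only point needing care is the index bookkeeping: the shift between $m$ and $m-1$, the case $m=0$ (avoided by choosing $N\geq 1$ in the forward direction), and the trivial case $n=m$ in the reverse direction.
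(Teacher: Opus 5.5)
Your proof is correct and takes the same route as the paper. Convergence of the series is reduced to convergence of the partial sums $(s_n)_\nnn$, and Theorem~\ref{t:convCauchy} together with the Completeness Axiom (Definition~\ref{d:compaxiom}) then replaces convergence by the Cauchy property. The paper leaves the step relating $|s_n-s_m|$ to $\bigl|\sum_{k=m}^{n} a_k\bigr|$ implicit; your handling of the shift $m\mapsto m-1$ (via $N:=N'+1$) and of the case $n=m$ supplies that detail correctly.
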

\begin{proof}
Recall that  $\sum_{k\in\NN} a_k$ converges if and only if
the sequence $(s_n)_\nnn$ of partial sums converges,
where $s_n = \sum_{k=0}^n a_k$.
In turn, $(s_n)_\nnn$ converges if and only if it is a Cauchy 
sequence (Theorem~\ref{t:convCauchy} and Definition~\ref{d:compaxiom}), 
which yields the result.
\end{proof}

\begin{lemma}
\label{l:convnull}
Let $(a_n)_\nnn$ be a sequence of real numbers such that
$\sum_{k\in\NN} a_k$ converges. Then $a_n\to 0$.
\end{lemma}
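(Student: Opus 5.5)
The plan is to express $a_n$ as a difference of consecutive partial sums and then apply the limit laws. Set $s_n := \sum_{k=0}^n a_k$ for every $\nnn$. By assumption the sequence $(s_n)_\nnn$ converges, say to $s\in\RR$. For every integer $n\geq 1$ we have
\begin{equation*}
a_n = s_n - s_{n-1}.
\end{equation*}
This identity is the key observation; everything else follows from results already established.

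Next I will show that the shifted sequence $(s_{n-1})_{n\geq 1}$ also converges to $s$. This follows directly from the definition of convergence. Given $\varepsilon>0$, pick $N$ with $|s_n-s|<\varepsilon$ for $n\geq N$. Then $|s_{n-1}-s|<\varepsilon$ for all $n\geq N+1$. Alternatively, one can view it as the subsequence with indices $n_k=k$ starting from $k=0$ and invoke Proposition~\ref{p:allgo}. The only minor point to handle here is the index shift.

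By the Difference Law (Corollary~\ref{c:difflaw}), $a_n = s_n - s_{n-1}\to s-s = 0$, which is the claim.

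As an alternative route, one could use the Cauchy Test (Proposition~\ref{p:cauchycrit}) with $m=n$. For $\varepsilon>0$ there exists $N$ such that $|a_n| = \bigl|\sum_{k=n}^{n} a_k\bigr|<\varepsilon$ for all $n\geq N$, which is exactly $a_n\to 0$. I would likely present this second argument, since it is a single line given Proposition~\ref{p:cauchycrit}. There is no real obstacle in either approach.
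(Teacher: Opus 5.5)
Your second argument, applying the Cauchy Test (Proposition~\ref{p:cauchycrit}) with $m=n$ to get $|a_n|<\varepsilon$ for all $n\geq N$, is exactly the paper's proof. Your first argument, writing $a_n = s_n - s_{n-1}$ and applying the Difference Law (Corollary~\ref{c:difflaw}) after checking the index shift, is also correct; it is just slightly longer.
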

\begin{proof}
Indeed, let $\varepsilon >0$.
By Proposition~\ref{p:cauchycrit},
there exists $N\in\NN$ such that for every $n=m\geq N$, we have
$|\sum_{k=n}^n a_k| = |a_n|<\varepsilon$.
Therefore, $a_n\to 0$.
\end{proof}

The contrapositive of the last result gives immediately
a test for divergence.

\begin{corollary}[Divergence Test]
\index{Divergence Test}
\label{c:divtest}
Let $(a_n)_\nnn$ be a sequence of real numbers such that $a_n\not\to 0$.
Then $\sum_{k\in\NN}a_k$ diverges.
\index{Divergence test (for series)}
\end{corollary}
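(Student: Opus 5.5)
The Divergence Test is the contrapositive of Lemma~\ref{l:convnull}, so the plan is to argue by contradiction rather than prove anything new. I would assume, to the contrary, that $\sum_{k\in\NN}a_k$ converges, i.e., that the sequence of partial sums $s_n=\sum_{k=0}^n a_k$ has a limit. Lemma~\ref{l:convnull} then applies verbatim and yields $a_n\to 0$. This contradicts the hypothesis $a_n\not\to 0$, so the series must diverge.

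The only point I would make explicit is the logical form being used. The lemma has the form $p\Rightarrow q$, where $p$ is ``$\sum_{k\in\NN}a_k$ converges'' and $q$ is ``$a_n\to 0$''. The corollary is exactly $\lnot q\Rightarrow\lnot p$, which is logically equivalent to $p\Rightarrow q$ by the truth-table comparison in Exercise~\ref{exo:contrapositivetruth}. One could equally phrase the proof directly as ``by the contrapositive of Lemma~\ref{l:convnull}''.

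For completeness, I would recall the mechanism inside the lemma. The Cauchy Test (Proposition~\ref{p:cauchycrit}) with $m=n$ gives $|a_n|<\varepsilon$ for all $n\geq N$. There is no real obstacle here. The only care needed is to note that ``diverges'' means simply ``does not converge'' in the sense of Section~\ref{sec:series}, so that negating convergence of the partial sums is precisely the conclusion required.
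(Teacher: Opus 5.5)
Your proposal is correct and takes the same approach as the paper, which obtains the Divergence Test immediately as the contrapositive of Lemma~\ref{l:convnull}. Phrasing it as a proof by contradiction rather than as a contrapositive is an inessential difference.
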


\begin{example}
$\sum_{k\in\NN} (-1)^k$ diverges because $(-1)^n\not\to 0$.
\end{example}

\begin{theorem}[Bounded Partial Sums Test]
\index{Bounded Partial Sums Test}
\label{t:seriesbounded}
Let $(a_n)_\nnn$ be a sequence of nonnegative real numbers.
Then $\sum_{k\in\NN} a_k$ converges if and only if the sequence of
partial sums $(\sum_{k=0}^n a_k)_\nnn$ is bounded.
\end{theorem}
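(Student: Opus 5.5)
The plan is to reduce the statement to two results already established: Theorem~\ref{t:convbound} (every convergent sequence is bounded) and the Bounded Monotone Convergence Theorem (Theorem~\ref{t:monconv}). Write $s_n := \sum_{k=0}^n a_k$ for the partial sums. By definition, the series $\sum_{k\in\NN} a_k$ converges exactly when the sequence $(s_n)_\nnn$ converges, so everything will be phrased in terms of $(s_n)_\nnn$.

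For the implication ``$\Rightarrow$'', assume the series converges, so $(s_n)_\nnn$ is a convergent sequence. Theorem~\ref{t:convbound} then says $(s_n)_\nnn$ is bounded, and nothing more is required. This direction does not use the nonnegativity of the terms.

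For the implication ``$\Leftarrow$'', the key observation is that $(s_n)_\nnn$ is increasing. Indeed, for every $\nnn$ we have $s_{n+1}-s_n = a_{n+1}\geq 0$, so $s_n\leq s_{n+1}$. Hence $(s_n)_\nnn$ is monotone. If it is also bounded, Theorem~\ref{t:monconv} gives that $(s_n)_\nnn$ converges, which means the series converges.

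There is no real obstacle here. The only point to state explicitly is where the hypothesis $a_n\geq 0$ enters: it is used solely to obtain monotonicity of the partial sums. It is worth noting the example $a_n=(-1)^n$ to show that this hypothesis cannot be dropped. In that case the partial sums alternate between $1$ and $0$, so they are bounded, yet the series diverges by Corollary~\ref{c:divtest}.
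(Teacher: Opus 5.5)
Your proof is correct and matches the paper's argument. For ``$\Rightarrow$'' you use Theorem~\ref{t:convbound}; for ``$\Leftarrow$'' you note that $a_n\geq 0$ makes the partial sums increasing and then apply Theorem~\ref{t:monconv}, and your remark with $a_n=(-1)^n$ correctly shows that the nonnegativity hypothesis cannot be dropped.
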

\begin{proof}
Denote the sequence of partial sums of $(a_n)_\nnn$ by $(s_n)_\nnn$.
Then, since each $a_n\geq 0$, it is clear that $(s_n)_\nnn$ is
increasing. 
If $(s_n)_\nnn$ is bounded, it is convergent by
Theorem~\ref{t:monconv}. Conversely, if $(s_n)_\nnn$ is convergent,
then it is bounded by Theorem~\ref{t:convbound}.
\end{proof}

\begin{example}[Harmonic Series]
\label{ex:harser}\index{Harmonic Series}\index{Harmonic Numbers}
The \emph{Harmonic Series}
$\sum_{k\geq 1} \frac{1}{k}$ diverges to $\pinf$
because the increasing sequence of partial sums 
$(s_n)_\nnn = (\sum_{k=1}^n\frac{1}{k})_{k\in\NN}$ is unbounded:
indeed,
\begin{subequations}
\begin{align}
s_1 &= 1,\\
s_2 &= 1 + \tfrac{1}{2},\\
s_4 &= 1 + \tfrac{1}{2} + \underbrace{\tfrac{1}{3}+\tfrac{1}{4}}_{>
2\cdot \tfrac{1}{4} = \tfrac{1}{2}} >
1+2\cdot\tfrac{1}{2},\\
s_8 &= s_4 + \underbrace{\tfrac{1}{5} + \cdots + \tfrac{1}{8}}_{>
4\cdot\tfrac{1}{8} = \tfrac{1}{2}} > 1+ 3\cdot\tfrac{1}{2},\\
&\quad \vdots\\
s_{2^n} &\geq 1 + n\cdot\frac{1}{2},
&\quad 
\end{align}
\end{subequations}
and the result now follows from Theorem~\ref{t:seriesbounded}. 
Note that the harmonic series also illustrates that the converse
of Lemma~\ref{l:convnull} fails: indeed, $\frac{1}{n}\to 0$ yet
the harmonic series is not convergent.
\end{example}

\begin{example}[$p$ Series]
\label{ex:pseries}\index{p Series}
Let $p$ be an integer such that $p\geq 2$. 
Then $\sum_{k\geq 1} \frac{1}{k^p}$ is convergent\footnote{In fact, 
formulas are known when $p$ is even such as $\sum_{k\geq
1}\frac{1}{k^2} = \pi^2/6$. The situation is much less understood when
$p$ is odd.}.
Since each term in this series is positive
and in view of Theorem~\ref{t:seriesbounded}, it suffices to 
show that the (increasing) sequence of partial sums $(s_n)_\nnn$ 
is bounded.  Indeed,
let $n\geq 1$. Then there exists $m\in\NN$ such that
$n+1 \leq 2^{m+1}$ (see Theorem~\ref{t:100920}). 
Thus, using Theorem~\ref{t:geoseries}, 
\begin{subequations}
\begin{align}
s_n &\leq s_{2^{m+1}-1}
= \sum_{k=1}^{2^{m+1}-1} \frac{1}{k^p} 
= 1 + \left(\frac{1}{2^p} + \frac{1}{3^p}\right) + \cdots
+ \left(\sum_{k=2^m}^{2^{m+1}-1} \frac{1}{k^p}\right)\\
&\leq \sum_{l=0}^{m}2^l\frac{1}{(2^l)^p}
=\sum_{l=0}^{m} \left(\frac{1}{2^{p-1}}\right)^l
\leq \sum_{l=0}^{\pinf} \big(2^{-p+1}\big)^l
= \frac{1}{1-2^{-p+1}}
<+\infty.
\end{align}
\end{subequations}
\end{example}

\section{Alternating Series}

\begin{theorem}[Leibniz Alternating Series Test]
\label{t:Leibniz}\index{Leibniz Test}
\index{Alternating Series Test}
\index{Leibniz Alternating Series Test}
Let $(a_n)_\nnn$ be a decreasing sequence of nonnegative real numbers
such that $a_n\to 0$. 
Then $a_0-a_1+a_2-a_3\pm\cdots = \sum_{k\in\NN} (-1)^k a_k$ is convergent. 
\end{theorem}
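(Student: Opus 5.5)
I would verify the Cauchy criterion of Proposition~\ref{p:cauchycrit} directly, since the limit is unknown in advance. The Bounded Monotone Convergence Theorem (Theorem~\ref{t:monconv}) applied to the even and odd partial sums would be an alternative, but the Cauchy route needs only one estimate.

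Write $s_n := \sum_{k=0}^n (-1)^k a_k$. The key step is a bound on the tail blocks: for all $n\geq m$,
\begin{equation*}
0 \leq (-1)^m \sum_{k=m}^{n} (-1)^k a_k \leq a_m .
\end{equation*}
In particular, this gives $\big|\sum_{k=m}^n (-1)^k a_k\big|\leq a_m$.

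To prove the bound, factor out $(-1)^m$. This reduces it to showing that the finite alternating sum $T_{m,n} := a_m - a_{m+1} + a_{m+2} - \cdots \pm a_n$ lies in $[0,a_m]$. I would do this by induction on the number of terms $n-m$, for all $m$ simultaneously. Alternatively, group the terms in pairs:
\begin{itemize}
\item Writing $T_{m,n} = (a_m-a_{m+1}) + (a_{m+2}-a_{m+3}) + \cdots$ makes each bracket nonnegative, because $(a_n)$ is decreasing. A possible leftover last term $a_n$ is also $\geq 0$. Hence $T_{m,n}\geq 0$.
\item Writing $T_{m,n} = a_m - (a_{m+1}-a_{m+2}) - (a_{m+3}-a_{m+4}) - \cdots$ subtracts nonnegative brackets from $a_m$. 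A possible leftover last term $-a_n$ is $\leq 0$. Hence $T_{m,n}\leq a_m$.
\end{itemize}
Corollary~\ref{c:130923a}-style care with finite sums of nonnegative terms (Proposition~\ref{p:ofield}) justifies these steps.

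The cleanest rigorous form is the induction. The claim $S(j)$ states that $0\leq T_{m,m+j}\leq a_m$ for every $m$. The base case $j=0$ is $0\leq a_m\leq a_m$. For the inductive step, use $T_{m,m+j+1} = a_m - T_{m+1,m+j+1}$ together with the hypothesis $0\leq T_{m+1,m+j+1}\leq a_{m+1}\leq a_m$. This gives
\begin{equation*}
0\leq a_m - a_{m+1}\leq T_{m,m+j+1}\leq a_m ,
\end{equation*}
which is $S(j+1)$. I expect this bookkeeping to be the only real obstacle. The recursive identity $T_{m,n}=a_m-T_{m+1,n}$ avoids any parity case distinctions.

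To finish, let $\varepsilon>0$. Since $a_n\to 0$, there is $N$ with $a_m = |a_m-0| <\varepsilon$ for all $m\geq N$. For $n\geq m\geq N$ the bound then gives $\big|\sum_{k=m}^n(-1)^ka_k\big|\leq a_m<\varepsilon$. By Proposition~\ref{p:cauchycrit}, which rests on the Completeness Axiom (Definition~\ref{d:compaxiom}), the series converges.
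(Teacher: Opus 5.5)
Your proof is correct, and it takes a genuinely different route from the paper's.

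\textbf{The paper's route.} The paper shows that the even partial sums $(s_{2n})$ decrease, the odd ones $(s_{2n+1})$ increase, and $s_{2n+1}\le s_{2n}$. It applies the Bounded Monotone Convergence Theorem (Theorem~\ref{t:monconv}) to each subsequence. It then uses $s_{2n+1}-s_{2n}=-a_{2n+1}\to 0$ to show the two limits coincide. A separate interleaving step, $N:=\max\{2N_1,2N_2+1\}$, upgrades this to convergence of the whole sequence $(s_n)$.

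\textbf{Your route.} You prove one uniform tail estimate, $0\le T_{m,n}\le a_m$, by induction on $n-m$ via $T_{m,n}=a_m-T_{m+1,n}$. You then feed it directly into the Cauchy test (Proposition~\ref{p:cauchycrit}). Quantifying over all $m$ at once is exactly what the inductive step needs.

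\textbf{What each buys.} Your argument avoids parity bookkeeping and the merging of two subsequential limits. It also avoids the monotone convergence theorem, which in these notes is derived from Bolzano--Weierstrass. So it goes more directly from the Completeness Axiom to the conclusion. The paper's argument gives a more geometric picture: the limit $\alpha$ is trapped in the nested intervals $[s_{2n+1},s_{2n}]$ of \eqref{e:premt2}. Both yield the same quantitative information. From the nesting one gets $|\alpha-s_n|\le a_{n+1}$. From your bound, letting $n\to\infty$ via Corollary~\ref{c:limleq}, one gets $\bigl|\sum_{k=m}^{\infty}(-1)^k a_k\bigr|\le a_m$, which is the same remainder estimate.

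\textbf{A small citation issue.} In your informal pairing sketch, Corollary~\ref{c:130923a} is not the relevant fact; it concerns when a sum of nonnegative numbers equals zero. What you need there is only that finite sums of nonnegative numbers are nonnegative. That follows from Proposition~\ref{p:ofield}\ref{p:ofieldiv+} by induction. Your rigorous induction makes the pairing sketch unnecessary anyway.
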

\begin{proof}
Denote the sequence of partial sums by $(s_n)_\nnn$.
Then $s_{2n+2}-s_{2n} = a_{2n+2} - a_{2n+1} \leq 0$ 
and so $(s_{2n})_{n\in\NN}$ is decreasing.
Similarly, $(s_{2n+1})_{n\in\NN}$ is increasing.
Furthermore, $s_{2n+1}-s_{2n} = -a_{2n+1}\leq 0$ and
so $s_{2n+1}\leq s_{2n}$, for every $n\in\NN$. 
Altogether,
\begin{equation}
\label{e:premt2}
s_0\geq s_2\geq s_4\geq \cdots s_{2n}\geq s_{2n+1}\geq \cdots \geq
s_{3}\geq s_1.
\end{equation}
Therefore, $(s_{2n})_{n\in\NN}$ is decreasing and bounded,
hence convergent, say to $\alpha$,  by Theorem~\ref{t:monconv}.
Similarly,  $(s_{2n+1})_{n\in\NN}$ is increasing and bounded,
hence convergent, say to $\beta$.
Since $s_{2n+1}-s_{2n} = -a_{2n+1}\to 0$, it follows that 
$\alpha=\beta$. 
We now show that the entire sequence $(s_n)_\nnn$ is in fact
converging to $\alpha$. 
To this end, let $\varepsilon>0$.
Then there exist $N_1$ and $N_2$ in $\NN$ such that
\begin{equation}
n\geq N_1
\quad\Rightarrow\quad
|s_{2n}-\alpha|<\varepsilon
\end{equation}
and
\begin{equation}
n\geq N_2
\quad\Rightarrow\quad
|s_{2n+1}-\alpha|<\varepsilon. 
\end{equation}
Now set $N := \max\{2N_1,2N_2+1\}$.
Then if $n\geq N$, we have $|s_n-\alpha|<\varepsilon$.
\end{proof}

\begin{example}[Alternating Harmonic Series]
\label{ex:altharser}\index{Alternating Harmonic Series}
The \emph{Alternating Harmonic Series}
\begin{equation}
\sum_{k\geq 1} \frac{(-1)^{k-1}}{k} = 1 - \frac{1}{2} + \frac{1}{3} -
\frac{1}{4} \pm \cdots
\end{equation}
converges\footnote{In fact, the limit of this series is $\ln(2)$ as
can be shown with tools from Analysis.} 
by Theorem~\ref{t:Leibniz}.
\end{example}

\begin{example}
The \emph{alternating odd harmonic series} 
\begin{equation}
\sum_{k\in\NN} \frac{(-1)^k}{2k+1} = 1 -\frac{1}{3} + \frac{1}{5} -
\frac{1}{7}\pm \cdots
\end{equation}
converges\footnote{In fact, the value of this series is
$\frac{\pi}{4}$.} 
by Theorem~\ref{t:Leibniz}.
\end{example}

\section{Absolutely Convergent Series}

\begin{definition}[Absolute Convergence]
\index{absolute convergence (for series)}
\index{absolutely convergent series}
Let $(a_n)_\nnn$ be a sequence of real numbers.
Then $\sum_{k\in\NN} a_k$ \textbf{converges absolutely}
if $\sum_{k\in\NN} |a_k|$ is convergent.
\end{definition}

\begin{theorem}
\label{t:absconvimpliesregconv}
Every absolutely convergent series is convergent.
\end{theorem}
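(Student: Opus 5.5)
The plan is to apply the Cauchy Test (Proposition~\ref{p:cauchycrit}) twice: once to the series $\sum_{k\in\NN}|a_k|$, to extract its Cauchy condition, and once to $\sum_{k\in\NN} a_k$, to recover convergence. The link between the two is the general triangle inequality (Lemma~\ref{l:generaltriangle}).

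Concretely, suppose $\sum_{k\in\NN} a_k$ converges absolutely and fix $\varepsilon>0$. Since $\sum_{k\in\NN}|a_k|$ converges, Proposition~\ref{p:cauchycrit} applied to the sequence $(|a_n|)_\nnn$ gives $N\in\NN$ such that
\begin{equation*}
\left|\sum_{k=m}^n |a_k|\right| = \sum_{k=m}^n |a_k| < \varepsilon \quad\text{for all } n\geq m\geq N .
\end{equation*}
The absolute value on the left may be dropped because every term $|a_k|$ is nonnegative (Proposition~\ref{p:absval}\ref{p:absvali}). For the same $n\geq m\geq N$, Lemma~\ref{l:generaltriangle} yields
\begin{equation*}
\left|\sum_{k=m}^n a_k\right| \leq \sum_{k=m}^n |a_k| < \varepsilon .
\end{equation*}
This is exactly the condition in Proposition~\ref{p:cauchycrit} for $(a_n)_\nnn$, so $\sum_{k\in\NN} a_k$ converges.

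The only point that needs care is the case $n=m$. Lemma~\ref{l:generaltriangle} is stated for at least two summands, while the inequality for a single term, $|a_m|\leq|a_m|$, is trivial. I would mention this case separately in one line. There is no real obstacle beyond this: the substantive ingredient, namely that Cauchy sequences converge (the Completeness Axiom, Definition~\ref{d:compaxiom}), is already built into Proposition~\ref{p:cauchycrit}.
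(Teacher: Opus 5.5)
Your proposal is correct and follows the paper's proof essentially verbatim. Both apply the Cauchy Test (Proposition~\ref{p:cauchycrit}) to $\sum_{k\in\NN}|a_k|$, bound $\bigl|\sum_{k=m}^n a_k\bigr|$ by $\sum_{k=m}^n |a_k|$ via the triangle inequality, and apply the Cauchy Test again. Your separate remark on the case $n=m$ is a harmless extra bit of care that the paper leaves implicit.
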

\begin{proof}
Let $(a_n)_\nnn$ be sequence such that
$\sum_{k\in\NN} a_k$ is absolutely convergent, i.e.,
 $\sum_{k\in\NN} |a_k|$ is convergent.
By the Cauchy test (Proposition~\ref{p:cauchycrit}),
this means that for every $\varepsilon>0$, there exists $N\in\NN$
such that for all $n\geq m\geq N$, we have 
$\big|\sum_{k=m}^n |a_k|\big|<\varepsilon$;
thus, 
$|\sum_{k=m}^{n}a_k|\leq \sum_{k=m}^n |a_k|=\big|\sum_{k=m}^n
|a_k|\big|<\varepsilon$.
Again by  Proposition~\ref{p:cauchycrit},
it follows that $\sum_{k\in\NN}a_k$ is convergent.
\end{proof}

\begin{remark}
The converse of Theorem~\ref{t:absconvimpliesregconv}
fails in general. Indeed, the alternating harmonic series
(Example~\ref{ex:altharser}) converges, but the harmonic series
diverges (Example~\ref{ex:harser}); 
thus the alternating harmonic series converges but
not absolutely\footnote{A series that converges but not absolutely is
also called \textbf{conditionally convergent}.}.
\end{remark}

\begin{theorem}[Comparison Test]
\label{t:compartest}\index{Comparison Test}
Let $(b_n)_\nnn$ be a sequence of nonnegative numbers such that 
$\sum_{k\in\NN}b_k$ converges, and let $(a_n)_\nnn$ be a sequence of
real numbers such that for every $n\in\NN$, $|a_n|\leq b_n$.
Then the series $\sum_{k\in\NN}a_k$ is absolutely convergent.
\end{theorem}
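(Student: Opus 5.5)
The plan is to reduce everything to the Bounded Partial Sums Test (Theorem~\ref{t:seriesbounded}) applied to the nonnegative sequence $(|a_n|)_\nnn$. By definition, $\sum_{k\in\NN}a_k$ converges absolutely exactly when $\sum_{k\in\NN}|a_k|$ converges. Since each $|a_k|\geq 0$ by Proposition~\ref{p:absval}\ref{p:absvali}, Theorem~\ref{t:seriesbounded} says it suffices to show that the partial sums $t_n:=\sum_{k=0}^n|a_k|$ form a bounded sequence.

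First, the series $\sum_{k\in\NN}b_k$ converges, so its partial sums $s_n:=\sum_{k=0}^n b_k$ form a convergent sequence. Hence they are bounded by Theorem~\ref{t:convbound}, say $s_n\leq\gamma$ for every $\nnn$. (Alternatively, since $b_k\geq 0$, the sequence $(s_n)$ is increasing, so $s_n\leq\lim s_m$ by Corollary~\ref{c:limleq}; either route gives a bound.)

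Next, I would compare the partial sums termwise. From $|a_k|\leq b_k$ for $k=0,\ldots,n$, repeated use of Proposition~\ref{p:ofield}\ref{p:ofieldiv+} gives $t_n\leq s_n\leq\gamma$. This is formally a short induction on $n$: the base case is $|a_0|\leq b_0$, and the inductive step adds $|a_{n+1}|\leq b_{n+1}$ to $t_n\leq s_n$. Also $t_n\geq 0$, so $(t_n)_\nnn$ is bounded. Theorem~\ref{t:seriesbounded} then yields the convergence of $\sum_{k\in\NN}|a_k|$, which is absolute convergence of $\sum_{k\in\NN}a_k$. If desired, one can add that Theorem~\ref{t:absconvimpliesregconv} gives ordinary convergence as well.

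I do not expect a real obstacle. The only point needing care is obtaining the uniform bound $\gamma$ on the partial sums of $(b_n)$ from its convergence, and Theorem~\ref{t:convbound} handles that directly. An alternative proof would use the Cauchy Test (Proposition~\ref{p:cauchycrit}) via $\sum_{k=m}^n|a_k|\leq\sum_{k=m}^n b_k<\varepsilon$. I would mention this but follow the bounded-partial-sums route.
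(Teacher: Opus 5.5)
Your proof is correct, but it takes a different route from the paper. The paper argues on tails with the Cauchy Test (Proposition~\ref{p:cauchycrit}). For $\varepsilon>0$ it picks $N$ with $\sum_{k=m}^n b_k=\bigl|\sum_{k=m}^n b_k\bigr|<\varepsilon$ whenever $n\geq m\geq N$, notes $\sum_{k=m}^n|a_k|\leq\sum_{k=m}^n b_k$, and applies the Cauchy Test once more to $\sum_{k\in\NN}|a_k|$. This is precisely the alternative you mention but set aside. You instead compare entire partial sums, $t_n\leq s_n\leq\gamma$, and conclude with the Bounded Partial Sums Test (Theorem~\ref{t:seriesbounded}). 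The paper's version uses completeness in its most direct form and follows the same template as the proof of Theorem~\ref{t:absconvimpliesregconv}. Yours reaches completeness only through the Bounded Monotone Convergence Theorem (Theorem~\ref{t:monconv}), which the notes derive via Bolzano--Weierstrass, so it leans on the order structure and a longer chain of results. In return, your global comparison is arguably more intuitive. Together with Theorem~\ref{t:limleq}, the inequality $t_n\leq s_n$ also yields the quantitative bound $\sum_{k\in\NN}|a_k|\leq\sum_{k\in\NN}b_k$, which the paper's proof does not record. One small point on your parenthetical alternative: Corollary~\ref{c:limleq} must be applied to the tail $(s_m)_{m\geq n}$, since $s_n\leq s_m$ holds only for $m\geq n$. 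Your main route via Theorem~\ref{t:convbound} avoids this issue.
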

\begin{proof}
Let $\varepsilon>0$. Since $\sum_{k\in\NN}b_k$ is convergent and since
all terms of this series are nonnegative, by
Proposition~\ref{p:cauchycrit} 
there exists $N\in\NN$ such that for all $n\geq m\geq N$, we have 
\begin{equation}
\sum_{k=m}^{n}|a_k| 
\leq \sum_{k=m}^{n} b_k
= \left|\sum_{k=m}^{n} b_k\right|
<\varepsilon.
\end{equation}
Again from Proposition~\ref{p:cauchycrit} 
it follows that the series 
$\sum_{k\in\NN} a_k$ is absolutely convergent.
\end{proof}

The contrapositive of the last theorem yields a useful test for
divergence.

\begin{corollary}
\label{c:compartest}
Let $(a_n)_\nnn$ be a sequence of nonnegative numbers such that 
$\sum_{k\in\NN}a_k$ diverges, and let $(b_n)_\nnn$ be a sequence of
real numbers such that for every $n\in\NN$, $a_n\leq b_n$.
Then the series $\sum_{k\in\NN}b_k$ is divergent.
\end{corollary}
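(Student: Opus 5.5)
The statement to prove is Corollary~\ref{c:compartest}. The plan is to derive it as the contrapositive of the Comparison Test (Theorem~\ref{t:compartest}), exactly as the sentence preceding the corollary announces. We argue by contradiction. Suppose that $\sum_{k\in\NN} b_k$ converges; the goal is to show that $\sum_{k\in\NN} a_k$ then converges too, contradicting the hypothesis.

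The first step is to check that Theorem~\ref{t:compartest} actually applies with the roles of the two sequences swapped.
\begin{itemize}
\item The comparison sequence must be nonnegative with a convergent series. Since $0\leq a_n\leq b_n$ for every $n\in\NN$, every $b_n$ is nonnegative, and by assumption $\sum_{k\in\NN} b_k$ converges.
\item The other sequence must be dominated in absolute value. Since $a_n\geq 0$, we have $|a_n|=a_n\leq b_n$ for every $n\in\NN$, using the definition of the absolute value (Definition~\ref{d:absval}).
\end{itemize}
Hence Theorem~\ref{t:compartest} applies, with $(b_n)_\nnn$ as the comparison sequence and $(a_n)_\nnn$ as the dominated one.

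Theorem~\ref{t:compartest} then yields that $\sum_{k\in\NN} a_k$ is absolutely convergent. By Theorem~\ref{t:absconvimpliesregconv} it is therefore convergent. Alternatively, since $|a_k|=a_k$, absolute convergence already means convergence here. This contradicts the hypothesis that $\sum_{k\in\NN} a_k$ diverges, so $\sum_{k\in\NN} b_k$ must diverge.

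There is no real obstacle. The only point needing care is the first check: the $b_n$ are only stated to be real, and their nonnegativity has to be deduced from $0\leq a_n\leq b_n$. One could add, via Theorem~\ref{t:seriesbounded}, that the partial sums of $\sum_{k\in\NN} b_k$ are unbounded, so that the series in fact diverges to $\pinf$; but this is not required by the statement.
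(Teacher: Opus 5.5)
Your proof is correct and is essentially the paper's argument. You assume $\sum_{k\in\NN} b_k$ converges, apply the Comparison Test (Theorem~\ref{t:compartest}) with the roles of the two sequences swapped, and obtain convergence of $\sum_{k\in\NN} a_k$, a contradiction; you are also slightly more careful than the paper, which leaves implicit that $b_n\geq a_n\geq 0$ is what makes $(b_n)_\nnn$ an admissible nonnegative comparison sequence.
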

\begin{proof}
Indeed, if $\sum_{k\in\NN}b_k$ were not divergent, then 
by Theorem~\ref{t:compartest} it would follow that 
$\sum_{k\in\NN} a_k$ is convergent.
\end{proof}

Let us revisit the $p$-series (Example~\ref{ex:pseries})
with a much simpler proof.\index{p-series}
\begin{example}
If $p$ is an integer such that $p\geq 2$, 
then (Exercise~\ref{exo:previsited})
\begin{equation}
\label{e:previsited}
\frac{1}{n^p} \leq \frac{1}{n^2} \leq \frac{2}{n(n+1)},
\quad\text{for every $n\geq 1$.}
\end{equation}
In view of Example~\ref{ex:seite24} and Theorem~\ref{t:serieslaws},
it follows from the comparison test that 
$\sum_{k\in\NN}1/k^p$ is absolutely convergent.
\end{example}

\begin{theorem}[Ratio Test]
\label{t:ratiotest}\index{Ratio Test}
Let $(a_n)_\nnn$ be a sequence of nonzero real numbers and
let $\beta\in\left]0,1\right[$.
Suppose that for some $n_0\in\NN$ we have
\begin{equation}
\left| \frac{a_{n+1}}{a_n}\right|
\leq \beta,
\quad
\text{for every $n\geq n_0$.}
\end{equation}
Then the series $\sum_{k\in\NN}a_k$ converges absolutely.
\end{theorem}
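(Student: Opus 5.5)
The plan is to dominate the tail of $\sum_{k\in\NN}|a_k|$ by a convergent geometric series and then invoke the Comparison Test (Theorem~\ref{t:compartest}).

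The first step is to show by induction on $n\geq n_0$ that
\begin{equation*}
|a_n| \leq |a_{n_0}|\,\beta^{\,n-n_0}.
\end{equation*}
The base case $n=n_0$ is the trivial equality $|a_{n_0}|=|a_{n_0}|\beta^0$. For the inductive step, use the hypothesis in the form $|a_{n+1}|\leq\beta|a_n|$; this is obtained by multiplying $|a_{n+1}/a_n|\leq\beta$ by $|a_n|>0$ and using Proposition~\ref{p:absval}\ref{p:absvaliv}. Combined with the induction hypothesis it gives $|a_{n+1}|\leq |a_{n_0}|\beta^{\,n+1-n_0}$.

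Next, define a comparison sequence $(b_n)_\nnn$ of nonnegative numbers by
\begin{equation*}
b_n := |a_n| \ \text{for } n<n_0,
\qquad
b_n := C\beta^n \ \text{for } n\geq n_0,
\qquad\text{where } C := |a_{n_0}|\beta^{-n_0}.
\end{equation*}
By the first step, $|a_n|\leq b_n$ for every $\nnn$. It remains to check that $\sum_{k\in\NN}b_k$ converges. I would do this via the Bounded Partial Sums Test (Theorem~\ref{t:seriesbounded}), since the $b_n$ are nonnegative. For $n\geq n_0$, Theorem~\ref{t:geosum} and $0<\beta<1$ give
\begin{equation*}
\sum_{k=0}^n b_k \leq \sum_{k=0}^{n_0-1}|a_k| + C\sum_{k=0}^{n}\beta^k
\leq \sum_{k=0}^{n_0-1}|a_k| + \frac{C}{1-\beta},
\end{equation*}
and for $n<n_0$ the partial sums are bounded by the first finite sum. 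Alternatively, one could combine Theorem~\ref{t:geoseries} with Theorem~\ref{t:serieslaws} and observe that altering finitely many terms preserves convergence. Either way, Theorem~\ref{t:compartest} then yields absolute convergence of $\sum_{k\in\NN}a_k$.

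The only mildly delicate point is the bookkeeping for the first $n_0$ terms, which the hypothesis does not control. Handling them by defining $b_n=|a_n|$ for $n<n_0$, as above, sidesteps any need for a separate lemma about shifting the index of a series.
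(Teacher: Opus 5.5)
Your proof is correct and follows the same route as the paper: an induction giving a geometric bound on $|a_n|$, followed by the Comparison Test (Theorem~\ref{t:compartest}). The only difference is bookkeeping. The paper reduces to $n_0=0$ by remarking that changing finitely many terms does not affect convergence, whereas you handle the first $n_0$ terms explicitly through your comparison sequence $(b_n)$ and the Bounded Partial Sums Test, which makes that remark rigorous.
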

\begin{proof}
Since (absolute) convergence does not change when finitely many terms
of a series are changed, we assume for convenience that $n_0=0$, i.e.,
$|a_{n+1}/a_n|\leq \beta$, for all $\nnn$.
An easy mathematical induction yields
\begin{equation}
|a_n| \leq |a_0|\beta^n,
\quad
\text{for every $n\in\NN$.}
\end{equation}
Now $\sum_{k\in\NN} |a_0|\beta^k$ is absolutely convergent 
by Theorem~\ref{t:geoseries} and Theorem~\ref{t:serieslaws};
thus, the comparison test (Theorem~\ref{t:compartest}) yields
the absolute convergence of $\sum_{k\in\NN}a_k$.
\end{proof}

Exercise~\ref{exo:noconclusion} illustrates
that no conclusion can be drawn if $|a_{n+1}/a_n|\leq 1$.

Often, the ratio test is used in the following form.

\begin{corollary}[Ratio Test -- Utility Version]
\label{c:utirat}
\index{Ratio Test}
Let $(a_n)_\nnn$ be a sequence of nonzero real numbers and assume that
\begin{equation}
\label{e:newtest}
q := \lim_\nnn \left|\frac{a_{n+1}}{a_n}\right|
\end{equation}
exists. 
Then the following hold:
\begin{enumerate}
\item 
\label{c:utirat1}
If $q<1$, then 
the series $\sum_{k\in\NN}a_k$ converges absolutely.
\item 
\label{c:utirat2}
If $q>1$, then 
the series $\sum_{k\in\NN}a_k$ diverges.
\end{enumerate}
\end{corollary}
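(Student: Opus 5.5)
For part \ref{c:utirat1}, the plan is to reduce to Theorem~\ref{t:ratiotest}. Assume $q<1$. Since each $|a_{n+1}/a_n|\geq 0$, Corollary~\ref{c:limleq} gives $q\geq 0$. Next I would choose a number strictly between $q$ and $1$; a natural choice is $\beta:=(q+1)/2$, so that $0<\beta<1$ and $\beta-q=(1-q)/2>0$. Applying the definition of convergence with $\varepsilon:=\beta-q$ yields $n_0\in\NN$ such that $\big||a_{n+1}/a_n|-q\big|<\varepsilon$ for all $n\geq n_0$. In particular $|a_{n+1}/a_n|<q+\varepsilon=\beta$ for all $n\geq n_0$. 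Theorem~\ref{t:ratiotest} then applies with this $\beta\in\left]0,1\right[$, and it gives absolute convergence of $\sum_{k\in\NN}a_k$.

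For part \ref{c:utirat2}, I would use the Divergence Test (Corollary~\ref{c:divtest}). Assume $q>1$ and take $\varepsilon:=q-1>0$. The definition of convergence gives $n_0$ such that $|a_{n+1}/a_n|>q-\varepsilon=1$ for all $n\geq n_0$, that is, $|a_{n+1}|>|a_n|$ for $n\geq n_0$. A simple induction then yields $|a_n|\geq |a_{n_0}|>0$ for all $n\geq n_0$, where $|a_{n_0}|>0$ because the terms are assumed nonzero. Now suppose for contradiction that $a_n\to 0$. Then Proposition~\ref{p:absantoabsa} gives $|a_n|\to 0$. On the other hand, Corollary~\ref{c:limleq}, applied to the tail sequence $(|a_n|)_{n\geq n_0}$, gives $\lim|a_n|\geq|a_{n_0}|>0$, which is a contradiction. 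Hence $a_n\not\to 0$, and the series diverges by Corollary~\ref{c:divtest}.

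I expect the only delicate step to be the choice of $\beta$ in part \ref{c:utirat1}. The $\beta$ must lie strictly below $1$ and strictly above $q$, so that the $\varepsilon$-band around $q$ fits under $\beta$. This also explains why the borderline case $q=1$ is excluded from the statement. The rest is a routine unpacking of Definition~\ref{d:limit}. One minor point is that Corollary~\ref{c:limleq} is stated for sequences indexed from $0$. I would handle this by remarking that changing finitely many terms affects neither convergence nor the limit, just as the paper does in the proof of Theorem~\ref{t:ratiotest}.
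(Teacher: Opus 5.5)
Your proof is correct and matches the paper's argument essentially step for step. In part (i) you choose the same $\beta=(q+1)/2$ and $\varepsilon=(1-q)/2$ to reduce to Theorem~\ref{t:ratiotest}. In part (ii) you, like the paper's solution to Exercise~\ref{exo:utirat2}, show that $|a_n|$ is eventually bounded below by $|a_{n_0}|>0$, then conclude via Proposition~\ref{p:absantoabsa} and the Divergence Test. Your explicit check that $q\geq 0$ (so that $\beta>0$) and your appeal to Corollary~\ref{c:limleq} for $|a_n|\not\to 0$ are small justifications that the paper leaves implicit.
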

\begin{proof}
\ref{c:utirat1}:
Define $q_n := |a_{n+1}/a_n|$, for every $\nnn$. 
Set $\beta := (q+1)/2\in\zeroun$ and
$\varepsilon := (1-q)/2$. Then 
$q+\varepsilon = \beta$. 
Since $q_n\to q$, there exists $n_0\in\NN$ such that
$\varepsilon > |q_n-q| \geq q_n-q$ for every $n\geq n_0$.
In particular, $q_n<q+\varepsilon=\beta<1$ for 
every $n\geq n_0$. 
The conclusion now follows from Theorem~\ref{t:ratiotest}. 

\ref{c:utirat2}:
Exercise~\ref{exo:utirat2}. 
\end{proof}

\begin{example}
Consider the series $\sum_{\nnn} n^23^{-n}$. 
Set $a_n=n^23^{-n}$, for all $\nnn$. 
Using the limit laws, we deduce that 
\begin{align}
\left|\frac{a_{n+1}}{a_n}\right|
&=\frac{(n+1)^23^n}{n^23^{n+1}}=
\frac{1}{3}\bigg(1+\frac{1}{n}\bigg)^2
\to \frac{1}{3} =: q < 1.
\end{align}
Therefore, the series $\sum_{\nnn} n^23^{-n}$
converges absolutely by Corollary~\ref{c:utirat}\ref{c:utirat1}. 
In passing, we note that \texttt{Maple} or \texttt{WolframAlpha}
will give you even a formula for the partial sums
(which you could verify by induction); using this, one then
sees that the limit of this series is $3/2$. 
\end{example}

\hhbcom{One could develop
the theory of re-arrangements as in 
Forster page 43f.}

\hhbcom{Lay's book page 310 has lots of examples.}

\hhbcom{Forster Seite 40}

\section*{Exercises}\markright{Exercises}
\addcontentsline{toc}{section}{Exercises}
\setcounter{theorem}{0}

\begin{exercise}
\label{exo:previsited}
Prove \eqref{e:previsited}.
\end{exercise}
\begin{solution}
Let $n$ and $p$ be integers such that $n\geq 1$ and $p\geq 2$.
Then $0<1\leq n\leq n^2\leq \cdots \leq n^p$, and thus
$1/n^p \leq 1/n^2$. 
Also, $n(n+1)=n^2+n \leq n^2 + n\cdot n = 2n^2$. 
Therefore, 
$1/n^2 \leq 2/\big(n(n+1)\big)$. 
\end{solution}

\begin{exercise}
\label{exo:noconclusion}
Consider two instances of the series $\sum_{k\geq 1}a_k$, 
namely when
(i) $(\forall\nnn)$ $a_n = 1/n$, and when 
(ii) $(\forall\nnn)$ $a_n = 1/n^2$. 
Show that in either case $a_n\to 0$, 
$|a_{n+1}/a_{n}|<1$, and 
$|a_{n+1}/a_{n}| \to 1$.
Explain why no conclusion regarding convergence or divergence of
the series can be drawn.
\end{exercise}
\begin{solution}
We know that $1/n\to 0$, and hence $1/n^2 = (1/n)(1/n)\to 0\cdot 0 =
0$ by the product law for limits.
Let $n\geq 1$. Then $n<n+1$ and thus $n/(n+1)<1$;
thus, 
\begin{equation*}
1=\frac{1}{1+0}\leftarrow\frac{1}{1+\frac{1}{n}}=\frac{n}{n+1}=\frac{\frac{1}{n+1}}{\frac{1}{n}} < 1.
\end{equation*}
We also have $n^2 < (n+1)^2$; thus, 
\begin{equation*}
1=\Big(\frac{1}{1+0}\Big)^2\leftarrow\Big(\frac{1}{1+\frac{1}{n}}\Big)=\Big(\frac{n}{n+1}\Big)^2 = 
\frac{\frac{1}{(n+1)^2}}{\frac{1}{n^2}} < 1.
\end{equation*}
These examples dash any hopes for a Ratio Test with $q<1$
replaced by $1$:
indeed, in case~(i), we obtain the Harmonic Series
which is divergent (Example~\ref{ex:harser})
while in case~(ii), we obtain a convergent $p$-series 
(Example~\ref{ex:pseries}). 
\end{solution}

\begin{exercise}
Determine whether or not 
the series $\sum_{n\in\NN} n!/n^n$ absolutely converges.
\end{exercise}
\begin{solution}
We set $a_n := n!/n^n$, for $n\geq 1$. 
Then
\begin{align*}
\left|\frac{a_{n+1}}{a_n}\right|
&= \left|\frac{(n+1)!/(n+1)^{n+1}}{n!/n^n} \right|
= \frac{(n+1)!}{n!}\frac{n^{n}}{(n+1)^{n+1}}\\
&= (n+1) \frac{n^n}{(n+1)(n+1)^n}
= \frac{n^n}{(n+1)^n} \\
&= \left(\frac{n}{n+1}\right)^n
= \left(\frac{1}{1+1/n}\right)^n
=\frac{1}{\left(1+\frac{1}{n}\right)^n}.
\end{align*}
Now, using Bernoulli's inequality, we have
$\big(1+\frac{1}{n}\big)^n \geq 1 + n\frac{1}{n}=2$;
equivalently $\big(1+\frac{1}{n}\big)^{-n} \leq 2^{-1}$.
Altogether, 
\begin{equation*}
\left|\frac{a_{n+1}}{a_n}\right|
= \frac{1}{\left(1+\frac{1}{n}\right)^n} 
\leq \frac{1}{2}.
\end{equation*}
Therefore, by the Ratio Test (Theorem~\ref{t:ratiotest}),
the series is absolutely convergent.
\end{solution}

\begin{exercise}
Determine whether or not 
the series $\sum_{n\in\NN} n^5/4^n$ absolutely converges.
\end{exercise}
\begin{solution}
Set $a_n:=n^54^{-n}$, for all $\nnn$. 
Using the limit laws, we deduce that 
\begin{align*}
\left|\frac{a_{n+1}}{a_n}\right|
&=\frac{(n+1)^54^n}{n^54^{n+1}}=
\frac{1}{4}\bigg(1+\frac{1}{n}\bigg)^5
\to \frac{1}{4} =: q < 1.
\end{align*}
Therefore, the series $\sum_{\nnn} n^54^{-n}$
converges absolutely by Corollary~\ref{c:utirat}\ref{c:utirat1}. 
\end{solution}

\begin{exercise}
Determine whether or not 
the series $\sum_{n\geq 1} (n+10)/(n^2-3n+1)$ absolutely converges.
\end{exercise}
\begin{solution}
Let $n$ be an integer such that $n\geq 1$.
Then $n+10> n$. Also, 
$3n>1$
$\Leftrightarrow$
$n^2> n^2 -3n+1$
$\Leftrightarrow$
$1/(n^2 -3n+1) > 1/n^2$.
Thus, 
\begin{equation*}
\frac{n+10}{n^2-3n+1}
> \frac{n}{n^2} = \frac{1}{n}.
\end{equation*}
Since the Harmonic Series diverges (Example~\ref{ex:harser}),
it follows from Corollary~\ref{c:compartest} that the series
under investigation diverges as well.
\end{solution}

\begin{exercise}
Recall that the sequence of \emph{harmonic numbers} $(h_n)_\nnn$ 
\index{Harmonic numbers} is exactly the sequence of partial sums
of the harmonic series. In view of Example~\ref{ex:harser},
the sequence $(h_n)_\nnn$ diverges to $+\infty$.
Show that $\sum_{k\geq 2}h_k/(k(k-1))$ converges
and deduce that $h_n/(n(n-1))\to 0$. 
\emph{Hint:} Exercise~\ref{exo:strangeharm}.
\end{exercise} 
\begin{solution}
It is clear that the terms of the series are all nonnegative.
By Exercise~\ref{exo:strangeharm}, the series of partial sums
is bounded above by $2$. 
Hence, by Theorem~\ref{t:seriesbounded}, the series converges. 
Finally, by Lemma~\ref{l:convnull}, the terms converge to $0$.
\end{solution}

\begin{exercise}
\label{exo:utirat2}
Prove Corollary~\ref{c:utirat}\ref{c:utirat2}.
\end{exercise} 
\begin{solution}
Set $b_n := |a_{n+1}/a_n|$ for $\nnn$.
Then $b_n\to q>1$. 
Let $\varepsilon := (q-1)/2>0$.
Then there exists $N\in\NN$ such that 
$(\forall n\geq N)$ $|b_n-q|<\varepsilon$. 
Hence for $n\geq N$, we have 
$\varepsilon > |b_n-q| \geq q-b_n$ and thus
$b_n > q-\varepsilon = (1+q)/2 > 1$. In particular,
$(\forall n\geq N)$ $|a_{n+1}/a_n|=b_n\geq 1$.
An easy induction gives
$$(\forall n\geq N)\quad |a_{n+1}|\geq|a_n|\geq \cdots \geq |a_N|>0. $$
It follows that $|a_n|\not\to 0$. 
By Proposition~\ref{p:absantoabsa}, $a_n\not\to 0$. 
Finally, by Corollary~\ref{c:divtest}, $\sum_{k\in\NN} a_k$ diverges.
\end{solution}

\begin{exercise}
Determine whether each of the following series is either
convergent or divergent. Briefly justify your answer.
In case of convergence, you do not need to find the limit.
\begin{enumerate}
\item $\displaystyle \sum_{n=0}^{\infty} \frac{(-3)^n}{n!}$
\item $\displaystyle \sum_{n=0}^{\infty} (-3)^n\cdot n!$
\item $\displaystyle \sum_{n=0}^{\infty} \frac{(-1)^n}{3n+2}$
\item $\displaystyle \sum_{n=0}^{\infty} \frac{1}{\sqrt{3n+1}}$
\item $\displaystyle \sum_{n=1}^{\infty} \frac{1}{n^3}$
\end{enumerate}
\end{exercise} 
\begin{solution}
(i): We use the \textbf{Ratio Test} 
(Corollary~\ref{c:utirat}) to demonstrate \textbf{convergence}. 
Set $a_n := (-3)^n/n!$ for every $\nnn$. 
Then
\begin{equation*}
\left| \frac{a_{n+1}}{a_n}\right| =
\frac{\left|(-3)^{n+1}/(n+1)!\right|}{\left|(-3)^n/n!\right|} =
\frac{3^{n+1}\cdot n!}{3^n \cdot (n+1)!} = \frac{3}{n+1} \to 0 <
1,
\end{equation*}
as needed!

(ii): We use the \textbf{Divergence Test} 
(Corollary~\ref{c:divtest}). 
Set $a_n := (-3)^n\cdot n!$ for every $\nnn$. 
Then $|a_n| \geq 1$ for every $\nnn$ and so $a_n\not\to 0$ as
needed. 

(iii): We use the \textbf{Leibniz (Alternating Series) Test} 
(Theorem~\ref{t:Leibniz}) for \textbf{convergence}. 
Set $a_n := 1/(3n+2)$  for every $\nnn$. 
Then $a_n>a_{n+1} > 0$ and $a_n\to 0$ as needed. 

(iv): We use the comparison test for \textbf{divergence}
(Corollary~\ref{c:compartest}).
Set $b_n := 1/\sqrt{3n+1}$ and $a_n := 1/(3(n+1))$, for every
$\nnn$.
Then 
\begin{equation*}
b_n = \frac{1}{\sqrt{3n+1}} \geq \frac{1}{3n+1}>\frac{1}{3n+3} =
a_n,
\end{equation*}
for every $\nnn$.
Since $\sum_{n=1}^\infty \tfrac{1}{n} = \sum_{n=0}^\infty
\tfrac{1}{n+1}$ diverges (harmonic series, see
Example~\ref{ex:harser}) so must $\sum_{\nnn}a_n$ and we are done. 

(v): This is a $p$ series with $p=3\geq 2$ and we are thus
\textbf{convergent} by Example~\ref{ex:pseries}. 
\end{solution}

\begin{exercise} 
\label{exo:Cauchy0}
Suppose that $(a_n)_{n\geq 1}$ is a sequence of real 
numbers such that $a_n\to 0$.
Show that 
\begin{equation*}
\frac{1}{n}\sum_{i=1}^na_i \to 0.
\end{equation*}
\emph{Hint:} You could proceed along the following steps:
\begin{quotation}
1.~Get $N_1\geq 1$ such that $|a_{N_1}+\cdots + a_n|/n < \thalb\varepsilon$ 
for $n\geq N_1$. \\
2.~Then get $N_2 \geq 1$ such that 
$|a_{1}+\cdots + a_{N_1-1}|/n < \thalb\varepsilon$ for $n\geq N_2$.\\
3.~Now combine Steps~1 and 2.
\end{quotation}
\end{exercise}
\begin{solution}
Let $\varepsilon > 0$.

Because $a_n\to 0$ and $\varepsilon/2 > 0$, 
there exists $N_1\geq 1$ such that 
\begin{equation*}
    n\geq N_1 \;\;\Rightarrow\;\;
    |a_n| = |a_n-0| < \thalb\varepsilon.
\end{equation*}
It follows that for all $n\geq N_1$, 
\begin{align}
\label{e:200813a}
\frac{|a_{N_1}+a_{N_1+1}+\cdots + a_n|}{n}
&\leq \frac{|a_{N_1}|+|a_{N_1+1}|+\cdots + |a_n|}{n} 
< \frac{(n-N_1+1)\thalb\varepsilon}{n} 
\leq \thalb\varepsilon.
\end{align}
Next, because $1/n\to 0$ and $N_1$ is \emph{fixed}, it follows
that 
\begin{equation*}
\frac{a_1+a_2+\cdots+a_{N_1-1}}{n} \to 0.
\end{equation*}
Hence there exists $N_2\geq 1$ such that 
for all $n\geq N_2$, we have 
\begin{equation}
\label{e:200813b}
\frac{|a_{1}+a_{2}+\cdots+a_{N_1-1}|}{n} < \thalb\varepsilon.
\end{equation}
Now take an arbitrary $n\geq N := \max\{N_1,N_2\}$. 
Combining \eqref{e:200813a} and \eqref{e:200813b}, 
we estimate 
\begin{align*}
\left|\frac{a_1+a_2+\cdots + a_n}{n}\right|
&\leq 
\frac{|a_1+\cdots+a_{N_1-1}|}{n}
+ \frac{|a_{N_1}+\cdots+a_{n}|}{n}
< \thalb\varepsilon + \thalb\varepsilon =\varepsilon,
\end{align*}
and we're done!
\end{solution}

\begin{exercise} 
\label{exo:Cauchy1}
Suppose that $(b_n)_{n\geq 1}$ is a sequence of real 
numbers such that $b_n\to \beta$.
Show that 
\begin{equation*}
\frac{1}{n}\sum_{i=1}^nb_i \to \beta.
\end{equation*}
\emph{Hint:} You may use Exercise~\ref{exo:Cauchy0}.
\end{exercise}
\begin{solution}
Set $a_n \equiv b_n-\beta$. Then $a_n\to 0$.
Using Exercise~\ref{exo:Cauchy0}, 
we obtain
\begin{align*}
\frac{1}{n}\sum_{i=1}^nb_i
&=
\frac{1}{n}\sum_{i=1}^n (a_i+\beta)
=
\bigg(\frac{1}{n}\sum_{i=1}^n a_i\bigg) +
\bigg(\frac{1}{n}\sum_{i=1}^n \beta\bigg)
=
\bigg(\frac{1}{n}\sum_{i=1}^n a_i\bigg) +
\beta \\
&\to 0 + \beta = \beta
\end{align*}
as desired.
\end{solution}

\begin{exercise} 
\label{exo:Cauchy4}
Let $(a_n)_\nnn$ be a sequence of real numbers such that 
\begin{equation*}
a_{n+1}-a_n\to \ell\in\RR.
\end{equation*}
Show that 
\begin{equation*}
\frac{a_n}{n}\to \ell.
\end{equation*}
\emph{Hint:} You may use Exercise~\ref{exo:Cauchy1}.
\end{exercise}
\begin{solution}
Set $b_n := a_{n}-a_{n-1}$ for $n\geq 1$.
Then $b_n\to \ell$ by assumption.
Now Exercise~\ref{exo:Cauchy1} yields
\begin{equation*}
\ell \leftarrow \frac{1}{n}\sum_{i=1}^n b_i 
=
\frac{1}{n}\sum_{i=1}^n (a_i-a_{i-1})
= \frac{1}{n}\big( a_n - a_0),
\end{equation*}
which implies 
\begin{equation*}
\frac{a_n}{n} = 
\bigg(\frac{1}{n}\sum_{i=1}^n b_i\bigg) + \frac{a_0}{n}
\to \ell+0 = \ell
\end{equation*}
as announced.
\end{solution}

\begin{exercise} 
\label{exo:Cauchy2}
Prove or disprove: 
Suppose that $(b_n)_{n\geq 1}$ is a sequence of real 
numbers such that 
\begin{equation*}
\frac{1}{n}\sum_{i=1}^nb_i \to \beta \in\RR.
\end{equation*}
Then $b_n\to \beta$.
\end{exercise}
\begin{solution}
The statement is FALSE!
Here is a counterexample.
Set $b_n = (-1)^n$.
Then 
\begin{equation*}
\sum_{i=1}^n b_i
= \sum_{i=1}^n (-1)^i
= \begin{cases}
0, &\text{if $n$ is even;}\\
-1, &\text{if $n$ is odd.}
\end{cases}
\end{equation*}
It follows that 
\begin{equation*}
0\leftarrow \frac{-1}{n}
\leq \frac{1}{n}\sum_{i=1}^n b_n \leq 0,
\end{equation*}
and the Squeeze Theorem now gives 
\begin{equation*}
\frac{1}{n}\sum_{i=1}^n b_n \to 0.
\end{equation*}
On the other hand, $b_n=(-1)^n\not\to 0$.
\end{solution}

\begin{exercise} 
\label{exo:Cauchy3}
Prove or disprove: 
Suppose that $(b_n)_{n\geq 1}$ is a sequence of positive real 
numbers such that $b_n\to+\infty$. 
Then 
\begin{equation*}
\frac{1}{n}\sum_{i=1}^nb_i \to +\infty.
\end{equation*}
\end{exercise}
\begin{solution}
The statement is TRUE!
Let $C>0$. 
Because $b_n\to\pinf$, 
there exists $N\geq 1$ such that 
\begin{equation*}
    n\geq N
    \;\;\Rightarrow\;\;
    b_n>C.
\end{equation*}
Set $S := \sum_{i=1}^{N-1} b_i$, which is just a positive real constant.
It follows that 
\begin{align*}
\frac{1}{n}\sum_{i=1}^n b_i
&= \bigg(\frac{1}{n}\sum_{i=1}^{N-1} b_i\bigg)
+ \bigg(\frac{1}{n}\sum_{i=N}^n b_i\bigg)
\geq \frac{S}{n} + \frac{n-N+1}{n} C
\to 0 + C,
\end{align*}
which yields the result because $C$ was an arbitrary positive constant and 
so Theorem~\ref{t:seriesbounded} applies.
\end{solution}

\begin{exercise}[YOU be the marker!] 
Consider the following statement 
\begin{equation*}
\text{``$\sum_{n\geq 1}\frac{1}{n^2}$ diverges''}
\end{equation*}
and the following ``proof'':
\begin{quotation}
We know that $\sum_{n\geq 1} \tfrac{1}{n}$ diverges (Harmonic Series)!\\
Next, $n\geq 1$ $\Rightarrow$ $n^2\geq n$ $\Rightarrow$ 
$\tfrac{1}{n^2}\leq \tfrac{1}{n}$.\\
Therefore, by the Comparison Test, $\sum_{n\geq 1}\tfrac{1}{n^2}$ diverges.
\end{quotation}
Why is this proof wrong?
\end{exercise}
\begin{solution}
The last line makes an invalid statement. 
If you want to show that a series $\sum_{n\geq 1}a_n$ diverges
by comparing to $\sum_{n\geq 1}\tfrac{1}{n}$, then you have to 
show that $a_n\geq \tfrac{1}{n}$, not $a_n\leq \tfrac{1}{n}$.
(In fact, the series $\sum_{n\geq 1}\tfrac{1}{n^2}$ converges, 
as we know from Example~\ref{ex:pseries}.) 
\end{solution}

\begin{exercise}[YOU be the marker!] 
Consider the following statement 
\begin{equation*}
\text{``$\sum_{n\geq 1}\frac{1}{n}$ converges''}
\end{equation*}
and the following ``proof'':
\begin{quotation}
Set $a_n := \tfrac{1}{n}$ for all $n\geq 1$.\\
Then $|a_{n+1}/a_n| = n/(n+1) < 1$.\\
Hence the series converges by the Ratio Test.
\end{quotation}
Why is this proof wrong?
\end{exercise}
\begin{solution}
The last line makes an invalid statement. 
If you want to apply the Ratio Test, you need that 
$|a_{n+1}/a_{n}| \leq q <1$ for some $q<1$.
This is not true here because $|a_{n+1}/a_n| \to 1$.
Hence we cannot apply the Ratio Test.
(And indeed, the series, which is the Harmonic Series, 
does not converge.)
\end{solution}

\begin{exercise}[TRUE or FALSE?]
Mark each of the following statements as either true or false. 
Briefly justify your answer.
\begin{enumerate}
\item ``The alternating even harmonic series $\tfrac{1}{2}-\tfrac{1}{4}
+\tfrac{1}{6}-\tfrac{1}{8}\pm \cdots$ converges.''
\item ``If $\sum_{n\geq 1}a_n$ converges, then 
$\sum_{n\geq 1}|a_n|$ converges.''
\item ``Both $(\tfrac{1}{n})_{n\geq 1}$ and $\sum_{n\geq 1}\tfrac{1}{n}$ converge.''
\item ``If $\sum_{\nnn}a_n=\alpha$, then $a_n\to\alpha$.''
\end{enumerate}
\end{exercise}
\begin{solution}
(i): TRUE: This follows from the Leibniz Alternating Series Test.

(ii): FALSE: Consider $a_n\equiv (-1)^n\tfrac{1}{n}$. 

(iii): FALSE: While $1/n\to 0$, the Harmonic Series does diverge.

(iv): FALSE: $a_n\equiv (1/2)^n$ has the series being equal to $\alpha = 2$,
but $a_n\to 0 \neq 2= \alpha$. 
\end{solution} 
\chapter{Functions and Cardinality}
\label{cha:otherconv}

\section{Basic Definitions}

\begin{definition}
Let $X$ and $Y$ be nonempty sets.
We write 
\begin{equation}f\colon X\to Y\end{equation} and say $f$ is a \textbf{function}
with \textbf{domain} $X$ if for every $x\in X$, \index{Function}
$f(x)$ is an element in $Y$.
The set $Y$ is the \textbf{codomain} of $f$. 
The \textbf{graph} of the function $f$ is \index{Graph}
\begin{equation}
\gr f := \menge{(x,y)\in X\times Y}{y=f(x)}.
\end{equation}
The \textbf{range} of $f$ is the set \index{Range}
\begin{equation}
\ran f := f(X) := \bigcup_{x\in X} \{f(x)\}
= \menge{y\in Y}{\text{there exists $x\in X$ such that $f(x)=y$}}.
\end{equation}
The range of $f$ may be equal to --- or a proper subset of --- 
the codomain of $f$. 
More generally, if $S$ is a subset of $X$, we write
\begin{equation}
f(S) := \bigcup_{x\in S} \{f(x)\}
= \menge{y\in Y}{\text{there exists $x\in S$ such that $f(x)=y$}}
\end{equation}
and say that $f(S)$ is the \textbf{image} of $S$ under $f$ in $Y$.
Furthermore, if $T$ is a subset of $Y$, we write
\index{Image}\index{Pre-image}
\begin{equation}
f^{-1}(T) := \menge{x\in X}{f(x)\in T}
\end{equation}
and say that $f^{-1}(T)$ is the \textbf{pre-image} of $T$ under $f$ in $X$.
It will be convenient to define the \textbf{identity mapping} on $X$ by
\index{Identity mapping}
\begin{equation} \label{e:Id}
\Id_X \colon X\to X\colon x\mapsto x,
\end{equation}
and similarly for $\Id_Y$.
\end{definition}

Thus the graph of the function $f\colon X\to Y$ 
is a binary relation from $X$ to $Y$ (in the sense of
Definition~\ref{d:relation}) with the following two properties:
\begin{enumerate}
\item
for every $x\in X$, there exists $y\in Y$ such that $(x,y)\in\gr f$;
\item 
if $(x,y_1)$ and $(x,y_2)$ belong to $\gr f$, then $y_1=y_2$.
\end{enumerate}
Property~(ii) is also known as the \emph{Vertical Line Test} in
Calculus~I.

\begin{proposition}
\label{p:101108:1}
Let $f\colon X\to Y$,
let $A,A_1,A_2$ be subsets of $X$,
and let $B,B_1,B_2$ be subsets of $Y$.
Then the following hold.
\begin{enumerate}
\item 
\label{p:101108:1i}
$A \subseteq f^{-1}\big(f(A)\big)$.
\item 
\label{p:101108:1ii}
$f\big( f^{-1}(B)\big) \subseteq B$.
\item 
\label{p:101108:1iii}
$f(A_1\cap A_2)\subseteq f(A_1)\cap f(A_2)$.
\item 
\label{p:101108:1iv}
$f(A_1\cup A_2)=f(A_1)\cup f(A_2)$.
\item 
\label{p:101108:1v}
$f^{-1}(B_1\cap B_2) = f^{-1}(B_1)\cap f^{-1}(B_2)$.
\item 
\label{p:101108:1vi}
$f^{-1}(B_1\cup B_2) = f^{-1}(B_1)\cup f^{-1}(B_2)$.
\item 
\label{p:101108:1vii}
$f^{-1}(Y\smallsetminus B) = X\smallsetminus f^{-1}(B)$. 
\end{enumerate}
\end{proposition}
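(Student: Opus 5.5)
All seven items are statements about sets, so I will prove each by element chasing. For an equality I show both inclusions, or chain equivalences ``$x\in\text{LHS}\Leftrightarrow\cdots\Leftrightarrow x\in\text{RHS}$'', as in the proof of DeMorgan's laws in Proposition~\ref{p:setlaws}\ref{p:setlawsviii}. For an inclusion I only need the forward implication. Throughout I use the defining properties $y\in f(S)\Leftrightarrow(\exists x\in S)\ f(x)=y$ and $x\in f^{-1}(T)\Leftrightarrow f(x)\in T$.

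For \ref{p:101108:1i}, let $a\in A$. Then $f(a)\in f(A)$, so $a\in f^{-1}(f(A))$. For \ref{p:101108:1ii}, let $y\in f(f^{-1}(B))$. Then $y=f(x)$ for some $x\in f^{-1}(B)$, so $y=f(x)\in B$.

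For \ref{p:101108:1iii}, let $y=f(x)$ with $x\in A_1\cap A_2$. Then $x\in A_1$ gives $y\in f(A_1)$, and similarly $y\in f(A_2)$. For \ref{p:101108:1iv}, the inclusion ``$\supseteq$'' follows from $A_i\subseteq A_1\cup A_2$. For ``$\subseteq$'', if $y=f(x)$ with $x\in A_1\cup A_2$, I split into the cases $x\in A_1$ and $x\in A_2$.

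The preimage statements \ref{p:101108:1v}--\ref{p:101108:1vii} are pure chains of equivalences, for example
$x\in f^{-1}(B_1\cap B_2)\Leftrightarrow f(x)\in B_1\cap B_2\Leftrightarrow[f(x)\in B_1\text{ and } f(x)\in B_2]\Leftrightarrow x\in f^{-1}(B_1)\cap f^{-1}(B_2)$.
Item \ref{p:101108:1vi} is the same with ``or''. For \ref{p:101108:1vii}, with $x\in X$: $x\in f^{-1}(Y\smallsetminus B)\Leftrightarrow f(x)\notin B\Leftrightarrow x\notin f^{-1}(B)$.

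The only point needing care is \ref{p:101108:1iii}. It is tempting to try to prove equality there, and that fails: the two witnesses $x_1\in A_1$ and $x_2\in A_2$ with $f(x_1)=f(x_2)=y$ need not coincide. The forward direction of \ref{p:101108:1iv} is also a step where the existential witness must be handled by cases. I might add a counterexample remark showing \ref{p:101108:1i}--\ref{p:101108:1iii} can be strict: a constant map on $\{1,2\}$ with $A_1=\{1\}$ and $A_2=\{2\}$.
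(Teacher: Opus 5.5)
Your proposal is correct and follows the paper's argument essentially step for step: element chasing for the inclusions (i)--(iii), both inclusions with a case split on $x\in A_1$ or $x\in A_2$ for (iv), and chains of equivalences for (v)--(vii). The paper proves (i), (iii), (v), (vii) in the text and leaves (ii), (iv), (vi) as an exercise whose solution matches yours.

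One small point about your optional strictness remark: (ii) is strict only when the codomain is strictly larger than the range. For example, take the constant map $\{1,2\}\to\{1,2\}$ with $B=\{1,2\}$; then $f\big(f^{-1}(B)\big)$ is a singleton, a proper subset of $B$.
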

\begin{proof}
\ref{p:101108:1i}:
Let $a\in A$. Then $f(a)\in f(A)$ and so $a\in f^{-1}\big(f(A)\big)$.
Thus, $A\subseteq f^{-1}\big(f(A)\big)$. 

\ref{p:101108:1iii}:
Let $y\in f(A_1\cap A_2)$.
Then there exists $x\in A_1\cap A_2$ such that $y= f(x)$.
Since $x$ belongs to both $A_1$ and $A_2$,
we deduce that $y= f(x)\in f(A_1)$ and that $y= f(x)\in f(A_2)$.
Thus, $y\in f(A_1)\cap f(A_2)$ and therefore
$f(A_1\cap A_2)\subseteq f(A_1)\cap f(A_2)$.

\ref{p:101108:1v}:
Let $x\in X$.
The announced equality follows because of the equivalences
$x\in f^{-1}(B_1\cap B_2)$
$\Leftrightarrow$
$f(x)\in B_1\cap B_2$
$\Leftrightarrow$
[$f(x)\in B_1$ and $f(x)\in B_2$]
$\Leftrightarrow$
[$x\in f^{-1}(B_1)$ and $x\in f^{-1}(B_2)$]
$\Leftrightarrow$
$x\in f^{-1}(B_1)\cap f^{-1}(B_2)$.

\ref{p:101108:1vii}:
Let $x\in X$.
The announced equality follows because of the equivalences
$x\in f^{-1}(Y\smallsetminus B)$
$\Leftrightarrow$
$f(x)\in Y\smallsetminus B$
$\Leftrightarrow$
[$f(x)\in Y$ and $f(x)\notin B$]
$\Leftrightarrow$
$f(x)\notin B$
$\Leftrightarrow$
$x\notin f^{-1}(B)$
$\Leftrightarrow$
$x\in X\smallsetminus f^{-1}(B)$.
\end{proof}

\begin{definition}
Let $f\colon X\to Y$ be a function. 
Then $f$ is:
\begin{enumerate}
\item \textbf{injective} (or \textbf{one-to-one}) 
if $f(x_1)=f(x_2)$ always implies $x_1=x_2$;
\item \textbf{surjective} (or \textbf{onto})
if $\ran f = Y$;
\item \textbf{bijective} if $f$ is both injective and surjective.
\end{enumerate}
If $f$ is injective (surjective, bijective, resp.), 
then $f$ is called an 
\textbf{injection} (\textbf{surjection}, \textbf{bijection}, resp.).
\index{injective}\index{one-to-one}
\index{surjective}\index{onto}
\index{bijective}
\end{definition}

If $f\colon X\to Y$ is injective and $x_1$ and $x_2$ are two different
elements of $X$, then $f(x_1)\neq f(x_2)$, which explains the name.

\begin{example}
\label{ex:quinny:1}
$f\colon \RR\to \RR\colon x\mapsto x^2$ is
neither injective nor surjective.
\end{example}

\begin{example}
\label{ex:quinny:2}
$f\colon \left[0,+\infty\right[\to \RR \colon x\mapsto x^2$ is 
injective but not surjective.
\end{example}

\begin{example}
\label{ex:quinny:3}
$f\colon \RR\to \left[0,+\infty\right[\colon x\mapsto x^2$ is
surjective but not injective.
\end{example}

\begin{example}
\label{ex:quinny:4}
$f\colon \left[0,+\infty\right[ \to \left[0,+\infty\right[\colon
x\mapsto x^2$ is both injective and surjective, i.e., bijective.
\end{example}

Additional assumptions on $f$ yield
the following strengthening of Proposition~\ref{p:101108:1}.

\begin{proposition}
\label{p:101108:2}
Let $f\colon X\to Y$,
let $A,A_1,A_2$ be subsets of $X$,
and let $B$ be a subset of $Y$.
Then the following hold.
\begin{enumerate}
\item 
\label{p:101108:2i}
If $f$ is injective, then $A = f^{-1}\big(f(A)\big)$.
\item 
\label{p:101108:2ii}
If $f$ is surjective, then  $f\big( f^{-1}(B)\big) = B$.
\item 
\label{p:101108:2iii}
If $f$ is injective, then $f(A_1\cap A_2) =  f(A_1)\cap f(A_2)$.
\end{enumerate}
\end{proposition}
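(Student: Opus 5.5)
In each of the three items, one inclusion is already Proposition~\ref{p:101108:1}. So the plan is to prove only the reverse inclusion, using the extra hypothesis on $f$, by an element-chasing argument.

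For \ref{p:101108:2i}, Proposition~\ref{p:101108:1}\ref{p:101108:1i} gives $A\subseteq f^{-1}\big(f(A)\big)$. For the reverse, take $x\in f^{-1}\big(f(A)\big)$, so $f(x)\in f(A)$. By definition of the image there exists $a\in A$ with $f(x)=f(a)$. Injectivity yields $x=a\in A$, so $f^{-1}\big(f(A)\big)\subseteq A$ and equality follows.

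For \ref{p:101108:2ii}, Proposition~\ref{p:101108:1}\ref{p:101108:1ii} gives $f\big(f^{-1}(B)\big)\subseteq B$. For the reverse, take $y\in B$. Since $B\subseteq Y=\ran f$ by surjectivity, there is $x\in X$ with $f(x)=y$. Then $f(x)\in B$, so $x\in f^{-1}(B)$, hence $y=f(x)\in f\big(f^{-1}(B)\big)$.

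For \ref{p:101108:2iii}, Proposition~\ref{p:101108:1}\ref{p:101108:1iii} gives $f(A_1\cap A_2)\subseteq f(A_1)\cap f(A_2)$. For the reverse, take $y\in f(A_1)\cap f(A_2)$. Then there are $a_1\in A_1$ and $a_2\in A_2$ with $f(a_1)=y=f(a_2)$. Injectivity gives $a_1=a_2$, which lies in $A_1\cap A_2$, so $y\in f(A_1\cap A_2)$.

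I expect no real obstacle here. The one point needing care is in \ref{p:101108:2i} and \ref{p:101108:2iii}: membership in an image only supplies \emph{some} preimage, which need not be the given point. Injectivity is exactly what identifies the two preimages, and it is invoked at precisely that step.
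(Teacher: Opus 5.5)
Your proposal is correct and follows the paper's argument essentially step for step. In each item you cite Proposition~\ref{p:101108:1} for the easy inclusion. You then use injectivity to identify the two preimages in (i) and (iii), and surjectivity to produce a preimage of $y\in B$ in (ii).
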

\begin{proof}
\ref{p:101108:2i}:
By Proposition~\ref{p:101108:1}\ref{p:101108:1i},
$A\subseteq f^{-1}\big(f(A)\big)$.
Now take $x\in f^{-1}\big(f(A)\big)$. 
Then $f(x)\in f(A)$. Hence there exists $a\in A$ such that
$f(x)=f(a)$. Since $f$ is injective, it follows that $x=a\in A$.
Therefore, $f^{-1}\big(f(A)\big)\subseteq A$. 
\end{proof}

\section{Composition and the Inverse}

\begin{definition}
Let $X,Y,Z$ be nonempty sets,
let $f\colon X\to Y$,
and let $g\colon Y\to Z$.
The \textbf{composition} $g\circ f$
is the function\index{Composition}
\begin{equation}
g\circ f \colon X\to Z\colon x\mapsto g\big(f(x)\big).
\end{equation}
\end{definition}

The order in a composition is critical.

\begin{example}
Let $f\colon \RR\to\RR \colon x\mapsto x+1$,
let $g\colon \RR\to\RR\colon x\mapsto 2x$,
and let $x\in \RR$.
Then $(g\circ f)(x) = g\big(f(x)\big)
= g(x+1) = 2(x+1) = 2x+2$
while
$(f\circ g)(x) = f\big(g(x)\big) = f(2x) = 2x+1$.
\end{example}

\begin{proposition}
\label{p:quinlan}
Let $X,Y,Z$ be nonempty sets,
let $f\colon X\to Y$,
and let $g\colon Y\to Z$.
Then the following hold.
\begin{enumerate}
\item 
\label{p:quinlan:i}
If $f$ and $g$ are injective, then so is $g\circ f$.
\item 
\label{p:quinlan:ii}
If $f$ and $g$ are surjective, then so is $g\circ f$.
\item 
\label{p:quinlan:iii}
If $f$ and $g$ are bijective, then so is $g\circ f$.
\end{enumerate}
\end{proposition}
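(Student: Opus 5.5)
The plan is to argue directly from the definitions of injective, surjective, and composition, handling \ref{p:quinlan:i} and \ref{p:quinlan:ii} separately and then obtaining \ref{p:quinlan:iii} for free.

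For \ref{p:quinlan:i}, I will take $x_1,x_2\in X$ with $(g\circ f)(x_1)=(g\circ f)(x_2)$, which by definition of the composition reads $g\big(f(x_1)\big)=g\big(f(x_2)\big)$. Injectivity of $g$, applied to the two elements $f(x_1)$ and $f(x_2)$ of $Y$, gives $f(x_1)=f(x_2)$. Injectivity of $f$ then gives $x_1=x_2$. Hence $g\circ f$ is injective.

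For \ref{p:quinlan:ii}, I must show $\ran(g\circ f)=Z$. The inclusion $\ran(g\circ f)\subseteq Z$ is automatic, since $Z$ is the codomain. For the reverse inclusion, I fix $z\in Z$. Since $\ran g=Z$, there exists $y\in Y$ with $g(y)=z$. Since $\ran f=Y$, there exists $x\in X$ with $f(x)=y$. Then $(g\circ f)(x)=g\big(f(x)\big)=g(y)=z$, so $z\in\ran(g\circ f)$.

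Part \ref{p:quinlan:iii} then follows at once: a bijection is by definition both injective and surjective, so I simply combine \ref{p:quinlan:i} and \ref{p:quinlan:ii}.

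I do not expect a genuine obstacle. The only point needing care is the order in which the hypotheses are used. In \ref{p:quinlan:i} one peels off $g$ first and then $f$. In \ref{p:quinlan:ii} one lifts $z$ through $g$ first and then through $f$. Keeping this order straight also matters for the composition itself, which, as the preceding example shows, is sensitive to order.
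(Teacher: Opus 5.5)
Your proposal is correct and follows essentially the same argument as the paper. For (i) you cancel $g$ and then $f$ using injectivity, for (ii) you lift $z$ first through $g$ and then through $f$, and for (iii) you combine the two. The paper proves (i) in the text and leaves (ii) and (iii) to an exercise whose solution is exactly your argument.
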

\begin{proof}
\ref{p:quinlan:i}:
Suppose that $f$ and $g$ are injective and 
that $g\big(f(x_1)\big) = 
(g\circ f)(x_1)=(g\circ f)(x_2) = g\big(f(x_2)\big)$.
Since $g$ is injective, it follows
that $f(x_1)=f(x_2)$.
Now, since $f$ is also injective, we obtain $x_1=x_2$.
Therefore, $g\circ f$ is injective.
\end{proof}

\begin{definition}\label{d:invfunc}
Let $f\colon X\to Y$ be bijective.
Then the \textbf{inverse function}
is the function from $Y$ to $X$ with graph\index{Inverse function}
\begin{equation}
\gr f^{-1} = \menge{(y,x)\in Y\times X}{(x,y)\in \gr f},
\end{equation}
i.e., for all $x\in X$ and $y\in Y$,
\begin{equation}
y= f(x)
\quad\Leftrightarrow\quad
x= f^{-1}(y).
\end{equation}
\end{definition}

\begin{theorem}
\label{t:invinv}
Let $f\colon X\to Y$ be bijective.
Then $f^{-1}\colon Y\to X$ is also bijective,
$\big(f^{-1}\big)^{-1} = f$, 
$f^{-1}\circ f = \Id_X$,
and $f\circ f^{-1} = \Id_Y$.
\end{theorem}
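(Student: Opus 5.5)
The plan is to work directly from Definition~\ref{d:invfunc}, where $f^{-1}$ is specified through its graph. Everything then reduces to the equivalence
\begin{equation*}
y=f(x)\quad\Leftrightarrow\quad x=f^{-1}(y), \qquad x\in X,\; y\in Y.
\end{equation*}
First, though, I would confirm that the graph $\menge{(y,x)\in Y\times X}{(x,y)\in\gr f}$ really is the graph of a function from $Y$ to $X$. That means checking the two properties listed after the definition of graph. Existence: for each $y\in Y$, surjectivity of $f$ gives some $x\in X$ with $f(x)=y$. Uniqueness (the vertical line test): if $(y,x_1)$ and $(y,x_2)$ both lie in the graph, then $f(x_1)=y=f(x_2)$, and injectivity forces $x_1=x_2$. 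The definition presumably takes this for granted, but it is the one place where bijectivity is actually used, so I would make it explicit.

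Next come the two composition identities.
\begin{itemize}
\item For $x\in X$, set $y:=f(x)$. The equivalence gives $f^{-1}(y)=x$, so $(f^{-1}\circ f)(x)=x$, that is, $f^{-1}\circ f=\Id_X$.
\item For $y\in Y$, set $x:=f^{-1}(y)$. The equivalence gives $f(x)=y$, so $(f\circ f^{-1})(y)=y$, that is, $f\circ f^{-1}=\Id_Y$.
\end{itemize}

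Bijectivity of $f^{-1}$ follows from these identities.
\begin{itemize}
\item Injectivity: if $f^{-1}(y_1)=f^{-1}(y_2)$, apply $f$ to both sides and use $f\circ f^{-1}=\Id_Y$ to get $y_1=y_2$.
\item Surjectivity: for $x\in X$ we have $f^{-1}(f(x))=x$, so $x\in\ran f^{-1}$.
\end{itemize}

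Since $f^{-1}$ is bijective, $(f^{-1})^{-1}\colon X\to Y$ is defined. By Definition~\ref{d:invfunc} its graph is
\begin{equation*}
\menge{(x,y)}{(y,x)\in\gr f^{-1}}=\menge{(x,y)}{(x,y)\in\gr f}=\gr f.
\end{equation*}
Two functions with the same domain, codomain and graph are equal, so $(f^{-1})^{-1}=f$.

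None of these steps is hard. The step that needs care is the first one, the well-definedness of $f^{-1}$, because the later arguments silently depend on $f^{-1}(y)$ being a single, definite element of $X$. A secondary subtlety is the final identification $(f^{-1})^{-1}=f$: it rests on the convention that a function is determined by its graph (together with its domain and codomain), and I would state that convention explicitly.
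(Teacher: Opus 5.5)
Your proposal is correct and follows the paper's proof essentially step for step: it gets injectivity of $f^{-1}$ by applying $f$, reads off $f^{-1}\circ f=\Id_X$ and $f\circ f^{-1}=\Id_Y$ from the defining equivalence, and obtains $(f^{-1})^{-1}=f$ by comparing graphs. The only differences are that you explicitly check that the graph in Definition~\ref{d:invfunc} is really the graph of a function, which the paper takes for granted, and that you derive bijectivity of $f^{-1}$ from the composition identities instead of before them.
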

\begin{proof}
It is clear that $f^{-1}$ is surjective.
To show that $f^{-1}$ is injective, assume
that $f^{-1}(y_1)=f^{-1}(y_2)$.
Applying $f$ yields $y_1=y_2$, and so $f^{-1}$ is injective.
Now let $x\in X$. Then $f(x)\in Y$ and $f^{-1}(f(x))= x$ so
that $f^{-1}\circ f = \Id_X$. 
One sees similarly that $f\circ f^{-1} = \Id_Y$.
Finally, 
\begin{equation}
\gr \big(f^{-1}\big)^{-1} = \menge{(x,y)\in X\times Y}{(y,x)\in \gr
f^{-1}} = \gr f,
\end{equation}
as announced. 
\end{proof}

\begin{example}
The function 
$f\colon\left[0,\pinf\right[\to\left[0,\pinf\right[\colon x\mapsto x^2$
is bijective, with inverse function
$f^{-1}\colon\left[0,\pinf\right[\to\left[0,\pinf\right[\colon y\mapsto\sqrt{y}$.
\end{example}

\begin{theorem}
\label{t:compinv}
Let $X,Y,Z$ be nonempty sets,
let $f\colon X\to Y$ and 
$g\colon Y\to Z$ both be bijective.
Then $g\circ f\colon X\to Z$ is bijective with inverse
\begin{equation}
(g\circ f)^{-1} = f^{-1}\circ g^{-1}.
\end{equation}
\end{theorem}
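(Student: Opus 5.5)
The plan is to split the statement into two parts: first that $g\circ f$ is bijective, and then the formula for its inverse.

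Bijectivity is immediate from Proposition~\ref{p:quinlan}\ref{p:quinlan:iii}, since $f$ and $g$ are both bijective. By Definition~\ref{d:invfunc}, the inverse $(g\circ f)^{-1}\colon Z\to X$ therefore exists. It is characterized by the equivalence $z=(g\circ f)(x)\Leftrightarrow x=(g\circ f)^{-1}(z)$ for all $x\in X$ and $z\in Z$.

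For the formula, Theorem~\ref{t:invinv} tells us that $f^{-1}\colon Y\to X$ and $g^{-1}\colon Z\to Y$ are bijective functions. Hence $h:=f^{-1}\circ g^{-1}\colon Z\to X$ is a well-defined function. It suffices to show that $h$ and $(g\circ f)^{-1}$ have the same graph, and I will do this by checking the defining equivalence directly. Fix $x\in X$ and $z\in Z$, and set $y:=f(x)$. Using Definition~\ref{d:invfunc} twice, we get
\[
z=g(f(x)) \;\Leftrightarrow\; z=g(y) \;\Leftrightarrow\; y=g^{-1}(z),
\]
and
\[
y=f(x)\;\Leftrightarrow\; x=f^{-1}(y).
\]
Combining these yields $z=(g\circ f)(x)\Leftrightarrow x=f^{-1}(g^{-1}(z))=h(z)$. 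This gives $\gr h=\gr (g\circ f)^{-1}$, and therefore $h=(g\circ f)^{-1}$.

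The only delicate step is the chaining of the equivalences. We must keep $y$ tied to $f(x)$ in one direction and to $g^{-1}(z)$ in the other; each implication uses exactly one of the bijections. If that bookkeeping gets awkward, I would use a fallback. Compute $h\circ(g\circ f)=f^{-1}\circ(g^{-1}\circ g)\circ f=f^{-1}\circ f=\Id_X$ via Theorem~\ref{t:invinv}, and compose both sides on the right with $(g\circ f)^{-1}$. Using $(g\circ f)\circ(g\circ f)^{-1}=\Id_Z$, again from Theorem~\ref{t:invinv}, this gives $h=(g\circ f)^{-1}$. The fallback relies on associativity of composition, which holds pointwise.
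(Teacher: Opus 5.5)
Your proposal is correct and takes essentially the paper's route: bijectivity from Proposition~\ref{p:quinlan}\ref{p:quinlan:iii}, then the chain of equivalences $x=(g\circ f)^{-1}(z)\Leftrightarrow z=g(f(x))\Leftrightarrow f(x)=g^{-1}(z)\Leftrightarrow x=f^{-1}(g^{-1}(z))$. One bookkeeping fix: the last link is Definition~\ref{d:invfunc} for $f$ applied at the point $g^{-1}(z)\in Y$, i.e.\ $f(x)=g^{-1}(z)\Leftrightarrow x=f^{-1}(g^{-1}(z))$, as the paper writes it, and not at $y:=f(x)$, where both sides of your second display are trivially true and nothing connects to $z$.
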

\begin{proof}
By Proposition~\ref{p:quinlan}\ref{p:quinlan:iii},
$g\circ f$ is bijective. 
Set $h := (g\circ f)^{-1}$,
let $z\in Z$, and let $x\in X$.
Then $x= h(z)$
$\Leftrightarrow$
$z= (g\circ f)(x)$
$\Leftrightarrow$
$z= g\big(f(x)\big)$
$\Leftrightarrow$
$f(x)= g^{-1}(z)$
$\Leftrightarrow$
$x = f^{-1}\big(g^{-1}(z)\big)$
$\Leftrightarrow$
$x= (f^{-1}\circ g^{-1})(z)$.
Therefore, $(g\circ f)^{-1} = h = f^{-1}\circ g^{-1}$.
\end{proof}

\section{Cardinality}

\begin{definition}\label{d:cardinality}
Let $X$ and $Y$ be sets. 
We say that $X$ and $Y$ 
have the \textbf{same cardinality} 
(also known as {equipotent} or {equinumerous})
and write $X\approx Y$ if 
there exists $f\colon X\to Y$ such that $f$ is bijective.
By convention, one has $\emp\approx \emp$.
\index{Cardinality}
\index{equipotent}
\index{equinumerous}
\end{definition}

\begin{example}
\
\begin{enumerate}
\item $\{\text{red},\text{green},\text{blue}\} \approx \{1,2,3\}$.
\item Denote by $2\ZZ$ the set of even integers.
Then $\ZZ\approx 2\ZZ$ via $m\mapsto 2m$.
\item Denote by $2\NN$ the set of even nonnegative integers.
Then $\NN\approx 2\NN$ via $m\mapsto 2m$.
\end{enumerate}
\end{example}

We first observe that having the same cardinality 
acts just like an equivalence relation.

\begin{theorem}
\label{t:cardeqrel}
Let $X$, $Y$, and $Z$ be sets.
Then the following hold.
\begin{enumerate}
\item 
\label{t:cardeqrel:i}
$X\approx X$.
\item 
\label{t:cardeqrel:ii}
If $X\approx Y$, then $Y\approx X$.
\item 
\label{t:cardeqrel:iii}
If $X\approx Y$ and $Y\approx Z$, then $X\approx Z$.
\end{enumerate}
\end{theorem}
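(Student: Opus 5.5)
The plan is to verify the three properties directly from Definition~\ref{d:cardinality}, producing in each case an explicit bijection built from the results of the previous section. The empty set is treated separately throughout, via the convention $\emp\approx\emp$. Note also that if $X\approx Y$ and one of the two sets is empty, then both are: a function from a nonempty set cannot land in $\emp$, and $\ran f$ of a function into a nonempty $Y$ is nonempty. So the degenerate cases reduce at once to the convention, and in what follows we may assume all sets involved are nonempty.

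\ref{t:cardeqrel:i}: We take $f=\Id_X$ from \eqref{e:Id}. It is injective, since $\Id_X(x_1)=\Id_X(x_2)$ literally says $x_1=x_2$. It is surjective, since every $x\in X$ equals $\Id_X(x)$. Hence it is a bijection and $X\approx X$.

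\ref{t:cardeqrel:ii}: Given a bijection $f\colon X\to Y$, the inverse function $f^{-1}\colon Y\to X$ of Definition~\ref{d:invfunc} is well defined. By Theorem~\ref{t:invinv} it is again bijective, so $Y\approx X$.

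\ref{t:cardeqrel:iii}: Given bijections $f\colon X\to Y$ and $g\colon Y\to Z$, the composition $g\circ f\colon X\to Z$ is bijective by Proposition~\ref{p:quinlan}\ref{p:quinlan:iii}. Therefore $X\approx Z$.

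The only mild obstacle is that Proposition~\ref{p:quinlan}\ref{p:quinlan:iii} was stated with only part~\ref{p:quinlan:i} proved. If needed, I would supply the surjectivity argument directly. Given $z\in Z$, pick $y\in Y$ with $g(y)=z$ using surjectivity of $g$, then pick $x\in X$ with $f(x)=y$ using surjectivity of $f$. This gives $(g\circ f)(x)=z$. Combined with the proved injectivity part, this yields bijectivity of $g\circ f$. Everything else is immediate.
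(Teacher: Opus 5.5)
Your proof is correct and follows the paper's route: $\Id_X$ for reflexivity, Theorem~\ref{t:invinv} for symmetry, and Proposition~\ref{p:quinlan}\ref{p:quinlan:iii} for transitivity. The paper proves the remaining parts of that proposition in its exercises, and the surjectivity argument you supply matches that proof. Your extra empty-set discussion is harmless but misworded: the point is that a function with empty domain has empty range and so cannot be onto a nonempty $Y$. In the paper's setup, functions are only defined between nonempty sets anyway.
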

\begin{proof}
\ref{t:cardeqrel:i}:
$\Id_X\colon X\to X\colon x\mapsto x$ 
is a bijection from $X$ to $X$, so
$X\approx X$.

\ref{t:cardeqrel:ii}:
Suppose that $X\approx Y$, i.e.,
there is a bijection $f\colon X\to Y$.
By Theorem~\ref{t:invinv},
$f^{-1}$ is a bijection from $Y$ to $X$ so that $Y\approx X$.

\ref{t:cardeqrel:iii}:
Suppose that $X\approx Y$ via $f\colon X\to Y$,
and that $Y\approx Z$ via $g\colon Y\to Z$.
By Theorem~\ref{t:compinv} or
Proposition~\ref{p:quinlan}\ref{p:quinlan:iii}, $g\circ f\colon X\to Z$ is a bijection;
hence, $X\approx Z$. 
\end{proof}

\begin{definition}[cardinal number]\label{d:cardinal}
Let $X$ be a set.\index{Cardinal number}
We say that the \textbf{cardinal number} of $X$ is \textbf{finite}
and equal to $n$, written $\# X = n$, 
if $\{1,2,\ldots,n\}\approx X$; otherwise, the cardinal number is
\textbf{transfinite}.\index{transfinite}
The cardinal number of $\NN$ is denoted by 
\begin{equation}
\aleph_0
\end{equation}
and pronounced
``aleph\footnote{$\aleph$ is the first letter of the Hebrew
alphabet!} naught''; so, $\# \NN = \aleph_0$. 
If $Y$ is another set and $X\approx Y$, we write $\# X=\# Y$.
\end{definition}

Observe that $\# \emp=0$,
that the cardinal number of 
$\{\text{red},\text{green},\text{blue}\}$ is $3$,
and that the cardinal number of $2\NN$ is $\aleph_0$.
In view of Theorem~\ref{t:cardeqrel}, we may think about cardinal
numbers as equivalence classes.

\section{Countability}

\begin{definition}
Let $X$ be a set.
Then $X$ is \textbf{countably infinite} 
(also known as denumerable) 
if $\NN\approx X$.
If $X$ is empty, finite, or countably infinite, then $X$ is called
\textbf{countable}. 
If $X$ is not countable, it is called \textbf{uncountable}.
\index{denumerable}\index{countably infinite}
\index{countable}\index{uncountable}
\end{definition}

\begin{example}
$\ZZ$ is countably infinite.
Indeed, we list a bijection from $\NN$ to $\ZZ$, namely:
$0\mapsto 0$, $1\mapsto -1$, $2\mapsto 1$,
$3\mapsto -2$, $4\mapsto 2$, \ldots, i.e.,
\begin{equation}
\NN\to \ZZ\colon
n\mapsto
\begin{cases}
\displaystyle \frac{n}{2}, &\text{if $n$ is even;}\\[+5 mm]
\displaystyle \frac{-(n+1)}{2}, &\text{if $n$ is odd.}
\end{cases}
\end{equation}
\end{example}

The next result states that $\aleph_0$ is the smallest
transfinite cardinal number.

\begin{proposition}
\label{p:countinfinite}
Let $Y$ be countably infinite,
and let $X$ be an infinite subset of $Y$.
Then $X$ is countably infinite.
\end{proposition}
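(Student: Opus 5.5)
Since $Y$ is countably infinite, we fix a bijection $g\colon \NN\to Y$ and write $y_k := g(k)$, so that $Y=\{y_0,y_1,y_2,\ldots\}$ with all $y_k$ distinct. The plan is to list the elements of $X$ in the order in which they appear in this enumeration. That is, we build a strictly increasing sequence of indices $k_0<k_1<k_2<\cdots$ in $\NN$ such that $\{y_{k_n}\}_{n\in\NN} = X$, and then define $f\colon\NN\to X\colon n\mapsto y_{k_n}$.

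The indices are constructed by induction, and the only tool needed is that every nonempty subset of $\NN$ has a least element (well-ordering, which follows from induction). Set $I:=g^{-1}(X)=\menge{k\in\NN}{y_k\in X}$. Since $X\subseteq Y$ and $g$ is a bijection, $g$ maps $I$ bijectively onto $X$. Moreover $I$ is infinite: if $I$ were finite, then $X=g(I)$ would be finite, contradicting the hypothesis.

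\textbf{Base Case}: let $k_0$ be the least element of $I$.

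\textbf{Inductive Step}: given $k_0<\cdots<k_n$ in $I$, the set $I\smallsetminus\{k_0,\ldots,k_n\}$ is nonempty because $I$ is infinite. Let $k_{n+1}$ be the least element of $\menge{k\in I}{k>k_n}$. We must check that this set is nonempty. Otherwise every element of $I$ would be at most $k_n$, so $I\subseteq\{0,\ldots,k_n\}$ would be finite, which is absurd.

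It remains to verify that $f$ is a bijection.

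\emph{Injectivity.} The sequence $(k_n)$ is strictly increasing, so the $k_n$ are distinct. Since $g$ is injective, the values $y_{k_n}$ are distinct as well.

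\emph{Surjectivity.} This is the step requiring care, and it is the main obstacle. Given $x\in X$, write $x=y_m$ with $m\in I$; we must show $m=k_n$ for some $n$. A strictly increasing sequence in $\NN$ satisfies $k_n\geq n$, which follows by an easy induction (as in the proof of Proposition~\ref{p:allgo}). Hence $k_m\geq m$, and since $k_0\le m$, there is a largest $n$ with $k_n\leq m$. Suppose $k_n<m$. Then $m\in\menge{k\in I}{k>k_n}$, so by minimality $k_{n+1}\leq m$, contradicting the maximality of $n$. Therefore $k_n=m$, and $f(n)=x$.

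We conclude that $f$ is a bijection $\NN\to X$, so $\NN\approx X$ and $X$ is countably infinite. Equivalently, we can use Theorem~\ref{t:cardeqrel}: $I\approx X$ via $g|_I$, and the argument above shows $\NN\approx I$.
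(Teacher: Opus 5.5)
Your proof is correct and follows the same route as the paper. Both fix a bijection $\NN\to Y$, extract the strictly increasing sequence of indices $k_0<k_1<\cdots$ of the enumerated elements that lie in $X$, and use the map $n\mapsto y_{k_n}$. You also supply the details the paper leaves implicit: the well-ordering construction of the indices, and the surjectivity check. In that check, it is $k_{m+1}>m$ (which follows from $k_n\ge n$) that makes the set $\{n : k_n\le m\}$ finite, so it has a largest element.
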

\begin{proof}
Let $f\colon\NN\to Y$ be bijective.
Consider the sequence $f(n)_\nnn$, which lies in $Y$.
Since $f$ is onto and since $X\subseteq Y$, we have
$X\subseteq f(\NN)$. 
Hence $X = \{f(n_k)\}_{k\in\NN}$, where $(n_k)_{k\in\NN}$ is a subsequence
of $(n)_\nnn$. Thus, 
the map $\NN\to X\colon k\mapsto f(n_k)$ is surjective and also
injective (since $f$ is injective). 
Therefore, $\NN\approx X$, i.e., $X$ is denumerable. 
\end{proof}

It is sometimes cumbersome to construct the bijection when verifying
that a set is countably infinite. The following result gives an
easier-to-use criterion.

\begin{proposition}
\label{p:nicecount}
Let $X$ be a nonempty set.
Then the following are equivalent.
\begin{enumerate}
\item
\label{p:nicecounti}
$X$ is countable.
\item
\label{p:nicecountii}
There exists an injection $f\colon X\to\NN$.
\item
\label{p:nicecountiii}
There exists a surjection $g\colon \NN\to X$.
\end{enumerate}
\end{proposition}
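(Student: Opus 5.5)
I will prove the cycle of implications \ref{p:nicecounti}$\Rightarrow$\ref{p:nicecountii}$\Rightarrow$\ref{p:nicecountiii}$\Rightarrow$\ref{p:nicecounti}, using throughout that $X\neq\emp$.

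\ref{p:nicecounti}$\Rightarrow$\ref{p:nicecountii}: Since $X$ is nonempty and countable, it is either finite or countably infinite. If $\#X=n\geq 1$, there is a bijection $h\colon\{1,\ldots,n\}\to X$. Then $f:=h^{-1}$, viewed as a map into $\NN$, is injective by Theorem~\ref{t:invinv}. If $X$ is countably infinite, there is a bijection $h\colon\NN\to X$, and $f:=h^{-1}\colon X\to\NN$ is injective.

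\ref{p:nicecountii}$\Rightarrow$\ref{p:nicecountiii}: Let $f\colon X\to\NN$ be injective, and fix some $x_0\in X$ (this is where nonemptiness is used). Define $g\colon\NN\to X$ by $g(n):=x$ if $n=f(x)$ for some $x\in X$, and $g(n):=x_0$ otherwise. This is well defined because $f$ is injective, so such an $x$ is unique. It is surjective because $g(f(x))=x$ for every $x\in X$.

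\ref{p:nicecountiii}$\Rightarrow$\ref{p:nicecounti}: Let $g\colon\NN\to X$ be surjective. For each $x\in X$, the set $g^{-1}(\{x\})$ is a nonempty subset of $\NN$. Let $f(x)$ be its least element, using the well-ordering of $\NN$ (a consequence of induction). Then $g(f(x))=x$, so $f\colon X\to\NN$ is injective. Hence $X\approx f(X)$, via $f$ regarded as a bijection onto its range, and $f(X)\subseteq\NN$.

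Now split into two cases. If $f(X)$ is infinite, then Proposition~\ref{p:countinfinite}, applied with $Y=\NN$, shows that $f(X)$ is countably infinite. By Theorem~\ref{t:cardeqrel}, $X$ is countably infinite, hence countable. If $f(X)$ is finite, then $X$ is finite, hence countable.

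The main obstacle is the finite case. One must check that a finite subset of $\NN$ really satisfies $\#f(X)=n$ for some $n$, i.e.\ that it is in bijection with some $\{1,\ldots,n\}$. I would handle this by listing the elements in increasing order, by induction on the number of elements, or simply take it as evident from the definition of ``finite'' in Definition~\ref{d:cardinal}. A second, smaller point is the minimum-element choice in the last implication: it relies on well-ordering, which I would justify via strong induction (Exercise~\ref{exo:strongind}).
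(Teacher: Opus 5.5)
Your proof is correct and follows the paper's argument essentially step for step: the same cycle (i)$\Rightarrow$(ii)$\Rightarrow$(iii)$\Rightarrow$(i), consisting of inverting the bijection, extending the inverse of $f$ by a fixed point $x_0$, taking the least preimage $\min\{n\in\NN : g(n)=x\}$, and invoking Proposition~\ref{p:countinfinite} in the infinite case. The finite case you flag as the main obstacle is not actually one, since by Definition~\ref{d:cardinal} ``finite'' already means being in bijection with some $\{1,\ldots,n\}$; this is why the paper disposes of it with ``clearly countable.''
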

\begin{proof}
``\ref{p:nicecounti}$\Rightarrow$\ref{p:nicecountii}'':
There exists a bijection $h\colon I\to X$, where $I\subseteq \NN$.
Then $f := h^{-1}$ is a bijection from $X$
onto $I$ and thus an injection from $X$ to $\NN$. 

``\ref{p:nicecountii}$\Rightarrow$\ref{p:nicecountiii}'':
Since $f$ is an injection from $X$ to $\NN$, it is a bijection
from $X$ to $f(X)$; thus,
$f^{-1}$ is a bijection from $f(X)$ to $X$. 
Now let $x\in X$ and define
\begin{equation}
g\colon \NN\to X\colon
n\mapsto
\begin{cases}
f^{-1}(n), &\text{if $n\in f(X)$;}\\
x, &\text{if $n\notin f(X)$.}
\end{cases}
\end{equation}
Then $g\big(f(X)\big) = f^{-1}\big(f(X)\big) = X$
and $g\big(\NN\smallsetminus f(X)\big) = \{x\}$.
Thus, $g$ is a surjection from $\NN$ to $X$.

``\ref{p:nicecountiii}$\Rightarrow$\ref{p:nicecounti}'':
Given $g$, define
\begin{equation}
h\colon X\to\NN\colon x\mapsto \min\menge{\nnn}{g(n)=x}.
\end{equation}
Then $h$ is an injection from $X$ to $\NN$ and hence
it is a bijection from $X$ to $h(X)$. If $X$ is finite, it is clearly
countable. If $X$ is infinite, then $h(X)$ is an infinite
subset of $\NN$ and thus countably infinite by
Proposition~\ref{p:countinfinite}.
\end{proof}

\begin{corollary}
\label{c:131126a}
Let $X$ be a nonempty set. 
Then $X$ is countable if and only if
there exists a sequence $(x_n)_\nnn$ such that
$\{x_n\}_\nnn = X$. 
\end{corollary}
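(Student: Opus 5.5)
The plan is to deduce both directions from Proposition~\ref{p:nicecount}, specifically from the equivalence ``\ref{p:nicecounti}$\Leftrightarrow$\ref{p:nicecountiii}''. The key observation is that a sequence $(x_n)_\nnn$ in $X$ is, by definition, nothing but a function $g\colon\NN\to X\colon n\mapsto x_n$. Its set of terms $\{x_n\}_\nnn$ is exactly the range $\ran g = g(\NN)$. So the statement ``$\{x_n\}_\nnn = X$'' says precisely that $g$ is surjective onto $X$, and the corollary is a restatement of Proposition~\ref{p:nicecount} in the language of sequences.

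For ``$\Rightarrow$'', assume $X$ is countable (and nonempty, by hypothesis). Proposition~\ref{p:nicecount}, ``\ref{p:nicecounti}$\Rightarrow$\ref{p:nicecountiii}'', gives a surjection $g\colon\NN\to X$. I would set $x_n := g(n)$ for every $\nnn$. Then each $x_n\in X$, and surjectivity gives $X = g(\NN)=\{x_n\}_\nnn$.

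For ``$\Leftarrow$'', suppose a sequence $(x_n)_\nnn$ satisfies $\{x_n\}_\nnn = X$. I would define $g\colon\NN\to X$ by $g(n):=x_n$. This is well defined with codomain $X$, since every $x_n$ lies in $\{x_n\}_\nnn = X$. It is surjective, since $\ran g = \{x_n\}_\nnn = X$. Proposition~\ref{p:nicecount}, ``\ref{p:nicecountiii}$\Rightarrow$\ref{p:nicecounti}'', then shows that $X$ is countable.

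There is no genuine obstacle here. The only point needing care is the nonemptiness hypothesis, which is required to invoke Proposition~\ref{p:nicecount}, and the identification of a sequence with a function on $\NN$ whose range is its set of terms. Repetitions among the $x_n$ are allowed and cause no harm, because only surjectivity is needed and not injectivity.
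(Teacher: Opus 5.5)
Your proposal is correct and matches the paper's intended argument. The paper states this corollary without a separate proof, immediately after Proposition~\ref{p:nicecount}, as a direct rephrasing of the equivalence \ref{p:nicecounti}$\Leftrightarrow$\ref{p:nicecountiii}, with a sequence read as a map $\NN\to X$ whose range is $\{x_n\}_{n\in\NN}$ — exactly as you do, including the correct handling of nonemptiness and repetitions.
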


\begin{theorem}
\label{t:countunion}
For each $\nnn$, suppose that $S_n$ is a countable set.
Then 
\begin{equation}
\bigcup_{\nnn} S_n
\end{equation}
is countable as well; in other words, the countable union of countable sets
is countable.
\end{theorem}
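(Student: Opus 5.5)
We prove that $\bigcup_\nnn S_n$ is countable by producing a surjection onto it from $\NN$ and then invoking Proposition~\ref{p:nicecount}. Empty sets cause the only technicality, so we handle them first.

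Set $U := \bigcup_\nnn S_n$. If every $S_n$ is empty, then $U=\emp$ is countable by definition. Otherwise we simply discard the empty $S_n$, since they do not change $U$. If only finitely many indices remain, we re-index by repeating one of the nonempty sets infinitely often. This does not change $U$, so we may assume every $S_n$ is nonempty.

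Since each $S_n$ is nonempty and countable, Proposition~\ref{p:nicecount} yields a surjection $g_n\colon\NN\to S_n$ for every $\nnn$. Combining these gives the map
\begin{equation*}
G\colon \NN\times\NN\to U\colon (n,m)\mapsto g_n(m),
\end{equation*}
which is surjective. Indeed, each $u\in U$ lies in some $S_n$, so $u=g_n(m)$ for some $m$. It remains to find a surjection $\NN\to\NN\times\NN$ and compose it with $G$.

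This last step is the main obstacle: proving $\NN\times\NN$ countable. I would use the injection $\NN\times\NN\to\NN\colon (n,m)\mapsto 2^n3^m$. Injectivity follows from a short argument. Suppose $2^n3^m=2^{n'}3^{m'}$ with, say, $n\geq n'$. Cancelling $2^{n'}$ gives $2^{n-n'}3^m=3^{m'}$. The right-hand side is odd, so $n=n'$. Then $3^m=3^{m'}$, and since $3^m<3^{m'}$ whenever $m<m'$, we get $m=m'$.

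Proposition~\ref{p:nicecount} then gives a surjection $h\colon\NN\to\NN\times\NN$, because $\NN\times\NN$ is nonempty. The composite $G\circ h\colon\NN\to U$ is a surjection: it is a composite of surjections, as in the argument of Proposition~\ref{p:quinlan}\ref{p:quinlan:ii}. Since $U$ is nonempty, Proposition~\ref{p:nicecount} shows that $U$ is countable.
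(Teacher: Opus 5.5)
Your proof is correct, but it is organized differently from the paper's.

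The paper writes each (nonempty) $S_n$ as $\{x_{n,k}\}_{k\in\NN}$ using Corollary~\ref{c:131126a}. It arranges the $x_{n,k}$ in an infinite array and runs through the array along anti-diagonals in a zigzag. This produces a single sequence $(y_n)_\nnn$ with $\{y_n\}_\nnn=\bigcup_\nnn S_n$, and a second application of Corollary~\ref{c:131126a} finishes. The zigzag is in effect an explicit surjection $\NN\to\NN\times\NN$, specified by a picture rather than a formula.

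You instead split the argument into two parts: the surjection $G\colon\NN\times\NN\to U$, and a separate lemma that $\NN\times\NN$ is countable. You prove the lemma with the injection $(n,m)\mapsto 2^n3^m$. You then convert it into a surjection through the implication (ii)$\Rightarrow$(iii) of Proposition~\ref{p:nicecount}, so you never have to write down an enumeration explicitly.

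Your version gains rigor and modularity. The injectivity check is a short, self-contained parity argument, whereas the paper's arrows would need something like the Cantor pairing function to be made precise. The lemma on $\NN\times\NN$ is also reusable, e.g.\ for $\QQ$. You also handle empty $S_n$ explicitly, which the paper passes over even though Corollary~\ref{c:131126a} is stated only for nonempty sets.

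The paper's version is more concrete and visual. It shows exactly in which order the elements of the union are listed, and it uses only the sequence characterization of countability, with no number theory.
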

\begin{proof}
For each $\nnn$, there exists a sequence $(x_{n,k})_{k\in\NN}$ such
that $S_n = \{x_{n,k}\}_{k\in\NN}$. 
Now write
\begin{equation*}
\begin{matrix}
S_0: & x_{0,0} &\to & x_{0,1} & & x_{0,2} & \to & x_{0,3} & \cdots \\
 & &\swarrow  &  &\nearrow &  & \swarrow &  & \nearrow\\
S_1: & x_{1,0} &    & x_{1,1} & & x_{1,2} &     & x_{1,3} & \cdots\\
 & \downarrow & \nearrow   &  & \swarrow &  & \nearrow  &  & \\
S_2: & x_{2,0} &    & x_{2,1} & & x_{2,2} &     & x_{2,3} & \\
 & &\swarrow  &  &\nearrow &  &  &  &  \\
S_3: & x_{3,0} &    & x_{3,1} & & x_{3,2} &     & x_{3,3} & \\
 & \downarrow & \nearrow   &  &  &  &     &  & \\
S_4: & x_{4,0} &    & x_{4,1} & & x_{4,2} &     & x_{4,3} & \cdots\\
\vdots &  &   &  &  &  &     &  & \\
\end{matrix}
\end{equation*}
The arrows generate a new sequence $$(y_n)_\nnn 
= (x_{0,0},x_{0,1},x_{1,0},x_{2,0},x_{1,1},x_{0,2},x_{0,3},\ldots)$$
such that $\{y_n\}_\nnn = \bigcup_{\nnn} S_n$. 
Now apply Corollary~\ref{c:131126a}. 
\end{proof}

\section{Cardinality of Subsets of Real Numbers}

\begin{example}
$\QQ$ is countable.
\end{example}
\begin{proof}
Indeed, for every integer $n\geq 1$,
$A_n := \menge{k/n}{k\in\NN}$ and 
$B_n := \menge{-k/n}{k\in\NN}$ are countable,
hence so is $A_n\cup B_n = \menge{k/n}{k\in\ZZ} = \frac{1}{n}\ZZ$.
It follows that
\begin{equation}
\QQ = \bigcup_{n\in\{1,2,3,\ldots\}} \tfrac{1}{n}\ZZ
\end{equation}
is countable by Theorem~\ref{t:countunion}.
\end{proof}

\begin{example}
The open interval $\left]0,1\right[$ is uncountable.
\end{example}
\begin{proof}
Suppose to the contrary that  $\left]0,1\right[$ is countable, say
by a sequence of real numbers $(x_n)_{n\geq 1}$ such that
$\{x_n\}_{n\geq 1} =  \left]0,1\right[$. 
We use the decimal expansion to write
\begin{subequations}
\begin{align}
x_1 &= 0.a_{1,1}a_{1,2}a_{1,3}\cdots\\
x_2 &= 0.a_{2,1}a_{2,2}a_{2,3}\cdots\\
x_3 &= 0.a_{3,1}a_{3,2}a_{3,3}\cdots\\
\vdots &
\end{align}
\end{subequations}
Now define a sequence of digits by
\begin{equation}
b_k := \begin{cases}
4, &\text{if $a_{k,k}\neq 4$;}\\
6, &\text{if $a_{k,k}=4$,}
\end{cases}
\end{equation}
and then set
\begin{equation}
y := 0.b_1b_2b_3\cdots.
\end{equation}
Note that for every $k\geq 1$, we have
$b_k\neq a_{k,k}$; thus, $y$ is not in the set
$\{x_n\}_{n\geq 1}$ although clearly $y\in\left]0,1\right[$ 
--- contradiction!
\end{proof}

\begin{corollary}
Any nonempty open interval $\left]\alpha,\beta\right[$ is uncountable,
as is $\RR$.
\end{corollary}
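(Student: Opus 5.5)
The plan is to reduce everything to the interval $\left]0,1\right[$, which the preceding example shows is uncountable, by exhibiting explicit bijections and then invoking the equivalence-relation behaviour of $\approx$ (Theorem~\ref{t:cardeqrel}). The key auxiliary fact is that countability is invariant under bijection: if $A\approx B$ and $B$ is countable, then $A$ is countable. I would prove this first via Proposition~\ref{p:nicecount}. If $h\colon A\to B$ is a bijection and $g\colon \NN\to B$ is a surjection, then $h^{-1}\circ g\colon \NN\to A$ is a surjection, by Theorem~\ref{t:invinv} and Proposition~\ref{p:quinlan}\ref{p:quinlan:ii}. Consequently, if $A\approx B$ and $A$ is uncountable, then $B$ is uncountable.

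For the interval $\left]\alpha,\beta\right[$ with $\alpha<\beta$, I would use the affine map
\begin{equation*}
\varphi\colon \left]0,1\right[\to\left]\alpha,\beta\right[\colon t\mapsto \alpha+(\beta-\alpha)t.
\end{equation*}
Its candidate inverse is $s\mapsto (s-\alpha)/(\beta-\alpha)$. Checking that both maps land in the stated intervals uses only Proposition~\ref{p:ofield}\ref{p:ofieldiii}\&\ref{p:ofieldv} together with $\beta-\alpha>0$. Since the two maps compose to the identity in both orders, $\varphi$ is a bijection. Hence $\left]0,1\right[\approx\left]\alpha,\beta\right[$, and the latter is uncountable.

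For $\RR$, the route avoiding an explicit bijection is to argue by contradiction. Suppose $\RR$ were countable, so by Proposition~\ref{p:nicecount} there is an injection $f\colon\RR\to\NN$. Its restriction to $\left]0,1\right[$ is still an injection into $\NN$, so $\left]0,1\right[$ would be countable by the same proposition, which is a contradiction. This subset argument works for any set containing an uncountable subset, so it needs no further computation.

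The main obstacle is the injectivity and surjectivity bookkeeping for $\varphi$, which is routine but must be done explicitly with the order axioms. As a backup for $\RR$, one could give the explicit bijection $\left]-1,1\right[\to\RR\colon x\mapsto x/(1-|x|)$ with inverse $y\mapsto y/(1+|y|)$. I would present the restriction argument instead because it is shorter.
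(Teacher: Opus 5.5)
Your proposal is correct and follows the paper's proof: the affine map between $\left]0,1\right[$ and $\left]\alpha,\beta\right[$ handles the interval, and a subset argument handles $\RR$. The only difference is that you get ``a subset of a countable set is countable'' by restricting the injection from Proposition~\ref{p:nicecount}, whereas the paper cites Proposition~\ref{p:countinfinite}; your version has the small bonus that you never need to rule out a finite $\RR$ separately.
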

\begin{proof}
Indeed, $\left]\alpha,\beta\right[\approx \left]0,1\right[$ via
$x\mapsto (x-\alpha)/(\beta-\alpha)$.
And if $\RR$ were countable, then so would be any infinite subset (see
Proposition~\ref{p:countinfinite}) and in particular
nonempty open intervals, which is absurd.
\end{proof}

\begin{corollary}
The set of irrational numbers $\RR\smallsetminus\QQ$ is uncountable.
\end{corollary}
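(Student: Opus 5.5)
I will argue by contradiction, using the decomposition $\RR = \QQ\cup(\RR\smallsetminus\QQ)$. Suppose that $\RR\smallsetminus\QQ$ were countable. By the preceding example, $\QQ$ is countable. I then want to conclude that $\RR$ is countable, which contradicts the previous corollary asserting that $\RR$ is uncountable.

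To pass from the two countable pieces to their union, I will invoke Theorem~\ref{t:countunion}. Set $S_0 := \QQ$, $S_1 := \RR\smallsetminus\QQ$, and $S_n := \QQ$ for every $n\geq 2$. Each $S_n$ is countable, so
\begin{equation*}
\bigcup_{\nnn} S_n = \QQ\cup(\RR\smallsetminus\QQ) = \RR
\end{equation*}
is countable. The equality of sets here is immediate: every real number either is rational or is not.

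Alternatively, one can avoid the padding trick by using Corollary~\ref{c:131126a} directly. Note first that $\RR\smallsetminus\QQ$ is nonempty, since $\sqrt{2}$ lies in it by Theorem~\ref{t:squareroot2} and Theorem~\ref{t:squareroot}. Then take sequences $(q_n)_\nnn$ and $(r_n)_\nnn$ enumerating $\QQ$ and $\RR\smallsetminus\QQ$, respectively. Interleaving them as $(q_0,r_0,q_1,r_1,\ldots)$ gives a sequence whose set of terms is $\RR$, so $\RR$ would be countable.

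There is no real obstacle. The only subtle point is that Theorem~\ref{t:countunion} and Corollary~\ref{c:131126a} need their sets to be countable, and in the second approach nonempty. The padding choice $S_n := \QQ$ handles the first requirement, and the irrationality of $\sqrt{2}$ handles the second.
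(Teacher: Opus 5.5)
Your proof is correct and is the same argument as the paper's: if $\RR\smallsetminus\QQ$ were countable, then $\RR=\QQ\cup(\RR\smallsetminus\QQ)$ would be countable too, contradicting the uncountability of $\RR$. The paper simply asserts that this union of two countable sets is countable, whereas you justify that step explicitly, either via Theorem~\ref{t:countunion} with padding or via Corollary~\ref{c:131126a} by interleaving, and that extra rigor is welcome.
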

\begin{proof}
Recall that $\QQ$ is countable. 
Now if $\RR\smallsetminus\QQ$ were countable, so would
be $\RR = \QQ \cup (\RR\smallsetminus\QQ)$, which is absurd.
\end{proof}

\section{Cantor's Theorem}

\begin{theorem}[Cantor]
\label{t:cantor}
Let $X$ be a nonempty set and denote its power set by $\mathcal{P}(X)$.
Then $|X|<|\mathcal{P}(X)|$ in the sense that there exists an
injection from $X$ to $\mathcal{P}(X)$ but there is no bijection
from $X$ to $\mathcal{P}(X)$.\index{Cantor's Theorem}
\end{theorem}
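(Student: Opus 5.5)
The statement has two parts, and I will prove them separately. First I exhibit an injection from $X$ to $\mathcal{P}(X)$. Then I show by contradiction that no bijection exists. In fact I will show the stronger claim that no surjection $f\colon X\to\mathcal{P}(X)$ exists, which of course rules out a bijection.

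For the injection, I take
\begin{equation*}
\iota\colon X\to\mathcal{P}(X)\colon x\mapsto\{x\}.
\end{equation*}
This is well defined, since $\{x\}\subseteq X$. Suppose $\{x_1\}=\{x_2\}$. Then $x_1\in\{x_2\}$, so $x_1=x_2$. Hence $\iota$ is injective. This settles the first half.

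For the second half, I suppose to the contrary that $f\colon X\to\mathcal{P}(X)$ is a bijection, and in particular a surjection. I then form the diagonal set
\begin{equation*}
D := \menge{x\in X}{x\notin f(x)}\subseteq X,
\end{equation*}
which is the same idea as in Russell's Paradox (Remark~\ref{r:Russell}) and in the diagonal argument showing that $\left]0,1\right[$ is uncountable. Since $D\in\mathcal{P}(X)$ and $f$ is surjective, there is some $d\in X$ with $f(d)=D$. I then argue by cases, as in Remark~\ref{r:Russell}. In \emph{Case~1}, $d\in D$; by the definition of $D$ this gives $d\notin f(d)=D$, which is absurd. In \emph{Case~2}, $d\notin D=f(d)$; by the definition of $D$ this gives $d\in D$, which is again absurd. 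Both cases lead to a contradiction, so no such $f$ exists.

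I expect no real obstacle here. The only point needing care is that $D$ is a legitimate set, unlike Russell's $R$. This holds because $D$ is carved out of the given set $X$ by a condition, so it is a genuine element of $\mathcal{P}(X)$, and the surjectivity of $f$ may be applied to it. I will also remark that nonemptiness of $X$ is not actually needed for the argument, but it matches the hypothesis as stated.
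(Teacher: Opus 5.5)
Your proof is correct and follows the paper's argument essentially verbatim: the injection $x\mapsto\{x\}$, the diagonal set $\menge{x\in X}{x\notin f(x)}$, a preimage $d$ obtained from the assumed surjectivity of $f$, and the two-case contradiction. Your explicit verification that $x\mapsto\{x\}$ is injective, which the paper calls clear, and your observation that only surjectivity is used are small but welcome additions.
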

\begin{proof}
It is clear that there is an injection from $X$ to $\mathcal{P}(X)$,
namely
\begin{equation}
X\to \mathcal{P}(X)\colon x\mapsto \{x\}.
\end{equation}
To show that there is no bijection from $X$ to $\mathcal{P}(X)$, 
we assume to the contrary that 
\begin{equation}
f\colon X\to\mathcal{P}(X) \text{~ is bijective.}
\end{equation}
Because $f(x)\subseteq X$, it may or may not be that $x\in f(x)$ for $x\in X$.
Now set 
\begin{equation}
Y := \menge{x\in X}{x\notin f(x)}.
\end{equation}
Since $f$ is surjective, there exists $y\in X$ such that
\begin{equation}
f(y) = Y.
\end{equation}
Now if $y\in Y$, we have, by definition of $Y$, $y\notin f(y)=Y$,
which is absurd.
But, if $y\notin Y$, then $y\notin f(y)$ and thus $y$ should belong to
$Y$, which is equally absurd.
\end{proof}

\section{Zorn's Lemma}

\begin{definition}[partially and totally ordered sets]
Let $A$ be a nonempty set and let $\preccurlyeq$ be a 
relation on $A$ (review Section~\ref{sec:psar} if needed). 
Consider the following conditions:
\begin{dingautolist}{192}
\item 
\label{order:eins}
$(\forall a \in A)\quad a\preccurlyeq a$.
\item 
\label{order:zwei}
$(\forall a\in A)(\forall b \in A)(\forall c \in A)\quad
\left[\,a\preccurlyeq b\;\;\text{and}\;\;b\preccurlyeq c\,\right]
\;\;\Rightarrow\;\;a \preccurlyeq c$.
\item 
\label{order:vier}
$(\forall a\in A)(\forall b \in A)\quad 
\left[\,a\preccurlyeq b\;\;\text{and}\;\;b\preccurlyeq a\,\right]
\;\;\Rightarrow\;\;a=b$.
\item 
\label{order:funf}
$(\forall a\in A)(\forall b \in A)\quad 
a\preccurlyeq b\;\;\text{or}\;\;b\preccurlyeq a$.
\end{dingautolist}

We say that  $(A,\preccurlyeq)$ is:
\begin{enumerate}
\item 
a \textbf{partially ordered set}\index{partially ordered set} if
 \ref{order:eins}, \ref{order:zwei}, and \ref{order:vier} hold;
\item 
\textbf{totally ordered set}\index{totally ordered set} if
it is partially ordered and 
\ref{order:funf} holds, i.e., \ref{order:eins}--\ref{order:funf}
hold. 
\end{enumerate}
\end{definition}

For instance, nonempty subsets of $\RR$ are totally ordered via
$\leq$.
Also, for any set $X$, the power set $\mathcal{P}(X)$ is
partially ordered via $\subseteq$. 

\begin{definition}
Let $(A,\preccurlyeq)$ be a partially ordered set, let
$B\subseteq A$, and let $a\in A$. 
Then $B$ is a \textbf{chain}\index{chain} if $(B,\preccurlyeq)$ is
totally ordered. 
The element $a\in A$ is an \textbf{upper bound}\index{upper bound} of $B$ if
$(\forall b\in B)$ $b \preccurlyeq a$. 
Finally, $a$ is a \textbf{maximal element}\index{maximal element} of $A$
if $(\forall c \in A)$ $a\preccurlyeq c$ $\Rightarrow$ $c=a$. 
\end{definition}

The following fact can be viewed as a fundamental axiom in set
theory. 

\begin{fact}[Zorn's Lemma] 
\label{f:Zorn}
\index{Zorn's lemma}
Let $A$ be a partially ordered set such that every chain in $A$ 
has an upper bound. Then $A$ contains a maximal element.
\end{fact}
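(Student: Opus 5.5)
Since the notes introduce Zorn's Lemma as a \emph{fact} that ``can be viewed as a fundamental axiom'', a proof has to start from some other axiom. I would derive it from the Axiom of Choice, by contradiction. The idea is to build a chain that is ``too long'' by repeatedly choosing strict upper bounds, which is the classical Zermelo--Kneser ``tower'' argument.

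\emph{Step 1 (set-up).} Suppose $(A,\preccurlyeq)$ satisfies the hypothesis but has no maximal element. Write $a\prec b$ for [$a\preccurlyeq b$ and $a\neq b$]. Let $C\subseteq A$ be a chain and let $u$ be an upper bound of $C$. Since $u$ is not maximal, there is $v\in A$ with $u\prec v$. By \ref{order:zwei} and \ref{order:vier}, $v$ is then a \emph{strict} upper bound of $C$, i.e.\ $c\prec v$ for every $c\in C$. So for every chain $C$ (including $C=\varnothing$) the set $S(C)$ of strict upper bounds of $C$ is nonempty. By the Axiom of Choice I fix a function $g$ with $g(C)\in S(C)$ for every chain $C$.

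\emph{Step 2 (conforming sets).} I call $W\subseteq A$ \emph{conforming} if it is well ordered by $\preccurlyeq$ and, for every $x\in W$, we have $x=g(W_{<x})$. Here ``well ordered'' means $W$ is a chain in which every nonempty subset has a least element, and $W_{<x}:=\menge{w\in W}{w\prec x}$. For example, $\varnothing$, $\{g(\varnothing)\}$ and $\{g(\varnothing),g(\{g(\varnothing)\})\}$ are conforming. The key lemma is that of any two conforming sets $W,V$, one is an initial segment of the other; in particular their union is a chain.

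\emph{Step 3 (the main obstacle: the comparison lemma).} I expect this to be the hardest step. I would let $D$ be the set of $x\in W\cap V$ with $W_{\preccurlyeq x}=V_{\preccurlyeq x}$, and let $I$ be the union of all these common initial segments. Then $I$ is an initial segment of both $W$ and $V$. Assume $I\neq W$ and $I\neq V$. Let $x$ be the least element of $W\smallsetminus I$ and $y$ the least element of $V\smallsetminus I$, which exist by well-ordering. Then $W_{<x}=I=V_{<y}$, so the conforming property gives $x=g(I)=y$. Hence $x\in D$, so $x\in I$, a contradiction. The delicate bookkeeping is checking that $W_{<x}=I$ really holds, i.e.\ that $I$ is an initial segment; this uses transitivity \ref{order:zwei} and antisymmetry \ref{order:vier}.

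\emph{Step 4 (conclusion).} Let $U$ be the union of all conforming sets. Using Step~3, I would verify three things. First, $U$ is a chain. Second, $U$ is well ordered: a least element of a nonempty subset is found inside any single conforming set meeting it. Third, $U$ is conforming, because each conforming $W$ is an initial segment of $U$, so $U_{<x}=W_{<x}$ for $x\in W$. Then $U\cup\{g(U)\}$ is also conforming, since $g(U)$ is a strict upper bound of $U$. By definition of $U$ this forces $g(U)\in U$, and hence $g(U)\prec g(U)$, which is absurd. Therefore $A$ has a maximal element.
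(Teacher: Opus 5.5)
Your proof is correct, but the notes give no proof to follow. They postulate Zorn's Lemma as an axiom and then argue in the opposite direction: they derive the Axiom of Choice, and the existence of a basis in Theorem~\ref{t:Hamel}, from it, and only remark that the two are equivalent. You instead take the Axiom of Choice as the primitive axiom and derive Zorn by the well-ordered ``conforming set'' argument. Steps~1--4 all go through:
\begin{itemize}
\item $I$ is downward closed in both $W$ and $V$ by transitivity.
\item $W_{<x}=I$ follows from that downward closedness together with totality of $W$.
\item The chain, well-ordering and conforming properties of $U$ all follow once every conforming set is an initial segment of $U$.
\end{itemize}
The notes' route buys brevity: the applications come before any well-orderings, initial segments or transfinite bookkeeping are needed. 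Your route rests on the more intuitive axiom and uses choice exactly once. Combined with the notes' derivation of choice from Zorn, it actually proves the equivalence that the closing remark only asserts.

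There are two cautions. First, within the notes the Axiom of Choice is a \emph{theorem proved from Zorn's Lemma}, so citing it would be circular. You must adopt it as your axiom. The notes' product form is enough: with $C$ ranging over the chains of $A$, an element of $\prod_{C} S(C)$ is precisely your function $g$.

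Second, in the notes partially and totally ordered sets are nonempty by definition. So it is not clear that $\varnothing$ counts as a chain, and the hypothesis may not hand you an upper bound of $\varnothing$. You do need $g(\varnothing)$, because $W_{<x}=\varnothing$ for the least element $x$ of any nonempty conforming $W$. This is harmless: $A\neq\varnothing$ by definition, and every element of $A$ is a strict upper bound of $\varnothing$, so $S(\varnothing)=A$. You should say this explicitly rather than appeal to the hypothesis.
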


\begin{theorem}[Axiom of Choice]
Let $I$ be a nonempty set,
and let, for every $i\in I$, $X_i$ be a nonempty set.
Then the Cartesian product $\prod_{i\in I} X_i$ is nonempty.
\end{theorem}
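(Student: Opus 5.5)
We derive the Axiom of Choice from Zorn's Lemma (Fact~\ref{f:Zorn}). It suffices to produce a \emph{choice function}, i.e., a function $c\colon I\to\bigcup_{i\in I}X_i$ with $c(i)\in X_i$ for every $i\in I$, since such a $c$ is exactly an element of $\prod_{i\in I}X_i$.

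To this end, let $A$ be the set of all \emph{partial choice functions}, i.e., pairs $(J,g)$ where $J\subseteq I$ and $g\colon J\to\bigcup_{i\in I}X_i$ satisfies $g(j)\in X_j$ for all $j\in J$. Equivalently, identify $g$ with its graph, a subset of $I\times\bigcup_i X_i$.

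\textbf{Step 1: $A$ is a nonempty partially ordered set.} We order $A$ by extension: $(J_1,g_1)\preccurlyeq(J_2,g_2)$ if $J_1\subseteq J_2$ and $g_2(j)=g_1(j)$ for all $j\in J_1$. In graph language this is just inclusion $\gr g_1\subseteq\gr g_2$, so \ref{order:eins}, \ref{order:zwei} and \ref{order:vier} are inherited from $\subseteq$ on a power set. For nonemptiness, pick some $i_0\in I$ (possible since $I\neq\emp$) and an element $x_0\in X_{i_0}$; then $(\{i_0\},\,i_0\mapsto x_0)\in A$. This is a single choice, so it needs no choice principle. Alternatively, we may allow $J=\emp$ with the empty function.

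\textbf{Step 2: every chain has an upper bound.} Given a chain $B\subseteq A$, let $J^*:=\bigcup_{(J,g)\in B}J$ and define $g^*(j):=g(j)$ for any $(J,g)\in B$ with $j\in J$. The key point is that $g^*$ is well defined. If $j\in J_1\cap J_2$, then since $B$ is totally ordered, one of the two elements extends the other, and so $g_1(j)=g_2(j)$. In graph terms, $\gr g^*=\bigcup_{B}\gr g$, and the vertical line test holds by the same comparability argument. Clearly $g^*(j)\in X_j$ for each $j\in J^*$, so $(J^*,g^*)\in A$ and it bounds $B$ from above. If $B$ is empty, the element from Step~1 serves as an upper bound. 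This well-definedness check is the only genuinely delicate step.

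\textbf{Step 3: a maximal element is total.} By Zorn's Lemma, $A$ has a maximal element $(J,g)$. Suppose $J\neq I$. Pick $i_1\in I\smallsetminus J$ and some $x_1\in X_{i_1}$, which is nonempty by hypothesis. Extending $g$ by $i_1\mapsto x_1$ gives a strictly larger element of $A$, contradicting maximality. Hence $J=I$, and $g$ is a choice function, so $\prod_{i\in I}X_i\neq\emp$.
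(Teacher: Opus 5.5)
Your proposal is correct and follows essentially the same route as the paper: partial choice functions, viewed as graphs in $I\times\bigcup_{i\in I}X_i$ and ordered by inclusion; the union of a chain serves as an upper bound; and a maximal element given by Zorn's Lemma must have domain all of $I$, since otherwise it could be extended by one more pair $(i,x_i)$. Your explicit check that the union of a chain is again single-valued, using comparability of its members, fills in a step the paper only asserts.
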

\begin{proof}
Set $X := \bigcup_{i\in I} X_i$, 
define a subset $A$ of the power set of $I\times X$ by
requiring that if $f\in A$ and $\{(i,x),(i,y)\}\subseteq f$,
then $x=y\in X_i$. We identify such $f\in A$ as the graph a function 
also called $f$ such that $f(i)\in X_i$ for every $i$ in the domain of $f$. 
Now order $A$ partially with ``$\subseteq$''.
Since $\varnothing\in A$, we have $A\neq\varnothing$. 
Let $B$ be a chain in $A$. 
Then $\bigcup_{f\in B} f$ is an upper bound of $B$ in $A$. 
By Zorn's Lemma (Fact~\ref{f:Zorn}), 
$A$ contains a maximal element, say $g$. 
We claim that the domain of $g$, call it $J$, is $I$. 
Suppose not. Then there exists $i \in I\smallsetminus J$.
Consider $h := g \cup \{(i,x_i)\}$, where $x_i\in X_i$ (recall
that $X_i$ is nonempty). 
Then $h\in A$ and $g \subsetneqq h$. This contradicts the
maximality of $g$.
It follows that the domain of $g$ is $I$, i.e., $g$ can be viewed
as a function such that $(g(i))_{i\in I}\in \prod_{i\in I} X_i$. 
\end{proof}

The next result requires some knowledge of basic linear algebra.

\begin{theorem}[Existence of a basis]
\label{t:Hamel}
Let $V$ be a vector space.
Then $V$ admits a basis. 
\end{theorem}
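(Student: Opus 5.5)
We will apply Zorn's Lemma (Fact~\ref{f:Zorn}) in the same spirit as the proof of the Axiom of Choice above. If $V=\{0\}$, the empty set is a basis, so we assume $V\neq\{0\}$. Let $A$ be the collection of all linearly independent subsets of $V$, partially ordered by $\subseteq$. Recall that a set $S\subseteq V$ is linearly independent if every \emph{finite} subset of $S$ is linearly independent in the usual sense. Then $A\neq\varnothing$, since $\varnothing\in A$ (or $\{v\}\in A$ for any $v\neq 0$). Moreover $\subseteq$ satisfies conditions \ref{order:eins}, \ref{order:zwei} and \ref{order:vier}, so $(A,\subseteq)$ is a partially ordered set.

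\textbf{Chains have upper bounds.} Let $B$ be a chain in $A$. We propose $U:=\bigcup_{S\in B} S$ as an upper bound. Clearly $S\subseteq U$ for each $S\in B$, so the only thing to check is that $U\in A$. This is the key step, and it is where the finiteness in the definition of linear independence matters.

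Take finitely many distinct vectors $u_1,\ldots,u_m\in U$ and scalars with $\sum_{i=1}^m c_iu_i=0$. Each $u_i$ lies in some $S_i\in B$. Since $B$ is totally ordered, a straightforward induction on $m$ shows that the finite family $S_1,\ldots,S_m$ has a largest member, say $S_j$. Then all $u_i$ lie in $S_j$, which is linearly independent, so every $c_i=0$. Hence $U$ is linearly independent, and if $B=\varnothing$ we simply use $\varnothing$ as the upper bound. Zorn's Lemma therefore yields a maximal element $M\in A$.

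\textbf{Maximality gives spanning.} Suppose $v\in V$ is not in the span of $M$. Then $M\cup\{v\}$ is linearly independent. Indeed, in any vanishing finite combination, the coefficient of $v$ must be $0$, since otherwise $v$ could be solved for as a combination of vectors of $M$. The remaining coefficients then vanish by the independence of $M$. Thus $M\subsetneqq M\cup\{v\}\in A$, contradicting maximality. Hence $M$ spans $V$, and $M$ is a basis.

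The main obstacle is the chain step, namely showing that the union of a chain is linearly independent. It hinges on reducing each finite dependence relation to a single member of the chain.
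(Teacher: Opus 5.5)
Your proposal is correct and follows essentially the same route as the paper: you apply Zorn's Lemma to the linearly independent subsets ordered by inclusion, take the union of a chain as its upper bound (reducing any finite dependence relation to the largest of finitely many chain members), and use maximality to force $M$ to span $V$. You are in fact slightly more careful than the paper, since you handle the empty chain and justify explicitly why $M\cup\{v\}$ stays linearly independent.
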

\begin{proof}
If $V=\{0\}$, then $\varnothing$ is a basis.
So assume that $V\neq\{0\}$. 
Now create a subset $A$ of the power set of $V$
by requiring $S\in A$ if $S$ is a linearly independent subset of
$V$. Taking $v\in V\smallsetminus \{0\}$, we see that
$\{v\}\in A$ so that $A\neq\varnothing$.
Partially order $A$ by inclusion. 
Let $B = (S_i)_{i\in I}$ be a chain in $A$. 
Set $S := \bigcup_{i\in I} S_i$. 
We claim that $S$ is linearly independent, i.e., $S\in A$.
Suppose not.
Then there exist vectors $v_1,\ldots,v_n$ in $S$
and scalars $\alpha_1,\ldots,\alpha_n$, not all zero, such that
\begin{equation}
\label{e:260305a}
\alpha_1 v_1+\alpha_2 v_2 + \cdots + \alpha_nv_n = 0.
\end{equation}
Get $i(1),\ldots,i(n)$ such that each $v_j \in S_{i(j)}$. 
Since $B$ is a chain, there exists $m\in\{1,\ldots,n\}$ such that
$\bigcup_{1\leq j\leq n} S_{i(j)} = S_{i(m)}\in A$. 
But then \eqref{e:260305a} is absurd since $S_{i(m)}$ is linearly
independent. 

We have thus shown that $S$ is indeed linearly independent. 
Hence $S\in A$ and $S$ is an upper bound of $B$.
By Zorn's Lemma, there exists a maximal element $M\in A$.
Hence $M$ is linearly independent.

We claim that $M$ is a spanning set.
Suppose not.
Then there exists $v\in V$ such that
$v$ is not in the span of $M$.
But then $M\cup\{v\}$ is not only in $A$ but 
also a proper superset of $M$.
This contradicts the maximality of $M$.
Hence $M$ is indeed a spanning set.

Altogether, $M$ is both linearly independent and a spanning set,
i.e., $M$ is a basis of $V$.
\end{proof}

\begin{remark}
Some comments are in order.
\begin{enumerate}
\item 
Let $V$ denote the vector space of all sequences in $\RR$, with
the usual addition and scalar multiplication.
Then $V$ admits a basis by Theorem~\ref{t:Hamel}.
However, nobody knows an actual concrete basis of $V$.
This type of existence result is typical when
working with Zorn's Lemma.
\item 
Zorn's Lemma can also be used to prove the existence of various
interesting quantities in mathematics including:
nonzero linear functionals in Functional Analysis,
maximal ideals in Algebra, and 
nonmeasurable sets in Measure Theory.
\item Zorn's Lemma and the Axiom of Choice are, in fact,
equivalent. 
\end{enumerate}
\end{remark}

\section*{Exercises}\markright{Exercises}
\addcontentsline{toc}{section}{Exercises}
\setcounter{theorem}{0}

\begin{exercise}
Prove items 
\ref{p:101108:1ii},
\ref{p:101108:1iv},
and \ref{p:101108:1vi}
of Proposition~\ref{p:101108:1}.
\end{exercise}
\begin{solution}
\ref{p:101108:1ii}:
Take $y\in f\big(f^{-1}(B)\big)$.
Then $y= f(x)$, where $x\in f^{-1}(B)$.
Thus, $f(x)\in B$ and therefore $y=f(x)\in B$.
It follows that $f\big(f^{-1}(B)\big)\subseteq B$.

\ref{p:101108:1iv}:
Let $y\in f(A_1\cup A_2)$.
Then $y= f(x)$, where $x\in A_1 \cup A_2$.
If $x\in A_1$, then $y = f(x)\in f(A_1)$.
And if $x\in A_2$, then $y=f(x)\in f(A_2)$.
Thus, $y = f(x) \in f(A_1)\cup f(A_2)$ and therefore
$f(A_1\cup A_2)\subseteq f(A_1)\cup f(A_2)$.

Conversely, since $A_1$ and $A_2$ are both subsets of $A_1\cup A_2$, 
it is clear that $f(A_1)\subseteq f(A_1\cup A_2)$
and that $f(A_2)\subseteq f(A_1\cup A_2)$.
Therefore, $f(A_1)\cup f(A_2)\subseteq f(A_1\cup A_2)$.

\ref{p:101108:1vi}:
Let $x\in X$.
Then 
$x\in f^{-1}(B_1\cup B_2)$
$\Leftrightarrow$
$f(x)\in B_1\cup B_2$
$\Leftrightarrow$
[$f(x)\in B_1$ or $f(x)\in B_2$]
$\Leftrightarrow$
[$x\in f^{-1}(B_1)$ or $x\in f^{-1}(B_2)$]
$\Leftrightarrow$
$x\in f^{-1}(B_1)\cup f^{-1}(B_2)$.
\end{solution}

\begin{exercise} 
Verify Example~\ref{ex:quinny:1}.
\end{exercise}
\begin{solution}
Not injective: 
$f(-1) = (-1)^2 = 1 = 1^2 = f(1)$ but $-1\neq 1$. 

Not surjective:
There is not $x\in\RR$ such that $f(x)=x^2=-1$.
\end{solution}

\begin{exercise} 
Verify Example~\ref{ex:quinny:2}.
\end{exercise}
\begin{solution}
Not surjective:
There is not $x\in\RR$ such that $f(x)=x^2=-1$.

Injective:
Assume without loss of generality that $0\leq x < y$.
Then $0 \leq x^2 < y^2$ and so $f(x)<f(y)$.
So $f$ is strictly increasing, hence injective.
\end{solution}

\begin{exercise} 
Verify Example~\ref{ex:quinny:3}.
\end{exercise}
\begin{solution}
Not injective: 
$f(-1) = (-1)^2 = 1 = 1^2 = f(1)$ but $-1\neq 1$. 

Surjective:
Let $y\geq 0$. 
We have proved that there exists a unique square root
$\sqrt{y}\geq 0$ such that $y=\sqrt{y}^2 = f(\sqrt{y})$. 
Hence $f$ is surjective. 
\end{solution}

\begin{exercise} 
Verify Example~\ref{ex:quinny:4}.
\end{exercise}
\begin{solution}
Injective: 
Assume without loss of generality that $0\leq x < y$.
Then $0 \leq x^2 < y^2$ and so $f(x)<f(y)$.
So $f$ is strictly increasing, hence injective.

Surjective:
Let $y\geq 0$. 
We have proved that there exists a unique square root
$\sqrt{y}\geq 0$ such that $y=\sqrt{y}^2 = f(\sqrt{y})$. 
Hence $f$ is surjective. 
\end{solution}

\begin{exercise}
Prove Proposition~\ref{p:101108:2}\ref{p:101108:2ii}.
\end{exercise}
\begin{solution}
It follows from Proposition~\ref{p:101108:1}\ref{p:101108:1ii} that
$f(f^{-1}(B))\subseteq B$. 

Conversely, assume that $f$ is surjective, and let $y\in B$. 
Then there exists $x\in X$ such that $f(x)=y$. 
Hence $x\in f^{-1}(y)\subseteq f^{-1}(B)$,
and thus $y = f(x) \in f(f^{-1}(B))$.
This shows that $B\subseteq  f(f^{-1}(B))$. 
\end{solution}

\begin{exercise}
Prove Proposition~\ref{p:101108:2}\ref{p:101108:2iii}.
\end{exercise}
\begin{solution}
By Proposition~\ref{p:101108:1}\ref{p:101108:1iii},
$f(A_1\cap A_2)\subseteq f(A_1)\cap f(A_2)$.
Now take $y\in f(A_1)\cap f(A_2)$.
Then there exists $a_1\in A_1$ and $a_2\in A_2$ such
that $f(a_1)=y=f(a_2)$. Since $f$ is injective, it follows that
$a_1=a_2$. Thus $y\in f(A_1\cap A_2)$. 
\end{solution}

\begin{exercise}
Show that the conclusion in each item of Proposition~\ref{p:101108:2}
fails if the hypothesis on $f$ is omitted.
\end{exercise}
\begin{solution}
\ref{p:101108:2i}:
$f\colon \RR\to\left[0,+\infty\right[\colon x\mapsto x^2$
and $A=\{2\}$. Then $f(A) = \{4\}$ and $f^{-1}(f(A)) = \{\pm 2\} \neq
\{2\}$.

\ref{p:101108:2ii}:
$f\colon \RR\to\RR\colon x\mapsto x^2$
and $B=[-4,4]$. Then $f^{-1}(B) = [-2,2]$ and
$f([-2,2]) = [0,4]$. 

\ref{p:101108:2iii}:
$f\colon \RR\to\left[0,+\infty\right[\colon x\mapsto x^2$,
$A_1 = \{-2\}$, and $A_2=\{2\}$.
Then $A_1\cap A_2=\varnothing$ and so $f(A_1\cap A_2)=\varnothing$,
but $f(A_1)\cap f(A_2) = \{4\}$.
\end{solution}

\begin{exercise}
Prove Proposition~\ref{p:quinlan}\ref{p:quinlan:ii}%
\&\ref{p:quinlan:iii}.
\end{exercise}
\begin{solution}
Recall that $g\circ f\colon X\to Z\colon x\mapsto g\big(f(x)\big)$. 

\ref{p:quinlan:ii}:
Suppose $f$ and $g$ are surjective and let $z\in Z$.
On the one hand, since $g$ is surjective, there exists $y\in Y$ such that $g(y)=z$.
On the other hand, since $f$ is surjective, there exists $x\in X$ such that
$f(x) = y$.
Altogether, $(g\circ f)(x) =  g\big(f(x)\big) = g(y) = z$.
Therefore, $g\circ f$ is surjective.

\ref{p:quinlan:iii}:
On the one hand, by \ref{p:quinlan:i}, $g\circ f$ is injective.
On the other hand, by \ref{p:quinlan:ii}, $g\circ f$ is surjective.
Altogether, $g\circ f$ is both injective and surjective, i.e., bijective.
\end{solution}

\begin{exercise}
Show that the set of all polynomials with rational coefficients is
countable. 
\end{exercise}
\begin{solution}
Denote by $P_n$ the set of all polynomials of degree $n$ with rational
coefficients.
It is clear that $P_0 = \QQ$ and that
$P_n = \bigcup_{q\in \QQ} (qx^n + P_{n-1})$.
By induction on $n$, each $P_n$ is countable. 
By Theorem~\ref{t:countunion},
$\bigcup_{\nnn}P_n$ is countable as well.
\end{solution}

\begin{exercise}
Show that for every nonempty subset $X$, we have 
$$\# X < \# \mathcal{P}(X) < \# \mathcal{P}(\mathcal{P}(X)) < \cdots .$$
\end{exercise}
\begin{proof}
This is clear by \emph{Cantor's Theorem} (Theorem~\ref{t:cantor}).
\emph{Comment:} If $X$ is infinite, then Cantor showed us 
that there are infinitely many infinite cardinalities! 
\end{proof}

\enlargethispage{2\baselineskip}

\begin{exercise}
\label{exo:160826c}
Let $f\colon [0,1]\to [0,1]$ be such that 
$(f\circ f)(x) = f(f(x))=1-x$ for every $x\in[0,1]$.
Show that $f$ is bijective. 
\end{exercise}
\begin{solution}
Suppose $f(x)=f(y)$.
Then $1-x=f(f(x))=f(f(y))=1-y$.
Hence $1-x=1-y$ and thus $x=y$.
This shows that $f$ is injective. 

To show that $f$ is surjective, let $y\in[0,1]$
and set $x= f(f(f(y)))$.
Then $f(x) = \big((f\circ f)\circ(f\circ f)\big)(y)
=(f\circ f)(1-y) = 1-(1-y)=y$ as required.

Altogether, we showed that $f$ is injective and surjective,
i.e., bijective.
\end{solution}

\begin{exercise}
\label{exo:160826d}
Show that there is no function $f\colon[0,1]\to[0,1]$ that 
satisfies both 
(i) [either $f$ is strictly increasing or strictly decreasing]
and 
(ii) $(f\circ f)(x) = f(f(x))=1-x$ for every $x\in[0,1]$.
\end{exercise}
\begin{solution}
We argue by contradiction.
Suppose $f$ satisfies both (i) and (ii).
By (ii) and Exercise~\ref{exo:160826c},
$f$ is bijective. 
Since (i) holds, we must have
either $f(0)=0$ (if $f$ is strictly increasing)
or $f(0)=1$ (if $f$ is strictly decreasing).
On the other hand, 
by (ii) and Exercise~\ref{exo:160826b},
$f(0)\notin\{0,1\}$.
Altogether, this is absurd.
\end{solution}

\begin{exercise}
\label{exo:171114}
Show that there is no continuous function $f\colon[0,1]\to[0,1]$ 
such that 
$(f\circ f)(x) = f(f(x))=1-x$ for every $x\in[0,1]$.
\end{exercise}
\begin{solution}
By Exercise~\ref{exo:160826c},
$f$ is bijective.
The \emph{Intermediate Value Theorem} from Calculus/Analysis and a proof by
contradiction shows that $f$ is either strictly increasing or
strictly decreasing. 
Now apply Exercise~\ref{exo:160826d}. 
\end{solution}

\begin{exercise}
In contrast to Exercise~\ref{exo:171114},
there does exist a piecewise continuous function 
$f\colon[0,1]\to[0,1]$ 
such that 
$(f\circ f)(x) = f(f(x))=1-x$ for every $x\in[0,1]$.
Indeed, verify that 
\begin{equation*}
f(x) := \begin{cases}
-x + \tfrac{3}{4} , &\text{if $0\leq x < \tfrac{1}{4}$;}\\
x-\tfrac{1}{4}, &\text{if $\tfrac{1}{4}\leq x < \tfrac{1}{2}$;}\\
\tfrac{1}{2}, &\text{if $x=\tfrac{1}{2}$;}\\
x+\tfrac{1}{4}, &\text{if $\tfrac{1}{2}<x \leq \tfrac{3}{4}$;}\\
-x+\tfrac{5}{4}, &\text{if $\tfrac{3}{4}<x \leq 1$}
\end{cases}
\end{equation*}
does the job. 
\end{exercise}
\begin{solution}
Let $x\in [0,1]$.
Note that $f(f(1/2)) = f(1/2)=1/2=1-1/2$.
We consider the remaining cases.

\emph{Case~1:} $0\leq x<1/4$.\\
Then $f(x) = -x+3/4 \in \left]-1/4,0\right] + 3/4 =
\left]1/2,3/4\right]$ and thus
$f(f(x)) = f(x)+1/4 = (-x+3/4)+1/4 = -x+1$. 

\emph{Case~2:} $1/4\leq x<1/2$.\\
Then $f(x) = x-1/4 \in \left[0,1/4\right[$ 
and thus 
$f(f(x)) = -f(x)+3/4 = -(x-1/4)+3/4 = -x+1$. 

\emph{Case~3:} $1/2< x\leq 3/4$.\\
Then $f(x) = x+1/4 \in \left]3/4,1\right]$ 
and thus 
$f(f(x)) = -f(x)+5/4 = -(x+1/4)+5/4 = -x+1$. 

\emph{Case~4:} $3/4< x\leq 1$.\\
Then $f(x) = -x+5/4 \in \left[-1,-3/4\right[ + 5/4 =
\left[1/4,1/2\right[$ 
and thus 
$f(f(x)) = f(x)-1/4 = (-x+5/4)-1/4 = -x+1$. 

\emph{Remark:}
This function shows up in Quantum Theory.
See Section~17.1 in 
Maria~Luisa Dalla Chiara, Roberto Giuntini, and
Richard Greechie: \emph{Reasoning in Quantum Theory}, Springer. 
\end{solution}

\begin{exercise} 
Let $X$ be the set of all real sequences $(x_n)_{\nnn}$.
Define two operators on $X$, namely
\begin{equation*}
R\colon X\to X\colon (x_0,x_1,x_2,\ldots)
\mapsto (0,x_0,x_1,x_2,\ldots)
\end{equation*}
and
\begin{equation*}
L\colon X\to X\colon (x_0,x_1,x_2,\ldots)
\mapsto (x_1,x_2,x_3,\ldots),
\end{equation*}
which are called the \emph{right shift} and the 
\emph{left shift operator},
respectively.
Prove or disprove:
\begin{enumerate}
    \item $L\circ R = \Id$
    \item $R\circ L = \Id$
\end{enumerate}
\emph{Comment.} These operators are even ``linear''.
The behaviour exhibited by them cannot happen 
in finite-dimensional vector spaces. 
\end{exercise}
\begin{solution}
Write $\mathbf{x} := (x_0,x_1,x_2,\ldots)\in X$.

(i): We have 
\begin{align*}
(L\circ R)(\mathbf{x}) 
&=L(R(\mathbf{x}))
=L\big((0,x_0,x_1,x_2,\ldots)\big)
=(x_0,x_1,\ldots)
=\Id(\mathbf{x})
\end{align*}
and therefore $L\circ R = \Id$.

(ii): This time, 
\begin{align*}
(R\circ L)(\mathbf{x}) 
&=R(L(\mathbf{x}))
=R\big((x_1,x_2,x_3,\ldots)\big)
=(0,x_1,x_2,\ldots)\\
&\neq (x_0,x_1,x_2,\ldots) \qquad\text{[unless $x_0=0$!]}\\
&=\Id(\mathbf{x}).
\end{align*}
So $R\circ L\neq \Id$.
\end{solution}

\begin{exercise} 
Let $f\colon X\to Y$ be a function. 
Show the following:
\begin{enumerate}
\item 
$f$ is injective 
$\Leftrightarrow$
there exists a function $g\colon \ran f \to X$ such that 
$g\circ f = \Id_X$ ($g$ is called a \emph{left inverse} of $f$).
\item 
$f$ is surjective 
$\Leftrightarrow$
there exists a function $h\colon Y\to X$ such that 
$f\circ h = \Id_Y$ ($h$ is called a \emph{right inverse} of $f$).
\end{enumerate}
\end{exercise}
\begin{solution}
(i):
``$\Rightarrow$'':
Suppose that $f$ is injective. 
Let $y\in \ran f$. Then there exists $x\in X$ such that $f(x)=y$.
There is no other element with this property:
indeed, if $x'\in X$ is such that $f(x')=y$,
then $x=x'$ by injectivity of $f$.
Thus we let
\begin{equation*}
    g\colon \ran f \to X \colon y \mapsto 
    \text{the unique $x\in X$ such that $f(x)=y$}.
\end{equation*}
Now let $x\in X$. Then $y := f(x)\in \ran f$.
By definition of $g$, we have $g(y)=x$. 
Altogether, $(g\circ f)(x)=g(f(x))=g(y) = x = \Id_X(x)$.
``$\Leftarrow$'': 
Suppose $g$ is as in the statement. 
Also suppose that $f(x_1)=f(x_2)$.
Then $x_1=\Id_X(x_1)=(g\circ f)(x_1) = (g\circ f)(x_2) 
=\Id_X(x_2)=x_2$ and therefore $f$ is injective.

(ii): 
``$\Rightarrow$'':
Suppose that $f$ is surjective. 
Let $y\in Y$.
Then there exists $x\in X$ such that $f(x)=y$.
Define 
\begin{equation*}
    h\colon Y \to X \colon y \mapsto 
    \text{some $x\in X$ such that $f(x)=y$}.
\end{equation*}
Then clearly $f(h(y))=y$ and so $f\circ h = \Id_Y$. 
``$\Leftarrow$'':
Suppose $h$ is as in the statement.
Let $y\in Y$.
Set $x:= h(y)$.
Then $f(x)=f(h(y)) = (f\circ h)(y)=\Id_Y(y) = y$ and thus 
$f$ is surjective. 
\end{solution}

\begin{exercise} 
Suppose $f\colon A\to B$ and $g\colon C\to D$ are injective.
Prove or disprove:
\begin{equation*}
h\colon A\times C\to B\times D \colon 
(a,c)\mapsto \big(f(a),g(c)\big)
\text{~~is injective.}
\end{equation*}
\end{exercise}
\begin{solution}
The statement is TRUE!
Let $(a_1,c_1)$ and $(a_2,c_2)$ be in $A\times C$ 
such that $h(a_1,c_1)=h(a_2,c_2)$.
Then 
$(f(a_1),g(c_1))=(f(a_2),g(c_2))$.
Comparing the first and the second components gives 
$f(a_1)=f(a_2)$ and $g(c_1)=g(c_2)$, respectively.
Because $f$ and $g$ are injective, it follows that 
$a_1=a_2$ and $c_1=c_2$.
Therefore
$(a_1,c_1)=(a_2,c_2)$ and we are done.
\end{solution}

\begin{exercise}[YOU be the marker!]  
Consider the following statement 
\begin{equation*}
\text{``Every function $f\colon X\to Y$ is injective''}
\end{equation*}
and the following ``proof'':
\begin{quotation}
Let $x_1$ and $x_2$ be in $X$ such that $f(x_1)\neq f(x_2)$.\\
We claim that $x_1\neq x_2$.\\
Indeed, if $x_1=x_2$, then $f(x_1)=f(x_2)$ which is absurd.\\
Hence our claim is true.\\
Therefore, $f$ is injective.
\end{quotation}
Why is this proof wrong?
\end{exercise}
\begin{solution}
All is fine except for the very last line.
All the other lines show that $f(x_1)\neq f(x_2)$
$\Rightarrow$ $x_1\neq x_2$.
But to check injectivity, we need to check 
that $f(x_1)=f(x_2)$ $\Rightarrow$ $x_1=x_2$ 
(or that if $x_1\neq x_2$, then $f(x_1)\neq f(x_2)$).
\end{solution}

\begin{exercise}[TRUE or FALSE?] 
Mark each of the following statements as either true or false. 
Briefly justify your answer.
\begin{enumerate}
\item ``If $f\colon A\to B$ is injective, then $f$ is surjective.''
\item ``If $f\colon A\to B$ is surjective, then $f$ is injective.''
\item ``The set of all \emph{positive} rational numbers is uncountable.''
\item ``If $f\colon A\to B$ is surjective and $g\colon B\to C$ is injective, 
then $g\circ f$ is bijective.''
\end{enumerate}
\end{exercise}
\begin{solution}
(i): FALSE: $f\colon \RP\to\RR\colon x\mapsto x^2$.

(ii): FALSE: $f\colon \RR\to\RP\colon x\mapsto x^2$.

(iii): FALSE: $\QQ$ is countable, and so are all its subsets!

(iv): FALSE: $A=\{1,-1\}$, $C=B=\{1\}$, 
$f(x)=x^2$ and $g(y)=y$.
\end{solution} 
\chapter{Exponential Function and Cauchy Product}

\section{Definition and Absolute Convergence}

\begin{theorem}[exponential series and function]
\label{t:expfunc}
Let $x\in\RR$. 
Then the \textbf{exponential series}
\begin{equation}
\label{e:exp(x)}
\sum_{n=0}^\infty \frac{x^n}{n!} = 1 + x + \frac{x^2}{2} +
\frac{x^3}{6} + \frac{x^4}{24} + \cdots
\end{equation}
is absolutely convergent. The corresponding function
is the \textbf{exponential function}, written $\exp(x)$.
Note that 
\begin{equation}
\label{e:exp(0)=1}
\exp(0) = 1.
\end{equation}
\index{Exponential series}\index{Exponential function}
\end{theorem}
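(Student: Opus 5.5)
We will show absolute convergence of the exponential series by the Ratio Test. The case $x=0$ has to be treated separately, because Theorem~\ref{t:ratiotest} and Corollary~\ref{c:utirat} require all terms to be nonzero. When $x=0$, every term with $n\geq 1$ vanishes and the $n=0$ term is $0^0/0!=1$ (recall $x^0:=1$). The partial sums are then the constant sequence $(1)_\nnn$, so the series of absolute values converges to $1$ by Example~\ref{ex:cseq}. This already gives \eqref{e:exp(0)=1}.

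Now assume $x\neq 0$ and set $a_n := x^n/n!$ for every $\nnn$. Each $a_n$ is nonzero, since $x^n\neq 0$ by Proposition~\ref{p:may26:4}\ref{p:may26:4iii} (or the corresponding induction) and $n!>0$. Using $(n+1)!=(n+1)\cdot n!$ and Proposition~\ref{p:absval}, the ratio is
\begin{equation*}
\left|\frac{a_{n+1}}{a_n}\right| = \frac{|x|^{n+1}\, n!}{(n+1)!\,|x|^n} = \frac{|x|}{n+1}.
\end{equation*}
By \eqref{e:1/n} and the Constant-Multiple Law (Corollary~\ref{c:c-mlaw}), together with the subsequence/shift observation that $\tfrac{1}{n+1}\to 0$ (Proposition~\ref{p:allgo}, or directly from the definition), this ratio tends to $q=0<1$. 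Corollary~\ref{c:utirat}\ref{c:utirat1} then yields absolute convergence.

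Alternatively, one can avoid limits and apply Theorem~\ref{t:ratiotest} directly. Pick $n_0\in\NN$ with $n_0+1\geq 2|x|$, which is possible by Lemma~\ref{l:entier}. Then $|a_{n+1}/a_n|\leq \tfrac12=:\beta$ for all $n\geq n_0$.

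By Theorem~\ref{t:absconvimpliesregconv}, the series also converges, so $\exp(x)$ is a well-defined real number. No step is a genuine obstacle. The only points needing care are the separate handling of $x=0$ (forced by the nonzero hypothesis in the Ratio Test) and the convention $0^0=1$ used to evaluate $\exp(0)$.
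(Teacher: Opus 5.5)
Your proof is correct and follows the paper's argument. You treat $x=0$ separately, where only the constant term $1$ survives (which also gives $\exp(0)=1$), and for $x\neq 0$ you compute $|a_{n+1}/a_n|=|x|/(n+1)\to 0$ and invoke Corollary~\ref{c:utirat}; your alternative of applying Theorem~\ref{t:ratiotest} directly with $\beta=\tfrac12$ once $n_0+1\geq 2|x|$ is a harmless limit-free variant, and it echoes the hypothesis $N\geq 2|x|-2$ of Theorem~\ref{t:expremainder}.
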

\begin{proof}
Clearly, \eqref{e:exp(0)=1} follows from \eqref{e:exp(x)} when $x=0$,
and the corresponding series is certainly absolutely convergent since
all terms --- except for the first one --- are zero.
So we assume that $x\neq 0$.
Then for every integer $n\in\NN$, we have 
\begin{equation}
\left| \frac{x^{n+1}\big/(n+1)!}{x^n\big/n!}\right|
= \frac{|x|^{n+1}}{(n+1)!}\frac{n!}{|x|^n} 
= \frac{|x|}{n+1}
\to 0. 
\end{equation}
The result thus follows from 
the ratio test (Corollary~\ref{c:utirat}). 
\end{proof}

\emph{Euler's number} $e$ is defined as $\exp(1)$, i.e.,
\begin{equation} \label{e:euler}
e := \exp(1) = \sum_{n=0}^\infty \frac{1}{n!} = 1 + 1 + \frac{1}{2} + \frac{1}{6} +
\frac{1}{24} + \cdots\, .
\end{equation}

The next result gives good information on how many terms
we need to sum before we can be assured of a precise answer.
As so often, the geometric series plays a key role.
We will use it afterwards to estimate $e$.

\begin{theorem}
\label{t:expremainder}
Let $x\in\RR$ and let $N\in\NN$ be such that $N\geq 2|x|-2$.
Then
\begin{equation}
\label{e:expremainder}
\left| \exp(x) - \sum_{n=0}^N \frac{x^n}{n!}\right| \leq
\frac{2|x|^{N+1}}{(N+1)!}.
\end{equation}
\end{theorem}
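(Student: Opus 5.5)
The plan is to express the error as the tail of the exponential series and to bound this tail by a geometric series, using the hypothesis $N\geq 2|x|-2$ to control the ratio of consecutive terms.

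\textbf{Step 1: write the error as a tail limit.} By Theorem~\ref{t:expfunc} the series converges, so the partial sums $s_m := \sum_{n=0}^m x^n/n!$ converge to $\exp(x)$. Hence
\[
\exp(x)-s_N = \lim_{m\to\infty}\sum_{n=N+1}^{m} \frac{x^n}{n!}
\]
by the limit laws (Corollary~\ref{c:difflaw}). It therefore suffices to bound $\bigl|\sum_{n=N+1}^m x^n/n!\bigr|$ uniformly in $m>N$. The desired inequality then passes to the limit via Proposition~\ref{p:absantoabsa} and Corollary~\ref{c:limleq}.

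\textbf{Step 2: compare with a geometric sum.} By the general triangle inequality (Lemma~\ref{l:generaltriangle}),
\[
\Bigl|\sum_{n=N+1}^m \frac{x^n}{n!}\Bigr|\leq \sum_{k=0}^{m-N-1}\frac{|x|^{N+1+k}}{(N+1+k)!}.
\]
Next I would show by induction on $k$ that
\[
\frac{|x|^{N+1+k}}{(N+1+k)!}\leq \frac{|x|^{N+1}}{(N+1)!}\Bigl(\frac{|x|}{N+2}\Bigr)^k .
\]
The inductive step multiplies by $|x|/(N+2+k)\leq |x|/(N+2)$. The key observation is that $N\geq 2|x|-2$ means $N+2\geq 2|x|$, so $|x|/(N+2)\leq \tfrac12$. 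The sum is therefore at most
\[
\frac{|x|^{N+1}}{(N+1)!}\sum_{k\ge0}2^{-k}=\frac{2|x|^{N+1}}{(N+1)!},
\]
using Theorem~\ref{t:geosum} or Theorem~\ref{t:geoseries} with ratio $\tfrac12$. Each finite partial geometric sum is already below $2$, so no limit is needed at this stage.

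\textbf{Main obstacle.} The only delicate point is bookkeeping: the ratio bound must hold for every factor $N+2+k$ with $k\geq0$, and it does, since these factors increase. One must also handle the case $x=0$, which is trivial because both sides of \eqref{e:expremainder} are $0$ (with the convention $0^0=1$ affecting only the $n=0$ term, which sits inside $s_N$). Passing the uniform bound through the limit is routine given the earlier results.
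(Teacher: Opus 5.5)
Your proof is correct and follows the same route as the paper. You write the error as the tail of the exponential series, bound each tail term by $\frac{|x|^{N+1}}{(N+1)!}\bigl(\frac{|x|}{N+2}\bigr)^k$, and use $N+2\geq 2|x|$ to compare with the geometric series of ratio $\tfrac12$. Your uniform bound on finite partial sums, followed by passing to the limit via Proposition~\ref{p:absantoabsa} and Corollary~\ref{c:limleq}, just makes rigorous the paper's direct manipulation of the infinite tail.
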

\begin{proof}
Indeed, since $2|x|\leq N+2 \Leftrightarrow |x|/(N+2)\leq 1/2$
and using the geometric series,
we estimate 
\begin{multline}
\left| \exp(x) - \sum_{n=0}^N \frac{x^n}{n!}\right|
=\left|\sum_{n=N+1}^\infty \frac{x^n}{n!}\right|
\leq \sum_{n=N+1}^\infty \left| \frac{x^n}{n!} \right| \\
= \frac{|x|^{N+1}}{(N+1)!} \bigg(1 + \frac{|x|}{N+2} +
\frac{|x|^2}{(N+2)(N+3)}+\cdots
+ \frac{|x|^k}{(N+2)\cdots(N+k+1)}+\cdots\bigg)\\
\leq \frac{|x|^{N+1}}{(N+1)!} \bigg(1 + \frac{|x|}{N+2} +
\Big(\frac{|x|}{N+2}\Big)^2+\cdots
+ \Big(\frac{|x|}{N+2}\Big)^k+\cdots\bigg)\\
\leq \frac{|x|^{N+1}}{(N+1)!} \bigg(1 + \frac{1}{2} +
\Big(\frac{1}{2}\Big)^2+\cdots
+ \Big(\frac{1}{2}\Big)^k+\cdots\bigg)
= \frac{2|x|^{N+1}}{(N+1)!},
\end{multline}
as claimed.
\end{proof}

\begin{remark}
\label{r:euler}
Since $2/(11+1)!<10^{-8}$,
we can use $x=1$ and $N=11$ in Theorem~\ref{t:expremainder}
to approximate Euler's number $e$ to 7 digits as 
$2.7182818$.
\end{remark}

We now show that Euler's number $e$ is irrational.

\begin{theorem}[$e\notin\QQ$]
Euler's number is irrational, i.e., $e\in\RR\smallsetminus\QQ$. 
\end{theorem}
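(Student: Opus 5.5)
I will argue by contradiction, in the spirit of Theorem~\ref{t:squareroot2}. Suppose $e=m/N$ with integers $m\geq 1$ and $N\geq 1$; by enlarging the fraction if necessary (multiply numerator and denominator by $2$) I may assume $N\geq 2$. The idea is to multiply $e$ by $N!$ and split it into an integer part and a tail. The integer part comes from the first $N+1$ terms of the series \eqref{e:euler}. The tail will turn out to be strictly between $0$ and $1$, which is absurd.

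Concretely, I set
\begin{equation*}
S_N := \sum_{n=0}^{N}\frac{1}{n!}
\quad\text{and}\quad
R := N!\,\big(e-S_N\big).
\end{equation*}
First, $N!\,e = (N-1)!\,m$ is an integer. Second, $N!\,S_N=\sum_{n=0}^{N} N!/n!$ is an integer, because each $N!/n!$ with $0\leq n\leq N$ is a product of integers. Hence $R$ is an integer.

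Next I bound $R$. The tail $e-S_N=\sum_{n=N+1}^\infty 1/n!$ is a convergent series of positive terms. Since its first term is $1/(N+1)!>0$, Corollary~\ref{c:limleq} applied to its partial sums gives $e-S_N\geq 1/(N+1)!>0$, so $R>0$. For the upper bound I will not use Theorem~\ref{t:expremainder}: its constant $2$ only yields $R\leq 2/(N+1)$, which suffices for $N\geq 2$, but I would rather get the sharp estimate directly. I compare term by term with a geometric series:
\begin{equation*}
\frac{N!}{(N+k)!}=\frac{1}{(N+1)\cdots(N+k)}\leq \frac{1}{(N+1)^k},\qquad k\geq 1.
\end{equation*}
Theorem~\ref{t:geoseries} together with Theorem~\ref{t:serieslaws} and Theorem~\ref{t:limleq} (to pass the termwise inequality to the limit of the partial sums) then gives
\begin{equation*}
R\leq \sum_{k\geq 1}\frac{1}{(N+1)^k}=\frac{1}{N}<1 .
\end{equation*}
This holds for $N\geq 2$, which is exactly why I arranged $N\geq 2$ at the start.

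So $R$ is an integer with $0<R<1$, which is a contradiction. The main point requiring care is the limit bookkeeping: both inequalities concern infinite tails, so I must justify them through partial sums and Corollary~\ref{c:limleq}, or Theorem~\ref{t:limleq}, rather than manipulating infinite sums formally. The rest is routine.
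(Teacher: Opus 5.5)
Your proof is correct and follows essentially the same route as the paper. Both multiply by $n!$, observe that $n!\,(e-\sum_{k=0}^n 1/k!)$ is a positive integer, and squeeze it below $1$ by comparing with the geometric series $\sum_{k\geq1}(n+1)^{-k}=1/n$. The only difference is minor: the paper gets a strict bound from $n!/k!<(n+1)^{-(k-n)}$ for $k\geq n+2$, so $x<1/n\leq 1$ even when $n=1$, whereas you use the non-strict bound and arrange $N\geq 2$ by doubling the fraction. Your choice avoids having to justify that a termwise strict inequality survives the passage to the limit.
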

\begin{proof}
Suppose to the contrary that $e$ is rational, say
\begin{equation}
\label{e:polandout1}
e = \frac{m}{n}
\end{equation}
where $m\in\{1,2,\ldots\}$ and $n\in\{1,2,\ldots\}$. 
Now define
\begin{equation}
\label{e:polandout2}
x := n! \bigg(e - \sum_{k=0}^n \frac{1}{k!} \bigg)
= n!\sum_{k=n+1}^\infty \frac{1}{k!} = \sum_{k=n+1}^\infty \frac{n!}{k!} > 0. 
\end{equation}
Note that if $k\geq n+1$, then 
$k!/n! = (n+1)(n+2)\cdots k \geq (n+1)^{k-n}$ and so 
\begin{equation}
\frac{n!}{k!} \leq \frac{1}{(n+1)^{k-n}}
\end{equation}
and this inequality is \emph{strict} whenever $k\geq n+2$. 
Using the formula for the Geometric Series, it follows that 
\begin{align}
x &= \sum_{k=n+1}^\infty \frac{n!}{k!}
<
\sum_{k=n+1}^\infty \frac{1}{(n+1)^{k-n}}
= \frac{1}{n+1}\sum_{l=0}^\infty \frac{1}{(n+1)^l}
= \frac{1}{n+1} \frac{1}{1-\frac{1}{n+1}} = \frac{1}{n}\leq 1.
\end{align}
To sum up, we have proved so far that 
\begin{equation}
\label{e:polandout3}
0<x < 1. 
\end{equation}
On the other hand, substituting \cref{e:polandout1} into \cref{e:polandout2}, 
we learn that 
\begin{align}
0&< x 
= n!\bigg(\frac{m}{n} - \sum_{k=0}^n \frac{1}{k!} \bigg)
= m(n-1)! - \sum_{k=0}^n \frac{n!}{k!}
= m(n-1)! - \sum_{k=0}^n (k+1)\cdots (n-1)n
\end{align}
is a positive integer! Hence $x\geq 1$ which is absurd in view of 
\cref{e:polandout3}.
\end{proof}

\section{The Cauchy Product of Two Series}

The identity
\begin{equation}
\label{e:slownet}
\sum_{n=0}^\infty a_nb_n = \bigg(\sum_{n=0}^\infty a_n\bigg)
\bigg(\sum_{n=0}^\infty b_n\bigg).
\end{equation}
is \textbf{unfortunately false}
for two series $\sum_{\nnn}a_n$ and $\sum_{\nnn}b_n$ that are absolutely convergent.
In this section, we explore how the right side of \eqref{e:slownet}
is evaluated. 

\begin{definition}
Let $(a_n)_\nnn$ and $(b_n)_\nnn$ be two sequences of real numbers.
Then the \textbf{Cauchy product} of $\sum_{\nnn}a_n$ and $\sum_{\nnn}
b_n$ is defined by $\sum_\nnn c_n$, where\index{Cauchy product}
\begin{equation}
\label{e:cauchyprod}
c_n = \sum_{k=0}^n a_{n-k}b_k, 
\qquad \text{for every $\nnn$.}
\end{equation}
\end{definition}

\begin{theorem}
\label{t:cauchyprod}
Let $\sum_{\nnn} a_n$
and $\sum_{\nnn} b_n$
be two absolutely convergent series, with 
Cauchy product $\sum_\nnn c_n$.
Then $\sum_\nnn c_n$ is absolutely convergent and
\begin{equation}
\sum_{n=0}^\infty c_n = \bigg(\sum_{n=0}^\infty a_n\bigg)
\bigg(\sum_{n=0}^\infty b_n\bigg).
\end{equation}
\end{theorem}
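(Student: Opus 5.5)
The plan is to compare the partial sums of the Cauchy product with the products of partial sums over a square of indices, and to show that the leftover terms go to zero by absolute convergence.

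Set $A_n := \sum_{k=0}^n a_k$, $B_n := \sum_{k=0}^n b_k$, $C_n := \sum_{k=0}^n c_k$. Let $\alpha$ and $\beta$ denote the limits of $(A_n)$ and $(B_n)$. Also put $\alpha^* := \sum_{k}|a_k|$ and $\beta^* := \sum_k|b_k|$, which exist by hypothesis.

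\textbf{Step 1: reindex the partial sums.} Unwinding \eqref{e:cauchyprod} gives
\begin{equation*}
C_n = \sum_{\substack{i+j\leq n\\ i,j\geq 0}} a_ib_j ,
\end{equation*}
a sum over a triangle of indices. The product $A_nB_n$, in contrast, is a sum over the square $\{0,\ldots,n\}^2$, using the double-sum identity of Chapter~\ref{cha:fields}. Since the triangle lies inside the square,
\begin{equation*}
A_nB_n - C_n = \sum_{\substack{i,j\leq n\\ i+j>n}} a_ib_j .
\end{equation*}
If $i+j>n$, then $i>n/2$ or $j>n/2$. Writing $m:=\lfloor n/2\rfloor$, the triangle inequality (Lemma~\ref{l:generaltriangle}) then gives
\begin{equation*}
|A_nB_n-C_n| \leq \Big(\sum_{i=m+1}^n|a_i|\Big)\beta^* + \alpha^*\Big(\sum_{j=m+1}^n|b_j|\Big).
\end{equation*}

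\textbf{Step 2: make the error small.} By the Cauchy Test (Proposition~\ref{p:cauchycrit}) applied to $\sum|a_k|$ and $\sum|b_k|$, both tail sums in Step~1 can be made less than any $\varepsilon>0$ once $m$ is large, i.e., once $n\geq 2N+2$ for a suitable $N$. Hence $A_nB_n-C_n\to 0$. The Product Law (Theorem~\ref{t:prodlaw}) gives $A_nB_n\to\alpha\beta$, so $C_n\to\alpha\beta$ by the Difference Law.

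\textbf{Step 3: absolute convergence.} We have $|c_n|\leq \sum_{k=0}^n|a_{n-k}||b_k|$. Running the same triangle-in-square comparison on the absolute values gives
\begin{equation*}
\sum_{k=0}^n|c_k| \leq \Big(\sum_{k=0}^n|a_k|\Big)\Big(\sum_{k=0}^n|b_k|\Big) \leq \alpha^*\beta^* .
\end{equation*}
The Bounded Partial Sums Test (Theorem~\ref{t:seriesbounded}) then shows that $\sum|c_n|$ converges.

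The main obstacle is Step~1: justifying the rewriting of $C_n$ as a triangular double sum, and the split of the index set $\{i+j>n\}$ into the two regions $i>n/2$ and $j>n/2$. These regions overlap, but since every term is bounded in absolute value this only overcounts, which is harmless for an upper bound.
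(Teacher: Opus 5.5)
Your proof is correct and follows essentially the same route as the paper: you compare $C_n$ with $A_nB_n$, observe that the difference is a sum over the part of the square $\{0,\ldots,n\}^2$ where $i+j>n$, and show this goes to zero using absolute convergence. The differences are only cosmetic. The paper bounds that region by $I_n\smallsetminus I_N$ for $n\geq 2N$ and uses that $\big(\sum_{k\leq n}|a_k|\big)\big(\sum_{k\leq n}|b_k|\big)$ is Cauchy, whereas you split at $\lfloor n/2\rfloor$ and use the Cauchy tails of each series separately. For absolute convergence, you bound $\sum_{k\leq n}|c_k|\leq\alpha^*\beta^*$ directly, whereas the paper reruns the argument on $|a_n|,|b_n|$ and then invokes the comparison test.
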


\begin{proof}
Set $A := \sum_{\nnn} a_n$,
$B := \sum_\nnn b_n$, 
and for every $\nnn$,
$C_n := \sum_{k=0}^n c_k$,
$D_n := \big(\sum_{k=0}^n a_k\big)\big(\sum_{k=0}^n b_k\big)$,
and
$E_n := \big(\sum_{k=0}^n |a_k|\big)\big(\sum_{k=0}^n |b_k|\big)$.
Observe that $(E_n)_\nnn$ is a convergent sequence, by 
the product law (Theorem~\ref{t:prodlaw}). 
In particular, it is a Cauchy sequence.
Now let $\varepsilon>0$.
Then there exists $N\in\NN$ such that 
\begin{equation}
\label{e:101107:a}
|E_n-E_N|<\varepsilon,
\qquad\text{for every $n\geq N$.}
\end{equation}
For every $n\in\NN$, we define the set of indices
\begin{equation}
I_n := \menge{(i,j)\in\NN\times\NN}{i\leq n \text{~and~} j\leq n}
\end{equation}
Then \eqref{e:101107:a} turns into
\begin{equation}
\sum_{(i,j)\in I_n\smallsetminus I_N} |a_ib_j| <\varepsilon,
\qquad\text{for every $n\geq N$.}
\end{equation}
On the other hand, again by the product law,
we have $D_n\to AB$.
Now 
\begin{equation}
D_n = \sum_{(i,j)\in I_n} a_ib_j
\end{equation}
while
\begin{equation}
C_n = \sum_{m=0}^{n}c_m = \sum_{m=0}^{n}\sum_{k=0}^ma_{m-k}b_k
= \sum_{(i,j)\in I_n, i+j\leq n} a_ib_j.
\end{equation}
Hence,
\begin{equation}
D_n- C_n = \sum_{(i,j)\in I_n, i+j>n} a_ib_j.
\end{equation}
Let $n\geq 2N$ and let $(i,j)\in I_n$ such that $i+j>n$.
If $(i,j)\in I_N$ we would have 
$i\leq N$ and $j\leq N$, hence $n<i+j\leq 2N\leq n$, which is absurd.
Hence $(i,j)\in I_n\smallsetminus I_N$.
Altogether, we deduce that for $n\geq 2N$, we have
\begin{align}
|D_n-C_n| &= \left|  \sum_{(i,j)\in I_n, i+j>n} a_ib_j\right|
\leq  \sum_{(i,j)\in I_n, i+j>n} |a_ib_j|
\leq  \sum_{(i,j)\in I_n\smallsetminus I_N} |a_ib_j| < \varepsilon.
\end{align}
Hence, $D_n-C_n\to 0$, and since $D_n\to AB$, we obtain 
$C_n = (C_n-D_n)+D_n\to 0 + AB = AB$, i.e.,
the Cauchy product is a convergent series with limit $AB$.

It remains to show the absolute convergence of the Cauchy product. 
To this end, set $\alpha_n = |a_n|$ and $\beta_n=|b_n|$,
for every $\nnn$. 
Then $\sum_\nnn \alpha_n$ and $\sum_\nnn\beta_n$ are two absolutely
convergent series, and denote their Cauchy product by
$\sum_\nnn\gamma_n$. Applying the above part of the proof to these two
series, we deduce that $\sum_\nnn\gamma_n$ is a convergent series.
Since for every $\nnn$, 
\begin{equation}
|c_n| = \left|\sum_{k=0}^n a_{n-k}b_k\right|
\leq \sum_{k=0}^n |a_{n-k}b_k|
= \sum_{k=0}^{n} \alpha_{n-k}\beta_k = \gamma_n,
\end{equation}
it follows from the comparison test (Theorem~\ref{t:compartest})
that $\sum_\nnn c_n$ is absolutely convergent.
\end{proof}

\section{Functional Equation}

\begin{theorem}[Functional equation for the exponential function]
\label{t:superexp}
Let $x$ and $y$ be in $\RR$.
Then \index{Functional equation (for $\exp$)}
\begin{equation}
\exp(x+y)=\exp(x)\exp(y).
\end{equation}
\end{theorem}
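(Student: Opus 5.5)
The plan is to apply Theorem~\ref{t:cauchyprod} to the two exponential series and then recognize each term of the resulting Cauchy product through the Binomial Theorem (Theorem~\ref{t:binomial}). Set $a_n := x^n/n!$ and $b_n := y^n/n!$ for every $\nnn$. By Theorem~\ref{t:expfunc}, both $\sum_\nnn a_n$ and $\sum_\nnn b_n$ are absolutely convergent, with sums $\exp(x)$ and $\exp(y)$. So the hypotheses of Theorem~\ref{t:cauchyprod} hold. It follows that the Cauchy product $\sum_\nnn c_n$, where
\begin{equation*}
c_n = \sum_{k=0}^n a_{n-k}b_k = \sum_{k=0}^n \frac{x^{n-k}}{(n-k)!}\frac{y^k}{k!},
\end{equation*}
converges (even absolutely), and its sum equals $\exp(x)\exp(y)$.

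The key algebraic step is to identify $c_n$. Multiplying and dividing by $n!$ and using Lemma~\ref{l:choose}\ref{l:choose:ii}, namely ${n\choose k} = n!/(k!(n-k)!)$ for $0\leq k\leq n$, we get
\begin{equation*}
c_n = \frac{1}{n!}\sum_{k=0}^n {n\choose k} x^{n-k}y^k = \frac{(x+y)^n}{n!}.
\end{equation*}
The last equality is the Binomial Theorem (Theorem~\ref{t:binomial}). Hence $\sum_\nnn c_n$ is exactly the exponential series at $x+y$, so its sum is $\exp(x+y)$. Comparing the two expressions for the same limit, which is unique, gives $\exp(x+y)=\exp(x)\exp(y)$.

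There is no real obstacle here, since the analytic work was already done in Theorem~\ref{t:cauchyprod}. The only point needing care is the factorial bookkeeping in $c_n$, including the edge cases $k=0$ and $k=n$, where $0!=1$ and $x^0=y^0=1$. These are covered by Lemma~\ref{l:choose}\ref{l:choose:i}\&\ref{l:choose:ii} and by the convention $x^0:=1$, so the identity holds even when $x=0$ or $y=0$.
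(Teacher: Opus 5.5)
Your proposal is correct and follows essentially the same route as the paper: apply Theorem~\ref{t:cauchyprod} to the two absolutely convergent exponential series, then identify $c_n=(x+y)^n/n!$ via Lemma~\ref{l:choose}\ref{l:choose:ii} and the Binomial Theorem. Your remarks on uniqueness of the limit and on the edge cases $k=0$, $k=n$ are accurate; the paper leaves them implicit.
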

\begin{proof}
We have
\begin{equation}
\exp(x) = \sum_{n=0}^\infty \frac{x^n}{n!}
\quad\text{and}\quad
\exp(y) = \sum_{n=0}^\infty \frac{y^n}{n!},
\end{equation}
where both series are absolutely convergent
by Theorem~\ref{t:expfunc}.
Denote their Cauchy product by $\sum_\nnn c_n$.
Then, using 
the Binomial Theorem (Theorem~\ref{t:binomial}) and 
Lemma~\ref{l:choose}\ref{l:choose:ii}, 
we compute that
\begin{equation}
c_n 
= \sum_{k=0}^n \frac{x^{n-k}}{(n-k)!}\frac{y^k}{k!}
= \frac{1}{n!}\sum_{k=0}^n {n\choose k}x^{n-k}y^k
= \frac{(x+y)^n}{n!}.
\end{equation}
In view of Theorem~\ref{t:cauchyprod}, 
we therefore obtain 
\begin{align}
\exp(x)\exp(y) &=  
\bigg(\sum_{n=0}^\infty \frac{x^n}{n!}\bigg)
\bigg(\sum_{n=0}^\infty \frac{y^n}{n!}\bigg)
= \sum_{n=0}^\infty c_n
= \sum_{n=0}^\infty  \frac{(x+y)^n}{n!}
=\exp(x+y),
\end{align}
as claimed.
\end{proof}

\begin{corollary}
\label{c:exp}
The following hold for every $x\in\RR$ and every $m\in\ZZ$.
\begin{enumerate}
\item
\label{c:exp:i}
$\exp(x)>0$.
\item
\label{c:exp:ii}
$\exp(-x) = 1/\exp(x)$.
\item
\label{c:exp:iii}
$\exp(m) = e^m$.
\end{enumerate}
\end{corollary}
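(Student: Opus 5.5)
The whole corollary will be read off from the functional equation (Theorem~\ref{t:superexp}) together with $\exp(0)=1$ from \eqref{e:exp(0)=1}. I would prove \ref{c:exp:ii} first, since it feeds into both other items.

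For \ref{c:exp:ii}, apply Theorem~\ref{t:superexp} with $y=-x$. This gives $\exp(x)\exp(-x)=\exp(0)=1$. Consequently $\exp(x)\neq 0$ by Proposition~\ref{p:may26:4}\ref{p:may26:4ii}, because otherwise the product would be $0\neq 1$. By the uniqueness of inverses (Proposition~\ref{p:may26:3}), $\exp(-x)=1/\exp(x)$.

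For \ref{c:exp:i}, write $x=\tfrac{x}{2}+\tfrac{x}{2}$ and use Theorem~\ref{t:superexp} again:
\begin{equation*}
\exp(x)=\exp(\tfrac{x}{2})^2\geq 0 .
\end{equation*}
Since $\exp(x)\neq0$ by the previous step, Proposition~\ref{p:ofield}\ref{p:ofieldx} upgrades this to $\exp(x)>0$. This halving trick is the only step needing an idea. Positivity cannot be read directly off the series, because for $x<0$ the terms have alternating signs.

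For \ref{c:exp:iii}, first take $m\in\NN$ and induct on $m$. The base case is $\exp(0)=1=e^0$. For the inductive step,
\begin{equation*}
\exp(m+1)=\exp(m)\exp(1)=e^m e=e^{m+1},
\end{equation*}
using Theorem~\ref{t:superexp} and $e=\exp(1)$ from \eqref{e:euler}. For negative $m$, set $m=-k$ with $k\in\NN$. Then \ref{c:exp:ii} gives
\begin{equation*}
\exp(-k)=1/\exp(k)=1/e^k=(e^{-1})^k=e^{-k},
\end{equation*}
by the definition of negative powers and Proposition~\ref{p:100908:1}. Here $e\neq0$ holds by \ref{c:exp:i}.
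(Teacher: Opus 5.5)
Your proof is correct. Items \ref{c:exp:ii} and \ref{c:exp:iii} go exactly as in the paper: the paper's Exercise~\ref{exo:expm} does the same induction for $m\geq 0$ and then reflects via \ref{c:exp:ii}.

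Your proof of \ref{c:exp:i}, however, takes a genuinely different route. The paper reads positivity off the series only for $x\geq 0$, where every term is nonnegative, so that $\exp(x)\geq 1$. It then transfers positivity to $x<0$ via $\exp(-x)=1/\exp(x)$. So your remark that positivity cannot be read off the series is overstated --- it can, once combined with \ref{c:exp:ii}.

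Your identity $\exp(x)=\exp(\tfrac{x}{2})^2$ never touches the series. It uses only the functional equation, the nonvanishing of $\exp$, and the fact that nonzero squares are positive in an ordered field. Hence it proves positivity for every $f\colon\RR\to\RR$ with $f(x+y)=f(x)f(y)$ and $f(0)=1$.

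That generality has two costs:
\begin{enumerate}
\item It depends on being able to halve the argument. On $\ZZ$, the function $n\mapsto(-1)^n$ satisfies the same functional equation and is not positive.
\item It gives less information. The paper's argument also yields the bound $\exp(x)\geq 1$ for $x\geq 0$. That is the kind of estimate used later, in the logarithm exercise, to show that $\exp$ is strictly increasing.
\end{enumerate}
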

\begin{proof}
Since $\exp(0)=1$ (see \eqref{e:exp(0)=1}), 
it follows from Theorem~\ref{t:superexp} that
$1 = \exp(0) = \exp\big(x+(-x)\big) = \exp(x)\exp(-x)$.
In particular, $\exp(x)\neq 0$ and \ref{c:exp:ii} thus holds.
Furthermore, $\exp(x)$ and $\exp(-x)$ are both either positive or
negative. Now if $x\geq 0$, then 
\begin{equation}
\exp(x) =\sum_{n=0}^\infty \frac{x^n}{n!} = 1 + x + \frac{x^2}{2} +
\frac{x^3}{6} +\cdots \geq 1+0+0+0+\cdots = 1;
\end{equation}
consequently, $\exp(x)>0$ in this case, and by \ref{c:exp:ii},
$\exp(-x)>0$ as well. This verifies \ref{c:exp:i}.
The proof of \ref{c:exp:iii} is left as Exercise~\ref{exo:expm}.
\end{proof}

\begin{remark}[Addition Theorems for Sine and Cosine]
What we proved in this chapter actually holds
true for the \emph{complex} exponential function,
with the real absolute value replaced by
the \emph{complex absolute value}
$|x+\ii y| := \sqrt{x^2 + y^2}$ 
to make sense of the notion of absolute convergence of the exponential series. 
The functional equation --- combined with polar coordinates ---
then provides a charming proof of the addition theorems for
$\sin$ and $\cos$. Indeed, let $\alpha$ and $\beta$ be in $\RR$.
Then
\begin{subequations}
\begin{align}
\cos(\alpha+\beta) + \ii\sin(\alpha+\beta)
& = \exp(\ii(\alpha+\beta))
=\exp(\ii\alpha + \ii\beta)
=\exp(\ii\alpha)\exp(\ii\beta)\\
&=(\cos\alpha + \ii\sin\alpha)(\cos\beta+\ii\sin\beta)\\
&= \big(\cos\alpha\cos\beta - \sin\alpha\sin\beta\big)
+\ii\big(\sin\alpha\cos\beta+\cos\alpha\sin\beta\big).
\end{align}
\end{subequations}
Comparing real and imaginary parts, we obtain
\begin{equation}
\cos(\alpha+\beta) = \cos\alpha\cos\beta - \sin\alpha\sin\beta
\end{equation}
and 
\begin{equation}
\sin(\alpha+\beta) = \sin\alpha\cos\beta+\cos\alpha\sin\beta.
\end{equation}
\end{remark}

\section*{Exercises}\markright{Exercises}
\addcontentsline{toc}{section}{Exercises}
\setcounter{theorem}{0}

\begin{exercise}
Illustrate Remark~\ref{r:euler} by using a programming language of
your choice
(e.g., \texttt{Maple}, \texttt{Python}, \texttt{Octave},
\texttt{Java}, etc.). 
Hand in (i) a printout of your computer code and
(ii) a printout of the output of your computer code. 
\end{exercise}
\begin{solution}
\begin{center}
    \includegraphics[
        width=\textwidth,
        height=0.5\textheight,
        keepaspectratio
    ]{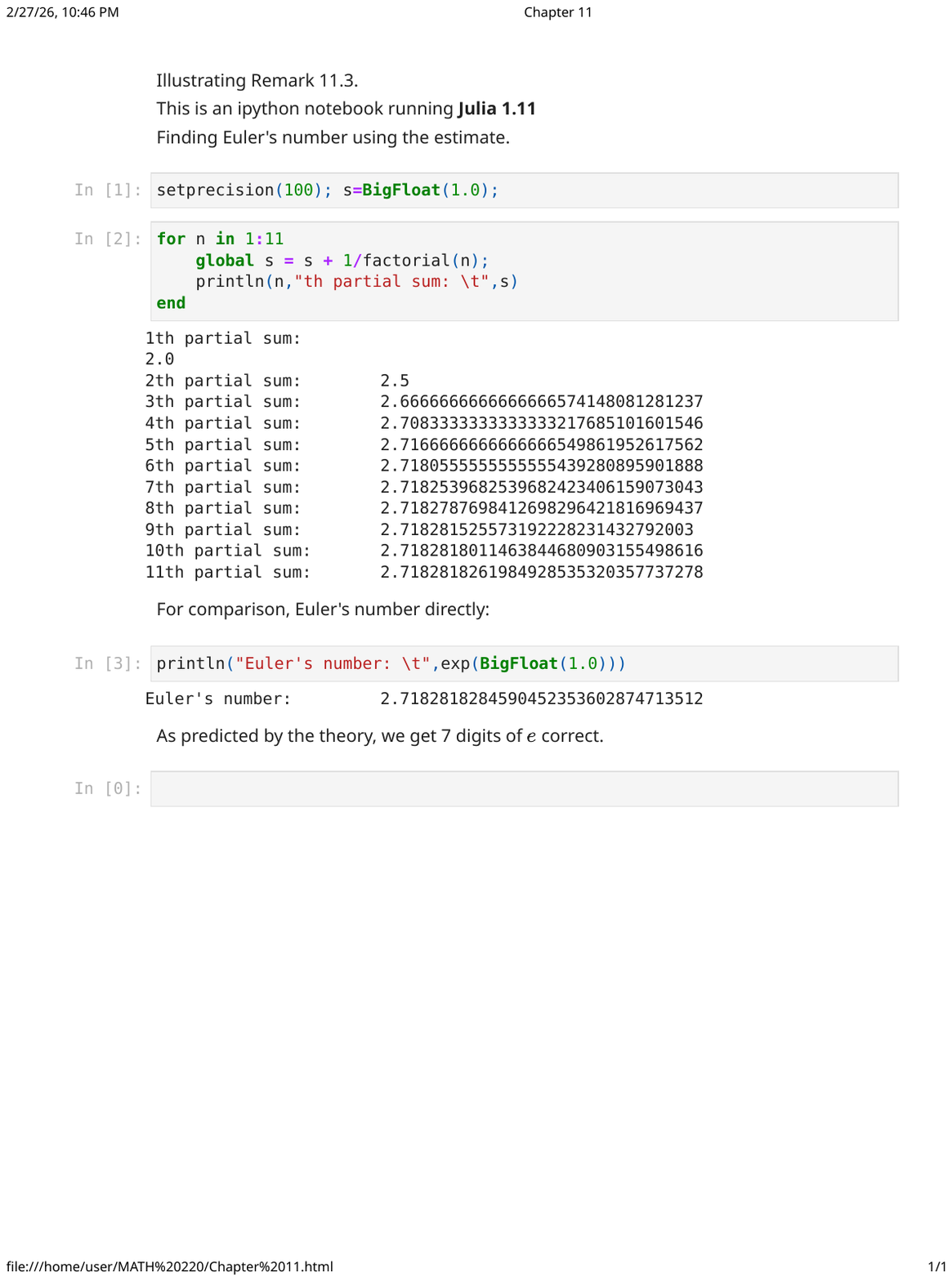}
\end{center}
\end{solution}

\begin{exercise}
Suppose that $N\in\NN$, and 
let $A(z) = a_0+a_1z+a_2z^2 + \cdots + a_Nz^N$ and 
$B(z) = b_0+b_1z+b_2z^2 + \cdots + b_Nz^N$
be polynomials of degree $N$.
Denote the product polynomial $A(z)B(z)$ by $C(z)$,
written as $C(z)=\sum_{k=0}^{2N} c_kz^k$.
If $0\leq n\leq N$, what is $c_n$ in terms of the coefficients of the
original polynomials $A(z)$ and $B(z)$?
Compare to \eqref{e:cauchyprod}.
\end{exercise}
\begin{solution}
We have
\begin{align*}
C(z) &= A(z)B(z)\\
&= \Big(a_0+a_1z+a_2z^2+\cdots +a_Nz^N\Big)
\Big(b_0+b_1z+b_2z^2+\cdots+b_Nz^N\Big)\\
&= (a_0b_0) + (a_1zb_0+a_0b_1z) + (a_2z^2b_0+a_1zb_1z+a_0b_2z^2)\\
&\qquad + \cdots + (a_nz^nb_0 + a_{n-1}z^{n-1}b_1z+\cdots + a_0b_nz^n)
+\cdots +a_Nz^Nb_Nz^N\\
&= (a_0b_0)z^0 + (a_1b_0+a_0b_1)z + (a_2b_0+a_1b_1+a_0b_2)z^2\\
&\qquad + \cdots + (a_nb_0+a_{n-1}b_1+\cdots + a_0b_n)z^n + \cdots 
+ (a_Nb_N)z^{2N}.
\end{align*}
It follows that
\begin{equation*}
c_n = a_nb_0+a_{n-1}b_1+\cdots + a_0b_n = \sum_{k=0}^{n}a_{n-k}b_k,
\end{equation*}
which shows that the definition of the Cauchy product is motivated by
multiplying polynomials or formal power series. 
\end{solution}

\begin{exercise}
Show that the series $\sum_{\nnn} (-1)^n/\sqrt{n+1}$ converges and
then 
show that the Cauchy product of this series with itself
diverges.
\end{exercise}
\begin{solution}
Since $0<1<2<\cdots <n<n+1<\cdots$,
it follows that $(\sqrt{n+1})_\nnn$ is a strictly increasing sequence.
Hence, $(1/\sqrt{n+1})_\nnn$ is a strictly decreasing sequence 
of positive numbers. 
The series thus converges by the Leibniz Alternating Series Test
(Theorem~\ref{t:Leibniz}). 

Denote the Cauchy product of this series with itself by 
$\sum_\nnn c_n$.
Then
\begin{equation*}
c_n = \sum_{k=0}^n
\frac{(-1)^{n-k}}{\sqrt{n-k+1}}\frac{(-1)^k}{\sqrt{k+1}}
= (-1)^n\sum_{k=0}^n\frac{1}{\sqrt{(n-k+1)(k+1)}}.
\end{equation*}
Now if $k\in\{0,1,\ldots,n\}$, then
$(n-k+1)(k+1)\leq (n+1)(n+1)=(n+1)^2$
and so $\sqrt{(n-k+1)(k+1)} \leq n+1$.
It thus follows that
\begin{equation*}
|c_n| = \sum_{k=0}^n\frac{1}{\sqrt{(n-k+1)(k+1)}}
\geq \sum_{k=0}^n \frac{1}{n+1} = (n+1)\frac{1}{n+1}=1.
\end{equation*}
In view of Lemma~\ref{l:convnull},
the series $\sum_\nnn c_n$ cannot converge.
\end{solution}

\begin{exercise}
\label{exo:expm}
Prove Corollary~\ref{c:exp}\ref{c:exp:iii}.
\emph{Hint}:
Prove the result for $m\in\NN$ first, using mathematical induction.
Then use Corollary~\ref{c:exp}\ref{c:exp:ii} to deal with $m<0$.
\end{exercise}
\begin{solution}
When $m=0$, we have $\exp(m)=\exp(0)=1=e^0=e^m$ by \eqref{e:exp(0)=1}. 
Now assume that $\exp(m)=e^m$ for some integer $m\geq 0$.
Then, by Theorem~\ref{t:superexp}, 
$\exp(m+1)=\exp(m)\exp(1)=e^me^1 = e^{m+1}$ and the result
follows for all $m\geq 0$ by the Principle of Mathematical Induction.

Finally, assume that $m<0$.
Then, by Corollary~\ref{c:exp}\ref{c:exp:ii} and what we just
proved about nonnegative integers, we have
$\exp(m) = \exp(-|m|) = 1/\exp(|m|) = 1/e^{|m|} = e^{-|m|} = e^m$.
\end{solution}

\begin{exercise}
Provide two absolutely convergent series such that
\eqref{e:slownet} fails.
\end{exercise}
\begin{solution}
E.g., set $a_n := b_n := 1$ if $n\in\{0,1\}$ and $a_n := b_n := 0$ if
$n\geq 2$. Then
\begin{align*}
\big(1+1+0+0+\cdots\big)
\big(1+1+0+0+\cdots\big)
&= 4
\neq 2 = 1+1 
=\big(1\cdot 1 + 1\cdot 1 + 0\cdot 0 + 0\cdot 0 + \cdots\big),
\end{align*}
which shows that \eqref{e:slownet} fails. 
\end{solution}

\begin{exercise}[logarithm]
Show that $\exp$ is strictly increasing, i.e., 
$x<y$ $\Rightarrow$ $\exp(x)<\exp(y)$, and deduce that 
$\exp$ is injective. Now denote the range of $\exp$ by $R$ so that 
$\exp\colon \RR\to R$ is bijective.
Then the corresponding inverse function is the \textbf{natural logarithm}
$\ln$. \index{logarithm}\index{natural logarithm}
Show that if $u$ and $v$ belong to $R$, then\footnote{The functional
equation for the logarithm is of considerable historical significance
because it converted multiplication into the much simpler addition.}
$$\ln(u\cdot v) = \ln(u) + \ln(v).$$
\end{exercise}
\begin{solution}
Suppose that $x<y$. 
Then $d:= y-x>0$. The definition of $\exp(d)$ gives 
$\exp(d) = \sum_{n=0}^\infty \frac{d^n}{n!} = 1 + d + \frac{d^2}{2} + \cdots > 1$.
Next, $\exp(x)>0$ and $\exp(y)>0$ by Corollary~\ref{c:exp}\ref{c:exp:i}. 
Altogether, 
$\exp(y) = \exp(x+d) = \exp(x)\exp(d) > \exp(x)$, 
and so $\exp$ is strictly increasing, hence injective.

One can show that $R = \RPP$ using Analysis. 
Now let $u$ and $v$ belong to $R$. Then there exist $x$ and $y$ in $\RR$
such that $\exp(x)=u$ and $\exp(y)=v$. 
Thus, $\ln(u)=x$ and $\ln(v)=y$. 
Using the functional equation for the exponential function, we obtain
$u\cdot v = \exp(x)\exp(y)=\exp(x+y)$,
which implies
\begin{equation*}
\ln(u\cdot v) = \ln(\exp(x+y)) = x+y =\ln(u) + \ln(v),
\end{equation*}
as claimed. 
\end{solution}

\begin{exercise}[absolute value in $\CC$]
Given $z=x+\ii y\in \CC$, where $x,y$ are in $\RR$, 
the absolute value is defined by 
$$|z| := \sqrt{x^2+y^2}.$$
Prove the following:
\begin{enumerate}
\item[(i)]
$|z|\geq 0$.
\item[(ii)]
$|z|=0$ $\Leftrightarrow$ $z=0$.
\item[(iii)]
If $z\in\RR$, i.e., $y=0$, then
$|z|$ is the same as the absolute value defined in
Chapter~\ref{ch:of}. 
\end{enumerate}
\end{exercise}
\begin{solution}
(i): This is clear since the square root is nonnegative.

(ii): Using Corollary~\ref{c:130923c}, we have 
$|z|=0$
$\Leftrightarrow$
$\sqrt{x^2+y^2}=0$
$\Leftrightarrow$
${x^2+y^2}=0$
$\Leftrightarrow$
$x=y=0$
$\Leftrightarrow$
$z=0$. 

(iii): Suppose that $y=0$.
Then $|z| = \sqrt{x^2+0^2} = \sqrt{|x|^2} = |x|$. 
\end{solution}

\begin{exercise}[absolute value of the product in $\CC$]
\label{exo:abscompprod}
Show that for $z$ and $w$ in $\CC$, we have
$$|z\cdot w|= |z|\cdot|w|.$$
\end{exercise}
\begin{solution}
Write $z=x+\ii y$ and $w = u+\ii v$,
where $x,y,u,v$ are in $\RR$. 
Using 
Exercise~\ref{exo:151007a}, we obtain that 
\begin{align*}
|z\cdot w|^2 &=|(x+\ii y)(u+\ii v)|^2
=|(xu-yv)+\ii (xv+yu)|^2
=(xu-yv)^2+(xv+yu)^2\\
&=(x^2+y^2)(u^2+v^2)
=|x+\ii y|^2|u+\ii v|^2
=|z|^2|w|^2
=\big(|z|\cdot|w|\big)^2. 
\end{align*}
The result follows by taking square roots. 
\end{solution}

\begin{exercise}[triangle inequality in $\CC$]
Show that for $z$ and $w$ in $\CC$, we have
$$|z+w|\leq |z|+|w|.$$
\emph{Hint:} Show the squared version of this inequality by using
Exercise~\ref{exo:CS}. 
\end{exercise}
\begin{solution}
Write $z=x+\ii y$ and $w = u+\ii v$,
where $x,y,u,v$ are in $\RR$. 
\begin{align*}
|z+w|^2 &=|(x+\ii y)+(u+\ii v)|^2
=|(x+u)+\ii (y+v)|^2
= (x+u)^2 + (y+v)^2 \\
&= x^2+u^2+y^2+v^2 + 2(xu+yv)
\leq x^2+u^2+y^2+v^2 + 2|xu+yv|\\
&= x^2+y^2+u^2+v^2 + 2\sqrt{(xu+yv)^2}
\leq x^2+y^2+u^2+v^2 +2\sqrt{(x^2+y^2)(u^2+v^2)}\\
&= x^2+y^2+u^2+v^2 +2\sqrt{x^2+y^2}\sqrt{u^2+v^2}
=|x+\ii y|^2 + |u+\ii v|^2 + 2|x+\ii y|\cdot|u+\ii v|\\
&=\big(|x+\ii y|+|u+\ii v|\big)^2
=\big(|z|+|w|\big)^2, 
\end{align*}
where we used $\alpha\leq|\alpha|$ 
in the first inequality and 
\cref{exo:CS} in the second inequality. 
The result follows by taking square roots. 
\end{solution}

\begin{exercise}[justifying a limit from Calculus I]
Show that for every $x\in[-3/2,3/2]\smallsetminus\{0\}$ we have
$$\left| \frac{\exp(x)-1}{x}-1\right| \leq |x|.$$
\emph{Hint:} Theorem~\ref{t:expremainder}!
In passing, we note that by letting $x\to 0$, 
we obtain a limit given
(usually without justification) in Calculus~I:
$$\lim_{x\to 0} \frac{\exp(x)-1}{x} = 1.$$
\end{exercise}
\begin{solution}
Consider Theorem~\ref{t:expremainder} with $N=1$.
The inequlity in the hypothesis of that theorem,
$N\geq 2|x|-2$,
translates to
$1\geq 2|x|-2$ 
$\Leftrightarrow$
$2+1\geq 2|x|$
$\Leftrightarrow$
$3\geq 2|x|$
$\Leftrightarrow$
$|x|\leq 3/2$
$\Leftrightarrow$
$x\in[-3/2,3/2]$,
i.e., precisely to our hypothesis on $x$. 
Next, \eqref{e:expremainder} turns into
\begin{equation*}
\left| \exp(x) - \sum_{n=0}^1 \frac{x^n}{n!}\right| \leq
\frac{2|x|^{1+1}}{(1+1)!},
\end{equation*}
which is equivalent to 
\begin{equation*}
\left| \exp(x) - (1+x)\right| \leq
|x|^2. 
\end{equation*}
Now divide by $|x|$ to obtain 
\begin{equation*}
\left| \frac{\exp(x) - 1}{x} - 1\right| \leq
|x|. 
\end{equation*}
The limit statement is clear from the Squeeze Theorem 
(for functions). 
\end{solution}

\begin{exercise}[Cosine Series] 
Show that for every $x\in\RR$, the Maclaurin series 
for the cosine function, 
$$\cos(x) := \sum_{\nnn}(-1)^n\frac{x^{2n}}{(2n)!}= 1 - \frac{x^2}{2!}\pm \cdots,$$
is absolutely convergent.
\end{exercise}
\begin{solution}
The statement is clear when $x=0$. 
So we assume $x\neq 0$ and we set 
\begin{equation*}
a_{n} := \frac{(-1)^nx^{2n}}{(2n)!}, 
\end{equation*}
where $n\in\NN$. 
Then 
\begin{align*}
\left|\frac{a_{n+1}}{a_n}\right|
&= 
\left|\frac{\displaystyle\frac{(-1)^{n+1}x^{2(n+1)}}{(2(n+1))!}}%
{\displaystyle\frac{(-1)^nx^{2n}}{(2n)!}}\right|
= 
\frac{\displaystyle\frac{|x|^{2n+2}}{(2n+2)!}}%
{\displaystyle\frac{|x|^{2n}}{(2n)!}}
=
\frac{|x|^2}{(2n+2)(2n+1)}
\to 0.
\end{align*}
Now apply the Ratio Test (Corollary~\ref{c:utirat} with $q=0$).
\end{solution}

\begin{exercise}[Sine Series] 
Show that for every $x\in\RR$, the Maclaurin series 
for the sine function, 
$$\sin(x) := \sum_{\nnn}(-1)^n\frac{x^{2n+1}}{(2n+1)!}= x - \frac{x^3}{3!}\pm \cdots,$$
is absolutely convergent.
\end{exercise}
\begin{solution}
The statement is clear when $x=0$. 
So we assume $x\neq 0$ and we set 
\begin{equation*}
a_{n} := \frac{(-1)^nx^{2n+1}}{(2n+1)!}.
\end{equation*}
Then 
\begin{align*}
\left|\frac{a_{n+1}}{a_n}\right|
&= 
\left|\frac{\displaystyle\frac{(-1)^{n+1}x^{2(n+1)+1}}{(2(n+1)+1)!}}%
{\displaystyle\frac{(-1)^nx^{2n+1}}{(2n+1)!}}\right|
= 
\frac{\displaystyle\frac{|x|^{2n+3}}{(2n+3)!}}%
{\displaystyle\frac{|x|^{2n+1}}{(2n+1)!}}
=
\frac{|x|^2}{(2n+3)(2n+2)}
\to 0.
\end{align*}
Now apply the Ratio Test (Corollary~\ref{c:utirat} with $q=0$).
\end{solution}

\begin{exercise} 
Let $(c_n)_{n\geq 1}$ be a bounded sequence of real numbers 
(not necessarily convergent).
Prove or disprove:
\begin{equation*}
\sum_{n\geq 1}\frac{c_n}{n^2}
\end{equation*}
is convergent.
\end{exercise}
\begin{solution}
We show that the result is TRUE!

Because $(c_n)_{n\geq 1}$ is a bounded sequence, 
there exists $\gamma\geq 0$ such that 
\begin{equation*}
(\forall n\geq 1)\quad 
|c_n|\leq \gamma.
\end{equation*}
We know that $\sum_{n\geq 1}\frac{1}{n^2}$ converges 
(it is a $p$-series with $p=2$), hence absolutely 
because its terms are positive.
Set, for $n\in\{1,2,\ldots\}$, $b_n := \gamma/n^2 \geq 0$. 
It follows that 
\begin{equation*}
\sum_{n\geq 1} b_n = \sum_{n\geq 1} \frac{\gamma}{n^2}
\end{equation*}
converges absolutely.
Now set 
$a_n := c_n/n^2$, where $n\in\{1,2,\ldots\}$. 
Then 
\begin{equation*}
|a_n| = \frac{|c_n|}{n^2} \leq \frac{\gamma}{n^2} = b_n.
\end{equation*}
By the Comparison Test (Theorem~\ref{t:compartest}),
$\sum_{n\geq 1}a_n$ converges absolutely. 
\end{solution}

\begin{exercise}[Root Test] 
Let $(a_n)_{\nnn}$ be a sequence of 
real numbers and assume that 
\begin{equation*}
q := \lim_{n\to\infty} \sqrt[n]{|a_n|}
\end{equation*}
exists and $q<1$.
(We assume in this exercise the existence of $n$th roots.)
Show that 
$\sum_\nnn a_n$ converges absolutely.
\emph{Hint:} Compare to a geometric series.
\end{exercise}
\begin{solution}
Set $\beta := (1+q)/2<1$. 
Similar to the proof of the Ratio Test, 
there exists $N\in\NN$ such that 
for all $n\geq N$, we have 
\begin{equation*}
\sqrt[n]{|a_n|} \leq \beta; 
\text{~equivalently,~}
|a_n| \leq \beta^n.
\end{equation*}
Comparing with the absolutely convergent series 
$\sum_{n\geq N}\beta^n$, we learn that 
$\sum_{n\geq N}|a_n|$ converges and hence so does 
$\sum_{\nnn}|a_n|$.
\end{solution}

\begin{exercise} 
Define the sequence $(a_n)_{\nnn}$ by 
\begin{equation*}
a_n := \begin{cases}
    \displaystyle \frac{1}{2^n}, &\text{if $n$ is even;}\\[+5mm]
    \displaystyle \frac{3}{2^n}, &\text{if $n$ is odd.}
\end{cases}
\end{equation*}
Prove or disprove: The series 
\begin{equation*}
\sum_{\nnn} a_n = 1 + \frac{3}{2} + \frac{1}{2^2} + \frac{3}{2^3} + \cdots 
\end{equation*}
is convergent.
\end{exercise}
\begin{solution}
We show that the result is TRUE!
(An interesting feature of this example is that the ratio test
is not applicable!)

Set $b_n \equiv 3/2^n$.
Then $\sum_{\nnn} b_n$ converges absolutely 
(it is a multiple of a geometric series!).
Moreover, clearly $|a_n| = a_n \leq b_n$ for all $\nnn$.
By the Comparison Test (Theorem~\ref{t:compartest}),
$\sum_{\nnn}a_n$ converges absolutely. 
\end{solution}

\begin{exercise}[Bessel Function] 
The \emph{Bessel function} (of order $0$) is defined by 
\begin{equation*}
J_0(x) := \sum_{n=0}^\infty \frac{(-1)^n x^{2n}}{2^{2n}(n!)^2}
\end{equation*}
for every $x\in\RR$.
Show that this series is absolutely convergent.
\end{exercise}
\begin{solution}
The statement is clear when $x=0$.
So we assume that $x\neq 0$ and we set
\begin{equation*}
a_n := \frac{(-1)^n x^{2n}}{2^{2n}(n!)^2}.
\end{equation*}
Then 
\begin{align*}
\left|\frac{a_{n+1}}{a_n}\right|
&= 
\left|\frac{\displaystyle\frac{(-1)^{n+1} x^{2(n+1)}}{2^{2(n+1)}((n+1)!)^2}}%
{\displaystyle\frac{(-1)^n x^{2n}}{2^{2n}(n!)^2}}\right|
= \frac{\displaystyle \frac{|x|^{2n+2}}{2^{2n+2}((n+1)!)^2}}%
{\displaystyle \frac{|x|^{2n}}{2^{2n}(n!)^2}}
= \frac{|x|^2}{2^2(n+1)^2}
\to 0. 
\end{align*}
Now apply the Ratio Test (Corollary~\ref{c:utirat} with $q=0$).
\end{solution}

\begin{exercise}[Binomial Series] 
The \emph{binomial series} is defined by 
\begin{equation*}
B(x) := \sum_{\nnn} \frac{p(p-1)\cdots (p-n+1)}{n!}x^n
\end{equation*}
for $x\in\RR$ and $p\in\RR$.\\
(i) Show that this series converges absolutely when $|x|<1$.\\
(ii) Simplify $B(x)$ when $p\in\NN$.
\end{exercise}
\begin{solution}
When $x=0$, the series clearly converges absolutely and $B(0)=1$.

So we assume that $x\neq 0$ and we set
\begin{equation*}
a_n := \frac{p(p-1)\cdots (p-n+1)x^n}{n!}
\end{equation*}
If $p\in\NN$, then $a_n=0$ for $n\geq p+1$,
the series thus converges absolutely and $B(x) = (x+1)^p$
by the Binomial Theorem! (This is also true for any $p$ when $|x|<1$.)

So assume $p\in\RR\smallsetminus\NN$.
Then $a_n\neq 0$ for all $\nnn$ and 
\begin{align*}
\left|\frac{a_{n+1}}{a_n}\right|
&= 
\left|\frac{\displaystyle\frac{p(p-1)\cdots(p-(n+1)+1)x^{n+1}}{(n+1)!}}%
{\displaystyle\frac{p(p-1)\cdots(p-n+1)x^n}{n!}}\right|
= \frac{|x||p-n|}{(n+1)}
= \frac{|x||p/n-1|}{(1+1/n)}
\\
&\to \frac{|x||0-1|}{1+0} = |x|. 
\end{align*}
Now apply the Ratio Test (Corollary~\ref{c:utirat} with $q=|x|$).
\end{solution}

\begin{exercise}[YOU be the marker!]  
Consider the following statement 
\begin{equation*}
\text{``$\exp(1)=0$''}
\end{equation*}
and the following ``proof'':
\begin{quotation}
We have seen that $1=1\cdot 1$.\\
Hence $\exp(1)=\exp(1\cdot 1)$.\\
The functional equation now implies 
$\exp(1) = \exp(1)+\exp(1)$.\\
Therefore, $0=\exp(1)$.
\end{quotation}
Why is this proof wrong?
\end{exercise}
\begin{solution}
The functional equation is for \emph{sums} of values, not \emph{products}.
(The proof could be recycled for $\ln$ to show that $\ln(1)=0$.)
\end{solution}

\begin{exercise}[TRUE or FALSE?] 
Mark each of the following statements as either true or false. 
Briefly justify your answer.
\begin{enumerate}
\item ``If $A := \sum_{\nnn} a_n$ and $B := \sum_{\nnn} b_n$ converge absolutely, 
then $\sum_{\nnn}a_nb_n = A\cdot B$.''
\item ``If $x,y$ are in $\RR$, then $\exp(x\cdot y) = \exp(x)+\exp(y)$.''
\item ``If $x\in\RR$, then $\sqrt{\exp(2x)}=\exp(x)$.''
\item ``$4<\exp(2)<9$.''
\end{enumerate}
\end{exercise}
\begin{solution}
(i): FALSE: $B := A := 2 = 1+1+0+0+\cdots$. 
Then $1\cdot 1 + 1\cdot 1 + 0\cdot 0 + 0 \cdot 0 + \cdots = 2
\neq 4 = A\cdot B$.

(ii): FALSE: $x:=y:=0$ leads to $1=1+1$. 

(iii): TRUE: $\exp(2x)=\exp(x+x)=\exp(x)\exp(x)$. Now take square roots!

(iv): TRUE: We saw that $2<\exp(1)<3$ (see Remark~\ref{r:euler}).
Now $2=1+1$ and so $\exp(2)=\exp(1+1)=\exp(1)\exp(1)$, 
which yields the result.
\end{solution} 
\chapter{Miscellany}

\section{Infimum and Supremum}

\begin{definition}[bounded above and bounded below]
Let $S$ be a subset of $\RR$.
We say that $S$ is \textbf{bounded above}
(resp.\ \textbf{bounded below})
if there exists $\beta\in\RR$ 
such that for all $x\in S$, we have
$x\leq \beta$ (resp.\ $\beta \leq x$). In this case,
the number $\beta$ is called 
an \textbf{upper bound} (resp.\ \textbf{lower bound})
of $S$. 
The set $S$ is called \textbf{bounded}
if it is both bounded above and bounded below.
\index{bounded set}
\index{bounded below (set)}\index{bounded above (set)}
\index{upper bound}\index{lower bound}
\end{definition}

Note that if $(a_n)_\nnn$ is a sequence,
then the set $\{a_n\}_\nnn$ is bounded if and only
if the sequence is bounded in the sense of 
Definition~\ref{d:boundedseq}.

Also, similarly to \eqref{e:bdseq},
we can show that a set $S$ is bounded if and only
if
\begin{equation}
\text{there exists $\gamma\geq 0$ such that
$|x|\leq\gamma$,
for every $x\in S$.}
\end{equation}

\begin{definition}[supremum]
\label{d:sup}
Let $S$ be a nonempty subset of $\RR$ that is bounded above, and
let $\beta\in\RR$. 
Then $\beta$ is 
the \textbf{least upper bound}
or \textbf{supremum}\footnote{The name comes from the Latin
``suppremum'' = ``highest part''.}of $S$, written
\begin{equation}
\beta = \sup S,
\end{equation}
if the following hold.
\begin{enumerate}
\item $\beta$ is an upper bound of $S$.
\item If $\alpha<\beta$, then $\alpha$ is \emph{not} an upper bound of $S$.
\end{enumerate}
If $\sup S\in S$, then $\sup S$ is also called
the \textbf{maximum} of $S$, written $\max S$. 
\index{supremum}\index{maximum}
\end{definition}

It is easy to see that the supremum is unique, provided it exists
(Exercise~\ref{exo:supunique}). 
It is convenient to define $\sup\varnothing := -\infty$, 
and $\sup S :=+\infty$ if $S$ is not bounded above.

The definition of the infimum\footnote{The name is the noun
corresponding to the Latin ``infimus'' = ``lowest''.} is analogous:

\begin{definition}[infimum]\label{d:inf}
Let $S$ be a nonempty subset of $\RR$ that is bounded below, and
let $\beta\in\RR$. 
Then $\beta$ is 
the \textbf{greatest lower bound}
or \textbf{infimum} of $S$, written
\begin{equation}
\beta = \inf S,
\end{equation}
if the following hold.
\begin{enumerate}
\item $\beta$ is a lower bound of $S$.
\item If $\alpha>\beta$, then $\alpha$ is \emph{not} a lower bound of $S$.
\end{enumerate}
If $\inf S\in S$, then $\inf S$ is also called
the \textbf{minimum} of $S$, written $\min S$. 
\end{definition}

The infimum is unqiue provided it exists, and
one sets $\inf \varnothing := +\infty$,
and $\inf S := -\infty$ when $S$ is not bounded below.
With the notation
\begin{equation}
-S := \menge{-s}{s\in S},
\end{equation}
we have (Exercise~\ref{exo:infsup}) 
\begin{equation}
\label{e:infsup}
\inf S = -\sup(-S).
\end{equation}

\begin{example}[intervals]
Let $a$ and $b$ be real numbers such that $a<b$.
Then the following hold.
\begin{enumerate}
\item
$\inf [a,b] = \min [a,b] = a$ and 
$\sup [a,b] = \max [a,b] = b$.
\item
$\inf \left]a,b\right] = a \notin \left]a,b\right]$
and 
$\sup \left]a,b\right] = \max \left]a,b\right] = b$.
\item
$\inf \left[a,b\right[ = \min \left[a,b\right[ = a$
and 
$\sup \left[a,b\right[ = b \notin \left[a,b\right[$.
\item
$\inf \left]a,b\right[ = a \notin \left]a,b\right[$
and 
$\sup \left]a,b\right[ = b \notin \left]a,b\right[$.
\end{enumerate}
\end{example}

The existence of infimum and supremum is a subtle
property of the set of real numbers.

\begin{theorem}
Let $S$ be a nonempty subset of $\RR$ such that
$S$ is bounded above.
Then $\sup S$ exists
and there exists a sequence $(x_n)_\nnn$ in $S$
such that $x_n \to \sup S$.
\end{theorem}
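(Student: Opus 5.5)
We would prove this with a bisection argument that closely mirrors the proof of the Bolzano-Weierstrass theorem (Theorem~\ref{t:BW}). The only completeness tool we use is the Completeness Axiom (Definition~\ref{d:compaxiom}), applied through Cauchy sequences.

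\textbf{Setup.} Since $S\neq\varnothing$, pick $s\in S$, and let $\beta$ be an upper bound of $S$. If $s=\beta$, then $\beta\in S$ is an upper bound lying in $S$. It is then clearly the supremum, since any $\alpha<\beta$ is exceeded by $\beta\in S$, and the constant sequence $x_n=\beta$ does the job. So assume $s<\beta$.

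\textbf{Construction.} By induction on $k$ we build $\alpha_k\le\beta_k$ and points $x_k\in S$ with the following properties:
\begin{enumerate}
\item $\beta_k$ is an upper bound of $S$;
\item $x_k\in S$ and $\alpha_k\le x_k\le\beta_k$;
\item $[\alpha_{k+1},\beta_{k+1}]\subseteq[\alpha_k,\beta_k]$;
\item $\beta_k-\alpha_k=2^{-k}(\beta-s)$.
\end{enumerate}
Start with $\alpha_0=x_0=s$ and $\beta_0=\beta$. Given step $k$, set $\mu=(\alpha_k+\beta_k)/2$. If $\mu$ is an upper bound of $S$, put $[\alpha_{k+1},\beta_{k+1}]=[\alpha_k,\mu]$ and $x_{k+1}=x_k$; note $x_k\le\mu$ because $\mu$ is an upper bound. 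Otherwise some $y\in S$ has $y>\mu$; put $[\alpha_{k+1},\beta_{k+1}]=[\mu,\beta_k]$ and $x_{k+1}=y$, so that $\mu<y\le\beta_k$. In both cases all four properties persist.

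\textbf{Convergence and identification.} For $k,l\ge N$, both $x_k$ and $x_l$ lie in $[\alpha_N,\beta_N]$, so $|x_k-x_l|\le 2^{-N}(\beta-s)$. Hence $(x_k)$ is Cauchy by Example~\ref{ex:geoseq}, and it converges to some $\ell$. Also $|\beta_k-x_k|\le 2^{-k}(\beta-s)\to0$, so $\beta_k\to\ell$ by the limit laws.

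We then check that $\ell$ is the supremum.
\begin{itemize}
\item[(i)] For $x\in S$ we have $x\le\beta_k$ for all $k$, so Theorem~\ref{t:limleq} gives $x\le\ell$; thus $\ell$ is an upper bound.
\item[(ii)] If $\alpha<\ell$, take $\varepsilon=\ell-\alpha$. For large $k$ we get $|x_k-\ell|<\varepsilon$, so $x_k>\alpha$ with $x_k\in S$; thus $\alpha$ is not an upper bound.
\end{itemize}
So $\ell=\sup S$ and $x_k\to\sup S$.

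\textbf{Main obstacle.} The delicate point is the case split in the inductive step. We must make sure that $x_{k+1}$ really lies in the new interval, in particular in the branch where $\mu$ is an upper bound and we keep $x_{k+1}=x_k$. We must also make sure that the right endpoint stays an upper bound, since that is what drives part (i). Once these invariants are stated carefully, the Cauchy estimate goes through exactly as in Theorem~\ref{t:BW}.
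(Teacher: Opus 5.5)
Your proof is correct and uses essentially the same bisection argument as the paper: test whether the midpoint is an upper bound, keep the right endpoint an upper bound, and keep a sample point in $S$ within distance $2^{-k}(\beta - s)$ of it. The differences are minor. The paper bisects $[x_n,\beta_n]$ directly, so that $(x_n)$ is increasing and $(\beta_n)$ decreasing, and then applies the Bounded Monotone Convergence Theorem. You carry a separate left endpoint $\alpha_k$, so your $x_k$ need not be monotone, and you instead get convergence from the Cauchy estimate and the Completeness Axiom, as in the proof of Bolzano--Weierstrass.
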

\begin{proof}
Let $\beta_0$ be an upper bound of $S$, take $x_0\in S$,
and set $\delta := \beta_0-x_0 \geq 0$.

We shall construct inductively sequences  $(x_n)_\nnn$ 
and $(\beta_n)_\nnn$ in $S$ and $\RR$ respectively such that 
\begin{enumerate}
\item 
\label{sup1}
$(\forall n\geq 1)$ 
$x_{n-1}\leq x_n\in S$ (which implies that
$(x_n)_\nnn$ is an increasing sequence of points in $S$);
\item
\label{sup2}
$(\forall n\geq 1)$ 
$\beta_{n-1}\geq \beta_n$ and $\beta_n$ is an upper bound of $S$ (which imples that 
$(\beta_n)_\nnn$ is a decreasing sequence of upper bounds of $S$);
\item
\label{sup3}
$(\forall \nnn)$ 
$\beta_n-x_n \leq \delta/2^n$.
\end{enumerate}
The base case is clear.

Now assume that $x_0\leq x_1\leq\cdots\leq x_n$
and $\beta_0\geq\beta_1\geq \cdots\geq\beta_n$ are given
with the prescribed properties, for some integer $\nnn$.
It is clear that $x_n\leq \beta_n$ since $x_n\in S$ and 
$\beta_n$ is an upper bound of $S$.
Set
\begin{equation}
\mu := \frac{x_n + \beta_n}{2},
\end{equation}
i.e., $\mu$ is the midpoint of the interval $[x_n,\beta_n]$.

\emph{Case~1:} $\mu$ is an upper bound of $S$.\\
We then set $x_{n+1} := x_n$ and $\beta_{n+1} := \mu$.

\emph{Case~2:} $\mu$ is not an upper bound of $S$.\\
Then there must exist a point $x_{n+1} \in
\left]\mu,\beta_n\right] \cap S$, and we set $\beta_{n+1} := \beta_n$. 

In either case, 
it is clear that $x_n\leq x_{n+1}\leq \beta_{n+1}\leq \beta_n$,
and that $\beta_{n+1}-x_{n+1} \leq (\beta_n-x_n)/2 \leq 
(\delta/2^n)/2 = \delta/2^{n+1}$, as required.

Having constructed these monotone sequences and utilizing
Theorem~\ref{t:monconv}, we see that they must both converge
to the same limit $\beta$.

It remains to show that $\beta = \sup S$. 
Indeed, if $s\in S$, then $s\leq \beta_n$ for all $\nnn$ and hence
$s\leq\beta$ by Theorem~\ref{t:limleq}.
Now assume that $\alpha < \beta$.
Choose $\nnn$ so large such that $\delta/2^n < \beta-\alpha$.
Then 
$x_n\geq \beta_n-\delta/2^n \geq \beta-\delta/2^n > \alpha$,
so $\alpha$ is definitely not an upper bound of $S$.
\end{proof}

\begin{corollary}
Let $S$ be a nonempty subset of $\RR$ such that $S$ is bounded below.
Then $\inf S$ exists and
there exists a sequence $(x_n)_\nnn$ in $S$ such that
$x_n\to\inf S$.
\end{corollary}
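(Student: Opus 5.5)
The plan is to reduce to the theorem just proved by reflecting $S$ through the origin, using the set $-S := \menge{-s}{s\in S}$ introduced before \eqref{e:infsup}. First, $-S$ is nonempty because $S$ is. Next, if $\alpha$ is a lower bound of $S$, then $-\alpha$ is an upper bound of $-S$: indeed, for $-s\in -S$ we have $\alpha\leq s$, so $-s\leq -\alpha$ by Proposition~\ref{p:ofield}\ref{p:ofieldviii} with $u=-1$. Hence the preceding theorem applies to $-S$. It yields $\beta := \sup(-S)\in\RR$ together with a sequence $(y_n)_\nnn$ in $-S$ such that $y_n\to\beta$.

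The second step is to show that $-\beta$ is the infimum of $S$ in the sense of Definition~\ref{d:inf}; this is essentially \eqref{e:infsup}.
\begin{itemize}
\item \emph{Lower bound.} Let $s\in S$. Then $-s\in -S$, so $-s\leq\beta$, and therefore $-\beta\leq s$.
\item \emph{Greatest lower bound.} Let $\alpha>-\beta$, so that $-\alpha<\beta$. Since $\beta$ is the supremum of $-S$, the number $-\alpha$ is not an upper bound of $-S$. So there is some $s\in S$ with $-s>-\alpha$, i.e.\ $s<\alpha$. Thus $\alpha$ is not a lower bound of $S$.
\end{itemize}
By uniqueness of the infimum, $\inf S=-\beta$ exists.

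For the sequence, set $x_n := -y_n$. Each $x_n$ lies in $S$, since $y_n\in -S$ means $y_n=-s$ for some $s\in S$, and then $-y_n = s$ by Proposition~\ref{p:may26:1}\ref{p:may26:1iv}. By the Constant-Multiple Law (Corollary~\ref{c:c-mlaw}) with $c=-1$, we get $x_n\to -\beta=\inf S$.

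No step is a genuine obstacle. The only point needing care is the sign bookkeeping when flipping inequalities in the greatest-lower-bound check, which is handled by Proposition~\ref{p:ofield}\ref{p:ofieldvi}/\ref{p:ofieldviii}.
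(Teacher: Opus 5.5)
Your proof is correct and is the intended argument. The paper gives no separate proof of this corollary: it follows from the preceding theorem applied to $-S$ together with $\inf S = -\sup(-S)$ from \eqref{e:infsup}, whose verification in Exercise~\ref{exo:infsup} is the same lower-bound/greatest-lower-bound check you carried out, and negating the approximating sequence via the Constant-Multiple Law finishes it as you do.
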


\begin{remark}
Infima and suprema of subsets of $\QQ$ need not necessarily lie in $\QQ$: 
for instance, the set
\begin{equation}
S := \menge{ x\in\QQ}{ x^2<2}
\end{equation}
is bounded above, but $\sup S = \sqrt{2}\in\RR\smallsetminus \QQ$
by Theorem~\ref{t:squareroot2}.
\end{remark}

\section{Limit inferior and limit superior}

The notions of limit inferior and limit superior can
be interpreted as generalizations of the idea of the limit
of a sequence. These generalized limits always exist, even if the
sequence under consideration has no limit.

\begin{definition}\label{d:liminfsup}
Let $(x_n)_\nnn$ be a sequence of real numbers.
For every $\nnn$, set
\begin{equation}
S_n := \{x_n,x_{n+1},\ldots\}.
\end{equation}
Then $S_0 \supseteq S_1 \supseteq \cdots \supseteq S_n \supseteq
S_{n+1}$, which implies that
\begin{equation}
\sup S_n \geq \sup S_{n+1}
\quad\text{and}\quad
\inf S_n \leq \inf S_{n+1},
\qquad \text{for every $\nnn$.}
\end{equation}
Both sequences thus converge (with limits possibly in $\pm\infty$).
One writes
\begin{equation}
\varlimsup_\nnn x_n := \lim_{\nnn} \sup S_n = \inf_{\nnn} \sup S_n
\end{equation}
and 
\begin{equation}
\varliminf_\nnn x_n := \lim_{\nnn} \inf S_n = \sup_{\nnn} \inf S_n,
\end{equation}
and says that 
$\varlimsup_\nnn x_n$ is the \textbf{limit superior}
(or \textbf{upper limit})
of $(x_n)_\nnn$ and
$\varliminf_\nnn x_n$ is the \textbf{limit inferior}
(or \textbf{lower limit}) 
of $(x_n)_\nnn$.
\index{limit inferior}\index{limit superior}
\end{definition}

Not surprisingly, 
\begin{equation}
\label{e:200819a}
\varliminf_\nnn x_n \leq \varlimsup_\nnn x_n.
\end{equation}

\begin{example} \label{ex:minuseins}
$\varliminf_\nnn (-1)^n = -1 < +1 = \varlimsup_\nnn (-1)^n$.
\end{example}

Suppose that $(x_n)_\nnn$ is a bounded sequence.
It may be shown that 
$\varlimsup x_n$ is the
largest cluster point of the sequence $(x_n)_\nnn$,
that $\varliminf x_n$
is the smallest cluster point of the sequence $(x_n)_\nnn$,
and that 
\begin{equation}
\label{e:last}
\text{
$(x_n)_\nnn$ is convergent 
$\Leftrightarrow$
$\varliminf_\nnn x_n = \varlimsup_\nnn x_n$,
}
\end{equation}
in which case
$\lim_\nnn x_n = \varliminf_\nnn x_n = \varlimsup_\nnn x_n$.

\section*{Exercises}\markright{Exercises}
\addcontentsline{toc}{section}{Exercises}
\setcounter{theorem}{0}

\begin{exercise}
\label{exo:supunique}
Let $S$ be a subset of $\RR$. Show that 
the supremum is unique provided it exists.
\end{exercise}
\begin{solution}
Suppose $\beta'$ and $\beta$ are both suprema,
but different, say $\beta'<\beta$.
On the one hand, since $\beta$ is a supremum of $S$ and $\beta'<\beta$,
it follows that $\beta'$ is not an upper bound of $S$.
On the other hand, we assumed that $\beta'$ is a supremum of $S$;
in particular, it is an upper bound.
Altogether, we have reached a contradiction.
\end{solution}

\begin{exercise}
\label{exo:supmono}
Let $A$ and $B$ be nonempty subsets of $\RR$ both of which are bounded 
above and such that $A\subseteq B$.
Show that $\sup A \leq \sup B$.
\end{exercise}
\begin{solution}
Write 
$\alpha := \sup A$ and $\beta := \sup B$.
Then $\beta$ is an upper bound for $B$,
hence $(\forall b\in B)$ $b\leq \beta$ and
thus $(\forall a\in A)$ $a\leq \beta$.
It follows that $\beta$ is an upper bound for $A$.
We claim that $\beta\geq \alpha$.
Suppose to the contrary that $\beta<\alpha$.
By definition of $\alpha$,
we must have that $\beta$ is not an upper bound of $A$
which is absurd.
\end{solution}

\begin{exercise}
\label{exo:infsup}
Prove \eqref{e:infsup}, where we assume (for simplicity) that
$S$ is a nonempty bounded subset of $\RR$. 
\end{exercise}
\begin{solution}
We must show that
\begin{equation*}
\inf S = -\sup(-S).
\end{equation*}
Set $\sigma := \sup(-S)$.
We must show that $-\sigma = \inf S$, i.e.,
(i) $-\sigma$ is a lower bound of $S$ and
(ii) If $\alpha > -\sigma$, then $\alpha$ is not a lower bound of
$S$.

(i): Since $\sigma$ is an upper bound for $-S$, we have
$(\forall s\in S)$ $-s\leq \sigma$;
equivalently, 
$(\forall s\in S)$ $s \geq -\sigma$ as required. 

(ii): Let $\alpha > -\sigma$. 
Then $-\alpha < \sigma$.
By definition of $\sigma$ as the supremum of $-S$,
$-\alpha$ cannot be an upper bound of $-S$.
Hence there exists $s\in S$ such that $-\alpha < -s$.
It follows that $\alpha > s$ and $\alpha$ is not a lower bound
of $S$ as required. 
\end{solution}

\begin{exercise} 
Let $A$ be a nonempty subset of $\RR$ such that 
$A$ is bounded above.
Let $\sigma \in \RR$. 
Show that 
$\sigma = \sup A$
$\Leftrightarrow$
[ 
there exists a sequence $(a_n)_\nnn$ in $A$
such that $a_n\to\sigma$ and 
for every sequence $(b_n)_\nnn$ in $A$ such that 
$b_n\to\beta$ we must have $\beta \leq\sigma$].
\end{exercise}
\begin{solution}
``$\Rightarrow$'':
Suppose $\sigma=\sup A$.
Then $\sigma-\tfrac{1}{2^n}$
is not an upper bound of $A$,
so there exists $a_n\in A$
such that 
$\sigma-\tfrac{1}{2^n}<a_n\leq \sigma$.
The Squeeze Theorem now gives $a_n\to\sigma$.
Next, let $(b_n)_\nnn$ be in $A$ and $b_n\to\beta$.
Then $(\forall\nnn)$ $b_n\leq \sigma$.
Taking limits yields $\beta\leq\sigma$. 

``$\Leftarrow$'':
Let $a\in A$. Setting $b_n\equiv a$ gives $a \leq\sigma$; 
thus, $\sigma$ is an upper bound of $A$. 
On the other hand, let $\lambda<\sigma$ and pick up a sequence $(a_n)_\nnn$
in $A$ such that $a_n\to\sigma$.
Then eventually $a_n>\lambda$ and so $\lambda$ is not an upper bound of $A$.
Altogether, $\sigma = \sup A$.
\end{solution}

\begin{exercise} 
Let $A$ be a nonempty subset of $\RR$ that is bounded above.
Prove or disprove:
\begin{equation*}
\text{If $\sup A$ is not the maximum, then $A$ contains infinitely many elements.}
\end{equation*}
\end{exercise}
\begin{solution}
We show that the result is true.

Suppose to the contrary that $A$ contains only finitely many distinct elements,
say $A =\{a_1,a_2,\ldots,a_n\}$ for some $n\geq 1$. 
Then $A$ has a unique largest element $a_m$, where $1\leq m\leq n$.
But then $a_m=\sup(A)\in A$ and so $a_m$ is the maximum,
which contradicts the assumption. 
\end{solution}

\begin{exercise} 
Verify \eqref{e:200819a}.
\end{exercise}
\begin{solution}
Let $S_n$ be as in Definition~\ref{d:liminfsup}.
Then 
\begin{equation*}
\varliminf_\nnn x_n = \lim_\nnn \inf S_n 
\;\;\text{and}\;\;
\varlimsup_\nnn x_n = \lim_\nnn \sup S_n. 
\end{equation*}
For every $\nnn$, we have 
\begin{equation*}
(\forall s_n\in S_n)\quad 
\inf S_n \leq s_n \leq \sup S_n. 
\end{equation*}
Now taking the limit as $n\to\infty$ gives
$\varliminf_\nnn x_n \leq \varlimsup_\nnn x_n$.
\end{solution}

\begin{exercise}
Provide, if possible, sequences $(x_n)_\nnn$ such that the following hold:
\begin{enumerate}
\item
$-\infty < \varliminf_\nnn x_n < \varlimsup_\nnn x_n=+\infty$;
\item
$-\infty = \varliminf_\nnn x_n < \varlimsup_\nnn x_n=+\infty$;
\item
$-\infty < \varliminf_\nnn x_n < \varlimsup_\nnn x_n<+\infty$ and 
$(x_n)_\nnn$ has exactly 3 cluster points. 
\item
$-\infty < \varliminf_\nnn x_n < \varlimsup_\nnn x_n<+\infty$ and 
$(x_n)_\nnn$ has infinitely many cluster points. 
\item
$-\infty = \varliminf_\nnn x_n < \varlimsup_\nnn x_n=+\infty$ and 
every real number is a cluster point of $(x_n)_\nnn$. 
\end{enumerate}
\end{exercise}
\begin{solution}
(i): $((-1)^n)_\nnn$ has $\varliminf = -1$ and $\varlimsup = +1$.

(ii): $(0,1,0,2,0,3,0,4,0,5,0,6,\ldots)$ has 
$\varliminf = 0$ and $\varlimsup = \pinf$.

(iii): 
$(-1,0,1,-1,0,1,-1,0,1,\ldots)$.

(iv): Give yourself two numbers, say $-10$ and $10$.
Start at $x_0$, then $x_1 = x_0+1/1$,
$x_2=x_1+1/2$, $x_3=x_2+1/3$, $x_{k}=x_{k-1}+1/k$ until you jump over 
your upper hurdle $10$: $x_n<10$, $x_{n+1}\geq 10$.
(This does happen because the harmonic series diverges!).
Now reverse course: $x_{n+2} = x_{n+1}- 1/(n+2)$,
until you reach the lower hurdle $-10$. Then reverse course again.
Every hurdle is crossed infinitely often. And the step sizes 
become smaller and smaller.
The sequence you created will have the entire interval 
$[-10,10]$ as its cluster point --- isn't that wonderfully weird?

(v): 
Argue as in (iv), but change the hurdles:
$10,-10$: Once your crossed below $-10$, go up to $20$,
then down to $-20$, then $30$ and $-30$, and so on.
Again the divergence of the harmonic series makes this possible.
The eventual sequence has \emph{every real numbers as a cluster point}!!
(Math is stranger and more fun than much of fiction!)
\end{solution}

\begin{exercise} 
\label{exo:goodcharoflimsup}
Let $(x_n)_\nnn$ be a bounded sequence of real numbers,
and let $\sigma\in\RR$. 
Show that 
$\sigma = \varlimsup_\nnn x_n$
$\Leftrightarrow$
$(\forall\varepsilon>0)$
$\menge{\nnn}{x_n>\sigma-\varepsilon}$ is infinite and 
$\menge{\nnn}{x_n\geq \sigma+\varepsilon}$ is finite.
\end{exercise}
\begin{solution}
Let $S_n$ be as in Definition~\ref{d:liminfsup} and
set $\sigma_n := \sup S_n\to\varlimsup_\nnn x_n$. 

``$\Rightarrow$'':
Suppose that $\sigma = \varlimsup_\nnn x_n$. 
We know that $\sigma_n \downarrow\sigma$ by assumption. 
Let $\varepsilon>0$.
Entertain the thought that 
$\menge{\nnn}{x_n>\sigma-\varepsilon}$ were finite.
Then there exists $N\in\NN$ such that 
$n\geq N$ $\Rightarrow$
$\menge{\nnn}{x_n>\sigma-\varepsilon}=\varnothing$, i.e., 
$x_n\leq \sigma-\varepsilon$. 
But then $\sigma_n \leq\sigma-\varepsilon$  for all $n\geq N$
and so (taking limits) $\sigma \leq \sigma-\varepsilon$ which is 
absurd. Hence 
$\menge{\nnn}{x_n>\sigma-\varepsilon}$ is indeed infinite.
Now entertain the thought that 
$\menge{\nnn}{x_n\geq \sigma+\varepsilon}$ is infinite.
Then $\sigma_n \geq \sigma+\varepsilon$ and 
(taking limits) $\sigma \geq \sigma+\varepsilon$ which is absurd.

``$\Leftarrow$'':
Because $\menge{\nnn}{x_n\geq \sigma+\varepsilon}$ is finite,
there is an index $N$ such that 
$n\geq N$ $\Rightarrow$ 
$x_n<\sigma+\varepsilon$; 
hence, $\sup S_n \leq \sigma+\varepsilon$.
Taking the limit yields 
$\varlimsup_\nnn x_n \leq \sigma+\varepsilon$.
Since this is true for every $\varepsilon >0$,
it follows that 
$\varlimsup_\nnn x_n \leq \sigma$. 
On the other hand, 
$\menge{\nnn}{x_n>\sigma-\varepsilon}$ is infinite.
Hence $\sigma_n \geq \sigma-\varepsilon$ and 
(taking the limit as $n\to\infty$)
$\varlimsup_\nnn x_n \geq \sigma-\varepsilon$.
Since this is true for every $\varepsilon>0$,
it follows that 
$\varlimsup_\nnn x_n \geq \sigma$.
Altogether, 
$\varlimsup_\nnn x_n = \sigma$.
\end{solution}

\begin{exercise} 
\label{exo:200821a}
Let $(x_n)_\nnn$ be a bounded sequence of real numbers,
and set $\sigma := \varlimsup_\nnn x_n$.
Show that 
(i) $\sigma$ is a cluster point of $(x_n)_\nnn$
and 
(ii) if $\lambda$ is a cluster point of $(x_n)_\nnn$,
then $\lambda \leq\sigma$.
Put differently, $\varlimsup_\nnn x_n$ is the 
largest cluster point of $(x_n)_\nnn$.
\emph{Hint:} Exercise~\ref{exo:goodcharoflimsup}. 
\emph{Comment:} A similar proof then shows that 
$\varliminf_\nnn x_n$ is the 
smallest cluster point of $(x_n)_\nnn$.
\end{exercise}
\begin{solution}
Let $\varepsilon >0$
By Exercise~\ref{exo:goodcharoflimsup}, 
$\menge{\nnn}{x_n>\sigma-\varepsilon}$ is infinite,
and there exists $N\in\NN$ such that 
$\menge{\nnn}{x_n<\sigma+\varepsilon} =
\{N,N+1,N+2,\ldots\}$.
Thus, there exists \emph{infinitely many} indices $n$ such that 
$\sigma-\varepsilon < x_n < \sigma+\varepsilon$, i.e., 
$|x_n-\sigma|<\varepsilon$. 
Letting $\varepsilon = 1/2^0$, 
we find $n_0\in\NN$ such that 
$|x_{n_0}-\sigma|<1/2^0$. 
Letting $\varepsilon = 1/2^1$, 
we find $n_1\in\NN$ such that 
$n_1>n_0$ and 
$|x_{n_1}-\sigma|<1/2^1$. 
Continuing in this fashion, we obtain 
a subsequence $(x_{n_k})_{\kkk}$ such that 
$|x_{n_k}-\sigma| <1/2^k$. 
It follows that $\lim_{\kkk} x_{n_k}\to\sigma$
and therefore $\sigma$ is indeed a cluster point of $(x_n)_\nnn$. 

Now let $\lambda$ be another (possibly different) 
cluster point of $(x_n)_\nnn$. 
The set $\menge{\nnn}{x_n\geq \sigma+\varepsilon}$ is finite;
therefore, $\lambda$ cannot be larger than $\sigma$, 
and so $\lambda \leq\sigma$. 
\end{solution}

\begin{exercise}
Use Exercise~\ref{exo:200821a} to 
quickly verify Example~\ref{ex:minuseins}. 
\end{exercise}
\begin{solution}
The sequence has exactly 
two cluster points, $-1$ and $1$.
(If there was any other cluster point different from these,
there must be terms arbitrarily close to that point but
all terms of the sequence are $1$ or $-1$.)
Now apply Exercise~\ref{exo:200821a}. 
\end{solution}

\begin{exercise}
Let $(x_n)_\nnn$ be a bounded sequence of real numbers.
Verify \eqref{e:last}. 
\end{exercise}
\begin{solution}
``$\Rightarrow$'':
Suppose $x_n\to\lambda$.
Then $(x_n)_\nnn$ has only one cluster point (and it is 
still a bounded sequence).
It follows from Exercise~\ref{exo:200821a} that 
$\lambda = \varlimsup_\nnn x_n = \varliminf x_n$.

``$\Leftarrow$'':
Suppose to the contrary that $(x_n)_\nnn$ does not converge.
Then it has at least two cluster points $\lambda_1<\lambda_2$.
By Exercise~\ref{exo:200821a},
$\varliminf_\nnn x_n \leq \lambda_1 < \lambda_2 \leq \varlimsup_\nnn x_n$ 
which contradicts the assumption. 
\end{solution}

\begin{exercise} 
Prove or disprove:
There exists a sequence $(a_n)_\nnn$ of real numbers such that 
\begin{equation*}
\inf\menge{a_n}{\nnn}
< \varliminf_\nnn a_n 
< \varlimsup_\nnn a_n
< \sup\menge{a_n}{\nnn}.
\end{equation*}
\end{exercise}
\begin{solution}
By way of example, we show that the result is true.
Set 
\begin{equation*}
    a_n\equiv (-1)^n\big(1+\tfrac{1}{2^n})
    = \big(2,-(1+\tfrac{1}{2}),1+\tfrac{1}{4},-(1+\tfrac{1}{8}),\ldots) \big).
\end{equation*}
Then 
$\inf\menge{a_n}{\nnn} = -(1+\tfrac{1}{2}) = -1.5$,
$\varliminf_\nnn a_n = -1$,
$\varlimsup_\nnn a_n = 1$,
and $\sup\menge{a_n}{\nnn} = 2$.
\end{solution}

\begin{exercise} 
Prove or disprove:
If $(a_n)_\nnn$ and $(b_n)_\nnn$ are sequences of real numbers, 
then 
\begin{equation*}
\varliminf_\nnn (a_n+b_n) = \varliminf_\nnn a_n + \varliminf_\nnn b_n.
\end{equation*}
\end{exercise}
\begin{solution}
The result is false, we provide a counterexample. 

Set $a_n\equiv (-1)^n = (1,-1,1,-1,\ldots)$ 
and $b_n\equiv (-1)^{n+1} = (-1,1,-1,1,\ldots)$.
Then $a_n+b_n\equiv 0$ and thus 
$\varliminf_\nnn (a_n+b_n) = 0$.
On the other hand,
$\varliminf_\nnn a_n = \varliminf_\nnn b_n = -1$ and thus 
$\varliminf_\nnn a_n + \varliminf_\nnn b_n = -1 + (-1) = -2\neq 0$.
\end{solution}

\begin{exercise}[YOU be the marker!]  
Consider the following statement 
\begin{equation*}
\text{``$\varliminf_{\nnn}|x_n\cdot y_n| = 
\Big(\varliminf_\nnn|x_n|\Big)\cdot \Big(\varliminf_\nnn|y_n|\Big)$''}
\end{equation*}
concerning two sequences of real numbers and the following ``proof'':
\begin{quotation}
Properties of the absolute value give us $|x_n\cdot y_n| = |x_n|\cdot|y_n|$ for all $\nnn$.\\
Now taking the limit inferior gives 
$\varliminf_{\nnn}|x_n\cdot y_n| = 
\varliminf_\nnn\big(|x_n|\cdot|y_n|\big)$. \\
Therefore, the result follows by using the product law for limits. 
\end{quotation}
Why is this proof wrong?
\end{exercise}
\begin{solution}
The first two lines are correct.
The last line is nonsense, because we only have a product law 
for actual limits, not limit superior or limit inferior.
And indeed, the result is false and here is a counterexample:
\begin{align*}
(x_n)&=\big(1,\tfrac{1}{1},2,\tfrac{1}{2},3,\tfrac{1}{3},\ldots \big),\\
(y_n)&=\big(\tfrac{1}{1},1,\tfrac{1}{2},2,\tfrac{1}{3},3,\ldots \big).
\end{align*}
Then $x_n\cdot y_n\equiv 1>0$ and so the LHS $=1$.
On the other hand, 
$\varliminf_\nnn |x_n| = \varliminf_\nnn |y_n| = 0$ and so the RHS $=0$. 
\end{solution}

\begin{exercise}[TRUE or FALSE?] 
Mark each of the following statements as either true or false. 
Briefly justify your answer.
\begin{enumerate}
\item ``If $A=B$, then  $\inf A = \inf B$ and $\sup A = \sup B$.''
\item ``If $\inf A = \inf B$ and $\sup A = \sup B$, then $A=B$.''
\item ``$\sup(-A)=-\sup(A)$.''
\item ``If $\gamma = \inf A = \sup A$, then $A=\{\gamma\}.$''
\end{enumerate}
\end{exercise}
\begin{solution}
(i): TRUE: If the sets are the same, so are their suprema and infima.

(ii): FALSE: Consider $A=\{0,1\}$ and $B=[0,1]$. 

(iii): FALSE: For $A=[-1,1]=-A$, we have $\sup(-A)=1$ and $-\sup(A)=-1$.

(iv): TRUE: Let $a\in A$.
Then $\gamma = \inf(A)\leq a \leq \sup(A) = \gamma$.
Hence $a=\gamma$.
\end{solution} 
\Extrachap{Symbols and Notation}

\begin{tabular}{lll}

$\lnot p, p\land q, p\lor q$ & p.~\pageref{d:notandor} & not, and,
or\\

$p\Rightarrow q, p\Leftrightarrow q$ & p.~\pageref{d:notandor} &
implies, if and only if\\

$\CC$ & p.~\pageref{sec:fields} & The set of complex numbers\\

$\NN$ & p.~\pageref{def:NN} &
The set of nonnegative integers integers $\{0,1,\dots\}$\\

$\QQ$ & p.~\pageref{sec:fields} & The set of rational numbers\\

$\RR$ & p.~\pageref{cha:fields} & The set of real numbers\\

$\ZZ$ & p.~\pageref{def:ZZ} &
The set of integers integers $\{0,1,-1,2,-2,\dots\}$\\

$n!$ & p.~\pageref{def:factorial} & $1\cdot 2\cdots n$ \\

${n\choose k}$ & p.~\pageref{d:choose} & Binomial coefficients\\ 

$\sum$ & p.~\pageref{def:sigma-notation},\pageref{sec:series} & Sigma notation for sums and
series\\

$\prod$ & p.~\pageref{def:Pi-notation} & Pi notation for products\\

$\in$ & p.~\pageref{cha:sets} & Element\\ 

$\varnothing$ & p.~\pageref{l:emptyset} & Empty set\\ 

$\subseteq$ & p.~\pageref{d:subset} & Subset\\ 

$\cup$ & p.~\pageref{d:uic} & Union\\ 

$\cap$ & p.~\pageref{d:uic} & Intersection\\ 

$\smallsetminus$ & p.~\pageref{d:uic} & Complement in a set\\ 

$\mathcal{P}(X)$ & p.~\pageref{d:powerset} & Power set of $X$ \\ 

$A \times B$ & p.~\pageref{d:productset} & Product set of $A$ and $B$ \\ 

$\Rel$, $\sim$ & p.~\pageref{d:eqrel} & Equivalence
relation\\

$\equiv$ & p.~\pageref{ex:mod5} & Equivalence relation or congruent modulo \\

$[a]$ & p.~\pageref{d:eqclass} & Equivalence class\\

$A/R$ & p.~\pageref{d:eqclass} & Quotient set\\ 

$|x|$ & p.~\pageref{d:absval} & Absolute value of $x$\\

$\max$, $\min$ & p.~\pageref{l:maxmin} & Maximum and minimum\\

$\lfloor x\rfloor$ &p.~\pageref{c:entier} & Floor of $x$ \\

$(a_n)_\nnn$ & p.~\pageref{ex:seq} & Sequence\\ 

$\exp$ & p.~\pageref{t:expfunc} & Exponential function\\

$e = \exp(1)$ & p.~\pageref{e:euler} & Euler's number\\

$\Id$ & p.~\pageref{e:Id} & Identity mapping \\

$f^{-1}$ & p.~\pageref{d:invfunc} & Inverse function of $f$\\

$A\approx B$ & p.~\pageref{d:cardinality} & $A$ and $B$ have the same cardinality\\

$\aleph_0$ & p.~\pageref{d:cardinal} & cardinal number of $\NN$ (``aleph naught'')\\



\end{tabular}

\printindex            

\small
\begin{thebibliography}{999}

\bibitem{Beardon}
A.F.\ Beardon,
\emph{Limits},
Springer, 1997.

\bibitem{BG}
M.\ Beck and R.\ Geoghegan,
\emph{The Art of Proof},
Springer, 2010.

\bibitem{Bloch}
E.D.\ Bloch,
\emph{Proofs and Fundamentals},
Springer, 2011. 

\bibitem{CPZ}
G.\ Chartrand, A.D.\ Polimeni, and P.\ Zhang,
\emph{Mathematical Proofs},
Pearson, 2008. 

\bibitem{Cunning}
D.W.\ Cunningham,
\emph{A Logical Introduction to Proof},
Springer, 2012. 

\bibitem{Forster}   
O.\ Forster, \emph{Analysis 1} (in German), 
Vieweg, 1983. 

\bibitem{Galovich}
S.\ Galovich,
\emph{Doing Mathematics},
Thomson, 2007. 

\bibitem{Gerstein}
L.J.\ Gerstein,
\emph{Introduction to Mathematical Structures and Proofs},
second edition, 
Springer, 2012.

\bibitem{Howie}
J.M.\ Howie, \emph{Real Analysis},
Springer, 2006. 

\bibitem{Lay}
S.R.\ Lay,
\emph{Analysis},
Pearson, 2005. 

\bibitem{Lipschutz}
S.\ Lipschutz,
\emph{Set Theory and Related Topics},
Schaum's Outlines, 1998. 

\bibitem{SES}
D.\ Smith, M.\ Eggen, and R.\ St.\ Andre,
\emph{A Transition to Advanced Mathematics},
Brooks/Cole, 2011. 

\bibitem{Solow}
D.\ Solow,
\emph{How to read and do proofs},
Wiley, 2005. 





\end{thebibliography}
\end{document}